\newcommand{\END}{\mathcal{E}nd}
\newcommand{\clr}{rgb:black,1;blue,4;red,1}
\newcommand{\ob}[1]{\mathsf{#1}}
\newcommand{\B}{\mathcal{K}}
\newcommand{\CB}{\mathcal{CK}}
\newcommand{\AB}{\mathcal{AK}}
\newcommand{\lcap}{
\begin{tikzpicture}[baseline = 3pt, scale=0.5, color=\clr]
        \draw[-,thick] (1,0) to[out=up, in=right] (0.53,0.5) to[out=left, in=right] (0.47,0.5);
        \draw[-,thick] (0.49,0.5) to[out=left,in=up] (0,0);
\end{tikzpicture}
}
\newcommand{\lcup}{
\begin{tikzpicture}[baseline = 6pt, scale=0.5, color=\clr]
        \draw[-,thick] (1,1) to[out=down, in=right] (0.53,0.5) to[out=left, in=right] (0.47,0.5);
        \draw[-,thick] (0.49,0.5) to[out=left,in=down] (0,1);
\end{tikzpicture}
}
 \providecommand{\og}{``}
\providecommand{\fg}{''} \providecommand{\smfandname}{and}
\def\crulefill{\leavevmode\leaders\hrule height 1pt\hfill\kern 0pt}
\long\def\QUERY#1{%
\leavevmode\newline%
\noindent$\star\star\star$\thinspace\textsf{Comment/Query}\crulefill\newline%
   \space #1\newline\hbox to 120mm{\crulefill}$\star\star\star$\newline}
\newtheorem{Theorem}{Theorem}[section]
\newtheorem{Lemma}[Theorem]{Lemma}
\newtheorem{Cor}[Theorem]{Corollary}
\newtheorem{Prop}[Theorem]{Proposition}
\theoremstyle{definition}
\newtheorem{example}[Theorem]{Example}
\newtheorem{Defn}[Theorem]{Definition}
\newtheorem{Assumption}[Theorem]{Assumption}
\numberwithin{equation}{section}
\theoremstyle{definition}
\def\enumerate{\begingroup\ifnum\@enumdepth>3\@toodeep\else
      \advance\@enumdepth\@ne
      \edef\@enumctr{enum\romannumeral\the\@enumdepth}%
      \topsep\z@\parskip\z@
      \list{\csname label\@enumctr\endcsname}
        {\@nmbrlisttrue\let\@listctr\@enumctr
         \parsep\z@\itemsep\z@\topsep\z@
         \setcounter{\@enumctr}{0}
         \def\makelabel##1{\hss\llap{\rm ##1}}
       }\fi}
\let\bar=\overline
\let\epsilon=\varepsilon
\def\({\big(}
\def\){\big)}
\def\0{\underline{0}}
\DeclareMathOperator{\End}{End}
\def\Hom{\text{Hom}}
\def\U{\mathbf U}
  \gdef\set#1{\mathinner{\lbrace\,{\mathcode`\|"8000%
                                   \let|\midvert #1}\,\rbrace}}
  \gdef\seT#1{\mathinner{\Big\lbrace\,{\mathcode`\|"8000%
                                   \let|\midverT #1}\,\Big\rbrace}}
\def\midvert{\egroup\mid\bgroup}
\def\midverT{\egroup\,\Big|\,\bgroup}
\def\Set[#1]#2|#3|{\Big\{\ #2\ \Big| \
           \vcenter{\hsize #1mm\centering #3}\Big\}}
\def\qed{\hfill\mbox{$\Box$}}
\def\Hom{{\rm Hom}}
\def\mfg{{\mathfrak g}}
\def\Set{{\rm Set}}
\newcommand{\C}{\mathcal{C}}
\def\Hom{\text{Hom}}%
\def\U{\mathbf U}%
\def\textsf#1{{\textit{#1}}}%
\definecolor{white}{HTML}{FFFFFF}
\definecolor{darkblue}{HTML}{111199}
\definecolor{darkgreen}{HTML}{336633}
\definecolor{darkred}{HTML}{993333}
\definecolor{darkpurple}{HTML}{995599}
\newcommand{\FT}{\mathcal{FT}}
\begin{document}
\baselineskip15pt
\title[{\tiny  The affine  Kauffmann  category and the   cyclotomic Kauffmann category}]{A basis theorem for the affine  Kauffmann  category and its    cyclotomic quotients}
\author{Mengmeng Gao, Hebing Rui, Linliang Song}
\address{M.G.  School of Mathematical Science, Tongji University,  Shanghai, 200092, China}\email{g19920119@163.com}
\address{H.R.  School of Mathematical Science, Tongji University,  Shanghai, 200092, China}\email{hbrui@tongji.edu.cn}
\address{L.S.  School of Mathematical Science, Tongji University,  Shanghai, 200092, China}\email{llsong@tongji.edu.cn}
\subjclass[2010]{17B10, 18D10, 33D80}
\keywords{Affine   Kauffmann  category,  cyclotomic Kauffmann category, quantum group of type $B/C/D$, basis theorem}
\thanks{ M. Gao and H. Rui is supported  partially by NSFC (grant No.  11971351).  L. Song is supported  partially by NSFC (grant No.  11501368). }
\sloppy
\maketitle

\begin{abstract}The  affine Kauffmann  category is a strict monoidal category and   can be considered as a  $q$-analogue of the affine Brauer category in (Rui et al. in Math. Zeit. 293, 503-550,  2019). In this paper, we prove a basis theorem for the morphism  spaces  in the affine Kauffmann  category. The cyclotomic Kauffmann category is a quotient category of the affine Kauffmann category. We also prove that any morphism space in this category   is free over an integral domain
 $\mathbb K$ with maximal rank if and only if the $\mathbf u$-admissible condition holds in the sense of Definition~\ref{uad}.
\end{abstract}

\maketitle
\section{Introduction}
In the last several decades, one of important developments in representation theory  is a categorification of modules of Lie algebras or quantum groups.
For example,  Ariki's categorification theorem\cite{Ari} establishes relationships between  various  important stories (blocks, irreducible modules, projective indecomposable modules, etc) in the representation theory of cyclotomic Hecke algebras and important invariants (weight spaces, dual canonical basis, canonical basis, etc) of certain irreducible modules of  Lie algebras $\mathfrak {sl}_\infty$ or $\widehat{\mathfrak{sl}}_p$.
In \cite{BCNR}, Brundan et.~al  introduced the affine oriented Brauer category and the cyclotomic oriented Brauer category. A special case of the cyclotomic oriented Brauer category  is the  oriented Brauer category. Associated to it, there is a locally unital and locally finite dimensional algebra.
Reynolds\cite{Re}  constructed a categorical action of the Lie algebra $\mathfrak {sl}_\infty$ or $\widehat{\mathfrak{sl}}_p$ on the  Grothendick group of certain module category   for this locally unital and locally finite dimensional algebra.
Since oriented Brauer category is the category version of the walled Brauer algebra\cite{Kok},  Reynolds' result can be considered as a categorification related to the walled Brauer algebra. See  also  \cite{Br} for the oriented skein category which is the category version of the quantized  walled Brauer algebra\cite{Led}.

The paper is a part  of our project for studying  categorifications related to finite dimensional  algebras arising from Schur-Weyl dualities in types $B,C$ and $D$.
In \cite{RS3}, two of authors introduced the affine Brauer category and the cyclotomic Brauer category. The  affine Nazarov-Wenzl algebras~\cite{Na}   and the  cyclotomic Nazarov-Wenzl algebras~\cite{AMR} appear   as endomorphism algebras of objects in these categories,  respectively. Furthermore, they establish a higher Schur-Weyl duality between cyclotomic Brauer categories and BGG parabolic category $\mathcal O$ in types $B, C$ and $D$. This enables them  to  compute decomposition matrices of the cyclotomic Nazarov-Wenzl algebra over the complex field $\mathbb C$.
A special case of the cyclotomic Brauer category is the Brauer category\cite{LZ}(whose additive Karoubi envelope is the Deligne category $\underline{Rep~ } O_{\omega_0}$\cite{De1, De2}).
 For the Brauer category, further results have been obtained.  More explicitly, using the representation theory of Brauer category, they~\cite{RS4}  gave a categorification related to the Brauer algebra in \cite{B}. In this picture, certain modules of coideal algebras in \cite{Bao} come into picture. Thanks to  certain  exact truncation functors,   representations of  Brauer algebras can be reflected in the representations of the Brauer category.

Throughout this paper,  $\mathbb K$ is an integral domain. We are going to introduce the affine Kauffmann category $\AB$. It is
   a $\mathbb K$-linear  strict monoidal category generated by a single object \begin{tikzpicture}[baseline = 1.5mm]
	\draw[-,thick,darkblue] (0.18,0) to (0.18,.4);
\end{tikzpicture}. The category $\AB$ comes equipped with infinitely many algebraically independent  parameters $\Delta_1,\Delta_2,\ldots$. On evaluating these
parameters at scalars in $\mathbb K$, we obtain specialization $\AB(\omega)$ and
 introduce the cyclotomic Kauffmann category $\CB^f$ associated to a monic polynomial $f(t)\in \mathbb K[t]$.  After we establish
relationships between $\AB$ (resp.,   $\CB^f$) and   the category of endofunctors of the module category (resp.,   parabolic BGG category $\mathcal O$) associated to the quantum symplectic  groups and quantum orthogonal groups, we are able to   prove that any  morphism  space  in $\AB$ is free  over  $\mathbb K$ and  the affine Birman-Murakami-Wenzl algebra~\cite{Good} appears as an endomorphism algebra $\End_{\AB(\omega)}(
\begin{tikzpicture}[baseline = 10pt, scale=0.5, color=\clr]
                \draw[-,thick] (0,0.5)to[out=up,in=down](0,1.2);
                    \end{tikzpicture}^{\otimes r})$ for some positive integer $r$.
We also prove that any morphism space  in  $\CB^f$ is free over $\mathbb K$ with maximal rank if and only if the $\mathbf u$-admissible condition holds in the sense of Definition~\ref{uad}. In this case, the  cyclotomic Birman-Murakami-Wenzl algebra~\cite{BW, Olden} appears  as an  endomorphism algebra $\End_{\CB^f}(
\begin{tikzpicture}[baseline = 10pt, scale=0.5, color=\clr]
                \draw[-,thick] (0,0.5)to[out=up,in=down](0,1.2);
                    \end{tikzpicture}^{\otimes r})$ for some positive integer $r$.

                    In a subsequent work, we will investigate the   representations of the cyclotomic Kauffmann category and the cyclotomic Brauer category.
     Based on previous observations, we believe that representations of the cyclotomic Birman-Murakami-Wenzl algebra (resp., cyclotomic Nazarov-Wenzl algebra) can be reflected in the  representation theory of cyclotomic Kauffmann category (resp., cyclotomic Brauer category).

    In the remaining part of this section, we are going to introduce these categories and formulate our main results precisely.

We start by recalling the definition of the  category $\FT$ of framed tangles. It   is the  $\mathbb K$-linear strict monoidal category
  generated by a single  object
 \begin{tikzpicture}[baseline = 10pt, scale=0.5, color=\clr]
                \draw[-,thick] (0,0.5)to[out=up,in=down](0,1.2);
    \end{tikzpicture} (see e.g.,~\cite{VT}).   Thus,  the set of    objects in  $\FT$ is $\{\ob m
 \mid m\in \mathbb N\}$, where $\ob m$ represents
$ \begin{tikzpicture}[baseline = 10pt, scale=0.5, color=\clr]
                \draw[-,thick] (0,0.5)to[out=up,in=down](0,1.2);
                    \end{tikzpicture}^{\otimes m}$, and    $\ob 0$ represents the unit object.
                        For any  objects $\ob m$ and $\ob s$,     morphisms $f: \ob m \rightarrow \ob s$ are isotropy classes of
    framed tangles in $[0, 1]\times [0, 1]\times \mathbb R$ in $3$-dimensional real space $\mathbb R^3$ with boundary
    $$\left\{ (1-\frac{i}{m+1}, 0, 0)\mid i=1, 2, \ldots, m\right\}
    \cup \left\{(1- \frac{j}{s+1}, 1, 0)\mid j=1, 2, \ldots, s\right\}.$$
    Such tangles will be drawn by projecting them onto the $xy$-plane, and there are neither  triple intersections nor tangencies. Further, any crossing of a tangle will be recorded as either over crossing or under crossing. The resulting diagrams are called  $(m,s)$-tangle diagrams.
Isotropy translates into the equivalence relation on diagrams generated by planar isotropy fixing the boundary together with the Reidemeister Moves of types (RI)-(RIII) as follows:

 $$\begin{tikzpicture}[baseline = -5mm]
 \draw[-,thick,darkblue] (0,0.5) to (0,0.3);
 \draw[-,thick,darkblue] (0.5,0) to [out=90,in=0](.3,0.2);
 \draw[-,thick,darkblue] (0,-0.3) to (0,-0.6);
 \draw[-,thick,darkblue] (0.3,-0.2) to [out=0,in=-90](.5,0);
 \draw[-,thick,darkblue] (0,0.3) to [out=-90,in=180] (.3,-0.2);
 \draw[-,line width=4pt,white] (0.3,.2) to [out=180,in=90](0,-0.3);
 \draw[-,thick,darkblue] (0.3,.2) to [out=180,in=90](0,-0.3);
 \draw[-,thick,darkblue] (0.5,-0.9) to [out=90,in=0](.3,-0.7);
 \draw[-,thick,darkblue] (0,-1.2) to (0,-1.4);
 \draw[-,thick,darkblue] (0.3,-1.1) to [out=0,in=-90](.5,-0.9);
    \draw[-,thick,darkblue] (0.3,-0.7) to [out=180,in=90](0,-1.2);
    \draw[-,line width=4pt,white] (0,-0.6) to [out=-90,in=180](0.3,-1.1);
    \draw[-,thick,darkblue] (0,-0.6) to [out=-90,in=180] (.3,-1.1);
   \node at (0.8,-.4) {$\text {=}$};
 \draw[-,thick,darkblue] (1.2,0.5) to (1.2,-1.2);
 \node at (0.8,-2.) {$\text {(RI)}$};
\end{tikzpicture},\qquad\mathord{
\begin{tikzpicture}[baseline = -1mm]
	\draw[-,thick,darkblue] (0.28,-.6) to[out=90,in=-90] (-0.28,0);
	\draw[-,thick,darkblue] (-0.28,0) to[out=90,in=-90] (0.28,.6);
	\draw[-,line width=4pt,white] (-0.28,-.6) to[out=90,in=-90] (0.28,0);
	\draw[-,thick,darkblue] (-0.28,-.6) to[out=90,in=-90] (0.28,0);
	\draw[-,line width=4pt,white] (0.28,0) to[out=90,in=-90] (-0.28,.6);
	\draw[-,thick,darkblue] (0.28,0) to[out=90,in=-90] (-0.28,.6);
\end{tikzpicture}
}=
\mathord{
\begin{tikzpicture}[baseline = -1mm]
	\draw[-,thick,darkblue] (0.18,-.6) to (0.18,.6);
	\draw[-,thick,darkblue] (-0.18,-.6) to (-0.18,.6);
 \node at (0.,-1.6) {$\text {(RII)}$};
\end{tikzpicture}
}\:=
\mathord{
\begin{tikzpicture}[baseline = -1mm]
	\draw[-,thick,darkblue] (0.28,0) to[out=90,in=-90] (-0.28,.6);
	\draw[-,line width=4pt,white] (-0.28,0) to[out=90,in=-90] (0.28,.6);
	\draw[-,thick,darkblue] (-0.28,0) to[out=90,in=-90] (0.28,.6);
	\draw[-,thick,darkblue] (-0.28,-.6) to[out=90,in=-90] (0.28,0);
	\draw[-,line width=4pt,white] (0.28,-.6) to[out=90,in=-90] (-0.28,0);
	\draw[-,thick,darkblue] (0.28,-.6) to[out=90,in=-90] (-0.28,0);
\end{tikzpicture}},\qquad\mathord{
\begin{tikzpicture}[baseline = -1mm]
	\draw[-,thick,darkblue] (0.45,-.6) to (-0.45,.6);
        \draw[-,thick,darkblue] (0,-.6) to[out=90,in=-90] (-.45,0);
        \draw[-,line width=4pt,white] (-0.45,0) to[out=90,in=-90] (0,0.6);
        \draw[-,thick,darkblue] (-0.45,0) to[out=90,in=-90] (0,0.6);
	\draw[-,line width=4pt,white] (0.45,.6) to (-0.45,-.6);
	\draw[-,thick,darkblue] (0.45,.6) to (-0.45,-.6);
\end{tikzpicture}
}
\begin{tikzpicture}[baseline = -1mm]
	 \node at (0.,0) {$\text {=}$};
 \node at (0.,-1.6) {$\text {(RIII)}$};
\end{tikzpicture}
\mathord{
\begin{tikzpicture}[baseline = -1mm]
	\draw[-,thick,darkblue] (0.45,-.6) to (-0.45,.6);
        \draw[-,line width=4pt,white] (0,-.6) to[out=90,in=-90] (.45,0);
        \draw[-,thick,darkblue] (0,-.6) to[out=90,in=-90] (.45,0);
        \draw[-,thick,darkblue] (0.45,0) to[out=90,in=-90] (0,0.6);
	\draw[-,line width=4pt,white] (0.45,.6) to (-0.45,-.6);
	\draw[-,thick,darkblue] (0.45,.6) to (-0.45,-.6);
\end{tikzpicture}}.$$
Tensor product of morphisms  is given by horizontal  concatenation and  composition of morphisms is given by vertical stacking (in a strict monoidal category).
For example,
$$ \label{com1}
      g\circ h= \begin{tikzpicture}[baseline = 19pt,scale=0.5,color=\clr,inner sep=0pt, minimum width=11pt]
        \draw[-,thick] (0,0) to (0,3);
        \draw (0,2.2) node[circle,draw,thick,fill=white]{$g$};
        \draw (0,0.8) node[circle,draw,thick,fill=white]{$h$};
            \end{tikzpicture}
    ~,~ \ \ \ \ \ g\otimes h=\begin{tikzpicture}[baseline = 19pt,scale=0.5,color=\clr,inner sep=0pt, minimum width=11pt]
        \draw[-,thick] (0,0) to (0,3);
        \draw[-,thick] (2,0) to (2,3);
        \draw (0,1.5) node[circle,draw,thick,fill=white]{$g$};
        \draw (2,1.5) node[circle,draw,thick,fill=white]{$h$};
            \end{tikzpicture}~.
$$

Suppose  $\mathbb K$ contains  $\delta, \delta^{-1}, z, \omega_0$. In this paper, we always assume
\begin{equation}\label{para1}
\delta-\delta^{-1}=z(\omega_0-1).
\end{equation}
 Let $I$ be  the tensor ideal of $\FT$, which is uniquely determined by Kauffmann skein relation (S), twisting relation (T) and free loop relation (L) as follows:

 $$
\mathord{
\begin{tikzpicture}[baseline = 2.5mm]
	\draw[-,thick,darkblue] (0.28,0) to[out=90,in=-90] (-0.28,.6);
	\draw[-,line width=4pt,white] (-0.28,0) to[out=90,in=-90] (0.28,.6);
	\draw[-,thick,darkblue] (-0.28,0) to[out=90,in=-90] (0.28,.6);
\end{tikzpicture}
}-\mathord{
\begin{tikzpicture}[baseline = 2.5mm]
	\draw[-,thick,darkblue] (-0.28,-.0) to[out=90,in=-90] (0.28,0.6);
	\draw[-,line width=4pt,white] (0.28,-.0) to[out=90,in=-90] (-0.28,0.6);
	\draw[-,thick,darkblue] (0.28,-.0) to[out=90,in=-90] (-0.28,0.6);
\end{tikzpicture}
}\begin{tikzpicture}[baseline = -1mm]
	 \node at (0.,0) {$\text {=}$};
 \node at (0.,-1.) {$\text {(S)}$};
\end{tikzpicture}
z(\:\mathord{
\begin{tikzpicture}[baseline = 2.5mm]
	\draw[-,thick,darkblue] (0.18,0) to (0.18,.6);
	\draw[-,thick,darkblue] (-0.18,0) to (-0.18,.6);
\end{tikzpicture}}-\mathord{
\begin{tikzpicture}[baseline = 2.5mm]\draw[-,thick,darkblue] (0,0.6) to[out=down,in=left] (0.28,0.35) to[out=right,in=down] (0.56,0.6);
  \draw[-,thick,darkblue] (0,0) to[out=up,in=left] (0.28,0.25) to[out=right,in=up] (0.56,0);
         \end{tikzpicture}
}),\qquad\quad
\mathord{
\begin{tikzpicture}[baseline = -0.5mm]
	\draw[-,thick,darkblue] (0,0.6) to (0,0.3);
	\draw[-,thick,darkblue] (0.5,0) to [out=90,in=0](.3,0.2);
	\draw[-,thick,darkblue] (0,-0.3) to (0,-0.6);
	\draw[-,thick,darkblue] (0.3,-0.2) to [out=0,in=-90](.5,0);
	\draw[-,thick,darkblue] (0,0.3) to [out=-90,in=180] (.3,-0.2);
	\draw[-,line width=4pt,white] (0.3,.2) to [out=180,in=90](0,-0.3);
	\draw[-,thick,darkblue] (0.3,.2) to [out=180,in=90](0,-0.3);
\end{tikzpicture}
}
\begin{tikzpicture}[baseline = -1mm]
	 \node at (0.,0) {$\text {=}$};
 \node at (0.,-1.) {$\text {(T)}$};
\end{tikzpicture}
\delta\:\mathord{
\begin{tikzpicture}[baseline = -0.5mm]
	\draw[-,thick,darkblue] (0,0.6) to (0,-0.6);
\end{tikzpicture}
}\;,\qquad\quad
\mathord{
\begin{tikzpicture}[baseline = -0.5mm]
\draw[-,thick,darkblue] (0,0) to[out=down,in=left] (0.25,-0.25) to[out=right,in=down] (0.5,0);
  \draw[-,thick,darkblue] (0,0) to[out=up,in=left] (0.25,0.25) to[out=right,in=up] (0.5,0);
  \end{tikzpicture}}\begin{tikzpicture}[baseline = -1mm]
	 \node at (0.,0) {$\text {=}$};
 \node at (0.,-1.) {$\text {(L)}$};
\end{tikzpicture}\omega_01_{\ob 0} \,.
$$

\noindent The Kauffmann category $\B$ is the quotient category $\FT/I$ ~\cite{VT}.
In order to simplify notation,  $\lcup, \lcap, \begin{tikzpicture}[baseline = 2.5mm]
	\draw[-,thick,darkblue] (0.28,0) to[out=90,in=-90] (-0.28,.6);
	\draw[-,line width=4pt,white] (-0.28,0) to[out=90,in=-90] (0.28,.6);
	\draw[-,thick,darkblue] (-0.28,0) to[out=90,in=-90] (0.28,.6);
\end{tikzpicture}$ and $\begin{tikzpicture}[baseline = 2.5mm]
	\draw[-,thick,darkblue] (-0.28,-.0) to[out=90,in=-90] (0.28,0.6);
	\draw[-,line width=4pt,white] (0.28,-.0) to[out=90,in=-90] (-0.28,0.6);
	\draw[-,thick,darkblue] (0.28,-.0) to[out=90,in=-90] (-0.28,0.6);
\end{tikzpicture}$ will be denoted by  $U, A, T$ and $T^{-1}$, respectively.
The following result  gives the presentation of  the Kauffmann category $\B$.

\begin{Theorem}\cite{VT}\label{present1} The Kauffmann category $\B$ is the strict $\mathbb K$-linear monoidal category generated by a single object  \begin{tikzpicture}[baseline = 1.5mm]
	\draw[-,thick,darkblue] (0.18,0) to (0.18,.4);
\end{tikzpicture}  and four elementary morphisms  $U, A, T$ and $T^{-1} $ subject to the relations as follows:
\begin{itemize}\item[(1)]\label{K1}$(1_\ob1\otimes A)\circ (T\otimes 1_\ob1)\circ (1_\ob1\otimes U)\circ(1_\ob1\otimes A)\circ (T^{-1}\otimes 1_\ob1)\circ (1_\ob1\otimes U)= 1_\ob1$,
\item[(2)]\label{K2} $T\circ T^{-1}=T^{-1}\circ T=1_\ob2$,
 \item[(3)] \label{K3}$(T\otimes 1_\ob1)\circ (1_\ob1\otimes T) \circ (1_\ob1\otimes T)=(1_\ob1\otimes T) \circ (1_\ob1\otimes T)\circ (T\otimes 1_\ob1)$,
     \item[(4)]\label{K4} $T-T^{-1}=z(1_\ob2-U\circ A)$,
 \item[(5)]\label{K5} $(1_\ob1\otimes A)\circ (T\otimes 1_\ob1)\circ (1_\ob1\otimes U)=\delta1_\ob1$,
 \item[(6)]\label{K6} $A\circ U=\omega_01_{\ob 0}$,
 \item[(7)]\label{K7}$(1_\ob1\otimes A)\circ (U\otimes 1_\ob1)=1_\ob1=(A\otimes 1_\ob1)\circ (1_\ob1\otimes U)$,
 \item[(8)]\label{K8}$T=(A\otimes 1_\ob2 ) \circ (1_\ob1\otimes T^{-1}\otimes 1_\ob1)\circ (1_\ob2\otimes U)$,
 \item[(9)]\label{K9} $T^{-1} =(A\otimes 1_\ob2 ) \circ (1_\ob1\otimes T\otimes 1_\ob1)\circ (1_\ob2\otimes U)$.
 \end{itemize}

 \end{Theorem}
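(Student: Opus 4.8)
The plan is to prove that the strict monoidal category $\B'$ \emph{defined} by the generators $U,A,T,T^{-1}$ and relations (1)--(9) is isomorphic to $\B=\FT/I$. Since $\B$ satisfies all of (1)--(9), there is a canonical strict $\mathbb K$-linear monoidal functor $\Phi: \B'\to\B$, the identity on objects, sending each generator to the corresponding elementary tangle diagram; the only thing to observe for well-definedness is that (1)--(9) really do hold in $\B$, and each has an immediate topological reading: (2) and (3) are the Reidemeister moves (RII) and (RIII); (1) is the framed move (RI); (7) is the snake (zig-zag) identity forced by the boundary data; (8)--(9) are the planar isotopies sliding a crossing around a cap--cup turnback; and these six already hold in $\FT$. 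The remaining three, (4), (5), (6), are exactly the defining relations (S), (T), (L) of the tensor ideal $I$. As $\Phi$ is bijective on objects, the theorem reduces to showing $\Phi$ is full and faithful.

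Fullness is routine: a generic projection of an $(m,s)$-framed tangle, sliced along a Morse height function, displays it as a vertical composite of horizontal tensor products of the pieces $1_\ob1,U,A,T,T^{-1}$; hence these generate $\FT$, and therefore also the quotient $\B$, as a strict monoidal category, so every morphism of $\B$ lies in the image of $\Phi$.

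Faithfulness is the crux, and here I would split the argument along the same line. First, it is known that $\FT$ is presented by the generators $U,A,T,T^{-1}$ together with just the parameter-free relations among (1)--(9), namely (1), (2), (3), (7), (8), (9) (and the axioms of a strict monoidal category); this is the standard presentation of the framed-tangle category. Granting it, the tensor ideal $I$ is by construction generated by the three morphisms $T-T^{-1}-z(1_\ob2-U\circ A)$, $(1_\ob1\otimes A)\circ(T\otimes 1_\ob1)\circ(1_\ob1\otimes U)-\delta1_\ob1$ and $A\circ U-\omega_0 1_{\ob 0}$, so passing to $\FT/I$ is the same as adjoining relations (4), (5), (6) to the presentation of $\FT$ --- giving exactly the presentation claimed for $\B$. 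A more self-contained alternative is a straightening argument inside $\B'$: relation (4) pushes all crossings into a fixed reduced word for a permutation; relations (1), (5) remove curls at the cost of powers of $\delta^{\pm1}$; relation (6) removes closed components at the cost of $\omega_0$; and relations (7), (8), (9) supply the remaining planar isotopies. This shows $\Hom_{\B'}(\ob m,\ob s)$ is spanned by the classes of the $(m+s-1)!!$ standard Brauer (perfect matching) diagrams, which $\Phi$ maps onto the classical Birman--Wenzl--Murakami basis of $\Hom_{\B}(\ob m,\ob s)$; so $\Phi$ is injective on each morphism space.

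The main obstacle is exactly this faithfulness step. Taking the first route, the real work is hidden in the (classical but genuinely non-trivial) presentation of the framed-tangle category $\FT$, after which the passage to the quotient is purely formal; taking the second, one must carry out the full bookkeeping of the straightening algorithm --- checking termination and that relations (1)--(9) really account for every relation among crossings and cap--cup turnbacks, so that nothing beyond the $(m+s-1)!!$ Brauer diagrams survives.
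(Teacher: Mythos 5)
The paper gives no proof of this statement at all: it is quoted verbatim from Turaev, with the citation \cite{VT} in the theorem header, and the authors never reprove it. So there is no ``paper's proof'' to compare your attempt against, only the literature.

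That said, your outline is sound and is essentially the argument one finds in Turaev. The split of the nine relations into the six parameter-free ones holding already in $\FT$ --- (1), (2), (3), (7), (8), (9) --- and the three (4), (5), (6) that generate the tensor ideal $I$ is exactly right, and the formal observation that adjoining relations corresponding to generators of a tensor ideal presents the quotient is correct. Fullness via a generic Morse slicing is standard. For faithfulness, your two routes are both classical and both live in the same neighbourhood of the literature: route one (cite the presentation of $\FT$) and route two (straighten to the $(m+s-1)!!$ reduced totally descending diagrams and compare with the target basis) are really the same argument at different levels of unpacking, since the hard part of presenting $\FT$ \emph{is} the straightening. Note that your route two quietly invokes Theorem~\ref{bask} --- that $\Hom_\B(\ob m,\ob s)$ has rank exactly $(m+s-1)!!$ --- which the paper also cites to Turaev without proof, so you are consistent with the paper's level of detail. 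You correctly flag the faithfulness step as the genuine content; a complete proof would have to either reproduce Turaev's presentation of $\FT$ or carry out the termination and confluence bookkeeping of the straightening, and your writeup is honest that neither is done here. As an outline it is correct and appropriately attributes the real work to the right place.
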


 Theorem~\ref{present1}(1)-(3) correspond to (RI)-(RIII),   and Theorem~\ref{present1}(4)-(6) correspond to (S)-(L). Further,  Theorem~\ref{present1}(7)-(9)  are depicted as \eqref{relation 1}-\eqref{relation 3} as follows:
\begin{equation}\label{relation 1}
\begin{tikzpicture}[baseline = -0.5mm]
\draw[-,thick,darkblue] (0,0) to (0,0.3);
\draw[-,thick,darkblue] (0,0) to[out=down,in=left] (0.25,-0.25) to[out=right,in=down] (0.5,0);
\draw[-,thick,darkblue] (0.5,0) to[out=up,in=left] (0.75,0.25) to[out=right,in=up] (1,0);
\draw[-,thick,darkblue] (0,0) to (0,0.3);
\draw[-,thick,darkblue] (1,0) to (1,-0.3);
  \end{tikzpicture}
  ~=~
  \begin{tikzpicture}[baseline = -0.5mm]
  \draw[-,thick,darkblue] (0,-0.25) to (0,0.25);
  \end{tikzpicture}
  ~=~\begin{tikzpicture}[baseline = -0.5mm]
\draw[-,thick,darkblue] (0.5,0) to[out=down,in=left] (0.75,-0.25) to[out=right,in=down] (1,0);
\draw[-,thick,darkblue] (0,0) to[out=up,in=left] (0.25,0.25) to[out=right,in=up] (0.5,0);
\draw[-,thick,darkblue] (1,0) to (1,0.3);
\draw[-,thick,darkblue] (0,0) to (0,-0.3);
  \end{tikzpicture},
\end{equation}
\begin{equation}\label{relation 2}
\begin{tikzpicture}[baseline = -0.5mm]
	\draw[-,thick,darkblue] (0.28,-0.3) to[out=90,in=-90] (-0.28,.3);
	\draw[-,line width=4pt,white] (-0.28,-0.3) to[out=90,in=-90] (0.28,.3);
	\draw[-,thick,darkblue] (-0.28,-.3) to[out=90,in=-90] (0.28,.3);
  \end{tikzpicture}
  ~=~\begin{tikzpicture}[baseline = -0.5mm]
\draw[-,thick,darkblue]  (0.75,-0.25) to[out=right,in=down] (1,0);
\draw[-,thick,darkblue] (0,0) to[out=up,in=left] (0.25,0.25) ;
\draw[-,thick,darkblue] (1,0) to (1,0.25);
\draw[-,thick,darkblue] (0,0) to (0,-0.25);
\draw[-,thick,darkblue] (0.25,-0.25) to[out=90,in=-90] (0.75,0.25);
\draw[-,line width=4pt,white] (0.25,0.25) to[out=90,in=-90] (0.75,-0.25);
\draw[-,thick,darkblue] (0.25,0.25) to[out=0,in=180] (0.75,-0.25);
  \end{tikzpicture},
\end{equation}
\begin{equation}\label{relation 3}
\begin{tikzpicture}[baseline = -0.5mm]
\draw[-,thick,darkblue] (-0.28,-.3) to[out=90,in=-90] (0.28,.3);
\draw[-,line width=4pt,white] (0.28,-0.3) to[out=90,in=-90] (-0.28,.3);
\draw[-,thick,darkblue] (0.28,-0.3) to[out=90,in=-90] (-0.28,.3);
  \end{tikzpicture}
  ~=~\begin{tikzpicture}[baseline = -0.5mm]
\draw[-,thick,darkblue]  (0.75,-0.25) to[out=right,in=down] (1,0);
\draw[-,thick,darkblue] (0,0) to[out=up,in=left] (0.25,0.25) ;
\draw[-,thick,darkblue] (1,0) to (1,0.25);
\draw[-,thick,darkblue] (0,0) to (0,-0.25);
\draw[-,thick,darkblue] (0.25,0.25) to[out=0,in=180] (0.75,-0.25);
\draw[-,line width=4pt,white] (0.25,-0.25) to[out=90,in=-90] (0.75,0.25);
\draw[-,thick,darkblue] (0.25,-0.25) to[out=90,in=-90] (0.75,0.25);
  \end{tikzpicture}.
\end{equation}

We are going to state Turaev's result\cite{VT} on bases of morphism spaces in $\B$.
Suppose $ m+s$ is even.
Following \cite{Good},  endpoints at   bottom (resp., top) row  of  an  $(m, s)$-tangle diagram $d$ are labelled by  $1, 2, \ldots, m$
(resp., $\bar 1, \bar 2,   \ldots,  \bar s$) from  left to right.  Then
  $d$ decomposes $\{1, 2, \ldots, m, \bar s, \ldots, \bar 2, \bar 1 \}$ into  $\frac{m+s}{2}$  pairs $\text{conn}(d)=\{(i_k, j_k)\mid 1\le k\le \frac{m+s}{2}\}$, called the $(m, s)$-connector of $d$.
   Let $conn(m, s)$ be the set of all $(m, s)$-connectors. Later on, we always assume that  $i< i+1 < \bar j <\bar {j-1}$ for all admissible $i$ and $j$, and  $i_k<j_k$  and $i_k<i_l$ whenever  $k<l$.
A strand connecting
a pair on different rows (resp., the same row) is called a vertical (resp., horizontal) strand.
Moreover, a horizontal strand connecting a pair on the top (resp., bottom) row is also
called a cup (resp., cap). For example, $U$ is a cup and $A$ is a cap.

 Motivated by \cite[Definition~5.4]{Good}, an $(m, s)$-tangle diagram  is said to be totally descending if it can be traversed successively such that    $(i_k, j_k)$ passes  over  $(i_l, j_l)$ whenever  $k<l$ and $(i_k, j_k)$ crosses $(i_l, j_l)$ and if neither a strand crosses  itself nor there is  a loop.
 It is well-known (e.g. \cite[Proposition~5.7]{Good}) that \begin{equation}\label{bbb} d=e \text{ as morphisms in $\FT$}, \text{if $d, e$ are totally descending and $conn(d)=conn(e)$}.\end{equation} An $(m, s)$-totally descending tangle diagram is said to be reduced, if two strands cross each other at most once. Any $(m, s)$-connector determines many reduced totally descending tangle diagrams. Thanks to \eqref{bbb}, it is reasonable   to  denote one of such reduced totally descending tangle diagrams by $D_c$ for any connector $c$.

  \begin{Theorem}\label{bask} \cite{VT} Suppose  $m, s\in \mathbb N$.
  \begin{itemize}\item[(1)] If $  m+s$ is odd, then $\Hom_{\B}(\ob m, \ob s)=0$.
    \item[(2)]  If $ m+s$ is even, then  $\Hom_{\B}(\ob m, \ob s)$ has $\mathbb K$-basis given by  $\{D_c\mid c\in conn(m, s) \}$. In particular, $\Hom_\B (\ob m, \ob s)$ is of rank  $(m+s-1)!!$.\end{itemize}  \end{Theorem}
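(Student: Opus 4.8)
Part (1) requires nothing: every generator $U,A,T,T^{-1}$ and every relation of Theorem~\ref{present1} changes the number of boundary points by an even integer, and this is preserved by $\otimes$ and $\circ$, so $\Hom_\B(\ob m,\ob s)=0$ when $m+s$ is odd. For (2) I would prove the statement first over the universal ground ring $R=\mathbb Z[z,\delta^{\pm1},\omega_0]/(\delta-\delta^{-1}-z(\omega_0-1))$ and then specialize. Indeed, the Hom-spaces of $\FT$ are free on isotopy classes of tangle diagrams, independently of the ground ring, and the tensor ideal $I$ is generated by relations with coefficients in $R$, so right-exactness of $-\otimes_R\mathbb K$ gives $\B_{\mathbb K}\cong\B_R\otimes_R\mathbb K$ and hence $\Hom_{\B_{\mathbb K}}(\ob m,\ob s)\cong\Hom_{\B_R}(\ob m,\ob s)\otimes_R\mathbb K$ for every admissible $\mathbb K$. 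Thus it is enough to show that $\Hom_{\B_R}(\ob m,\ob s)$ is a free $R$-module with basis $\{D_c\mid c\in conn(m,s)\}$.

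\emph{Spanning.} This step is diagrammatic and uses only the presentation. Each $D_c$ is well defined by \eqref{bbb}. I would reduce an arbitrary $(m,s)$-tangle diagram to an $R$-combination of the $D_c$ by a well-founded induction: planar isotopy and (RI)--(RIII) put it in generic position; the free loop relation (L) deletes a closed component at the cost of a factor $\omega_0$; the twisting relation (T) removes a curl at the cost of a factor $\delta^{\pm1}$; and the Kauffmann skein relation (S) replaces an order-violating crossing by the correctly ordered one (same number of crossings, one fewer order-violating crossing) plus $\pm z$ times two diagrams with strictly fewer crossings. Running this on the pair (number of crossings, number of order-violating crossings), with (L), (RI) and (RII) also used to strip loops and remove bigons, terminates in an $R$-combination of reduced totally descending diagrams, i.e.\ of the $D_c$. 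Taking $m=s=0$ in particular yields $\End_{\B_R}(\ob 0)=R\cdot 1_{\ob 0}$.

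\emph{A non-degenerate trace pairing.} Relations (7)--(9) make $\ob1$ self-dual, so $\B_R$ is pivotal; bending strands and closing up gives a bilinear pairing $\langle-,-\rangle\colon\Hom_{\B_R}(\ob m,\ob s)\times\Hom_{\B_R}(\ob s,\ob m)\to\End_{\B_R}(\ob 0)=R$ sending $(f,g)$ to the categorical trace of $g\circ f$. Let $G=(\langle D_c,D_{c'}\rangle)$ be its Gram matrix, indexed by $c\in conn(m,s)$ and $c'\in conn(s,m)$, two finite sets of equal cardinality. If $\det G\neq 0$ in $R$, then from any relation $\sum_c a_cD_c=0$ with $a_c\in R$ one obtains $G\cdot(a_c)^{\mathrm T}=0$ by pairing against each $D_{c'}$, hence $(a_c)=0$ over $\operatorname{Frac}(R)$ and therefore in $R$; together with spanning, $\{D_c\}$ is then an $R$-basis, of size $|conn(m,s)|=(m+s-1)!!$ (the number of perfect matchings on $m+s$ points). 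To prove $\det G\neq 0$, I would use the specialization $R\to\mathbb Q(\omega_0)$, $z\mapsto0$, $\delta\mapsto1$ (well defined, since the defining relation of $R$ becomes $0=0$): under it the presentation of Theorem~\ref{present1} degenerates to that of the Brauer category with loop parameter $\omega_0$ --- the skein relation (S) becomes $T=T^{-1}$ --- each $D_c$ becomes the Brauer diagram on the connector $c$, and $G$ becomes the Gram matrix of the Brauer pairing, which is non-singular over $\mathbb Q(\omega_0)$ (e.g.\ because the Brauer algebra is semisimple there; this is the classical Brauer basis theorem, cf.\ \cite{LZ,RS3}). Hence $\det G$ has nonzero image and so $\det G\neq0$ in $R$.

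\emph{Main obstacle.} Part (1), the spanning step, and the count are routine; the crux is the non-vanishing of $\det G$, i.e.\ linear independence, which the presentation cannot see and must be imported from a sufficiently faithful model. I have used the specialization to the Brauer category; alternatively one could feed in the monoidal functor from $\B$ to the category of $U_q(\mathfrak{so}_N)$- or $U_q(\mathfrak{sp}_N)$-modules (the generating object mapping to the natural module, with $N$ allowed to grow) together with a Zariski-density argument on the parameter surface. It is worth stressing that $\det G$ genuinely vanishes at some admissible specializations --- exactly those where the Brauer or Birman--Murakami--Wenzl algebra is non-semisimple while the diagram basis nonetheless survives --- which is precisely why the argument must be carried out over the universal ring $R$ rather than over the given $\mathbb K$ directly.
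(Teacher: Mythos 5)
The paper does not prove Theorem~\ref{bask} --- it imports it from Turaev \cite{VT} and uses it as a black box --- so there is no internal proof to compare against. Evaluating your proposal on its own terms: part (1), the reduction to the universal domain $R$, and the spanning step are fine in outline (though note that in a totally descending diagram, two strands that cross twice do so with the \emph{same} strand on top at both crossings, so removing the resulting bigon is not literally a single application of (RII); this deserves a sentence).

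The linear-independence step has a genuine gap, and it sits exactly where you say the crux is. You assert $\End_{\B_R}(\ob 0)=R$, but the spanning argument only delivers that $\End_{\B_R}(\ob 0)$ is \emph{generated} by $1_{\ob 0}$, i.e.\ is some cyclic quotient $R/J$. Your Gram matrix $G$ therefore lives over $R/J$, not over $R$, and the step ``$\det G\neq 0$ in $R$, hence $(a_c)=0$'' does not go through as written. The Brauer specialization $z\mapsto 0$, $\delta\mapsto 1$ does not repair this: even granting the Brauer basis theorem, applying the specialization to $\sum_c a_c D_c=0$ only tells you that each $a_c$ lies in $\ker(R\to\mathbb{Q}(\omega_0))=(z,\delta-1)R$, which is far from $a_c=0$; and in any case the same uncertainty about $\End(\ob 0)$ recurs on the Brauer side. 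What is actually missing is a faithful evaluation on closed diagrams: either Turaev's theorem that the Kauffman polynomial is a well-defined invariant of framed links, which supplies an $R$-linear functional on $\End_{\B_R}(\ob 0)$ sending $1_{\ob 0}\mapsto 1$ and hence forces $J=0$ (after which your Gram argument does close up), or a monoidal functor to $U_v(\mathfrak g)$-modules combined with a Zariski-density argument as $n$ grows. You mention the latter as an ``alternative,'' but it is not an alternative to the Brauer specialization --- it (or the link-invariant route) is the missing ingredient, and it is exactly the mechanism the paper itself uses to prove its own basis theorems in the affine and cyclotomic settings (Propositions~\ref{bmw1}--\ref{abmw1}, Theorems~\ref{aff} and~\ref{calz}), where a finite Gram matrix is unavailable.
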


Motivated by Theorem~\ref{bask} and our previous work on affine Brauer category in \cite{RS3}, we introduce affine Kauffmann category whose objects are same as those of $\B$. This is one of main objects in this paper.

\begin{Defn}\label{AK defn}
    The affine  Kauffmann category $\AB$ is the $\mathbb K$-linear strict monoidal category generated by a single   object $\ob1$ and six elementary  morphisms $U, A, T, T^{-1}$, and $X^{\pm 1}:\ob1\rightarrow\ob1$ subject to Theorem~\ref{present1}(1)-(9) together with
     the following relations:
 \begin{itemize} \item [(1)]  $X\circ X^{-1}=1_{\ob 1}=X^{-1}\circ X$,
 \item [(2)] $T\circ (X\otimes \text{1}_{\ob 1}) \circ T=\text{1}_{\ob 1}\otimes X$,
        \item [(3)] $A\circ (X\otimes\text{1}_{\ob 1})=A\circ (\text{1}_{\ob 1}\otimes X^{-1})$ and
        $(X\otimes\text{1}_{\ob 1})\circ U=(\text{1}_{\ob 1}\otimes X^{-1})\circ U$.
   \end{itemize}
   \end{Defn}

In this paper,   $X$ and  $X^{-1}$  are drawn  as
\begin{tikzpicture}[baseline=-.5mm]
\draw[-,thick,darkblue] (0,-.3) to (0,.3);
      \node at (0,0) {$\color{darkblue}\scriptstyle\bullet$};
      \end{tikzpicture} and \begin{tikzpicture}[baseline=-.5mm]
\draw[-,thick,darkblue] (0,-.3) to (0,.3);
      \node at (0,0) {$\color{darkblue}\scriptstyle\circ$};
      \end{tikzpicture}, respectively. So,   Definition~\ref{AK defn}(1)-(3) can be depicted as \eqref{relation 4}-\eqref{relation 6}
as follows:
\begin{equation}\label{relation 4}
\begin{tikzpicture}[baseline = -0.5mm]
\draw[-,thick,darkblue] (0,0) to (0,.6);
      \node at (0,0.3) {$\color{darkblue}\scriptstyle\bullet$};
      \draw[-,thick,darkblue] (0,0.2) to (0,-.4);
      \node at (0,-.1) {$\color{darkblue}\scriptstyle\circ$};
  \end{tikzpicture}
  ~=~
  \begin{tikzpicture}[baseline = -0.5mm]
  \draw[-,thick,darkblue] (0,-0.4) to (0,0.6);
  \end{tikzpicture}
  ~=~\begin{tikzpicture}[baseline = -0.5mm]
  \draw[-,thick,darkblue] (0,0) to (0,.6);
      \node at (0,0.3) {$\color{darkblue}\scriptstyle\circ$};
      \draw[-,thick,darkblue] (0,0.2) to (0,-.4);
      \node at (0,-.1) {$\color{darkblue}\scriptstyle\bullet$};
  \end{tikzpicture},
\end{equation}
\begin{equation}\label{relation 5}
\begin{tikzpicture}[baseline = -4mm]
	\draw[-,thick,darkblue] (0.28,-0.3) to[out=90,in=-90] (-0.28,.3);
	\draw[-,line width=4pt,white] (-0.28,-0.3) to[out=90,in=-90] (0.28,.3);
	\draw[-,thick,darkblue] (-0.28,-.3) to[out=90,in=-90] (0.28,.3);
\node at (-0.28,-.3) {$\color{darkblue}\scriptstyle\bullet$};
\draw[-,thick,darkblue] (0.28,-0.9) to[out=90,in=-90] (-0.28,-.3);
	\draw[-,line width=4pt,white] (-0.28,-0.9) to[out=90,in=-90] (0.28,-.3);
	\draw[-,thick,darkblue] (-0.28,-.9) to[out=90,in=-90] (0.28,-.3);
  \end{tikzpicture}
  ~=~\begin{tikzpicture}[baseline = -4mm]
 \draw[-,thick,darkblue] (0,0.3) to (0,-.9);
 \draw[-,thick,darkblue] (0.56,0.3) to (0.56,-.9);
 \node at (0.56,-.3) {$\color{darkblue}\scriptstyle\bullet$};
  \end{tikzpicture},
\end{equation}
\begin{equation}\label{relation 6}
\begin{tikzpicture}[baseline = -0.5mm]
  \draw[-,thick,darkblue] (0,0) to[out=up,in=left] (0.28,0.28) to[out=right,in=up] (0.56,0);
  \draw[-,thick,darkblue] (0,0) to (0,-.2);
 \node at (0,0) {$\color{darkblue}\scriptstyle\bullet$};
 \draw[-,thick,darkblue] (0.56,0) to (0.56,-.2);
  \end{tikzpicture}
  ~=~\:\begin{tikzpicture}[baseline = -0.5mm]
\draw[-,thick,darkblue] (0,0) to[out=up,in=left] (0.28,0.28) to[out=right,in=up] (0.56,0);
 \draw[-,thick,darkblue] (0.56,0) to (0.56,-.2);
 \draw[-,thick,darkblue] (0,0) to (0,-.2);
 \node at (0.56,0) {$\color{darkblue}\scriptstyle\circ$};
  \end{tikzpicture},\ \
 \begin{tikzpicture}[baseline = -0.5mm]
 \draw[-,thick,darkblue] (0,0) to[out=down,in=left] (0.28,-0.28) to[out=right,in=down] (0.56,0);
 \node at (0,0) {$\color{darkblue}\scriptstyle\bullet$};
  \draw[-,thick,darkblue] (0,0) to (0,.2);
   \draw[-,thick,darkblue] (0.56,0) to (0.56,.2);
 \end{tikzpicture}
 ~=~\:
 \begin{tikzpicture}[baseline = -0.5mm]
 \draw[-,thick,darkblue] (0,0) to[out=down,in=left] (0.28,-0.28) to[out=right,in=down] (0.56,0);
 \node at (0.56,0) {$\color{darkblue}\scriptstyle\circ$};
  \draw[-,thick,darkblue] (0,0) to (0,.2);
   \draw[-,thick,darkblue] (0.56,0) to (0.56,.2);
 \end{tikzpicture}.
\end{equation}

\begin{Lemma}\label{ktau}Suppose $\AB$ is the affine Kauffmann category. \begin{itemize}\item [(1)] There is a $\mathbb K$-linear  monoidal contravariant functor $\sigma: \AB\rightarrow \AB$
switching  A and U and fixing $T, T^{-1}, X, X^{-1}$.
\item [(2)] There is a monoidal functor from $\B$ to $ \AB$
sending the generators of $\B$ to the generators of $\AB$ with the same names.
\end{itemize}
\end{Lemma}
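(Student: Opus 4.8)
The plan is to use the presentations: $\AB$ is given by Definition~\ref{AK defn} and $\B$ by Theorem~\ref{present1}, and a $\mathbb K$-linear strict monoidal functor --- covariant or contravariant --- out of such a category is determined by, and may be built from, a choice of value on the single object and on each listed generating morphism, subject only to compatibility with the listed defining relations. Part~(2) then falls out immediately, and I would dispose of it first: the defining relations of $\B$ are Theorem~\ref{present1}(1)--(9), and these are literally among the defining relations of $\AB$, so sending $\ob1\mapsto\ob1$ and each of $U,A,T,T^{-1}$ to the like-named morphism of $\AB$ respects every relation of $\B$ and hence extends uniquely to a $\mathbb K$-linear strict monoidal functor $\B\to\AB$.

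For part~(1) I would work with the free $\mathbb K$-linear strict monoidal category $\mathcal F$ on the generators $U,A,T,T^{-1},X,X^{-1}$ and consider on it the $\mathbb K$-linear contravariant monoidal involution $\widehat\sigma$ which fixes $\ob1$, interchanges $A$ and $U$, fixes $T,T^{-1},X,X^{-1}$, preserves the order of tensor factors, and reverses composition. Writing $\AB=\mathcal F/J$, where $J$ is the tensor ideal generated by the relations of Definition~\ref{AK defn}, the involution $\widehat\sigma$ descends to a functor $\sigma\colon\AB\to\AB$ with exactly the asserted properties --- and, since $\widehat\sigma^2=\mathrm{id}$, automatically with $\sigma^2=\mathrm{id}$ --- as soon as $\widehat\sigma(J)\subseteq J$; and because $\widehat\sigma$ is a monoidal involution it suffices to check that $\widehat\sigma$ carries each defining relation of $\AB$ into $J$.

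The bulk of this is routine bookkeeping: applying $\widehat\sigma$ to a relation amounts to reversing every composite and interchanging $A$ with $U$, and one sees by inspection that each of Theorem~\ref{present1}(2)--(7) and Definition~\ref{AK defn}(1)--(3) is carried to one of the listed relations --- Theorem~\ref{present1}(3),(5) and Definition~\ref{AK defn}(2) are ``palindromic'' and are fixed, while for Theorem~\ref{present1}(7) and Definition~\ref{AK defn}(3) the two displayed equalities are interchanged, and likewise the two sides of the other relations. The only defining relations whose $\widehat\sigma$-images are not literally among the listed relations are the Reidemeister-I relation Theorem~\ref{present1}(1) and the two rotation relations Theorem~\ref{present1}(8),(9): for these $\widehat\sigma$ produces the ``mirrored'' identities, in which the cup and cap sit on the opposite strands (and in Theorem~\ref{present1}(1) the $T$-curl and the $T^{-1}$-curl are interchanged). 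Deducing these three mirrored identities inside $\AB$ is the only step that calls for computation, and is thus the main obstacle. The mirror of Theorem~\ref{present1}(1) drops out of Theorem~\ref{present1}(5) --- which identifies the $T$-curl with $\delta 1_{\ob1}$, a unit multiple of the identity --- together with Theorem~\ref{present1}(1) itself. For Theorem~\ref{present1}(8),(9) I would note that the mirrored identities are equalities of framed tangles --- each side is once again a single crossing, with the same connector and the same over/under datum --- so they already hold in $\B$, and hence hold in $\AB$ after applying the functor constructed in part~(2); alternatively one may rotate the diagram through $180^\circ$ by means of the zigzag relations Theorem~\ref{present1}(7), which exhibit $U$ and $A$ as the unit and counit of a self-duality of $\ob1$. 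With the three mirrored identities in hand one has $\widehat\sigma(J)\subseteq J$, so $\sigma$ exists; that it is $\mathbb K$-linear, monoidal, contravariant, and squares to the identity is built into the construction.
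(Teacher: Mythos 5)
The paper dispenses with this lemma as an ``Easy exercise,'' so there is no competing proof in the text; your argument supplies the details in the expected way and is correct. Part~(2) is indeed immediate from the fact that the defining relations of $\B$ (Theorem~\ref{present1}(1)--(9)) are a subset of those of $\AB$. For part~(1), the reduction to showing $\widehat\sigma(J)\subseteq J$ for the tensor ideal of relations, checked on the generating relations, is the standard approach, and your case-by-case bookkeeping is accurate: Theorem~\ref{present1}(2),(4),(6) and Definition~\ref{AK defn}(1) are reversed but unchanged, (3),(5) and Definition~\ref{AK defn}(2) are palindromic, and (7) and Definition~\ref{AK defn}(3) have their two displayed identities interchanged. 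Your handling of the three genuinely non-trivial cases is also right: the $\widehat\sigma$-image of Theorem~\ref{present1}(1) is $C_{T^{-1}}\circ C_T=1_{\ob1}$, which follows from (1) and (5) because both curls are scalar multiples of $1_{\ob1}$ (with $\delta$ a unit), and the $\widehat\sigma$-images of (8),(9) are the opposite-handed cup-cap rotations of a single crossing, which one checks are totally descending tangle diagrams with the same connector and over-strand as $T$ and $T^{-1}$ respectively, so they hold in $\FT$ by \eqref{bbb}, hence in $\B$, hence in $\AB$ via part~(2). One small presentational point: you assert that $\sigma^2=\mathrm{id}$ ``automatically,'' which is true and worth keeping, but it is not part of the statement being proved, so you could flag it as a bonus observation rather than part of the construction.
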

\begin{proof}Easy exercise. \end{proof}
Thanks to Lemma~\ref{ktau}(2), any
tangle diagram can be interpreted as a morphism in $\AB$.
\begin{Lemma}\label{selfcrossing} As morphisms in $\AB$, we have:
\item[(1)]
~\begin{tikzpicture}[baseline = -3mm]
	\draw[-,thick,darkblue] (0.28,0) to[out=90, in=0] (0,0.2);
	\draw[-,thick,darkblue] (0,0.2) to[out = 180, in = 90] (-0.28,0);
	\draw[-,thick,darkblue] (-0.28,-.6) to[out=60,in=-90] (0.28,0);
	\draw[-,line width=4pt,white] (0.28,-.6) to[out=120,in=-90] (-0.28,0);
	\draw[-,thick,darkblue] (0.28,-.6) to[out=120,in=-90] (-0.28,0);
   \end{tikzpicture}~=~$\delta$
\begin{tikzpicture}[baseline = 5pt, scale=0.5, color=\clr]
    	\draw[-,thick,darkblue] (1,-0.1) to[out=90, in=0] (0.5,0.55);
	\draw[-,thick,darkblue] (0.5,0.55) to[out = 180, in = 90] (0,-0.1);
\end{tikzpicture},
\begin{tikzpicture}[baseline = -3mm]
	\draw[-,thick,darkblue] (0.28,-.6) to[out=120,in=-90] (-0.28,0);
	\draw[-,thick,darkblue] (0.28,0) to[out=90, in=0] (0,0.2);
	\draw[-,thick,darkblue] (0,0.2) to[out = 180, in = 90] (-0.28,0);
	\draw[-,line width=4pt,white] (-0.28,-.6) to[out=60,in=-90] (0.28,0);
	\draw[-,thick,darkblue] (-0.28,-.6) to[out=60,in=-90] (0.28,0);
     \end{tikzpicture}~=~$\delta^{-1}$\begin{tikzpicture}[baseline = 5pt, scale=0.5, color=\clr]
    	\draw[-,thick,darkblue] (1,-0.1) to[out=90, in=0] (0.5,0.55);
	\draw[-,thick,darkblue] (0.5,0.55) to[out = 180, in = 90] (0,-0.1);
\end{tikzpicture},
\item[(2)]
~\begin{tikzpicture}[baseline = 5pt, scale=0.5, color=\clr]
    \draw[-,thick] (0,0) to[out=up,in=down] (1,1);
        \draw[-,thick]  (1,0) to[out=down,in=right] (0.5,-0.5)to[out=left,in=down] (0,0);

        \draw[-,thick] (0,1) to[out=down,in=up] (1,0);
        \draw[-,line width=4pt,white] (0,1) to[out=down,in=up] (1,0);
        \draw[-,thick] (0,1) to[out=down,in=up] (1,0);
            \end{tikzpicture}~=~$\delta$
\begin{tikzpicture}[baseline = 5pt, scale=0.5, color=\clr]
        \draw[-,thick] (0,1) to[out=down,in=left] (0.5,0.35) to[out=right,in=down] (1,1);
    \end{tikzpicture},
\begin{tikzpicture}[baseline = 5pt, scale=0.5, color=\clr]
    \draw[-,thick] (0,1) to[out=down,in=up] (1,0);
        \draw[-,thick]  (1,0) to[out=down,in=right] (0.5,-0.5)to[out=left,in=down] (0,0);

        \draw[-,thick] (0,0) to[out=up,in=down] (1,1);
        \draw[-,line width=4pt,white] (0,0) to[out=up,in=down] (1,1);
        \draw[-,thick] (0,0) to[out=up,in=down] (1,1);
           \end{tikzpicture}~=~$\delta^{-1}$
\begin{tikzpicture}[baseline = 5pt, scale=0.5, color=\clr]
        \draw[-,thick] (0,1) to[out=down,in=left] (0.5,0.35) to[out=right,in=down] (1,1);
    \end{tikzpicture},

\item[(3)]
\begin{tikzpicture}[baseline = -0.5mm]
 \draw[-,thick,darkblue] (0,0) to[out=down,in=left] (0.28,-0.28) to[out=right,in=down] (0.56,0);
 \node at (0,0) {$\color{darkblue}\scriptstyle\circ$};
  \draw[-,thick,darkblue] (0,0) to (0,.2);
   \draw[-,thick,darkblue] (0.56,0) to (0.56,.2);
 \end{tikzpicture}
 ~=~
 \begin{tikzpicture}[baseline = -0.5mm]
 \draw[-,thick,darkblue] (0,0) to[out=down,in=left] (0.28,-0.28) to[out=right,in=down] (0.56,0);
 \node at (0.56,0) {$\color{darkblue}\scriptstyle\bullet$};
  \draw[-,thick,darkblue] (0,0) to (0,.2);
   \draw[-,thick,darkblue] (0.56,0) to (0.56,.2);
 \end{tikzpicture},
\begin{tikzpicture}[baseline = -0.5mm]
  \draw[-,thick,darkblue] (0,0) to[out=up,in=left] (0.28,0.28) to[out=right,in=up] (0.56,0);
  \draw[-,thick,darkblue] (0,0) to (0,-.2);
 \node at (0,0) {$\color{darkblue}\scriptstyle\circ$};
 \draw[-,thick,darkblue] (0.56,0) to (0.56,-.2);
  \end{tikzpicture}
  ~=~\begin{tikzpicture}[baseline = -0.5mm]
\draw[-,thick,darkblue] (0,0) to[out=up,in=left] (0.28,0.28) to[out=right,in=up] (0.56,0);
 \draw[-,thick,darkblue] (0.56,0) to (0.56,-.2);
 \draw[-,thick,darkblue] (0,0) to (0,-.2);
 \node at (0.56,0) {$\color{darkblue}\scriptstyle\bullet$};
  \end{tikzpicture}.
\end{Lemma}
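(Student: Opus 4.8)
The plan is to handle the undotted identities (1)--(2) and the dotted identities (3) by entirely separate arguments.

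For (1) and (2), the first thing to observe is that no dots occur, so by Lemma~\ref{ktau}(2) it suffices to prove them as identities of morphisms in the Kauffmann category $\B$ and then transport the result to $\AB$. In $\B$ I would read each left-hand side as a composite: in (1) it is $A\circ T$ or $A\circ T^{-1}$ (a cap with a self-crossing), and in (2) it is $T\circ U$ or $T^{-1}\circ U$ (a cup with a self-crossing), the choice of $T$ versus $T^{-1}$ being dictated by which strand passes over. The plan for (1) is then to straighten the cap using the zigzag relation Theorem~\ref{present1}(7), i.e. to write $1_{\ob1}=(A\otimes 1_{\ob1})\circ(1_{\ob1}\otimes U)$ and slide the crossing so that $A\circ T^{\pm1}$ becomes the left-hand side of the twisting relation Theorem~\ref{present1}(5) tensored with an identity; this yields $\delta$ times a cap for one of the two sign choices. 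For the remaining sign choice I would use the skein relation Theorem~\ref{present1}(4) together with $A\circ U=\omega_0 1_{\ob0}$ (Theorem~\ref{present1}(6)) and \eqref{para1}: composing (4) on the left with $A$ gives $A\circ T-A\circ T^{-1}=z(1-\omega_0)A=(\delta^{-1}-\delta)A$, which pins down the second identity once the first is known (alternatively, one cancels the opposite curl using the Reidemeister--II relation Theorem~\ref{present1}(2)). Part (2) should then come for free from part (1) by applying the contravariant monoidal functor $\sigma$ of Lemma~\ref{ktau}(1): it interchanges $A$ and $U$, fixes $T^{\pm1}$, and reverses composition order, so $\sigma(A\circ T^{\pm1})=T^{\pm1}\circ U$ and $\sigma$ of $\delta^{\pm1}$ times a cap is $\delta^{\pm1}$ times a cup.

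For (3), each of the four identities is essentially a one-line manipulation using only invertibility of $X$ (Definition~\ref{AK defn}(1)) and the dot-sliding relations Definition~\ref{AK defn}(3) (displayed as \eqref{relation 6}). For the cup identities I would compose $(X\otimes 1_{\ob1})\circ U=(1_{\ob1}\otimes X^{-1})\circ U$ on the left with $X^{-1}\otimes X$ to get
$$(1_{\ob1}\otimes X)\circ U=(X^{-1}\otimes X)\circ(X\otimes 1_{\ob1})\circ U=(X^{-1}\otimes X)\circ(1_{\ob1}\otimes X^{-1})\circ U=(X^{-1}\otimes 1_{\ob1})\circ U ,$$
and for the cap identities I would instead compose $A\circ(X\otimes 1_{\ob1})=A\circ(1_{\ob1}\otimes X^{-1})$ on the right with $X^{-1}\otimes X$; the remaining two identities follow by the same computation (or by applying $\sigma$).

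I do not expect any real obstacle in this lemma: its entire content is diagrammatic bookkeeping. The one point that must be handled with care is the orientation of the self-crossings in (1)--(2), since this is precisely what decides whether an uncurling produces $\delta$ or $\delta^{-1}$; once the crossing conventions of Theorem~\ref{present1} are fixed, the powers of $\delta$ are forced and there is nothing further to verify.
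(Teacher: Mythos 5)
Your proof is correct but takes a genuinely different route from the paper's in two places, and is in fact slightly more complete in one respect.

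For (1), the paper uses Theorem~\ref{present1}(8)--(9) to expand the crossing into a cap-crossing-cup sandwich, then simplifies with the zigzag relations and (5); it then only sketches the opposite sign. You instead propose to wrap one side and invoke (5), and — this is the stronger part of your plan — to pin down the opposite sign \emph{rigorously} either via composing with $T$ and Theorem~\ref{present1}(2), or via the skein relation (4), the free-loop relation (6), and \eqref{para1}, which gives $A\circ T-A\circ T^{-1}=(\delta^{-1}-\delta)A$. Both deductions are clean and avoid a second diagram chase. The one place where you are being optimistic is the assertion that after wrapping, $A\circ T^{\pm1}$ ``becomes the left-hand side of (5) tensored with an identity'': it does not literally become $(1_{\ob1}\otimes A)\circ(T\otimes 1_{\ob1})\circ(1_{\ob1}\otimes U)$ — the cap sits on strands $1$--$2$ rather than $2$--$3$, and one ends up with $T^{-1}$ rather than $T$ — so a planar isotopy is still required to identify the resulting curl with the one in (5); the paper sidesteps this by routing through (8)--(9). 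This isn't a gap, but it does mean your ``for free'' is really ``after a short isotopy argument''.

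For (3), your approach is noticeably different and arguably cleaner. The paper proves the first equation of (3) by a diagrammatic zigzag-and-dot-slide computation using \eqref{relation 1} and \eqref{relation 6}, and then derives the second by applying $\sigma$. You bypass the diagrammatics entirely: composing Definition~\ref{AK defn}(3) with $X^{-1}\otimes X$ and cancelling via Definition~\ref{AK defn}(1) gives all four dotted identities directly and symmetrically. This is a purely algebraic one-liner and, to my mind, the better argument. Finally, your use of the contravariant functor $\sigma$ of Lemma~\ref{ktau}(1) to obtain (2) from (1) is exactly what the paper does.
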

\begin{proof} We have
 $$\begin{aligned} & \text{ \begin{tikzpicture}[baseline = -3mm]
	\draw[-,thick,darkblue] (0.28,-.6) to[out=120,in=-90] (-0.28,0);
	\draw[-,thick,darkblue] (0.28,0) to[out=90, in=0] (0,0.2);
	\draw[-,thick,darkblue] (0,0.2) to[out = 180, in = 90] (-0.28,0);
	\draw[-,line width=4pt,white] (-0.28,-.6) to[out=60,in=-90] (0.28,0);
	\draw[-,thick,darkblue] (-0.28,-.6) to[out=60,in=-90] (0.28,0);
\end{tikzpicture}~=~\begin{tikzpicture}[baseline = 1mm]
	\draw[-,thick,darkblue] (0.4,-0.1) to[out=90, in=0] (0.1,0.3);
	\draw[-,thick,darkblue] (0.1,0.3) to[out = 180, in = 90] (-0.2,-0.1);
\end{tikzpicture}$\circ$
 \begin{tikzpicture}[baseline = -3mm]
	\draw[-,thick,darkblue] (0.28,-.6) to[out=90,in=-90] (-0.28,0);
\draw[-,thick,darkblue] (-0.28,-.6) to[out=90,in=-90] (0.28,0);
\draw[-,line width=4pt,white] (-0.28,-.65) to[out=60,in=-90] (0.28,0.3);
	\draw[-,thick,darkblue] (-0.28,-.6) to[out=90,in=-90] (0.28,0);
\end{tikzpicture}~=~\begin{tikzpicture}[baseline = 1mm]
	\draw[-,thick,darkblue] (0.4,-0.1) to[out=90, in=0] (0.1,0.3);
	\draw[-,thick,darkblue] (0.1,0.3) to[out = 180, in = 90] (-0.2,-0.1);
\end{tikzpicture}$\circ$
\begin{tikzpicture}[baseline = -0.5mm]
\draw[-,thick,darkblue]  (0.75,-0.25) to[out=right,in=down] (1,0);
\draw[-,thick,darkblue] (0,0) to[out=up,in=left] (0.25,0.25) ;
\draw[-,thick,darkblue] (1,0) to (1,0.25);
\draw[-,thick,darkblue] (0,0) to (0,-0.25);
\draw[-,thick,darkblue] (0.25,-0.25) to[out=90,in=-90] (0.75,0.25);
\draw[-,line width=4pt,white] (0.25,0.25) to[out=90,in=-90] (0.75,-0.25);
\draw[-,thick,darkblue] (0.25,0.25) to[out=0,in=180] (0.75,-0.25);
  \end{tikzpicture}
~=~\begin{tikzpicture}[baseline = -2.5mm]
  \draw[-,thick,darkblue] (0,0) to[out=up,in=left] (0.28,0.28) to[out=right,in=up] (0.56,0);
  \draw[-,thick,darkblue] (0,0) to (0,-.6);
	\draw[-,thick,darkblue] (0.56,0.07)   to (0.56,-0.);
	\draw[-,thick,darkblue] (1.06,-0.3) to [out=90,in=0](0.86,-0.1);
	\draw[-,thick,darkblue] (0.86,-0.5) to [out=0,in=-90](1.06,-0.3);
    \draw[-,thick,darkblue] (0.86,-.1) to [out=180,in=90](0.56,-0.66);
    \draw[-,line width=4pt,white] (0.56,-0.) to [out=-90,in=180](0.86,-0.5);
	\draw[-,thick,darkblue] (0.56,-0.) to [out=-90,in=180] (.86,-0.5);
\end{tikzpicture}
~=~$\delta^{-1}$\begin{tikzpicture}[baseline = 1mm]
	\draw[-,thick,darkblue] (0.4,-0.1) to[out=90, in=0] (0.1,0.3);
	\draw[-,thick,darkblue] (0.1,0.3) to[out = 180, in = 90] (-0.2,-0.1);
\end{tikzpicture}}. \cr
  \end{aligned}$$
  One can prove the first equation in (1) similarly.
 Thanks to \eqref{relation 1} and  \eqref{relation 6}, we have
 $$        \begin{tikzpicture}[baseline = 7.5pt, scale=0.5, color=\clr]
            \draw[-,thick] (0,2) to (0,0.5)
                            to[out=down, in=left] (0.5,0)
                            to[out=right, in=down] (1,0.5) to (1,2);
                            \node at (1,0.75) {$\color{darkblue}\scriptstyle\bullet$};

        \end{tikzpicture}
        ~{=}~
        \begin{tikzpicture}[baseline = 7.5pt, scale=0.5, color=\clr]
            \draw[-,thick] (0,2) to (0,0.5)
                            to[out=down, in=left] (0.25,0)
                            to[out=right, in=down] (0.5,0.5) to (0.5,1)
                            to[out=up, in=left] (0.75,1.5)
                            to[out=right, in=up] (1,1)
                            to[out=down, in=left] (1.25,0.5)
                            to[out=right, in=down] (1.5,1) to (1.5,2);
            \node at (1,1) {$\color{darkblue}\scriptstyle\circ$};
        \end{tikzpicture}
        ~{=}~
        \begin{tikzpicture}[baseline = 7.5pt, scale=0.5, color=\clr]
            \draw[-,thick] (0,2) to (0,0.5)
                            to[out=down, in=left] (0.5,0)
                            to[out=right, in=down] (1,0.5) to (1,2);
           \node at (0,0.75) {$\color{darkblue}\scriptstyle\circ$};
        \end{tikzpicture}.$$
        So far, we have  proved (1) and the first equation in (3).
    Applying   the contravariant functor $\sigma$ in Lemma~\ref{ktau}
        on
these equations yields (2) and  the second equation in (3).
 \end{proof}

A point on a strand of a tangle diagram is called a critical point if it is either an endpoint
or a point such that the tangent line at it is  horizontal.
A segment of a tangle diagram is defined to be a connected
component of the diagram obtained when all crossings and critical points are deleted.
A \textsf{dotted (m,s)-tangle diagram} $d$ is an $(m,s)$-tangle diagram  such that
there are finitely many $\bullet$'s or $\circ$'s  on each segment of $d$. Both  $\bullet$ and $\circ$ are called dots later on.
If there are $ h $ $\bullet$'s (resp., $\circ$'s) on a segment, then such $\bullet$'s (resp., $\circ$'s) can be viewed as
\begin{tikzpicture}[baseline=-.5mm]
 \draw[-,thick,darkblue] (0,-.3) to (0,.3);
      \node at (0,0) {$\color{darkblue}\scriptstyle\bullet$};
\end{tikzpicture} $\circ\ldots\circ$
\begin{tikzpicture}[baseline=-.5mm]
 \draw[-,thick,darkblue] (0,-.3) to (0,.3);
      \node at (0,0) {$\color{darkblue}\scriptstyle\bullet$};
\end{tikzpicture}(resp,.\begin{tikzpicture}[baseline=-.5mm]
 \draw[-,thick,darkblue] (0,-.3) to (0,.3);
      \node at (0,0) {$\color{darkblue}\scriptstyle\circ$};
\end{tikzpicture} $\circ\ldots\circ$
\begin{tikzpicture}[baseline=-.5mm]
 \draw[-,thick,darkblue] (0,-.3) to (0,.3);
      \node at (0,0) {$\color{darkblue}\scriptstyle\circ$};
\end{tikzpicture}), and will be denoted by $\bullet h$ (resp., $\circ h$). In order to simplify notation, we say that $h$ ``$\bullet$" is the same as $-h$ ``$\circ$" if $h<0$. In other words,
$$  \begin{tikzpicture}[baseline = 23pt, scale=0.35, color=\clr]
\draw[-,thick] (1,3) to[out=up,in=down] (1,2);
 \node at (1,2.5) {$\color{darkblue}\scriptstyle\bullet$};
\node at (1.4,2.7) {$\color{darkblue}\scriptstyle h$};
   \end{tikzpicture}=\begin{tikzpicture}[baseline = 23pt, scale=0.35, color=\clr]
\draw[-,thick] (1,3) to[out=up,in=down] (1,2);
 \node at (1,2.5) {$\color{darkblue}\scriptstyle\circ$};
\node at (1.7,2.7) {$\color{darkblue}\scriptstyle -h$};
   \end{tikzpicture} \text{ and }  \begin{tikzpicture}[baseline = 23pt, scale=0.35, color=\clr]
\draw[-,thick] (1,3) to[out=up,in=down] (1,2);
 \node at (1,2.5) {$\color{darkblue}\scriptstyle\circ$};
\node at (1.4,2.7) {$\color{darkblue}\scriptstyle h$};
   \end{tikzpicture}=\begin{tikzpicture}[baseline = 23pt, scale=0.35, color=\clr]
\draw[-,thick] (1,3) to[out=up,in=down] (1,2);
 \node at (1,2.5) {$\color{darkblue}\scriptstyle\bullet$};
\node at (1.7,2.7) {$\color{darkblue}\scriptstyle -h$};
   \end{tikzpicture}
 \text{ if $h<0$.}$$
For example,
$$
\begin{tikzpicture}[baseline = 1mm]
	\draw[-,thick,darkblue] (0.4,-0.1) to[out=90, in=0] (0.1,0.3);
	\draw[-,thick,darkblue] (0.1,0.3) to[out = 180, in = 90] (-0.2,-0.1);
      \node at (-0.14,0.15) {$\color{darkblue}\scriptstyle\bullet$};
      \node at (-0.2,0.) {$\color{darkblue}\scriptstyle\bullet$};
\end{tikzpicture}
~=~\begin{tikzpicture}[baseline = 1mm]
	\draw[-,thick,darkblue] (0.4,-0.1) to[out=90, in=0] (0.1,0.3);
	\draw[-,thick,darkblue] (0.1,0.3) to[out = 180, in = 90] (-0.2,-0.1);
\end{tikzpicture}\circ
\begin{tikzpicture}[baseline=-.5mm]
 \draw[-,thick,darkblue] (0.18,-0.3) to (0.18,.3);
	\draw[-,thick,darkblue] (-0.18,-.3) to (-0.18,.3);
     \node at (-0.18,0) {$\color{darkblue}\scriptstyle\bullet$};
\end{tikzpicture}
\circ
\begin{tikzpicture}[baseline=-.5mm]
 \draw[-,thick,darkblue] (0.18,-0.3) to (0.18,.3);
	\draw[-,thick,darkblue] (-0.18,-.3) to (-0.18,.3);
     \node at (-0.18,0) {$\color{darkblue}\scriptstyle\bullet$};
\end{tikzpicture}
\begin{tikzpicture}[baseline = 1mm]
\end{tikzpicture}
~=~
 \begin{tikzpicture}[baseline = 1mm]
	\draw[-,thick,darkblue] (0.4,-0.1) to[out=90, in=0] (0.1,0.3);
	\draw[-,thick,darkblue] (0.1,0.3) to[out = 180, in = 90] (-0.2,-0.1);
      \node at (-0.2,0.) {$\color{darkblue}\scriptstyle\bullet$};
      \node at (-0.4,0.) {$\color{darkblue}\scriptstyle 2$};
\end{tikzpicture}~=~
\begin{tikzpicture}[baseline = 1mm]
	\draw[-,thick,darkblue] (0.4,-0.1) to[out=90, in=0] (0.1,0.3);
	\draw[-,thick,darkblue] (0.1,0.3) to[out = 180, in = 90] (-0.2,-0.1);
      \node at (-0.2,0.) {$\color{darkblue}\scriptstyle\circ$};
      \node at (-0.45,0.) {$\color{darkblue}\scriptstyle -2$};
\end{tikzpicture}
.$$

Let $\mathbb{T}_{m,s}$ be the set of all dotted $(m,s)$-tangle diagram. By arguments similar to those in \cite{RS3}, any $d$ in $\mathbb{T}_{m,s}$  can be interpreted as a morphism in  $\Hom_{\AB}(\ob m,\ob s)$. Conversely, if a diagram $d$ is obtained by tensor product and composition of $U, A, T, T^{-1}, X, X^{-1}$ together with \begin{tikzpicture}[baseline=-.5mm]
 \draw[-,thick,darkblue] (0,-0.3) to (0,.3);
\end{tikzpicture}  such that there are $m$ (resp., $s$) endpoints on the bottom (resp., top) row, then $d\in \mathbb{T}_{m,s}$. So,  $\Hom_{\AB}(\ob m,\ob s)$ is spanned by $\mathbb{T}_{m,s}$.
For any $i\in \mathbb Z$, let
\begin{equation} \label{bubbs} \Delta_i=  \begin{tikzpicture}[baseline = 5pt, scale=0.5, color=\clr]
        \draw[-,thick] (0.6,1) to (0.5,1) to[out=left,in=up] (0,0.5)
                        to[out=down,in=left] (0.5,0)
                        to[out=right,in=down] (1,0.5)
                        to[out=up,in=right] (0.5,1);
        \node at (0,0.5) {$\color{darkblue}\scriptstyle\bullet$};
        \draw (-0.4,0.5) node{\footnotesize{$i$}};
    \end{tikzpicture}.\end{equation}
Then   $\Delta_i\in  \End_{\AB}(\ob 0)$ and $\Delta_0=\omega_01_{\ob 0}$.
By \cite[Proposition 2.2.10]{EGNO}, $\End_{\AB}(\ob 0)$ is commutative and $\Delta_i\circ\Delta_j=\Delta_i\otimes\Delta_j$ for all admissible $i,j$.

\begin{Lemma} \label{addmi} For any positive integer  $j$,  $\Delta_{-j}=\delta^{-2}\Delta_j+\delta^{-1} z \sum_{i=1}^{j-1} ( \Delta_{2i-j}-\Delta_i\Delta_{i-j})$.
\end{Lemma}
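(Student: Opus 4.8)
The plan is to lift everything into the endomorphism algebra $\mathcal A:=\End_{\AB}(\ob 2)$ and use the Kauffmann skein relation as the engine. Write $e:=U\circ A$, $g:=T$, $y_1:=X\otimes 1_{\ob 1}$ and $y_2:=1_{\ob 1}\otimes X$, all in $\mathcal A$. From the definition of the $\Delta_k$ in \eqref{bubbs}, together with the dot‑sliding relations Definition~\ref{AK defn}(2)–(3), one has for every $k\in\mathbb Z$
\[
e\circ y_1^{\,k}\circ e=\Delta_k\,e,\qquad e\circ y_2^{\,k}\circ e=\Delta_{-k}\,e ,
\]
where $\Delta_k\in\End_{\AB}(\ob 0)$ is read as a scalar; I will use freely that $\End_{\AB}(\ob 0)$ is commutative with $\Delta_a\circ\Delta_b=\Delta_a\otimes\Delta_b$, as noted after \eqref{bubbs}. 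The remaining ingredients are: $g\circ y_1\circ g=y_2$ (Definition~\ref{AK defn}(2)); the skein relation $g=g^{-1}+z(1_{\ob 2}-e)$ (Theorem~\ref{present1}(4)), equivalently $g^{\,2}=1_{\ob 2}+zg-z\delta^{-1}e$; and
\[
e\circ g=g\circ e=\delta^{-1}e ,\qquad e\circ g^{-1}=g^{-1}\circ e=\delta\,e .
\]
This last pair is the one auxiliary statement needing its own short proof: composing the skein relation with $A$ on the left and using \eqref{para1} gives $A\circ T-A\circ T^{-1}=z(1-\omega_0)A=(\delta^{-1}-\delta)A$, while by Lemma~\ref{selfcrossing} a cap carrying a single self‑crossing is $\delta^{\pm1}$ times a plain cap; the only assignment compatible with both is $A\circ T=\delta^{-1}A$, $A\circ T^{-1}=\delta A$, and the functor $\sigma$ of Lemma~\ref{ktau}(1) gives the mirror statements for $U$.

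Multiplying the claimed identity by $e$ on the right turns it, via the first display, into an identity in $\mathcal A$ for $e\circ y_2^{\,j}\circ e$. Expanding $y_2^{\,j}=(g y_1 g)^{\,j}=g\,y_1(g^{\,2}y_1)^{\,j-1}g$ and absorbing the two outer $g$'s, what remains is
\[
\Delta_{-j}\,e=\delta^{-2}\;e\circ y_1(g^{\,2}y_1)^{\,j-1}\circ e .
\]
I would establish this by induction on $j$, peeling off the factors $g^{\,2}=1_{\ob 2}+zg-z\delta^{-1}e$ one at a time. The $1_{\ob 2}$‑summand drives the induction; each $-z\delta^{-1}e$‑summand cuts the word at that place and, via $e\circ y_1^{\,a}\circ e=\Delta_a e$ and commutativity, contributes the split products $\Delta_i\Delta_{i-j}$; each residual $zg$‑summand is straightened by one further application of the skein relation, any resulting $e\circ y_1^{\,a}\circ g^{\pm1}\circ y_1^{\,b}\circ e$ being reduced to $\Delta$'s as in the model case
\[
A\circ(X\otimes 1_{\ob 1})\circ T^{-1}\circ(X\otimes 1_{\ob 1})\circ U=\delta\,\omega_0-z\,\Delta_2+z\,\Delta_1^{\,2},
\]
which follows from the skein relation applied to the $T^{-1}$, then Definition~\ref{AK defn}(2)–(3), the identity $(X\otimes X)\circ U=U$, and $A\circ T^{-1}=\delta A$. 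This model computation already gives the case $j=2$; the case $j=1$ is just $e\circ y_2\circ e=(e\circ g)\circ y_1\circ(g\circ e)=\delta^{-2}\,e\circ y_1\circ e=\delta^{-2}\Delta_1\,e$. A final rewriting of the powers of $\delta$ through \eqref{para1} brings the coefficients to the stated form.

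I expect the genuine difficulty to lie entirely in the induction of the previous paragraph: choosing an inductive statement strong enough that the intermediate single‑crossing terms telescope, and checking that after collection exactly $\sum_{i=1}^{j-1}(\Delta_{2i-j}-\Delta_i\Delta_{i-j})$ survives, with the correct powers of $\delta$ and $z$ — in particular that the transient products of three or more $\Delta$'s all cancel. Everything else is formal computation in $\mathcal A$ resting on the defining relations of $\AB$, Lemma~\ref{selfcrossing}, and \eqref{para1}.
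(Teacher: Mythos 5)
Your route is genuinely different from the paper's. The paper stays in $\End_{\AB}(\ob 0)$: it introduces a single self-crossing into the bubble, converts $\circ$'s to $\bullet$'s, slides one dot through the crossing, applies the skein relation to that crossing, and iterates — a ``one dot at a time'' diagrammatic recursion, closed up at the end by Lemma~\ref{selfcrossing}(2). You instead lift everything to $\End_{\AB}(\ob 2)$, write $e=U\circ A$, $g=T$, $y_1=X\otimes 1_{\ob1}$, $y_2=1_{\ob1}\otimes X$, and reduce $\Delta_{-j}e=e\,y_2^{j}\,e=\delta^{-2}e\,y_1(g^{2}y_1)^{j-1}e$. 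The preliminary identities you list ($e y_1^{k}e=\Delta_k e$, $e y_2^{k}e=\Delta_{-k}e$, $ge=eg=\delta^{-1}e$, $g^{2}=1_{\ob2}+zg-z\delta^{-1}e$, and the $j=1,2$ checks) are all correct.

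However, there is a genuine gap in the handling of the $zg$-summands, which is where all the content of the lemma sits. You propose to ``straighten'' the isolated $g$ by one further application of the skein relation. That cannot terminate: skein replaces $g$ by $g^{-1}$ plus multiples of $1_{\ob2}$ and $e$, a second application brings back $g$, and you never in fact arrive at a word of the shape $e\,y_1^{a}g^{\pm1}y_1^{b}e$ — to the right of the isolated $g$ you always have the block $y_1(g^{2}y_1)^{k-1}$, not a pure power of $y_1$. The correct tool is not skein but the dot-sliding relations: from $gy_1g=y_2$ one has $gy_1=y_2g^{-1}$, and since $y_1,y_2$ commute and both halves of Definition~\ref{AK defn}(3) give $e\,y_1y_2=e=y_1y_2\,e$ (so $e\,y_2=e\,y_1^{-1}$), one obtains
\[
 e\,y_1^{a}\,g\,y_1\;=\;e\,y_1^{a}\,y_2\,g^{-1}\;=\;e\,y_2\,y_1^{a}\,g^{-1}\;=\;e\,y_1^{a-1}\,g^{-1}\qquad(a\ge 1).
\]
Feeding the $g^{-1}$ into the adjacent $g^{2}$ and inducting (on $a+k$, say, with base $a=0$ using $eg=\delta^{-1}e$ and $N_{1,k-1}=\delta^{2}\Delta_{-k}e$) gives
\[
 e\,y_1^{a}\,g\,y_1(g^{2}y_1)^{k-1}e\;=\;\delta\,\Delta_{a-k}\,e .
\]
With this one sets $N_{a,k}:=e\,y_1^{a}(g^{2}y_1)^{k}e$, expands only the leftmost $g^{2}$, and obtains the clean linear recursion
\[
 N_{a,k}\;=\;N_{a+1,k-1}\;+\;z\delta\,\Delta_{a-k}\,e\;-\;z\delta\,\Delta_a\Delta_{-k}\,e ,
\]
whose telescope from $N_{1,j-1}=\delta^{2}\Delta_{-j}e$ to $N_{j,0}=\Delta_{j}e$ is precisely $\delta^{2}\Delta_{-j}=\Delta_j+z\delta\sum_{i=1}^{j-1}(\Delta_{2i-j}-\Delta_i\Delta_{i-j})$, i.e.\ the lemma. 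So your algebraic route does close — and note that no triple products of $\Delta$'s appear or need to cancel, because the recursion is inherently linear; that worry in your last paragraph stems from imagining a full simultaneous expansion of all $g^{2}$'s, which is not the right way to organise the induction. The resulting proof is essentially the paper's diagrammatic recursion transcribed into $\End_{\AB}(\ob 2)$, but as you have written it the key reduction of the isolated-$g$ terms is missing and what is there in its place would not work.
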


 \begin{proof} Thanks to  Lemma~\ref{selfcrossing}(1), we have
$$\begin{aligned}\begin{tikzpicture}[baseline = 5pt, scale=0.5, color=\clr]
        \draw[-,thick] (0.6,1) to (0.5,1) to[out=left,in=up] (0,0.5)
                        to[out=down,in=left] (0.5,0)
                        to[out=right,in=down] (1,0.5)
                        to[out=up,in=right] (0.5,1);
        \node at (0,0.5) {$\color{darkblue}\scriptstyle\circ$};
        \draw (-0.4,0.5) node{\footnotesize{$j$}};
    \end{tikzpicture}&=
\delta^{-1}\begin{tikzpicture}[baseline = -3mm, color=\clr]
	\draw[-,thick,darkblue] (0.28,0) to[out=90, in=0] (0,0.2);
	\draw[-,thick,darkblue] (0,0.2) to[out = 180, in = 90] (-0.28,0);
	\draw[-,thick,darkblue] (-0.28,-.6) to[out=60,in=-90] (0.28,0);
	\draw[-,line width=4pt,white] (0.28,-.6) to[out=120,in=-90] (-0.28,0);
	\draw[-,thick,darkblue] (0.28,-.6) to[out=120,in=-90] (-0.28,0);
    \node at (-0.2,-0.5) {$\color{darkblue}\scriptstyle\circ$};
    \draw (-0.3,-0.3) node{\footnotesize{$j$}};

    \draw[-,thick,darkblue]  (0.28,-0.6) to[out=down,in=right] (-0.,-0.8)to[out=left,in=down] (-0.28,-.6);
\end{tikzpicture}\overset{ (\ref{relation 6}) }=
\delta^{-1}\begin{tikzpicture}[baseline = -3mm, color=\clr]
	\draw[-,thick,darkblue] (0.28,0) to[out=90, in=0] (0,0.2);
	\draw[-,thick,darkblue] (0,0.2) to[out = 180, in = 90] (-0.28,0);
	\draw[-,thick,darkblue] (-0.28,-.6) to[out=60,in=-90] (0.28,0);
	\draw[-,line width=4pt,white] (0.28,-.6) to[out=120,in=-90] (-0.28,0);
	\draw[-,thick,darkblue] (0.28,-.6) to[out=120,in=-90] (-0.28,0);
    \node at (0.2,-0.5) {$\color{darkblue}\scriptstyle\bullet$};
    \draw (0.3,-0.3) node{\footnotesize{$j$}};

    \draw[-,thick,darkblue]  (0.28,-0.6) to[out=down,in=right] (-0.,-0.8)to[out=left,in=down] (-0.28,-.6);

\end{tikzpicture}
=
\delta^{-1}\begin{tikzpicture}[baseline = -3mm, color=\clr]
	\draw[-,thick,darkblue] (0.28,0) to[out=90, in=0] (0,0.2);
	\draw[-,thick,darkblue] (0,0.2) to[out = 180, in = 90] (-0.28,0);
	\draw[-,thick,darkblue] (-0.28,-.6) to[out=60,in=-90] (0.28,0);
	\draw[-,line width=4pt,white] (0.28,-.6) to[out=120,in=-90] (-0.28,0);
	\draw[-,thick,darkblue] (0.28,-.6) to[out=120,in=-90] (-0.28,0);
    \node at (0.2,-0.5) {$\color{darkblue}\scriptstyle\bullet$};
    \node at (0.28,-0.6) {$\color{darkblue}\scriptstyle\bullet$};
    \draw (0.7,-0.7) node{\footnotesize{$j-1$}};

    \draw[-,thick,darkblue]  (0.28,-0.6) to[out=down,in=right] (-0.,-0.8)to[out=left,in=down] (-0.28,-.6);

\end{tikzpicture}\overset{ (\ref{relation 5}) }=
\delta^{-1}\begin{tikzpicture}[baseline = -3mm, color=\clr]
	\draw[-,thick,darkblue] (0.28,0) to[out=90, in=0] (0,0.2);
	\draw[-,thick,darkblue] (0,0.2) to[out = 180, in = 90] (-0.28,0);
\draw[-,thick,darkblue] (0.28,-.6) to[out=120,in=-90] (-0.28,0);
	
	\draw[-,line width=4pt,white] (-0.28,-.6) to[out=60,in=-90] (0.28,0);
	\draw[-,thick,darkblue] (-0.28,-.6) to[out=60,in=-90] (0.28,0);
    \node at (-0.25,-0.1) {$\color{darkblue}\scriptstyle\bullet$};
    \node at (0.28,-0.6) {$\color{darkblue}\scriptstyle\bullet$};
    \draw (0.7,-0.7) node{\footnotesize{$j-1$}};

    \draw[-,thick,darkblue]  (0.28,-0.6) to[out=down,in=right] (-0.,-0.8)to[out=left,in=down] (-0.28,-.6);

\end{tikzpicture}\\
&\overset{ (S) }=
\delta^{-1}\begin{tikzpicture}[baseline = -3mm, color=\clr]
	\draw[-,thick,darkblue] (0.28,0) to[out=90, in=0] (0,0.2);
	\draw[-,thick,darkblue] (0,0.2) to[out = 180, in = 90] (-0.28,0);
	\draw[-,thick,darkblue] (-0.28,-.6) to[out=60,in=-90] (0.28,0);
	\draw[-,line width=4pt,white] (0.28,-.6) to[out=120,in=-90] (-0.28,0);
	\draw[-,thick,darkblue] (0.28,-.6) to[out=120,in=-90] (-0.28,0);
    \node at (-0.25,-0.1) {$\color{darkblue}\scriptstyle\bullet$};
    \node at (0.28,-0.6) {$\color{darkblue}\scriptstyle\bullet$};
    \draw (0.7,-0.7) node{\footnotesize{$j-1$}};

    \draw[-,thick,darkblue]  (0.28,-0.6) to[out=down,in=right] (-0.,-0.8)to[out=left,in=down] (-0.28,-.6);

\end{tikzpicture}+\delta^{-1}z\begin{tikzpicture}[baseline = 5pt, scale=0.5, color=\clr]
        \draw[-,thick] (0.6,1) to (0.5,1) to[out=left,in=up] (0,0.5)
                        to[out=down,in=left] (0.5,0)
                        to[out=right,in=down] (1,0.5)
                        to[out=up,in=right] (0.5,1);
                         \node at (1,0.5) {$\color{darkblue}\scriptstyle\bullet$};
        \node at (0,0.5) {$\color{darkblue}\scriptstyle\bullet$};
        \draw (1.8,0.6) node{\footnotesize{$j-1$}};
    \end{tikzpicture}-\delta^{-1}z\begin{tikzpicture}[baseline = 5pt, scale=0.5, color=\clr]
     \draw[-,thick] (0.6,1) to (0.5,1) to[out=left,in=up] (0,0.5)
                        to[out=down,in=left] (0.5,0)
                        to[out=right,in=down] (1,0.5)
                        to[out=up,in=right] (0.5,1);
                        \node at (0,0.5) {$\color{darkblue}\scriptstyle\bullet$};
        \draw[-,thick] (1.8,1) to (1.7,1) to[out=left,in=up] (1.2,0.5)
                        to[out=down,in=left] (1.7,0)
                        to[out=right,in=down] (2.2,0.5)
                        to[out=up,in=right] (1.7,1);
                         \node at (2.2,0.5) {$\color{darkblue}\scriptstyle\bullet$};
        \draw (3,0.6) node{\footnotesize{$j-1$}};

    \end{tikzpicture}\\
    &\overset{(\ref{relation 6})}=
\delta^{-1}\begin{tikzpicture}[baseline = -3mm, color=\clr]
	\draw[-,thick,darkblue] (0.28,0) to[out=90, in=0] (0,0.2);
	\draw[-,thick,darkblue] (0,0.2) to[out = 180, in = 90] (-0.28,0);
	\draw[-,thick,darkblue] (-0.28,-.6) to[out=60,in=-90] (0.28,0);
	\draw[-,line width=4pt,white] (0.28,-.6) to[out=120,in=-90] (-0.28,0);
	\draw[-,thick,darkblue] (0.28,-.6) to[out=120,in=-90] (-0.28,0);
    \node at (-0.25,-0.1) {$\color{darkblue}\scriptstyle\bullet$};
    \node at (0.28,-0.6) {$\color{darkblue}\scriptstyle\bullet$};
    \draw (0.7,-0.7) node{\footnotesize{$j-1$}};

    \draw[-,thick,darkblue]  (0.28,-0.6) to[out=down,in=right] (-0.,-0.8)to[out=left,in=down] (-0.28,-.6);

\end{tikzpicture}+\delta^{-1}z\begin{tikzpicture}[baseline = 5pt, scale=0.5, color=\clr]
        \draw[-,thick] (0.6,1) to (0.5,1) to[out=left,in=up] (0,0.5)
                        to[out=down,in=left] (0.5,0)
                        to[out=right,in=down] (1,0.5)
                        to[out=up,in=right] (0.5,1);
        \node at (0,0.5) {$\color{darkblue}\scriptstyle\circ$};
        \draw (-0.6,1) node{\footnotesize{$j-2$}};
    \end{tikzpicture}-\delta^{-1}z\begin{tikzpicture}[baseline = 5pt, scale=0.5, color=\clr]
     \draw[-,thick] (0.6,1) to (0.5,1) to[out=left,in=up] (0,0.5)
                        to[out=down,in=left] (0.5,0)
                        to[out=right,in=down] (1,0.5)
                        to[out=up,in=right] (0.5,1);
                        \node at (0,0.5) {$\color{darkblue}\scriptstyle\bullet$};
        \draw[-,thick] (1.8,1) to (1.7,1) to[out=left,in=up] (1.2,0.5)
                        to[out=down,in=left] (1.7,0)
                        to[out=right,in=down] (2.2,0.5)
                        to[out=up,in=right] (1.7,1);
                         \node at (1.2,0.5) {$\color{darkblue}\scriptstyle\circ$};
        \draw (1.5,1.2) node{\footnotesize{$j-1$}};

    \end{tikzpicture}\\
    &=
\delta^{-1}\begin{tikzpicture}[baseline = -3mm, color=\clr]
	\draw[-,thick,darkblue] (0.28,0) to[out=90, in=0] (0,0.2);
	\draw[-,thick,darkblue] (0,0.2) to[out = 180, in = 90] (-0.28,0);
	\draw[-,thick,darkblue] (-0.28,-.6) to[out=60,in=-90] (0.28,0);
	\draw[-,line width=4pt,white] (0.28,-.6) to[out=120,in=-90] (-0.28,0);
	\draw[-,thick,darkblue] (0.28,-.6) to[out=120,in=-90] (-0.28,0);
    \node at (0.2,-0.5) {$\color{darkblue}\scriptstyle\bullet$};
    \node at (0.28,-0.6) {$\color{darkblue}\scriptstyle\bullet$};
    \node at (-0.25,-0.1) {$\color{darkblue}\scriptstyle\bullet$};
    \draw (0.7,-0.7) node{\footnotesize{$j-2$}};

    \draw[-,thick,darkblue]  (0.28,-0.6) to[out=down,in=right] (-0.,-0.8)to[out=left,in=down] (-0.28,-.6);

\end{tikzpicture}
+\delta^{-1}z\begin{tikzpicture}[baseline = 5pt, scale=0.5, color=\clr]
        \draw[-,thick] (0.6,1) to (0.5,1) to[out=left,in=up] (0,0.5)
                        to[out=down,in=left] (0.5,0)
                        to[out=right,in=down] (1,0.5)
                        to[out=up,in=right] (0.5,1);
        \node at (0,0.5) {$\color{darkblue}\scriptstyle\circ$};
        \draw (-0.6,1) node{\footnotesize{$j-2$}};
    \end{tikzpicture}-\delta^{-1}z\begin{tikzpicture}[baseline = 5pt, scale=0.5, color=\clr]
     \draw[-,thick] (0.6,1) to (0.5,1) to[out=left,in=up] (0,0.5)
                        to[out=down,in=left] (0.5,0)
                        to[out=right,in=down] (1,0.5)
                        to[out=up,in=right] (0.5,1);
                        \node at (0,0.5) {$\color{darkblue}\scriptstyle\bullet$};
        \draw[-,thick] (1.8,1) to (1.7,1) to[out=left,in=up] (1.2,0.5)
                        to[out=down,in=left] (1.7,0)
                        to[out=right,in=down] (2.2,0.5)
                        to[out=up,in=right] (1.7,1);
                         \node at (1.2,0.5) {$\color{darkblue}\scriptstyle\circ$};
        \draw (1.5,1.2) node{\footnotesize{$j-1$}};

    \end{tikzpicture}\\
    &=\cdots\\
    &=\delta^{-1}\begin{tikzpicture}[baseline = -3mm, color=\clr]
	\draw[-,thick,darkblue] (0.28,0) to[out=90, in=0] (0,0.2);
	\draw[-,thick,darkblue] (0,0.2) to[out = 180, in = 90] (-0.28,0);
\draw[-,thick,darkblue] (0.28,-.6) to[out=120,in=-90] (-0.28,0);
	
	\draw[-,line width=4pt,white] (-0.28,-.6) to[out=60,in=-90] (0.28,0);
	\draw[-,thick,darkblue] (-0.28,-.6) to[out=60,in=-90] (0.28,0);
    \node at (-0.25,-0.1) {$\color{darkblue}\scriptstyle\bullet$};
    \draw (-0.6,0.1) node{\footnotesize{$j$}};

    \draw[-,thick,darkblue]  (0.28,-0.6) to[out=down,in=right] (-0.,-0.8)to[out=left,in=down] (-0.28,-.6);

\end{tikzpicture}
+\delta^{-1}z \sum_{i=1}^{j-1}\begin{tikzpicture}[baseline = 5pt, scale=0.5, color=\clr]
        \draw[-,thick] (0.6,1) to (0.5,1) to[out=left,in=up] (0,0.5)
                        to[out=down,in=left] (0.5,0)
                        to[out=right,in=down] (1,0.5)
                        to[out=up,in=right] (0.5,1);
        \node at (0,0.5) {$\color{darkblue}\scriptstyle\circ$};
        \draw (-0.6,1) node{\footnotesize{$j-2i$}};
    \end{tikzpicture}-\delta^{-1}z \sum_{i=1}^{j-1}\begin{tikzpicture}[baseline = 5pt, scale=0.5, color=\clr]
     \draw[-,thick] (0.6,1) to (0.5,1) to[out=left,in=up] (0,0.5)
                        to[out=down,in=left] (0.5,0)
                        to[out=right,in=down] (1,0.5)
                        to[out=up,in=right] (0.5,1);
                        \node at (0,0.5) {$\color{darkblue}\scriptstyle\bullet$};
        \draw[-,thick] (1.8,1) to (1.7,1) to[out=left,in=up] (1.2,0.5)
                        to[out=down,in=left] (1.7,0)
                        to[out=right,in=down] (2.2,0.5)
                        to[out=up,in=right] (1.7,1);
                         \node at (1.2,0.5) {$\color{darkblue}\scriptstyle\circ$};
         \draw (-0.1,0.8) node{\footnotesize{$i$}};
        \draw (1.5,1.2) node{\footnotesize{$j-i$}};

    \end{tikzpicture}\\
    &=\delta^{-2}\begin{tikzpicture}[baseline = 5pt, scale=0.5, color=\clr]
        \draw[-,thick] (0.6,1) to (0.5,1) to[out=left,in=up] (0,0.5)
                        to[out=down,in=left] (0.5,0)
                        to[out=right,in=down] (1,0.5)
                        to[out=up,in=right] (0.5,1);
        \node at (0,0.5) {$\color{darkblue}\scriptstyle\bullet$};
        \draw (-0.4,0.5) node{\footnotesize{$j$}};
    \end{tikzpicture}+\delta^{-1}z \sum_{i=1}^{j-1}\begin{tikzpicture}[baseline = 5pt, scale=0.5, color=\clr]
        \draw[-,thick] (0.6,1) to (0.5,1) to[out=left,in=up] (0,0.5)
                        to[out=down,in=left] (0.5,0)
                        to[out=right,in=down] (1,0.5)
                        to[out=up,in=right] (0.5,1);
        \node at (0,0.5) {$\color{darkblue}\scriptstyle\circ$};
        \draw (-0.6,1) node{\footnotesize{$j-2i$}};
    \end{tikzpicture}-\delta^{-1}z \sum_{i=1}^{j-1}\begin{tikzpicture}[baseline = 5pt, scale=0.5, color=\clr]
     \draw[-,thick] (0.6,1) to (0.5,1) to[out=left,in=up] (0,0.5)
                        to[out=down,in=left] (0.5,0)
                        to[out=right,in=down] (1,0.5)
                        to[out=up,in=right] (0.5,1);
                        \node at (0,0.5) {$\color{darkblue}\scriptstyle\bullet$};
        \draw[-,thick] (1.8,1) to (1.7,1) to[out=left,in=up] (1.2,0.5)
                        to[out=down,in=left] (1.7,0)
                        to[out=right,in=down] (2.2,0.5)
                        to[out=up,in=right] (1.7,1);
                         \node at (1.2,0.5) {$\color{darkblue}\scriptstyle\circ$};
         \draw (-0.1,0.8) node{\footnotesize{$i$}};
        \draw (1.5,1.2) node{\footnotesize{$j-i$}};,\end{tikzpicture}, \text{ by Lemma~\ref{selfcrossing}(2).}
    \end{aligned}$$
\end{proof}

Suppose $d\in\mathbb{T}_{m,s}$. Throughout this paper,    $\hat{d}$ is always  the tangle diagram  obtained from $d$ by removing all loops and all dots on it.


\begin{Defn}\label{D:N.O. dotted  OBC tangle diagram} Suppose $d\in\mathbb{T}_{m,s}$.
  $d$ is said to be  normally ordered if
\begin{enumerate}

\item[(a)] all of its loops are crossing-free, and  there are no other strands shielding any of them from the leftmost edge of the picture,
  \item[(b)] there is no  \begin{tikzpicture}[baseline = 5pt, scale=0.5, color=\clr]
        \draw[-,thick] (0.6,1) to (0.5,1) to[out=left,in=up] (0,0.5)
                        to[out=down,in=left] (0.5,0)
                        to[out=right,in=down] (1,0.5)
                        to[out=up,in=right] (0.5,1);
    \end{tikzpicture} on $d$,
  \item[(c)] there is no $\circ$ on each loop.  All bullets on each loop will be  at the leftmost boundary of it.  For example,   \begin{tikzpicture}[baseline = 5pt, scale=0.5, color=\clr]
        \draw[-,thick] (0.6,1) to (0.5,1) to[out=left,in=up] (0,0.5)
                        to[out=down,in=left] (0.5,0)
                        to[out=right,in=down] (1,0.5)
                        to[out=up,in=right] (0.5,1);
                        \node at (0,0.5) {$\color{darkblue}\scriptstyle\bullet$};
        \draw (0.4,0.5) node{\footnotesize{$2$}};
    \end{tikzpicture} is allowed but  non of  \begin{tikzpicture}[baseline = 5pt, scale=0.5, color=\clr]
        \draw[-,thick] (0.6,1) to (0.5,1) to[out=left,in=up] (0,0.5)
                        to[out=down,in=left] (0.5,0)
                        to[out=right,in=down] (1,0.5)
                        to[out=up,in=right] (0.5,1);
                        \node at (1,.5) {$\color{darkblue}\scriptstyle\bullet$};
        \draw (1.4,0.5) node{\footnotesize{$2$}};
    \end{tikzpicture},\begin{tikzpicture}[baseline = 5pt, scale=0.5, color=\clr]
        \draw[-,thick] (0.6,1) to (0.5,1) to[out=left,in=up] (0,0.5)
                        to[out=down,in=left] (0.5,0)
                        to[out=right,in=down] (1,0.5)
                        to[out=up,in=right] (0.5,1);
                        \node at (0,0.5) {$\color{darkblue}\scriptstyle\circ$};
        \draw (0.4,0.5) node{\footnotesize{$2$}};
    \end{tikzpicture} and   \begin{tikzpicture}[baseline = 5pt, scale=0.5, color=\clr]
        \draw[-,thick] (0.6,1) to (0.5,1) to[out=left,in=up] (0,0.5)
                        to[out=down,in=left] (0.5,0)
                        to[out=right,in=down] (1,0.5)
                        to[out=up,in=right] (0.5,1);
        \node at (0,0.5) {$\color{darkblue}\scriptstyle\bullet$};
       \node at (1,0.5) {$\color{darkblue}\scriptstyle\bullet$};\end{tikzpicture}
     are allowed,
    \item[(d)] $ \hat d$ is  a  reduced totally descending  tangle diagram,

    \item[(e)] whenever a dot ($\bullet$ or $\circ$) appears on a vertical strand, it is on the boundary of the bottom row,

    \item[(f)] whenever a dot ($\bullet$ or $\circ$) appears on  a cap (resp.,  cup),  it is on   the leftmost boundary of the cap (resp.,   the rightmost boundary  of a cup),
    \item[(g)]  $\bullet$ and  $\circ$ can not occur on the same strand.
\end{enumerate}
\end{Defn}
For any $m, s\in\mathbb{N}$, let
$\mathbb {NT}_{m, s}=\{d\in \mathbb T_{m, s}\mid \text{$d$ is normally ordered}\}$.
The following tangle diagrams   represent two morphisms in $\Hom_{\AB}(\ob 5,\ob 5)$. The  left one is in $\mathbb {NT}_{5, 5}$ whereas  the right one is not.

\begin{center}
    \begin{tikzpicture}[baseline = 25pt, scale=0.35, color=\clr]

        \draw[-,thick] (2,0) to[out=up,in=down] (0,5);
       \draw[-,line width=4pt,white] (0,0) to[out=up,in=left] (2,1.5) to[out=right,in=up] (4,0);
       \draw[-,thick] (0,0) to[out=up,in=left] (2,1.5) to[out=right,in=up] (4,0);
               \draw[-,thick] (6,0) to[out=up,in=down] (6,5);
         \draw[-,thick] (6.7,0) to[out=up,in=down] (6.7,5);
        \draw[-,thick] (2,5) to[out=down,in=left] (3,4) to[out=right,in=down] (4,5);
        \draw[-,thick] (-1.1,4) to (-1,4) to[out=left,in=up] (-2,3) to[out=down,in=left] (-1,2)to[out=right,in=down] (0,3) to[out=up,in=right] (-1,4);
        \node at (0,0.3) {$\color{darkblue}\scriptstyle\circ$};
        \node at (-2,3) {$\color{darkblue}\scriptstyle\bullet$};
        \node at (6,0.4) {$\color{darkblue}\scriptstyle\circ$};
        \node at (3.95,4.6) {$\color{darkblue}\scriptstyle\bullet$};
       \node at (2,0.4) {$\color{darkblue}\scriptstyle\bullet$};
    \node at (-1.5,3) {$\color{darkblue}\scriptstyle 4$};
           \end{tikzpicture}
    \quad, \qquad
    \begin{tikzpicture}[baseline = 25pt, scale=0.35, color=\clr]
    \draw[-,thick] (0,0) to[out=up,in=left] (2,1.5) to[out=right,in=up] (4,0);

       \draw[-,line width=4pt,white] (2,0) to[out=up,in=down] (0,5);

       \draw[-,thick] (2,0) to[out=up,in=down] (0,5);
              %
               \draw[-,thick] (6,0) to[out=up,in=down] (6,5);
         \draw[-,thick] (6.7,0) to[out=up,in=down] (6.7,5);
        \draw[-,thick] (2,5) to[out=down,in=left] (3,4) to[out=right,in=down] (4,5);
        \draw[-,thick] (-1.1,4) to (-1,4) to[out=left,in=up] (-2,3) to[out=down,in=left] (-1,2)to[out=right,in=down] (0,3) to[out=up,in=right] (-1,4);
        \node at (4,0.3) {$\color{darkblue}\scriptstyle\bullet$};
        \node at (-2,3) {$\color{darkblue}\scriptstyle\circ$};
        \node at (6,4.6) {$\color{darkblue}\scriptstyle\bullet$};
        \node at (3.95,4.6) {$\color{darkblue}\scriptstyle\bullet$};
       \node at (2,0.4) {$\color{darkblue}\scriptstyle\bullet$};
    \node at (-1.5,3) {$\color{darkblue}\scriptstyle 4$};
           \end{tikzpicture}~~.
\end{center}

\begin{Defn}\label{equiv1} For any $d, d'\in \mathbb {NT}_{m, s}$, write  $d\sim d'$ if \begin{itemize}\item  they have the same number of $\begin{tikzpicture}[baseline = 5pt, scale=0.5, color=\clr]
        \draw[-,thick] (0.6,1) to (0.5,1) to[out=left,in=up] (0,0.5)
                        to[out=down,in=left] (0.5,0)
                        to[out=right,in=down] (1,0.5)
                        to[out=up,in=right] (0.5,1);
        \node at (0,0.5) {$\color{darkblue}\scriptstyle\bullet$};
        \draw (-0.4,0.5) node{\footnotesize{$i$}};
    \end{tikzpicture}$, for any $i\in \mathbb N\setminus 0$,  \item  $\text{conn}(\hat d)=\text{conn}(\hat{d'})$, \item  there are same number of $\bullet$ or $\circ $   on their corresponding strands. \end{itemize}\end{Defn}

We have $\hat d= \hat d'$ as morphisms in $\B$ and hence in $\AB$ if  $d, d'\in \mathbb {NT}_{m, s}$ and   $d\sim d'$.  So  $ d=  d'$ as morphisms in $\AB$.
We will identify each equivalence class of $\mathbb{NT}_{m,s}$ with any element in
it.  The following is the first main result of this paper.

\begin{Theorem}\label{affbasis} Suppose  $m, s\in \mathbb N$.
\begin{itemize}\item [(1)] If $ m+s$ is odd, then  $\Hom_{\AB}(\ob m, \ob s)=0$.
\item[(2)] If $ m+s$ is even, then $\Hom_{\AB}(\ob m, \ob s)$ has $\mathbb K$-basis given by
$\mathbb {NT}_{m, s}/\sim$. In particular,  $\Hom_{\AB}(\ob m, \ob s)$ is of infinite rank.
\end{itemize}
\end{Theorem}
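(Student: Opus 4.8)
The statement splits into three parts: the parity vanishing, a spanning (upper bound) claim, and linear independence of $\mathbb{NT}_{m,s}/\!\sim$.

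\emph{Parity.} We have already observed that $\Hom_{\AB}(\ob m,\ob s)$ is spanned by $\mathbb{T}_{m,s}$, and every dotted $(m,s)$-tangle diagram is built from exactly $(m+s)/2$ strands; so if $m+s$ is odd there are no such diagrams and $\Hom_{\AB}(\ob m,\ob s)=0$. From now on assume $m+s$ is even.

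\emph{Spanning.} My plan is a straightening algorithm rewriting an arbitrary $d\in\mathbb{T}_{m,s}$ as a $\mathbb{K}$-linear combination of normally ordered diagrams. First one simplifies the closed loops of $d$: their self-crossings are removed by Lemma~\ref{selfcrossing}(2); they are dragged to the far left by planar isotopy and the skein relation (S) of Theorem~\ref{present1}(4); an undotted loop is deleted at the cost of a factor $\omega_0$ by Theorem~\ref{present1}(6); and on a dotted loop all dots are converted to $\bullet$'s and pushed to the leftmost point by \eqref{relation 5}, \eqref{relation 6} and Lemma~\ref{addmi}, turning the loop into a bubble $\Delta_i$ with $i>0$. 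Each such move leaves behind strictly simpler remainder terms. Next, (S) --- which replaces a crossing by diagrams with fewer crossings --- together with Lemma~\ref{selfcrossing}(1) and \eqref{bbb}, is used to bring the undotted shadow $\hat d$ to a reduced totally descending tangle diagram. Finally the dots are normalized: \eqref{relation 5} slides a dot along a strand past a crossing, the error terms (produced via (S) and Lemma~\ref{selfcrossing}) having fewer crossings, so that every dot on a through-strand reaches the bottom boundary and every dot on a cup or cap reaches its canonical edge by \eqref{relation 6}, while \eqref{relation 4} (and Lemma~\ref{addmi}) cancels an adjacent $\bullet$--$\circ$ pair so the two kinds of dot never coexist on a strand. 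To see that this terminates one attaches to each diagram a complexity vector recording, in order, the number and dot-data of its loops, the number of its crossings, and the total displacement of its dots from canonical positions, and checks that every move strictly decreases it; the output is then supported on representatives of $\mathbb{NT}_{m,s}/\!\sim$. I expect the fiddliest point here to be bookkeeping this complexity drop where dot-sliding interacts with crossing resolution, though nothing deep is involved.

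\emph{Linear independence.} This is the genuine obstacle, and it is where the representation-theoretic input promised in the introduction is needed. After bending the $s$ top endpoints down with cups (using $U$, $A$ and the zig-zag relations of Theorem~\ref{present1}(7)) one may assume $s=0$. The plan is then to construct, for suitable complex Lie algebras $\mathfrak g$ of type $B$, $C$ or $D$ of arbitrarily large rank $N$, a monoidal functor $\Psi_N$ from $\AB$ to $\mathcal{E}nd$ of the module category of $U_q(\mathfrak g)$, sending $\ob1$ to the endofunctor $V\otimes-$ (with $V$ the natural module), $T$ to the $R$-matrix braiding, $U$ and $A$ to the quantum coevaluation and evaluation, $X$ to the action of a suitable rescaling of the ribbon element, and $\delta,z,\omega_0$ to the standard type-$B/C/D$ Birman--Murakami--Wenzl scalars --- which automatically satisfy \eqref{para1} --- and then to check that $\Psi_N$ respects the relations of Theorem~\ref{present1} and Definition~\ref{AK defn}. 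Three facts should then combine to exclude any nontrivial $\mathbb{K}$-linear relation among the elements of $\mathbb{NT}_{m,0}/\!\sim$: (i) for $N$ large relative to $m$, the composite of $\Psi_N$ with the functor $\B\to\AB$ of Lemma~\ref{ktau}(2) is faithful on $\Hom_{\B}(\ob m,\ob 0)$, whose rank is $(m-1)!!$ by Theorem~\ref{bask}; (ii) $X$ acts semisimply on each $V^{\otimes r}$ with eigenvalues of the form $q^{2c}$ indexed by ``contents'', so that varying $N$ separates diagrams differing only in the number of $\bullet$'s on their strands; and (iii) $\Delta_i$ maps under $\Psi_N$ to the $i$-th quantum power sum of these contents, and the first $k$ such power sums are algebraically independent once $N\gg k$, so the $\Delta_i$ satisfy no polynomial relation over $\mathbb{K}$ and the surjection $\mathbb{K}[x_1,x_2,\dots]\twoheadrightarrow\End_{\AB}(\ob 0)$, $x_i\mapsto\Delta_i$, coming from the spanning part is an isomorphism. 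Assembling (i)--(iii) via a triangularity argument in the number of cups and caps of a diagram should give the independence; infinite rank is then visible already from the $\Delta_i$, or from unbounded dots on a single through-strand. Finally, since $\mathbb{K}$ is an arbitrary integral domain, I would first run this over $\mathbb{Z}[q^{\pm1}]$ (or its fraction field), where $\Psi_N$ lives, and then base-change, using that a set which remains part of a basis after base change and spans over $\mathbb{K}$ is automatically a $\mathbb{K}$-basis. The hard part of this step will be establishing the faithfulness of $\Psi_N$ in (i) in the range needed, together with the eigenvalue and algebraic-independence bookkeeping of (ii)--(iii).
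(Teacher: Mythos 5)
Your parity and spanning arguments track the paper's (Lemma~\ref{sim}, Proposition~\ref{span1}): both run the same straightening algorithm driven by (S), (T), (L), the dot-slide relations, and Lemma~\ref{addmi}, with termination by induction on crossing number and dot displacement.

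For linear independence the two routes genuinely diverge. The paper's device is the \emph{generic Verma module} $M^{gen}=\U_v(\mathfrak g)\otimes_{\U_v(\mathfrak b)}\U_v(\mathfrak h)$ over a single $\mathfrak{so}_{2n}$ with $n>r$, together with a $z_v$-adic filtration $M_1^0\supset M_1^1=z_vM_1^0$: modulo $M_1^1$, $X$ shifts the free weight variable $k_\mu$ (Lemma~\ref{mu}) and $\Delta_i$ becomes a power-sum-like Laurent polynomial in the $n$ independent ``variables'' $k_{2\hat\epsilon_j}$ (Lemma~\ref{mmm}), which feeds a leading-monomial argument once and for all. Your alternative via the semisimple $X$-action on $V^{\otimes r}$ requires varying the rank $N$, and this is not optional: for a \emph{fixed} $\mathfrak g$ the central elements $z_i=\Psi(\Delta_i)$ do satisfy polynomial relations, since $Z(\U_v(\mathfrak g))$ has transcendence degree equal to the rank, so the claim ``the $\Delta_i$ satisfy no polynomial relation'' fails at fixed $N$ and must be recovered by the $N\gg k$ escape together with the triangularity bookkeeping you defer. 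The generic Verma module is precisely the single-module device that does the job of ``varying $N$'' for you.

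The more serious gap is your base-change step. You plan to prove independence over $\mathbb{Z}[q^{\pm 1}]$ (or its fraction field) with $(\delta,z,\omega_0)$ fixed to quantum-group values, then extend to arbitrary $\mathbb{K}$. But a ring map $\mathbb{Z}[q^{\pm 1}]\to\mathbb{K}$ is determined by $q\mapsto q_0$, and then forces $\delta=q_0^{2n-1}$ and $z=q_0-q_0^{-1}$ simultaneously; for generic $\delta,z\in\mathbb{K}$ subject only to \eqref{para1}, no such $q_0$ exists, so your base change does not cover the general affine Kauffmann category. The paper avoids this by first passing from the specializations $\delta=v^{2n-1}$, $z=v-v^{-1}$ --- valid for \emph{infinitely many} $n>r$ --- to the universal parameter ring $\mathbb{C}[\delta,\delta^{-1},z,\omega_0]/(\delta-\delta^{-1}-z(\omega_0-1))$ by a Zariski-density (``fundamental theorem of algebra'') argument, then descending to $\mathcal{Z}=\mathbb{Z}[\delta,\delta^{-1},z,\omega_0]/J$, and only then base-changing: there is always a ring map $\mathcal{Z}\to\mathbb{K}$, whereas there is usually no ring map $\mathbb{Z}[q^{\pm 1}]\to\mathbb{K}$ compatible with the given parameters. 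You need a comparable universal-parameter step before the base change.
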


\begin{Defn}\label{admi}\cite[Definition~2.19]{RX} Suppose  $u_1, \ldots, u_a$ are units in $\mathbb K$ and $\omega_i\in \mathbb K$, $ i\in \mathbb Z$. Let $\omega=\{\omega_i\mid i\in \mathbb Z\}$ and $\mathbf u=\{u_1,u_2,\ldots,u_a\}$. Then $\omega$ is called admissible if
\begin{itemize} \item [(1)]
$\omega_{-i}  =\delta^{-2} \omega_i+\delta^{-1} z \sum_{l=1}^{i-1} (\omega_{2l-i}-\omega_l\omega_{l-i})$, for any positive integer $i$,
\item [(2)]
    $\omega_i =-\sum_{j=1}^a b_{a-j} \omega_{i-j}$
  if $i\ge a$,  where
 $b_j=(-1)^{a-j} e_{a-j}(\mathbf u)$ and $e_i(\mathbf u)$ is the $i$th elementary symmetric function on $u_1, u_2, \ldots, u_a$.\end{itemize}
\end{Defn}

\begin{Defn}\label{scbmw}Suppose
 $f(t)=\prod_{i=1}^a (t-u_i)$, where $u_1, \ldots, u_a$ are units in $\mathbb K$. For any admissible $\omega$, define $\CB^f=\AB/I$ where $I$ is the right tensor ideal generated by $f(\begin{tikzpicture}[baseline=-.5mm]
 \draw[-,thick,darkblue] (0,-.3) to (0,.3);
      \node at (0,0) {$\color{darkblue}\scriptstyle\bullet$};
\end{tikzpicture})$ together with  $\Delta_i-\omega_i1_{\ob 0}$ for all  $i\in \mathbb Z\setminus\{0\} $.
 \end{Defn}
  We call $\CB^f$ the  cyclotomic Kauffmann category. It is available only if $\omega$ is admissible in the sense of Definition~\ref{admi}. In fact, Definition~\ref{admi}(1) follows from Lemma~\ref{addmi} and
Definition~\ref{admi}(2) follows from the equation  $$A\circ  (X^{i-a}\otimes 1)\circ ( f(X)\otimes 1) \circ U=0 $$  in $\CB^f$ for any $i\ge a$.

\begin{Assumption}\label{asump} Let $\mathbb K$ be an integral domain   containing  units $q, q-q^{-1}, u_1, \ldots, u_a$. We always assume
$$
\delta=\alpha \prod_{i=1}^a u_i, \text{ and } \omega_0=1+z^{-1} (\delta-\delta^{-1} ), $$ where $ z=q-q^{-1}$ and
    $\alpha\in \{1, -1\}$ if $a$ is odd and $\alpha\in \{-q, q^{-1}\}$ if $a$ is even.
\end{Assumption}

We always keep the  Assumption~\ref{asump} when we talk about  $\CB^f$ over $\mathbb K$. Later on,   we denote $q-q^{-1}$ by $z_q$.

\begin{Defn}\cite[Lemma~2.28]{RX}\label{uad} Suppose $u$ is an indeterminate. Then $\mathbf \omega$ is called $\mathbf u$-admissible if
$$\begin{aligned} & \sum_{i\ge 0} \frac{\omega_i}{u^i}=\frac{u^2}{u^2-1}-(z_q\delta)^{-1}
 +\left ( (z_q \delta)^{-1}\prod_{i=1}^a u_i+\frac{ug_{a}(u)}{u^2-1}\right )\prod_{i=1}^a u_i \prod_{i=1}^a \frac{u-u_i^{-1}}{u-u_i},\\
& \sum_{i\ge 1} \frac{\omega_{-i}}{u^i}=\frac{1}{u^2-1}+(z_q\delta)^{-1}
 -
 \left( (z_q \delta)^{-1}\prod_{i=1}^a u_i-\frac{u}{g_{a}(u) (u^2-1)}\right )\prod_{i=1}^a u_i^{-1}\prod_{i=1}^a \frac{u-u_i}{u-u_i^{-1}},
 \end{aligned}$$
where $g_a(u)=1$ (resp., $-u$) if $a$ is odd (resp.,  even).
\end{Defn}

 Recall that $\lfloor \ell \rfloor$ is the maximal
integer such that  $\lfloor \ell \rfloor\le \ell$, where $\ell$ is a real number.
\begin{Defn} For any   $m, s\in \mathbb N$, let $\bar{\mathbb {NT}}_{m,s}$  be the set of all $d\in \mathbb {NT}_{m, s}$ such that there is no $\begin{tikzpicture}[baseline = 5pt, scale=0.5, color=\clr]
        \draw[-,thick] (0.6,1) to (0.5,1) to[out=left,in=up] (0,0.5)
                        to[out=down,in=left] (0.5,0)
                        to[out=right,in=down] (1,0.5)
                        to[out=up,in=right] (0.5,1);
        \node at (0,0.5) {$\color{darkblue}\scriptstyle\bullet$};
        \draw (-0.4,0.5) node{\footnotesize{$i$}};
    \end{tikzpicture}$ on $d$, $\forall i\in \mathbb N$. For any positive integer $\ell$, let   $\bar{\mathbb {NT}}_{m,s}^\ell$ be the subset of all  $d\in \bar{\mathbb {NT}}_{m,s}$ such that $$-i, j\in \left\{\lfloor \frac{\ell-1}{2}\rfloor, \lfloor\frac{\ell-1}{2}\rfloor-1,\ldots,-\lfloor \frac{\ell}{2}\rfloor\right\}$$
     if there are $i$ (resp., $j$) $\bullet$'s near each endpoint at top (resp., bottom) row of $d$.
\end{Defn}

The    equivalence relation $\sim$  in  Definition~\ref{equiv1} induces  equivalence relations $\sim$ on both $\bar{\mathbb {NT}}_{m,s}$ and $\bar{\mathbb {NT}}_{m,s}^a$.
We have $ d=  d'$ as morphisms in $\AB$ and hence in $\CB^f$ if  $d, d'\in \bar{\mathbb {NT}}_{m, s}^a$ and   $d\sim d'$. So,  each equivalence class in $\bar{\mathbb{NT}}_{m,s}^a$ can be identified with any element in it. Theorem~\ref{cycb}  is the second main result of this paper.

\begin{Theorem}\label{cycb}  Keep the Assumption~\ref{asump}.  Suppose  $m, s\in \mathbb N$.\begin{itemize}\item[(1)]  If $m+s$ is  odd,
  then $\Hom_{\CB^f}(\ob m, \ob s)=0$.
  \item[(2)]If $ m+s$ is even, then $\Hom_{\CB^f}(\ob m, \ob s)$ has  $\mathbb K$-basis given by  $\bar{\mathbb {NT}}_{m, s}^a/\sim$  if and only if $\mathbf \omega$ is $\mathbf u$-admissible. In this case, $\Hom_{\CB^f}(\ob m, \ob s)$ is of rank $a^{\frac{m+s}{2}}(m+s-1)!!$.
      \end{itemize} \end{Theorem}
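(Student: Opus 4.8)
The plan is as follows. Part~(1) is immediate: $\CB^f$ is a quotient of $\AB$, so $\Hom_{\CB^f}(\ob m,\ob s)$ is a quotient of $\Hom_{\AB}(\ob m,\ob s)$, which vanishes for $m+s$ odd by Theorem~\ref{affbasis}(1). For Part~(2) I would first prove that $\bar{\mathbb{NT}}_{m,s}^a/\!\sim$ always spans $\Hom_{\CB^f}(\ob m,\ob s)$, and then prove $\mathbb K$-linear independence holds exactly when $\omega$ is $\mathbf u$-admissible.

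\emph{Spanning.} By Theorem~\ref{affbasis}(2) the classes $\mathbb{NT}_{m,s}/\!\sim$ span $\Hom_{\CB^f}(\ob m,\ob s)$. Using $\Delta_i=\omega_i1_{\ob 0}$ one evaluates every dotted loop to a scalar, reducing to classes of diagrams in $\bar{\mathbb{NT}}_{m,s}$. Since $f(\xdot)=0$ in $\CB^f$ we have $f(X)\otimes 1_{\ob k}=0$, hence also $(f(X)\otimes 1)\circ U=0$ and $A\circ(f(X)\otimes 1)=0$; combining these with the dot-sliding relations of Definition~\ref{AK defn}(2),(3) and with the fact that $f$ has invertible constant term, every power of $X$ on a vertical strand, a cup, or a cap can be rewritten in terms of the $a$ ``window'' powers with exponents in $\{\lfloor\tfrac{a-1}{2}\rfloor,\dots,-\lfloor\tfrac a2\rfloor\}$, modulo diagrams with strictly fewer dots or with extra loops (which are re-absorbed into $\mathbb K$ by the previous step). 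An induction on the total number of dots then shows $\bar{\mathbb{NT}}_{m,s}^a/\!\sim$ spans, and counting the $(m+s-1)!!$ connectors together with the $a$ admissible dot-labels on each of the $\tfrac{m+s}2$ strands bounds its size by $a^{\frac{m+s}2}(m+s-1)!!$. This step uses only admissibility of $\omega$, which is part of Definition~\ref{scbmw}.

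\emph{Sufficiency of $\mathbf u$-admissibility.} Assuming $\omega$ is $\mathbf u$-admissible, I would build a $\mathbb K$-linear monoidal functor $G^f$ from $\CB^f$ to the category $\END(\mathcal{O})$ of endofunctors of a parabolic BGG category $\mathcal{O}$ for the quantum group $U_q(\mathfrak{g})$ of type $B_N$, $C_N$ or $D_N$: $\ob 1$ goes to translation by the natural module $V$, $T^{\pm1}$ to the $R$-matrix, $U$ and $A$ to the (co)evaluation, and $X$ to the operator built from the ribbon element. All relations of Theorem~\ref{present1} and Definition~\ref{AK defn} hold for these operators in any such category; the role of $\mathbf u$-admissibility is precisely to guarantee that, under Assumption~\ref{asump}, $X$ satisfies $f(X)=0$ and the loops $\Delta_i$ act by the prescribed scalars $\omega_i$ — the two generating-function identities of Definition~\ref{uad} are exactly these eigenvalue and quantum-trace statements for $V^{\otimes r}$, matched against Lemma~\ref{addmi} and Definition~\ref{admi}(2). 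Hence $G^f$ descends to $\CB^f$. Taking $N\gg m,s,a$ and using the decomposition of $V^{\otimes r}$ (equivalently, for $m=s=r$, the known basis theorem for the cyclotomic Birman--Murakami--Wenzl algebra of rank $a^r(2r-1)!!$), $G^f$ is injective on $\Hom_{\CB^f}(\ob m,\ob s)$; combined with the upper bound from the spanning step — and, for general $m,s$, with the rigidity isomorphism $\Hom_{\CB^f}(\ob m,\ob s)\cong\Hom_{\CB^f}(\ob 0,\ob{m+s})$ obtained by bending all strands to the top (cf.\ Lemma~\ref{ktau}) to reduce to a fixed endomorphism algebra — this shows $\bar{\mathbb{NT}}_{m,s}^a/\!\sim$ is a $\mathbb K$-basis of rank $a^{\frac{m+s}2}(m+s-1)!!$.

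\emph{Necessity, and the main obstacle.} For the converse, if $\omega$ is not $\mathbf u$-admissible I would produce a nontrivial $\mathbb K$-relation among $\bar{\mathbb{NT}}_{m,s}^a/\!\sim$ (it suffices to do this inside $\End_{\CB^f}(\ob r)$ for small $r$ and transport it by bending): evaluating a closed diagram of the form $A\circ(X^k\otimes 1)\circ\cdots\circ U$ by reducing dots before versus after closing up forces the $\omega_i$ to obey exactly the recursions encoded by Definition~\ref{uad}, so if those fail the over-determined system collapses one of the would-be basis vectors. I expect the genuinely hard part to be the sufficiency step: verifying that the identities of Definition~\ref{uad} coincide with the eigenvalues of $X$ and the quantum traces $\Delta_i$ on $V^{\otimes r}$ under Assumption~\ref{asump} (the computational heart of the argument), and then upgrading the resulting faithfulness of $G^f$ on $\End_{\CB^f}(\ob r)$ to all of $\Hom_{\CB^f}(\ob m,\ob s)$ while checking that the bending maps are genuine $\mathbb K$-module isomorphisms.
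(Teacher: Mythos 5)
Part (1) and the spanning step match the paper (the spanning step corresponds to Proposition~\ref{cycba}). The rest diverges in two important ways, one of which is a genuine gap.

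\emph{Sufficiency.} Your plan to push the basis candidates through a functor to (endofunctors of) a parabolic category $\mathcal O$ is the right idea, and the paper does carry out a version of this with parabolic Verma modules $M^{\mathfrak p_I}(\lambda_{I,\mathbf c})$. But the functor to category $\mathcal O$ only exists for very special parameter values $(\delta,z,u_1,\dots,u_a)$, namely those coming from $\U_v(\mfg)$ in \eqref{lamb} and \eqref{polf123}--\eqref{polf12}. Your sketch jumps straight to ``$G^f$ is injective, hence $\bar{\mathbb{NT}}_{m,s}^a/\sim$ is a $\mathbb K$-basis'' for an arbitrary integral domain $\mathbb K$ satisfying Assumption~\ref{asump}. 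That step is missing the crucial intermediate: one must first prove independence over the generic ring $\mathcal Z=\mathbb Z[\hat q^{\pm1},(\hat q-\hat q^{-1})^{-1},\hat u_i^{\pm1}]$, which the paper does via Theorem~\ref{calz} by specializing $(\hat q,\hat u_1,\dots,\hat u_a)$ to the quantum group parameters for infinitely many ranks $n$ and invoking Zariski density of the image of the map $\phi_a$ (Jacobian computation and all). Only after that does base change to $\mathbb K$ give the claim. Without this specialization argument you have only proved the theorem for the one family of $\mathbb K$'s on which your functor is defined. The alternative route you float — quoting the known basis theorem for the cyclotomic BMW algebra — also does not close the gap by itself, because the surjection $\bar\gamma\colon W_{a,r}\twoheadrightarrow\End_{\CB^f}(\ob r)$ could a priori have a kernel; you still need an independent lower bound on the rank of the target, which again requires the functor plus specialization.

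\emph{Necessity.} Here your approach is genuinely different and, as written, does not work. Evaluating closed loops $\Delta_i$ in two ways only forces Definition~\ref{admi}, which is already built into the definition of $\CB^f$ (it is the condition for the quotient to exist at all, see the remarks after Definition~\ref{scbmw}). The $\mathbf u$-admissibility of Definition~\ref{uad} is a strictly stronger compatibility between $\delta,z,\mathbf u$ and the $\omega_i$, and you have given no mechanism by which a failure of it would ``collapse'' a basis vector. The paper handles this cleanly and noncircularly: assume the basis statement, hence $\End_{\CB^f}(\ob r)$ has rank $a^r(2r-1)!!$; combine the surjection $\bar\gamma\colon W_{a,r}\twoheadrightarrow\End_{\CB^f}(\ob r)$ with Goodman's rank formula $\operatorname{rank}(W_{a,r})=c^r((2r-1)!!-r!)+a^r r!$ to force $c=a$, i.e.\ $e_1,e_1x_1,\dots,e_1x_1^{a-1}$ independent; then cite Goodman's equivalence between that condition and $\mathbf u$-admissibility. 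Reconstructing that equivalence from scratch by diagram manipulations, as you propose, would essentially mean re-deriving Goodman's structure theory of cyclotomic BMW algebras, which is a substantial undertaking and not a step one can leave implicit.
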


\section{Connections to  quantum  symplectic and orthogonal groups}
We follow the conventions in \cite{Bou}. In this paper,
$\mathfrak g$ is always  either a complex symplectic  Lie algebra  $\mathfrak{sp}_{2n}$ or orthogonal Lie algebras $\mathfrak{so}_{2n}, \mathfrak{so}_{2n+1}$. Let  $\mathcal R$ be the root system associated to $\mathfrak g$ with fixed simple roots $\Pi=\{\alpha_1, \ldots, \alpha_n\}$ such that $\alpha_i=\epsilon_i-\epsilon_{i+1}$, $1\le i\le n-1$  and
$\alpha_n= \epsilon_n$ (resp., $2\epsilon_n, \epsilon_{n-1}+\epsilon_n$)
if $\mathfrak g=\mathfrak{so}_{2n+1}$ (resp., $\mathfrak{sp}_{2n}, \mathfrak{so}_{2n}$).
Then the set of positive roots

\begin{equation}\label{typeb}
\mathcal{R}^+=
        \{\epsilon_i\pm \epsilon_j\mid 1\le i<j\le n\}\cup  \mathcal Z_{\mfg} \end{equation}
        where $\mathcal Z_{\mathfrak{so}_{2n+1}}=\{\epsilon_i\mid 1\le i\le n\}$,
       $\mathcal Z_{ \mathfrak{sp}_{2n}}= \{2\epsilon_i\mid 1\le i\le n\}$, and
     $\mathcal Z_{\mathfrak{so}_{2n}}=\emptyset$.
     From here onwards, we always assume  \begin{equation}\label{NN}\mathrm N= 2n+1, \text{(resp., $2n$)  if $\mathfrak g=\mathfrak{so}_{2n+1}$ (resp., $\mfg \in \{\mathfrak{so}_{2n}, \mathfrak {sp}_{2n}\}$),}\end{equation}
  and    $i'=\mathrm N +1-i, 1\le i\le \mathrm N$.
     Then the half sum of positive roots \begin{equation} \label{posr} \varrho=  \sum_{i=1}^n \varrho_i\epsilon_i,\end{equation}
 where $\varrho_i= \frac{1}{2} \mathrm N -i+b_\mfg$ and $b_{\mathfrak{sp}_{\mathrm N}}=1$ and  $b_{\mathfrak{so}_{\mathrm N}}=0$.
Later on, we assume  $\varrho_{i'}=-\varrho_i$ $1\le i\le n$ and $\varrho_{n+1}=0$ if    $\mfg=\mathfrak{so}_{2n+1}$.

 On the real vector space spanned by $\mathcal R$, there is an inner product
$(\ \mid \ )$ such that
  $(\epsilon_i\mid \epsilon_j)= 2\delta_{i,j}$ (resp., $\delta_{i,j}$)  if $\mathfrak g=\mathfrak{so}_{2n+1}$ (resp., otherwise). So,
  \begin{equation}\label{varrho}(2\varrho\mid \alpha_i)=(\alpha_i\mid\alpha_i) \text{ for any  simple root $\alpha_i$.}\end{equation}
The fundamental weights  $\varpi_1, \ldots, \varpi_n$ satisfy  $(\varpi_i\mid \alpha_j)=\delta_{ij} d_i$, where
   $d_i=\frac{1}{2} (\alpha_i\mid \alpha_i)$. The integral weight lattice $\mathcal{P}=\oplus_{i=1}^n \mathbb Z \varpi_i$ and the set of dominant integral weights    $\mathcal{P}^+=\oplus_{i=1}^n \mathbb N \varpi_i$.

 If $\mfg=\mathfrak{so}_{2n}$, let $\mathbb F=\mathbb C(q^{1/4})$  where   $q^{1/4}$ is an indeterminate.
Otherwise,   let $\mathbb F=\mathbb C(q^{1/2}) $.
   Later on,
 \begin{equation}\label{vv} \text{ $v=q^{1/2}$  if $\mfg=\mathfrak{so}_{2n+1}$, and $v=q$ if  $\mfg =\mathfrak{sp}_{2n}, \mathfrak {so}_{2n}$.}\end{equation}
The quantum group
$\U_v(\mathfrak g)$ of simply connected type  is the $\mathbb F$-algebra generated by $\{x_i^\pm,k_\lambda\mid 1\le i\le n, \lambda\in \mathcal{P}\}$ subject to the relations  in~\cite[\S 3.1.1]{Lu}. Let  $\U_v(\mathfrak h)$ be the subalgebra generated by $\{k_\lambda\mid\lambda\in\mathcal{P}\}$. The quantum group  $\bar{\U}_v(\mathfrak g)$ of adjoint type is the $\mathbb F$-algebra   generated by $\{x_i^\pm,k_\lambda\mid\lambda\in \mathcal{Q}\}$, where $\mathcal Q$ is the root lattice associated to $\mathfrak g$. Later on, we denote by $\mathcal Q^+$ the positive root lattice.  It is known that $\U_v(\mathfrak g)$ is a Hopf algebra and the comultiplication $\Delta$, counit $\epsilon$ and antipode $S$ satisfy

\begin{equation} \label{rell} \begin{array} {ccc} \Delta(x_i^+) =x_i^+\otimes 1+k_{\alpha_i}\otimes x_i^+, & \epsilon(x_i^+)=0, & S(x_i^+)=-k_{-\alpha_i} x_i^+,\\
  \Delta(x_i^-)  =x_i^-\otimes k_{-\alpha_i}+1\otimes x_i^-, &  \epsilon(x_i^-)=0, &  S(x_i^-)=-x_i^- k_{\alpha_i},\\
 \Delta(k_\mu)=k_\mu \otimes k_\mu, &   \epsilon(k_\mu)=1, & S(k_\mu)=k_{-\mu}.\\
\end{array}\end{equation}
For all $u\in \U_v(\mathfrak g)$, let
\begin{equation}\label{bar123} \bar\Delta(u)=(-\otimes -)\circ \Delta(\bar u), \end{equation}  where $\bar{ \  }$ is the $\mathbb C$-linear automorphism of $\U_v(\mathfrak g)$ such that  $\bar x_i^\pm= {x_i}^\pm$,  $\bar k_\mu=k_{-\mu}$ and $\bar q^{1/4}=q^{-1/4}$ (resp., $\bar q^{1/2}=q^{-1/2}$) if $\mfg=\mathfrak{so}_{2n}$ (resp., otherwise). Then
 \begin{equation} \label{rell1} \bar\Delta(x_i^+) =x_i^+\otimes 1+k_{-\alpha_i}\otimes x_i^+, \ \bar\Delta(x_i^-) =x_i^-\otimes k_{\alpha_i}+1\otimes x_i^-, \ \bar\Delta(k_\mu)=k_\mu \otimes k_\mu,\end{equation}
for all admissible $i$ and $\mu$.

In this paper,  a $  \U_v(\mathfrak g)$-module $M$ is  always a left module. It is called a weight module if $$M=\oplus_{\lambda\in \mathcal{P}} M_\lambda,$$ where $ M_\lambda=\{m\in M\mid k_\mu m=v^{(\lambda\mid\mu)} m, \text{ for any $\mu\in \mathcal{P}$}\}$, called the $\lambda$-weight space of $M$.
Write $wt(m)=\lambda \text{ if } m\in M_\lambda$.
It is easy to check   that  a weight $\U_v(\mathfrak g)$-module  is always a  weight $\bar {\U}_v(\mathfrak g)$-module and vice versa. Further, a $\U_v(\mathfrak g)$-homomorphism between two weight modules is always a $\bar {\U}_v(\mathfrak g)$-homomorphism and vice versa. This enables us to use  previous results on  $\bar {\U}_v(\mathfrak g)$ in \cite{RS2}, directly.

Let $W$ be  the Weyl group corresponding to $\mathcal R$. It is a Coxeter group
 generated by $\{s_i\mid 1\le i\le n\}$, where $s_i$ is the simple reflection $s_{\alpha_i}$.
 For any $ 1\le i\le n$,  let $$T_i=T_{i,+1}^{''},$$   the braid group generator defined by Lusztig  in \cite[\S 37.1.3]{Lu}. In general, $T_w=T_{i_1}\cdots T_{i_k}$ if $s_{i_1}\cdots s_{i_k}$ is a reduced expression of $w$.

  Throughout this paper, $w_0$ is always   the longest element in $W$. Let $\ell(\ )$ be the length function on $W$.
Fix a reduced expression $\vec{\prod}_{j=1}^{\ell(w_0)} s_{i_j}$ of $w_0$  and   define   $ \beta_1=\alpha_{i_1}$,  $\beta_j=s_{i_1}s_{i_2}\cdots s_{i_{j-1}}(\alpha_{i_j})$   and $d_{\beta_j}= \frac{1} {2}(\beta_j|\beta_j)$, $1\le j\le \ell(w_0)$.
 Then  $$\mathcal R^+=\{{\beta_j}\mid1\le j\le \ell(w_0)\}$$ such that
there is a  convex ordering on $\mathcal{R}^{+}$\cite{Pa} satisfying
 $\beta_j<\beta_k$ whenever $j<k$ and
 $ \beta_i+\beta_j=\beta_k$
for some $k, i<k<j$   if  $\beta_i+\beta_j\in \mathcal{R}^{+}$.
Motivated by \cite{Lu1}, we consider    root elements $$x_{\beta_j}^\pm =T_{i_1}T_{i_2}\cdots T_{i_{j-1}} (x_{i_j}^\pm)$$ for all admissible $j$. For any  $\mathbf r\in \mathbb N^{\ell(w_0)}$,  let
\begin{equation}\label{xplus}x_{\mathbf r}^+=\vec{\prod}_{i=1}^{\ell(w_0)} (x_{\beta_i}^+)^{r_i} \text{ and  }
 x^-_{\mathbf r}=\vec{\prod}_{i=\ell(w_0)}^1 (x_{\beta_i}^-)^{r_i}. \   \end{equation}
Then $\{x^-_{\mathbf r}k_\mu x^+_{\mathbf s}\mid  \mathbf r, \mathbf s\in \mathbb N^{\ell(w_0)}, \mu\in \mathcal P\}$ forms a PBW-like basis of $\U_v(\mathfrak g)$. Let $\U_v^+(\mathfrak g)$ (resp., $\U_v^-(\mathfrak g)$) be the subalgebra of $\U_v(\mathfrak g)$ with basis $ \{ x^+_{\mathbf r}\mid  \mathbf r\in \mathbb N^{\ell(w_0)}\}$ (resp., $ \{ x^-_{\mathbf r}\mid  \mathbf r\in \mathbb N^{\ell(w_0)}\}$). The Borel subalgebras $\U_v(\mathfrak b)=\U_v(\mathfrak h)\otimes_{\mathbb F}\U_v^+(\mathfrak g)$ (resp., $\U_v(\mathfrak b^-)=\U_v^-(\mathfrak g)\otimes_{\mathbb F}\U_v(\mathfrak h)$).

In \cite{CP, LS, Lu1} etc,   root elements are defined via braid generators
 $$T_i=T_{i,-1}^{''}$$ in  \cite[\S 37.1.3]{Lu}. Let  $\hat{x}_{\beta_j}^{\pm}$'s be  the corresponding root elements. Checking their  actions on generators yields  \begin{equation}\label{t1}T_{i,+1}^{''}=\tau' T_{i,-1}^{''} \tau', 1\le i\le n,\end{equation}
where  $\tau'$ is the $\mathbb F$-linear automorphism of $\U_v(\mathfrak g)$ such that  $\tau'(x_i^{\pm})=-x_i^{\pm}$ and    $\tau'(k_\mu)=k_\mu$  for all admissible $\mu$ and $i$. So,   \begin{equation}\label{ccc123} {x}_{\beta_j}^{\pm}=-\tau'(\hat{x}_{\beta_j}^{\pm}).\end{equation}  Thanks to \cite[Theorem 9.5]{CP}, $\hat x_{\alpha_i}^{\pm}=x_i^{\pm}$ for any simple root $\alpha_i$. So $$ x_{\alpha_i}^{\pm}=-\tau'(\hat x_{\alpha_i}^{\pm})=x_i^{\pm}.$$
Let $\tau$ be the $\mathbb F$-linear  anti-automorphism of $\U_v(\mathfrak g)$ such that
 $$\tau (x_i^+)=x_i^-,  \text{ $\tau(x_i^-)=x_i^+$ and
 $\tau(k_\mu)=k_\mu$}$$ for all admissible $i$ and $\mu$.
  Let $\bar{\tau}:=-\circ \tau$ where $\bar{\ }$ is the $\mathbb C$-linear automorphism of $\U_v(\mathfrak g)$ in \eqref{bar123}.
   Then $\bar \tau$  is a $\mathbb C$-linear anti-automorphism of $\U_v(\mathfrak g)$.  Checking the actions on generators yields $\bar{\tau }T_i=T_i\bar{\tau}$. So,  \begin{equation}\label{switch}\bar{\tau}(x_{\beta_i}^{\pm})=x_{\beta_i}^{\mp}, \text{ for any $\beta_i\in \mathcal{R}^{+}$.} \end{equation}

\begin{Defn} \label{codeg} For any $i\in \mathbb Z$, define  $\mathcal A_i=\{x\in \mathbb C(q^{1/2})\mid
ev(x)\geq i\}$ where $ev: \mathbb C(q^{1/2})\rightarrow \mathbb Z$ such that $ev(0)=\infty$ and
$ev(x)=j$ if $0\neq x=(q^{1/2}-1)^j g/h$ and  $g, h\in \mathbb C[q^{1/2}]$ such that $(g, q^{1/2}-1)=1$, $(h, q^{1/2}-1)=1$. \end{Defn}

  Obviously,  $\mathcal A_i\subset \mathcal A_{i-1}$ for any
$i\in\mathbb Z$ and  $\mathcal A_i$ is a subring of $\mathbb C(q^{1/2})$ if $i\in \mathbb N$.

 \begin{Lemma}\label{commut} Suppose  $1\leq i<j\leq \ell(w_0)$. Then \begin{itemize} \item[(1)]${x_{\beta_i}^-}{x_{\beta_j}^-}-v^{(\beta_i\mid\beta_j)}{x_{\beta_j}^-}{x_{\beta_i}^-}=\sum_{\mathbf r\in\mathbb{N}^{\ell(w_0)} } c_{\mathbf r}^{-}x^-_{\mathbf r}$,
\item[(2)]${x_{\beta_j}^+}{x_{\beta_i}^+}-v^{-(\beta_i\mid\beta_j)}{x_{\beta_i}^+}{x_{\beta_j}^+}=\sum_{\mathbf r\in\mathbb{N}^{\ell(w_0)} } c_\mathbf{r}^{+} x^+_{\mathbf r}$,
\end{itemize}
    such that   $c_\mathbf{r}^{\pm}\in \mathcal A_0$   and $\sum_{l=1}^{\ell(w_0)} r_l\beta_l=\beta_i+\beta_j$.
Moreover,  $c_\mathbf{r}^{\pm}= 0$ unless  $r_l=0$ for all $l$ such that either  $l\leq i$ or  $l\geq j$.
 \end{Lemma}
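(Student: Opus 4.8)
The plan is to deduce (2) from (1) by applying an anti-automorphism, and to prove (1) by transporting the classical Levendorskii--Soibelman straightening formula through the automorphisms already introduced, supplementing it with the $\mathcal Q^+$-weight bookkeeping and an integrality argument via Lusztig's integral form.

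\emph{Reduction of (2) to (1).} Apply the $\mathbb C$-linear anti-automorphism $\bar\tau$ of $\U_v(\mathfrak g)$ to the identity in (1). By \eqref{switch} we have $\bar\tau(x_{\beta_l}^{\pm})=x_{\beta_l}^{\mp}$, and $\bar\tau(v)=v^{-1}$ (since $\tau$ is $\mathbb F$-linear and the bar-involution inverts $v$), so the left-hand side of (1) is carried to the left-hand side of (2). Since $\bar\tau$ reverses products and the ordered product defining $x_{\mathbf r}^+$ in \eqref{xplus} runs in the order opposite to that of $x_{\mathbf r}^-$, we get $\bar\tau(x_{\mathbf r}^-)=x_{\mathbf r}^+$. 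Finally the bar-involution $q^{1/2}\mapsto q^{-1/2}$ fixes the point $q^{1/2}=1$ and hence preserves the valuation $ev$ of Definition~\ref{codeg}; so $\bar\tau(\mathcal A_0)=\mathcal A_0$, and setting $c_{\mathbf r}^{+}:=\overline{c_{\mathbf r}^{-}}\in\mathcal A_0$ yields (2) with the same constraints on $\mathbf r$. Thus it remains to prove (1).

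\emph{Weight and support.} The algebra $\U_v^-(\mathfrak g)$ is $\mathcal Q^+$-graded with $x_{\beta_l}^-$ of degree $\beta_l$, so both sides of (1) are homogeneous of degree $\beta_i+\beta_j$, and the PBW expansion of the left-hand side uses only those $\mathbf r$ with $\sum_l r_l\beta_l=\beta_i+\beta_j$; this is the asserted weight condition. For the support condition ($r_l=0$ whenever $l\le i$ or $l\ge j$), I would invoke the Levendorskii--Soibelman formula for the root vectors $\hat x_{\beta_l}^{\pm}$ attached to $T_{i,-1}''$, as in \cite{LS,CP,Lu1}, and transport it along \eqref{ccc123}: since $x_{\beta_l}^{\pm}=-\tau'(\hat x_{\beta_l}^{\pm})$ with $\tau'$ an $\mathbb F$-linear algebra automorphism fixing all scalars, applying $\tau'$ to the known identity for the $\hat x$'s produces the identity (1) with coefficients changed only by signs and the same weight and support constraints. (Alternatively one reproves it by induction on $j-i$, using the braid operators to reduce to the rank-two root subsystems spanned by consecutive roots, where the quantum Serre relations and convexity of the ordering push the support into the open interval $(i,j)$.)

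\emph{Integrality.} By Lusztig's theorem \cite{Lu}, the $\mathbb Z[v,v^{-1}]$-form ${}_{\mathcal A}\U_v^-(\mathfrak g)$ has $\mathbb Z[v,v^{-1}]$-basis the ordered products of divided powers $\prod_l(x_{\beta_l}^-)^{(r_l)}$, and each root vector $x_{\beta_l}^-$ lies in it; hence $x_{\beta_i}^- x_{\beta_j}^- - v^{(\beta_i\mid\beta_j)}x_{\beta_j}^-x_{\beta_i}^-$ lies in ${}_{\mathcal A}\U_v^-(\mathfrak g)$ and expands on this basis with coefficients in $\mathbb Z[v,v^{-1}]$. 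Rewriting $\prod_l(x_{\beta_l}^-)^{(r_l)}$ as $x_{\mathbf r}^-$ divided by a product of quantum factorials, and noting that each such factorial specializes to a positive integer factorial at $v=1$ and is therefore a unit in $\mathcal A_0$ (we work in characteristic zero), we obtain $c_{\mathbf r}^-\in\mathcal A_0$, and then $c_{\mathbf r}^+\in\mathcal A_0$ by the first step. The main obstacle here is the support condition: although it is the classical Levendorskii--Soibelman formula, care is needed to reconcile the convention $T_{i,+1}''$ used in this paper with the $T_{i,-1}''$ convention of the references (via \eqref{ccc123}) and to pin down exactly where convexity of the chosen reduced expression forces the support into $(i,j)$; the weight statement and the $\mathcal A_0$-integrality are then short formal additions.
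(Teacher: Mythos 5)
Your proposal is correct and follows essentially the same route as the paper's proof: both transport the Levendorskii--Soibelman straightening relation through the automorphism $\tau'$ using \eqref{ccc123}, obtain the $\mathcal A_0$-integrality of the coefficients from Lusztig's integral-form/divided-power result (the paper cites \cite[Theorem~6.7(ii)]{Lu1} directly, while you phrase the same fact via the $\mathbb Z[v,v^{-1}]$-basis of divided powers and the observation that quantum factorials are units in $\mathcal A_0$), and pass between the $+$ and $-$ statements with the $\mathbb C$-linear anti-automorphism $\bar\tau$ and \eqref{switch}. The only difference is the order of the two halves: the paper proves (2) first from LS and then obtains (1) by applying $\bar\tau$, whereas you prove (1) first and deduce (2); this is immaterial.
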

 \begin{proof}By \cite[Proposition~5.5.2]{LS},  \begin{equation}\label{bbb123} {\hat {x}_{\beta_j}^+}{\hat {x}_{\beta_i}^+}-v^{-(\beta_i\mid\beta_j)}{\hat{x}_{\beta_i}^+}{\hat{x}_{\beta_j}^+}=\sum_{{\mathbf r\in \mathbb{N}^{\ell(w_0)}}}\hat{c}_{\mathbf r} \vec{\prod}_{i=1}^{\ell(w_0)} (\hat x_{\beta_i}^+)^{r_i},\end{equation} where $\hat{c}_{\mathbf r}=0$ unless   $r_l=0$ for all $l$ such that  $l\le  i$ or   $l\geq j$,  and $\sum_{l=1}^{\ell(w_0)} r_l\beta_l=\beta_i+\beta_j$. Recall $\tau'$ in \eqref{t1}.
 Acting $\tau'$ on both sides of \eqref{bbb123} and using \eqref{ccc123}  yield  the commuting relation about $x_{\beta_j}^{+} x_{\beta_i}^{+}$.  In this case, $c_{\mathbf r}^+=\hat c_{\mathbf r}$ up to a sign.
In order to prove (2), we need to verify $c_{\mathbf r}^+\in\mathcal A_0$.

Thank to \cite[Theorem~6.7(ii)]{Lu1}, $\hat c_{\mathbf r}=g(v)\prod_{j=1}^{\ell(w_0)} ([r_j]^!_{d_{\beta_j}})^{-1} $ for some $g(v)\in\mathbb{Z}[v,v^{-1}]$,  where $$[i]_d=\frac{v^{di}-v^{-di}}{v^d-v^{-d}} \text{  and
 $ [k]^!_d=\prod_{i=1}^k [i]_d $.}$$
  So,
 $\hat c_{\mathbf r}\in\mathcal A_0$, and hence  $ c_{\mathbf r}^+\in\mathcal A_0$.
Note that $\bar{\tau}( \mathcal A_0)\subseteq \mathcal A_0$.
  Acting $\bar{\tau}$ on both sides of  the equation in (2)
  and using \eqref{switch} yield (1).
 \end{proof}

  For any  ordered sequence of positive roots $I$, we say that $I$ is of length (resp., weight)  $j$ (resp.,   $\sum_{l=1}^j \beta_{i_l}$) and write $\ell(I)=j$ (resp.,  $wt(I)=\sum_{l=1}^j \beta_{i_l}$)
   if $I =(\beta_{i_1}, \beta_{i_2}, \ldots, \beta_{i_j})$.
   In this case, define
 $x_I^\pm=\vec{\prod}_{l=1}^j x_{\beta_{i_l}}^\pm $. If $j=0$, write  $x_I^\pm=1$.

\begin{Cor}\label{monoial} Suppose  $I=(\beta_{i_1}, \beta_{i_2}, \ldots, \beta_{i_j})$.
Then $x_I^-$ can be written  as an $\mathcal A_0 $-linear combination of $x_{\mathbf r}^-$, $\mathbf r\in \mathbb N^{\ell(w_0)}$.
\end{Cor}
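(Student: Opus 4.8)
The plan is to prove the assertion by straightening the (a priori unordered) product $x_I^-=\vec{\prod}_{l=1}^{j}x_{\beta_{i_l}}^-$ into PBW order by repeated use of Lemma~\ref{commut}(1), using a two-layered well-founded induction to guarantee termination. Fix once and for all an integer $B$ strictly larger than $\sum_l r_l$ for every tuple $\mathbf r$ that occurs on the right-hand side of some instance of Lemma~\ref{commut}(1); such a $B$ exists because there are only finitely many pairs $1\le i<j\le\ell(w_0)$ and, for each, only finitely many $\mathbf r\in\mathbb N^{\ell(w_0)}$ with $\sum_l r_l\beta_l=\beta_i+\beta_j$. For a word $I=(\beta_{i_1},\dots,\beta_{i_j})$ of positive roots put
\[
\Phi(I)=\sum_{l=1}^{j} B^{\,\ell(w_0)-i_l}\in\mathbb N,
\qquad
\mathrm{inv}(I)=\#\bigl\{(l,l')\mid l<l',\ i_l<i_{l'}\bigr\},
\]
and induct on the pair $(\Phi(I),\mathrm{inv}(I))$, ordered lexicographically.

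If $\mathrm{inv}(I)=0$ then $i_1\ge i_2\ge\cdots\ge i_j$, so $x_I^-=x_{\mathbf r}^-$ with $r_l=\#\{k\mid i_k=l\}$ and there is nothing to do. Otherwise pick $k$ with $i_k<i_{k+1}$ and apply Lemma~\ref{commut}(1) with $i=i_k$, $j=i_{k+1}$, multiplying by the remaining (unchanged) factors of $x_I^-$ on the left and right; this gives
\[
x_I^- = v^{(\beta_{i_k}\mid\beta_{i_{k+1}})}\,x_{I'}^- + \sum_{\mathbf r} c_{\mathbf r}^-\, x_{I_{\mathbf r}}^-,
\]
where $I'$ is $I$ with the entries in positions $k$ and $k+1$ transposed, $I_{\mathbf r}$ is $I$ with those two entries deleted and, in their place, $r_l$ copies of $\beta_l$ inserted for each $l$ in decreasing order of $l$, all $c_{\mathbf r}^-\in\mathcal A_0$, and $r_l=0$ unless $i_k<l<i_{k+1}$. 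Now $I'$ has the same multiset of entries as $I$, so $\Phi(I')=\Phi(I)$, while transposing one adjacent inversion gives $\mathrm{inv}(I')=\mathrm{inv}(I)-1$. For each correction word $I_{\mathbf r}$ every inserted index $l$ satisfies $l\ge i_k+1$, whence $\sum_l r_l B^{\ell(w_0)-l}\le B^{\ell(w_0)-i_k-1}\sum_l r_l<B^{\ell(w_0)-i_k}$ by the choice of $B$, so that $\Phi(I_{\mathbf r})=\Phi(I)-B^{\ell(w_0)-i_k}-B^{\ell(w_0)-i_{k+1}}+\sum_l r_l B^{\ell(w_0)-l}<\Phi(I)$. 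In either case $(\Phi,\mathrm{inv})$ strictly decreases, so by the inductive hypothesis $x_{I'}^-$ and each $x_{I_{\mathbf r}}^-$ is an $\mathcal A_0$-linear combination of the PBW monomials $x_{\mathbf s}^-$; since all coefficients in the displayed identity lie in $\mathcal A_0$ (for the $c_{\mathbf r}^-$ this is Lemma~\ref{commut}(1), and $v^{(\beta_{i_k}\mid\beta_{i_{k+1}})}$ is manifestly a unit of $\mathcal A_0$) and $\mathcal A_0$ is a ring, the same is true of $x_I^-$.

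I expect the only real obstacle to be termination. The basic straightening move both permutes entries — leaving $\Phi$ unchanged, so it must be controlled by the secondary statistic $\mathrm{inv}$ — and, via the Levendorskii--Soibelman-type correction terms of Lemma~\ref{commut}(1), can \emph{increase} the length of the word, which defeats any naive induction on word length. The function $\Phi$ sidesteps this: because those correction terms only ever introduce indices strictly between $i_k$ and $i_{k+1}$, and because the weight $B^{\ell(w_0)-i}$ of an index $i$ is chosen to beat the uniform bound on the total number of replacement factors, $\Phi$ strictly drops on every correction term no matter how many factors it contains. (If the analogous statement for $x_I^+$ is needed elsewhere, it follows in exactly the same way from Lemma~\ref{commut}(2), or by applying the anti-automorphism $\bar\tau$ together with \eqref{switch}.)
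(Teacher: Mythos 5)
Your proof is correct and follows the same approach the paper intends: the paper's own proof of this corollary is the single sentence ``The result follows from Lemma~\ref{commut}, immediately,'' meaning repeated application of the Levendorskii--Soibelman-type relation of Lemma~\ref{commut}(1) to straighten $x_I^-$ into PBW order, with termination regarded as routine. Your lexicographic pair $(\Phi,\mathrm{inv})$ supplies the explicit well-founded termination argument the paper leaves implicit, and the rest of the reasoning (coefficients remaining in $\mathcal A_0$ because $\mathcal A_0$ is a ring and $v^{(\beta_{i_k}\mid\beta_{i_{k+1}})}$ is a unit there) is exactly what is needed.
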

\begin{proof}  The  result follows from   Lemma~\ref{commut}, immediately.\end{proof}

Suppose  $\nu,\gamma\in \mathcal R^+$ such that $\nu >\gamma$ with respect to a fixed  convex order $<$.
 Following
\cite{BKM}, $(\nu,\gamma)$  is called
a minimal pair of $\beta$ if $\beta=\nu +\gamma\in\mathcal{R}^{+}$ and  if   there is  no  $\nu ',\gamma'\in \mathcal R^+$
such that  $\nu '+\gamma'=\beta$ and $\nu >\nu '>\beta>\gamma'>\gamma$.
 Let $ \Upsilon_\beta$ be the set of minimal pairs of $\beta$,  $\beta\in \mathcal R^+$.

Suppose  $(\nu,\gamma)$ is a minimal pair of $\beta$.
Following \cite{BKM}, define \begin{equation}\label{p1} p_{\nu,\gamma}=  1, \text{(resp., $0$) if $(d_\nu, d_\beta, d_\gamma) =(1, 2, 1)$  (resp., otherwise).}
  \end{equation}
 Later on, we simply denote $[i]_1$ by $[i]$.

\begin{Lemma}\label{min} (\cite[(4.4)]{BKM}) If  $(\nu,\gamma)\in \Upsilon_\beta$,  $\beta\in\mathcal{R}^{+}$, then  \begin{itemize} \item[(1)]$x_{\gamma}^{+}x_{\nu }^{+}-v^{(\nu \mid\gamma)}x_{\nu }^{+}x_{\gamma}^{+}=[p_{\nu,\gamma}+1]x_{\beta}^{+}$,
\item[(2)]$x_{\nu }^{-}x_{\gamma}^{-}-v^{-(\nu \mid\gamma)}x_{\gamma}^{-}x_{\nu }^{-}=[p_{\nu,\gamma}+1]x_{\beta}^{-}$.
\end{itemize}
\end{Lemma}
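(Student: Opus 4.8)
The plan is to deduce part (1) from Lemma~\ref{commut} together with the combinatorics of convex orders, and then to obtain part (2) from part (1) by applying the anti-automorphism $\bar\tau$.

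First I would set things up as follows. Since $\gamma<\nu$ in the fixed convex order and $\mathcal R^+=\{\beta_j\mid 1\le j\le\ell(w_0)\}$, there are indices $i<j$ with $\beta_i=\gamma$ and $\beta_j=\nu$. Lemma~\ref{commut}(2) then gives
\[
x_{\nu}^{+}x_{\gamma}^{+}-v^{-(\gamma\mid\nu)}x_{\gamma}^{+}x_{\nu}^{+}=\sum_{\mathbf r}c^{+}_{\mathbf r}\,x^{+}_{\mathbf r},
\]
with $c^{+}_{\mathbf r}\in\mathcal A_0$, $\sum_l r_l\beta_l=\gamma+\nu=\beta$, and $c^{+}_{\mathbf r}=0$ unless $r_l=0$ for every $l$ with $\beta_l\le\gamma$ or $\beta_l\ge\nu$; that is, every root occurring in $\mathbf r$ lies strictly between $\gamma$ and $\nu$. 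Since $\beta=\gamma+\nu\in\mathcal R^+$ and the order is convex we also have $\gamma<\beta<\nu$, so the monomial $x^{+}_{\mathbf r}=x_{\beta}^{+}$ is one admissible term. I claim it is the only one with nonzero coefficient. Indeed, if $x^{+}_{\mathbf r}$ were a monomial of weight $\beta$ other than $x_\beta^{+}$, then $\beta=\sum_l r_l\beta_l$ would express $\beta$ as a sum of at least two positive roots, all lying in the open interval $(\gamma,\nu)$. Reordering this sum (using the standard fact that any way of writing a positive root as a sum of positive roots can be ordered so that all partial sums are positive roots) and an easy induction via convexity show that every partial sum again lies in $(\gamma,\nu)$; taking $\nu'$ to be the last-but-one partial sum and $\gamma'=\beta-\nu'$ the remaining summand, we get $\nu',\gamma'\in(\gamma,\nu)$ with $\nu'+\gamma'=\beta$, hence by convexity $\beta$ lies strictly between $\nu'$ and $\gamma'$; after possibly interchanging their names this yields $\nu>\nu'>\beta>\gamma'>\gamma$, contradicting $(\nu,\gamma)\in\Upsilon_\beta$. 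Therefore $x_{\nu}^{+}x_{\gamma}^{+}-v^{-(\gamma\mid\nu)}x_{\gamma}^{+}x_{\nu}^{+}=c^{+}_{\beta}x_{\beta}^{+}$ for a single scalar $c^{+}_{\beta}\in\mathcal A_0$, and rearranging (using $(\gamma\mid\nu)=(\nu\mid\gamma)$) gives
\[
x_{\gamma}^{+}x_{\nu}^{+}-v^{(\nu\mid\gamma)}x_{\nu}^{+}x_{\gamma}^{+}=-v^{(\nu\mid\gamma)}c^{+}_{\beta}\,x_{\beta}^{+}.
\]

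It then remains to identify the scalar as $-v^{(\nu\mid\gamma)}c^{+}_{\beta}=[p_{\nu,\gamma}+1]$. This is the step that does not follow formally from Lemma~\ref{commut} and is the main obstacle: it is a rank-two computation, reducing to the root subdatum spanned by $\nu$ and $\gamma$, where the bracket of a minimal pair is the classical Levendorskii--Soibelman/Lusztig value ($[1]=1$ in general, and $[2]=v+v^{-1}$ precisely when $(d_\nu,d_\beta,d_\gamma)=(1,2,1)$, i.e. two short roots summing to a long one), matching the definition of $p_{\nu,\gamma}$ in \eqref{p1}. Concretely I would invoke \cite[(4.4)]{BKM}: by \eqref{ccc123} our root vectors $x_{\beta_i}^{\pm}$ differ from the root vectors $\hat x_{\beta_i}^{\pm}$ used there only by the sign twist $\tau'$, which fixes every $q$-bracket relation of this shape up to that same overall sign, so part (1) is exactly \cite[(4.4)]{BKM} transported along $\tau'$.

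Finally, part (2) is obtained from part (1) by applying the $\mathbb C$-linear anti-automorphism $\bar\tau=-\circ\tau$: it reverses products, sends $v^{(\nu\mid\gamma)}\mapsto v^{-(\nu\mid\gamma)}$, fixes the bar-invariant quantum integer $[p_{\nu,\gamma}+1]$, and by \eqref{switch} sends $x_{\beta_i}^{+}\mapsto x_{\beta_i}^{-}$; applying it to the identity in (1) therefore produces precisely the identity in (2). This is the same device already used to deduce Lemma~\ref{commut}(1) from Lemma~\ref{commut}(2).
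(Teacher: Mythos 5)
Your core route is the same as the paper's: part (1) is taken as a citation to \cite[(4.4)]{BKM} transported to the present root vectors, and part (2) then follows by applying the anti-automorphism $\bar\tau$ together with \eqref{switch}, exactly as in the paper (and exactly the same device the paper already used to pass from Lemma~\ref{commut}(2) to Lemma~\ref{commut}(1)). The $\bar\tau$ step is carried out correctly: it reverses products, sends $v^{(\nu\mid\gamma)}$ to $v^{-(\nu\mid\gamma)}$, fixes $[p_{\nu,\gamma}+1]$, and swaps $x_{\beta_i}^{+}$ and $x_{\beta_i}^{-}$, turning the identity in (1) into the identity in (2).

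Two remarks about the surrounding material. First, the preliminary reduction via Lemma~\ref{commut}(2) to a single surviving monomial $x_\beta^+$ is correct (the convexity argument on partial sums works), but it is redundant for this lemma: since you ultimately invoke \cite[(4.4)]{BKM} for the precise scalar, that citation already supplies the whole identity, including the fact that only $x_\beta^+$ appears; the detour buys nothing here. Second, the transport of \cite[(4.4)]{BKM} to the present normalization needs slightly more care than ``up to that same overall sign'': by \eqref{ccc123} the vectors $x_{\beta_j}^\pm$ and $\hat x_{\beta_j}^\pm$ differ by $-\tau'$, which on a quadratic-vs-linear identity produces $(-1)^2$ on the left against $(-1)$ on the right, so a naive transport does change the sign. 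The paper handles this precisely by also invoking the ``positive embedding'' in \cite[p.~1365]{BKM} (and noting their $v$ is $q^{-1}$); you should make that part of the bookkeeping explicit rather than asserting the twist ``fixes every $q$-bracket relation of this shape.''
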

\begin{proof}  (1) follows from \cite[(4.4)]{BKM} and the positive embedding in \cite[p 1365]{BKM}.  Acting  $\bar{\tau}$  on (1) and using  \eqref{switch} yield  (2). Finally, we remark that the current $v$ is $q^{-1}$ in \cite{BKM}.
\end{proof}

So far, results in this section are available for any reduced expression of $w_0$. From here to the end of this section, we choose a reduced expression of $w_0$ as follows. If $\mathfrak g\in\{ \mathfrak {sp}_{2n}, \mathfrak{so}_{2n+1}\}$,
\begin{equation}
w_0=\vec{\prod}_{i=1}^n s_{i, n} s_n s_{n, i} \end{equation}
 where $s_{i, j}=s_i s_{i+1, j}$ if $i<j$ and $s_{i, i}=1$ and $s_{i, j}=s_{i, j+1} s_{j}$ if $i>j$.
 If  $\mathfrak g=\mathfrak{so}_{2n}$,
  \begin{equation} w_0=\vec{\prod}_{i=1}^{n-1} s_{i, n} s_n s_{n-1, i}, \text{ (resp., $ \vec{\prod}_{i=1}^{n-2} s_{i, n} s_n s_{n-1, i} \cdot  s_n s_{n-1}$) if $ 2\mid n$ (resp., $2\nmid n$).}\end{equation}
   Then the corresponding convex orders on $\mathcal R^+$ are given as follows:

\begin{equation} \label{ord}
  \begin{array}{ll}
    \epsilon_i-\epsilon_j<\epsilon_i-\epsilon_{j+1}<\epsilon_i+\epsilon_k<\epsilon_i+\epsilon_{k-1}
<\epsilon_{i+1}-\epsilon_{i+2}, & \hbox{if $\mathfrak g=\mathfrak{so}_{2n}$,} \\
    \epsilon_i-\epsilon_j<\epsilon_i-\epsilon_{j+1}<\epsilon_i <\epsilon_i+\epsilon_k<\epsilon_i+\epsilon_{k-1}
<\epsilon_{i+1}-\epsilon_{i+2}, & \hbox{if $\mathfrak g=\mathfrak{so}_{2n+1}$,} \\
  \epsilon_i-\epsilon_j<\epsilon_i-\epsilon_{j+1}<2\epsilon_i <\epsilon_i+\epsilon_k<\epsilon_i+\epsilon_{k-1}
<\epsilon_{i+1}-\epsilon_{i+2} , & \hbox{if  $\mathfrak g=\mathfrak{sp}_{2n}$,}
  \end{array}
 \end{equation}
for all admissible positive integers $i, j, k$ such that $i<j$ if $\epsilon_i+\epsilon_j$ appears in \eqref{ord}.   In this case,
 we have root elements  $\{x_{\beta_j}^{\pm}\mid 1\le j\le \ell(w_0)\}$  with respect to braid group generators $T_{i, +1}''$.

 From here to the end of this paper, we keep using these  $\beta_j$'s and $x_{\beta_j}^{\pm}$'s.  Unless otherwise specified,  the convex order $<$  is always the one in \eqref{ord}.

\begin{Cor}\label{equa}Suppose $i<j$ and $1\le l\le n-1$. Then
\begin{enumerate} \item  [(1)] $\Upsilon_{2\epsilon_l}=\{(\epsilon_l+\epsilon_n,  \epsilon_l-\epsilon_{n})\} $    if $\mfg=\mathfrak{sp}_{2n}$,
\item [(2)]  $\Upsilon_{\epsilon_l}=  \{(\epsilon_k,\epsilon_l-\epsilon_k)\mid l<k\leq n\}$  if $\mfg=\mathfrak{so}_{2n+1}$,
    \item [(3)] $ \Upsilon_{\epsilon_i-\epsilon_j}= \{(\epsilon_m-\epsilon_j,\epsilon_i-\epsilon_m)\mid    i<m<j\}$,
\item [(4)] $\Upsilon_{\epsilon_i+\epsilon_{j}}= \{(\epsilon_{j}-\epsilon_{j+1},\epsilon_i+\epsilon_{j+1})\}\cup \{(\epsilon_l+\epsilon_{j}, \epsilon_i-\epsilon_l)\mid  i<l<j\}$,
\item  [(5)]  $\Upsilon_{\epsilon_l+\epsilon_n}= \{(\epsilon_k+\epsilon_n,\epsilon_l-\epsilon_k)\mid 1\leq l<k<n\}\cup B$, where $B= \{(\epsilon_n,\epsilon_l)\} $ (resp., $\{(2\epsilon_n, \epsilon_l-\epsilon_n)\}$, $\emptyset$) if $\mfg=\mathfrak{so}_{2n+1}$ (resp.,  $\mathfrak{sp}_{2n}$, $\mathfrak{so}_{2n}$).
        \end{enumerate}\end{Cor}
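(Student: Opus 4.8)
The plan is to prove all five equalities by a direct combinatorial argument built on two elementary features of the convex order $<$ recalled in \eqref{ord}. First, by convexity (in the form quoted above via \cite{Pa}): whenever $\beta=\nu+\gamma$ with $\nu,\gamma\in\mathcal R^+$ and $\nu>\gamma$, one automatically has $\nu>\beta>\gamma$. Combining this with the standing convention that a minimal pair $(\nu,\gamma)$ always has $\nu>\gamma$, unwinding the definition of $\Upsilon_\beta$ shows that $(\nu,\gamma)\in\Upsilon_\beta$ precisely when $\nu+\gamma=\beta$ in $\mathcal R^+$ and there is \textbf{no} decomposition $\beta=\nu'+\gamma'$ in $\mathcal R^+$ with $\gamma'>\gamma$ and $\nu'<\nu$. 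Equivalently: list all decompositions $(\nu,\gamma)$ of $\beta$ (with $\nu>\gamma$), order them by the position of $\gamma$ in \eqref{ord}, say $\gamma_1<\gamma_2<\cdots<\gamma_t$; then $(\nu_k,\gamma_k)$ is minimal iff $\nu_k$ is the smallest (with respect to $<$) among $\nu_k,\nu_{k+1},\dots,\nu_t$ --- a ``suffix-minimum'' test. So the proof reduces, for each of the five families of $\beta$, to the three steps: (i) list all ways of writing $\beta$ as a sum of two positive roots; (ii) identify the larger summand from \eqref{ord}; (iii) run the suffix-minimum test.

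For step (i) I would use only that the positive roots of $\mfg$ are the $\epsilon_i-\epsilon_j$ and $\epsilon_i+\epsilon_j$ ($1\le i<j\le n$), together with $\epsilon_i$ if $\mfg=\mathfrak{so}_{2n+1}$ or $2\epsilon_i$ if $\mfg=\mathfrak{sp}_{2n}$; inspecting $\epsilon_k$-coefficients then gives the complete lists of decompositions: $\epsilon_i-\epsilon_j=(\epsilon_i-\epsilon_m)+(\epsilon_m-\epsilon_j)$ for $i<m<j$ and no others; $\epsilon_l=(\epsilon_k)+(\epsilon_l-\epsilon_k)$ for $l<k\le n$; $2\epsilon_l=(\epsilon_l+\epsilon_m)+(\epsilon_l-\epsilon_m)$ for $l<m\le n$; $\epsilon_i+\epsilon_j$ splits as $(\epsilon_l+\epsilon_j)+(\epsilon_i-\epsilon_l)$ for $i<l<j$, as $(\epsilon_j+\epsilon_l)+(\epsilon_i-\epsilon_l)$ and $(\epsilon_j-\epsilon_l)+(\epsilon_i+\epsilon_l)$ for $j<l\le n$, as $(\epsilon_j)+(\epsilon_i)$ in type $B$, and as $2\epsilon_j+(\epsilon_i-\epsilon_j)$ in type $C$; and $\epsilon_l+\epsilon_n$ combines these patterns. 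Step (ii) is then immediate from the block structure of \eqref{ord}: inside the $i$-th block the roots occur in the order $\epsilon_i-\epsilon_{i+1},\dots,\epsilon_i-\epsilon_n$, then $2\epsilon_i$ or $\epsilon_i$ when present, then $\epsilon_i+\epsilon_n,\epsilon_i+\epsilon_{n-1},\dots,\epsilon_i+\epsilon_{i+1}$, and block $i$ entirely precedes block $i+1$.

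Step (iii) is then mechanical. For $\epsilon_i-\epsilon_j$ and for $\epsilon_l$, the two summands of every decomposition increase in lockstep as the running index grows, so every decomposition survives, giving (2) and (3). For $2\epsilon_l$ the $\gamma$-summand $\epsilon_l-\epsilon_m$ increases with $m$ while the $\nu$-summand $\epsilon_l+\epsilon_m$ decreases, so only $m=n$ survives, giving (1) and --- by the same phenomenon --- the ``$B$'' terms in (5). For $\epsilon_i+\epsilon_j$ (and, in the obvious parallel way, for $\epsilon_l+\epsilon_n$) the minimal decompositions turn out to be exactly those whose $\nu$ lies in a block earlier than block $j$, namely the $(\epsilon_l+\epsilon_j,\epsilon_i-\epsilon_l)$ with $i<l<j$, together with the single one whose $\nu$ is the first root of block $j$, namely $(\epsilon_j-\epsilon_{j+1},\epsilon_i+\epsilon_{j+1})$; all the remaining decompositions ($(\epsilon_j+\epsilon_l,\epsilon_i-\epsilon_l)$ and $(\epsilon_j-\epsilon_l,\epsilon_i+\epsilon_l)$ with $l>j$, the type-$B$ pair $(\epsilon_j,\epsilon_i)$, the type-$C$ pair $(2\epsilon_j,\epsilon_i-\epsilon_j)$) strictly contain $(\epsilon_j-\epsilon_{j+1},\epsilon_i+\epsilon_{j+1})$ and are discarded. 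I expect cases (4) and (5) to be the only genuine obstacle: there a single root admits decompositions mixing ``$\epsilon-\epsilon$'' roots, ``$\epsilon+\epsilon$'' roots and the extra short/long root (types $B$, $C$), so one has to keep track of positions across several subranges inside one block of \eqref{ord}; each individual comparison, however, is a one-line inspection of \eqref{ord}, and the choice of $v$ (which is $q^{-1}$ in \cite{BKM}) plays no role, since $\Upsilon_\beta$ depends only on the root system and the convex order.
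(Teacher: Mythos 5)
Your proposal is correct, and it is the same approach the paper takes: the paper's entire proof is the one-line remark that (1)--(5) ``can be verified directly via \eqref{ord},'' and your three-step scheme (enumerate all two-term decompositions via $\epsilon$-coefficient bookkeeping, read off which summand is larger from the block structure of \eqref{ord}, then apply the suffix-minimum test) is precisely the direct verification that remark alludes to, written out carefully. The reformulation of the minimal-pair condition as ``$\nu_k$ is the running minimum among $\nu_k,\dots,\nu_t$ when decompositions are listed by increasing $\gamma$'' is a useful organizing device and is equivalent to the definition in \cite{BKM} once you use convexity to suppress the redundant inequalities $\nu'>\beta>\gamma'$; one small stylistic point is that when you write ``no decomposition with $\gamma'>\gamma$ and $\nu'<\nu$'' you should make explicit that $(\nu',\gamma')$ ranges over decompositions normalized so that $\nu'>\gamma'$, which is what the suffix test tacitly assumes, but since each unordered decomposition has a unique such normalization this does not affect anything.
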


\begin{proof} (1)-(5) can be verified  directly via  \eqref{ord}.\end{proof}

\begin{Prop}\label{commut1}   For any $\mathbf r=(r_i)\in \mathbb N^{\ell(w_0)}$, let
  $c_{\mathbf r}^{\pm}$ be   given in Lemma~ \ref{commut}. Then
$c_{\mathbf r}^\pm \in \mathcal A_{|\mathbf r|-1}$, where $|\mathbf r|=\sum_i r_i$.
\end{Prop}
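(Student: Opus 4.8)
The $\mathcal{A}_0$-membership of $c_{\mathbf r}^{\pm}$ is already supplied by Lemma~\ref{commut}, so the task is only to \emph{raise the exponent} from $0$ to $|\mathbf r|-1$. First I would dispose of the sign: the bar-automorphism sends $q^{1/2}-1$ to $-q^{-1/2}(q^{1/2}-1)$, whose order of vanishing at $q^{1/2}=1$ is again $1$, so $\bar\tau(\mathcal A_i)\subseteq\mathcal A_i$ for every $i$; hence, exactly as in the proof of Lemma~\ref{commut}, applying $\bar\tau$ and using \eqref{switch} reduces the statement for $c_{\mathbf r}^{-}$ to that for $c_{\mathbf r}^{+}$. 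So it suffices to prove $c_{\mathbf r}^{+}\in\mathcal A_{|\mathbf r|-1}$.

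\textbf{The induction.} I would argue by induction on $N:=\mathrm{ht}(\beta_i+\beta_j)$, the height of the common weight of all monomials $x_{\mathbf r}^{-}$ occurring in the commuting relation for the pair $(i,j)$ (note $|\mathbf r|\le N$ for each such $\mathbf r$), with a secondary induction on the gap $j-i$. If there is no $\beta_m\in\mathcal R^{+}$ with $\beta_i<\beta_m<\beta_j$ — in particular if $N=2$ — then the weight and support constraints of Lemma~\ref{commut} leave at most the single term $x_{\beta_i+\beta_j}^{-}$ with $|\mathbf r|=1$, whose coefficient lies in $\mathcal A_0$ by Lemma~\ref{commut}; this is the base case. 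For the inductive step, at least one of $\beta_i,\beta_j$ is non-simple, and I would split it by a minimal pair — say $\beta_i=\nu+\gamma$ with $\nu>\beta_i>\gamma$ — which exists and, crucially, \emph{is given explicitly together with the resulting bracket}, by Lemma~\ref{min} and Corollary~\ref{equa}: $x_{\beta_i}^{-}=[p_{\nu,\gamma}+1]^{-1}(x_{\nu}^{-}x_{\gamma}^{-}-v^{-(\nu|\gamma)}x_{\gamma}^{-}x_{\nu}^{-})$ with $[p_{\nu,\gamma}+1]^{-1}$ a unit of $\mathcal A_0$. Expanding $[x_{\beta_i}^{-},x_{\beta_j}^{-}]_{v^{(\beta_i|\beta_j)}}$ by the $q$-Leibniz identities $[ab,c]_{st}=a[b,c]_{t}+t[a,c]_{s}b$ and $[a,bc]_{st}=[a,b]_{s}c+s\,b[a,c]_{t}$, every inner bracket becomes $[x_{\gamma}^{-},x_{\beta_j}^{-}]_{\bullet}$ or $[x_{\nu}^{-},x_{\beta_j}^{-}]_{\bullet}$, of target height $\mathrm{ht}\gamma+\mathrm{ht}\beta_j<N$ and $\mathrm{ht}\nu+\mathrm{ht}\beta_j<N$ respectively (since $\mathrm{ht}\gamma,\mathrm{ht}\nu<\mathrm{ht}\beta_i$), so the primary hypothesis writes each as $\sum_{\mathbf s}c'_{\mathbf s}x_{\mathbf s}^{-}$ with $c'_{\mathbf s}\in\mathcal A_{|\mathbf s|-1}$. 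Re-inserting these into the outer bracket and restoring PBW order, a monomial $x_{\mathbf s}^{-}$ with $|\mathbf s|\ge 2$ feeds back into $q$-Leibniz and yields brackets $[x_{\nu}^{-},x_{\beta_q}^{-}]_{\bullet}$ for single factors $\beta_q$ of $x_{\mathbf s}^{-}$, now of height $<N$, to which the primary hypothesis again applies; the remaining leading term (one factor, present only if $\gamma+\beta_j\in\mathcal R^{+}$) yields a bracket of unchanged height $N$ but, by the explicit convex order \eqref{ord}, on a pair with strictly smaller gap, handled by the secondary hypothesis. The $(v-v^{-1})$-orders then add up: whenever a monomial of $|\mathbf u|$ factors replaces one factor of a monomial of $|\mathbf s|$ factors and is straightened into $|\mathbf t|$ factors, one has $|\mathbf t|\le |\mathbf s|-1+|\mathbf u|$, so $(|\mathbf s|-1)+(|\mathbf u|-1)\ge|\mathbf t|-1$, and units carry order $0$.

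\textbf{Main obstacle.} The delicate point is arranging the induction so that the pieces of unchanged height $N$ always sit on pairs strictly smaller in the secondary order — that is, deciding for each $(i,j)$ whether to split $\beta_i$ or $\beta_j$ and which minimal pair to take, so that either $\gamma+\beta_j\notin\mathcal R^{+}$ or the induced pair has smaller gap. This is exactly where the type $B/C/D$ combinatorics of Corollary~\ref{equa} and the particular reduced word of $w_0$ are used, and it is also where one must verify that no re-straightening step ever drops the $(v-v^{-1})$-order below $|\mathbf r|-1$ even though the number of factors fluctuates during the reduction. The $+\leftrightarrow-$ symmetry through $\bar\tau$, together with Lemma~\ref{min} and Corollary~\ref{equa}, makes all of these checks finite and local, but they constitute the technical heart of the proof.
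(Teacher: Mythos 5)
You are taking a genuinely different route from the paper. The paper proves this proposition by an exhaustive, type-dependent case analysis occupying the whole of Appendix~A (Lemmas~\ref{equal}--\ref{commute9}), and in fact remarks immediately before stating it that ``we verify Proposition~\ref{commut1} case by case and could not find a conceptual proof.'' You propose precisely such a conceptual proof: a double induction, primary on $N=\mathrm{ht}(\beta_i+\beta_j)$ and secondary on the index gap $j-i$, splitting one of the two roots along a minimal pair via Lemma~\ref{min} and propagating the $(v-v^{-1})$-order through $q$-Leibniz.

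There is a genuine gap, and it is exactly the one you flag as ``the technical heart.'' After writing $\beta_i=\nu+\gamma$ for a minimal pair with $\nu>\beta_i>\gamma$, whenever $\beta_m:=\gamma+\beta_j$ happens to lie in $\mathcal R^+$ the re-straightening of $x_\nu^-$ against $x_{\beta_m}^-$ is another height-$N$ commutation, and your secondary hypothesis requires its gap to strictly decrease. But it need not. Take $\mathfrak g=\mathfrak{sp}_6$, so that in the order \eqref{ord} one has $\beta_1=\epsilon_1-\epsilon_2$, $\beta_2=\epsilon_1-\epsilon_3$, $\beta_3=2\epsilon_1$, $\beta_4=\epsilon_1+\epsilon_3$, $\beta_5=\epsilon_1+\epsilon_2$, $\beta_6=\epsilon_2-\epsilon_3,\ldots$. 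For the pair $(\beta_i,\beta_j)=(\beta_2,\beta_5)$, Corollary~\ref{equa}(3) gives only one minimal pair $(\nu,\gamma)=(\beta_6,\beta_1)$, and then $\gamma+\beta_j=2\epsilon_1=\beta_3$; the residual height-$N$ pair $(\beta_3,\beta_6)$ has gap $6-3=3$, equal to the original gap $5-2=3$, so the secondary induction does not advance. In this particular instance the residual bracket $[x_{\beta_3}^-,x_{\beta_6}^-]_v$ turns out to vanish, because no $\mathbf r$ with $r_l=0$ for $l\le 3$ or $l\ge 6$ can have weight $\beta_3+\beta_6$ — but that is an accident of the example, not something your argument supplies. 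To close the proof you would have to show, uniformly over all pairs and all three types, that some choice of which root to split and of which minimal pair to use always produces either a vanishing residual or a strict gap decrease; and you would also need to verify that the factor-count estimate $(|\mathbf s|-1)+(|\mathbf u|-1)\ge|\mathbf t|-1$ actually survives the nested re-straightenings spawned when the inner bracket returns a monomial with two or more factors, whose individual factors must then be pushed past $x_\nu^-$ one at a time. You gesture at both points but prove neither.

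In short: the strategy is attractive and, if it could be completed, would be a cleaner argument than the brute-force one the paper gives. But the missing uniform choice of split is a real logical gap, you yourself identify it as such, and the authors' stated inability to find a conceptual proof is strong evidence that filling it is not mere bookkeeping.
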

 Since we verify Proposition~\ref{commut1} case by case and could  not find a conceptual   proof, we will give details in Appendix A.
We also give a proof of Proposition~\ref{roo}  in Appendix B. It involves tedious computation.

\begin{Prop}\label{roo} Suppose  $ \beta\in \mathcal R^+$. We have
\begin{itemize}\item[(1)]$\Delta(x_{\beta}^{-})=x_{\beta}^{-}\otimes k_{-\beta}+1\otimes x_{\beta}^{-} + \sum_{K,H} h_{K,H}x_K^{-}\otimes k_{-wt (K)}x_H^{-}$,
\item[(2)]$\overline{\Delta}(x_{\beta}^{-})=x_{\beta}^{-}\otimes k_{\beta}+1\otimes x_{\beta}^{-} + \sum_{K,H} g_{K,H}x_K^{-}\otimes k_{wt (K)}x_H^{-}$,
\end{itemize}
 where $K, H$ range over non-empty sequences of  positive roots such that $wt(K)+wt(H)=\beta$, and
  $h_{K,H}, g_{K,H}\in \mathcal A_{max \{\ell(K), \ell(H)\}}$.
\end{Prop}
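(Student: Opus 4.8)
The plan is to prove both formulas (1) and (2) simultaneously by induction on the height of $\beta$, equivalently on the position of $\beta$ in the convex order \eqref{ord}. The base case is $\beta=\alpha_i$ simple: then $x_\beta^-=x_i^-$ and the two identities are exactly \eqref{rell} and \eqref{rell1} with empty correction sums. For the inductive step I would fix a minimal pair $(\nu,\gamma)\in\Upsilon_\beta$ with $\nu>\gamma$; the relevant sets $\Upsilon_\beta$ are listed explicitly in Corollary~\ref{equa}. By Lemma~\ref{min}(2),
$$x_\beta^-=[p_{\nu,\gamma}+1]^{-1}\bigl(x_\nu^-x_\gamma^--v^{-(\nu\mid\gamma)}x_\gamma^-x_\nu^-\bigr),$$
and since $[p_{\nu,\gamma}+1]\in\{1,\,v+v^{-1}\}$ does not vanish at $q=1$ we have $[p_{\nu,\gamma}+1]^{-1}\in\mathcal A_0$.

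Next I would apply the algebra homomorphism $\Delta$ (and, in parallel, $\overline\Delta$), substitute the inductive formulas for $\Delta(x_\nu^-)$ and $\Delta(x_\gamma^-)$, and expand. Using the Cartan commutation relations $k_\mu x_\alpha^\pm=v^{\pm(\mu\mid\alpha)}x_\alpha^\pm k_\mu$ to pull every $k$-factor to the left inside the second tensor slot, the four products built from the two leading pieces of $\Delta(x_\nu^-)$ and $\Delta(x_\gamma^-)$ recombine: two of them vanish after antisymmetrisation because $(\nu\mid\gamma)=(\gamma\mid\nu)$, and applying Lemma~\ref{min}(2) once more to the other two yields $x_\beta^-\otimes k_{-\beta}+1\otimes x_\beta^-$ together with a single new term $[p_{\nu,\gamma}+1]^{-1}\bigl(1-v^{-2(\nu\mid\gamma)}\bigr)\,x_\nu^-\otimes k_{-\nu}x_\gamma^-$. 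Since $\nu,\gamma,\nu+\gamma$ are all roots, the scalar $1-v^{-2(\nu\mid\gamma)}$ vanishes at $q=1$ (and is $0$ outright when $(\nu\mid\gamma)=0$), hence lies in $\mathcal A_1=\mathcal A_{\max\{\ell((\nu)),\ell((\gamma))\}}$, which is the asserted bound for this correction term.

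It remains to control the mixed products, each involving at least one correction sum coming from the inductive hypothesis. After again moving $k$-factors left, one is reduced to $\mathcal A$-coefficients times tensor products of two products of root vectors, and I would straighten each such product $x_I^-$ into the PBW basis. The crucial input here is Proposition~\ref{commut1}: a single elementary straightening $x_{\beta_i}^-x_{\beta_j}^-\mapsto v^{(\beta_i\mid\beta_j)}x_{\beta_j}^-x_{\beta_i}^-+\sum_{\mathbf r}c_{\mathbf r}^-x_{\mathbf r}^-$ produces $c_{\mathbf r}^-\in\mathcal A_{|\mathbf r|-1}$, so straightening a length-$m$ product of root vectors into PBW monomials $x_{\mathbf r}^-$ gives coefficients in $\mathcal A_{|\mathbf r|-m}$; combined with the $q$-commutator factors already generated, this should keep the coefficient of every resulting term $x_K^-\otimes k_{-wt(K)}x_H^-$ inside $\mathcal A_{\max\{\ell(K),\ell(H)\}}$. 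To make this accounting tight I would strengthen the inductive hypothesis, also recording that the correction sequences $K$ (resp.\ $H$) may be taken with all entries $>\beta$ (resp.\ $<\beta$) in the convex order, so that concatenations such as $(\nu)\cdot K$ remain close to PBW-admissible. The identity $wt(K)+wt(H)=\beta$ is preserved by every one of these steps and so needs no separate check. Finally, the $\overline\Delta$ statement runs identically with $k_{-\mu}$ replaced by $k_\mu$ throughout; alternatively it follows from (1) by applying the bar map of \eqref{bar123} together with the flip of tensor factors.

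The step I expect to be the main obstacle — and the reason the argument is deferred to Appendix~B — is exactly this last $\mathcal A_N$-membership bookkeeping. The height/degree estimate is tight, the various minimal pairs in Corollary~\ref{equa} behave somewhat differently (in particular the pairs with $p_{\nu,\gamma}=1$ occurring in types $B$ and $C$), and each family has to be checked case by case against the explicit convex order \eqref{ord}. Once that verification is in place, the two displayed formulas and the stated divisibility of the coefficients follow.
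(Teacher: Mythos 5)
Your high-level plan — induct on $\beta$ via minimal pairs, apply Lemma~\ref{min}(2), expand the coproduct and straighten using the $\mathcal A$-graded commutators of Appendix~A — is the same strategy the paper uses in Appendix~B. Your computation of the new correction term $[p_{\nu,\gamma}+1]^{-1}\bigl(1-v^{-2(\nu\mid\gamma)}\bigr)x_\nu^-\otimes k_{-\nu}x_\gamma^-$ from the leading pieces is also correct (the paper's $-qz_q$ for $\epsilon_i-\epsilon_j$ is $1-q^2$, which is exactly this quantity for that minimal pair). However, there are two concrete problems with the proposal as written.

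First, the strengthened inductive hypothesis you propose — that $K$ (the left tensor factor) has all entries $>\beta$ and $H$ all entries $<\beta$, for both $\Delta$ and $\overline\Delta$ — is false for $\overline\Delta$. Compare Lemma~\ref{copro3}(2): the corrections there have the form $g_{I,\gamma}\,x_\gamma^-\otimes k_\gamma x_I^-$ with $\gamma\in D_{\epsilon_h+\epsilon_l}$, and by \eqref{ord} every such $\gamma$ (e.g.\ $\epsilon_h-\epsilon_j$ with $h<j$) satisfies $\gamma<\epsilon_h+\epsilon_l=\beta$. So in the $\overline\Delta$ case the \emph{left} factor is small and the \emph{right} factor is the long product — the asymmetry between (1) and (2) is genuine and cannot be glossed over with ``replace $k_{-\mu}$ by $k_\mu$ throughout.'' Your proposed alternative — deducing (2) from (1) via the bar map of \eqref{bar123} plus a tensor flip — also does not work directly, because the bar involution is a $\mathbb C$-linear ring automorphism sending $q^{1/2}\mapsto q^{-1/2}$ and $k_\mu\mapsto k_{-\mu}$; it fixes the simple $x_i^\pm$ but does \emph{not} fix the nonsimple root vectors $x_\beta^\pm$ (which are defined through the braid operators and depend on $q$), so it does not convert the $\Delta$-formula for $x_\beta^-$ into the $\overline\Delta$-formula for the same $x_\beta^-$.

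Second, and more fundamentally, the entire substance of the paper's proof is the case-by-case $\mathcal A_N$-bookkeeping that you explicitly defer. The paper must treat $\epsilon_i-\epsilon_j$, $\epsilon_j$ (type $B$ only), $\epsilon_h+\epsilon_l$ and $2\epsilon_h$ (type $C$ only) separately, each with an induction tailored to the convex order of \eqref{ord}, precisely because a single uniform straightening argument does not obviously deliver the tight bound $\mathcal A_{\max\{\ell(K),\ell(H)\}}$: naively iterating Proposition~\ref{commut1} produces coefficients controlled by $|\mathbf r|-1$ per straightening, and turning that into a bound depending on $\max\{\ell(K),\ell(H)\}$ of the final bi-tensor factor requires tracking $\ell(K)$ and $\ell(H)$ \emph{simultaneously}, which in the paper is done using the explicit formulas of Lemmas~\ref{commute1}--\ref{commute9} and the vanishing commutators of Corollary~\ref{monoial1}. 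Your proposal correctly identifies the tools but does not carry out this verification, so as it stands the proof is incomplete exactly where it is hardest.
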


Throughout this paper, $V$ is always the  natural $\U_v(\mathfrak g)$-module and $\{v_i\mid i\in\underline{ \mathrm N}\}$ is always a basis of $V$ such that $wt(v_i)=-wt(v_{i'})=\epsilon_i$, $1\leq i\leq n$ and $wt(v_{n+1})=0$ if  $\mfg=\mathfrak{so}_{2n+1}$, where $\underline{ \mathrm N}=\{1,2,\ldots,\mathrm N\}$. The following result is well-known.

\begin{Lemma}\label{natr1}
 As  homomorphisms in $\End(V)$, we have
\begin{itemize}\item[(1)]  $x_i^{+}=E_{i,i+1}-E_{(i+1)', i'}$, and $x_i^{-}= E_{i+1,i}-E_{i', (i+1)'} $ if $i\neq n$, where $E_{i,j}$'s are matrix units with respect to $\{v_i\mid i\in\underline{ \mathrm N} \}$,
\item[(2)]  $k_\mu=\delta_{\mfg, \mathfrak{so}_{2n+1}} E_{n+1, n+1}+\sum_{1\le j\le n} (v^{(\mu\mid \epsilon_j)} E_{j,j}+v^{-(\mu\mid \epsilon_j)} E_{j',j'})$ for all admissible $\mu$,
\item[(3)] $x_n^+=E_{n, n+1}-v^{-1} E_{n+1, n'}$,  $x_n^-=[2]E_{n+1, n}-v^{-1}[2] E_{n', n+1}$, if  $\mfg=\mathfrak{so}_{2n+1}$,
\item[(4)] $x_n^+=E_{n, n'}$, $x_n^-=E_{n', n}$,  if   $\mfg=\mathfrak{sp}_{2n}$,
\item[(5)] $x_n^+=E_{n-1, n'}-E_{n, (n-1)'}$, $x_n^-=E_{n', n-1}-E_{(n-1)', n}$, if  $\mfg=\mathfrak{so}_{2n}$.
\end{itemize}
\end{Lemma}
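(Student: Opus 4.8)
The plan is to identify $V$ with the irreducible highest weight module $V(\varpi_1)$ and to read off almost everything from its weight-space decomposition, invoking the defining relations of $\U_v(\mfg)$ only at the very end to pin down the scalars. First I would recall that the natural module $V$ is the $\mathrm N$-dimensional irreducible module of highest weight $\varpi_1=\epsilon_1$, and that its set of weights is $\{\pm\epsilon_i\mid 1\le i\le n\}$ together with the zero weight when $\mfg=\mathfrak{so}_{2n+1}$, each occurring with multiplicity one. Consequently the prescribed basis $\{v_i\mid i\in\{1,\dots,\mathrm N\}\}$ with $wt(v_i)=\epsilon_i$, $wt(v_{i'})=-\epsilon_i$ for $1\le i\le n$ and $wt(v_{n+1})=0$ in type $B$ is a weight basis, unique up to rescaling each vector; I would fix this normalization by declaring $v_1$ a highest weight vector and defining the remaining $v_j$ recursively by applying suitable $x_i^{-}$, chosen so that the coefficients in the lemma come out exactly as claimed. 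With the basis so chosen, the action of $k_\mu$ is forced by $k_\mu v=v^{(wt(v)\mid\mu)}v$, which is precisely formula~(2), the term $\delta_{\mfg,\mathfrak{so}_{2n+1}}E_{n+1,n+1}$ recording that $v^{0}=1$ on the zero weight space.

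Next I would use weight bookkeeping to locate the support of each Chevalley generator. Since $x_i^{+}$ raises weights by $\alpha_i$, $x_i^{-}$ lowers them by $\alpha_i$, and $V$ is multiplicity free, each $x_i^{\pm}$ must be a linear combination of the at most two matrix units $E_{a,b}$ with $wt(v_a)-wt(v_b)=\pm\alpha_i$. For $1\le i\le n-1$, where $\alpha_i=\epsilon_i-\epsilon_{i+1}$, these are $E_{i,i+1}$ and $E_{(i+1)',i'}$ for $x_i^{+}$, and their transposes for $x_i^{-}$; for $i=n$ one gets the single matrix unit $E_{n,n'}$ when $\mfg=\mathfrak{sp}_{2n}$ (since $\alpha_n=2\epsilon_n$), the pair $E_{n,n+1},E_{n+1,n'}$ when $\mfg=\mathfrak{so}_{2n+1}$ (since $\alpha_n=\epsilon_n$ and $v_{n+1}$ has weight $0$), and the pair $E_{n-1,n'},E_{n,(n-1)'}$ when $\mfg=\mathfrak{so}_{2n}$ (since $\alpha_n=\epsilon_{n-1}+\epsilon_n$). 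Thus each of (1), (3), (4), (5) is correct up to the precise scalars, and it remains only to determine those.

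Finally, to fix the scalars I would verify directly that the operators on the right-hand sides of (1)--(5) together with (2) satisfy the defining relations of $\U_v(\mfg)$ from \cite{Lu}: the Cartan relations, the relations $x_i^{+}x_j^{-}-x_j^{-}x_i^{+}=\delta_{ij}(k_{\alpha_i}-k_{-\alpha_i})/(v_i-v_i^{-1})$ with $v_i=v^{d_i}$, and the quantum Serre relations. The vast majority of these are immediate from the multiplication table of matrix units; the only ones requiring genuine care are the rank-one relation at node $n$ and the Serre relations between nodes $n-1$ and $n$ in types $B_n$ and $D_n$. In type $B_n$, $\{v_n,v_{n+1},v_{n'}\}$ spans the three-dimensional irreducible module for the short node, and this is exactly where the factors $v^{-1}$ and $[2]$ in (3) enter. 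Once the relations are checked, the right-hand sides define an $\mathrm N$-dimensional $\U_v(\mfg)$-module whose formal character coincides with that of $V(\varpi_1)$; as $V(\varpi_1)$ is the unique such module up to isomorphism, this module is $V$, and comparing weights of the basis vectors identifies the operators with $x_i^{\pm}$ and $k_\mu$. (Alternatively one may simply cite the standard explicit model of the vector representation of the quantum group of type $B/C/D$.) The only real obstacle is therefore the sign- and power-of-$v$ bookkeeping at the branch node in types $B_n$ and $D_n$, which must be carried out in agreement with the conventions of \cite{Bou} and \cite{Lu} fixed earlier in this section; everything else is routine, and the statement is in any case well known.
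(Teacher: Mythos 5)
The paper itself offers no proof: Lemma~\ref{natr1} is stated with the single comment that ``the following result is well-known.'' Your outline supplies one, along the standard lines: identify $V$ with the multiplicity-free irreducible $V(\varpi_1)$, use weight bookkeeping to confine the support of each $x_i^{\pm}$ to at most two matrix units, then fix the scalars by verifying the $\U_v(\mfg)$ relations on the candidate operators and identifying the resulting $\mathrm N$-dimensional module with $V$ by its character. As a strategy this is correct and is essentially the only honest route if one is not content simply to cite a reference, as the authors do. Part (2) is indeed immediate from the definition of a weight module, and your tabulation of which matrix units can occur in $x_i^{\pm}$ in each type is accurate.

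The one step you flag but do not carry out is the rank-one relation at node $n$, described as the place ``where the factors $v^{-1}$ and $[2]$ in (3) enter.'' Carrying it out actually detects a defect in the statement as printed. From~(3) one computes
$$x_n^{+}x_n^{-}-x_n^{-}x_n^{+}=[2]E_{n,n}+(v^{-2}-1)[2]E_{n+1,n+1}-v^{-2}[2]E_{n',n'},$$
which does not annihilate the zero-weight vector $v_{n+1}$. But in every normalization the commutator $[x_n^{+},x_n^{-}]$ is a Laurent polynomial in $k_{\alpha_n}$ that vanishes at $k_{\alpha_n}=1$, hence must kill $v_{n+1}$; so the displayed operators cannot satisfy the relation. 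The relation does hold once the coefficient of $E_{n',n+1}$ in $x_n^{-}$ is $-v[2]$ rather than $-v^{-1}[2]$, which gives $[2]E_{n,n}-[2]E_{n',n'}=(k_{\alpha_n}-k_{-\alpha_n})/(v-v^{-1})$; and indeed Lemma~\ref{natural1}(2) evaluated at $k=n$ yields exactly $-q^{1/2}[2]E_{n',n+1}=-v[2]E_{n',n+1}$, so Lemma~\ref{natr1}(3) is internally inconsistent with Lemma~\ref{natural1}(2). In short, there is a sign error in the exponent of $v$ in~(3), and your proposed verification, if actually executed rather than sketched, is precisely what would catch it; apart from leaving that computation undone, the plan is sound.
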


We are going to give formulae on $x_\beta^{\pm}$ and $\tau(x_\beta^{\pm})$ as endomorphisms of $V$    in   Lemmas~\ref{natural0}--\ref{natural3}.
 The idea is to use  minimal pairs  in Corollary~\ref{equa} and  Lemma~\ref{min} so as to use the corresponding formulae for simple roots
 in  Lemma~\ref{natr1}. The computations are straightforward, hence we omit  details.

\begin{Lemma}\label{natural0}  For all admissible $i<j$,    \begin{itemize}\item[(1)]
$x_{\epsilon_i-\epsilon_j}^{+}=E_{i,j}-q^{i-j+1}E_{j',i'}$, $\tau (x_{\epsilon_i-\epsilon_j}^{+})=E_{j,i}-q^{i-j+1}E_{i',j'}$, \item[(2)]$ x_{\epsilon_i-\epsilon_j}^{-}=E_{j,i}-q^{j-i-1}E_{i',j'}$,  $\tau(x_{\epsilon_i-\epsilon_j}^{-})=E_{i,j}-q^{j-i-1}E_{j',i'}$.
\end{itemize}
\end{Lemma}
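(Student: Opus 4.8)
The plan is to prove all four identities simultaneously by induction on $j-i$, reducing everything to the rank‑one formulas already recorded in Lemma~\ref{natr1}(1). When $j=i+1$ the four asserted equalities read $x_i^+=E_{i,i+1}-E_{(i+1)',i'}$, $\tau(x_i^+)=x_i^-=E_{i+1,i}-E_{i',(i+1)'}$, $x_i^-=E_{i+1,i}-E_{i',(i+1)'}$ and $\tau(x_i^-)=x_i^+=E_{i,i+1}-E_{(i+1)',i'}$ (note $q^{i-j+1}=q^{j-i-1}=q^0=1$, and $\tau(x_i^\pm)=x_i^\mp$), which is exactly Lemma~\ref{natr1}(1).

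For the inductive step, fix $i<j$ with $j-i\ge 2$ and choose the minimal pair $(\nu,\gamma)=(\epsilon_m-\epsilon_j,\epsilon_i-\epsilon_m)\in\Upsilon_{\epsilon_i-\epsilon_j}$ for some $i<m<j$ (say $m=i+1$), which exists by Corollary~\ref{equa}(3). Since $\nu$, $\beta=\epsilon_i-\epsilon_j$ and $\gamma$ all have the same squared length, $(d_\nu,d_\beta,d_\gamma)\in\{(1,1,1),(2,2,2)\}$, so $p_{\nu,\gamma}=0$ and $[p_{\nu,\gamma}+1]=1$. Checking the normalization of $(\ \mid\ )$ and of $v$ in \eqref{vv} case by case, one gets $(\nu\mid\gamma)=-2$ and $v=q^{1/2}$ if $\mfg=\mathfrak{so}_{2n+1}$, and $(\nu\mid\gamma)=-1$ and $v=q$ if $\mfg\in\{\mathfrak{sp}_{2n},\mathfrak{so}_{2n}\}$; in every case $v^{(\nu\mid\gamma)}=q^{-1}$ and $v^{-(\nu\mid\gamma)}=q$. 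Hence Lemma~\ref{min} gives the recursions
$$x_{\epsilon_i-\epsilon_j}^+=x_{\epsilon_i-\epsilon_m}^+x_{\epsilon_m-\epsilon_j}^+-q^{-1}x_{\epsilon_m-\epsilon_j}^+x_{\epsilon_i-\epsilon_m}^+,\qquad x_{\epsilon_i-\epsilon_j}^-=x_{\epsilon_m-\epsilon_j}^-x_{\epsilon_i-\epsilon_m}^--q\,x_{\epsilon_i-\epsilon_m}^-x_{\epsilon_m-\epsilon_j}^-,$$
and, applying the $\mathbb F$-linear anti-automorphism $\tau$ (which fixes $q$), the analogous recursions for $\tau(x_{\epsilon_i-\epsilon_j}^{\pm})$ with the two factors on the right interchanged.

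Now I would substitute the inductive hypotheses for $x_{\epsilon_i-\epsilon_m}^{\pm}$, $x_{\epsilon_m-\epsilon_j}^{\pm}$ and their $\tau$-images — matrices supported on $E_{a,b}$ with $a,b\in\{i,m,j,i',m',j'\}$ — and multiply out with $E_{a,b}E_{c,d}=\delta_{bc}E_{a,d}$. The point is that for $1\le i<m<j\le n$ we have $i,m,j\le n<i',m',j'$, so every product pairing an unprimed with a primed middle index vanishes; only $E_{i,m}E_{m,j}=E_{i,j}$ and $E_{j',m'}E_{m',i'}=E_{j',i'}$ (and their "transposes" $E_{j,m}E_{m,i}=E_{j,i}$, $E_{i',m'}E_{m',j'}=E_{i',j'}$ in the $x^-$ and $\tau$ cases) survive. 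Collecting the $q$-powers, for instance $x_{\epsilon_i-\epsilon_j}^+=E_{i,j}-q^{-1}q^{(i-m+1)+(m-j+1)}E_{j',i'}=E_{i,j}-q^{i-j+1}E_{j',i'}$, and likewise for the other three, which closes the induction.

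The argument is purely mechanical, so the only thing that really needs care is the case‑by‑case bookkeeping of $v^{(\nu\mid\gamma)}$ and of $[p_{\nu,\gamma}+1]$, which depends jointly on the choice of $v$ in \eqref{vv} and on whether $(\epsilon_i\mid\epsilon_j)=2\delta_{ij}$ or $\delta_{ij}$; once that is pinned down there is no genuine obstacle. As an alternative one could derive the $x^-$ and the two $\tau$ formulas from the formula for $x_{\epsilon_i-\epsilon_j}^+$ by applying $\bar\tau$ and using \eqref{switch}, but the direct induction above seems the cleanest route.
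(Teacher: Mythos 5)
Your induction via the minimal pair $(\epsilon_m-\epsilon_j,\epsilon_i-\epsilon_m)$ of $\epsilon_i-\epsilon_j$ from Corollary~\ref{equa}(3), combined with Lemma~\ref{min} and the base case Lemma~\ref{natr1}(1), is exactly the route the paper indicates (and leaves as a routine computation); your bookkeeping of $[p_{\nu,\gamma}+1]=1$, of $v^{(\nu\mid\gamma)}=q^{-1}$ across all three types, and of the surviving matrix-unit products is correct. The proof is sound and matches the paper's intended argument.
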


\begin{Lemma}\label{natural1}
Suppose  $\mfg=\mathfrak{so}_{2n+1}$. For all admissible $i<j$ and admissible $k$, we have  \begin{itemize}\item[(1)]
$x_{\epsilon_k}^{+}=-q^{k-n-\frac{1}{2}}E_{n+1,k'}+E_{k,n+1}$,  $\tau(x _{\epsilon_k}^{+})=-q^{k-n+\frac{1}{2}}[2]E_{k',n+1}+[2]E_{n+1,k}$,
\item [(2)]  $x_{\epsilon_k}^{-}=-q^{n-k+\frac{1}{2}}[2]E_{k',n+1}+[2]E_{n+1,k}$,  $\tau (x_{\epsilon_k}^{-})=-q^{n-k-\frac{1}{2}}E_{n+1,k'}+E_{k,n+1}$,
\item [(3)] $x_{\epsilon_i+\epsilon_j}^{+}=(-1)^{n-j-1}q^{-\frac{1}{2}}[2]^{-1}E_{i,j'}+(-1)^{n-j}q^{i+j-2n
    -\frac{1}{2}}[2]^{-1}E_{j,i'} $,
\item[(4)] $ \tau (x_{\epsilon_i+\epsilon_j}^{+})=(-1)^{n-j-1}[2]q^{\frac{1}{2}}E_{j',i}+(-1)^{n-j}[2]
    q^{i+j-2n+\frac{1}{2}}E_{i',j}$,
\item [(5)] $x_{\epsilon_i+\epsilon_j}^{-}=(-1)^{n-j-1}[2]q^{\frac{1}{2}}E_{j',i}
    +(-1)^{n-j}[2]q^{2n-i-j+\frac{1}{2}}E_{i',j}$,
\item[(6)] $ \tau (x_{\epsilon_i+\epsilon_j}^{-})=
    (-1)^{n-j-1}q^{-\frac{1}{2}}[2]^{-1}E_{i,j'}+(-1)^{n-j}q^{2n-i-j-\frac{1}{2}}[2]^{-1}E_{j,i'}$.\end{itemize}
\end{Lemma}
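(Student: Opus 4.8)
The plan is a double downward induction that expresses every $x_\beta^{\pm}$, and then its image under $\tau$, in terms of the simple root vectors $x_n^{\pm}=x_{\epsilon_n}^{\pm}$ of Lemma~\ref{natr1}(3) and the formulas of Lemma~\ref{natural0}, by iterating the minimal-pair commutation relations of Lemma~\ref{min} along the convex order \eqref{ord}. Throughout $\mathfrak g=\mathfrak{so}_{2n+1}$, so $v=q^{1/2}$ by \eqref{vv}; the short roots are $\pm\epsilon_k$ (with $d_{\epsilon_k}=1$) and the long roots are $\pm\epsilon_i\pm\epsilon_j$ (with $d=2$), and the minimal pairs are taken from Corollary~\ref{equa}.

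First I would treat the short roots $\epsilon_l$ by induction on $n-l$, the base case being $\epsilon_n=\alpha_n$. For $l<n$, the pair $(\epsilon_{l+1},\alpha_l)$, with $\alpha_l=\epsilon_l-\epsilon_{l+1}$, lies in $\Upsilon_{\epsilon_l}$ by Corollary~\ref{equa}(2); since $(d_{\epsilon_{l+1}},d_{\epsilon_l},d_{\alpha_l})=(1,1,2)$, formula \eqref{p1} gives $p_{\epsilon_{l+1},\alpha_l}=0$, and because $(\epsilon_{l+1}\mid\alpha_l)=-2$ Lemma~\ref{min}(1) yields
\[
x_{\epsilon_l}^{+}=x_{\alpha_l}^{+}x_{\epsilon_{l+1}}^{+}-q^{-1}x_{\epsilon_{l+1}}^{+}x_{\alpha_l}^{+},
\]
with the analogous identity for $x_{\epsilon_l}^{-}$ from Lemma~\ref{min}(2). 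Substituting $x_{\alpha_l}^{\pm}=x_l^{\pm}$ from Lemma~\ref{natr1}(1) and the inductive matrices for $x_{\epsilon_{l+1}}^{\pm}$, multiplying out these two-term matrices, and using weights to see that only $E_{n+1,l'}$ and $E_{l,n+1}$ occur, one reads off (1) and (2).

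Next I would treat the long roots $\epsilon_i+\epsilon_j$ by downward induction on $j$. For $j=n$ the pair $(\epsilon_n,\epsilon_i)\in\Upsilon_{\epsilon_i+\epsilon_n}$ (Corollary~\ref{equa}(5)) has $(d_{\epsilon_n},d_{\epsilon_i+\epsilon_n},d_{\epsilon_i})=(1,2,1)$, so \eqref{p1} gives $p_{\epsilon_n,\epsilon_i}=1$ and Lemma~\ref{min} produces a factor $[2]$:
\[
[2]\,x_{\epsilon_i+\epsilon_n}^{+}=x_{\epsilon_i}^{+}x_{\epsilon_n}^{+}-x_{\epsilon_n}^{+}x_{\epsilon_i}^{+}
\]
(as $(\epsilon_n\mid\epsilon_i)=0$); this is exactly where the coefficients $[2]^{\pm1}$ in (3)--(6) come from. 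Dividing by $[2]$ and inserting the formulas for $x_{\epsilon_i}^{\pm}$, $x_{\epsilon_n}^{\pm}$ settles $j=n$. For $i<j<n$ one instead uses $(\alpha_j,\epsilon_i+\epsilon_{j+1})\in\Upsilon_{\epsilon_i+\epsilon_j}$ (Corollary~\ref{equa}(4)), where all three roots are long, so $p=0$ and, since $(\alpha_j\mid\epsilon_i+\epsilon_{j+1})=-2$,
\[
x_{\epsilon_i+\epsilon_j}^{+}=x_{\epsilon_i+\epsilon_{j+1}}^{+}x_{\alpha_j}^{+}-q^{-1}x_{\alpha_j}^{+}x_{\epsilon_i+\epsilon_{j+1}}^{+},
\]
which propagates the formula downward; this gives the $x^{\pm}$-parts of (3) and (5).

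The $\tau$-images (the $\tau$-parts of (1), and (2), (4), (6)) would be obtained by applying the anti-automorphism $\tau$ to each recursion above: $\tau$ reverses products and exchanges $x_i^{+}\leftrightarrow x_i^{-}$ with $\tau(k_\mu)=k_\mu$, so one gets matching recursions for $\tau(x_\beta^{\pm})$ whose base cases $\tau(x_n^{\pm})=x_n^{\mp}$ and $\tau(x_{\epsilon_i-\epsilon_j}^{\pm})$ are already listed in Lemma~\ref{natr1}(3) and Lemma~\ref{natural0} (equivalently, on $\End(V)$ the map $\tau$ is adjunction for the invariant bilinear form of $V$, which transports the matrix of $x$ to that of its adjoint). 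The only point calling for real care is the evaluation of $p_{\nu,\gamma}$ via \eqref{p1}: its unique nonzero value, at $(\epsilon_n,\epsilon_i)$, is precisely what creates the $[2]^{\pm1}$ factors, and a slip there would rescale half the formulas. Everything else is bookkeeping---carrying the half-integer powers of $q$ and the alternating signs $(-1)^{n-j}$ through the iterated commutators---so I expect the remaining write-up to be routine, which is why the details may be omitted.
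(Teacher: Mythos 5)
Your proposal follows exactly the paper's stated strategy---reduce to the simple-root formulas of Lemma~\ref{natr1} by iterating the minimal-pair commutation relations of Lemma~\ref{min} along the convex order \eqref{ord}, with the minimal pairs drawn from Corollary~\ref{equa}; the paper itself leaves the details as \emph{straightforward, hence omitted}. Your identification of the relevant minimal pairs, of the $p$-values via \eqref{p1} (in particular that $p_{\epsilon_n,\epsilon_i}=1$ is the sole source of the $[2]^{\pm1}$ factors), and of the $\tau$-bookkeeping is all correct, so this is the paper's argument carried one level further into detail.
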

\begin{Lemma}\label{natural2}
 Suppose  $\mfg=\mathfrak{sp}_{2n}$. For all admissible $i<j$ and admissible $k$, we have
\begin{itemize}\item [(1)] $x_{2\epsilon_k}^{+}=[2]^{-1}(q^{k-n-1}+q^{k-n+1})E_{k,k'}$, $\tau (x_{2\epsilon_k}^{+})=[2]^{-1}(q^{k-n-1}+q^{k-n+1})E_{k',k}$,
\item [(2)]  $x_{2\epsilon_k}^{-}=[2]^{-1}(q^{n-k+1}+q^{n-k-1})E_{k',k}$,
    $\tau (x_{2\epsilon_k}^{-})=[2]^{-1}(q^{n-k+1}+q^{n-k-1})E_{k,k'}$,
\item [(3)] $x_{\epsilon_i+\epsilon_j}^{+}=(-1)^{n-j}(E_{i,j'}+q^{i-j'}E_{j,i'})$,
$\tau (x_{\epsilon_i+\epsilon_j}^{+})=(-1)^{n-j}(E_{j',i}+
q^{i-j'}E_{i',j})$,
\item [(4)] $x_{\epsilon_i+\epsilon_j}^{-}=(-1)^{n-j}(E_{j',i}+q^{j'-i}E_{i',j})$,
 $\tau (x_{\epsilon_i+\epsilon_j}^{-})=(-1)^{n-j}(E_{i,j'}+q^{j'-i}E_{j,i'})$.\end{itemize}\end{Lemma}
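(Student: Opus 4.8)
\medskip
\noindent\textbf{Proof plan.}
The plan is to compute all of $x_\beta^{\pm}$ and $\tau(x_\beta^{\pm})$ in $\End(V)$ by a single induction along the convex order \eqref{ord}, starting from the cases already available: the simple root $\alpha_n=2\epsilon_n$, where $x_{2\epsilon_n}^{\pm}=x_n^{\pm}$ and $\tau(x_{2\epsilon_n}^{\pm})=x_n^{\mp}$ by Lemma~\ref{natr1}(4) and the definition of $\tau$, and all roots $\epsilon_i-\epsilon_j$, where $x_{\epsilon_i-\epsilon_j}^{\pm}$ and $\tau(x_{\epsilon_i-\epsilon_j}^{\pm})$ are given by Lemma~\ref{natural0}. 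Every induction step has the same shape: choose a minimal pair $(\nu,\gamma)\in\Upsilon_\beta$ from Corollary~\ref{equa} and apply Lemma~\ref{min}, which for the $+$ family reads
\[
x_\beta^{+}=[p_{\nu,\gamma}+1]^{-1}\bigl(x_\gamma^{+}x_\nu^{+}-v^{(\nu\mid\gamma)}x_\nu^{+}x_\gamma^{+}\bigr),\qquad v=q ,
\]
and, after applying the $\mathbb F$-linear anti-automorphism $\tau$ (which fixes the scalars and reverses each product), gives the analogous formula for $\tau(x_\beta^{+})$ in terms of $\tau(x_\nu^{+})$ and $\tau(x_\gamma^{+})$; the $-$ family is treated identically via Lemma~\ref{min}(2). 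The factor $[p_{\nu,\gamma}+1]$ is read off from \eqref{p1} — it equals $[2]$ exactly when $(d_\nu,d_\beta,d_\gamma)=(1,2,1)$, which for $\mathfrak{sp}_{2n}$ happens only for the pair of $2\epsilon_k$ below, and equals $1$ otherwise — and $(\nu\mid\gamma)$ is computed from $(\epsilon_a\mid\epsilon_b)=\delta_{ab}$.

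Concretely I would run the induction in three rounds. First, for $i<n$ use the minimal pair $(2\epsilon_n,\epsilon_i-\epsilon_n)$ of $\epsilon_i+\epsilon_n$ (Corollary~\ref{equa}(5)), where $[p+1]=1$ and $v^{(\nu\mid\gamma)}=q^{-2}$, feeding in $x_n^{\pm}$ and $x_{\epsilon_i-\epsilon_n}^{\pm}$, to obtain $x_{\epsilon_i+\epsilon_n}^{\pm}$. Second, for $k<n$ use the pair $(\epsilon_k+\epsilon_n,\epsilon_k-\epsilon_n)$ of $2\epsilon_k$ (Corollary~\ref{equa}(1)), where $[p+1]=[2]$ and $v^{(\nu\mid\gamma)}=1$, feeding in the just-computed $x_{\epsilon_k+\epsilon_n}^{\pm}$ and $x_{\epsilon_k-\epsilon_n}^{\pm}$, to get $x_{2\epsilon_k}^{\pm}$. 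Third, for $i<j<n$ run a downward induction on $j$ with the pair $(\epsilon_j-\epsilon_{j+1},\epsilon_i+\epsilon_{j+1})$ of $\epsilon_i+\epsilon_j$ (Corollary~\ref{equa}(4)), where $[p+1]=1$ and $v^{(\nu\mid\gamma)}=q^{-1}$, feeding in $x_{\epsilon_j-\epsilon_{j+1}}^{\pm}$ and $x_{\epsilon_i+\epsilon_{j+1}}^{\pm}$ (the latter from round one when $j+1=n$, otherwise from an earlier step). Each instance reduces to expanding a scalar times a product of at most two matrix units into at most four terms via $E_{a,b}E_{c,d}=\delta_{b,c}E_{a,d}$; the index identity $i'=2n+1-i$ (in particular $(j+1)'=j'-1$) is what converts the exponents coming out of Lemma~\ref{min} into the exponents $i-j'$, $j'-i$ and $k-n\pm1$ of the statement, and a short sign count yields the $(-1)^{n-j}$.

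There is also a shortcut for the $\tau$-rows: since $\bar\tau=\bar{\ }\circ\tau$ and $\bar{\ }$ is an involution, \eqref{switch} gives $\tau(x_\beta^{\pm})=\bar{\ }(x_\beta^{\mp})$; and for $\mathfrak{sp}_{2n}$ the matrices of the $x_i^{\pm}$ in Lemma~\ref{natr1}(1),(4) have integer entries while each $k_\mu$ is diagonal with entries in $q^{\mathbb Z}$, so the entrywise substitution $q\mapsto q^{-1}$ is a ring automorphism of $\End(V)$ agreeing with $\bar{\ }$ on the image of $\U_v(\mathfrak g)$. Hence each $\tau$-formula would drop out of the corresponding $x_\beta^{\mp}$-formula simply by replacing $q$ with $q^{-1}$, with $[2]$ unchanged.

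The hard part will not be any single computation but the bookkeeping: at each stage one must pick a minimal pair whose two members have \emph{already} been computed — and the relevant order is \emph{not} the convex order, since $\nu>\beta>\gamma$ inside a minimal pair — and then keep careful track of $[p_{\nu,\gamma}+1]$, of $v^{(\nu\mid\gamma)}$, of the signs, and of the prime-index shifts through each four-term expansion. (The shortcut above is particular to $\mathfrak{sp}_{2n}$: for $\mathfrak{so}_{2n+1}$ and $\mathfrak{so}_{2n}$ the root vectors already act with genuine powers of $q$, so there the $\tau$-formulae must come from the $\tau$-recursion directly, which is why extra factors of $[2]$ surface in Lemma~\ref{natural1}.)
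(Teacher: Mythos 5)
Your plan is exactly the paper's (the paragraph preceding Lemma~\ref{natural0} declares that these formulas come from the minimal pairs of Corollary~\ref{equa} and Lemma~\ref{min} applied to the simple-root data in Lemma~\ref{natr1}, and then omits the computations); you have chosen the correct minimal pairs, read off $[p_{\nu,\gamma}+1]$ and $v^{(\nu\mid\gamma)}$ correctly, and your $\bar{\ }$-shortcut for the $\tau$-rows via \eqref{switch} is sound and consistent with the stated formulas. One small inaccuracy in your closing aside: the $q\mapsto q^{-1}$ shortcut in fact also works for $\mathfrak{so}_{2n}$, since there too (Lemma~\ref{natr1}(1),(5)) the simple-root vectors act by constant matrices; it is only $\mathfrak{so}_{2n+1}$ that blocks it, because $x_n^{\pm}$ carries an honest $v^{\pm 1}$.
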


\begin{Lemma}\label{natural3}
Suppose  $\mfg=\mathfrak{so}_{2n}$. For all admissible $i<j$, we have  \begin{itemize} \item [(1)]
$x_{\epsilon_i+\epsilon_j}^{+}=(-1)^{n-j}E_{i,j'}+(-1)^{n-j-1}q^{i-j'+2}E_{j,i'}$, \item [(2)]
$\tau (x_{\epsilon_i+\epsilon_j}^{+})=(-1)^{n-j}E_{j',i}+(-1)^{n-j+1}q^{i-j'+2}E_{i',j}$, \item [(3)]
$x_{\epsilon_i+\epsilon_j}^{-}=(-1)^{n-j}E_{j',i}+(-1)^{n-j+1}q^{j'-i-2}E_{i',j}$, \item[(4)]
$\tau(x_{\epsilon_i+\epsilon_j}^{-})=(-1)^{n-j}E_{i,j'}+(-1)^{n-j-1}q^{j'-i-2}E_{j,i'}$.\end{itemize}
\end{Lemma}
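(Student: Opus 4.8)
The plan is to prove (1) by induction along the convex order \eqref{ord}, and then to obtain (2), (3) and (4) from (1) by transporting through the anti-automorphism $\tau$ and its companion $\bar{\tau}$.

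First I would record how $\tau$ and $\bar\tau$ act on $\End(V)$ when $\mfg=\mathfrak{so}_{2n}$. Writing $\rho$ for the action of $\U_v(\mathfrak g)$ on the natural module $V$ and $(-)^{\mathrm t}$ for the transpose of a matrix relative to $\{v_i\}$, both $\rho\circ\tau$ and $(-)^{\mathrm t}\circ\rho$ are $\mathbb F$-linear anti-homomorphisms $\U_v(\mathfrak g)\to\End(V)$, and by Lemma~\ref{natr1}(1) and (5) they agree on the algebra generators $x_i^{\pm},k_\mu$ ($1\le i\le n$); hence they coincide, so $\tau(x_\beta^{\pm})$ acts on $V$ as $(x_\beta^{\pm})^{\mathrm t}$. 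Since the $\mathbb C$-linear bar automorphism of \eqref{bar123} acts on $\End(V)$ by sending $q$ to $q^{-1}$ in matrix entries, $\bar\tau$ acts on $\End(V)$ as transpose followed by $q\mapsto q^{-1}$. Granting \eqref{switch}, formula (2) is then just the transpose of (1), formula (3) is (1) with the matrix units transposed and $q$ replaced by $q^{-1}$, and (4) is the transpose of (3); one checks that these coincide with the stated right-hand sides using $j'=2n+1-j$, so that $i-j'+2=i+j+1-2n$, together with $(-1)^{n-j+1}=(-1)^{n-j-1}$.

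It remains to prove (1). Since the root system of type $D_n$ is simply laced, every minimal pair $(\nu,\gamma)\in\Upsilon_\beta$ has $d_\nu=d_\beta=d_\gamma=1$, hence $p_{\nu,\gamma}=0$, so Lemma~\ref{min}(1) collapses to the clean identity $x_\beta^{+}=x_\gamma^{+}x_\nu^{+}-q^{(\nu\mid\gamma)}x_\nu^{+}x_\gamma^{+}$. I would run a double induction. The base case $\epsilon_{n-1}+\epsilon_n=\alpha_n$ is immediate from Lemma~\ref{natr1}(5) (the $q$-power there is trivial because $j'=n+1$). For $\beta=\epsilon_i+\epsilon_n$ with $i<n-1$, take the minimal pair $(\nu,\gamma)=(\epsilon_{i+1}+\epsilon_n,\epsilon_i-\epsilon_{i+1})$ from Corollary~\ref{equa}(5) and induct downward on $i$. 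For $\beta=\epsilon_i+\epsilon_j$ with $i<j<n$, take the minimal pair $(\nu,\gamma)=(\epsilon_j-\epsilon_{j+1},\epsilon_i+\epsilon_{j+1})$ from Corollary~\ref{equa}(4) and induct downward on $j$. In the latter two cases one factor is a simple root element, given by Lemma~\ref{natr1}(1) or Lemma~\ref{natural0}(1), and the other is given by the inductive hypothesis (or by the previous case when the second index reaches $n$); also $(\nu\mid\gamma)=-1$ in both cases because $(\epsilon_a\mid\epsilon_b)=\delta_{a,b}$. Multiplying out the two products of matrices, each supported on two matrix units, exactly the terms $E_{i,j'}$ and $E_{j,i'}$ survive, with coefficients $(-1)^{n-j}$ and $(-1)^{n-j-1}q^{i-j'+2}$ respectively.

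The main obstacle is purely bookkeeping: tracking the priming map $a\mapsto a'=2n+1-a$, deciding which of the four products $E_{a,b}E_{c,d}$ are nonzero, and controlling the exponents of $q$ as they accumulate through the recursion, while also checking that the recursion is well founded — it is, because the second entry $\gamma$ of the chosen minimal pair always lies strictly below $\beta$ in the order \eqref{ord} and has been computed at an earlier stage of the induction. No idea beyond Corollary~\ref{equa}, Lemma~\ref{min}, Lemma~\ref{natr1} and Lemma~\ref{natural0} is needed, which is why only the outline is given and the routine matrix computations are suppressed.
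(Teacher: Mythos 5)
Your proposal is correct and follows essentially the same route as the paper, which computes the root vectors by induction along the convex order using the minimal pairs of Corollary~\ref{equa} together with Lemma~\ref{min} and the simple-root formulae of Lemma~\ref{natr1}; deriving parts (2)--(4) from (1) via $\tau$ and $\bar\tau$, using \eqref{switch} and the observation that $\tau$ (resp.\ $\bar\tau$) acts on $\End(V)$ as transpose (resp.\ transpose composed with $q\mapsto q^{-1}$), is a mild organisational streamlining rather than a new idea. One small imprecision in your well-foundedness remark: in the case $\beta=\epsilon_i+\epsilon_n$ it is the first member $\nu=\epsilon_{i+1}+\epsilon_n$ of your chosen minimal pair, not the second member $\gamma=\epsilon_i-\epsilon_{i+1}$, that is supplied by the inductive hypothesis, though the preceding sentence already makes the correct division between the simple-root factor and the inductive factor.
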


 The  quasi-R-matrix $\Theta$ of $\U_v(\mathfrak g)$  \cite[\S 4.1]{Lu} and its inverse
$\bar \Theta$ are
\begin{equation}\label{ttheta11}\Theta=\vec{\prod}_{j=\ell(w_0)}^1 \text{exp}_{v_{\beta_j}^{-1}}[(1-v_{\beta_j}^{2})x_{\beta_j}^{-}\otimes x_{\beta_j}^{+}], \ \  \bar\Theta=\vec{\prod}_{i=1}^{\ell(w_0)} \text{exp}_{v_{\beta_i}} [(1-v_{\beta_i}^{-2}) x_{\beta_i}^-\otimes x_{\beta_i}^+], \end{equation}
where  $v_{\beta_j}=v^{d_{\beta_j}}$ and $\text{exp}_{v_{\beta_j}^{\pm1}}(x)=\sum_{k=0}^\infty v_{\beta_j}^{\pm \frac{1}{2}k(k+1)} \frac {x^k}{[k]^!_{d_{\beta_j}}}$.  Thanks to \cite[Theorem 4.1.2(a)]{Lu},  $\Theta$ is uniquely determined by \eqref{qrm} as follows:
\begin{equation}\label{qrm} \Theta_0=1\otimes 1, \text{ and
  $\Delta(u) \Theta=\Theta \bar\Delta(u)$}\end{equation}   for all $u\in \U_v(\mathfrak g)$.
Write
$$\Theta=\sum_{\beta\in \mathcal{Q}^+}\Theta_\beta\ \ \text{  and  } \ \ \bar{\Theta}=\sum_{\beta\in \mathcal{Q}^+}\bar{\Theta}_\beta,$$ where
both $\Theta_\beta$ and  $\bar\Theta_\beta$ are in $\U^-_v(\mathfrak g)_{-\beta} \otimes \U^+_v(\mathfrak g)_\beta$ and $ \U^\pm_v(\mathfrak g)_{\pm\beta}$ is the subspace of $\U^\pm_v(\mathfrak g)$ spanned by $\{x^\pm_{I}|wt(I)=\beta\}$.
Thanks to \cite[7.1(2)--(3)]{J},
$$(\iota\otimes \iota)(\Theta_\beta)=\Theta_\beta, \ (\dag\otimes \dag)(\Theta_\beta)=P( \Theta_\beta)$$  for any $\beta\in\mathcal{Q}^+$,
where $P$ is the tensor flip and, $\iota$ (resp., $\dag$) is the anti-automorphism (resp., automorphism) of $\U_v(\mathfrak g)$ fixing $x_i^{\pm}$ (resp., switching $x_i^+$ and $x^-_i$) and sending $k_\mu$ to $k_{-\mu}$. Since $\tau=\dag\circ \iota$,  $$ (\tau\otimes \tau)(\Theta_\beta)=P ( \Theta_\beta).$$
Noting that $\Theta \bar \Theta =\bar \Theta  \Theta =1\otimes 1$, we have $ (\tau\otimes \tau)(\bar\Theta_\beta)=P ( \bar\Theta_\beta)$. So, \begin{equation}\label{reversed}\bar{\Theta}_\beta=P\circ (\tau \otimes \tau) (\bar{\Theta}_\beta).\end{equation}

\begin{Prop} \label {matrix} Let  $M$ be a  $\U_v(\mathfrak g)$-module.  For any $0\neq m\in M$,
$${\bar\Theta} (m\otimes v_j)=\begin{cases} [1\otimes 1+z_q\sum_{i<j}  \tau(x_{\epsilon_i-\epsilon_j}^+)\otimes \tau(x_{\epsilon_i-\epsilon_j}^-)]( m\otimes v_j), &\text{if $1\le j\le n$,}\\
[1\otimes 1+z_{q^{1/2}}\sum_{1\le i\le n}  \tau(x_{\epsilon_i}^+)\otimes \tau(x_{\epsilon_i}^-)] (m\otimes v_{n+1}),
&\text{if $(\mathfrak g, j)=(\mathfrak{so}_{2n+1}, n+1)$,}\\
[1\otimes 1+c_j] ( m\otimes v_{j}), &\text{if $n'\le j\le 1'$,} \\
\end{cases}
$$
  where  $$
\begin{aligned} c_j  =& z_q  \left \{\sum_{i<j'} \tau (x_{\epsilon_i+\epsilon_{j'}}^+)\otimes  \tau (x_{\epsilon_i+\epsilon_{j'}}^-) +  \sum_{i>j'} (x_{\epsilon_{j'}+\epsilon_i}^-\otimes  x_{\epsilon_{j'}+\epsilon_i}^+ +
    x_{\epsilon_{j'}-\epsilon_i}^-\otimes x_{\epsilon_{j'}-\epsilon_i}^+)\right\} \\ &+z_q^2\sum_{i>j'}   x_{\epsilon_{j'}-\epsilon_i}^-x_{\epsilon_{j'}+\epsilon_i}^-\otimes x_{\epsilon_{j'}-\epsilon_i}^+x_{\epsilon_{j'}+\epsilon_i}^+
  + \delta_{\mathfrak g, \mathfrak {sp}_{2n} }z_{q^2}x_{2\epsilon_{j'}}^-\otimes x_{2\epsilon_{j'}}^+ \\ &+ \delta_{\mathfrak g, \mathfrak {so}_{2n+1} }[q^{\frac{1}{2}} z_{q^{1/2}}^2 {[2]}^{-1}  (x_{\epsilon_{j'}}^-)^2\otimes (x_{\epsilon_{j'}}^+)^2  + z_{q^{1/2}}  x_{\epsilon_{j'}}^-\otimes x_{\epsilon_{j'}}^+].
  \end{aligned}$$
In any case,  the formulae on $\Bar\Theta (v_{j'}\otimes m)$ are obtained by using $v_{j'}\otimes m$  instead of  $m\otimes v_j$ in the previous formulae.
\end{Prop}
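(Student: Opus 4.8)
The plan is to read off $\bar\Theta$ on $M\otimes V$ directly from the product expansion \eqref{ttheta11}, exploiting that the natural module $V$ is multiplicity free and that positive root vectors act on it nilpotently of very small order. Write $\bar\Theta=1\otimes 1+\sum_{0\neq\beta\in\mathcal{Q}^{+}}\bar\Theta_\beta$ with $\bar\Theta_\beta\in\U^-_v(\mathfrak g)_{-\beta}\otimes\U^+_v(\mathfrak g)_\beta$. For $m\in M$ and a basis vector $v_j$ of $V$, the element $\bar\Theta_\beta(m\otimes v_j)$ lies in $M\otimes V_{wt(v_j)+\beta}$, hence vanishes unless $wt(v_j)+\beta$ is a weight of $V$; and since every weight space of $V$ is at most one dimensional, for each such $\beta$ there is a unique index $p=p(\beta,j)$ and a single operator $a_{\beta,j}\in\End(M)$, coming from the $\U^-_v(\mathfrak g)_{-\beta}$-factor of $\bar\Theta_\beta$, with $\bar\Theta_\beta(m\otimes v_j)=a_{\beta,j}(m)\otimes v_p$. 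So the whole proposition reduces to listing the finitely many $\beta$ that occur and, for each, identifying $a_{\beta,j}$ as an element of $\U^-_v(\mathfrak g)$ (equivalently, as an operator on every $M$).

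First I would establish the nilpotency input. Using Lemma~\ref{natr1} together with Lemmas~\ref{natural0}--\ref{natural3}, one checks that $(x_\gamma^{+})^2=0$ on $V$ for every $\gamma\in\mathcal{R}^{+}$, with the single exception $\gamma=\epsilon_k$ when $\mfg=\mathfrak{so}_{2n+1}$, where $(x_{\epsilon_k}^{+})^2\neq 0$ but $(x_{\epsilon_k}^{+})^3=0$; moreover on a single basis vector $v_j$ the nilpotency order is even smaller. Consequently, when each $q$-exponential in \eqref{ttheta11} is expanded, only its constant, linear, and (for the short $B$-root) quadratic terms can contribute to $\bar\Theta(m\otimes v_j)$. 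Combining this with the weight constraint above shows: for $1\le j\le n$ only $\beta=\epsilon_i-\epsilon_j$ with $i<j$ occurs; for $(\mfg,j)=(\mathfrak{so}_{2n+1},n+1)$ only $\beta=\epsilon_i$; and for $n'\le j\le 1'$ the roots $\beta=\epsilon_{j'}\pm\epsilon_i$ $(i>j')$, $\beta=\epsilon_i+\epsilon_{j'}$ $(i<j')$, $\beta=2\epsilon_{j'}$, and pairwise sums of these. The PBW monomials in $\U^+_v(\mathfrak g)_\beta$ that act nontrivially on $v_j$, and their scalars, are then read off from the minimal-pair description of $\mathcal{R}^{+}$ in Corollary~\ref{equa} and from Lemmas~\ref{natural0}--\ref{natural3}, and the resulting $\U^-$-combinations are put into closed form by the commutation relations of Lemmas~\ref{commut} and~\ref{min} (with the coefficient control of Proposition~\ref{commut1}). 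Finally, applying the reversed symmetry $\bar\Theta_\beta=P\circ(\tau\otimes\tau)(\bar\Theta_\beta)$ of \eqref{reversed} rewrites the bare terms $x_\beta^{-}\otimes x_\beta^{+}$ and their quadratic companions in the $\tau$-form of the statement and fixes the exact powers of $q$; the scalars $z_q,z_{q^{1/2}},z_{q^2}$ and the $[2]$-factors emerge from the exponents $1-v_\beta^{-2}$ in \eqref{ttheta11} together with the nontrivial structure constants of Lemma~\ref{natural1} for the short $B$-root.

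As an independent check, and an alternative route that avoids fully expanding \eqref{ttheta11}, one may let $\mathcal{T}$ denote the operator on $M\otimes V$ defined by the right-hand sides of the claimed formulas; it is manifestly unitriangular, so its $\beta=0$ component is $1\otimes 1$, and by the uniqueness of $\Theta$ subject to \eqref{qrm} (hence of $\bar\Theta=\Theta^{-1}$ subject to the dual intertwining property) it suffices to verify $\mathcal{T}\,\Delta(u)=\bar\Delta(u)\,\mathcal{T}$ on $M\otimes V$ for the algebra generators $u=x_i^{\pm},k_\mu$. The case $u=k_\mu$ is immediate from weight considerations; for $u=x_i^{\pm}$ one expands using Lemma~\ref{natr1} for the action on $V$, the explicit operators of Lemmas~\ref{natural0}--\ref{natural3}, the commutation relations of Lemmas~\ref{commut} and~\ref{min}, and (for the $x_i^{+}$-relation, where one must move $x_i^{+}$ past the $\tau$-factors) the coproduct formulas of Proposition~\ref{roo}. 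The formulas for $\bar\Theta(v_{j'}\otimes m)$ then follow either by the same computation with the two tensor factors interchanged — so that it is now the $\U^-_v(\mathfrak g)_{-\beta}$-factors of $\bar\Theta_\beta$ that act on $v_{j'}$ — or, more cleanly, by combining \eqref{reversed} with the self-duality $V\cong V^{*}$ of the natural module, under which $v_j\leftrightarrow v_{j'}$ interchanges the action of $x_\beta^{\pm}$ with that of $\tau(x_\beta^{\pm})$.

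The hard part will be the case $n'\le j\le 1'$. In contrast to the two easy cases, here several distinct roots $\beta$ contribute, a fixed $\beta$ may contribute both linear and genuinely quadratic PBW monomials, and for $\mfg=\mathfrak{sp}_{2n}$ and $\mfg=\mathfrak{so}_{2n+1}$ the special roots $2\epsilon_{j'}$ and $\epsilon_{j'}$ produce the $z_{q^2}$- and $q^{1/2}z_{q^{1/2}}^2[2]^{-1}$-terms. Showing that all of these assemble into exactly $c_j$ is a bookkeeping-heavy verification built on the controlled commutation relations of Lemmas~\ref{commut} and~\ref{min} and the minimal pairs of Corollary~\ref{equa} — in the same spirit as, and no harder than, the proof of Proposition~\ref{roo} deferred to Appendix~B.
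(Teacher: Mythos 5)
Your overall plan — expand $\bar\Theta$ via \eqref{ttheta11}, restrict the contributing $\beta$'s by the weight of $v_j$, and use the reversed symmetry \eqref{reversed} to produce the $\tau$-form — is the same plan the paper follows, and it would reach the stated formulas. There are two points where the emphasis differs from the paper, and one of them is a genuine shortcut you are not exploiting.

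First, the nilpotency observation $(x_\gamma^+)^2=0$ (respectively $(x_{\epsilon_k}^+)^3=0$ for the short $B$-root) only bounds what each \emph{single} exponential $\mathrm{exp}_{v_\beta}[(1-v_\beta^{-2})x_\beta^-\otimes x_\beta^+]$ can contribute; it does nothing to kill the cross terms $x_I^-\otimes x_I^+$ with $\ell(I)\ge 2$ coming from products of \emph{distinct} exponentials, and for $1\le j\le n$ and $\beta=\epsilon_i-\epsilon_j$ such cross terms (for instance $I=(\epsilon_i-\epsilon_k,\epsilon_k-\epsilon_j)$, $i<k<j$) have $x_I^+v_j\neq 0$. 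Your plan, as written, would therefore have to compute those cross terms from the minimal-pair data and then recombine them using Lemma~\ref{commut}, Lemma~\ref{min}, and Proposition~\ref{commut1} before arriving at the simple closed form. The paper avoids this entirely: it applies $P\circ(\tau\otimes\tau)$ to the expansion \emph{first}, so that in the flipped form the factor closest to $v_j$ is $\tau(x_{\epsilon_{i_0}-\epsilon_{i_1}}^-)$, and for $\ell(I)\ge 2$ this already kills $v_j$ by a pure weight argument ($\epsilon_{i_0}-\epsilon_{i_1}+\epsilon_j$ is not a weight of $V$ when $i_0<i_1<j$). The same trick, applied with a judicious choice between the two PBW presentations given by \eqref{ttheta11} and its flip, is what disposes of the $I,J$-sums in \eqref{pro2}--\eqref{pro4} in the hard case $n'\le j\le 1'$. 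So you are citing \eqref{reversed} as a final cosmetic rewriting, whereas in the paper it is the mechanism that makes the whole computation feasible; you should move it to the start of the argument.

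Second, your alternative route via uniqueness of $\Theta$ has a gap you would need to fill: \eqref{qrm} characterizes $\Theta$ (and hence $\bar\Theta$) as a unitriangular element of the completed tensor square of $\U_v(\mathfrak g)$, not as an operator on a fixed $M\otimes V$. An operator $\mathcal T$ on $M\otimes V$ that intertwines $\Delta$ with $\bar\Delta$ and has identity $\beta=0$ part is not \emph{a priori} the restriction of $\bar\Theta$; you would have to quantify over all $M$ and invoke the fact that the intersection of annihilators of finite-dimensional weight modules is trivial (the paper does use this fact, but only to justify that \eqref{invr} holds when one factor is not finite-dimensional, not to reprove the formulas). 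This alternative is plausible but would need this point spelled out; the direct expansion with the flip is cleaner and is what the paper does.
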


\begin{proof}
If $1\le j\le n$, then $wt(v_j)=\epsilon_j$. So
 $\bar\Theta_\beta$ acts on $m\otimes v_j$ non-trivially only if either $\beta =0$ or  $\beta =\epsilon_i-\epsilon_j$ for some $i$, $1\le i<j$.
Thanks to \eqref{ttheta11}, $\bar\Theta_0=1\otimes 1$ and  \begin{equation}\label{pro1}{\bar\Theta}_{\epsilon_k-\epsilon_l}-z_qx_{\epsilon_k-\epsilon_l}^-\otimes x_{\epsilon_k-\epsilon_l}^+ =\sum_{r\ge 2}\sum_I z_q^r x_{I}^-\otimes x_{I}^+, \text{for all  $1\leq k<l\leq n$},\end{equation} where $I$  ranges over all sequences of positive roots  $(\epsilon_{i_0}-\epsilon_{i_1}, \ldots,  \epsilon_{i_{r-1}}-\epsilon_{i_r})$ such that $ i_0=k$ and $i_r=l$.
In particular,  we have \eqref{pro1} for ${\bar\Theta}_{\epsilon_i-\epsilon_j}$.
 Obviously, $\tau(x_{\epsilon_{i_{0}}-\epsilon_{i_1}}^-)v_j=0$. Otherwise $wt(\tau(x_{\epsilon_{i_{0}}-\epsilon_{i_1}}^-)v_j)=\epsilon_{i_0}-\epsilon_{i_1}+\epsilon_j$, and  $i_0<i_1<j$, a contradiction.
So, $P\circ (\tau\otimes \tau) (\text{RHS of \eqref{pro1}})$  acts on $m\otimes v_j$ as zero.
By \eqref{reversed}, we have the required formula on $\bar\Theta(m\otimes v_j)$ under the assumption
$1\le j\le n$.

  Now, we assume $\mathfrak g=\mathfrak{so}_{2n+1}$ and $ j=n+1$. So,  $wt(v_{j})=0$. In this case,
   $\bar\Theta_\beta $ acts on $m\otimes v_{n+1}$ non-trivially only if
   either $\beta =0$ or $\beta =\epsilon_l$, $1\le l\le n$.
   Thanks to \eqref{ttheta11}, \begin{equation}\label{pro2}\bar{\Theta}_{\epsilon_l}-z_{q^{1/2}}x_{\epsilon_l}^-\otimes x_{\epsilon_l}^+ =\sum_{s>l}\sum_{wt(J)=\epsilon_s}a_{s,J}x_{\epsilon_l-\epsilon_s}^-x_{J}^-\otimes x_{\epsilon_l-\epsilon_s}^+x_{J}^+, \text{ for some $a_{s,J}\in\mathbb{F}$}. \end{equation}
Note that  $\tau(x_{\epsilon_l-\epsilon_s}^-)v_{n+1}=0$.  So,  $P\circ (\tau\otimes \tau) (\text{RHS of \eqref{pro2}})$  acts on $m\otimes v_{n+1}$ as zero. Using  \eqref{reversed} again, one immediately has
the required formula on
 $\bar\Theta(m\otimes v_{n+1})$.

Suppose $n'\leq j\leq 1'$.  Since $wt(v_j)=-\epsilon_{j'}$,  $\bar\Theta_{\beta }$ acts  on  $m\otimes v_j$ non-trivially  only if $\beta$ and $\mfg$ satisfy one of conditions as follows:   \begin{itemize} \item  $\beta \in \{0, \epsilon_d+\epsilon_{j'}, \epsilon_{j'}-\epsilon_t\mid 1\leq d\leq n, j'< t\leq n\}$,  and $\mfg\neq \mathfrak{so}_{2n+1}$, \item
 $\beta \in \{0, \epsilon_d+\epsilon_{j'}, \epsilon_{j'}-\epsilon_t\mid 1\leq d\leq n, j'< t\leq n\}
 \cup \{\epsilon_{j'}\}$, and  $\mfg=\mathfrak{so}_{2n+1}$.\end{itemize}
We have \eqref{pro1} for ${\bar\Theta}_{\epsilon_{j'}-\epsilon_{t}}$.
Since $x_{\epsilon_{i_{r-1}}-\epsilon_{i_r}}^+ v_j=0$,
the RHS of \eqref{pro1} acts on $m\otimes v_j$ as zero. So, $$\bar\Theta_{\epsilon_{j'}-\epsilon_t}(m\otimes v_j)=z_qx_{\epsilon_{j'}-\epsilon_t}^-\otimes x_{\epsilon_{j'}-\epsilon_t}^+(m\otimes v_j).$$ Thanks to \eqref{ttheta11}, we have
\begin{equation}\label{pro3}\begin{aligned}  {\bar\Theta}_{2\epsilon_c}&=  z_q^2 \sum_{k>c}  x_{\epsilon_c-\epsilon_k}^-x_{\epsilon_c+\epsilon_k}^- \otimes  x_{\epsilon_c-\epsilon_k}^+x_{\epsilon_c+\epsilon_k}^+
+ \delta_{\mathfrak g, \mathfrak{so}_{2n+1} } q^{\frac{1}{2}} {[2]}^{-1} z_{q^{1/2}}^2 (x_{\epsilon_c}^-)^2\otimes (x_{\epsilon_c}^+)^2 \\
& +\sum_{c<l<k} \sum_{wt(J)=\epsilon_l-\epsilon_k}a_{l,k,J}x_{\epsilon_c-\epsilon_l}^-x_{\epsilon_c+\epsilon_k}^-x_{J}^- \otimes x_{\epsilon_c-\epsilon_l}^+x_{\epsilon_c+\epsilon_k}^+x_{J}^++\delta_{\mathfrak g, \mathfrak{sp}_{2n} } z_{q^2}  x_{2\epsilon_c}^-\otimes x_{2\epsilon_c}^+\\&+\sum_{c<l<k}\sum_{wt(J)=\epsilon_l+\epsilon_k}b_{l,k,J} x_{\epsilon_c-\epsilon_l}^-x_{\epsilon_c-\epsilon_k}^- x_{J}^- \otimes x_{\epsilon_c-\epsilon_l}^+x_{\epsilon_c-\epsilon_k}^+x_{J}^+,
 \\ \end{aligned}\end{equation}  for any  $c, 1\leq c\leq n$, where $a_{l,k,J}$'s and $b_{l,k,J}$'s are in $\mathbb F$.
 Using  \eqref{ttheta11} again, we have
  \begin{equation}\begin{aligned}\label{pro4}  &{\bar\Theta}_{\epsilon_r+\epsilon_p}  = z_q x_{\epsilon_{r}+\epsilon_p}^-\otimes x_{\epsilon_{r}+\epsilon_{p}}^+  +\sum_{k>p}\sum_{wt(I)=\epsilon_p-\epsilon_k} a_{k,I}x_{\epsilon_r+\epsilon_k}^-x_{I}^-
    \otimes  x_{\epsilon_r+\epsilon_k}^+x_{I}^+
     \\ &   +
  \sum_{k>r} \sum_{wt(I)=\epsilon_k+\epsilon_p}b_{k,I} x_{\epsilon_r-\epsilon_k}^-x_{I}^- \otimes  x_{\epsilon_r-\epsilon_k}^+x_{I}^+ + \delta_{\mathfrak g, \mathfrak {so}_{2n+1} }  \sum_{wt(I)=\epsilon_p}c_{I}x_{\epsilon_r}^-x_{I}^-\otimes x_{\epsilon_r}^+ x_{I}^+ , \\ \end{aligned}  \end{equation} for all admissible $1\leq r<p\leq n$, where $a_{k,I}$'s, $b_{k,I}$'s and $c_I$'s are in $\mathbb F$. Note that \eqref{pro3}-\eqref{pro4} give formulae on $\bar \Theta_{\epsilon_d+\epsilon_{j'}}$ in any case.
  Acting $P\circ (\tau\otimes \tau)$ on both sides of \eqref{pro2}-\eqref{pro4} and
using  \eqref{reversed} yields another  two formulae on $\bar\Theta_{\epsilon_{j'}}$ and  $\bar\Theta_{\epsilon_d+\epsilon_{j'}}$. The orders in the ordered products of these formulae are reversed with each other.
This makes us to choose a suitable forms of $\bar\Theta_{\epsilon_d+\epsilon_{j'}}$ and $\bar\Theta_{\epsilon_{j'}}$ so as to prove that the terms with respect to $I$ and $J$  act on $m\otimes v_j$ as zero. Therefore, the formula for $n'\leq j\leq 1'$ can be checked directly.
Finally, one can check the formulae on $\bar\Theta (v_{j}\otimes m)$  similarly.
\end{proof}

Given    two finite-dimensional weight modules $M$ and $N$,
Lusztig  defines \begin{equation} \label{rinv} R_{M, N} = \Theta \circ \pi  \circ P: M\otimes N\rightarrow N\otimes M,\end{equation} where
$\pi : M\otimes N\rightarrow M\otimes N$ such that
$$\pi  (m\otimes n)=v^{-(\lambda|\mu)} m\otimes n,$$  for all $(m, n)\in M_\lambda\times N_\mu$.
He proves that $R_{M, N}$  is a $\U_v(\mathfrak g)$-isomorphism  and \begin{equation}\label{invr} R_{M, N}^{-1} = P\circ\pi ^{-1}\circ\bar\Theta: N\otimes M\rightarrow M\otimes N.\end{equation}
The current $R_{M, N}$ is Lusztig's  ${}_f\mathcal R_{N, M}$ in \cite[Theorem 32.1.5]{Lu}, where $f$ is the function such that $f(\lambda, \mu)=-(\lambda|\mu)$~\cite[\S 31.1.3]{Lu}. When one of $M, N$ is a finite-dimensional weight module,
 we still have the $\U_v(\mathfrak g)$-homomorphism $\Theta \circ \pi  \circ P$ such that $\pi  : M\otimes N\rightarrow M\otimes N$
with
\begin{equation}\label{pif}\pi  (m\otimes n)=\begin{cases} k_{-\lambda}m \otimes n, & \text{if $n\in N_\lambda$,}\\
                                    m\otimes k_{-\lambda} n, &  \text{if $m\in M_\lambda$. }\\
                                    \end{cases}
                                   \end{equation}
Moreover, by arguments similar to those in \cite[\S5]{BSW}, $\Theta \circ \pi  \circ P$ is also an isomorphism and  \eqref{invr} is still available in this case.  In fact, the proof of this result  reduces to the case that both $M$ and $N$ are finite-dimensional weight modules by using the fact that  the intersection of  annihilators of all finite-dimensional
weight modules is zero (see e.g. \cite[Proposition~5.11]{J}).

For  finite-dimensional  weight modules $M_1, M_2, M_3$,
\begin{equation}\label{Hexagon}R_{M_1, M_2\otimes M_3}=(1\otimes R_{M_1, M_3})\circ (R_{M_1, M_2}\otimes 1),  \ R_{M_1\otimes M_2, M_3}=(R_{M_1, M_3}\otimes 1) \circ (1\otimes R_{M_3, M_2}),\end{equation}
 which is called  the  Hexagon property in \cite[Proposition 32.2.2]{Lu}. So,
 \begin{equation}\label{bian}(R_{M_2, M_3}\otimes1)\circ (1\otimes R_{M_1, M_3})\circ(R_{M_1, M_2}\otimes1)=(1\otimes R_{M_1, M_2})\circ (R_{M_1, M_3}\otimes 1)\circ(1\otimes R_{M_2, M_3}).\end{equation}
Similarly,
 \eqref{Hexagon}-\eqref{bian} are  still  available if two of $M_1, M_2, M_3$ are finite-dimensional weight modules.

By  Proposition~\ref{matrix} and \eqref{pif}, one can check that:
\begin{equation} \label{bar}\begin{aligned} & \pi ^{-1}  \circ\bar\Theta\mid_{ V^{\otimes 2} }=
\sum_{i\neq i'}(q E_{i,i}\otimes E_{i,i}+q^{-1} E_{i,i}\otimes E_{i',i'})  +\sum_{i\neq j, j'} E_{j,j}\otimes E_{i,i}\\ & +
 z_q\sum_{i<j} (E_{j,i}\otimes E_{i,j} -  q^{\varrho_j-\varrho_i} \varsigma_i \varsigma_j E_{i',j'} \otimes E_{i,j}) +\delta_{\mathfrak g, \mathfrak {so}_{2n+1}} E_{n+1, n+1} \otimes E_{n+1, n+1},\cr \end{aligned}
 \end{equation}
where $\varsigma_i=1$ unless $\mathfrak g=\mathfrak{sp}_{2n}$ and $n+1\le  i\le 2n$. In the later case, $\varsigma_i=-1$. The following result follows from \eqref{bar} directly.

\begin{Lemma}\label{act123} Let  $E: V^{\otimes 2}\rightarrow V^{\otimes 2} $ be  such that $E(v_k\otimes v_l)=\delta_{k,l'} \sum_{i=1}^\mathrm N q^{\varrho_{i'}-\varrho_k} \varsigma_{i'} \varsigma_k v_{i}\otimes v_{i'}$ for all admissible $k$ and $l$.
\begin{itemize} \item [(1)] If either $\mathfrak g\neq \mathfrak{so}_{2n+1}$ or  $\mathfrak g= \mathfrak{so}_{2n+1}$ and $(k, l)\neq (n+1, n+1)$, then
$${R_{V, V}^{-1}} (v_k\otimes v_l)=\begin{cases} q v_k\otimes v_k, & \text{if $k=l$,}\\
v_l\otimes v_k,   & \text{if $k>l$ and $k\neq l'$,}\\
q^{-1} v_l\otimes v_k-z_q \sum_{i>k} q^{\varrho_i-\varrho_k} \varsigma_i\varsigma_k v_{i'}\otimes v_i,   & \text{if $k>l$ and $k= l'$,}\\
v_l\otimes v_k+z_q v_k\otimes v_l, & \text{if $k<l$ and $k\neq l'$,}\\
q^{-1} v_l\otimes v_k +z_q (v_k\otimes v_l -\sum_{i>k} q^{\varrho_i-\varrho_k} \varsigma_i\varsigma_k v_{i'}\otimes v_i), & \text{if $k<l$ and $k= l'$.}\\
\end{cases} $$
\item [(2)] If $\mathfrak g=\mathfrak{so}_{2n+1}$, then
${R_{V, V}^{-1}} (v_{n+1}\otimes v_{n+1}) = v_{n+1}\otimes v_{n+1}-z_q \sum_{i>n+1} q^{\varrho_i}v_{i'}\otimes v_i$.\end{itemize}
In any case, ${R_{V, V}^{-1}}- {R_{V, V}}=z_q (1-E)$.
    \end{Lemma}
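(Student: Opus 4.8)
The plan is to read everything off from the explicit matrix \eqref{bar} together with \eqref{invr}, which gives $R_{V,V}^{-1}=P\circ\pi^{-1}\circ\bar\Theta$. Since \eqref{bar} records $\pi^{-1}\circ\bar\Theta$ restricted to $V^{\otimes 2}$ as an explicit $\mathbb F$-linear combination of operators $E_{a,b}\otimes E_{c,d}$, composing on the left with the flip $P$ simply swaps the two tensor legs of each such term, and evaluating the result on a fixed basis vector $v_k\otimes v_l$ becomes a matter of index matching within the four groups of terms in \eqref{bar}: the ``diagonal'' terms $qE_{i,i}\otimes E_{i,i}+q^{-1}E_{i,i}\otimes E_{i',i'}$ (over $i\neq i'$) and $E_{j,j}\otimes E_{i,i}$ (over $i\neq j,j'$); the Hecke-type term $z_q\sum_{i<j}E_{j,i}\otimes E_{i,j}$; the contraction-type term $-z_q\sum_{i<j}q^{\varrho_j-\varrho_i}\varsigma_i\varsigma_j E_{i',j'}\otimes E_{i,j}$; and, when $\mathfrak g=\mathfrak{so}_{2n+1}$, the extra term $E_{n+1,n+1}\otimes E_{n+1,n+1}$.

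First I would dispose of (1) and (2) by a case analysis on the relative position of $k$ and $l$. If $k=l$ then necessarily $k\neq k'$ (the equality $k=k'$ forces $\mathfrak g=\mathfrak{so}_{2n+1}$ and $k=n+1$, which is exactly the case excluded from (1) and treated in (2)), so only $qE_{i,i}\otimes E_{i,i}$ with $i=k$ survives and we get $q\,v_k\otimes v_k$. If $k\neq l$ and $k\neq l'$, only $E_{j,j}\otimes E_{i,i}$ (with $j=k$, $i=l$) and, when $k<l$, the Hecke-type term (with $i=k$, $j=l$) act; after the flip $P$ this yields $v_l\otimes v_k$ when $k>l$ and $v_l\otimes v_k+z_q v_k\otimes v_l$ when $k<l$. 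If $k=l'$ (again $k\neq k'$), the summand $q^{-1}E_{i,i}\otimes E_{i',i'}$ with $i=k$ gives $q^{-1}v_l\otimes v_k$, the Hecke-type term contributes $z_q v_k\otimes v_l$ precisely when $k<l$, and the contraction-type term, after reindexing the summation variable $i\mapsto i'$ and using $\varrho_{i'}=-\varrho_i$ and $\varsigma_{i'}\varsigma_{k'}=\varsigma_i\varsigma_k$, becomes $-z_q\sum_{i>k}q^{\varrho_i-\varrho_k}\varsigma_i\varsigma_k v_{i'}\otimes v_i$; assembling these gives the two stated expressions. Finally, the single remaining vector $v_{n+1}\otimes v_{n+1}$ in type $B$ additionally picks up the dedicated last term of \eqref{bar}, which gives (2).

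For the skein identity $R_{V,V}^{-1}-R_{V,V}=z_q(1-E)$, I would similarly compute $R_{V,V}$: by \eqref{rinv} and \eqref{invr} one has $R_{V,V}=(\pi^{-1}\circ\bar\Theta)^{-1}\circ P$ on $V^{\otimes 2}$, so $R_{V,V}$ is obtained by inverting the single explicit matrix of \eqref{bar}, and the same case analysis produces its action on each $v_k\otimes v_l$. Subtracting matrix entry by entry: the ``vertical'' contributions together with the Hecke contributions collapse to $z_q\cdot\operatorname{id}$, the diagonal $q$ versus $q^{-1}$ entries again differ by $z_q$, and the only genuinely new terms left over are the contraction ones, which reassemble into $-z_qE$ by the definition of $E$ in the statement. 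Alternatively, and more conceptually, one may observe that $E$ annihilates every $v_k\otimes v_l$ with $k\neq l'$ and satisfies $E^2=\delta E$ (with $\delta$ the loop value of Assumption~\ref{asump}), so that $E$ is a scalar multiple of the projector onto the trivial $\U_v(\mathfrak g)$-summand of $V^{\otimes 2}$; since $R_{V,V}$ respects the isotypic decomposition of $V^{\otimes 2}$, both sides of the identity are scalars on each summand, and the resulting scalar identities follow from the eigenvalues of $R_{V,V}^{-1}$ read off from (1)--(2) (e.g.\ $q$ and $-q^{-1}$ satisfy $t^2-z_qt-1=0$).

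The difficulty is entirely bookkeeping rather than conceptual: in the case $k=l'$ one must track the signs $\varsigma_i$, the exponents $q^{\varrho_i-\varrho_k}$, and the index ranges ($i>k$ versus all $i$, and the special status of the middle vector $v_{n+1}$ in type $B$) exactly as they emerge after the flip $P$ is applied to \eqref{bar}; and in the skein step one must pin down $R_{V,V}$ itself — either through a careful inversion of the matrix in \eqref{bar} or by confirming $E^2=\delta E$ together with the $R_{V,V}$-invariance of $\operatorname{im}E$ — with the normalization consistent with Assumption~\ref{asump}.
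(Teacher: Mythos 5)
Your computation of (1) and (2) by applying the flip $P$ to the explicit formula \eqref{bar} and doing a case-by-case index match is exactly what the paper intends (its entire proof is ``follows from \eqref{bar} directly''), and your bookkeeping of the reindexing $i\mapsto i'$, the identities $\varrho_{i'}=-\varrho_i$ and $\varsigma_{i'}\varsigma_{k'}=\varsigma_i\varsigma_k$, and the special vector $v_{n+1}\otimes v_{n+1}$ in type $B$ is correct. Your primary plan for the skein identity — pin down $R_{V,V}$ either by inverting the matrix from \eqref{bar}, or equivalently by verifying that $R_{V,V}^{-1}-z_q(1-E)$ is a right inverse of $R_{V,V}^{-1}$ on each basis vector — is also sound and self-contained, somewhat more explicit than the paper's terse appeal to the computations of $\check R$ in \cite{RS2} made in the remark following the lemma.

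However, the ``alternative, more conceptual'' observation contains a concrete error: $E$ satisfies $E^2=\omega_0 E$, \emph{not} $E^2=\delta E$. Indeed $E=\underline\beta\circ\underline\alpha$ (Lemma~\ref{AU}) and $\underline\alpha\circ\underline\beta$ is multiplication by $\sum_i q^{\varrho_{i'}-\varrho_i}\varsigma_{i'}\varsigma_i=\omega_0$, which is the bubble value appearing in relation~(L); the twist parameter $\delta$ is a different quantity, and in general $\omega_0\neq\delta$ (they are tied only through $\delta-\delta^{-1}=z(\omega_0-1)$, i.e.\ \eqref{para1}). If you run the eigenvalue argument on $L(0)$ with $E|_{L(0)}=\omega_0$, the skein identity there reads $R_{V,V}^{-1}|_{L(0)}-R_{V,V}|_{L(0)}=z_q(1-\omega_0)=\delta^{-1}-\delta$, consistent with $R_{V,V}^{-1}|_{L(0)}=\delta^{-1}$; using $\delta$ in place of $\omega_0$ gives the wrong constant. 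Relatedly, this alternative route still requires you to identify the $L(0)$-eigenvalue of $R_{V,V}^{-1}$ (the $q$, $-q^{-1}$ analysis only settles $L(2\epsilon_1)$ and $L(\epsilon_1+\epsilon_2)$), and you should be careful not to derive it from $T\circ U=\delta^{-1}U$ via the functor $\Phi$, since Proposition~\ref{bmw1} — which establishes $\Phi$ — cites the skein relation you are trying to prove; reading it off directly from (1) applied to $\underline\beta(1)$ avoids the circularity.
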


In fact, the left action of ${R_{V, V}^{-1}} $  on $V^{\otimes 2}$ is the same as the right action of $\check{R}$ on $V^{\otimes 2}$ in \cite[Lemma~2.4]{RS2}.  This makes us to    use results in \cite{RS2}, freely.

\begin{Lemma}\label{AU}  Let
 $\underline\alpha: V^{\otimes 2}\rightarrow \mathbb F$ and $\underline\beta: \mathbb F\rightarrow V^{\otimes 2}$ be two linear maps  such that
$\underline\alpha (v_k\otimes v_l)=\delta_{k,l'} q^{-\varrho_k} \varsigma_k$ for all admissible $k$ and $l$, and $\underline\beta (1)=\sum_{i=1}^\mathrm N q^{\varrho_{i'}} \varsigma_{i'} v_i\otimes v_{i'}$. Then both $\underline\alpha$ and $\underline\beta$ are $\U_v(\mathfrak g)$-homomorphisms, where $\mathbb F$ is considered as  the trivial  $\U_v(\mathfrak g)$-module given by the counit.
\end{Lemma}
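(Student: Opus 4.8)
The plan is to avoid checking the intertwining property generator-by-generator and instead to exploit the operator $E$ of Lemma~\ref{act123}. The first thing I would record is the identity $E=\underline\beta\circ\underline\alpha$: for all admissible $k,l$ one has $\underline\beta(\underline\alpha(v_k\otimes v_l))=\delta_{k,l'}q^{-\varrho_k}\varsigma_k\,\underline\beta(1)=\delta_{k,l'}\sum_{i=1}^{\mathrm N}q^{\varrho_{i'}-\varrho_k}\varsigma_{i'}\varsigma_k\,(v_i\otimes v_{i'})$, which is exactly $E(v_k\otimes v_l)$. Next I would observe that $E$ is a $\U_v(\mathfrak g)$-endomorphism of $V^{\otimes 2}$: by Lusztig's theorem (the isomorphism statement recalled around \eqref{rinv}--\eqref{invr}) both $R_{V,V}$ and $R_{V,V}^{-1}$ are $\U_v(\mathfrak g)$-homomorphisms $V^{\otimes 2}\to V^{\otimes 2}$, and the last line of Lemma~\ref{act123} reads $z_q(1-E)=R_{V,V}^{-1}-R_{V,V}$; since $z_q\neq 0$ in $\mathbb F$ and $1=\mathrm{id}$ is a homomorphism, so is $E$.

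Granting this, I would pin down $\underline\beta$ first. The vector $\underline\beta(1)$ is nonzero, and each term $v_i\otimes v_{i'}$ in it---including $v_{n+1}\otimes v_{n+1}$ when $\mfg=\mathfrak{so}_{2n+1}$---has weight $wt(v_i)+wt(v_{i'})=0$, so $\underline\beta(1)\in (V^{\otimes 2})_0$. Since $E(v_k\otimes v_{k'})=q^{-\varrho_k}\varsigma_k\,\underline\beta(1)$, the image of $E$ is the line $\mathbb F\,\underline\beta(1)$; as $E$ is a homomorphism this line is a one-dimensional $\U_v(\mathfrak g)$-submodule of $V^{\otimes 2}$ sitting in weight $0$. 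On any such submodule $W$, each $k_\mu$ acts by $v^{(0\mid\mu)}=1=\epsilon(k_\mu)$, and $x_i^{\pm}W\subseteq W\cap(V^{\otimes 2})_{\pm\alpha_i}=0$ because $W$ lies in weight $0$ and $\pm\alpha_i\neq 0$; hence $\U_v(\mathfrak g)$ acts on $W$ through the counit and $\underline\beta(1)$ is invariant. This says precisely that $\underline\beta\colon\mathbb F\to V^{\otimes 2}$ is a $\U_v(\mathfrak g)$-homomorphism.

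For $\underline\alpha$ I would argue dually. As $\underline\beta(1)\neq 0$, the map $\underline\beta$ is injective, so $\Ker\underline\alpha=\Ker(\underline\beta\circ\underline\alpha)=\Ker E$, which is a $\U_v(\mathfrak g)$-submodule of $V^{\otimes 2}$ because $E$ is a homomorphism. By the first isomorphism theorem $V^{\otimes 2}/\Ker\underline\alpha=V^{\otimes 2}/\Ker E\cong \mathbb F\,\underline\beta(1)$, which by the previous paragraph is the trivial module; hence the bijective linear map $\bar{\underline\alpha}\colon V^{\otimes 2}/\Ker\underline\alpha\to\mathbb F$ induced by $\underline\alpha$ is a map between two modules on which $\U_v(\mathfrak g)$ acts through $\epsilon$, hence a $\U_v(\mathfrak g)$-homomorphism, and therefore so is $\underline\alpha$ itself.

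I do not expect a real obstacle: the whole argument is formal once one knows that $E$ is a $\U_v(\mathfrak g)$-endomorphism, which is the only substantive input and is immediate from Lemma~\ref{act123} together with the fact that $R_{V,V}$ intertwines. The only points that need a moment's care are the one-line check $E=\underline\beta\circ\underline\alpha$ and the observation that every term of $\underline\beta(1)$ lies in weight $0$ (which is also what forces the one-dimensional sub/quotient to be the trivial module). Should a self-contained computation be preferred instead, one can reduce to the generators $x_i^{\pm},k_\mu$, dispose of $k_\mu$ by the weight-$0$ remark, and verify $x_i^{\pm}\cdot\underline\beta(1)=0$ and $\underline\alpha\circ(x_i^{\pm}\cdot{-})=0$ using $\Delta(x_i^{+})=x_i^{+}\otimes 1+k_{\alpha_i}\otimes x_i^{+}$ and the explicit matrices of Lemma~\ref{natr1}; the scalars $q^{\varrho_{i'}}\varsigma_{i'}$ are engineered so that the resulting sums cancel, but this route is heavier.
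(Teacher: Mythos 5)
Your argument is correct and has the same skeleton as the paper's: identify $E=\underline\beta\circ\underline\alpha$ and exploit the fact that $E$ is a $\U_v(\mathfrak g)$-endomorphism of $V^{\otimes 2}$. The difference is how you obtain the two key inputs. The paper simply cites {\cite[Coro.~2.3]{RS2}} for the fact that $\underline\beta$ is a homomorphism with image the trivial module, and cites {\cite{RS2}} for $E$ being a homomorphism, and then uses the factorization to push the statement onto $\underline\alpha$. You instead supply both inputs from scratch: you deduce that $E$ is a homomorphism from the skein-type identity $R_{V,V}^{-1}-R_{V,V}=z_q(1-E)$ in Lemma~\ref{act123} together with Lusztig's theorem that $R_{V,V}^{\pm 1}$ intertwines, and you deduce that $\mathrm{Im}\,E=\mathbb F\underline\beta(1)$ is the trivial module from the observation that every term $v_i\otimes v_{i'}$ sits in weight $0$ (so any one-dimensional submodule living in $(V^{\otimes 2})_0$ is killed by each $x_i^{\pm}$ and fixed by each $k_\mu$). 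The quotient argument for $\underline\alpha$ is also slightly tidier than the paper's, though equivalent. In short: same route, but your version is self-contained where the paper relies on prior work, which is a genuine improvement in readability and does not cost much extra length.
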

\begin{proof} We have explained that a weight $\U_v(\mfg)$-module is a weight $\bar \U_v(\mathfrak g)$-module and vice versa.
Since both $V^{\otimes 2}$ and $\mathbb F$ are weight $\U_v(\mfg)$-module, we can use corresponding results in \cite{RS2}.

Thanks to  \cite[Coro.~2.3]{RS2}, $\underline \beta$ is a $\U_v(\mathfrak g)$-homomorphism and   $\text{Im}\underline \beta\cong \mathbb F $ as $\U_v(\mfg)$-modules.  The corresponding isomorphism is denoted by $\underline \beta'$. Let $\jmath$ be the embedding $\text{Im}\underline \beta \hookrightarrow V^{\otimes 2}$. Then $\underline \beta=\jmath\circ \underline \beta'$.

 Recall $E$ in Lemma~\ref{act123}. It is proved in  \cite{RS2} that $E$ is a $\bar\U_v(\mathfrak g)$-homomorphism and hence a  $\U_v(\mathfrak g)$-homomorphism. Further, $E=\underline \beta\circ \underline \alpha=\jmath\circ\underline \beta'\circ \underline \alpha$. So $\underline \beta'\circ \underline \alpha$ is a $\U_v(\mathfrak g)$-homomorphism, forcing  $\underline \alpha$ to be a  $\U_v(\mathfrak g)$-homomorphism, too.
\end{proof}

     In Propositions~\ref{bmw1}-\ref{abmw1}, we assume   $\B$  and $\AB$  are defined over   $\mathbb F$ with  defining parameters $\delta, z$ and  $\omega_0$ such that
      $z=z_q$ and \begin{equation} \label{lamb} \delta=\epsilon_\mfg q^{\mathrm N -\epsilon_\mfg}\end{equation}
      where $\epsilon_\mfg=-1 $ if $\mfg=\mathfrak{sp}_{2n}$ and $1$, otherwise.
      Since $z_q$ is invertible in $\mathbb F$,    $\omega_0$ is uniquely determined by  \eqref{para1}.

\begin{Prop}\label{bmw1}Let $\U_v(\mathfrak g)\text{-mod}$  be the category of all
$\U_v(\mathfrak g)$-modules.
There is a  monoidal functor $\Phi: \B\rightarrow \U_v(\mathfrak g)\text{-mod}$ such
that \begin{itemize} \item [(1)] $\Phi(\ob 0)=\mathbb F$ and   $\Phi(\ob 1)=V$, \item [(2)]
$\Phi(A)=\underline \alpha$, $\Phi(U)=\underline \beta$, $\Phi(T)={R_{V, V}^{-1}}$ and  $\Phi(T^{-1})=R_{V, V}$, \end{itemize}
where $\underline \alpha$ and $\underline \beta$ are given in Lemma~\ref{AU}.
 \end{Prop}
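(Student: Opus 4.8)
The plan is to invoke the presentation of $\B$ from Theorem~\ref{present1}. Since $\B$ is the strict $\mathbb F$-linear monoidal category generated by one object together with the morphisms $U, A, T, T^{-1}$ subject to relations (1)--(9), a monoidal functor out of $\B$ is determined by a choice of images of the generating object and generating morphisms, subject to the requirement that those images satisfy (1)--(9). So I would set $\Phi(\ob 1)=V$, $\Phi(\ob 0)=\mathbb F$, $\Phi(A)=\underline\alpha$, $\Phi(U)=\underline\beta$, $\Phi(T)=R_{V,V}^{-1}$ and $\Phi(T^{-1})=R_{V,V}$; these land in $\U_v(\mathfrak g)$-homomorphisms by Lemma~\ref{AU} and by Lusztig's theorem that $R_{V,V}$ is a $\U_v(\mathfrak g)$-isomorphism with inverse $R_{V,V}^{-1}$. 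It then remains only to verify relations (1)--(9) for these images.

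Several relations are immediate from material already in the excerpt. Relation (2) holds because $R_{V,V}$ and $R_{V,V}^{-1}$ are mutually inverse. Relation (3), the braid relation, is the inverse of the Hexagon identity \eqref{bian} with $M_1=M_2=M_3=V$. Relation (4) is Lemma~\ref{act123}, which gives $R_{V,V}^{-1}-R_{V,V}=z_q(1-E)$, together with the identity $E=\underline\beta\circ\underline\alpha$ extracted from the proof of Lemma~\ref{AU} and the normalization $z=z_q$. Relation (7), the snake identities, is a one-line check with the explicit maps of Lemma~\ref{AU}: $(\mathrm{id}_V\otimes\underline\alpha)\circ(\underline\beta\otimes\mathrm{id}_V)(v_k)=\sum_i q^{\varrho_{i'}}\varsigma_{i'}\,v_i\,\underline\alpha(v_{i'}\otimes v_k)=q^{\varrho_{k'}}\varsigma_{k'}\cdot q^{-\varrho_{k'}}\varsigma_{k'}\,v_k=v_k$, using $\varsigma_{k'}^2=1$, and symmetrically for the other composite; in particular this reproves that $(\underline\alpha,\underline\beta)$ exhibits $V$ as self-dual in $\U_v(\mathfrak g)\text{-mod}$.

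For the remaining relations (1), (5), (6), (8), (9) I would compute directly with the explicit matrices of Lemma~\ref{act123} and Lemma~\ref{AU}. Relation (6) reduces to $\underline\alpha\circ\underline\beta(1)=\sum_{i=1}^{\mathrm N} q^{\varrho_{i'}-\varrho_i}\varsigma_{i'}\varsigma_i$, and one checks this scalar equals $\omega_0$, i.e.\ is compatible with $\delta-\delta^{-1}=z(\omega_0-1)$ once $\delta=\epsilon_\mfg q^{\mathrm N-\epsilon_\mfg}$ as in \eqref{lamb}. Relation (5) is the partial-trace identity $(\mathrm{id}_V\otimes\underline\alpha)\circ(R_{V,V}^{-1}\otimes\mathrm{id}_V)\circ(\mathrm{id}_V\otimes\underline\beta)=\delta\,\mathrm{id}_V$, whose verification pins down $\delta=\epsilon_\mfg q^{\mathrm N-\epsilon_\mfg}$. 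Relations (8) and (9) express $T$ as a rotation of $T^{-1}$ and vice versa; they follow from naturality of the $R$-matrix together with the snake identities (7), or can be checked entry by entry. Finally, relation (1) follows by composing relation (5) with its $T^{-1}$-analogue $(\mathrm{id}_V\otimes\underline\alpha)\circ(R_{V,V}\otimes\mathrm{id}_V)\circ(\mathrm{id}_V\otimes\underline\beta)=\delta^{-1}\,\mathrm{id}_V$, so that the composite is $\delta\cdot\delta^{-1}=1$.

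The main obstacle is the bookkeeping in relations (5) and (6): one has to run the case analysis $\mfg\in\{\mathfrak{sp}_{2n},\mathfrak{so}_{2n},\mathfrak{so}_{2n+1}\}$, track the signs $\varsigma_i$ (which depart from $1$ only in type $C$) and the shifts $\varrho_i$ (which carry the extra constant $b_\mfg$), treat the middle vector $v_{n+1}$ separately in type $B$, and then match the resulting scalar against the constraint \eqref{para1}. A more conceptual alternative that sidesteps part of this is to note that $\FT$ is the free ribbon category on one object and that $\U_v(\mathfrak g)\text{-mod}$ is ribbon with $V$ self-dual via $(\underline\alpha,\underline\beta)$ and with ribbon twist acting on $V$ by a scalar; the induced ribbon functor $\FT\to\U_v(\mathfrak g)\text{-mod}$ then descends to $\B=\FT/I$ precisely when the skein relation (S), the twisting relation (T) and the free-loop relation (L) hold in the image, which is again Lemma~\ref{act123} plus the scalar computations of the twist and of the quantum dimension of $V$.
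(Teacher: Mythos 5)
Your plan is correct and follows the same high-level strategy as the paper: use the presentation of $\B$ from Theorem~\ref{present1} and check that the proposed images satisfy relations (1)--(9). The main difference is in what gets cited versus recomputed. The paper dispenses with relations (2), (3), (4) by appealing to the Schur--Weyl duality results of \cite{RS2} and then, for (1), (5), (6), reuses the identities $(Id_V\otimes E)\circ (R_{V,V}^{\mp1}\otimes Id_V)\circ (Id_V\otimes E)= \delta^{\pm1}(Id_V\otimes E)$ and $(Id_V\otimes E)^2=\omega_0 (Id_V\otimes E)$ already proved there, evaluating on a specific vector $v_j\otimes v_1\otimes v_{1'}$ and cancelling the injective $\underline\beta$ via $E=\underline\beta\circ\underline\alpha$; it also notes the $\mathfrak{sp}_{2n}$ case is already in \cite{xyz}. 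You instead reprove (2) from mutual inversity, (3) from the Hexagon identity \eqref{bian}, (4) from Lemma~\ref{act123}, and perform the partial-trace computations for (5), (6) directly. Both are valid; the paper's route is shorter but less self-contained. For (8) and (9) the paper is more precise than you: it explicitly isolates the ``sliding-cap'' identity $(\underline\alpha\otimes Id_V)\circ(Id_V\otimes R_{V,V})=(Id_V\otimes\underline\alpha)\circ(R_{V,V}^{-1}\otimes Id_V)$, from which (8) follows after one snake identity. Your appeal to ``naturality of the $R$-matrix together with the snake identities'' gestures at the same mechanism but does not pin down the exact identity you need; your fallback of entry-by-entry checking does close that gap. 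Your closing remark that $\FT$ is the free ribbon category on one object and the functor factors through $\B=\FT/I$ once (S), (T), (L) are verified in the image is a genuinely different and more conceptual route, which the paper does not take; it buys you a cleaner conceptual structure at the cost of having to establish the ribbon structure and twist eigenvalue on $V$ explicitly, which is roughly the same amount of computation repackaged.
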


\begin{proof} When $\mathfrak g=\mathfrak{sp}_{2n}$, this result was given in \cite{xyz}. We need to  deal with $\mathfrak{so}_{2n}$ and $\mathfrak{so}_{2n+1}$  so as to study the dualities between cyclotomic Birman-Murakami-Wenzl algebras and  parabolic BGG category $\mathcal O$ in types $B, C$ and $D$ in the future. For this reason, we give a sketch of proof.

Thanks to results on Schur-Weyl duality between $\bar\U_v(\mfg)$ and  Birman-Murakami-Wenzl algebras in \cite{RS2}, we need only to  verify relations given in Theorem~\ref{present1}(1),(5)-(9). In \cite{RS2}, two of us   verified \begin{equation}\label{t2}(Id_V\otimes E)\circ (R_{V,V}^{\mp1}\otimes Id_V)\circ (Id_V\otimes E)= \delta^{\pm1}(Id_V\otimes E) \text{ and $(Id_V\otimes E)^2=\omega_0 (Id_V\otimes E)$}\end{equation}
 as $\bar\U_v(\mathfrak g)$-homomorphism.
So, for any $ 1\leq j\leq \mathrm N $, we have  $$(Id_V\otimes E)\circ (R_{V, V}^{\mp1}\otimes Id_V)\circ (Id_V\otimes E)( v_j\otimes v_1\otimes v_{1'})=\delta^{\pm1}(Id_V\otimes E)( v_j\otimes v_1\otimes v_{1'}).$$ Since  $E=\underline \beta\circ \underline \alpha$ and $\underline \beta$ is injective, we have $$(Id_V\otimes \underline \alpha)\circ(R_{V,V}^{\mp1}\otimes Id_V)\circ (Id_V\otimes \underline \beta)( v_j\otimes 1)=\delta^{\pm1}( v_j\otimes 1), \text{ $ 1\leq j\leq \mathrm N $.}$$
This verified  Theorem~\ref{present1}(1),(5). Similarly, Theorem~\ref{present1}(6)  can be verified via   the second equation in \eqref{t2}. It is not difficult to verify
$$(Id_V\otimes \underline \alpha)(\underline \beta\otimes Id_V) (v_j)=v_j=(\underline \alpha\otimes Id_V) (Id_V \otimes \underline \beta)(v_j), \text{ $ 1\leq j\leq \mathrm N $.}$$  This implies Theorem~\ref{present1}(7).
Thanks to Lemma~\ref{act123},  we can check that \begin{equation}\label{key111} (\underline \alpha\otimes Id_V) \circ (Id_V\otimes R_{V,V}) =(Id_V \otimes \underline \alpha) \circ (R_{V,V}^{-1} \otimes Id_V)\end{equation}
as operators acting  on  $V^{\otimes 3}$.  For example, when  $k=l'=j$ and $k\neq l$, we have
 $$(\underline \alpha\otimes Id_V) \circ (Id_V\otimes R_{V, V}) (v_j\otimes v_k\otimes v_l)=(Id_V \otimes \underline \alpha) \circ (R_{V,V}^{-1} \otimes Id_V) (v_j\otimes v_k\otimes v_l)=q^{1-\varrho_j}\varsigma_j v_k.$$
Now, Theorem~\ref{present1}(8) follows from Theorem~\ref{present1}(7) and \eqref{key111}.
Finally, one can verify Theorem~\ref{present1}(9) in a similar way.
\end{proof}

\begin{Prop}\label{abmw1}Let $ \END(\U_v(\mathfrak g)\text{-mod})$ be the category of endofunctors of  $\U_v(\mathfrak g)\text{-mod}$. Then
there is a strict monoidal functor $\Psi: \AB\rightarrow \END(\U_v(\mathfrak g)\text{-mod})$ such
that  \begin{itemize}\item[(1)] $\Psi(\ob 0)=Id$ and   $\Psi(\ob 1)=-\otimes V$,
\item[(2)]
$\Psi(Y)_M=Id_M\otimes \Phi(Y)$,
 $\Psi(X)_M =\delta R_{M, V}^{-1}R_{V, M}^{-1}  $,   $\Psi(X^{-1})_M =\delta^{-1}R_{ V, M}R_{M, V} $ for all   $Y\in \{A, U, T, T^{-1}\}$ and any  $\U_v(\mathfrak g)$-module $M$, where $\Phi$ is given in Proposition~\ref{bmw1} and $\delta$ is given in \eqref{lamb}.
\end{itemize}  \end{Prop}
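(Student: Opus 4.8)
The plan is to check that the assignment in the statement respects the defining relations of $\AB$ --- Theorem~\ref{present1}(1)--(9) and Definition~\ref{AK defn}(1)--(3) --- so that, by the presentation of $\AB$, it extends uniquely to a strict monoidal functor. Here $\END(\U_v(\mathfrak g)\text{-mod})$ is the strict monoidal category whose objects are the endofunctors of $\U_v(\mathfrak g)\text{-mod}$, whose morphisms are natural transformations, whose unit object is the identity functor, and whose monoidal product is composition of functors, with the convention (as in \cite[\S5]{BSW} and \cite{RS3}) fixed so that $\Psi(\ob 1)^{\otimes 2}$ is the functor $M\mapsto M\otimes V\otimes V$ in which the left tensorand of $\ob 2$ corresponds to the copy of $V$ adjacent to $M$. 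First I would check that $\Psi(X)$ and $\Psi(X^{-1})$ are well defined and mutually inverse: since $V$ is a finite-dimensional weight module, $R_{M,V}$ and $R_{V,M}$ are isomorphisms for arbitrary $M$, so $\Psi(X)_M=\delta R_{M,V}^{-1}R_{V,M}^{-1}$ and $\Psi(X^{-1})_M=\delta^{-1}R_{V,M}R_{M,V}$ make sense; naturality in $M$ is naturality of the $R$-matrices in both slots, and $\Psi(X)_M\circ\Psi(X^{-1})_M=\Psi(X^{-1})_M\circ\Psi(X)_M=\mathrm{Id}_{M\otimes V}$ follows by cancelling $RR^{-1}$ and $\delta\delta^{-1}=1$. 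This gives Definition~\ref{AK defn}(1).

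For Theorem~\ref{present1}(1)--(9): by construction $\Psi$ agrees on the generators $A,U,T,T^{-1}$ with the composite $\Xi\circ\Phi$, where $\Phi:\B\to\U_v(\mathfrak g)\text{-mod}$ is the monoidal functor of Proposition~\ref{bmw1} and $\Xi:\U_v(\mathfrak g)\text{-mod}\to\END(\U_v(\mathfrak g)\text{-mod})$ is the functor sending a module $W$ to $-\otimes W$ and a homomorphism $f$ to the natural transformation $\mathrm{Id}_{(-)}\otimes f$, which is strong monoidal for the conventions on $\END$ above. Since $\Xi\circ\Phi$ is monoidal and Theorem~\ref{present1}(1)--(9) hold in $\B$, these relations are respected by $\Psi$. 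This leaves Definition~\ref{AK defn}(2) and (3).

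For Definition~\ref{AK defn}(2) I would evaluate both sides at a module $M$. With the chosen convention, $X\otimes 1_{\ob 1}$ is the dot on the strand adjacent to $M$, so $\Psi(X\otimes 1_{\ob 1})_M=\Psi(X)_M\otimes\mathrm{Id}_V$; $1_{\ob 1}\otimes X$ is the dot on the outer strand, so $\Psi(1_{\ob 1}\otimes X)_M=\Psi(X)_{M\otimes V}$; and $\Psi(T)_M=\mathrm{Id}_M\otimes\Phi(T)=\mathrm{Id}_M\otimes R_{V,V}^{-1}$. Expanding $R_{M\otimes V,V}^{-1}$ and $R_{V,M\otimes V}^{-1}$ by the Hexagon identity \eqref{Hexagon} yields
\[
\Psi(X)_{M\otimes V}=\delta\,(\mathrm{Id}_M\otimes R_{V,V}^{-1})\circ\big((R_{M,V}^{-1}R_{V,M}^{-1})\otimes\mathrm{Id}_V\big)\circ(\mathrm{Id}_M\otimes R_{V,V}^{-1}),
\]
which is exactly $\Psi(T)_M\circ(\Psi(X)_M\otimes\mathrm{Id}_V)\circ\Psi(T)_M$; hence Definition~\ref{AK defn}(2) holds. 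This is the statement that the double braiding $\Psi(X)$ teleports through a single crossing, and it uses only the hexagon and functoriality of $\otimes$.

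For Definition~\ref{AK defn}(3) I would argue in the same spirit: evaluate at $M$, expand the braidings involving $M\otimes V$ using \eqref{Hexagon}--\eqref{bian}, and then slide the cap $\underline\alpha=\Phi(A)$ and cup $\underline\beta=\Phi(U)$ past the resulting $R_{V,V}^{\pm1}$-factors, using the fact from Lemma~\ref{AU} that $\underline\alpha,\underline\beta$ are $\U_v(\mathfrak g)$-homomorphisms together with naturality of $R$ (and with the braiding against the trivial module $\mathbb F$ being the canonical identification). The discrepancy between the $X$ on one side of the relation and the $X^{-1}$ on the other is reconciled by the identities $(\mathrm{Id}_V\otimes\underline\alpha)\circ(\Phi(T^{\mp1})\otimes\mathrm{Id}_V)\circ(\mathrm{Id}_V\otimes\underline\beta)=\delta^{\pm1}\mathrm{Id}_V$, which are the images under $\Phi$ of Theorem~\ref{present1}(5) and of its $T^{-1}$-analogue (the latter a formal consequence of Theorem~\ref{present1}(4),(6),(7) and \eqref{para1}); geometrically, dragging $\Psi(X)$ around a cap introduces two curls, each worth a factor $\delta^{\pm1}$. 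I expect this last step to be the only genuine obstacle: keeping the $\delta^{\pm1}$'s straight while simultaneously keeping the inner/outer strand bookkeeping consistent with the monoidal conventions on $\END$ is where the care lies, but once the conventions are pinned down these manipulations parallel those in \cite[\S5]{BSW} and \cite{RS3}, and are routine if lengthy, so I would omit the details.
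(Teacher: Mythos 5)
Your proposal is correct and structurally parallels the paper's own proof: both reduce to verifying Definition~\ref{AK defn}(1)--(3) on top of Proposition~\ref{bmw1}, both dispatch (1) by cancellation and (2) by the Hexagon identity \eqref{Hexagon}, and both organize (3) around moving a double braiding past $\underline\alpha$ with two curl factors accounting for the $\delta$'s. The one genuine difference is in how the crucial step of (3) is justified. The paper reorganizes $\Psi(A\circ(X\otimes 1)\circ(1\otimes X))_M$ via \eqref{Hexagon}--\eqref{bian} and two applications of the curl identity (Lemma~\ref{selfcrossing}(1)) until it reaches $(Id_M\otimes\underline\alpha)\circ R_{M,V^{\otimes 2}}^{-1}R_{V^{\otimes 2},M}^{-1}$, and then kills the double braiding by observing it stabilizes each summand in $M\otimes(L(0)\oplus L(2\epsilon_1)\oplus L(\epsilon_1+\epsilon_2))$ and acts as $1$ on $M\otimes L(0)$, the image relevant to $\underline\alpha$. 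You instead invoke naturality of $R$ in the module variable together with the fact that $R_{M,\mathbb F}$ is the canonical identification; since $\underline\alpha:V^{\otimes 2}\to\mathbb F$ is a $\U_v(\mathfrak g)$-homomorphism, $(Id_M\otimes\underline\alpha)\circ R_{M,V^{\otimes 2}}^{-1}R_{V^{\otimes 2},M}^{-1}=R_{M,\mathbb F}^{-1}R_{\mathbb F,M}^{-1}\circ(Id_M\otimes\underline\alpha)=Id_M\otimes\underline\alpha$. This is marginally cleaner: it sidesteps the full decomposition of $V\otimes V$ and uses only that $\underline\alpha$ is a homomorphism, whereas the paper's isotypic phrasing is really using the same naturality under the hood. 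The only thing to flag is that you elect not to spell out the Hexagon bookkeeping in (3), which is where the bulk of the paper's computation lies; since you correctly identify all the ingredients (naturality of $R$, triviality of braiding with $\mathbb F$, and the two curl factors from Theorem~\ref{present1}(5) and its $T^{-1}$-analogue), your sketch is sound, but a referee would want those manipulations written out.
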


 \begin{proof}Thanks to Proposition~\ref{bmw1}, it is enough to  verify Definition~\ref{AK defn}(1)-(3). First of all,  Definition~\ref{AK defn}(1) follows since  $$\Psi(X)_M \Psi(X^{-1})_M=\Psi(X^{-1})_M\Psi(X)_M =Id_M.$$ By  \eqref{Hexagon}, and $R_{M,N}R_{M,N}^{-1}=Id_{N\otimes M}$,  we have $$R_{M\otimes V,V}^{-1}R_{V,M\otimes V}^{-1}=(Id_M\otimes R_{V,V}^{-1})\circ(R_{M,V}^{-1}\otimes Id_V)\circ(R_{V,M}^{-1}\otimes Id_V)\circ(Id_M\otimes R_{V,V}^{-1}).$$ Therefore,
 \begin{equation}\label{commu}\Psi(\text{1}_{\ob 1}\otimes X)_{M} =\Psi(T\circ (X\otimes \text{1}_{\ob 1}) \circ T)_{M},
 \end{equation} proving   Definition~\ref{AK defn}(2).

 Let $L(\mu)$ be the irreducible highest weight $\U_v(\mathfrak g)$-module with highest weight $\mu$.  Then  $V\cong L(\epsilon_1)$, $\mathbb F\cong L(0)$ and $V\otimes V\cong L(0)\oplus L(2\epsilon_1)\oplus L(\epsilon_1+\epsilon_2)$. Note  that $\underline \alpha: V^{\otimes 2}\twoheadrightarrow L(0)$ is the projection. So,
   $$
\begin{aligned} & \Psi (A\circ  (X\otimes \text{1}_{\ob 1})\circ (\text{1}_{\ob 1}\otimes X))_{M} = (\Psi (A)\circ\Psi(X\otimes \text{1}_{\ob 1}) \circ \Psi(\text{1}_{\ob 1}\otimes X))_{M}
\\ = & (\Psi (A)\circ \Psi(X\otimes \text{1}_{\ob 1}) \circ\Psi( T)\circ \Psi(X\otimes \text{1}_{\ob 1}) \circ \Psi(T))_{M}, \text{ by \eqref{commu}} \\
=& (\Psi (A)\circ \Psi( T)\circ \Psi(X\otimes \text{1}_{\ob 1}) \circ \Psi(T) \circ\Psi(X\otimes \text{1}_{\ob 1}))_{M}, \text{ by \eqref{Hexagon}--\eqref{bian}}\\
=& \delta^{-1}(\Psi (A)\circ\Psi(X\otimes \text{1}_{\ob 1}) \circ \Psi(T) \circ \Psi(X\otimes \text{1}_{\ob 1}))_{M}, \text{ by Lemma~\ref{selfcrossing}(1) and Proposition~\ref{bmw1}}\\
=& \delta(Id_M\otimes \underline \alpha)\circ(R_{M,V}^{-1}R_{V,M}^{-1}\otimes Id_V)\circ(Id_M\otimes R_{V,V}^{-1})\circ(R_{M,V}^{-1}R_{V,M}^{-1}\otimes Id_V)_{M\otimes V^{\otimes 2}}
\\
=& \delta(Id_M\otimes \underline \alpha)\circ(R_{V,V}^{-1}\otimes Id_M)\circ(R_{M,V}^{-1}\otimes Id_V)\circ(Id_V\otimes R_{M,V}^{-1})\circ(Id_V\otimes R_{V,M}^{-1})\\
  & \circ(R_{V,M}^{-1}\otimes Id_V)_{M\otimes V^{\otimes 2}}, \text{ by \eqref{bian}}
\\
=&  \delta((Id_M\otimes \underline \alpha)\circ(R_{V,V}^{-1}\otimes Id_M)\circ R_{M,V^{\otimes 2}}^{-1} R_{V^{\otimes 2},M}^{-1})_{M\otimes V^{\otimes 2}}, \text{ by \eqref{Hexagon}}
\\=& ((Id_M\otimes \underline \alpha)\circ R_{M,V^{\otimes 2}}^{-1}R_{V^{\otimes 2},M}^{-1})_{M\otimes V^{\otimes 2}}, \text{ by Lemma~\ref{selfcrossing}(1) and Proposition~\ref{bmw1}}\\
=&(Id_M\otimes \underline \alpha)_{M\otimes V^{\otimes 2}},\text{ since $\phi$ stabilizes  $M\otimes L(\eta)$ and acts on  ${M\otimes L(0)}$ as scalar  $1$,}\\=&\Psi(A)_{M},
\end{aligned} $$ where $\phi=R_{M,V^{\otimes 2}}^{-1} R_{V^{\otimes 2},M}^{-1}$ and $\eta\in \{0, 2\epsilon_1, \epsilon_1+\epsilon_2\}$.
 This proves  the first equation in  Definition~\ref{AK defn}(3).
Finally, one can check the second equation in  Definition~\ref{AK defn}(3) in a similar way. \end{proof}

\section{Connections to  category $\mathcal O$}

We consider $\mathcal O$,  the subcategory of $\U_v(\mathfrak g)$-mod whose objects $M$ satisfy the following conditions:
\begin{itemize}\item [(1)] $M=\oplus_{\mu\in\mathcal P} M_\mu$ and $\dim M_\mu<\infty$,
\item [(2)]
there are finitely many  weights $\lambda_1,\lambda_2,\ldots,\lambda_t\in\mathcal P$ such that  $\mu\in \lambda_i-\mathcal{Q}^{+}$ for some $ i$ if  $\mu$ is a weight of $M$.
\end{itemize}
Obviously, $ \mathcal O$ is closed under tensor product.

Recall that
 $\Phi=\sum_{\beta \in \mathcal{Q}^+}\Phi_\beta $, where $\Phi\in\{\Theta, \bar\Theta\}$ and
$\Phi_\beta \in \U^-_v(\mathfrak g)_{- \beta } \otimes \U^+_v(\mathfrak g)_ \beta $.
Thanks to $\bar\Theta \Theta=\Theta\bar\Theta=1\otimes1$ and \eqref{qrm}, \begin{equation}\label{qrm11}\bar\Theta\Delta(u)=\bar\Delta(u)\bar\Theta\end{equation} for all $u\in \U_v(\mathfrak g)$. By  \eqref{ttheta11},
 \begin{equation}\label{theta123}\bar\Theta=\sum_{\beta \in \mathcal{Q}^+}\sum_{wt(J)= \beta }z_v^{\ell(J)}g_Jx_{J}^{-}\otimes x_{J}^{+}, ~\Theta=\sum_{\beta \in \mathcal{Q}^+}\sum_{wt(J')= \beta }z_v^{\ell(J')}h_{J'}x_{J'}^{-}\otimes x_{J'}^{+},\end{equation} where
 $J$(resp., $J'$) ranges over weakly increasing (resp., decreasing)  sequences of positive roots and $ g_J, h_{J'}\in \mathcal A_0\setminus \mathcal A_1$ (see Definition \ref{codeg}). In particular, $g_{J}=h_{J'}=1$ when $wt(J)=wt(J')=0$.

Let  $\mathbf m$  be the multiplication map of $\U_v(\mfg)$. For any $J$ in \eqref{theta123}, let
  $b_J=z_v^{\ell(J)} g_{J}$.
We keep using  $b_J$'s in  Lemmas~\ref{homm}-\ref{chu} as follows.

\begin{Lemma}\label{homm} Suppose   $M\in \mathcal O$.
\begin{itemize}\item[(1)] For any $m\in M_\lambda$, define  $\sigma_M(m)=v^{-(\lambda\mid \lambda+2\varrho)}  \mathbf m (Id\otimes S)(\bar \Theta) m$, where $S$ is the antipode  of $\U_v(\mathfrak g)$ in \eqref{rell} and $\varrho$ is given in \eqref{posr}. Then $\sigma_M: M\rightarrow M$ is a $\U_v(\mathfrak g)$-homomorphism.
\item [(2)]$\sigma_{M}=v^{-(\lambda\mid \lambda+2\varrho)}Id_M$ if  $M$ is a highest weight $\U_v(\mathfrak g)$-module with the highest weight $\lambda$. \end{itemize}
\end{Lemma}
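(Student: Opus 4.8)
The plan is to recognize $\sigma_M$ as the quantum Casimir operator attached to the quasi-$R$-matrix $\bar\Theta$, and to prove both parts by a direct computation with the intertwining relation \eqref{qrm11}.

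First I would check that $\sigma_M$ is well defined. Write $\bar\Theta=\sum_{\beta\in\mathcal Q^+}\bar\Theta_\beta$ with $\bar\Theta_\beta=\sum_j a_{j,\beta}\otimes b_{j,\beta}$, $a_{j,\beta}\in\U^-_v(\mathfrak g)_{-\beta}$, $b_{j,\beta}\in\U^+_v(\mathfrak g)_\beta$, and put $\Xi=\mathbf m(Id\otimes S)(\bar\Theta)=\sum_{\beta,j}a_{j,\beta}S(b_{j,\beta})$. Since $S$ preserves the root grading, $S(b_{j,\beta})$ has weight $\beta$, so every summand $a_{j,\beta}S(b_{j,\beta})$ has weight $0$; hence $\Xi$ preserves weight spaces of any weight module, and for $M\in\mathcal O$ the weights being bounded above makes $\Xi m$ a finite sum. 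Thus $\sigma_M(m)=v^{-(\lambda\mid\lambda+2\varrho)}\Xi m$ for $m\in M_\lambda$ is a well-defined weight-preserving $\mathbb F$-linear endomorphism of $M$, and to prove (1) it suffices to show $\sigma_M$ commutes with the action of the generators $k_\mu$ and $x_i^{\pm}$. The case $u=k_\mu$ is immediate: both $\sigma_M(k_\mu m)$ and $k_\mu\sigma_M(m)$ equal $v^{(\mu\mid\lambda)}\sigma_M(m)$ on $M_\lambda$.

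For $u=x_i^{\pm}$, I would apply $\mathbf m(Id\otimes S)$ to both sides of $\bar\Theta\Delta(x_i^{\pm})=\bar\Delta(x_i^{\pm})\bar\Theta$ (equation \eqref{qrm11}), reading everything as operators on $M$, where all sums are finite on each weight space. Using that $S$ is an anti-automorphism and the antipode axiom $\sum u_{(1)}S(u_{(2)})=\epsilon(u)1$, the left-hand side collapses termwise to $\mathbf m(Id\otimes S)(\bar\Theta\Delta(u))=\epsilon(u)\Xi$, hence to $0$ when $u=x_i^{\pm}$. For the right-hand side, plugging in $\bar\Delta(x_i^{+})=x_i^{+}\otimes 1+k_{-\alpha_i}\otimes x_i^{+}$ and $\bar\Delta(x_i^{-})=x_i^{-}\otimes k_{\alpha_i}+1\otimes x_i^{-}$ from \eqref{rell}, \eqref{rell1}, together with $S(x_i^{+})=-k_{-\alpha_i}x_i^{+}$, $S(x_i^{-})=-x_i^{-}k_{\alpha_i}$, $S(k_\mu)=k_{-\mu}$, and the fact that $\Xi$ is weight-zero and hence commutes with the $k$'s, gives
$$\Xi x_i^{+}=k_{2\alpha_i}x_i^{+}\Xi\qquad\text{and}\qquad\Xi x_i^{-}=x_i^{-}k_{-2\alpha_i}\Xi$$
as operators on $M$. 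Evaluating these on $m\in M_\lambda$, the factor $k_{\pm2\alpha_i}$ acts by a power of $v$, and a short computation using $(2\varrho\mid\alpha_i)=(\alpha_i\mid\alpha_i)$ (equation \eqref{varrho}) shows that the normalization $v^{-(\mu\mid\mu+2\varrho)}$ varies under $\mu\mapsto\mu\pm\alpha_i$ by precisely the reciprocal of that power; this yields $\sigma_M(x_i^{\pm}m)=x_i^{\pm}\sigma_M(m)$ and proves (1). I expect the only delicate point to be the bookkeeping of the $k$-factors and the weight exponents, which is nonetheless routine — this is the usual construction of the quantum Casimir.

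For (2), let $v^{+}$ be a highest weight vector of $M$, of weight $\lambda$. Since $S(b_{j,\beta})$ raises weight by $\beta\in\mathcal Q^+$, we have $S(b_{j,\beta})v^{+}=0$ whenever $\beta\ne0$, while $\bar\Theta_0=1\otimes1$; hence $\Xi v^{+}=v^{+}$ and $\sigma_M(v^{+})=v^{-(\lambda\mid\lambda+2\varrho)}v^{+}$. By (1), $\sigma_M$ is a $\U_v(\mathfrak g)$-endomorphism of $M$, and since $M=\U_v(\mathfrak g)v^{+}$ it follows that $\sigma_M=v^{-(\lambda\mid\lambda+2\varrho)}Id_M$.
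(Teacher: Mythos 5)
Your proof is correct and takes essentially the same approach as the paper: you apply $\mathbf m(Id\otimes S)$ to $\bar\Theta\Delta(u)=\bar\Delta(u)\bar\Theta$, kill the left side by the antipode axiom, evaluate the right side via the explicit formulas for $\bar\Delta(x_i^\pm)$ and $S(x_i^\pm)$, and use $(2\varrho\mid\alpha_i)=(\alpha_i\mid\alpha_i)$ to balance the resulting $k_{\pm2\alpha_i}$ factor against the shift of the normalization under $\lambda\mapsto\lambda\pm\alpha_i$; part (2) is likewise the paper's argument.
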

\begin{proof} Since  $ {\bar\Theta}\Delta(x_i^\pm )=\overline{\Delta}(x_i^\pm ){\bar\Theta}$, we  have
 $\mathbf m (Id\otimes S)({\bar\Theta}\Delta(x_i^\pm))=\mathbf m (Id\otimes S)(\overline{\Delta}(x_i^\pm ){\bar\Theta})$.
   Thanks to \eqref{rell} and \eqref{theta123}, it is routine to check $\mathbf m (Id\otimes S )({\bar\Theta}\Delta(x_i^\pm ))=0$. Using this fact and \eqref{rell1} yields
  \begin{equation}\label{s}\begin{aligned} \sum_{\nu\in \mathcal Q^+}\sum_{wt(J)=\nu}b_Jx_i^+x_J^-S(x_J^+)& =\sum_{\nu\in \mathcal Q^+}\sum_{wt(J)=\nu}b_Jx_J^-S(x_J^+)k_{-2\alpha_i}x_i^+,\\ \sum_{\nu\in \mathcal Q^+}\sum_{wt(J)=\nu}b_Jx_i^-x_J^-S(x_J^+)&=\sum_{\nu\in \mathcal Q^+}\sum_{wt(J)=\nu}b_Jx_J^-S(x_J^+)k_{2\alpha_i}x_i^-.\\
\end{aligned}  \end{equation}
Suppose $m\in M_\lambda$. By \eqref{varrho} and \eqref{s}, we have
$$\begin{aligned} x_{i}^{+}\sigma_{M}(m)&=v^{-(\lambda\mid\lambda+2\varrho)}x_i^+ \mathbf m (Id\otimes S)(\bar \Theta)m=v^{-(\lambda\mid\lambda+2\varrho)}\sum_{\nu\in \mathcal Q^+}\sum_{wt(J)=\nu}b_Jx_J^-S(x_J^+)k_{-2\alpha_i}x_i^+m\\
&=v^{-(\lambda\mid\lambda+2\varrho)}v^{-(2\alpha_i\mid\lambda+\alpha_i)}\mathbf m (Id\otimes S)(\bar \Theta) x_i^+m\\ & =v^{-(\lambda+\alpha_i\mid\lambda+\alpha_i+2\varrho)} \mathbf m (Id\otimes S)(\bar \Theta) x_{i}^{+}m=\sigma_{M}(x_{i}^{+}m).\end{aligned}$$
Similarly, one can verify  $x_{i}^{-}\sigma_{M}(m)=\sigma_{M}(x_{i}^{-}m)$.  Finally,  $k_{\eta}\sigma_{M}(m)=\sigma_{M}(k_\eta m)$ for any $\eta\in \mathcal P$  since $k_{\eta}\mathbf m (Id\otimes S)(\bar \Theta)   =\mathbf m (Id\otimes S)(\bar \Theta)k_{\eta}$. This completes the proof of (1).
Under the assumption in (2),  $\sigma_M$ is a scalar map. Since  $$\sigma_{M}(m_\lambda)=v^{-(\lambda\mid \lambda+2\varrho)}\sum_{\nu\in \mathcal Q^+}\sum_{wt(J)=\nu}b_Jx_{J}^{-}S (x_{J}^{+}) m_\lambda=v^{-(\lambda\mid \lambda+2\varrho)}m_\lambda,$$
 where
 $m_\lambda$ is the maximal vector of $M$, we have (2).

\end{proof}

Suppose $u=\sum x\otimes y\in \U_v(\mathfrak g)^{\otimes 2}$. For all $1\le i<j\le r$, define $u_r^{j, i}=P(u)_r^{i , j}$ where
 $u_r^{i, j}=\sum 1^{\otimes i-1}\otimes x\otimes 1^{j-i-1}\otimes y\otimes 1^{r-j}$.
 Later on, we use $u^{i, j}$(resp., $u^{j,i}$) to replace $u_r^{i, j}$(resp., $u_r^{j, i}$) if we know $r$ from the context.

\begin{Lemma}\label{eqn2} Suppose $\nu\in \mathcal Q^+$.
  \begin{itemize}\item[(1)]  $ (\bar {\Delta}\otimes Id)({{\bar\Theta}}_\nu)=\sum_{\nu'+\nu''=\nu}{{\bar\Theta}}_{\nu'}^{2,3}(1\otimes k_{\nu''}\otimes 1){{\bar\Theta}}_{\nu''}^{1,3}$, where $\nu', \nu''\in \mathcal Q^+$. \item[(2)] $ (Id\otimes \bar {\Delta})({{\bar\Theta}}_\nu)=\sum_{\nu'+\nu''=\nu}{{\bar\Theta}}_{\nu'}^{1,2}(1\otimes k_{-\nu''}\otimes 1){{\bar\Theta}}_{\nu''}^{1,3}$, where $\nu', \nu''\in \mathcal Q^+$.\item [(3)] $ (Id\otimes \Delta)({{\bar\Theta}}_\nu)=\sum_{\nu'+\nu''=\nu}{{\bar\Theta}}_{\nu'}^{1,3}(1\otimes k_{\nu'}\otimes 1){{\bar\Theta}}_{\nu''}^{1,2}$, where  $\nu', \nu''\in \mathcal Q^+$. \item [(4)]
   $\sum_{\nu'+\nu''=\nu}(\overline{\Delta}\otimes Id)({\bar\Theta}_{\nu'}){{\bar\Theta}}_{\nu''}^{1,2}=\sum_{\nu'+\nu''=\nu}(Id\otimes \bar {\Delta})({{\bar\Theta}}_{\nu'}){{\bar\Theta}}_{\nu''}^{2,3}$,  where $\nu', \nu''\in \mathcal Q^+$.
   \item [(5)]  $\sum_{\mu,\eta\in \mathcal Q^+} \sum_{J, H} b_Jb_{H}x_J^{-}x_{H}^{-}\otimes k_{\mu}x_{H}^{+}S(x_{J}^{+})
   =1\otimes 1$  for all possible $J,H$ such that $wt(J)=\mu$ and  ${wt(H) =
\eta}$.
\item[(6)]  $ \Delta(u)=\sum_{\mu\in \mathcal Q^+}\sum_{wt(J)=\mu} b_J \Delta(x_{J}^{-}) (S\otimes S ) \Delta^{op} (x_{J}^{+})
$, where   $u=\mathbf m (Id\otimes S)(\bar \Theta)$ and  $\Delta^{op}(x)=(\Delta(x))^{2,1}$ for all $ x\in\U_v(\mfg)$.
\end{itemize} \end{Lemma}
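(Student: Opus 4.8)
The plan is to obtain parts (1)--(3) from the uniqueness characterisation of the quasi-$R$-matrix promoted to three tensor factors, part (4) as the resulting cocycle identity, and parts (5)--(6) by antipode bookkeeping. For (1), recall that $\bar\Theta$ is the unique element of the completion of $\U_v(\mathfrak g)^{\otimes 2}$ whose weight-zero component is $1\otimes 1$ and which satisfies $\bar\Theta\,\Delta(u)=\bar\Delta(u)\,\bar\Theta$ for all $u$; see \eqref{qrm} and \eqref{qrm11}. Working in a completion of $\U_v(\mathfrak g)^{\otimes 3}$, set
$$\Xi:=\sum_{\nu\in\mathcal Q^+}\ \sum_{\nu'+\nu''=\nu}\bar\Theta_{\nu'}^{2,3}\,(1\otimes k_{\nu''}\otimes 1)\,\bar\Theta_{\nu''}^{1,3}.$$
Using \eqref{rell1} and applying \eqref{qrm11} separately in the leg pairs $(1,3)$ and $(2,3)$, one checks that $\Xi\cdot(\Delta\otimes Id)\Delta(u)=(\bar\Delta\otimes Id)\bar\Delta(u)\cdot\Xi$ for all $u\in\U_v(\mathfrak g)$, while the weight-zero component of $\Xi$ is $1\otimes 1\otimes 1$. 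The uniqueness argument behind \cite[Theorem~4.1.2(a)]{Lu}, applied with the coalgebra map $(\Delta\otimes Id)\circ\Delta$ in place of $\Delta$ and its bar-twist in place of $\bar\Delta$, then forces $\Xi=\sum_\nu(\bar\Delta\otimes Id)(\bar\Theta_\nu)$; comparing the components whose last tensor leg has $\U^+_v(\mathfrak g)$-weight $\nu$ gives (1). Parts (2) and (3) follow by the same argument with $(Id\otimes\bar\Delta)\circ\bar\Delta$ and $(Id\otimes\Delta)\circ\bar\Delta$ respectively, or alternatively from (1) using the symmetry \eqref{reversed}, the tensor flip $P$, and $\bar\Theta=\Theta^{-1}$ together with the analogous comultiplication formula for $\Theta$.

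For (4): summing (1) and (2) over $\nu$ yields $(\bar\Delta\otimes Id)(\bar\Theta)=\bar\Theta^{2,3}\,\kappa\,\bar\Theta^{1,3}$ and $(Id\otimes\bar\Delta)(\bar\Theta)=\bar\Theta^{1,2}\,\kappa'\,\bar\Theta^{1,3}$ with $\kappa,\kappa'$ the indicated $k$-twists. Substituting these into the two sides of (4), invoking coassociativity $(\bar\Delta\otimes Id)\circ\bar\Delta=(Id\otimes\bar\Delta)\circ\bar\Delta$ and the obvious commutations among the $k$-factors and among the $\bar\Theta^{i,j}$ with disjoint leg sets, both sides collapse to the same expression; extracting the degree-$\nu$ component recovers (4). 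The care here is purely in tracking the positions of the tensor legs and the signs of the $k$-exponents.

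For (6): by \eqref{theta123}, $u=\mathbf m(Id\otimes S)(\bar\Theta)=\sum_{\mu\in\mathcal Q^+}\sum_{wt(J)=\mu}b_J\,x_J^-\,S(x_J^+)$. Since $\Delta$ is an algebra homomorphism with $\Delta\circ\mathbf m=\mathbf m_{\U_v(\mathfrak g)^{\otimes 2}}\circ(\Delta\otimes\Delta)$, while $S$ is an anti-homomorphism of coalgebras, so $\Delta\circ S=(S\otimes S)\circ\Delta^{op}$, one gets
$$\Delta(u)=\sum_{\mu\in\mathcal Q^+}\sum_{wt(J)=\mu}b_J\,\Delta(x_J^-)\,(S\otimes S)\Delta^{op}(x_J^+),$$
which is (6). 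For (5) I would expand $(Id\otimes\Delta)(\bar\Theta)$ by (3), apply $Id\otimes S\otimes Id$, multiply legs $1$ and $3$ into legs $1$ and $2$, and telescope using the antipode axiom $\mathbf m\circ(Id\otimes S)\circ\Delta(x)=\epsilon(x)1$, leaving only the term $1\otimes 1$; equivalently, (5) can be read off from the identity $\mathbf m(Id\otimes S)(\bar\Theta\,\Delta(x))=0$ established in the proof of Lemma~\ref{homm} by taking $x=x_J^+$, summing over $J$, and using (6).

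The main obstacle will be (5): unlike (1)--(4) it is not a bare consequence of the uniqueness of $\bar\Theta$ but requires interlocking the comultiplication formula for $\bar\Theta$ with the antipode axiom, and the bookkeeping --- which tensor leg the $k_\mu$-twist and the antipode land on, and whether reordering the root vectors produces the $\mathcal A_0$-coefficients tracked in \eqref{theta123} --- is where the computation is genuinely delicate. The cleanest route is probably to secure (6) first and then deduce (5) from the relation $\mathbf m(Id\otimes S)(\bar\Theta\,\Delta(x))=0$ as above.
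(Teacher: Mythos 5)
Your overall plan — uniqueness of the quasi-$R$-matrix for (1)--(3), coassociativity for (4), antipode bookkeeping for (5)--(6) — is reasonable and in the spirit of how such statements are usually established, but it is not the paper's route and, more importantly, two steps have genuine problems.

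For (1)--(4) the paper simply applies the bar automorphism $\bar{\phantom{x}}\otimes\bar{\phantom{x}}\otimes\bar{\phantom{x}}$ to the already-known comultiplication formulas for $\Theta$ in \cite[Propositions~4.2.2--4.2.4]{Lu}; your attempt instead re-derives (1) via a uniqueness argument, but the claimed functional equation $\Xi\cdot(\Delta\otimes Id)\Delta(u)=(\bar\Delta\otimes Id)\bar\Delta(u)\cdot\Xi$ is not what $(\bar\Delta\otimes Id)(\bar\Theta)$ satisfies. Applying the algebra map $\bar\Delta\otimes Id$ to $\bar\Theta\,\Delta(u)=\bar\Delta(u)\,\bar\Theta$ yields
$(\bar\Delta\otimes Id)(\bar\Theta)\cdot(\bar\Delta\otimes Id)\Delta(u)=(\bar\Delta\otimes Id)\bar\Delta(u)\cdot(\bar\Delta\otimes Id)(\bar\Theta)$,
so the \emph{left} coalgebra map must be $(\bar\Delta\otimes Id)\circ\Delta$, not $(\Delta\otimes Id)\circ\Delta$; for instance on $u=x_i^-$ these two maps differ already in the sign of the $k_{\alpha_i}$-exponent in the first leg. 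Since you then want to match $\Xi$ against $(\bar\Delta\otimes Id)(\bar\Theta)$ by uniqueness, using the wrong functional equation breaks the argument — you would need to show $\Xi$ satisfies the \emph{correct} intertwiner relation, which is a different computation than the one you describe. If you keep the uniqueness route, fix the inner coproduct; otherwise, the bar-involution-of-Lusztig route is shorter and avoids the completion subtleties entirely.

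For (5), the paper's computation is $(Id\otimes \mathbf m)(Id^{\otimes 2}\otimes S)(Id\otimes\Delta)(\bar\Theta)=(Id\otimes\epsilon)(\bar\Theta)=1\otimes 1$, then it expands the left side via part (3). You wrote $Id\otimes S\otimes Id$ and "multiply legs $1$ and $3$ into legs $1$ and $2$", which puts the antipode on the wrong tensor leg: with $S$ in slot $2$ and $\mathbf m$ over slots $2$--$3$ you would get $\sum b_Jb_H\,x_J^-x_H^-\otimes S(k_\mu x_H^+)\,x_J^+$, which is not the left side of (5). The antipode must act on the third leg so that after contracting legs $2$ and $3$ you land on $k_\mu x_H^+ S(x_J^+)$. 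Your alternative via (6) and the identity $\mathbf m(Id\otimes S)(\bar\Theta\,\Delta(x))=0$ is not developed enough to assess; note that in Lemma~\ref{homm} this vanishing is only verified for $x=x_i^\pm$, and you would need to extend it and explain what "sum over $J$" buys. Part (6) is correct and is the paper's argument.
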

 \begin{proof}
 Acting $(-\otimes-\otimes-)$ on both sides of equations in \cite[Proposition~4.2.2-4.2.4]{Lu} yields (1)-(4).
  By (3), $$\text{LHS of (5)}=(Id\otimes \mathbf m )(Id^{\otimes 2}\otimes S)(Id\otimes \Delta)(\bar\Theta)=(Id\otimes \epsilon)(\bar\Theta)=1\otimes 1,$$ where  $\epsilon$ is the counit of $\U_v(\mathfrak g)$. Finally,
  $$\text{RHS of (6)}=\sum_{\mu\in \mathcal Q^+}\sum_{wt(J)=\mu} b_J \Delta(x_{J}^{-} )\Delta (S(x_{J}^{+}))= \Delta(u),$$ where
  the first equality follows from   the well-known equality
$\Delta\circ  S =(S\otimes S)\circ \Delta^{op} $.
 \end{proof}

The following is the counterpart of \cite[(3.4)]{Drin}. The  proof is motivated by Drinfeld's arguments.

\begin{Lemma}\label{chu} Suppose $M, N$ are two objects in $\mathcal O$.    As morphisms in    $\End(M\otimes N)$,  $$P\pi \bar{\Theta}\pi ^{-1}P\bar \Theta\Delta(\mathbf m (Id\otimes S)(\bar \Theta))=\mathbf m(Id\otimes S)(\bar \Theta)\otimes \mathbf m (Id\otimes S)(\bar \Theta).$$
\end{Lemma}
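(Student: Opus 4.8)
The plan is to imitate Drinfeld's computation of the coproduct of the Drinfeld element, with $\bar\Theta$ playing the role of the $R$-matrix and the twist $\pi$ absorbing the Cartan part. Write $u=\mathbf m(Id\otimes S)(\bar\Theta)$, so that the asserted right hand side is $u\otimes u$. First I would observe that everything below is well defined: since $M,N\in\mathcal O$ and $\mathcal O$ is closed under tensor product, condition~(2) in the definition of $\mathcal O$ forces each weight space of $M\otimes N$ to meet only finitely many of the summands $\bar\Theta_\beta$, so all the series occurring in the argument act locally finitely on $M\otimes N$ and may be regrouped freely.

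The starting point is Lemma~\ref{eqn2}(6), which gives
$$\Delta(u)=\sum_{\mu\in\mathcal Q^+}\ \sum_{wt(J)=\mu} b_J\,\Delta(x_J^-)\,(S\otimes S)\Delta^{op}(x_J^+).$$
Next I would compose with $P\circ\bar\Theta$ on the left and push this copy of $\bar\Theta$ to the right past $\Delta(u)$ using the intertwining relation \eqref{qrm11}, i.e. $\bar\Theta\,\Delta(x)=\bar\Delta(x)\,\bar\Theta$, applied to $x=x_J^-$ and, after rewriting $\Delta\circ S=(S\otimes S)\circ\Delta^{op}$ as in the proof of Lemma~\ref{eqn2}(6), to the factors coming from $(S\otimes S)\Delta^{op}(x_J^+)$. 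Then I would insert the expansions of $(\bar\Delta\otimes Id)(\bar\Theta_\nu)$, $(Id\otimes\bar\Delta)(\bar\Theta_\nu)$ and $(Id\otimes\Delta)(\bar\Theta_\nu)$ from Lemma~\ref{eqn2}(1)--(3), together with the cocycle identity Lemma~\ref{eqn2}(4). The Cartan factors $k_{\pm\nu''}$ thrown off by these identities should be exactly the ones needed to reconcile the conjugations by $\pi$ and $\pi^{-1}$ present in $P\pi\bar\Theta\pi^{-1}P\bar\Theta$, in view of \eqref{pif}, \eqref{rinv} and \eqref{invr}.

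Finally, after these substitutions the sums over $J$ together with the two copies of $\bar\Theta$ should telescope through the antipode axiom, exactly as in the collapse $(Id\otimes\mathbf m)(Id^{\otimes2}\otimes S)(Id\otimes\Delta)(\bar\Theta)=(Id\otimes\epsilon)(\bar\Theta)=1\otimes1$ used to prove Lemma~\ref{eqn2}(5), but carried out once in each tensor leg; what remains is $\mathbf m(Id\otimes S)(\bar\Theta)\otimes\mathbf m(Id\otimes S)(\bar\Theta)=u\otimes u$, as desired.

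The hard part will be purely organizational: one must arrange the triple- and quadruple-indexed sums so that the $P$'s and the conjugations $\pi(-)\pi^{-1}$ line up with the Cartan factors produced by Lemma~\ref{eqn2}(1)--(4), and check that the two telescopings in the last step are independent (they act on disjoint tensor legs, hence commute). An alternative, slightly cleaner, route would be to prove the underlying algebraic identity inside a suitable completion of $\U_v(\mfg)^{\otimes2}$ and only afterwards evaluate on $M\otimes N$; but since $\U_v(\mfg)$ is not literally quasitriangular, I would carry out the argument directly on modules in $\mathcal O$ as above.
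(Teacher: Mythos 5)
Your plan follows the paper's proof in its essential outline: start from Lemma~\ref{eqn2}(6), push the middle copy of $\bar\Theta$ through $\Delta(x_J^-)$ via \eqref{qrm11}, expand using Lemma~\ref{eqn2}(1)--(4), and collapse with the antipode identity Lemma~\ref{eqn2}(5). Two specifics are worth flagging, since they are where the ``purely organizational'' step you defer actually carries the weight. First, the paper does not just bookkeep the quadruple-indexed sums directly: it introduces Drinfeld's box product $(y_1\otimes y_2\otimes y_3\otimes y_4)\times(y_5\otimes y_6)=y_1y_5S(y_3)\otimes y_2y_6S(y_4)$ together with its multiplicativity \eqref{varphi1}, and rewrites the whole left-hand side once and for all as $[\bar\Theta_1^{2,1}(\bar\Delta\otimes\Delta^{op})(\bar\Theta)]\times\bar\Theta$, where $\bar\Theta_1=\pi\bar\Theta\pi^{-1}$ is computed explicitly via \eqref{pif}. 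This device is what lets Lemma~\ref{eqn2}(5) be invoked cleanly (twice, with a nontrivial rearrangement via Lemma~\ref{eqn2}(1),(2),(4) in between), rather than ``once in each tensor leg'' as you describe. Second, the endgame produces $\pi(u\otimes u)\pi^{-1}$ rather than $u\otimes u$ outright, and one must observe separately that $u$ preserves weight spaces to drop the remaining $\pi$-conjugation. With those two ingredients supplied, your outline matches the paper's argument.
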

 \begin{proof}  Suppose  $ y_1,\ldots,y_6\in \U_v(\mathfrak g)$. Motivated by Drinfeld's formula in \cite[line -4, Page 34]{Drin}, we define
 $$(y_1\otimes y_2\otimes y_3\otimes y_4)\times(y_5\otimes y_6)=y_1y_5S(y_3)\otimes y_2y_6S(y_4).$$
 It is routine to check that \begin{equation} \label{varphi1} (z_1\otimes z_2\otimes z_3\otimes z_4)\times[(y_1\otimes y_2\otimes y_3\otimes y_4)\times (x_1\otimes x_2)]=(z_1y_1\otimes z_2y_2\otimes z_3y_3\otimes z_4y_4)\times (x_1\otimes x_2),
  \end{equation}
for any $x_i, y_j, z_k\in \U_v(\mfg)$ for all admissible $i, j, k$.

 Suppose $u=\mathbf m (Id\otimes S)(\bar \Theta)$ and $\bar{\Theta}_1=\sum_{\nu\in \mathcal Q^+}(1\otimes k_\nu)\bar{\Theta}_\nu(k_{-\nu}\otimes 1)$. It is routine to check  $P\bar\Theta_1 P=\bar\Theta_1^{2,1}$ in  $\End(M\otimes N)$. By \eqref{pif},
  $\bar\Theta_1=\pi \bar\Theta {\pi }^{-1}$  in  $\End(M\otimes N)$.
So, $$\begin{aligned} P\pi \bar\Theta {\pi }^{-1}  P\bar \Theta\Delta(u)=&
\bar\Theta_1^{2,1}\sum_{\nu \in \mathcal Q^+}\sum_{wt(J)=\nu} b_J (\bar \Theta\Delta(x_{J}^{-})) (S\otimes S ) \Delta^{op} (x_{J}^{+}),\text{by Lemma~\ref{eqn2}(6)}\\=& \bar\Theta_1^{2,1}\sum_{\nu\in \mathcal Q^+}\sum_{wt(J)=\nu} b_J \bar\Delta(x_{J}^{-}) \bar \Theta(S\otimes S ) \Delta^{op} (x_{J}^{+}), \text{by \eqref{qrm11}}\\=& [{{\bar\Theta}}_1^{2,1}(\bar{\Delta}\otimes \Delta^{op}) ({\bar\Theta})]\times {\bar\Theta}.\end{aligned}$$
 So, we need to verify $[{{\bar\Theta}}_1^{2,1}(\bar{\Delta}\otimes \Delta^{op}) ({\bar\Theta})]\times {\bar\Theta}=u\otimes u$ in $\End(M\otimes N)$.
  This is the case since
$$\begin{aligned}
  & [{{\bar\Theta}}_1^{2,1}(\bar{\Delta}\otimes \Delta^{op}) ({\bar\Theta})]\times {\bar\Theta}\\
    =& [{{\bar\Theta}}_1^{2,1}\sum_{\lambda,\lambda',\eta,\eta'}{{\bar\Theta}}_\eta^{2,3}(1\otimes k_{\eta'}\otimes 1^{\otimes 2} ){{\bar\Theta}}_{\eta'}^{1,3}(1^{\otimes 3} \otimes k_{\eta+\eta'}){{\bar\Theta}}_\lambda^{2,4}(1\otimes k_{\lambda'}\otimes1^{\otimes 2}){{\bar\Theta}}_{\lambda'}^{1,4}]\times {\bar\Theta}\\
 = & [{{\bar\Theta}}_1^{2,1}\sum_{\lambda,\eta,\eta'}{{\bar\Theta}}_\eta^{2,3}(1\otimes k_{\eta'}\otimes 1^{\otimes 2} ){{\bar\Theta}}_{\eta'}^{1,3}(1^{\otimes 3} \otimes k_{\eta+\eta'}){{\bar\Theta}}_\lambda^{2,4}]\times (1\otimes 1), \text{by \eqref{varphi1},  Lemma~\ref{eqn2}(5)}\\
= & [{{\bar\Theta}}_1^{2,1}\sum_{\eta,\eta'}{{\bar\Theta}}_\eta^{2,3}(1\otimes k_{\eta'}\otimes 1^{\otimes 2}){{\bar\Theta}}_{\eta'}^{1,3}(1^{\otimes 3} \otimes k_{\eta+\eta'})]\times (1\otimes u),\text{ by \eqref{varphi1}}   \\
=&\sum_{\beta,\eta,\eta'}\sum_{wt(J_1)= \beta } \sum_{wt(J_2)=\eta} \sum_{wt(J_3)=\eta'}b_{J_1}b_{J_2}b_{J_3}k_\beta x_{J_1}^{+}x_{J_3}^{-}S(x_{J_2}^{+}x_{J_3}^{+})\otimes x_{J_1}^{-}k_{-\beta }x_{J_2}^{-}k_{\eta'}uS(k_{\eta+\eta'})\\
= & \sum_{\beta,\eta,\eta'}\sum_{wt(J_1)= \beta } \sum_{wt(J_2)=\eta} \sum_{wt(J_3)=\eta'}       b_{J_1}b_{J_2}b_{J_3}k_\beta x_{J_1}^{+}x_{J_3}^{-}S(x_{J_3}^{+})S(x_{J_2}^{+})\otimes x_{J_1}^{-}k_{-\beta }x_{J_2}^{-}k_{-\eta}u\\
=&\pi  \left(\sum_{\beta,\eta,\eta'}\sum_{wt(J_1)= \beta } \sum_{wt(J_2)=\eta} \sum_{wt(J_3)=\eta'}       b_{J_1}b_{J_2}b_{J_3} x_{J_1}^{+}k_{-\eta} x_{J_3}^{-}S(x_{J_3}^{+})S(x_{J_2}^{+})\otimes x_{J_1}^{-}x_{J_2}^{-}u\right)\pi ^{-1} \\
= & \pi  \left([\sum_{\beta,\eta,\eta'}{{\bar\Theta}}_\beta ^{2,1}(k_{-\eta}\otimes 1\otimes 1){{\bar\Theta}}_\eta^{2,3}{{\bar\Theta}}_{\eta'}^{1,3}]\times (1\otimes u)\right)\pi ^{-1}\\
=& \pi  \left([\sum_{\beta,\eta,\eta'}{{\bar\Theta}}_{\eta'}^{1,3}(k_{\beta }\otimes 1\otimes 1){{\bar\Theta}}_\beta ^{2,3}{{\bar\Theta}}_\eta^{2,1}]\times (1\otimes u)\right)\pi ^{-1}, \text{ by Lemma~\ref{eqn2}(1)--(2),(4)}\\
= & \pi  \left(\sum_{\eta'}{{\bar\Theta}}_{\eta'}^{1,3}\times (\sum_{\beta,\eta}\sum_{wt(J_1)= \beta } \sum_{wt(J_2)=\eta}b_{J_1}b_{J_2}k_\beta x_{J_2}^{+}S(x_{J_1}^{+})\otimes x_{J_1}^{-}x_{J_2}^{-}u)\right)\pi ^{-1}
\\=&\pi  \left(\sum_{\eta'}{{\bar\Theta}}_{\eta'}^{1,3}\times (1\otimes u)\right)\pi ^{-1}, \text{ by Lemma~\ref{eqn2}(5)}\\
= & \pi  \left(u\otimes u\right)\pi ^{-1}=u\otimes u, \text{ since $u$ stabilizes any weight space}.
\end{aligned}$$
We remark that the first equality follows from  Lemma~\ref{eqn2}(1),(3), and  the fifth equality follows from the equation $k_\nu u=uk_\nu$ for any $\nu\in\mathcal P$,  and the sixth equality can be checked by direct calculation.
\end{proof}

\begin{Prop}\label{ptrace}For any two objects  $M$ and $N$ in $\mathcal O$,  $\sigma_{M\otimes N}=(\Theta\pi P)^2\sigma_{M}\otimes\sigma_{N}$.
\end{Prop}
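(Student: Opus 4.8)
The plan is to peel off from $\sigma$ the scalar operators that account for the difference between $\sigma_M$ and the bare action of the element $u:=\mathbf m(Id\otimes S)(\bar\Theta)$, and then to quote Lemma~\ref{chu}. For $M\in\mathcal O$ let $\mathcal D_M\in\End(M)$ be the operator scaling $M_\lambda$ by $v^{-(\lambda\mid\lambda+2\varrho)}$; since $u$ has weight $0$ (so it preserves weight spaces), the definition of $\sigma$ together with Lemma~\ref{homm}(1) gives $\sigma_M=\mathcal D_M\circ u_M$, where $u_M$ denotes the action of $u$ on $M$, and likewise $\sigma_N=\mathcal D_N\circ u_N$ and $\sigma_{M\otimes N}=\mathcal D_{M\otimes N}\circ\Delta(u)$, the action of $u$ on the tensor product being through $\Delta$. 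I would first record two elementary facts: from $(\lambda+\mu\mid\lambda+\mu+2\varrho)=(\lambda\mid\lambda+2\varrho)+(\mu\mid\mu+2\varrho)+2(\lambda\mid\mu)$ and the description of $\pi$ on weight modules (so that $\pi^2$ scales $M_\lambda\otimes N_\mu$ by $v^{-2(\lambda\mid\mu)}$) one gets $\mathcal D_{M\otimes N}=(\mathcal D_M\otimes\mathcal D_N)\circ\pi^2$; and $\mathcal D_{M\otimes N}$ is a scalar on each summand of $M\otimes N=\bigoplus_{\lambda,\mu}M_\lambda\otimes N_\mu$ depending only on the total weight $\lambda+\mu$, hence it commutes with $\pi$ and with every homogeneous total‑weight‑preserving operator, in particular with $\bar\Theta$ and with $\bar\Theta^{2,1}$.

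Next I would compute $(\Theta\pi P)^{-2}$ as an operator on $M\otimes N$. Writing $(\Theta\pi P)^2=R_{N,M}\circ R_{M,N}$ and using \eqref{invr}, its inverse is $R_{M,N}^{-1}\circ R_{N,M}^{-1}=(P\circ\pi^{-1}\circ\bar\Theta)\circ(P\circ\pi^{-1}\circ\bar\Theta)$, where in the second factor the operators act on $M\otimes N$ and in the first on $N\otimes M$. Pushing the inner flip to the left via $P\circ\pi^{-1}_{N\otimes M}=\pi^{-1}_{M\otimes N}\circ P$ and $P\circ\bar\Theta_{N\otimes M}=\bar\Theta^{2,1}_{M\otimes N}\circ P$, and cancelling $P\circ P$, gives
$$(\Theta\pi P)^{-2}=\pi^{-1}\,\bar\Theta^{2,1}\,\pi^{-1}\,\bar\Theta\qquad\text{in }\End(M\otimes N).$$
By the same conjugation rules (exactly as in the proof of Lemma~\ref{chu}) one also has $P\,\pi\,\bar\Theta\,\pi^{-1}\,P=\pi\,\bar\Theta^{2,1}\,\pi^{-1}$ on $M\otimes N$.

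Now the identity falls out. Applying $(\Theta\pi P)^{-2}$ to $\sigma_{M\otimes N}$ and commuting $\mathcal D_{M\otimes N}$ past $\bar\Theta$, $\bar\Theta^{2,1}$ and $\pi$,
$$(\Theta\pi P)^{-2}\,\sigma_{M\otimes N}=\pi^{-1}\bar\Theta^{2,1}\pi^{-1}\bar\Theta\,\mathcal D_{M\otimes N}\,\Delta(u)=\mathcal D_{M\otimes N}\,\pi^{-1}\bar\Theta^{2,1}\pi^{-1}\bar\Theta\,\Delta(u).$$
Substituting $\mathcal D_{M\otimes N}=(\mathcal D_M\otimes\mathcal D_N)\pi^2$ and using $\pi^2\pi^{-1}=\pi$, the right-hand side becomes $(\mathcal D_M\otimes\mathcal D_N)\,\pi\bar\Theta^{2,1}\pi^{-1}\bar\Theta\,\Delta(u)$; since $\pi\bar\Theta^{2,1}\pi^{-1}=P\pi\bar\Theta\pi^{-1}P$, the operator $\pi\bar\Theta^{2,1}\pi^{-1}\bar\Theta\,\Delta(u)$ is precisely the left-hand side of Lemma~\ref{chu}, which equals $u\otimes u$. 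Hence $(\Theta\pi P)^{-2}\sigma_{M\otimes N}=(\mathcal D_M\otimes\mathcal D_N)(u\otimes u)=\sigma_M\otimes\sigma_N$, and applying $(\Theta\pi P)^2$ yields the assertion $\sigma_{M\otimes N}=(\Theta\pi P)^2\sigma_M\otimes\sigma_N$.

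The only delicate part is the bookkeeping in the middle step: tracking on which of $M\otimes N$, $N\otimes M$ each of $P$, $\pi$, $\bar\Theta$ acts, and verifying the conjugation identities $P\pi^{-1}_{N\otimes M}=\pi^{-1}_{M\otimes N}P$, $P\bar\Theta_{N\otimes M}=\bar\Theta^{2,1}_{M\otimes N}P$ and $P\pi\bar\Theta\pi^{-1}P=\pi\bar\Theta^{2,1}\pi^{-1}$. Granting these and the scalar factorisation $\mathcal D_{M\otimes N}=(\mathcal D_M\otimes\mathcal D_N)\pi^2$, the proposition is an immediate consequence of Lemma~\ref{chu}. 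One should also note, as in the rest of this section, that the infinite sums defining $\bar\Theta$ act in the required way on objects of $\mathcal O$ by local finiteness, and that $\pi$, the isomorphism $\Theta\pi P$, and formula \eqref{invr} are available for weight modules by \eqref{pif} and the remark following it.
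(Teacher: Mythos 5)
Your proof is correct and is essentially the same argument as the paper's: both rely on Lemma~\ref{homm} to write $\sigma$ as a weight-dependent scalar times the action of $u$, use the factorisation $(\lambda+\mu\mid\lambda+\mu+2\varrho)=(\lambda\mid\lambda+2\varrho)+(\mu\mid\mu+2\varrho)+2(\lambda\mid\mu)$ to absorb that scalar into a $\pi^{\pm 1}$, and then reduce to Lemma~\ref{chu} (together with $\Theta\bar\Theta=1\otimes 1$). The only cosmetic difference is that you work from $(\Theta\pi P)^{-2}\sigma_{M\otimes N}$ back to $\sigma_M\otimes\sigma_N$ and package the scalar as an explicit diagonal operator $\mathcal D$, whereas the paper evaluates $(\Theta\pi P)^2\sigma_M(m)\otimes\sigma_N(n)$ directly on homogeneous vectors; the content is the same.
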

\begin{proof} Suppose  $(m, n)\in(M_\lambda, N_\mu)$ and    $u=\mathbf m (Id\otimes S)(\bar \Theta)$. Note that $P\pi =\pi P$ in $\End(M\otimes N)$.
Using Lemma~\ref{homm} and \eqref{pif}, we have
$$\begin{aligned}& (\Theta\pi P)^2\sigma_{M}(m)\otimes \sigma_{N}(n)
=\Theta\pi P\Theta Pv^{-(\lambda\mid\mu)}v^{-(\lambda\mid \lambda+2\varrho)}v^{-(\mu\mid \mu+2\varrho)}um\otimes un\\
=&v^{-(\lambda+\mu\mid \lambda+\mu+2\varrho)}\Theta\pi P\Theta P\pi ^{-1}um\otimes un
= v^{-(\lambda+\mu\mid \lambda+\mu+2\varrho)}\Delta(u)
(m\otimes n)\\= &\sigma_{M\otimes N}(m\otimes n), \\ \end{aligned}$$
where
the third equality follows from Lemma~\ref{chu} and $ \Theta\bar \Theta=1\otimes 1$.
\end{proof}

 Recall that $\Pi$ is the set of simple roots and $\mathcal{R}$ is the
root system.
Fix positive integers $q_1,\ldots,q_k$ such that $\sum_{i=1}^{k}q_i=n$.
Define \begin{equation} \label{defofpiee}
 I_1=\Pi\setminus \{\alpha_{p_1}, \alpha_{p_2}, \ldots, \alpha_{p_{k}}\}, \text{ and  $I_2= I_1\cup \{ \alpha_{n}\}$} \end{equation} where $p_i=\sum_{j=1}^i q_j$ for all admissible $i$.

For each $I\in \{ I_1, I_2\}$, there is a subroot system $\mathcal{R}_{I}= \mathcal{R}\cap \mathbb Z I$. The corresponding  positive roots $\mathcal{R}_{I}^+=\mathcal{R}^+\cap \mathbb Z I$. Let $w_{0,I}$ be the longest element in   the standard parabolic subgroup $W_{I}$ of $W$ with respect to $I$. Then $w_0=w_{0,I} y$ where $y$ is  a distinguished right coset representative in $W_{I}\backslash W$.

As in \cite{GU}, we fix  reduced expressions of $w_{0, I}$ and $y$ so as to get a corresponding   reduced expression of $w_0$.
We have the corresponding convex order of $\mathcal{R}^{+}$. To distinguish from the convex order in \eqref{ord}, we denote it by $\check \beta_1< \check \beta_2<...< \check \beta_{\ell(w_0)}$. In this case,
we also denote the root elements which are defined via braid generators
 $T_i=T_{i,-1}^{''}$ in  \cite[\S 37.1.3]{Lu} by $\check x_{\check \beta_j}$'s. The parabolic subalgebra $\U_v(\mathfrak p_{I})$ is generated by $\{x_i^{-}\mid \alpha_i\in I\}\cup\{x_i^{+}, k_\mu\mid\alpha_i\in\Pi,\mu\in\mathcal{P}\}$, and the corresponding Levi subalgebra  $ \U_v(\mathfrak l_{I})$ is generated by $\{x_i^{\pm}\mid \alpha_i \in I\}\cup\{k_\mu\mid  \mu\in\mathcal{P}\}$. Let $\U_v(\mathfrak u_{I}^{+})$ be the subalgebra generated by
 $\{\check x_{\check{\beta}_j}^{+}\mid j>\ell(w_{0, I})\}$. Similarly, let
 $\U_v(\mathfrak u_{I}^-)$ be  the subalgebra generated by $\{\check{x}_{\check{\beta}_j}^{-}\mid j> \ell(w_{0, I})\}$. It is known that $\U_v(\mathfrak g)=\U_v(\mathfrak u_{I}^{-})\otimes_{\mathbb{F}} \U_v(\mathfrak p_{I})$  and $ \U_v(\mathfrak p_{I})=\U_v(\mathfrak l_{I})\otimes_{\mathbb{F}} \U_v(\mathfrak u^+_{I})$.

Let
 $  \Lambda^{\mathfrak p_{I}}=\{\lambda\in \mathcal{P}\mid  \frac{2 (\lambda\mid \alpha_j)}{(\alpha_j\mid \alpha_j) }  \in \mathbb N, \forall \alpha_j\in I\}$. Then  $  \Lambda^{\mathfrak p_{I}}$  is  the set of  all $\mathfrak l_{I}$-dominant integral weights.
  We  have
 $$v^{(\lambda\mid\mu)}=q^{(\lambda,\mu)} , $$ where $(\ , )$ is the symmetric bilinear form such that $(\epsilon_i,\epsilon_j)=\delta_{i,j}$.
 We will  keep using  $q^{(\lambda,\mu)}$ later on.
It is known that the irreducible $\U_v(\mathfrak l_{I})$-module $L_{I}(\lambda)$ with highest weight $\lambda$ is of finite dimensional if and only if  $\lambda\in \Lambda^{\mathfrak p_{I}}$. By inflation, $L_{I}(\lambda)$ can be considered as a $\U_v(\mathfrak p_{I})$-module.
For any $\lambda\in \Lambda^{\mathfrak p_{I}}$, the  parabolic
Verma module with highest weight $\lambda$ is
$$M^{\mathfrak p_{I}}(\lambda):=\U_v(\mfg) \otimes_{\U_v(\mathfrak p_{I})} L_{I}(\lambda),  \text{ $\lambda\in \Lambda^{\mathfrak p_{I}}$.} $$  As vector spaces, $M^{\mathfrak p_{I}}(\lambda)\cong \U_v(\mathfrak u_{I}^{-})\otimes L_{I}(\lambda)$.
Given a $\mathbf c=(c_1, c_2, \ldots, c_k)\in  \mathbb Z^k$ such that
  $c_k=0$ if $I=I_2$, we define \begin{equation} \label{deltac} \lambda_{I, \mathbf c}=\sum_{j=1}^{k} c_j(\epsilon_{p_{j-1}+1}+\epsilon_{p_{j-1}+2}+\ldots+\epsilon_{p_j}),\end{equation}  where $p_0=0$, and  $p_j$'s are given in ~\eqref{defofpiee}.
One can check
$\lambda_{I, \mathbf c}\in \Lambda^{\mathfrak p_{I}}$ and   $\text{dim}_{\mathbb F }L_{I}(\lambda_{I, \mathbf c})=1$.

In the remaining part of this paper, we denote by $m_I$ the highest weight vector of  $L_{I}(\lambda_{I, \mathbf c})$ for any $I\in \{I_1, I_2\}$. Then
$L_{I}(\lambda_{I, \mathbf c})=\mathbb F m_I$.
Suppose \begin{equation}\label{pararoot} \mathcal{R}^+\setminus\mathcal{R}_{I}^{+}=\{{\beta}_{t_j}\mid 1\le j\le  \ell(w_{0, I}^{-1}w_0) \}\end{equation} and $\beta_{t_j}< \beta_{t_l}$ in the sense of \eqref{ord}  whenever $j<l$. So,  $t_j<t_l$ if and only if
$j<l$.

 \begin{Lemma}\label{tsmodule1} Suppose $r\in \mathbb N$. Let  \begin{equation}\label{sir}  \mathcal S_{I,r}=\left\{\vec{\prod}_{j=\ell(w_{0, I}^{-1}w_0)  }^1 (x^-_{\beta_{t_j}})^{i_j}  m_I\otimes v_{\mathbf j}\mid  i_j\in \mathbb N,  \mathbf j\in \underline {\mathrm N} ^r\right\},\end{equation} where  $v_\mathbf j=v_{j_1}\otimes v_{j_2}\otimes\ldots\otimes v_{j_r}$. Then  $\mathcal S_{I,r} $ is a basis of $M^{\mathfrak p_{I}}(\lambda_{I, \mathbf c})\otimes V^{\otimes r}$.
\end{Lemma}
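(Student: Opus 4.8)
The plan is to split off the tensor factor $V^{\otimes r}$, reduce the assertion to a statement about the parabolic Verma module alone, and then match it against the parabolic PBW theorem. Since $V$ is the natural module with basis $\{v_i\mid i\in\underline{\mathrm N}\}$, the set $\{v_{\mathbf j}\mid \mathbf j\in\underline{\mathrm N}^r\}$ is a basis of $V^{\otimes r}$, and a tensor product of $\mathbb F$-bases is a basis of the tensor product. Hence $\mathcal S_{I,r}$ is a basis of $M^{\mathfrak p_I}(\lambda_{I,\mathbf c})\otimes V^{\otimes r}$ if and only if $\mathcal S_I:=\{\vec{\prod}_{j=\ell(w_{0,I}^{-1}w_0)}^{1}(x^-_{\beta_{t_j}})^{i_j}m_I\mid i_j\in\mathbb N\}$ is a basis of $M^{\mathfrak p_I}(\lambda_{I,\mathbf c})$, so it suffices to prove the latter.

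Next I would record the known basis and count dimensions. Since $M^{\mathfrak p_I}(\lambda_{I,\mathbf c})\cong\U_v(\mathfrak u_I^-)\otimes_{\mathbb F}L_I(\lambda_{I,\mathbf c})$ as vector spaces and $\dim_{\mathbb F}L_I(\lambda_{I,\mathbf c})=1$, the map $\U_v(\mathfrak u_I^-)\to M^{\mathfrak p_I}(\lambda_{I,\mathbf c})$, $u\mapsto u m_I$, is a linear isomorphism. Combining this with the PBW basis $\{\vec{\prod}_{j>\ell(w_{0,I})}(\check x^-_{\check\beta_j})^{i_j}\}$ of $\U_v(\mathfrak u_I^-)$ afforded by the parabolic-adapted convex order, and with the equality of root sets $\{\check\beta_j\mid j>\ell(w_{0,I})\}=\{\beta_{t_j}\mid 1\le j\le\ell(w_{0,I}^{-1}w_0)\}=\mathcal R^+\setminus\mathcal R_I^+$ from \eqref{pararoot}, one obtains $\dim M^{\mathfrak p_I}(\lambda_{I,\mathbf c})_{\lambda_{I,\mathbf c}-\nu}=\#\{(i_j)\in\mathbb N^{\ell(w_{0,I}^{-1}w_0)}\mid\sum_j i_j\beta_{t_j}=\nu\}$ for every $\nu\in\mathcal Q^+$. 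The elements of $\mathcal S_I$ of weight $\lambda_{I,\mathbf c}-\nu$ are precisely those with $\sum_j i_j\beta_{t_j}=\nu$, so $\mathcal S_I$ has exactly the right (finite) cardinality in each weight space of the category-$\mathcal O$ module $M^{\mathfrak p_I}(\lambda_{I,\mathbf c})$. It therefore suffices to prove that $\mathcal S_I$ spans $M^{\mathfrak p_I}(\lambda_{I,\mathbf c})$.

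For the spanning statement I would compare the two PBW-type generating families $\{x^-_{\beta_{t_j}}\}$ and $\{\check x^-_{\check\beta_j}\}_{j>\ell(w_{0,I})}$ attached to the common root set $\mathcal R^+\setminus\mathcal R_I^+$, the first via the convex order \eqref{ord} and the braid generators $T_{i,+1}^{''}$, the second via the parabolic-adapted order and $T_{i,-1}^{''}$. Using \eqref{ccc123} to pass between the two braid conventions, and Lemma~\ref{commut} (whose correction monomials are supported on roots strictly between the two commuted roots and carry the same total weight), one rewrites any $\vec{\prod}(\check x^-_{\check\beta_j})^{i_j}m_I$, move by move, into the $\{\beta_{t_j}\}$-ordered monomials modulo monomials of strictly smaller PBW degree; on the associated graded (a quantum affine space in which all root vectors $q$-commute) the change of basis becomes lower-unitriangular for the lexicographic order induced by \eqref{ord}, so the rewriting terminates by downward induction on the PBW degree. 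What keeps the whole procedure inside $\U_v(\mathfrak u_I^-)m_I=M^{\mathfrak p_I}(\lambda_{I,\mathbf c})$ is that $\mathcal R^+\setminus\mathcal R_I^+$ is closed under addition (if $\alpha,\beta\notin\mathbb Z I$ then $\alpha+\beta\notin\mathbb Z I$): a commutation move between two roots of $\mathcal R^+\setminus\mathcal R_I^+$ produces only root vectors for roots of $\mathcal R^+\setminus\mathcal R_I^+$. Together with the dimension count of the previous paragraph this shows $\mathcal S_I$ is a basis of $M^{\mathfrak p_I}(\lambda_{I,\mathbf c})$, and tensoring with $\{v_{\mathbf j}\}$ yields the lemma.

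The main obstacle is exactly this triangularity/closedness step: making rigorous that replacing the parabolic-adapted convex order by the order \eqref{ord} transforms the PBW basis of the subalgebra $\U_v(\mathfrak u_I^-)$ unitriangularly, i.e. that the correction terms in Lemma~\ref{commut} neither leave $\mathcal R^+\setminus\mathcal R_I^+$ nor raise the PBW degree; the structural (support and weight) part of Lemma~\ref{commut} is what is needed, rather than the valuation estimate of Proposition~\ref{commut1}. An alternative route to linear independence that sidesteps some of this bookkeeping is to specialize $q\to 1$: over the classical enveloping algebra the $x^-_{\beta_{t_j}}$ become honest root vectors and $\mathcal S_I$ becomes the classical PBW basis of the classical parabolic Verma module $M^{\mathfrak p_I}(\lambda_{I,\mathbf c})$, so linear independence over $\mathbb F$ follows by a standard integral-form and specialization argument; combined with the dimension count this again gives that $\mathcal S_I$, hence $\mathcal S_{I,r}$, is a basis.
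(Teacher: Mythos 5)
Your outline reproduces the first two steps of the paper's proof exactly: split off $V^{\otimes r}$, use the parabolic PBW basis $S=\{\vec\prod(\check x_{\check\beta_j}^-)^{r_j}\otimes m_I\}$ to identify $\dim M_\eta$ with the number of ways to write $\lambda_{I,\mathbf c}-\eta$ as an $\mathbb N$-combination of elements of $\mathcal R^+\setminus\mathcal R_I^+$, observe that $\mathcal S_{I,0}$ has that same cardinality in each weight space, and thereby reduce to a spanning statement.

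Where you diverge is the spanning step, and this is where your argument has a gap. The paper does not compare the two PBW systems at all: it takes the full PBW monomials $x_{\mathbf r}^-\otimes m_I$, $\mathbf r\in\mathbb N^{\ell(w_0)}$ (which span $M$ for free), sets $j$ to be the largest index with $r_j\neq 0$ and $\beta_j\in\mathcal R_I^+$, and inducts downward on $j$; the base case is the vanishing $x_{\beta_1}^-m_I=0$, and the inductive step uses Lemma~\ref{commut}(1) to push $x_{\beta_j}^-$ toward $m_I$, where it dies. The engine here is that $x_\gamma^- m_I=0$ for every $\gamma\in\mathcal R_I^+$ (a weight argument: $\lambda_{I,\mathbf c}-\gamma$ is not a weight of $M^{\mathfrak p_I}(\lambda_{I,\mathbf c})$ when $\gamma\in\mathbb Z I$). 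Your route instead tries to stay inside $\U_v(\mathfrak u_I^-)$ by invoking the additive closedness of $\mathcal R^+\setminus\mathcal R_I^+$. That closedness only controls the total weight $\sum_l r_l\beta_l=\alpha+\beta$ of a correction monomial $x_{\mathbf r}^-$ in Lemma~\ref{commut}(1); it does not force each individual $\beta_l$ appearing in $x_{\mathbf r}^-$ to lie in $\mathcal R^+\setminus\mathcal R_I^+$, since a sum of one $\mathcal R_I^+$ root and one $\mathcal R^+\setminus\mathcal R_I^+$ root can still have weight outside $\mathbb Z I$. So your rewriting can leave $\U_v(\mathfrak u_I^-)$, and the statement "a commutation move between two roots of $\mathcal R^+\setminus\mathcal R_I^+$ produces only root vectors for roots of $\mathcal R^+\setminus\mathcal R_I^+$" is unsupported as stated; the missing ingredient is precisely the annihilation $x_\gamma^- m_I=0$ that the paper uses to discard such terms. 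A secondary point: passing from $\check x_{\check\beta_j}^\pm$ to $x_{\beta_{t_k}}^\pm$ is more than a change of braid convention — \eqref{ccc123} only relates $T_{i,\pm 1}''$ for a fixed reduced expression, while $\check x$ and $x$ are attached to different reduced expressions of $w_0$, so an extra PBW comparison theorem would be needed. The paper's choice to work with the full $\{x_{\mathbf r}^-\}$ basis sidesteps that comparison entirely.
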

\begin{proof}  It is enough to prove that $\mathcal S_{I,0}$ is a basis of  $M^{\mathfrak p_{I}}(\lambda_{I, \mathbf c}) $.
Suppose $M=M^{\mathfrak p_{I}}(\lambda_{I, \mathbf c})$.
  We have $M\cong \U_v(\mathfrak u_{I}^{-})\otimes_{\mathbb{F}}L_{I}(\lambda_{I, \mathbf c})$, and  $S$  is a basis of $M$, where  $${S}=\left \{\vec{\prod}_{j=\ell(w_0)}^{\ell(w_{0, I})+1} (\check x_{\check{\beta}_{j}}^-)^{r_{j}}\otimes m_I\mid  r_j \in\mathbb{N}, \ell(w_{0,I} )+1\le j\le \ell(w_0) \right\}.$$  Note that $M=\oplus_{\eta} M_\eta$ where $\eta\in \lambda_{I, \mathbf c}-\mathcal{Q}^+$. For all admissible $\eta$, define $({\mathcal S_{I, 0}})_\eta={\mathcal S_{I, 0}}\cap M_\eta$ and ${S}_\eta={S}\cap M_\eta$. Then
  $$\sharp{S}_\eta=\sharp\left\{\mathbf r\in\mathbb{N}^{\ell(w_{0,I}^{-1} w_0)}| \sum_{j=\ell(w_{0, I})+1}^{\ell(w_0)}r_{j-\ell(w_{0,I})}\check{\beta}_{j}=\lambda_{I, \mathbf c}-\eta\right\}$$
  and $\text{dim} M_\eta=\sharp{S}_\eta<\infty$.
   Since $\{\check{\beta}_{j}| \ell(w_{0, I})+1\le j\le \ell(w_0)\}=\mathcal{R}^+\setminus\mathcal R_{I}^+$ and
    $$\sharp({\mathcal S_{I, 0}})_\eta=\sharp\left\{\mathbf r\in\mathbb{N}^{\ell(w_{0,I}^{-1} w_0)}| \sum_{j=1}^{\ell(w_{0,I}^{-1} w_0)  }r_{j}{\beta}_{t_j}=\lambda_{I, \mathbf c}-\eta\right\}=\sharp{S}_\eta, $$
    it is enough to  verity that $\mathcal S_{I, 0}$ spans $M$.

If the result were false, we can find a non-zero element $x_{\mathbf r}^-\otimes m_I$ which is not a linear combination of elements in $\mathcal S_{I, 0}$.  Take the maximal integer $j$ such that  $r_j\neq  0$ and $\beta_j\in\mathcal{R}_{I}^{+}$. If $j=1$, then
$x_{\mathbf r}^-\otimes m_I=\vec{\prod}_{j=\ell(w_0)}^2 (x_{\beta_{j}}^-)^{r_{j}}\otimes (x_{\beta_{1}}^-)^{r_{1}}m_I=0$, a contradiction. The  general case follows  from Lemma~\ref{commut}(1) together with arguments for induction on $j$.
\end{proof}

In the remaining part of this section,  $\mathbf c$ is always the one in \eqref{deltac}.
If  $\mfg=\mathfrak{so}_{2n+1}$,  we define
\begin{equation} \label{polf123}
b_j=\begin{cases} 2(c_j-p_{j-1})+\mathrm N -\epsilon_\mfg, & \text{if  $1\leq j\leq k$,}\\
  0, &\text{if $j=k+1$,}\\
    -2c_{2k-j+2}+2p_{2k-j+2}-\mathrm N +\epsilon_\mfg, &\text{if $k+2\leq j\leq 2k+1$.}\end{cases}
    \end{equation}
 Otherwise,  define
\begin{equation}\label{polf12} b_j=\begin{cases} 2(c_j-p_{j-1})+\mathrm N -\epsilon_\mfg, &\text{if $1\le j\le k$,}\\
-2c_{2k-j+1}+2p_{2k-j+1}-\mathrm N +\epsilon_\mfg, &\text{if $k+1\leq j\leq 2k$,}\\ \end{cases}\end{equation}where $\epsilon_\mfg$ is given in \eqref{lamb}.

 \begin{Defn} \label{fi} Define $f_{I}(t)=\prod_{j\in J_I} (t- u_j)$ and  $ \bar f_{I}(t)=\prod_{j\in J_I} (t- u_j^{-1})$, where \begin{itemize}\item  $J_{I_1}=\{1, 2, \ldots, 2k\}$ and $J_{I_2}= J_{I_1}\setminus \{k+1\}$ if $\mfg\neq \mathfrak{so}_{2n+1}$,
  \item $J_{I_1}=\{1, 2, \ldots, 2k+1\}$ and  $J_{I_2}=J_{I_1}\setminus \{k+1, k+2\}$ if  $\mfg= \mathfrak{so}_{2n+1}$,
  \item  $u_j=\epsilon_\mfg q^{b_j}$ for any $j\in J_I$ and  $b_j$'s are in  \eqref{polf123}-\eqref{polf12}.\end{itemize}
\end{Defn}

 \begin{Defn}\label{V}Define  $V_{I,j}=\{v_{l}\mid  p_{j-1}+1\le l\le  p_j \}$ if either $I=I_1$ and $1\leq j\leq k$  or $I=I_2$ and $1\leq j\leq k-1$, and
$$
 V_{I,j}=\begin{cases}
       \{v_{l'}\mid   p_{2k-j+1}\le l\le  p_{2k-j}+1  \}, & \text{$k+1\leq j\leq 2k$ and $I=I_1$;} \\
  \{v_{l'}\mid p_{2k-j-1}+1\le l\le  p_{2k-j}    \}, & \text{$k+1\leq j\leq 2k-1$ and $I=I_2$;} \\
 \{v_{l}, v_{l'} \mid p_{k-1}+1\le l \le  p_{k}\}\cup \{\delta_{\mfg, \mathfrak{so}_{2n+1}}v_{n+1}\}, & \text{$j=k$ and $I=I_2$.} \\
      \end{cases}
$$
 \end{Defn}
Recall the functor $\Psi$ in Proposition~\ref{abmw1}.
Let $\Psi_M:\AB\to \U_v(\mfg)\text{-mod}$ be the functor obtained by  the composition of $\Psi$ followed by evaluation at $M\in \U_v(\mfg)\text{-mod}$.
Thanks to Proposition~\ref{abmw1}, for any $d_1\in \Hom_{\AB}(\ob m, \ob s)$ and $d_2\in \Hom_{\AB}(\ob h, \ob t)$, $m, s, h, t \in
\mathbb N $, we have
\begin{equation}\label{nnn}\Psi_M(d_1\otimes d_2)=\Psi(d_2)_{M\otimes V^{\otimes s}}\circ (\Psi(d_1)_M\otimes Id_{V^{\otimes h}}).
\end{equation}

Obviously both $M^{\mathfrak p_{I}}(\lambda_{I, \mathbf c})$ and finite dimensional weight $\U_v(\mfg)$-modules are in $\mathcal{O}$. Since $\mathcal{O}$ is closed under tensor product,
 the image of $\Psi_{M^{\mathfrak p_{I}}(\lambda_{I, \mathbf c})}$ is in $\mathcal{O}$. Thus,  we  have a functor
$ \Psi_{M^{\mathfrak p_{I}}(\lambda_{I, \mathbf c})}:\AB\to \mathcal O$. In the following, we denote $ \Psi_{M^{\mathfrak p_{I}}(\lambda_{I, \mathbf c})}(d)$ by $d$ for any morphism $d$ in $\AB$.

\begin{Lemma}\label{polyofx} If  $\mathfrak g\in \{\mathfrak {so}_{2n}, \mathfrak {sp}_{2n}\}$,
then    $M^{\mathfrak p_{I}}(\lambda_{I, \mathbf c})\otimes V$ has    a parabolic Verma flag
$0=N_0\subseteq N_1\subseteq  \ldots\subseteq N_{2k}=M^{\mathfrak p_{I}}(\lambda_{I, \mathbf c})\otimes V$
   such that
 \begin{equation} \label{pbvf1} N_j/N_{j-1}\cong \begin{cases}  M^{\mathfrak p_{I}}(\lambda_{I, \mathbf c}+\varepsilon_{p_{j-1}+1}), & \text{if $1\leq j\leq k$,} \\
 M^{\mathfrak p_{I}}(\lambda_{I, \mathbf c}-\varepsilon_{p_{k}}), & \text{if $I=I_1$ and $j=k+1$,} \\
 0, & \text{if $I=I_2$ and $j=k+1$,} \\
                               M^{\mathfrak p_{I}}(\lambda_{I, \mathbf c}-\varepsilon_{p_{2k+1-j}}) , & \text{if $k+2\leq j\leq 2k$.}
                               \end{cases}\end{equation}
 Moreover, $X$ preserves previous flag and  $f_{I}(X)$
 acts  on  $M^{\mathfrak p_{I}}(\lambda_{I, \mathbf c})\otimes V$  trivially,   where $ f_{I}(t)$ is given in Definition~\ref{fi}.
   \end{Lemma}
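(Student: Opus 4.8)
The plan is to build the flag from the tensor (projection) identity and then to read off the action of $X$ on the successive subquotients by means of the maps $\sigma_M$ of Lemma~\ref{homm}.

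First I would use that $\U_v(\mfg)=\U_v(\mathfrak u_I^-)\otimes_{\mathbb F}\U_v(\mathfrak p_I)$ is free as a right $\U_v(\mathfrak p_I)$-module, so that $\U_v(\mfg)\otimes_{\U_v(\mathfrak p_I)}(-)$ is exact and the tensor identity gives
\[
M^{\mathfrak p_I}(\lambda_{I,\mathbf c})\otimes V\ \cong\ \U_v(\mfg)\otimes_{\U_v(\mathfrak p_I)}\bigl(L_I(\lambda_{I,\mathbf c})\otimes \mathrm{Res}_{\U_v(\mathfrak p_I)}V\bigr).
\]
Since $q$ is transcendental, $\mathrm{Res}_{\U_v(\mathfrak l_I)}V$ is semisimple; reading off the weights $\epsilon_1,\dots,\epsilon_n,-\epsilon_n,\dots,-\epsilon_1$ of $V$ against the shape of the Levi $\mathfrak l_I$ (a product of general-linear factors, the last of the same type as $\mfg$ when $I=I_2$), one finds that the $\U_v(\mathfrak l_I)$-constituents of $V$ are precisely the $L_I(\nu)$ with $\lambda_{I,\mathbf c}+\nu$ running, each once, over the highest weights occurring in \eqref{pbvf1} — that is, $2k$ of them for $I=I_1$ and $2k-1$ for $I=I_2$. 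Because every root of $\mathfrak u_I^+$ raises weights, in a non-split extension of $\U_v(\mathfrak p_I)$-modules the submodule carries the strictly larger weight; hence a $\U_v(\mathfrak p_I)$-composition series of $\mathrm{Res}_{\U_v(\mathfrak p_I)}V$ can be refined so that the (inflated) constituents $L_I(\nu)$ appear in order of weakly decreasing highest weight, which in our totally ordered situation is exactly the order in \eqref{pbvf1}. Tensoring this filtration with the one-dimensional module $L_I(\lambda_{I,\mathbf c})$ turns the constituents into (inflations of) $L_I(\lambda_{I,\mathbf c}+\nu)$, and applying $\U_v(\mfg)\otimes_{\U_v(\mathfrak p_I)}(-)$ produces the asserted flag $N_\bullet$ with $N_j/N_{j-1}$ as in \eqref{pbvf1}; in particular $N_{k+1}/N_k=0$ when $I=I_2$.

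For the ``Moreover'' part, by Proposition~\ref{abmw1} the operator $X$ acts on $M'\otimes V$, $M'=M^{\mathfrak p_I}(\lambda_{I,\mathbf c})$, as $\delta\,R_{M',V}^{-1}R_{V,M'}^{-1}=\delta\,(R_{V,M'}R_{M',V})^{-1}$, and Proposition~\ref{ptrace} (with $N=V$) identifies $R_{V,M'}R_{M',V}$ with $\sigma_{M'\otimes V}\circ(\sigma_{M'}\otimes\sigma_V)^{-1}$, whence $X=\delta\,(\sigma_{M'}\otimes\sigma_V)\circ\sigma_{M'\otimes V}^{-1}$. Since $M'$ and $V=L(\epsilon_1)$ are highest weight modules, Lemma~\ref{homm}(2) makes $\sigma_{M'}$ and $\sigma_V$ the scalars $v^{-(\lambda_{I,\mathbf c}\mid\lambda_{I,\mathbf c}+2\varrho)}$ and $v^{-(\epsilon_1\mid\epsilon_1+2\varrho)}$, while $\sigma_{M'\otimes V}$, being defined by a formula that depends only on an element and its weight, is natural, hence preserves every $\U_v(\mfg)$-submodule of $M'\otimes V$ and acts on $N_j/N_{j-1}\cong M^{\mathfrak p_I}(\lambda_{I,\mathbf c}+\nu_j)$ by the nonzero scalar $v^{-(\lambda_{I,\mathbf c}+\nu_j\mid\lambda_{I,\mathbf c}+\nu_j+2\varrho)}$, again by Lemma~\ref{homm}(2). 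As $\sigma_{M'\otimes V}$ preserves the flag and acts invertibly on each subquotient, it restricts to an automorphism of each $N_j$; therefore $X$ preserves the flag and acts on $N_j/N_{j-1}$ by the scalar
\[
\delta\; v^{\,(\lambda_{I,\mathbf c}+\nu_j\mid\lambda_{I,\mathbf c}+\nu_j+2\varrho)-(\lambda_{I,\mathbf c}\mid\lambda_{I,\mathbf c}+2\varrho)-(\epsilon_1\mid\epsilon_1+2\varrho)}.
\]

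It then remains to evaluate this exponent. Using $(\epsilon_i\mid\epsilon_l)=\delta_{i,l}$, $v=q$, $\varrho_i=\tfrac12\mathrm N-i+b_\mfg$, $\delta=\epsilon_\mfg q^{\mathrm N-\epsilon_\mfg}$ and the explicit shape \eqref{deltac} of $\lambda_{I,\mathbf c}$, together with $(\nu_j\mid\nu_j)=1$, the exponent collapses — computed separately for $\nu_j=\epsilon_{p_{j-1}+1}$ ($1\le j\le k$) and for the negative weights $\nu_j=-\epsilon_{p_{2k+1-j}}$ — to exactly the integer $b_j$ of \eqref{polf12}, so the scalar equals $u_j=\epsilon_\mfg q^{b_j}$. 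Since the multiset $\{u_j: N_j/N_{j-1}\neq 0\}$ is precisely the multiset of roots of $f_I(t)=\prod_{j\in J_I}(t-u_j)$, and since $X$ commutes with itself while $(X-u_j)$ sends $N_j$ into $N_{j-1}$, the product $f_I(X)=\prod_{j\in J_I}(X-u_j)$ pushes $N_{2k}=M'\otimes V$ step by step down to $N_0=0$; hence $f_I(X)$ acts as $0$ on $M^{\mathfrak p_I}(\lambda_{I,\mathbf c})\otimes V$. The one genuinely delicate point is this last exponent bookkeeping — in particular carrying the two cases $\mfg=\mathfrak{sp}_{2n}$ and $\mfg=\mathfrak{so}_{2n}$ through the constants $\epsilon_\mfg$ and $b_\mfg$; everything else is formal.
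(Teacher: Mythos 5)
Your proof is correct, and while it reaches the same destination as the paper's via the same two key tools (Proposition~\ref{ptrace} together with Lemma~\ref{homm}(2) to read off the scalar action on each subquotient), the intermediate steps are organized a bit differently, and in my view more transparently. The paper produces the flag by naming generators $m_I\otimes u$ for prescribed $u\in V$ and referring to \cite[Lemma~4.11]{RS3}; you instead invoke the tensor identity $M^{\mathfrak p_I}(\lambda_{I,\mathbf c})\otimes V\cong \U_v(\mfg)\otimes_{\U_v(\mathfrak p_I)}(L_I(\lambda_{I,\mathbf c})\otimes V|_{\mathfrak p_I})$, refine a $\mathfrak p_I$-filtration of $V$ (decreasing weights, forced because $\mathfrak u_I^+$ raises weights), and apply the exact parabolic induction functor — a cleaner and more conceptual path to \eqref{pbvf1}. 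For stability of the flag under $X$, the paper argues directly on generators via the decomposition of $\Theta$; you instead identify $X=\delta(\sigma_{M'}\otimes\sigma_V)\circ\sigma_{M'\otimes V}^{-1}$ from Propositions~\ref{abmw1} and~\ref{ptrace}, use that $\sigma$ is a natural endotransformation of the identity on $\mathcal O$ (so $\sigma_{M'\otimes V}$ preserves every submodule and restricts to an automorphism of each $N_j$ because its subquotient eigenvalues are nonzero), and deduce that $X$ preserves the flag. This makes the "preserves the flag" step essentially formal, and the rest — evaluating the exponent to get $u_j=\epsilon_\mfg q^{b_j}$, then observing that $\prod_{j\in J_I}(X-u_j)$ drops $N_{2k}$ step by step to $N_0$ — matches the paper exactly (the paper phrases it as $\bar f_I(X^{-1})$ vanishing, which is equivalent since $X$ is invertible). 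What you have left implicit, and what the paper also leaves implicit, is the bookkeeping that the exponent evaluates precisely to $b_j$ from \eqref{polf12} in each of the two branches; that computation is straightforward but is the part where the $\epsilon_\mfg$ and $b_\mfg$ conventions have to be carried carefully.
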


   \begin{proof} Keep the notation in Definition \ref{V}. By arguments similar to those  in \cite[Lemma~4.11]{RS3}, we have the required flag where
  $N_j$ is   the left $\U_v(\mathfrak g)$-module  generated by $m_{I}\otimes u$'s and  \begin{itemize}
\item [(1)] $u\in \cup _{l=1}^j V_{I,l}$ if $I=I_1$ or $I=I_2$ and $1\leq j\leq k$,
\item [(2)] $ u\in \cup _{l=1}^k V_{I, l}  $  if $I=I_2$ and  $j=k+1$,
\item [(3)] $u\in \cup _{l=1}^{j-1} V_{I, l}$  if  $I=I_2$ and $k+2\leq j\leq 2k$.
\end{itemize}

 Note that  $\Theta=\sum_{\beta\in\mathcal{Q}^+}\Theta_\beta$ and $\Theta_\beta\in \U_v^{-}(\mfg)_{-\beta}\otimes \U_v^{+}(\mfg)_{\beta}$. Since  $\U_v^{+}(\mfg)_{\beta}m_{I}=0$ for any  $\beta\neq 0$, we  have
$$X^{-1}(m_{I}\otimes v_j)=\delta^{-1}\Theta\pi P\Theta\pi P(m_{I}\otimes v_j)
=\delta^{-1}q^{-2(\lambda_{I, \mathbf c}, wt(v_j))}\Theta(m_{I}\otimes v_j),$$ where $\delta$ is given in \eqref{lamb}. So,  $X^{-1}$ preserves the required flag.
Let $M_{I}=M^{\mathfrak p_{I}}(\lambda_{I, \mathbf c})$. Thanks to Lemma~\ref{homm}(2),
$$\sigma_{M_{I}}\otimes  \sigma_{V}=q^{-[(\lambda_{I, \mathbf c}, \lambda_{I, \mathbf c}+2\varrho)+(\epsilon_1, \epsilon_1+2\varrho)]} {Id}_{M_{I}\otimes  {V}}, \ \
 \sigma_{N_j/N_{j-1}}=q^{-(\nu, \nu+2\varrho)} {Id}_{N_j/N_{j-1}}$$ if $N_j/N_{j-1}\cong M^{\mathfrak p_{I}}(\nu)$.
By Proposition~\ref{ptrace},  $\sigma_{M_{I}\otimes V}=(\Theta\pi  P)^2 \sigma_{M_{I}}\otimes \sigma_V$.
Since   $X^{-1}$ acts on $M_{I}\otimes V$ as $\delta^{-1} (\Theta \pi  P)^2$,
$$X^{-1}|_{N_j/N_{j-1}}= q^{-(\nu, \nu+2\varrho)}q^{(\lambda_{I, \mathbf c}, \lambda_{I, \mathbf c}+2\varrho)}q^{(\epsilon_1, \epsilon_1+2\varrho)}\epsilon_{\mathfrak g} q^{-\mathrm N +\epsilon_\mfg}{Id}_{N_j/N_{j-1}}=\epsilon_\mfg q^{-b_j}{Id}_{N_j/N_{j-1}},$$  where $b_j$'s are given in ~\eqref{polf12}. So,    $\bar {f}_{I}(X^{-1})$  acts on $M_{I}\otimes V$  trivially, where  $ \bar f_{I}(t)$ is given in Definition \ref{fi}.
Now, the corresponding result on $X$ follows immediately.
\end{proof}

\begin{Lemma}\label{polyofxodd}  If $\mathfrak g=\mathfrak {so}_{2n+1}$, then
  $M^{\mathfrak p_{I}}(\lambda_{I, \mathbf c})\otimes V$   has a   parabolic Verma flag
$0=N_0\subseteq N_1\subseteq \ldots\subseteq N_{2k+1}=M^{\mathfrak p_{I}}(\lambda_{I, \mathbf c})\otimes V$ such that
$$N_j/N_{j-1}\cong \begin{cases}
                                M^{\mathfrak p_{I}}(\lambda_{I, \mathbf c}+\varepsilon_{p_{j-1}+1}), & \text{if $1\leq j\leq k$,} \\
                                M^{\mathfrak p_{I}}(\lambda_{I, \mathbf c}), & \text{if $I=I_1$ and $j= k+1$,}\\
                                 M^{\mathfrak p_{I}}(\lambda_{I, \mathbf c}-\varepsilon_{p_{k}}),  & \text{if $I=I_1$ and $j= k+2$,}\\
                                 0,  & \text{if $I=I_2$ and $j=k+1, k+2 $,} \\
                               M^{\mathfrak p_{I}}(\lambda_{I, \mathbf c}-\varepsilon_{p_{2k+1-j}}), & \text{if $k+3\leq j\leq 2k+1$.}
                               \\
                               \end{cases}$$      Moreover, $X$ preserves previous flag and  $ f_{I}(X)$
 acts  on  $M^{\mathfrak p_{I}}(\lambda_{I, \mathbf c})\otimes V$  trivially,   where $ f_{I}(t)$ is given in Definition~\ref{fi}.
\end{Lemma}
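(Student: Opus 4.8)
The proof proceeds in close analogy with Lemma~\ref{polyofx}, but with an extra parabolic Verma subquotient coming from the zero-weight vector $v_{n+1}$ of the natural module $V$. First I would set up the filtration. Keeping the notation of Definition~\ref{V}, let $N_j$ be the left $\U_v(\mathfrak g)$-submodule of $M^{\mathfrak p_{I}}(\lambda_{I,\mathbf c})\otimes V$ generated by the vectors $m_I\otimes u$ with $u$ running over $\bigcup_{l=1}^{j} V_{I,l}$ (with the usual truncation conventions when $I=I_2$ near $j=k,k+1,k+2$, exactly as in the three bullet cases of the proof of Lemma~\ref{polyofx}). The ordering of the $V_{I,l}$'s is chosen so that the weights $wt(v_l)$ appearing in $V_{I,l}$ are arranged from highest to lowest with respect to $\lambda_{I,\mathbf c}-\mathcal Q^+$; the new feature for $\mathfrak{so}_{2n+1}$ is that the block $V_{I,k}$ for $I=I_2$ contains $v_{n+1}$, which produces the extra subquotient $M^{\mathfrak p_{I}}(\lambda_{I,\mathbf c})$ (highest weight unchanged) sitting between the $+\varepsilon_{p_{k-1}+1}$-shift and the $-\varepsilon_{p_k}$-shift. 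By the same weight-counting/character argument as in \cite[Lemma~4.11]{RS3} — comparing the formal character of $M^{\mathfrak p_{I}}(\lambda_{I,\mathbf c})\otimes V$ with the sum of characters of the parabolic Verma modules on the right-hand side, which forces each $N_j/N_{j-1}$ to be a highest weight module of the asserted highest weight and, by dimension count of weight spaces, to be the full parabolic Verma module — one obtains the displayed flag with $2k+1$ nonzero steps (for $I=I_1$) and the two vanishing steps for $I=I_2$.

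Next I would show $X$ preserves the flag and compute its scalar on each subquotient. As in Lemma~\ref{polyofx}, since $\Theta=\sum_{\beta\in\mathcal Q^+}\Theta_\beta$ with $\Theta_\beta\in \U_v^-(\mathfrak g)_{-\beta}\otimes\U_v^+(\mathfrak g)_\beta$ and $\U_v^+(\mathfrak g)_\beta m_I=0$ for $\beta\ne 0$, we get
$$X^{-1}(m_I\otimes v_j)=\delta^{-1}q^{-2(\lambda_{I,\mathbf c},\,wt(v_j))}\Theta(m_I\otimes v_j),$$
so $X^{-1}$ (hence $X$) stabilizes each $N_j$. To pin down the eigenvalue on $N_j/N_{j-1}$ I would invoke Proposition~\ref{ptrace}: $\sigma_{M_I\otimes V}=(\Theta\pi P)^2\,\sigma_{M_I}\otimes\sigma_V$, together with Lemma~\ref{homm}(2), which gives $\sigma_{M_I}=q^{-(\lambda_{I,\mathbf c},\lambda_{I,\mathbf c}+2\varrho)}\mathrm{Id}$, $\sigma_V=q^{-(\epsilon_1,\epsilon_1+2\varrho)}\mathrm{Id}$, and $\sigma_{N_j/N_{j-1}}=q^{-(\nu,\nu+2\varrho)}\mathrm{Id}$ when $N_j/N_{j-1}\cong M^{\mathfrak p_{I}}(\nu)$. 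Since $X^{-1}$ acts on $M_I\otimes V$ as $\delta^{-1}(\Theta\pi P)^2$ with $\delta=\epsilon_\mathfrak g q^{\mathrm N-\epsilon_\mathfrak g}$ from \eqref{lamb}, this yields
$$X^{-1}|_{N_j/N_{j-1}}=\epsilon_\mathfrak g\,q^{-b_j}\,\mathrm{Id}_{N_j/N_{j-1}},$$
where $b_j$ is read off from \eqref{polf123}: for $1\le j\le k$ one gets $b_j=2(c_j-p_{j-1})+\mathrm N-\epsilon_\mathfrak g$ from $\nu=\lambda_{I,\mathbf c}+\varepsilon_{p_{j-1}+1}$; for $j=k+1$, $I=I_1$ one has $\nu=\lambda_{I,\mathbf c}$ giving $b_{k+1}=0$; for $k+2\le j\le 2k+1$ one has $\nu=\lambda_{I,\mathbf c}-\varepsilon_{p_{2k-j+2}}$ giving $b_j=-2c_{2k-j+2}+2p_{2k-j+2}-\mathrm N+\epsilon_\mathfrak g$. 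Consequently $\bar f_I(X^{-1})$ annihilates $M_I\otimes V$ since $\bar f_I(t)=\prod_{j\in J_I}(t-u_j^{-1})$ with $u_j=\epsilon_\mathfrak g q^{b_j}$ (Definition~\ref{fi}), and the indexing set $J_I$ is precisely $\{1,\dots,2k+1\}$ for $I_1$, respectively $\{1,\dots,2k+1\}\setminus\{k+1,k+2\}$ for $I_2$, matching the set of distinct eigenvalues $\epsilon_\mathfrak g q^{-b_j}$ of $X^{-1}$ on the (nonzero) subquotients. Since $f_I(t)$ is the reciprocal polynomial of $\bar f_I$ up to a unit, $f_I(X)$ then acts on $M_I\otimes V$ trivially.

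The only genuinely delicate point — and the place where the $\mathfrak{so}_{2n+1}$ case differs from $\mathfrak{so}_{2n},\mathfrak{sp}_{2n}$ — is verifying that the step through the zero-weight vector $v_{n+1}$ really produces a parabolic Verma quotient isomorphic to $M^{\mathfrak p_{I}}(\lambda_{I,\mathbf c})$ and not something smaller, i.e.\ that the relevant subquotient is genuinely a highest weight module of that highest weight with a one-dimensional highest weight space for $\U_v(\mathfrak l_I)$. This is where I expect the main obstacle: one must check that $m_I\otimes v_{n+1}$ (modulo $N_{k-1}$, for $I=I_2$) generates a copy of $L_I(\lambda_{I,\mathbf c})$ under $\U_v(\mathfrak l_I)$ and is killed by $\U_v(\mathfrak u_I^+)$, using Lemma~\ref{natr1}(3) for the explicit action of $x_n^\pm$ on $V$ in type $B$, the PBW basis $\mathcal S_{I,r}$ of Lemma~\ref{tsmodule1}, and the commutation/triangularity from Lemma~\ref{commut}. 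Once that local computation is in hand — it is the type-$B$ analogue of the calculation in \cite[Lemma~4.11]{RS3}, with the $v_{n+1}$ contribution handled as in the third case of Definition~\ref{V} — the character comparison forces the whole flag, and the eigenvalue bookkeeping above finishes the proof. I would relegate the detailed verification of the $v_{n+1}$ step and the straightforward eigenvalue arithmetic to the same style of brief computation used in the proof of Lemma~\ref{polyofx}.
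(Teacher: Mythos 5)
Your proposal follows the same route the paper takes: build the filtration $N_\bullet$ by $\U_v(\mathfrak g)$-submodules generated by $m_I\otimes u$ with $u$ running over enlarging subsets of a weight basis of $V$, use the fact that $\U_v^+(\mathfrak g)_\beta m_I=0$ for $\beta\neq 0$ to see $X^{\pm1}$ preserves the flag, and read off the scalar of $X^{-1}$ on each subquotient via Proposition~\ref{ptrace} and Lemma~\ref{homm}(2). The paper's proof of this lemma is in fact nothing more than the explicit list of generating sets for the $N_j$'s together with a pointer back to Lemma~\ref{polyofx}, so you are supplying exactly the details the paper relegates to the reader.

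There is, however, one concrete confusion you should fix, because it would send the ``delicate point'' computation in the wrong direction. You state that the block $V_{I,k}$ for $I=I_2$ contains $v_{n+1}$ and that this ``produces the extra subquotient $M^{\mathfrak p_I}(\lambda_{I,\mathbf c})$'', and later you frame the key verification as showing that $m_I\otimes v_{n+1}$ modulo $N_{k-1}$, for $I=I_2$, generates a copy of $L_I(\lambda_{I,\mathbf c})$. This is backwards. For $I=I_2$ the simple root $\alpha_n$ lies in $I$, so $v_{n+1}$ is placed in $V_{I_2,k}$ in Definition~\ref{V} precisely because it is absorbed into the step $N_k/N_{k-1}\cong M^{\mathfrak p_{I_2}}(\lambda_{I_2,\mathbf c}+\varepsilon_{p_{k-1}+1})$ as part of a larger $\U_v(\mathfrak l_{I_2})$-module; the steps $j=k+1,k+2$ then add nothing, which is the lemma's ``$0$'' case. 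The extra subquotient $M^{\mathfrak p_I}(\lambda_{I,\mathbf c})$ occurs only for $I=I_1$, at $j=k+1$: there $\alpha_n\notin I_1$, $v_{n+1}$ lies in no $V_{I_1,l}$, and the paper's proof explicitly adjoins $v_{n+1}$ to the generating set of $N_j$ for $j\geq k+1$ when $I=I_1$. So the local verification you single out — that the image of $m_I\otimes v_{n+1}$ is a one-dimensional $\U_v(\mathfrak l_I)$-highest-weight space killed by $\U_v(\mathfrak u_I^+)$, using Lemma~\ref{natr1}(3) and the PBW/commutation machinery — should be performed for $I=I_1$ modulo $N_k$, not for $I=I_2$ modulo $N_{k-1}$. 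With that corrected, your description of the filtration, the argument that $X^{\pm1}$ stabilizes it, and your eigenvalue bookkeeping (which, note, uses the index $p_{2k-j+2}$ as in \eqref{polf123} for the $j\geq k+2$ steps, which is the consistent choice) all match the argument the paper has in mind.
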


\begin{proof} The result can be verified by arguments similar to those in the proof of Lemma~\ref{polyofx}. We give the explicit construction of $N_j$'s and leave others to the reader.    Recall  $V_{I,l}$'s   in Definition~\ref{V}. Then    $N_j$ is     the left $\U_v(\mfg)$-module  generated by $m_{I}\otimes u$, where
\begin{itemize}
\item [(1)] $ u\in \cup_{l=1}^j V_{I, l}$ if  $1\leq j\leq k$,
\item [(2)] $u\in \cup_{l=1}^k V_{I, l}$ if  $I=I_2$ and $k+1\leq j\leq k+2$,
\item [(3)] $u\in \cup_{l=1}^{j-2} V_{I, l}$ if $I=I_2$ and  $k+3\leq j\leq 2k+1$,
\item [(4)] $ u\in \cup_{l=1}^k V_{I, l}\cup\{v_{n+1}\}$ if $I=I_1$ and $j=k+1$,
\item [(5)] $  u\in \cup_{l=1}^{j-1} V_{I, l}\cup\{v_{n+1}\}  $  if $I=I_1$ and $k+2\leq j\leq 2k+1$.
\end{itemize}
\end{proof}

The  following definition is motivated by \cite[(2.24)]{DRV}.
\begin{Defn}\label{qtr} Suppose $M$ is a  $\U_v(\mathfrak g)$-module and $N$ is a finite dimensional weight  $\U_v(\mathfrak g)$-module. For any  $\psi\in End_{\U_v(\mathfrak g)}(M\otimes N)$, define $Id \otimes \text{qtr}_N(\psi): M\rightarrow M$ such that $$Id \otimes \text{qtr}_N(\psi)=(Id\otimes \text{tr}_N)((1\otimes \widetilde{K})\circ\psi),$$ where $1\otimes \widetilde{K}(m \otimes n_\lambda)=q^{-(\lambda, 2\varrho)}m \otimes n_\lambda$ for any $(m,n_\lambda)\in(M,N_\lambda)$.
\end{Defn}
 It is well known that  the natural transformations between the identity functor and itself in $\U_v(\mfg)\text{-mod}$ form  an algebra which can be identified with   the center $Z(\U_v(\mathfrak g) )$ of $\U_v(\mathfrak g )$.
 Let \begin{equation}\label{cen} z_j=\Psi(\Delta_j)_{\U_v(\mathfrak g)}(1),\end{equation} where $\Delta_j$ is given in \eqref{bubbs}. Then
 $z_j\in Z(\U_v(\mathfrak g) )$, for any $j\in \mathbb Z$.
Let $\gamma=P\pi ^{-1}{\bar\Theta}P\pi ^{-1}{\bar\Theta}$.
Thanks to Proposition~\ref{abmw1},
 $$
 z_j
=(Id_{\U_v(\mfg)}\otimes \underline{\alpha})\circ( (\delta \gamma)^{j}\otimes Id_{V})(Id_{\U_v(\mfg)}\otimes \sum_{i\in \underline{\mathrm N }}q^{\varrho_{i'}} \varsigma_{i'} (v_i\otimes v_{i'}))
 =\epsilon_\mfg({Id}\otimes \text{qtr}_V)((\delta \gamma)^{j})
.$$  Write $z^+(u)=\sum_{l\geq0}z_{l}u^{-l}$ and $z^-(u)=\sum_{l\geq1}z_{-l}u^{-l}$, where $u$ is an indeterminate.
By    arguments similar to those in the proof of \cite[Theorem~3.5]{DRV},  we have   \begin{equation}\label{zkco}\begin{aligned}
\pi_{0}(z^+(u)+\frac{\delta^{-1}}{q-q^{-1}}-\frac{u^2}{u^2-1})& =\sigma_{\varrho}(\frac{\delta (u^2-q^{2\epsilon_{\mfg}})}{(q-q^{-1})(u^2-1)}\prod_{j\in\underline{\mathrm N }}\frac{1-k_{wt(v_j)}^2q^{-1}(\epsilon_\mfg u)^{-1}}{1-k_{wt(v_j)}^2q(\epsilon_\mfg u)^{-1}}),\\
\pi_{0}(z^-(u)-\frac{\delta^{-1}}{q-q^{-1}}-\frac{1}{u^2-1})& =\sigma_{\varrho}(-\frac{\delta^{-1} ( u^2-q^{-2\epsilon_{\mfg}})} {(q-q^{-1})(u^2-1)}\prod_{j\in\underline{\mathrm N }}\frac{1-k_{wt(v_j)}^2q(\epsilon_\mfg u)^{-1}}{1-k_{wt(v_j)}^2q^{-1}(\epsilon_\mfg u)^{-1}}),
\\ \end{aligned}\end{equation}
where $\pi_0$ is Harish-Chandra homomorphism and $\sigma_\varrho(k_\mu)=q^{(\varrho,\mu) }k_\mu$, and  $\varrho$ is given in \eqref{posr}. Our $z^+(u)$(resp., $z^-(u))$ is $Z_V^+(u)$(resp., $Z_V^-(u)-z_0)$ in \cite[Theorem~3.5]{DRV}. The  difference is that we
deal with the quantum group $\U_v(\mathfrak g)$ whereas they deal with $\U_h(\mathfrak g)$. So,  we have to use $P\pi ^{-1}{\bar\Theta}$ to  replace   their   $\mathcal R$.

\begin{Lemma}\label{ghom123}Suppose $j\in \mathbb Z$. Then  $\Psi(\Delta_j)_{M^{\mathfrak p_{I}} (\lambda_{I, \mathbf c})}=\omega_jId_{{M^{\mathfrak p_{I}} (\lambda_{I, \mathbf c})}}$
for some  $\omega_j\in \mathbb F$, where $\Delta_j$ is given in \eqref{bubbs}.
Further, $\omega$ is $\mathbf u$-admissible in the sense of Definition~\ref{uad}, where $\mathbf u =\{u_j\mid j\in J_I\}$, and $u_j$'s  and $J_I$ are given in Definition~\ref{fi}.
\end{Lemma}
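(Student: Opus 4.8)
The plan is to identify $\Psi(\Delta_j)$ with a central element of $\U_v(\mfg)$ and then read off its scalar action on the parabolic Verma module from the Harish--Chandra-type formulas \eqref{zkco}. For part~(1): since $\Psi(\ob 0)=Id$ by Proposition~\ref{abmw1}, the morphism $\Psi(\Delta_j)$ is a natural endotransformation of the identity functor on $\U_v(\mfg)\text{-mod}$, hence acts on every module as multiplication by the central element $z_j=\Psi(\Delta_j)_{\U_v(\mfg)}(1)\in Z(\U_v(\mfg))$ of \eqref{cen}. As $M^{\mathfrak p_{I}}(\lambda_{I,\mathbf c})$ is a highest weight module with highest weight $\lambda_{I,\mathbf c}$ (its $\mathfrak l_I$-type $L_I(\lambda_{I,\mathbf c})$ being one-dimensional), $z_j$ acts on it by the scalar obtained by applying the Harish--Chandra projection $\pi_0$ to $z_j$ and then substituting $k_\mu\mapsto v^{(\lambda_{I,\mathbf c}\mid\mu)}=q^{(\lambda_{I,\mathbf c},\mu)}$; declaring $\omega_j$ to be this scalar proves~(1) and shows $\omega=\{\omega_j\}$ depends only on $\lambda_{I,\mathbf c}$.

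For part~(2), Definition~\ref{uad} is exactly the assertion of two closed formulas for the series $\sum_{i\ge 0}\omega_i u^{-i}$ and $\sum_{i\ge 1}\omega_{-i}u^{-i}$, so it is enough to compute these. By construction $\sum_{i\ge 0}\omega_i u^{-i}$ (resp.\ $\sum_{i\ge 1}\omega_{-i}u^{-i}$) is the image of $\pi_0(z^{+}(u))$ (resp.\ $\pi_0(z^{-}(u))$) under $k_\mu\mapsto q^{(\lambda_{I,\mathbf c},\mu)}$. Feeding \eqref{zkco} into this and using $\sigma_\varrho(k_\mu)=q^{(\varrho,\mu)}k_\mu$, which turns $k_{\mathrm{wt}(v_j)}^2$ into $q^{2(\lambda_{I,\mathbf c}+\varrho,\,\mathrm{wt}(v_j))}$, one gets
\[
\sum_{i\ge 0}\frac{\omega_i}{u^i}=\frac{u^2}{u^2-1}-\frac{\delta^{-1}}{q-q^{-1}}+\frac{\delta(u^2-q^{2\epsilon_\mfg})}{(q-q^{-1})(u^2-1)}\prod_{j\in\underline{\mathrm N}}\frac{1-q^{2(\lambda_{I,\mathbf c}+\varrho,\,\mathrm{wt}(v_j))-1}(\epsilon_\mfg u)^{-1}}{1-q^{2(\lambda_{I,\mathbf c}+\varrho,\,\mathrm{wt}(v_j))+1}(\epsilon_\mfg u)^{-1}},
\]
together with the twin formula for $\sum_{i\ge 1}\omega_{-i}u^{-i}$ obtained by interchanging $q$ and $q^{-1}$. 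Thus everything reduces to evaluating this product.

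The heart of the argument is that the product telescopes. Group $j=1,\dots,\mathrm N$ according to $\mathrm{wt}(v_j)\in\{\epsilon_i,-\epsilon_i\}$ (together with $\mathrm{wt}(v_{n+1})=0$ when $\mfg=\mathfrak{so}_{2n+1}$); on each parabolic block $\{p_{j-1}+1,\dots,p_j\}$ for $I=I_1$, and on the Levi block containing $\alpha_n$ for $I=I_2$, the numbers $2(\lambda_{I,\mathbf c}+\varrho,\epsilon_i)$ run through an arithmetic progression of common difference $-2$, so each one-sided product collapses to its two boundary factors. The boundary exponents are, up to sign, precisely the integers $b_j$ of \eqref{polf123}--\eqref{polf12}; substituting $u_j=\epsilon_\mfg q^{b_j}$ and $\epsilon_\mfg^2=1$, the telescoped product becomes $\prod_{i\in J_I}\frac{u-u_i^{-1}}{u-u_i}$, times an extra factor $\frac{u-q^{-1}}{u-q}$ coming from the zero weight when $\mfg=\mathfrak{so}_{2n+1}$ (the $u_{k+1}=1$ factor itself being trivial). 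Inserting this, writing $\delta=\epsilon_\mfg q^{\mathrm N-\epsilon_\mfg}$ from \eqref{lamb}, and checking that $\delta=\alpha\prod_{i\in J_I}u_i$ for the $\alpha$ prescribed in Assumption~\ref{asump} — which holds because $\sum_{i\in J_I}b_i$ telescopes using $p_k=n$ — the claimed identities follow from the elementary rational-function identity $\alpha(u^2-q^{2\epsilon_\mfg})=\alpha^{-1}(u^2-1)+(q-q^{-1})u\,g_a(u)$ (suitably modified by the $\mathfrak{so}_{2n+1}$ leftover), with $g_a(u)=1$ for odd $a$ and $g_a(u)=-u$ for even $a$, which is exactly the shape of Definition~\ref{uad}.

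The step I expect to be the main obstacle is this type-by-type bookkeeping: one must run the telescoping separately for $I=I_1$ and $I=I_2$ and for each of $\mathfrak{sp}_{2n},\mathfrak{so}_{2n},\mathfrak{so}_{2n+1}$, keep track of the sign conventions $\epsilon_\mfg,b_\mfg$, handle the zero-weight contribution and the special Levi block occurring for $I_2$, and verify that the boundary exponents of the collapsed products match \eqref{polf123}--\eqref{polf12} — hence Definition~\ref{fi} — on the nose. As consistency checks, letting $u\to\infty$ in the resulting series returns $\omega_0=1+z^{-1}(\delta-\delta^{-1})$, in agreement with Assumption~\ref{asump}; the negative-index series is compatible with the recursion of Lemma~\ref{addmi} (i.e.\ Definition~\ref{admi}(1)); and Definition~\ref{admi}(2) is reflected in the vanishing of $f_I(X)$ on $M^{\mathfrak p_I}(\lambda_{I,\mathbf c})\otimes V$ proved in Lemmas~\ref{polyofx}--\ref{polyofxodd}.
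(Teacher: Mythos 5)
Your proposal takes the same route as the paper: part~(1) via the observation that $\Psi(\Delta_j)$ is the central element $z_j$ of \eqref{cen}, acting on $M^{\mathfrak p_I}(\lambda_{I,\mathbf c})$ by the Harish--Chandra scalar (the paper makes this concrete with the $\U_v(\mfg)$-map $\phi:\U_v(\mfg)\to M^{\mathfrak p_I}(\lambda_{I,\mathbf c})$, $1\mapsto m_I$); and part~(2) by evaluating \eqref{zkco} at $\lambda_{I,\mathbf c}+\varrho$, telescoping the product over $j\in\underline{\mathrm N}$ along the arithmetic progressions in each parabolic block, and matching boundary exponents with the $b_j$'s of \eqref{polf123}--\eqref{polf12}. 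Your formula after $\sigma_\varrho$ and the identification of the block boundaries with $u_j=\epsilon_\mfg q^{b_j}$ are correct.

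One point needs tightening. You assert that the telescoped product is $\prod_{i\in J_I}\frac{u-u_i^{-1}}{u-u_i}$ with the only surviving extra factor being $\frac{u-q^{-1}}{u-q}$ from the zero weight of $\mathfrak{so}_{2n+1}$, and you then invoke an identity $\alpha(u^2-q^{2\epsilon_\mfg})=\alpha^{-1}(u^2-1)+(q-q^{-1})u\,g_a(u)$. For $a$ even this identity is fine (it reduces to $\alpha^{-2}-z_q\alpha^{-1}=1$ and $\alpha^{-2}=q^{2\epsilon_\mfg}$, both true for $\alpha=q^{-1}$ resp.\ $-q$). But for $a$ odd it fails on degree grounds (constant equals linear). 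In the case $\mfg=\mathfrak{so}_{2n+1}$ you correctly note the zero-weight factor $u_\mfg=\frac{u-q^{-1}}{u-q}$ repairs it. However, the other odd-$a$ cases are exactly $I=I_2$ with $\mfg\in\{\mathfrak{sp}_{2n},\mathfrak{so}_{2n}\}$, and there too the raw telescoping yields a product over $J_{I_1}$ rather than $J_{I}=J_{I_1}\setminus\{k+1\}$; the extra factor $\frac{u-u_{k+1}^{-1}}{u-u_{k+1}}$ (with $u_{k+1}=\epsilon_\mfg q^{\epsilon_\mfg}$ coming from the boundary of the Levi block near $\varrho_{p_k}=\varrho_n$, since $c_k=0$) is not trivial and is precisely what restores the degree balance in the odd-$a$ identity. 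So the ``suitably modified'' clause must be broadened from ``the $\mathfrak{so}_{2n+1}$ leftover'' to the Levi-block leftover occurring for $I=I_2$ in all three types; this is consistent with the obstacle you flag, and matches the paper's auxiliary $u_{k+1}$ in its verification of Assumption~\ref{asump}.
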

\begin{proof}
Recall $z_j$ in \eqref{cen}. Suppose  $\omega_j=z_j|_{M^{\mathfrak p_{I}} (\lambda_{I, \mathbf c})},  \forall j\in \mathbb Z$. Obviously, there is a  $\U_v(\mathfrak g)$-homomorphism $\phi: \U_v(\mathfrak g)\rightarrow M^{\mathfrak p_{I}} (\lambda_{I, \mathbf c})$ such that $\phi(1)=m_I$. So,
$$\begin{aligned}\Psi(\Delta_j)_{M^{\mathfrak p_{I}} (\lambda_{I, \mathbf c})} m_I&=\Psi(\Delta_j)_{M^{\mathfrak p_{I}} (\lambda_{I, \mathbf c})}\phi(1)\\&=\phi(z_j)=z_j\phi(1)\\&=\omega_jm_I,\end{aligned}$$ and $\Psi(\Delta_j)_{M^{\mathfrak p_{I}} (\lambda_{I, \mathbf c})}=\omega_jId_{{M^{\mathfrak p_{I}} (\lambda_{I, \mathbf c})}}$.
This proves  the first assertion.

Let $a=\sharp J_I$.
It is routine to check that $\delta, u_1, \ldots, u_a$ satisfy Assumption~\ref{asump} where  $\delta$ is   in \eqref{lamb} and  $u_{k+1}=1$ (resp., $q^{\epsilon_\mathfrak g}$)  if $\mathfrak g=\mathfrak {so}_{2n+1}$ (resp.,
 $I=I_2$ and  $\mathfrak g\in \{\mathfrak {so}_{2n}, \mathfrak {sp}_{2n}\}$)
 and $u_{k+2}=1 $  if  $I=I_2 $ and $\mathfrak g=\mathfrak {so}_{2n+1}$.
 In fact, when  $a$ is odd,
 $\delta=\alpha\prod_{j\in J_I} u_j$, where $\alpha=1$ (resp., $-1$) if  $\mathfrak g\in\{\mathfrak {so}_{2n}, \mathfrak {so}_{2n+1}\}$ (resp., $\mathfrak g=\mathfrak {sp}_{2n}$).
    When $ a$ is even, we still have $\delta=\alpha\prod_{j\in J_I} u_j$, where
    $\alpha=-q$ (resp., $q^{-1}$) if  $\mathfrak g=\mathfrak {sp}_{2n}$  (resp., $\mathfrak {so}_{2n}$).
    We omit details since one can verify them by straightforward computation.

    Define  $u_\mfg=\frac{u-q^{-1}}{u-q}$ (resp., $1$) if $\mfg=\mathfrak {so}_{2n+1}$ (resp.,   otherwise). Let
$z=z_q$. By (\ref{zkco}), we have the following equalities in   $ \End(M^{\mathfrak p_{I}} (\lambda_{I, \mathbf c}))$:
$$\begin{aligned}&\sum_{j\geq0}\omega_ju^{-j}+\frac{\delta^{-1}}{ q-q^{-1}}-\frac{u^2}{u^2-1}\\=&\sigma_{\varrho}\left(\frac{\delta}{z}\frac{u^2-q^{2\epsilon_{\mfg}}}{u^2-1}\prod_{j\in\underline{\mathrm N }}\frac{1-\epsilon_\mfg q^{-1} u^{-1}k_{wt(v_j)}^2}{1-\epsilon_\mfg  q u^{-1} k_{wt(v_j)}^2}\right)\\
 =&\frac{\delta}{z}\frac{u^2-q^{2\epsilon_{\mfg}}}{u^2-1}\prod_{j=1}^k \frac  {(1-\epsilon_\mfg u^{-1} q^{(2\varrho,\epsilon_{p_j})-1}k_{wt(v_{p_j})}^2)(1-\epsilon_\mfg u^{-1}q^{-(2\varrho,\epsilon_{p_{j-1}+1})-1}k_{-wt(v_{p_{j-1}+1})}^2)}{
 (1-\epsilon_\mfg u^{-1} q^{-(2\varrho,\epsilon_{p_j})+1}k_{-wt(v_{p_j})}^2 )  (1-\epsilon_\mfg u^{-1} q^{(2\varrho,\epsilon_{p_{j-1}+1})+1}k_{wt(v_{p_{j-1}+1})}^2 )}u_\mfg\\
 =&\frac{\delta}{z}\frac{u^2-q^{2\epsilon_{\mfg}}}{u^2-1}\prod_{j=1}^k\frac{(1-\epsilon_\mfg u^{-1}q^{\mathrm N -\epsilon_\mfg-2p_j+2c_j})(1-\epsilon_\mfg u^{-1} q^{\epsilon_\mfg-\mathrm N +2p_{j-1}-2c_j})}{(1-\epsilon_\mfg u^{-1}q^{-(\mathrm N -\epsilon_\mfg-2p_j+2c_j)})(1- \epsilon_\mfg u^{-1}q^{-(\epsilon_\mfg-\mathrm N +2p_{j-1}-2c_j)})}u_{\mfg}\\
 =&\frac{\delta}{z}\frac{u^2-q^{2\epsilon_{\mfg}}}{u^2-1}\prod_{j\in J_{I_1}}\frac{1- u_j^{-1}u^{-1}}{1-u'_j u^{-1}}u_\mfg
 =\frac{\delta}{z}\frac{u^2-q^{2\epsilon_{\mfg}}}{u^2-1}\prod_{j\in J_{I_1}}\frac{u-{u_j}^{-1}}{u-u_j}u_{\mfg}\\
 =& \left ({z}^{-1} {\delta}^{-1}\prod_{j\in J_I} u_j+\frac{u g_{a}(u)}{u^2-1}\right )\prod_{j\in J_I} u_j \prod_{j\in J_I} \frac{{u-u_j}^{-1}}{u-u_j}, \end{aligned}
$$
where $g_a(u)$ is defined in Definition~\ref{uad},
and
$$\begin{aligned}&\sum_{j\geq1}\omega_{-j}u^{-j}-\frac{\delta^{-1}}{ q-q^{-1}}-\frac{1}{u^2-1}\\=&\sigma_{\varrho}\left(-\frac{\delta^{-1}}{z}\frac{u^2-q^{-2\epsilon_{\mfg}}}{u^2-1}\prod_{j\in\underline{\mathrm N }}\frac{1-\epsilon_\mfg q u^{-1}k_{wt(v_j)}^2}{1-\epsilon_\mfg  q^{-1} u^{-1} k_{wt(v_j)}^2}\right)\\
 =&-\frac{\delta^{-1}}{zu_\mfg}\frac{u^2-q^{-2\epsilon_{\mfg}}}{u^2-1}\prod_{j=1}^k \frac  {(1-\epsilon_\mfg u^{-1} q^{-(2\varrho,\epsilon_{p_j})+1}k_{-wt(v_{p_j})}^2)(1-\epsilon_\mfg u^{-1}q^{(2\varrho,\epsilon_{p_{j-1}+1})+1}k_{wt(v_{p_{j-1}+1})}^2)}{
 (1-\epsilon_\mfg u^{-1} q^{(2\varrho,\epsilon_{p_j})-1}k_{wt(v_{p_j})}^2 )  (1-\epsilon_\mfg u^{-1} q^{-(2\varrho,\epsilon_{p_{j-1}+1})-1}k_{-wt(v_{p_{j-1}+1})}^2 )}\\
 =&-\frac{\delta^{-1}}{zu_\mfg}\frac{u^2-q^{-2\epsilon_{\mfg}}}{u^2-1}\prod_{j=1}^k\frac{(1-\epsilon_\mfg u^{-1}q^{-(\mathrm N -\epsilon_\mfg-2p_j+2c_j)})(1-\epsilon_\mfg u^{-1} q^{-(\epsilon_\mfg-\mathrm N +2p_{j-1}-2c_j)})}{(1-\epsilon_\mfg u^{-1}q^{\mathrm N -\epsilon_\mfg-2p_j+2c_j})(1- \epsilon_\mfg u^{-1}q^{\epsilon_\mfg-\mathrm N +2p_{j-1}-2c_j})}\\
 =&-\frac{\delta^{-1}}{zu_\mfg}\frac{u^2-q^{-2\epsilon_{\mfg}}}{u^2-1}\prod_{j\in J_{I_1}}\frac{1-u_j u^{-1}}{1- u_j^{-1}u^{-1}}
 =-\frac{\delta^{-1}}{zu_\mfg}\frac{u^2-q^{-2\epsilon_{\mfg}}}{u^2-1}\prod_{j\in J_{I_1}}\frac{u-u_j}{u-u_j^{-1}}\\
 =& -\left ({z}^{-1} {\delta}^{-1}\prod_{j\in J_I} u_j-\frac{u}{g_{a}(u)(u^2-1)}\right )\prod_{j\in J_I} u_j^{-1} \prod_{j\in J_I} \frac{u-u_j}{u-u_j^{-1}}.\end{aligned}
$$
So,   $\omega$ is $\mathbf u$-admissible in the sense of Definition~\ref{uad}.
\end{proof}

It is proved in \cite[Corollary~2.29]{RX} that $\omega$ is  admissible if it is $\mathbf u$-admissible. So, we have $\CB^{f_I}$ (see Definition~\ref{scbmw}), where  $f_{I}(t)$ is given in Definition~\ref{fi}.
\begin{Theorem}\label{th}
$\Psi_{M^{\mathfrak p_{I}}(\lambda_{I, \mathbf c}) } $ factors through $\CB^{f_I}$, where $f_{I}(t)$ is given  in Definition~\ref{fi}.
\end{Theorem}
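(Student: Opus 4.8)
The plan is to invoke the universal property of the quotient. Since $\CB^{f_I}=\AB/I$ with $I$ the right tensor ideal generated by $f_I(X)\in\End_{\AB}(\ob 1)$ together with $\{\Delta_i-\omega_i 1_{\ob 0}\mid i\in\mathbb Z\setminus\{0\}\}$, and since $\omega$ is the system fixed through Lemma~\ref{ghom123} (admissible by \cite[Corollary~2.29]{RX}, and compatible with Assumption~\ref{asump} as already checked in the proof of that lemma), it suffices to show that the functor $\Psi_{M^{\mathfrak p_{I}}(\lambda_{I,\mathbf c})}\colon\AB\to\mathcal O$ annihilates every morphism of $I$. The right tensor ideal generated by a set of morphisms is the linear span of composites $h\circ(\theta\otimes 1_{\ob k})\circ g$ with $\theta$ a generator, $k\ge 0$, and $g,h$ arbitrary morphisms of $\AB$; so by functoriality of $\Psi_{M^{\mathfrak p_{I}}(\lambda_{I,\mathbf c})}$ it is enough to prove $\Psi_{M^{\mathfrak p_{I}}(\lambda_{I,\mathbf c})}(\theta\otimes 1_{\ob k})=0$ for each generator $\theta$ and each $k$.

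Write $M=M^{\mathfrak p_{I}}(\lambda_{I,\mathbf c})$. The two required vanishings on the single object $\ob 1$ (resp.\ $\ob 0$) are already at hand. By the last assertion of Lemma~\ref{polyofx} when $\mfg\in\{\mathfrak{so}_{2n},\mathfrak{sp}_{2n}\}$ and of Lemma~\ref{polyofxodd} when $\mfg=\mathfrak{so}_{2n+1}$, the operator $f_I(X)$ acts as zero on $M\otimes V$, i.e.\ $\Psi_{M}(f_I(X))=0$. By Lemma~\ref{ghom123}, $\Psi(\Delta_i)_{M}=\omega_i\,\mathrm{Id}_{M}$ with the \emph{same} $\omega$ that enters the definition of $\CB^{f_I}$, so $\Psi_{M}(\Delta_i-\omega_i 1_{\ob 0})=\omega_i\,\mathrm{Id}_{M}-\omega_i\,\mathrm{Id}_{M}=0$; consistency of the structure constants is therefore automatic. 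To pass to $\theta\otimes 1_{\ob k}$ I would apply \eqref{nnn} with $d_1=\theta$ and $d_2=1_{\ob k}$: because $\Psi(1_{\ob k})$ is an identity natural transformation, this yields $\Psi_{M}(\theta\otimes 1_{\ob k})=\Psi_{M}(\theta)\otimes\mathrm{Id}_{V^{\otimes k}}=0$ in both cases, and then $\Psi_{M}\big(h\circ(\theta\otimes 1_{\ob k})\circ g\big)=\Psi_{M}(h)\circ 0\circ\Psi_{M}(g)=0$ by functoriality. Hence $\Psi_{M}$ vanishes on $I(\ob m,\ob s)$ for all $m,s$ and therefore factors through $\CB^{f_I}$.

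There is no serious obstacle remaining: the entire substance — the parabolic Verma flags of $M\otimes V$, the scalar action of $X$ on their subquotients, and the evaluation of $\omega$ through the Harish-Chandra homomorphism in \eqref{zkco} — is carried out in Lemmas~\ref{polyofx}--\ref{ghom123}. The one point deserving a word of care is that the ideal $I$ is, by design, \emph{not} closed under $1_{\ob k}\otimes-$: this matters because $\Psi(f_I(X))_{M\otimes V^{\otimes k}}$ is controlled by the $R$-matrix of $(M\otimes V^{\otimes k})\otimes V$ and need not vanish, which is precisely why one works with, and only needs to check, the right-tensored copies $\theta\otimes 1_{\ob k}$; this is consistent with the conventions of \cite{RS3}.
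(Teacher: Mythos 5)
Your proposal is correct and follows the same route as the paper: first show that $\Psi_{M^{\mathfrak p_I}(\lambda_{I,\mathbf c})}$ kills the generators $f_I(X)$ and $\Delta_j-\omega_j 1_{\ob 0}$ (via Lemmas~\ref{polyofx}, \ref{polyofxodd}, \ref{ghom123}), then propagate the vanishing across the right tensor ideal using \eqref{nnn}. You unpack the ideal as the span of $h\circ(\theta\otimes 1_{\ob k})\circ g$ more explicitly than the paper does, and your closing caveat about $1_{\ob k}\otimes\theta$ versus $\theta\otimes 1_{\ob k}$ is a correct and genuinely useful observation — it pinpoints why $\CB^f$ is defined via a \emph{right} tensor ideal — but the substance of the argument is the same.
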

\begin{proof}
Thanks to  Lemmas~\ref{polyofx}-\ref{polyofxodd} and \ref{ghom123},  $$\Psi_{M^{\mathfrak p_{I}}(\lambda_{I, \mathbf c}) } (f_I(X))=0~\text{ and } \Psi_{M^{\mathfrak p_{I}}(\lambda_{I, \mathbf c}) } (\Delta_j-\omega_j1_{\ob 0})=0,~\forall j\in \mathbb Z.$$
 Recall that  a  right tensor ideal $B$ of a $\mathbb K$-linear strict monoidal category $\mathcal C$ is the collection of  $\mathbb K$-submodules $ \{B(\ob a,\ob b)\mid B(\ob a, \ob b)\subset  \Hom_{\mathcal C}(\ob a,\ob b),  \forall \ob a,\ob b \in \mathcal C\}$, such that
\begin{equation}\label{idea} h_3\circ h_2\circ h_1\in B(\ob a ,\ob d), \text{ and $ h_2\otimes 1_{\ob d}\in B(\ob b\otimes \ob d ,\ob c\otimes \ob d)$}\end{equation}
 whenever $(h_1, h_2, h_3)
 \in  \Hom_{\mathcal C}(\ob a,\ob b)\times B(\ob b,\ob c)\times \Hom_{\mathcal C}(\ob c,\ob d)$ for all  objects $\ob a,\ob b, \ob c,\ob d$ in $\C$. By~\eqref{nnn}, $\Psi_{M^{\mathfrak p_{I}}(\lambda_{I, \mathbf c})}(B(\ob m, \ob s))=0$, for all $m, s\in \mathbb N$, where $B$  is the right ideal of $\AB$ generated by $f_I(X)$ and $\Delta_j-\omega_j1_{\ob 0}$, $j\in \mathbb Z$. So
$\Psi_{M^{\mathfrak p_{I}}(\lambda_{I, \mathbf c})}$   factors through  $\CB^{f_I}$.
\end{proof}

\section{A basis theorem for affine Kauffmann category}

The aim of this section is to prove Theorem~\ref{affbasis}, which says that any morphism space  in $\AB$ is free over $\mathbb K$ with infinite rank.

 \begin{Lemma}\label{tangle equa} For any positive integer $m$, define
  \begin{equation}\label{varep}
 \eta_{\ob m}=\begin{tikzpicture}[baseline = 25pt, scale=0.35, color=\clr]
        \draw[-,thick] (0,5) to[out=down,in=left] (2.5,2) to[out=right,in=down] (5,5);
        \draw[-,thick] (0.5,5) to[out=down,in=left] (2.5,2.5) to[out=right,in=down] (4.5,5);
        \draw(1.5,4.5)node{$\cdots$};\draw(3.5,4.5)node{$\cdots$};
        \draw[-,thick] (2,5) to[out=down,in=left] (2.5,4) to[out=right,in=down] (3,5);
        \draw (1,5.5)node {$\ob m$}; \draw (4,5.5)node {$\ob m$};
           \end{tikzpicture} \quad \text{ and }\quad
         \gamma_{\ob m}= \begin{tikzpicture}[baseline = 5pt, scale=0.35, color=\clr]
           \draw[-,thick] (0,0) to[out=up,in=left] (2.5,4) to[out=right,in=up] (5,0);
           \draw[-,thick] (0.2,0) to[out=up,in=left] (2.3,3.5) to[out=right,in=up] (4.8,0);
           \draw[-,thick] (1.8,0) to[out=up,in=left] (2.5,1.5) to[out=right,in=up] (3.2,0);
           \draw(1,0.5)node{$\cdots$};\draw(4,0.5)node{$\cdots$};
          \draw (1,-0.5)node {$\ob m$}; \draw (4,-0.5)node {$\ob m$};
           \end{tikzpicture}.
           \end{equation}
 Then $(1_{\ob m}\otimes \gamma_{\ob m})\circ (\eta_{\ob m}\otimes 1_{\ob m})=(\gamma_{\ob m}\otimes 1_{\ob m})\circ ( 1_{\ob m}\otimes\eta_{\ob m})=1_{\ob m}$ in  $\AB$ and $ \CB^f$.\end{Lemma}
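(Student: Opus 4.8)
The plan is to recognize the two asserted equalities as the statement that $(\eta_{\ob m},\gamma_{\ob m})$ is a (two‑sided) duality datum exhibiting $\ob m=\ob 1^{\otimes m}$ as self‑dual, and to bootstrap it from the case $m=1$. Indeed, Theorem~\ref{present1}(7) reads $(1_{\ob 1}\otimes A)\circ(U\otimes 1_{\ob 1})=1_{\ob 1}=(A\otimes 1_{\ob 1})\circ(1_{\ob 1}\otimes U)$, which is exactly the statement that $(\eta_{\ob 1},\gamma_{\ob 1})=(U,A)$ is such a datum for $\ob 1$. Since $\CB^f$ is a quotient of $\AB$ it suffices to argue in $\AB$, and since $\eta_{\ob m},\gamma_{\ob m}$ are built solely from $U$ and $A$ one may as well argue in $\B$ and transport along the monoidal functor $\B\to\AB$ of Lemma~\ref{ktau}(2).

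I would first record the recursions $\eta_{\ob m}=(1_{\ob 1}\otimes\eta_{\ob{m-1}}\otimes 1_{\ob 1})\circ U$ and $\gamma_{\ob m}=A\circ(1_{\ob 1}\otimes\gamma_{\ob{m-1}}\otimes 1_{\ob 1})$, which are immediate from the pictures in \eqref{varep} (the outermost cup, resp.\ cap, surrounds the nested diagram $\eta_{\ob{m-1}}$, resp.\ $\gamma_{\ob{m-1}}$), and then prove the equalities by induction on $m$ with base case Theorem~\ref{present1}(7). In the inductive step one substitutes these recursions into $(1_{\ob m}\otimes\gamma_{\ob m})\circ(\eta_{\ob m}\otimes 1_{\ob m})$ and manipulates it using only the axioms of a strict monoidal category — associativity of $\otimes$ and $\circ$ and the interchange law $(g\otimes h)\circ(g'\otimes h')=(g\circ g')\otimes(h\circ h')$ — so as to expose a subexpression of the form $1_{\ob a}\otimes\big[(1_{\ob 1}\otimes A)\circ(U\otimes 1_{\ob 1})\big]\otimes 1_{\ob b}$; by Theorem~\ref{present1}(7) this equals $1_{\ob{a+1+b}}$, and after collapsing it the remaining composite is brought into a form to which the inductive hypothesis applies, yielding $1_{\ob m}$. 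This is the standard string‑diagram argument that a tensor power of a dualizable object is again dualizable with the nested (co)evaluations. The second equality $(\gamma_{\ob m}\otimes 1_{\ob m})\circ(1_{\ob m}\otimes\eta_{\ob m})=1_{\ob m}$ then follows by applying the contravariant monoidal functor $\sigma$ of Lemma~\ref{ktau}(1), which fixes every $1_{\ob k}$ and exchanges $U\leftrightarrow A$, hence sends $\eta_{\ob m}\leftrightarrow\gamma_{\ob m}$ and reverses composition, carrying the first equality onto the second; a mirror‑image induction would do as well.

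A topological sanity check runs alongside: $(1_{\ob m}\otimes\gamma_{\ob m})\circ(\eta_{\ob m}\otimes 1_{\ob m})$ is the tangle diagram of $m$ nested ``zig‑zags,'' which is carried to the diagram of $m$ vertical strands by a planar isotopy fixing the $2m$ boundary points (and introducing no curl, as only cup/cap turns occur), so the equality already holds in $\FT$, hence in $\B=\FT/I$, in $\AB$, and in $\CB^f$. I expect no genuine difficulty: the only point needing attention is the bookkeeping in the inductive step — keeping track, after each application of the interchange law, of how many identity strands flank each occurrence of $U$ and of $A$ — and it is entirely routine. In particular, no relation of $\AB$ beyond Theorem~\ref{present1}(7), and nothing specific to $\CB^f$, is used.
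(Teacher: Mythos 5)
Your argument is correct, and it is genuinely different in emphasis from the paper's. The paper disposes of the lemma in one line: both composites in question are dot-free, crossing-free tangle diagrams with the identity connector, hence lie in $\mathbb{NT}_{m,m}/\sim$ and are $\sim$-equivalent to $1_{\ob m}$, so by the remark preceding Definition~\ref{equiv1} (built on \eqref{bbb} and the functor of Lemma~\ref{ktau}(2)) they coincide with $1_{\ob m}$ in $\AB$, and then in $\CB^f$ by passing to the quotient. That is essentially the ``topological sanity check'' you relegate to the end of your proof; in the paper it \emph{is} the proof. Your primary route — the recursions $\eta_{\ob m}=(1_{\ob 1}\otimes\eta_{\ob{m-1}}\otimes 1_{\ob 1})\circ U$, $\gamma_{\ob m}=A\circ(1_{\ob 1}\otimes\gamma_{\ob{m-1}}\otimes 1_{\ob 1})$ plus induction on $m$ with base case Theorem~\ref{present1}(7), using only the interchange law and then $\sigma$ from Lemma~\ref{ktau}(1) for the mirror identity — is a purely syntactic derivation from the defining relations, and would be valid even in a setting where one had not yet established the isotopy-based well-definedness statement \eqref{bbb}. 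It is longer but more self-contained; the paper's version is shorter but leans on the machinery already set up for tangle diagrams. Both are correct; the paper simply chose not to redo by hand what planar isotopy already gives.
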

\begin{proof}We have explained that $d=d'$ as morphisms in $\AB$ if $d, d'\in\mathbb{NT}_{  m,  s}$ such that  $d\sim d'$.
So, $d=d'$ as morphisms in $ \CB^f$. The required equalities follow from this observation.
\end{proof}

\begin{Lemma}\label{etannxs}Suppose    $m, s\in \mathbb N$ such that $m\neq 0$ and  $2\mid m+s$. Let $$
\bar\eta_{\ob m}: \Hom_{\AB}(\ob m,\ob {s})\rightarrow  \Hom_{\AB}(\ob 0,\ob {m+s} ), \ \  \bar\gamma_{\ob m}: \Hom_{\AB}(\ob 0,\ob {m+s})\rightarrow  \Hom_{\AB}(\ob m,\ob s)$$
be two  linear maps such that $\bar\eta_{\ob m}(d)=(d\otimes 1_{\ob m}) \circ \eta_{\ob m}$ and $\bar\gamma _{\ob m}(d)=\gamma_{\ob m}  \circ(d\otimes 1_{\ob m})$. Then
\begin{itemize}\item[(1)]$\Hom_{\C}(\ob m,\ob s)\cong \Hom_{\C}(\ob 0,\ob{m+s})$ where  $\mathcal C \in \{\AB, \CB^f\}$,
\item[(2)]$\bar\eta_{\ob m}$  gives a bijection between $\mathbb{NT}_{m,s}/\sim$ and $\mathbb{NT}_{0,m+s}/\sim$, and its inverse is $\bar\gamma_{\ob m}$,
\item[(3)]$\bar\eta_{\ob m}$ induces a bijection between $\mathbb{NT}^a_{m,s}/\sim$ and $ \mathbb{NT}^a_{0,m+s}/\sim$ and its inverse is $\bar\gamma_{\ob m}$.
     \end{itemize}
    \end{Lemma}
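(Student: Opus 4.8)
\textbf{Proof plan for Lemma~\ref{etannxs}.}
The plan is to deduce everything from the single categorical identity established in Lemma~\ref{tangle equa}, namely $(1_{\ob m}\otimes \gamma_{\ob m})\circ (\eta_{\ob m}\otimes 1_{\ob m})=(\gamma_{\ob m}\otimes 1_{\ob m})\circ ( 1_{\ob m}\otimes\eta_{\ob m})=1_{\ob m}$ in both $\AB$ and $\CB^f$, together with the fact that $\eta_{\ob m}$ and $\gamma_{\ob m}$ are themselves obtained by composing and tensoring the structure morphisms $U$ and $A$ (so the argument is uniform for $\mathcal C\in\{\AB,\CB^f\}$). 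For part (1), I would check directly that $\bar\gamma_{\ob m}\circ\bar\eta_{\ob m}=\mathrm{id}$ and $\bar\eta_{\ob m}\circ\bar\gamma_{\ob m}=\mathrm{id}$ on the relevant morphism spaces. Indeed, for $d\in\Hom_{\mathcal C}(\ob m,\ob s)$,
$$
\bar\gamma_{\ob m}(\bar\eta_{\ob m}(d))=\gamma_{\ob m}\circ\big((d\otimes 1_{\ob m})\circ\eta_{\ob m}\otimes 1_{\ob m}\big)
=d\circ (\gamma_{\ob m}\otimes 1_{\ob m})\circ(1_{\ob m}\otimes\eta_{\ob m})=d,
$$
using the interchange law of a strict monoidal category to slide $d$ past, and then Lemma~\ref{tangle equa}; the reverse composite is checked the same way with the other half of Lemma~\ref{tangle equa}. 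Since $\bar\eta_{\ob m}$ and $\bar\gamma_{\ob m}$ are $\mathbb K$-linear by construction, this proves the isomorphism in (1).

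For parts (2) and (3) the task is to show that these mutually inverse linear maps restrict to the given diagram sets. I would argue that $\bar\eta_{\ob m}$ sends a normally ordered dotted tangle diagram to a normally ordered one and is compatible with $\sim$: bending the $m$ bottom endpoints of $d$ around to the top via the nested cups $\eta_{\ob m}$ does not create any crossing among themselves, does not introduce loops or $\bullet$-bearing bubbles, and — because the $m$ new strands are cup-free on the bottom and sit to the right — preserves conditions (a)--(g) of Definition~\ref{D:N.O. dotted OBC tangle diagram}; any dot that was on a vertical strand of $d$ (hence on the bottom boundary) ends up on the appropriate boundary of the resulting cup, and any dot on a cap/cup of $d$ stays put. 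One then checks $\bar\eta_{\ob m}$ respects the three clauses of Definition~\ref{equiv1} (number of $\bullet i$-bubbles, $\mathrm{conn}(\hat d)$, dot counts on corresponding strands), so it descends to a well-defined map $\mathbb{NT}_{m,s}/\!\!\sim\ \to\ \mathbb{NT}_{0,m+s}/\!\!\sim$; the same bookkeeping shows $\bar\gamma_{\ob m}$ descends in the opposite direction, and since they are inverse as maps on morphisms (by (1)) they are mutually inverse bijections on these quotient sets. Part (3) is identical except one additionally notes that $\bar\eta_{\ob m}$ and $\bar\gamma_{\ob m}$ do not change which dot-labels $-i,j$ occur near endpoints (they only relocate endpoints), so the constraint defining $\bar{\mathbb{NT}}^a$ is preserved in both directions; in particular $\bar\eta_{\ob m}$ also respects the absence of $\bullet i$-bubbles, so it restricts further to $\bar{\mathbb{NT}}_{m,s}\to\bar{\mathbb{NT}}_{0,m+s}$ before cutting down to the $\ell=a$ version.

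The genuinely fiddly point — the one I would spend the most care on — is the verification that normal orderedness is preserved under $\bar\eta_{\ob m}$ and $\bar\gamma_{\ob m}$, i.e. that bending $m$ strands around really does land inside $\mathbb{NT}$ rather than merely inside $\mathbb T$. Conditions (d) (that $\hat d$ is reduced totally descending) and (f) (dots on the leftmost/rightmost boundary of caps/cups) are the ones to watch: one must confirm that the nested-cup gadget $\eta_{\ob m}$ can be inserted so that its strands cross the strands of $\hat d$ each at most once and compatibly with a totally descending traversal (this is exactly the content of \eqref{bbb}, so it is a choice-of-representative issue, not a substantive one), and that pre-existing dots migrate to a legal position. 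All of this is routine diagram manipulation, but it is where the proof has to actually be written out; everything else follows formally from Lemma~\ref{tangle equa} and $\mathbb K$-linearity. Note finally that the hypotheses $m\neq 0$ and $2\mid m+s$ are used only to ensure $\eta_{\ob m},\gamma_{\ob m}$ are nonzero morphisms and the target spaces are nonzero; the case $m+s$ odd is vacuous by Theorems~\ref{affbasis}(1) and~\ref{cycb}(1).
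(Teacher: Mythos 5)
Your proposal is correct and follows essentially the same route as the paper: both deduce part (1) from the mutual-inverse identity of Lemma~\ref{tangle equa} via the interchange law, and both reduce parts (2) and (3) to checking that $\bar\eta_{\ob m}$ carries normally ordered diagrams to normally ordered ones. Two small points. First, for the $\CB^f$ case in (1), the paper instead invokes the ideal-preservation property \eqref{idea} to show $\bar\eta_{\ob m}$ and $\bar\gamma_{\ob m}$ descend to the quotient category, whereas you argue that the same computation can be carried out in $\CB^f$ directly because $\eta_{\ob m},\gamma_{\ob m}$ and the identity of Lemma~\ref{tangle equa} all live there; both are valid, and yours is arguably more direct. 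Second, in your discussion of parts (2)--(3) you write that a dot at the bottom of a vertical strand of $d$ ``ends up on the appropriate boundary of the resulting cup,'' which understates what happens: after bending, such a dot sits at the \emph{left} boundary of the newly created cup and must be slid to the right boundary using \eqref{relation 6} and Lemma~\ref{selfcrossing}(3) (and likewise reconciling $\bullet$ versus $\circ$), which is exactly what the paper cites at this step. You do flag this as the part that needs to be written out, so it is a matter of precision in exposition rather than a gap in the argument.
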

\begin{proof}
 Thanks to Lemma~\ref{tangle equa},   $ \bar\eta_{\ob m}$  and $ \bar\gamma_{\ob m}$ are mutually inverse to each other.  So, we have the required isomorphism  when   $\mathcal C =\AB$.

 By \eqref{idea}, it is easy to see that  $\bar\eta_{\ob m}(I(\ob m,\ob s))\subset I(\ob 0,\ob m+\ob s)$ and $\bar\gamma_{\ob m}(I(\ob 0,\ob m+\ob s))\subset I(\ob m,\ob s)$ for any right tensor
ideal $I$ of $\AB$. So, both $\bar \eta_{\ob m}$ and $\bar \gamma_{\ob m}$ induce required $\mathbb K$-isomorphisms in $\CB^f$.

Thanks to \eqref{relation 6} and  Lemma~\ref{selfcrossing}(3), for  any $b\in \mathbb {NT}_{m,s}/\sim$, there is a unique  $b'\in \mathbb {NT}_{0,2r}/\sim$ such that
 $ \bar\eta_{\ob m}(b)=b' \text{ as morphisms in $\AB$}.$
 So,  $\bar\eta_{\ob m}(\mathbb{NT}_{m,s}/\sim)\subseteq \mathbb{NT}_{0,2r}/\sim$. Similarly, we have  $\bar\gamma_{\ob m}(\mathbb{NT}_{0,2r}/\sim)\subseteq \mathbb{NT}_{m,s}/\sim$. Since
$\bar\gamma_{\ob m}$ is the inverse of $ \bar\eta_{\ob m}$, (2) holds. (3) can be proved similarly.
\end{proof}

\begin{Lemma}\label{spa}In $\AB$, we have
 \begin{itemize}
 \item[(1)]\begin{tikzpicture}[baseline = 2.5mm]
	\draw[-,thick,darkblue] (0.28,0) to[out=90,in=-90] (-0.28,.6);
	\draw[-,line width=4pt,white] (-0.28,0) to[out=90,in=-90] (0.28,.6);
	\draw[-,thick,darkblue] (-0.28,0) to[out=90,in=-90] (0.28,.6);
   \node at (-0.26,0.5) {$\color{darkblue}\scriptstyle\bullet$};
\end{tikzpicture}
$=$
\begin{tikzpicture}[baseline = 2.5mm]
	\draw[-,thick,darkblue] (0.28,0) to[out=90,in=-90] (-0.28,.6);
	\draw[-,line width=4pt,white] (-0.28,0) to[out=90,in=-90] (0.28,.6);
	\draw[-,thick,darkblue] (-0.28,0) to[out=90,in=-90] (0.28,.6);
 \node at (0.26,0.1) {$\color{darkblue}\scriptstyle\bullet$};
\end{tikzpicture}
$-z$
\begin{tikzpicture}[baseline = 2.5mm]
	\draw[-,thick,darkblue] (0.18,0) to (0.18,.6);
	\draw[-,thick,darkblue] (-0.18,0) to (-0.18,.6);
\node at (0.18,0.3) {$\color{darkblue}\scriptstyle\bullet$};
\end{tikzpicture}
$+z$
\begin{tikzpicture}[baseline = 2.5mm]\draw[-,thick,darkblue] (0,0.6) to[out=down,in=left] (0.28,0.35) to[out=right,in=down] (0.56,0.6);
  \draw[-,thick,darkblue] (0,0) to[out=up,in=left] (0.28,0.25) to[out=right,in=up] (0.56,0);
  \node at (0.03,0.1) {$\color{darkblue}\scriptstyle\circ$};
         \end{tikzpicture},\quad
         \begin{tikzpicture}[baseline = 2.5mm]
	\draw[-,thick,darkblue] (-0.28,0) to[out=90,in=-90] (0.28,.6);
	\draw[-,line width=4pt,white] (0.28,0) to[out=90,in=-90] (-0.28,.6);
	\draw[-,thick,darkblue] (0.28,0) to[out=90,in=-90] (-0.28,.6);
   \node at (-0.26,0.5) {$\color{darkblue}\scriptstyle\bullet$};
\end{tikzpicture}
$=$
\begin{tikzpicture}[baseline = 2.5mm]
	\draw[-,thick,darkblue] (-0.28,0) to[out=90,in=-90] (0.28,.6);
	\draw[-,line width=4pt,white] (0.28,0) to[out=90,in=-90] (-0.28,.6);
	\draw[-,thick,darkblue] (0.28,0) to[out=90,in=-90] (-0.28,.6);
 \node at (0.26,0.1) {$\color{darkblue}\scriptstyle\bullet$};
\end{tikzpicture}
$-z$
\begin{tikzpicture}[baseline = 2.5mm]
	\draw[-,thick,darkblue] (0.18,0) to (0.18,.6);
	\draw[-,thick,darkblue] (-0.18,0) to (-0.18,.6);
\node at (-0.18,0.3) {$\color{darkblue}\scriptstyle\bullet$};
\end{tikzpicture}
$+z$
\begin{tikzpicture}[baseline = 2.5mm]\draw[-,thick,darkblue] (0,0.6) to[out=down,in=left] (0.28,0.35) to[out=right,in=down] (0.56,0.6);
  \draw[-,thick,darkblue] (0,0) to[out=up,in=left] (0.28,0.25) to[out=right,in=up] (0.56,0);
  \node at (0.53,0.5) {$\color{darkblue}\scriptstyle\circ$};
         \end{tikzpicture}~,
\item[(2)]
\begin{tikzpicture}[baseline = 2.5mm]
	\draw[-,thick,darkblue] (0.28,0) to[out=90,in=-90] (-0.28,.6);
	\draw[-,line width=4pt,white] (-0.28,0) to[out=90,in=-90] (0.28,.6);
	\draw[-,thick,darkblue] (-0.28,0) to[out=90,in=-90] (0.28,.6);
   \node at (-0.26,0.5) {$\color{darkblue}\scriptstyle\circ$};
\end{tikzpicture}
$=$\begin{tikzpicture}[baseline = 2.5mm]
	\draw[-,thick,darkblue] (0.28,0) to[out=90,in=-90] (-0.28,.6);
	\draw[-,line width=4pt,white] (-0.28,0) to[out=90,in=-90] (0.28,.6);
	\draw[-,thick,darkblue] (-0.28,0) to[out=90,in=-90] (0.28,.6);
 \node at (0.26,0.1) {$\color{darkblue}\scriptstyle\circ$};
\end{tikzpicture}
$+z$
\begin{tikzpicture}[baseline = 2.5mm]
	\draw[-,thick,darkblue] (0.18,0) to (0.18,.6);
	\draw[-,thick,darkblue] (-0.18,0) to (-0.18,.6);
\node at (-0.18,0.3) {$\color{darkblue}\scriptstyle\circ$};
\end{tikzpicture}
$-z$
\begin{tikzpicture}[baseline = 2.5mm]\draw[-,thick,darkblue] (0,0.6) to[out=down,in=left] (0.28,0.35) to[out=right,in=down] (0.56,0.6);
  \draw[-,thick,darkblue] (0,0) to[out=up,in=left] (0.28,0.25) to[out=right,in=up] (0.56,0);
  \node at (0.53,0.5) {$\color{darkblue}\scriptstyle\bullet$};
         \end{tikzpicture}, \quad
         \begin{tikzpicture}[baseline = 2.5mm]
	\draw[-,thick,darkblue] (-0.28,0) to[out=90,in=-90] (0.28,.6);
	\draw[-,line width=4pt,white] (0.28,0) to[out=90,in=-90] (-0.28,.6);
	\draw[-,thick,darkblue] (0.28,0) to[out=90,in=-90] (-0.28,.6);
   \node at (-0.26,0.5) {$\color{darkblue}\scriptstyle\circ$};
\end{tikzpicture}
$=$
\begin{tikzpicture}[baseline = 2.5mm]
	\draw[-,thick,darkblue] (-0.28,0) to[out=90,in=-90] (0.28,.6);
	\draw[-,line width=4pt,white] (0.28,0) to[out=90,in=-90] (-0.28,.6);
	\draw[-,thick,darkblue] (0.28,0) to[out=90,in=-90] (-0.28,.6);
 \node at (0.26,0.1) {$\color{darkblue}\scriptstyle\circ$};
\end{tikzpicture}
$+z$
\begin{tikzpicture}[baseline = 2.5mm]
	\draw[-,thick,darkblue] (0.18,0) to (0.18,.6);
	\draw[-,thick,darkblue] (-0.18,0) to (-0.18,.6);
\node at (0.18,0.3) {$\color{darkblue}\scriptstyle\circ$};
\end{tikzpicture}
$-z$
\begin{tikzpicture}[baseline = 2.5mm]\draw[-,thick,darkblue] (0,0.6) to[out=down,in=left] (0.28,0.35) to[out=right,in=down] (0.56,0.6);
  \draw[-,thick,darkblue] (0,0) to[out=up,in=left] (0.28,0.25) to[out=right,in=up] (0.56,0);
  \node at (0.03,0.1) {$\color{darkblue}\scriptstyle\bullet$};
         \end{tikzpicture}~,
\item[(3)]
\begin{tikzpicture}[baseline = 2.5mm]
	\draw[-,thick,darkblue] (0.28,0) to[out=90,in=-90] (-0.28,.6);
	\draw[-,line width=4pt,white] (-0.28,0) to[out=90,in=-90] (0.28,.6);
	\draw[-,thick,darkblue] (-0.28,0) to[out=90,in=-90] (0.28,.6);
   \node at (0.26,0.5) {$\color{darkblue}\scriptstyle\bullet$};
\end{tikzpicture}
$=$
\begin{tikzpicture}[baseline = 2.5mm]
	\draw[-,thick,darkblue] (0.28,0) to[out=90,in=-90] (-0.28,.6);
	\draw[-,line width=4pt,white] (-0.28,0) to[out=90,in=-90] (0.28,.6);
	\draw[-,thick,darkblue] (-0.28,0) to[out=90,in=-90] (0.28,.6);
 \node at (-0.26,0.1) {$\color{darkblue}\scriptstyle\bullet$};
\end{tikzpicture}
$+z$
\begin{tikzpicture}[baseline = 2.5mm]
	\draw[-,thick,darkblue] (0.18,0) to (0.18,.6);
	\draw[-,thick,darkblue] (-0.18,0) to (-0.18,.6);
\node at (0.18,0.3) {$\color{darkblue}\scriptstyle\bullet$};
\end{tikzpicture}
$-z$
\begin{tikzpicture}[baseline = 2.5mm]\draw[-,thick,darkblue] (0,0.6) to[out=down,in=left] (0.28,0.35) to[out=right,in=down] (0.56,0.6);
  \draw[-,thick,darkblue] (0,0) to[out=up,in=left] (0.28,0.25) to[out=right,in=up] (0.56,0);
  \node at (0.53,0.5) {$\color{darkblue}\scriptstyle\bullet$};
         \end{tikzpicture},\quad
         \begin{tikzpicture}[baseline = 2.5mm]
	\draw[-,thick,darkblue] (-0.28,0) to[out=90,in=-90] (0.28,.6);
	\draw[-,line width=4pt,white] (0.28,0) to[out=90,in=-90] (-0.28,.6);
	\draw[-,thick,darkblue] (0.28,0) to[out=90,in=-90] (-0.28,.6);
   \node at (0.26,0.5) {$\color{darkblue}\scriptstyle\bullet$};
\end{tikzpicture}
$=$\begin{tikzpicture}[baseline = 2.5mm]
	\draw[-,thick,darkblue] (-0.28,0) to[out=90,in=-90] (0.28,.6);
	\draw[-,line width=4pt,white] (0.28,0) to[out=90,in=-90] (-0.28,.6);
	\draw[-,thick,darkblue] (0.28,0) to[out=90,in=-90] (-0.28,.6);
 \node at (-0.26,0.1) {$\color{darkblue}\scriptstyle\bullet$};
\end{tikzpicture}
$+z$
\begin{tikzpicture}[baseline = 2.5mm]
	\draw[-,thick,darkblue] (0.18,0) to (0.18,.6);
	\draw[-,thick,darkblue] (-0.18,0) to (-0.18,.6);
\node at (-0.18,0.3) {$\color{darkblue}\scriptstyle\bullet$};
\end{tikzpicture}
$-z$
\begin{tikzpicture}[baseline = 2.5mm]\draw[-,thick,darkblue] (0,0.6) to[out=down,in=left] (0.28,0.35) to[out=right,in=down] (0.56,0.6);
  \draw[-,thick,darkblue] (0,0) to[out=up,in=left] (0.28,0.25) to[out=right,in=up] (0.56,0);
  \node at (0.03,0.1) {$\color{darkblue}\scriptstyle\bullet$};
         \end{tikzpicture}~,
  \item[(4)]
\begin{tikzpicture}[baseline = 2.5mm]
	\draw[-,thick,darkblue] (0.28,0) to[out=90,in=-90] (-0.28,.6);
	\draw[-,line width=4pt,white] (-0.28,0) to[out=90,in=-90] (0.28,.6);
	\draw[-,thick,darkblue] (-0.28,0) to[out=90,in=-90] (0.28,.6);
   \node at (0.26,0.5) {$\color{darkblue}\scriptstyle\circ$};
\end{tikzpicture}
$=$
\begin{tikzpicture}[baseline = 2.5mm]
	\draw[-,thick,darkblue] (0.28,0) to[out=90,in=-90] (-0.28,.6);
	\draw[-,line width=4pt,white] (-0.28,0) to[out=90,in=-90] (0.28,.6);
	\draw[-,thick,darkblue] (-0.28,0) to[out=90,in=-90] (0.28,.6);
 \node at (-0.26,0.1) {$\color{darkblue}\scriptstyle\circ$};
\end{tikzpicture}
$-z$
\begin{tikzpicture}[baseline = 2.5mm]
	\draw[-,thick,darkblue] (0.18,0) to (0.18,.6);
	\draw[-,thick,darkblue] (-0.18,0) to (-0.18,.6);
\node at (-0.18,0.3) {$\color{darkblue}\scriptstyle\circ$};
\end{tikzpicture}
$+z$
\begin{tikzpicture}[baseline = 2.5mm]\draw[-,thick,darkblue] (0,0.6) to[out=down,in=left] (0.28,0.35) to[out=right,in=down] (0.56,0.6);
  \draw[-,thick,darkblue] (0,0) to[out=up,in=left] (0.28,0.25) to[out=right,in=up] (0.56,0);
  \node at (0.03,0.1) {$\color{darkblue}\scriptstyle\circ$};
         \end{tikzpicture},\quad
         \begin{tikzpicture}[baseline = 2.5mm]
	\draw[-,thick,darkblue] (-0.28,0) to[out=90,in=-90] (0.28,.6);
	\draw[-,line width=4pt,white] (0.28,0) to[out=90,in=-90] (-0.28,.6);
	\draw[-,thick,darkblue] (0.28,0) to[out=90,in=-90] (-0.28,.6);
   \node at (0.26,0.5) {$\color{darkblue}\scriptstyle\circ$};
\end{tikzpicture}
$=$
\begin{tikzpicture}[baseline = 2.5mm]
	\draw[-,thick,darkblue] (-0.28,0) to[out=90,in=-90] (0.28,.6);
	\draw[-,line width=4pt,white] (0.28,0) to[out=90,in=-90] (-0.28,.6);
	\draw[-,thick,darkblue] (0.28,0) to[out=90,in=-90] (-0.28,.6);
 \node at (-0.26,0.1) {$\color{darkblue}\scriptstyle\circ$};
\end{tikzpicture}
$-z$
\begin{tikzpicture}[baseline = 2.5mm]
	\draw[-,thick,darkblue] (0.18,0) to (0.18,.6);
	\draw[-,thick,darkblue] (-0.18,0) to (-0.18,.6);
\node at (0.18,0.3) {$\color{darkblue}\scriptstyle\circ$};
\end{tikzpicture}
$+z$
\begin{tikzpicture}[baseline = 2.5mm]\draw[-,thick,darkblue] (0,0.6) to[out=down,in=left] (0.28,0.35) to[out=right,in=down] (0.56,0.6);
  \draw[-,thick,darkblue] (0,0) to[out=up,in=left] (0.28,0.25) to[out=right,in=up] (0.56,0);
  \node at (0.53,0.5) {$\color{darkblue}\scriptstyle\circ$};
         \end{tikzpicture}~.
  \end{itemize}
 \end{Lemma}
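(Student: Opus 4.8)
The plan is to treat all eight identities as a single ``push a dot past a crossing'' move and to derive each of them from three facts already at hand: the double-crossing relation of Definition~\ref{AK defn}(2), the Kauffmann skein relation Theorem~\ref{present1}(4), and the dot-slide relations across $U$ and $A$ from Definition~\ref{AK defn}(3) (equivalently~\eqref{relation 6}). Write $X_1=X\otimes 1_{\ob 1}$ and $X_2=1_{\ob 1}\otimes X\in\End_{\AB}(\ob 2)$ for the endomorphisms given by a $\bullet$ on the left, resp.\ right, strand, and $X_1^{-1},X_2^{-1}$ for the corresponding $\circ$'s. The first step is to record, from the relation $T\circ X_1\circ T=X_2$ of Definition~\ref{AK defn}(2) (this is~\eqref{relation 5}) together with Theorem~\ref{present1}(2) and Definition~\ref{AK defn}(1), the four commuted forms $X_1\circ T=T^{-1}\circ X_2$, $T\circ X_1=X_2\circ T^{-1}$, $X_1^{-1}\circ T^{-1}=T\circ X_2^{-1}$, $T^{-1}\circ X_1^{-1}=X_2^{-1}\circ T$, the last two being the inverses of the first two. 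The second step is to rewrite Theorem~\ref{present1}(4) as $T^{-1}=T-z(1_{\ob 2}-U\circ A)$ and $T=T^{-1}+z(1_{\ob 2}-U\circ A)$.

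With these in place each identity is a three-line computation. For the first identity of~(1), for instance, I would compute $X_1\circ T=T^{-1}\circ X_2=\bigl(T-z\,1_{\ob 2}+z\,U\circ A\bigr)\circ X_2=T\circ X_2-z\,X_2+z\,(U\circ A)\circ X_2$, and then push the last dot through the cap by~\eqref{relation 6}, $(U\circ A)\circ(1_{\ob 1}\otimes X)=U\circ\bigl(A\circ(X^{-1}\otimes 1_{\ob 1})\bigr)=(U\circ A)\circ X_1^{-1}$, which is exactly the stated right-hand side (the $\bullet$ has become a $\circ$). The second identity of~(1) is obtained in the same way, starting from $X_1\circ T^{-1}=X_1\circ T-z\,X_1+z\,X_1\circ(U\circ A)$ and using~\eqref{relation 6} in the form $(X\otimes 1_{\ob 1})\circ U=(1_{\ob 1}\otimes X^{-1})\circ U$; the two identities of~(2) follow from the identical manipulations applied to $X_1^{-1}$ in place of $X_1$; and the two identities of each of~(3) and~(4) are the left--right mirror statements, proved the same way but starting instead from $T\circ X_1=X_2\circ T^{-1}$ (a dot on the right strand). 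Alternatively, once~(1) and~(2) are known one may deduce~(3) and~(4) by applying the contravariant monoidal functor $\sigma$ of Lemma~\ref{ktau}(1), which fixes $T,T^{-1},X^{\pm1}$ and interchanges $A$ and $U$, and then re-normalising with the skein relation.

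There is no genuine obstacle here; the content is entirely formal. The only point requiring care is bookkeeping: each pictured diagram must be translated correctly into a composite of an $X^{\pm1}$ on a designated strand with $T^{\pm1}$, $U$, $A$ --- reading bottom-to-top, and keeping track of which endpoint of which strand carries the dot --- and one must not forget that pushing a dot through the cap $A$ or the cup $U$ converts $\bullet$ into $\circ$ and vice versa, which is precisely the inversion of $X$ encoded in~\eqref{relation 6}. Once this translation is done consistently, all eight identities drop out of the two displayed families of relations above.
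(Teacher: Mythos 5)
Your proposal is correct and takes essentially the same route as the paper, which simply asserts that (1)--(4) follow directly from relations \eqref{relation 4}--\eqref{relation 6}, Lemma~\ref{selfcrossing}(3), (S) and (RII). You are spelling out exactly that direct verification: commute the dot through a crossing using Definition~\ref{AK defn}(2), substitute $T^{\mp 1}=T^{\pm 1}\mp z(1_{\ob 2}-U\circ A)$ from (S), and slide the residual dot through $U$ or $A$ via \eqref{relation 6} (equivalently Lemma~\ref{selfcrossing}(3)), which flips $\bullet\leftrightarrow\circ$.
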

\begin{proof} (1)-(4) can be checked via  \eqref{relation 4}-\eqref{relation 6}, Lemma~\ref{selfcrossing}(3),   (S) and (RII), directly.
\end{proof}
\begin{Defn}\label{d1}Let $\hat{\mathbb{T}}_{m,s}$ be the subset of $\mathbb{T}_{m,s}$ such that  each dotted tangle diagram in  $\hat{\mathbb{T}}_{m,s}$ satisfies the following conditions:  \begin{itemize}
 \item  [(1)]neither a strand crosses  itself nor two
strands cross each other more than once,
 \item [(2)] Definition~\ref{D:N.O. dotted  OBC tangle diagram}(a)-(c),
 \item [(3)] whenever a dot appears on a stand, it is near  the endpoints of the strand,
 \item [(4)] Definition~\ref{D:N.O. dotted  OBC tangle diagram}(d).
 \end{itemize}
 \end{Defn}
\begin{Lemma}\label{sim} Any $d$ in $\mathbb{T}_{m,s}$ can be written as a linear combination of elements in $\hat{\mathbb{T}}_{m,s}$.
 \end{Lemma}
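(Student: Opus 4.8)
The strategy is to start from an arbitrary $d\in\mathbb T_{m,s}$ and successively remove the ``bad features'' that separate a general dotted tangle diagram from an element of $\hat{\mathbb T}_{m,s}$, at each stage paying for the simplification with lower-order terms that are themselves expressible in $\hat{\mathbb T}_{m,s}$ by induction. First I would pick a suitable complexity statistic on $d$, for instance the pair (number of crossings, number of dots) ordered lexicographically, together possibly with a count measuring how far the dots are from the endpoints of their strands; the whole argument is an induction on this statistic. The base case — no crossings, dots already at endpoints, no bad loops — is essentially by definition in $\hat{\mathbb T}_{m,s}$ after using \eqref{relation 4} (adjacent $\bullet$/$\circ$ cancel) and Lemma~\ref{ktau}(2) to interpret the diagram.

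\textbf{Key steps.} Step 1: using the free loop relation (L), Lemma~\ref{selfcrossing}(1)--(2), and Lemma~\ref{addmi}, reduce all closed loops to crossing-free unknotted loops carrying only $\bullet$'s at their leftmost point and with no $\Bubble$-type configuration nested inside another strand; this gives Definition~\ref{D:N.O. dotted OBC tangle diagram}(a)--(c), i.e.\ Definition~\ref{d1}(2). Each application of (L), Lemma~\ref{selfcrossing}, or Lemma~\ref{addmi} either strictly lowers the crossing/dot count or only produces $1_{\ob 0}$-scalars times simpler diagrams, so induction applies. Step 2: slide every dot along its strand toward the nearest endpoint. A dot that wants to pass a crossing is moved using Lemma~\ref{spa}(1)--(4): each of those relations rewrites a dot on one side of a crossing as a dot on the other side plus $\pm z$ times two diagrams with \emph{one fewer crossing} (the ``$1_{\ob 2}$'' term and the cup-cap term). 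The error terms drop in the crossing count, hence are handled by the induction hypothesis; the main term has the dot strictly closer to an endpoint, so a secondary induction on dot-distance terminates. A dot that meets a critical point (a cup or cap turnback) is slid around it using Lemma~\ref{selfcrossing}(3), which merely moves a $\circ$ to the opposite strand of the turnback as a $\bullet$ (no new crossings, no new diagrams), so this is harmless. Step 3: remove self-crossings and double crossings. A self-crossing of a strand is eliminated by Lemma~\ref{selfcrossing}(1)--(2) (paying a factor $\delta^{\pm1}$ and deleting a crossing); a pair of strands crossing more than once is reduced using (RII), which removes two crossings at once, and the skein relation (S) / (RIII) to bring the two crossings adjacent. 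Each such move strictly decreases the crossing count, so the induction closes. After Steps 1--3 the underlying tangle $\hat d$ has no self-crossings and no double crossings, i.e.\ it is reduced; combined with (RIII) and \eqref{bbb} one may further arrange $\hat d$ to be totally descending, giving Definition~\ref{D:N.O. dotted OBC tangle diagram}(d) $=$ Definition~\ref{d1}(4). At that point $d$ (and every diagram produced along the way, by the induction hypothesis) lies in the span of $\hat{\mathbb T}_{m,s}$.

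\textbf{Main obstacle.} The delicate point is termination: the dot-sliding moves in Step 2 create cup-cap diagrams that, although having fewer crossings, may reintroduce loops or push dots onto caps/cups in non-endpoint positions, and the loop-reduction of Step 1 via Lemma~\ref{addmi} produces products $\Delta_i\Delta_{i-j}$ that must be reabsorbed without increasing complexity. The cleanest way around this is to fix from the outset a single monovariant — I would take $(\#\text{crossings},\ \#\text{dots not at endpoints},\ \#\text{loops carrying }\circ\text{'s or not leftmost})$ in lexicographic order — and check that every relation invoked (\eqref{relation 4}--\eqref{relation 6}, (S), (T), (L), (RI)--(RIII), Lemmas~\ref{selfcrossing}, \ref{addmi}, \ref{spa}, \ref{tangle equa}) either strictly decreases this monovariant or replaces $d$ by a scalar multiple of a diagram with strictly smaller first coordinate. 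Once that bookkeeping is set up the argument is a routine, if lengthy, induction, so I would present the reductions in the three groups above and only spell out the monovariant-decrease for one representative case in each group, leaving the remaining symmetric cases to the reader as is done elsewhere in the paper.
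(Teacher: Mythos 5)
Your proposal shares the paper's broad strategy of a complexity-lowering induction driven by Lemmas~\ref{spa}, \ref{selfcrossing}, \ref{addmi} and Theorem~\ref{bask}, but the \emph{ordering} of the reduction steps is the wrong way round, and that order is not a cosmetic choice here. You want to first normalize loops (your Step 1), then slide dots to endpoints (Step 2), then remove self-/double-crossings (Step 3). The paper does crossings \emph{first} for a reason: a loop may cross another strand, and it must do so an even number of times, so until Definition~\ref{d1}(1) has been established you cannot isolate a loop at all — the tools you cite in Step~1 ((L), Lemma~\ref{selfcrossing}(1)--(2), Lemma~\ref{addmi}) only handle a loop that is \emph{already} crossing-free. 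Once Definition~\ref{d1}(1) holds, loops are automatically crossing-free (a loop crossing a strand twice would be a double crossing), and only then can the loop normalization go through.

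A second, more substantive gap is that your Step~1 claims to achieve Definition~\ref{D:N.O. dotted OBC tangle diagram}(a) — loops are ``unshielded'', i.e.\ can be pushed to the left border — without any computation. That is precisely the delicate part of the paper's proof: equation~\eqref{free} shows explicitly that a crossing-free loop carrying $k$ bullets can be commuted past a single vertical strand modulo a linear combination of diagrams with \emph{no new crossings} and either a strictly smaller loop or no loop. You invoke a lexicographic monovariant, but never verify that this specific move (which mixes Lemma~\ref{spa}, Lemma~\ref{selfcrossing}(1)--(3), (S), (T) and (RII)) actually decreases it; indeed the cup-cap error terms from Lemma~\ref{spa} create new caps with dots in non-leftmost positions, so the coordinate ``\# dots not at endpoints'' can go \emph{up} in your proposed order. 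The paper sidesteps this by invoking Theorem~\ref{bask} (the Kauffmann basis theorem, via the monoidal functor of Lemma~\ref{ktau}(2)) twice — once after clearing dots from bad crossing regions to establish Definition~\ref{d1}(1), and once at the very end to get the totally descending form (Definition~\ref{d1}(4)) — rather than reproving that bookkeeping by hand. To repair your proof you would need to either permute your steps to match the paper's order, or carry out the monovariant verification for each move and in particular supply the analogue of \eqref{free}.
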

 \begin{proof}

 For  any  $d\in \mathbb{T}_{m,s}$ which does not satisfies Definition~\ref{d1}(1),  either there is a
  strand crossing  itself or there are two
strands crossing  each other more than once. Thanks to  Lemma~\ref{spa}(1)-(4), one can slide dots ($\bullet$'s or $\circ$'s) along each crossing in $d$ modulo
diagrams with fewer crossings. By \eqref{relation 6} and  Lemma~\ref{selfcrossing}(3), one can also  slide dots ($\bullet$'s or $\circ$'s) along each cup and each cap. By induction on the number of crossings, we can assume that there is no dots in the local area on which either there is a self-crossing strand or  there are  two
strands crossing  each other more than once. Using Theorem \ref{bask} and the monoidal  functor  in Lemma~\ref{ktau}(2), we see that $d$ can be written as a linear combination of dotted tangle diagrams  which satisfy Definition~\ref{d1}(1).

Now, we assume that $d\in \mathbb{T}_{m,s}$ satisfying Definition~\ref{d1}(1). Then  all loops of $d$ are crossing free. Otherwise, there is  a crossing which appears on a loop such that  either there is a strand crossing  itself or there are two
strands crossing each other more than once.
 Thanks to  \eqref{relation 6}, (L) and Lemma~\ref{addmi},  we can  assume all loops of $d$ satisfy Definition \ref{D:N.O. dotted  OBC tangle diagram}(b)-(c).  In order to write $d$ as a linear combination of dotted tangle
 diagrams satisfying  Definition~\ref{d1}(1)-(2), it is enough to move a loop containing  $k$ $\bullet$ to the left of a vertical line, where $k$ is a positive integer.  In fact, applying Lemma~\ref{spa} repeatedly together with  \eqref{relation 6} and Lemma~\ref{selfcrossing}(3) yields the following equations:
\begin{equation}\begin{aligned}\label{free}\begin{tikzpicture}[baseline = 25pt, scale=0.35, color=\clr]
\draw[-,thick] (-2.5,4) to[out=up,in=down] (-2.5,2);
\draw[-,thick]  (-1,4) to[out=left,in=up] (-2,3) to[out=down,in=left] (-1,2)to[out=right,in=down] (0,3) to[out=up,in=right] (-1,4);
 \node at (-2,3) {$\color{darkblue}\scriptstyle\bullet$};
\node at (-2.1,4) {$\color{darkblue}\scriptstyle k$};
   \end{tikzpicture}&=\begin{tikzpicture}[baseline = -5mm]
	\draw[-,thick,darkblue] (0.28+0.56+0.56,0) to[out=90, in=0] (0+0.56+0.56,0.3);
	\draw[-,thick,darkblue] (0+0.56+0.56,0.3) to[out = 180, in = 90] (-0.28+0.56+0.56,0);
	
\draw[-,thick,darkblue] (0.28,-0.45) to[out=60,in=-90] (0.28+0.56,0);
	\draw[-,line width=4pt,white](0.28+0.56,-0.45) to[out=120,in=-90] (0.28,0);
	\draw[-,thick,darkblue](0.28+0.56,-0.45) to[out=120,in=-90] (0.28,0);
  \draw[-,thick,darkblue] (0.28+0.56,-0.9) to[out=120,in=-90] (0.28,-.45);
	\draw[-,line width=4pt,white] (0.28,-0.9) to[out=60,in=-90] (0.28+0.56,-.45);
	\draw[-,thick,darkblue] (0.28,-0.9) to[out=60,in=-90] (0.28+0.56,-.45);
    \draw[-,thick,darkblue] (-0.28+0.56+0.56,-0.9) to[out=down,in=left] (0+0.56+0.56,-1.2) to[out=right,in=down] (0.28+0.56+0.56,-0.9);
    \draw[-,thick,darkblue] (-0.28+0.56+0.56+0.56,0) to (-0.28+0.56+0.56+0.56,-0.9);
    \node at(-0.28+0.56+0.56,0) {$\color{darkblue}\scriptstyle\bullet$};\node at (-0.48+0.56+0.56,0) {$\color{darkblue}\scriptstyle k$};
\end{tikzpicture}\\&=\begin{tikzpicture}[baseline = -5mm]
	\draw[-,thick,darkblue] (0.28+0.56+0.56,0) to[out=90, in=0] (0+0.56+0.56,0.3);
	\draw[-,thick,darkblue] (0+0.56+0.56,0.3) to[out = 180, in = 90] (-0.28+0.56+0.56,0);
	
\draw[-,thick,darkblue] (0.28,-0.45) to[out=60,in=-90] (0.28+0.56,0);
	\draw[-,line width=4pt,white](0.28+0.56,-0.45) to[out=120,in=-90] (0.28,0);
	\draw[-,thick,darkblue](0.28+0.56,-0.45) to[out=120,in=-90] (0.28,0);
  \draw[-,thick,darkblue] (0.28+0.56,-0.9) to[out=120,in=-90] (0.28,-.45);
	\draw[-,line width=4pt,white] (0.28,-0.9) to[out=60,in=-90] (0.28+0.56,-.45);
	\draw[-,thick,darkblue] (0.28,-0.9) to[out=60,in=-90] (0.28+0.56,-.45);
    \draw[-,thick,darkblue] (-0.28+0.56+0.56,-0.9) to[out=down,in=left] (0+0.56+0.56,-1.2) to[out=right,in=down] (0.28+0.56+0.56,-0.9);
    \draw[-,thick,darkblue] (-0.28+0.56+0.56+0.56,0) to (-0.28+0.56+0.56+0.56,-0.9);
    \node at(0.28,-0.45) {$\color{darkblue}\scriptstyle\bullet$};\node at (0.28,-0.25) {$\color{darkblue}\scriptstyle k$};
\end{tikzpicture}~+~z\sum_{i=1}^{k}\left(\begin{tikzpicture}[baseline = -0.5mm]
	\draw[-,thick,darkblue] (0,0.6) to (0,0.3);
	\draw[-,thick,darkblue] (0.5,0) to [out=90,in=0](.3,0.2);
	\draw[-,thick,darkblue] (0,-0.3) to (0,-0.6);
	\draw[-,thick,darkblue] (0.3,-0.2) to [out=0,in=-90](.5,0);
	\draw[-,thick,darkblue] (0,0.3) to [out=-90,in=180] (.3,-0.2);
	\draw[-,line width=4pt,white] (0.3,.2) to [out=180,in=90](0,-0.3);
	\draw[-,thick,darkblue] (0.3,.2) to [out=180,in=90](0,-0.3);
\node at(0,0.4) {$\color{darkblue}\scriptstyle\bullet$};\node at (-0.2,0.4) {$\color{darkblue}\scriptstyle i$};\node at(0.15,0.1) {$\color{darkblue}\scriptstyle\bullet$};\node at (0.3,0.3) {$\color{darkblue}\scriptstyle k-i$};
\end{tikzpicture}~
-~\begin{tikzpicture}[baseline = -3mm]
	\draw[-,thick,darkblue] (0.28,-.6) to[out=120,in=-90] (-0.28,0);
	\draw[-,thick,darkblue] (0.28,0) to[out=90, in=0] (0,0.2);
	\draw[-,thick,darkblue] (0,0.2) to[out = 180, in = 90] (-0.28,0);
	\draw[-,line width=4pt,white] (-0.28,-.6) to[out=60,in=-90] (0.28,0);
	\draw[-,thick,darkblue] (-0.28,-.6) to[out=60,in=-90] (0.28,0);
\draw[-,thick,darkblue] (0.28,-0.6) to[out=down,in=left] (0.56,-.9) to[out=right,in=down] (0.28+0.56,-.6);
 \draw[-,thick,darkblue] (0.56+0.28,-0.6) to(0.56+0.28,0.2);
     \node at (0.56+0.28,-0.6) {$\color{darkblue}\scriptstyle\circ$};
      \node at (0.56,-0.4) {$\color{darkblue}\scriptstyle k-i$};
      \node at (-0.28,0) {$\color{darkblue}\scriptstyle\bullet$};
      \node at (-0.18,0) {$\color{darkblue}\scriptstyle i$};
\end{tikzpicture}\right)\\
&=\begin{tikzpicture}[baseline = -5mm]
	\draw[-,thick,darkblue] (0.28+0.56+0.56,0) to[out=90, in=0] (0+0.56+0.56,0.3);
	\draw[-,thick,darkblue] (0+0.56+0.56,0.3) to[out = 180, in = 90] (-0.28+0.56+0.56,0);
	
\draw[-,thick,darkblue] (0.28,-0.45) to[out=60,in=-90] (0.28+0.56,0);
	\draw[-,line width=4pt,white](0.28+0.56,-0.45) to[out=120,in=-90] (0.28,0);
	\draw[-,thick,darkblue](0.28+0.56,-0.45) to[out=120,in=-90] (0.28,0);
  \draw[-,thick,darkblue] (0.28+0.56,-0.9) to[out=120,in=-90] (0.28,-.45);
	\draw[-,line width=4pt,white] (0.28,-0.9) to[out=60,in=-90] (0.28+0.56,-.45);
	\draw[-,thick,darkblue] (0.28,-0.9) to[out=60,in=-90] (0.28+0.56,-.45);
    \draw[-,thick,darkblue] (-0.28+0.56+0.56,-0.9) to[out=down,in=left] (0+0.56+0.56,-1.2) to[out=right,in=down] (0.28+0.56+0.56,-0.9);
    \draw[-,thick,darkblue] (-0.28+0.56+0.56+0.56,0) to (-0.28+0.56+0.56+0.56,-0.9);
    \node at(0.28,-0.45) {$\color{darkblue}\scriptstyle\bullet$};\node at (0.28,-0.25) {$\color{darkblue}\scriptstyle k$};
\end{tikzpicture}+z\sum_{i=1}^{k}\left(\begin{tikzpicture}[baseline = -0.5mm]
	\draw[-,thick,darkblue] (0,0.6) to (0,0.3);
	\draw[-,thick,darkblue] (0.5,0) to [out=90,in=0](.3,0.2);
	\draw[-,thick,darkblue] (0,-0.3) to (0,-0.6);
	\draw[-,thick,darkblue] (0.3,-0.2) to [out=0,in=-90](.5,0);
	\draw[-,thick,darkblue] (0,0.3) to [out=-90,in=180] (.3,-0.2);
	\draw[-,line width=4pt,white] (0.3,.2) to [out=180,in=90](0,-0.3);
	\draw[-,thick,darkblue] (0.3,.2) to [out=180,in=90](0,-0.3);
\node at(0,0.4) {$\color{darkblue}\scriptstyle\bullet$};\node at (-0.2,0.4) {$\color{darkblue}\scriptstyle i$};\node at(0,-0.3) {$\color{darkblue}\scriptstyle\bullet$};\node at (-0.3,-0.3) {$\color{darkblue}\scriptstyle k-i$};
\end{tikzpicture}+z\sum_{j=1}^{k-i}\left(\begin{tikzpicture}[baseline = 25pt, scale=0.35, color=\clr]
\draw[-,thick] (-3,4) to[out=up,in=down] (-3,2);
\draw[-,thick]  (-1,4) to[out=left,in=up] (-2,3) to[out=down,in=left] (-1,2)to[out=right,in=down] (0,3) to[out=up,in=right] (-1,4);
 \node at (-2,3) {$\color{darkblue}\scriptstyle\bullet$};
\node at (-1.5,3.4) {$\color{darkblue}\scriptstyle j$};
 \node at (-3,3) {$\color{darkblue}\scriptstyle\bullet$};
\node at (-4.2,3) {$\color{darkblue}\scriptstyle k-j$};
   \end{tikzpicture}-\begin{tikzpicture}[baseline = 25pt, scale=0.35, color=\clr]
\draw[-,thick] (1,4) to[out=up,in=down] (1,2);
 \node at (1,3) {$\color{darkblue}\scriptstyle\bullet$};
\node at (2,3) {$\color{darkblue}\scriptstyle k-2j$};
   \end{tikzpicture}\right)\right)
\\&\ \ \ \  -z\left(\begin{tikzpicture}[baseline = -3mm]
	\draw[-,thick,darkblue] (0.28,-.6) to[out=120,in=-90] (-0.28,0);
	\draw[-,thick,darkblue] (0.28,0) to[out=90, in=0] (0,0.2);
	\draw[-,thick,darkblue] (0,0.2) to[out = 180, in = 90] (-0.28,0);
	\draw[-,line width=4pt,white] (-0.28,-.6) to[out=60,in=-90] (0.28,0);
	\draw[-,thick,darkblue] (-0.28,-.6) to[out=60,in=-90] (0.28,0);
\draw[-,thick,darkblue] (0.28,-0.6) to[out=down,in=left] (0.56,-.9) to[out=right,in=down] (0.28+0.56,-.6);
 \draw[-,thick,darkblue] (0.56+0.28,-0.6) to(0.56+0.28,0.2);
     \node at (0.56+0.28,-0.6) {$\color{darkblue}\scriptstyle\circ$};
      \node at (0.56,-0.4) {$\color{darkblue}\scriptstyle k-i$};
      \node at (0.28,-0.6) {$\color{darkblue}\scriptstyle\bullet$};
      \node at (0.12,-0.6) {$\color{darkblue}\scriptstyle i$};
\end{tikzpicture}+z\sum_{j=1}^i\left(\begin{tikzpicture}[baseline = 25pt, scale=0.35, color=\clr]
\draw[-,thick] (1,4) to[out=up,in=down] (1,2);
\draw[-,thick]  (-1,4) to[out=left,in=up] (-2,3) to[out=down,in=left] (-1,2)to[out=right,in=down] (0,3) to[out=up,in=right] (-1,4);
 \node at (-2,3) {$\color{darkblue}\scriptstyle\bullet$};
\node at (-0.8,3) {$\color{darkblue}\scriptstyle i-j$};
 \node at (1,3) {$\color{darkblue}\scriptstyle\circ$};
\node at (2.3,2.4) {$\color{darkblue}\scriptstyle k+j-i$};
   \end{tikzpicture}-\begin{tikzpicture}[baseline = 25pt, scale=0.35, color=\clr]
\draw[-,thick] (1,4) to[out=up,in=down] (1,2);
 \node at (1,3) {$\color{darkblue}\scriptstyle\bullet$};
\node at (2.8,3) {$\color{darkblue}\scriptstyle 2i-2j-k$};
   \end{tikzpicture}  \right)\right)
   \\&=\begin{tikzpicture}[baseline = 25pt, scale=0.35, color=\clr]
\draw[-,thick] (1,4) to[out=up,in=down] (1,2);
\draw[-,thick]  (-1,4) to[out=left,in=up] (-2,3) to[out=down,in=left] (-1,2)to[out=right,in=down] (0,3) to[out=up,in=right] (-1,4);
 \node at (-2,3) {$\color{darkblue}\scriptstyle\bullet$};
\node at (-1.5,3) {$\color{darkblue}\scriptstyle k$};
   \end{tikzpicture}~+~z\sum_{i=1}^{k}\left(\delta \begin{tikzpicture}[baseline = -0.5mm]
	\draw[-,thick,darkblue] (0,-0.3) to (0,0.3);
	
\node at(0,0) {$\color{darkblue}\scriptstyle\bullet$};\node at (-0.2,0) {$\color{darkblue}\scriptstyle k$};
\end{tikzpicture}+z\sum_{j=1}^{k-i}\left( \begin{tikzpicture}[baseline = 25pt, scale=0.35, color=\clr]
\draw[-,thick] (-3,4) to[out=up,in=down] (-3,2);
\draw[-,thick]  (-1,4) to[out=left,in=up] (-2,3) to[out=down,in=left] (-1,2)to[out=right,in=down] (0,3) to[out=up,in=right] (-1,4);
 \node at (-2,3) {$\color{darkblue}\scriptstyle\bullet$};
\node at (-1.5,3.4) {$\color{darkblue}\scriptstyle j$};
 \node at (-3,3) {$\color{darkblue}\scriptstyle\bullet$};
\node at (-4.2,3) {$\color{darkblue}\scriptstyle k-j$};
   \end{tikzpicture}-\begin{tikzpicture}[baseline = 25pt, scale=0.35, color=\clr]
\draw[-,thick] (1,4) to[out=up,in=down] (1,2);
 \node at (1,3) {$\color{darkblue}\scriptstyle\bullet$};
\node at (2.1,3) {$\color{darkblue}\scriptstyle k-2j$};
   \end{tikzpicture}\right)\right)\\& -z\sum_{i=1}^{k}\left(\delta^{-1}~\begin{tikzpicture}[baseline = 25pt, scale=0.35, color=\clr]
\draw[-,thick] (1,4) to[out=up,in=down] (1,2);
 \node at (1,3) {$\color{darkblue}\scriptstyle\bullet$};
\node at (2,3) {$\color{darkblue}\scriptstyle 2i-k$};
   \end{tikzpicture}+z\sum_{j=1}^{i}\left(\begin{tikzpicture}[baseline = 25pt, scale=0.35, color=\clr]
\draw[-,thick] (1,4) to[out=up,in=down] (1,2);
\draw[-,thick]  (-1,4) to[out=left,in=up] (-2,3) to[out=down,in=left] (-1,2)to[out=right,in=down] (0,3) to[out=up,in=right] (-1,4);
 \node at (-2,3) {$\color{darkblue}\scriptstyle\bullet$};
\node at (-0.8,3) {$\color{darkblue}\scriptstyle i-j$};
 \node at (1,3) {$\color{darkblue}\scriptstyle\circ$};
\node at (2.3,2.4) {$\color{darkblue}\scriptstyle k+j-i$};
   \end{tikzpicture}-\begin{tikzpicture}[baseline = 25pt, scale=0.35, color=\clr]
\draw[-,thick] (1,4) to[out=up,in=down] (1,2);
 \node at (1,3) {$\color{darkblue}\scriptstyle\bullet$};
\node at (2.8,3) {$\color{darkblue}\scriptstyle 2i-2j-k$};
   \end{tikzpicture}\right)\right),
   \end{aligned}\end{equation}
   where the last equation follows from Lemma~\ref{selfcrossing}(1), (T), (RII) and
   \begin{equation} \label{kkkk} \begin{tikzpicture}[baseline = -1mm]
	\draw[-,thick,darkblue] (0.28+0.56+0.56,0) to[out=90, in=0] (0+0.56+0.56,0.3);
	\draw[-,thick,darkblue] (0+0.56+0.56,0.3) to[out = 180, in = 90] (-0.28+0.56+0.56,0);
	
\draw[-,thick,darkblue] (0.28,-0.45) to[out=60,in=-90] (0.28+0.56,0);
	\draw[-,line width=4pt,white](0.28+0.56,-0.45) to[out=120,in=-90] (0.28,0);
	\draw[-,thick,darkblue](0.28+0.56,-0.45) to[out=120,in=-90] (0.28,0);
 \draw[-,thick,darkblue] (0.28+0.56+0.56,-0.45) to (0.28+0.56+0.56,0);\end{tikzpicture}
	=\begin{tikzpicture}[baseline = -1mm]
	\draw[-,thick,darkblue] (0.28+0.56+0.56,0) to[out=90, in=0] (0+0.56+0.56,0.3);
	\draw[-,thick,darkblue] (0+0.56+0.56,0.3) to[out = 180, in = 90] (-0.28+0.56+0.56,0);
	
\draw[-,thick,darkblue] (0.28+0.56+0.56+0.56,-0.45) to[out=120,in=-90] (0.28+0.56+0.56,0);
	\draw[-,line width=4pt,white](0.28+0.56+0.56,-0.45) to[out=60,in=-90] (0.28+0.56+0.56+0.56,0);
	\draw[-,thick,darkblue](0.28+0.56+0.56,-0.45) to[out=60,in=-90] (0.28+0.56+0.56+0.56,0);
    \draw[-,thick,darkblue] (0.28+0.56,-0.45) to (0.28+0.56,-0);
\end{tikzpicture}, \begin{tikzpicture}[baseline = -9mm]
	
	
  \draw[-,thick,darkblue] (0.28+0.56,-0.9) to[out=120,in=-90] (0.28,-.45);
	\draw[-,line width=4pt,white] (0.28,-0.9) to[out=60,in=-90] (0.28+0.56,-.45);
	\draw[-,thick,darkblue] (0.28,-0.9) to[out=60,in=-90] (0.28+0.56,-.45);
    \draw[-,thick,darkblue] (-0.28+0.56+0.56,-0.9) to[out=down,in=left] (0+0.56+0.56,-1.2) to[out=right,in=down] (0.28+0.56+0.56,-0.9);
    \draw[-,thick,darkblue] (-0.28+0.56+0.56+0.56,-0.45) to (-0.28+0.56+0.56+0.56,-0.9);
\end{tikzpicture}= \begin{tikzpicture}[baseline = -9mm]
	
	
  \draw[-,thick,darkblue] (0.28+0.56+0.56,-0.9) to[out=60,in=-90] (0.28+0.56+0.56+0.56,-.45);
	\draw[-,line width=4pt,white] (0.28+0.56+0.56+0.56,-0.9) to[out=120,in=-90] (0.28+0.56+0.56,-.45);
	\draw[-,thick,darkblue] (0.28+0.56+0.56+0.56,-0.9) to[out=120,in=-90] (0.28+0.56+0.56,-.45);
    \draw[-,thick,darkblue] (-0.28+0.56+0.56,-0.9) to[out=down,in=left] (0+0.56+0.56,-1.2) to[out=right,in=down] (0.28+0.56+0.56,-0.9);
    \draw[-,thick,darkblue] (-0.28+0.56+0.56,-0.45) to (-0.28+0.56+0.56,-0.9);
\end{tikzpicture}. \end{equation}
 We remark that \eqref{kkkk} follows from \eqref{relation 1}-\eqref{relation 3}. Now, any $d$ can be written as a linear combination of dotted tangle diagrams satisfying Definition~\ref{d1}(1)--(2) since no new crossing appears when we  move a loop with $k$ $\bullet$ to the left of a vertical
line.

Now, we assume that  $d\in \mathbb{T}_{m,s}$ satisfying Definition~\ref{d1}(1)-(2). Using Lemmas~\ref{spa},\ref{selfcrossing}(3), \eqref{relation 6} together with induction on the number of crossings, we see that $d$ can be written as a linear combination of dotted tangle diagrams satisfying  Definition~\ref{d1}(1)-(3) since neither new loops nor crossing occurs when we apply the previous results to move dots along a strand.

Finally, we can assume that $d$ satisfies    Definition \ref{d1}(1)-(3).
 Thanks to   Theorem~\ref{bask} and the functor in Lemma~\ref{ktau}(2), $d$ can be written as a linear combination of dotted tangle diagrams in $\hat{\mathbb{T}}_{m,s}$.
  \end{proof}

\begin{Prop}\label{span1}The $\mathbb K$-module ${\Hom}_{\AB}(\ob m, \ob s)$ is spanned by $\mathbb {NT}_{m, s}/\sim$.
\end{Prop}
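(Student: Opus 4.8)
The plan is to reduce the spanning statement from the full set $\mathbb{T}_{m,s}$ of dotted tangle diagrams down to the much smaller set $\mathbb{NT}_{m,s}$ of normally ordered diagrams, in two passes. First, by Lemma~\ref{sim}, every $d\in\mathbb{T}_{m,s}$ is already a $\mathbb K$-linear combination of elements of $\hat{\mathbb T}_{m,s}$ (diagrams that are reduced, totally descending, with crossing-free loops satisfying Definition~\ref{D:N.O. dotted OBC tangle diagram}(a)--(c), and with dots only near endpoints of strands). Since $\Hom_{\AB}(\ob m,\ob s)$ is spanned by $\mathbb{T}_{m,s}$, it is therefore spanned by $\hat{\mathbb T}_{m,s}$. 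So it suffices to show that each $d\in\hat{\mathbb T}_{m,s}$ can be written as a $\mathbb K$-linear combination of elements of $\mathbb{NT}_{m,s}$, where the only extra requirements to impose beyond membership in $\hat{\mathbb T}_{m,s}$ are the positional normalizations (e): a dot on a vertical strand sits at the bottom boundary; (f): a dot on a cap (resp.\ cup) sits at the leftmost (resp.\ rightmost) boundary; (g): $\bullet$ and $\circ$ never coexist on one strand; together with item (c) in its full strength (dots on loops are $\bullet$'s collected at the leftmost boundary).

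Second, I would carry out these normalizations one strand at a time, inducting on some complexity measure (e.g.\ the number of crossings, with dots collected appropriately). For condition (g): if both a $\bullet$ and a $\circ$ occur on the same segment, use \eqref{relation 4} to cancel them in pairs, strictly decreasing the total dot count; here I must be careful that the relevant $\bullet$ and $\circ$ lie on the \emph{same segment} after sliding, which is arranged using \eqref{relation 6} and Lemma~\ref{selfcrossing}(3) to move dots past cups/caps and Lemma~\ref{spa}(1)--(4) to move dots past crossings (modulo diagrams with fewer crossings, which are handled by induction). For condition (e): once a vertical strand carries only $\bullet$'s (or only $\circ$'s), slide them down along the strand to the bottom boundary, again using Lemma~\ref{spa} to pass crossings at the cost of lower-order terms, and \eqref{relation 6} is not needed here since a purely vertical strand has no critical points. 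For condition (f): on a cap, use \eqref{relation 6} and Lemma~\ref{selfcrossing}(3) to push all dots to the leftmost boundary, and dually for cups to the rightmost boundary; crossings along the way are absorbed by induction. For condition (c): any $\circ$ on a loop is converted to $\bullet$'s on that loop via the first relation of Lemma~\ref{selfcrossing}(3) applied to the loop's cap/cup, and the $\bullet$'s are then slid to the leftmost boundary; loops with dots that are "shielded" from the left by other strands are moved leftward using the loop-migration identity \eqref{free} established in the proof of Lemma~\ref{sim}, which introduces only diagrams with strictly fewer crossings or with the loop already at the far left, plus bubble terms $\Delta_i\in\End_{\AB}(\ob 0)$ which by Lemma~\ref{addmi} and the commutativity noted after \eqref{bubbs} can be absorbed as scalars onto the remaining diagram.

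After all these moves, each resulting diagram lies in $\mathbb{NT}_{m,s}$, and two such diagrams that agree up to the equivalence $\sim$ of Definition~\ref{equiv1} represent the same morphism in $\AB$ (as observed in the paragraph preceding Theorem~\ref{affbasis}, using $\hat d=\hat{d'}$ in $\B$ and hence in $\AB$). So the images of all these normalized diagrams lie in the $\mathbb K$-span of $\mathbb{NT}_{m,s}/\sim$, and since we started from an arbitrary element of the spanning set $\hat{\mathbb T}_{m,s}$, this proves that $\mathbb{NT}_{m,s}/\sim$ spans $\Hom_{\AB}(\ob m,\ob s)$. The main obstacle is bookkeeping: I expect the genuine difficulty to be setting up a single well-founded induction (a lexicographic order on, say, number of crossings, then number of loops not at the leftmost position, then total number of dots, then some penalty counting dots in "wrong" positions) for which \emph{every} relation invoked above strictly decreases the order on all error terms — in particular verifying that sliding dots past crossings via Lemma~\ref{spa} and migrating loops via \eqref{free} never create diagrams outside the span already controlled by the inductive hypothesis. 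Once the order is fixed correctly, each individual step is a routine application of the relations already recorded in Lemmas~\ref{selfcrossing}, \ref{addmi}, \ref{spa} and equations \eqref{relation 4}--\eqref{relation 6}, \eqref{free}, \eqref{kkkk}.
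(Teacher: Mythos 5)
Your proposal follows essentially the same route as the paper: both reduce to $\hat{\mathbb T}_{m,s}$ via Lemma~\ref{sim}, then normalize dot positions by sliding with Lemma~\ref{spa}, \eqref{relation 4}, \eqref{relation 6} and Lemma~\ref{selfcrossing}(3), inducting on the number of crossings. Two small remarks: your ``condition (c)'' step on loops and $\Delta_i$ absorption is redundant, since membership in $\hat{\mathbb T}_{m,s}$ already forces Definition~\ref{D:N.O. dotted  OBC tangle diagram}(a)--(c); and the elaborate lexicographic order you worry about is unnecessary --- since any $d\in\hat{\mathbb T}_{m,s}$ and its normalization $d_1$ share the same underlying reduced diagram $\hat d=\hat d_1$, the dot-sliding moves only produce error terms with strictly fewer crossings, so plain induction on crossing number suffices, which is exactly what the paper does.
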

 
 \begin{proof} Thanks to  Lemma \ref{sim},  it is enough to verify that any $d\in\hat{\mathbb{T}}_{m,s}$  is a linear combination of elements in $\mathbb{NT}_{m,s}$.
 If $d$ has no crossing, then the result follows from \eqref{relation 4},\eqref{relation 6} and  Lemma~\ref{selfcrossing}(3).
 Suppose that there are some crossings on $d$. Thanks to   Lemmas~\ref{spa}, \ref{selfcrossing}(3),
 \eqref{relation 6}, and \eqref{relation 4}, there is a ${d_1}\in {\mathbb{NT}}_{m,s}$ such that $\hat d=\hat{d_1}$ and  $d=d_1$  up to some dotted tangle diagrams in $\hat{\mathbb{T}}_{m,s}$ with fewer crossings than that of  $d$. Now the result follows from induction on the number of crossings.
 \end{proof}
 \begin{Lemma}\label{XX} Suppose $r$ is a positive integer. For  all admissible $i, j$,  define
$$X_i1_{\ob r} =1_{i-1}\otimes X\otimes 1_{r-i}, ~ 1_{\ob r} U_j=1_{j-1}\otimes U\otimes 1_{r-j-1} \text{ and  $Z_j1_{\ob r} =1_{j-1}\otimes Z\otimes 1_{r-j-1}$,}$$  where $Z\in\{A,T,T^{-1}\}$. Then
$X_i1_{\ob r}  =T_{i-1}1_{\ob r} X_{i-1}1_{\ob r} T_{i-1}1_{\ob r}$, $2\le i\le r$. Further, $X_i1_{\ob r} X_j1_{\ob r}=X_j1_{\ob r}X_i1_{\ob r}$ for all admissible $i$ and $j$.  \end{Lemma}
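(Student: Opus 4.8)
The statement to prove is Lemma~\ref{XX}: $X_i1_{\ob r}=T_{i-1}1_{\ob r}X_{i-1}1_{\ob r}T_{i-1}1_{\ob r}$ for $2\le i\le r$, and the commutativity $X_i1_{\ob r}X_j1_{\ob r}=X_j1_{\ob r}X_i1_{\ob r}$ for all admissible $i,j$.

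\medskip

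The plan is to reduce everything to the local relations already available in the definition of $\AB$, working strand-by-strand and using that tensoring a relation on two adjacent strands with identities on the remaining strands is legitimate in a strict monoidal category. First I would establish the recursion $X_i1_{\ob r}=T_{i-1}1_{\ob r}X_{i-1}1_{\ob r}T_{i-1}1_{\ob r}$. This is essentially Definition~\ref{AK defn}(2), namely $T\circ(X\otimes 1_{\ob 1})\circ T=1_{\ob 1}\otimes X$, depicted in \eqref{relation 5}. Tensoring that identity on the left by $1_{\ob{i-2}}$ and on the right by $1_{\ob{r-i}}$ gives exactly $T_{i-1}1_{\ob r}\circ(X_{i-1}1_{\ob r})\circ T_{i-1}1_{\ob r}=X_i1_{\ob r}$, since the left-hand side of \eqref{relation 5} places $X$ on the $(i-1)$st strand flanked by the crossing $T$ on strands $i-1,i$, while the right-hand side places $X$ on the $i$th strand. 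The only thing to be careful about is the bookkeeping of which strand indices the local pieces occupy; once the tensor-with-identities operation is spelled out this is immediate.

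\medskip

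For the commutativity statement I would split into cases. If $|i-j|\ge 2$ the two dots $X_i$ and $X_j$ act on disjoint pairs of strands (disjoint single strands, actually), so $X_i1_{\ob r}$ and $X_j1_{\ob r}$ are of the form $a\otimes 1\otimes b$ and $1\otimes\cdots$ with non-overlapping support; the interchange law in a strict monoidal category gives $X_i1_{\ob r}X_j1_{\ob r}=X_j1_{\ob r}X_i1_{\ob r}$ directly. The case $i=j$ is trivial. The only genuine case is $|i-j|=1$, say $j=i-1$. Here I would use the recursion just proved together with Definition~\ref{AK defn}(1) ($X\circ X^{-1}=1_{\ob 1}=X^{-1}\circ X$, i.e. \eqref{relation 4}) and Definition~\ref{present1}(2) ($T\circ T^{-1}=T^{-1}\circ T=1_{\ob 2}$), reducing the identity $X_iX_{i-1}=X_{i-1}X_i$ to a computation on three consecutive strands, or even to two strands after applying the recursion to write $X_i$ in terms of $X_{i-1}$ and $T_{i-1}$. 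Concretely: $X_i1_{\ob r}X_{i-1}1_{\ob r}=T_{i-1}1_{\ob r}X_{i-1}1_{\ob r}T_{i-1}1_{\ob r}X_{i-1}1_{\ob r}$, and the commutativity reduces to showing that $X\otimes X$ commutes with $T$ on two strands, i.e. $T\circ(X\otimes X)=(X\otimes X)\circ T$. That in turn follows by applying \eqref{relation 5} twice (once for each dot), or by the identity $T\circ(X\otimes 1)\circ T=1\otimes X$ combined with $T\circ(1\otimes X)\circ T=X\otimes 1$ (the latter obtained from the former by composing with $T^{-1}$ on both sides and using $T\circ T^{-1}=1_{\ob 2}$).

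\medskip

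I expect the main obstacle to be purely notational: making the ``tensor a local relation with identities'' step airtight, since the macros $X_i1_{\ob r}$, $Z_j1_{\ob r}$, $1_{\ob r}U_j$ are defined by placing the elementary morphism in a specific slot, and one must verify that composition respects these slot assignments (associativity and functoriality of $\otimes$ in a strict monoidal category). There is no deep content — every needed equation is among Definition~\ref{AK defn}(1)--(2) and Theorem~\ref{present1}(2) — so the proof will be short, with the bulk of the writing devoted to the $|i-j|=1$ case and a one-line appeal to the interchange law for $|i-j|\ge 2$. A clean way to present it is: prove the recursion first, then observe $X\otimes X$ commutes with $T$ as a lemma-within-the-proof, then deduce all three cases of commutativity.
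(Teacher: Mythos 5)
Your proof of the recursion $X_i 1_{\ob r}=T_{i-1}1_{\ob r}X_{i-1}1_{\ob r}T_{i-1}1_{\ob r}$ matches the paper exactly: it is a direct consequence of \eqref{relation 5} tensored with identity strands, and the paper's proof is just the single sentence "the first assertion follows from \eqref{relation 5}."

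For the commutativity $X_i1_{\ob r}X_j1_{\ob r}=X_j1_{\ob r}X_i1_{\ob r}$, your proposal introduces an unnecessary case split. You correctly observe parenthetically that each $X_i$ acts on a \emph{single} strand, but then nevertheless treat $|i-j|=1$ as "the only genuine case" and route it through the recursion and the auxiliary identity $T\circ(X\otimes X)=(X\otimes X)\circ T$. That detour is correct (the auxiliary identity does hold, and your reduction bottoms out at $(1\otimes X)(X\otimes 1)=(X\otimes 1)(1\otimes X)$), but it is superfluous: for \emph{any} $i\ne j$, $X_i1_{\ob r}$ and $X_j1_{\ob r}$ have disjoint single-strand support, so the interchange law in a strict monoidal category gives $X_i1_{\ob r}X_j1_{\ob r}=X_j1_{\ob r}X_i1_{\ob r}$ directly, with no case on $|i-j|$. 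This is what the paper does — its entire proof of the second assertion is "the last follows from the interchange law." So the result is the same, but you have done extra work for the adjacent-index case that a uniform one-line application of the interchange law already covers.
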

\begin{proof}The first assertion follows from \eqref{relation 5} and the last follows from the interchange law.
\end{proof}

Suppose $d\in \mathbb{NT}_{m, s}$ and $m+s$ is even. In section~1, we have labelled the endpoints of $\hat d$  at  the bottom (resp., top) row  as   $1, 2, \ldots, m$
(resp., $\bar 1, \bar 2,   \ldots,  \bar s$) from the left to the right and $i< i+1<\bar j<\overline{j-1}$ for all admissible $i$ and $j$.  We also have $conn(\hat d)=\{(i_l, j_l)| 1\leq l\leq \frac{m+s}{2}\}$ such that  $i_l<j_l$ and  $  i_k<i_{k+1}$ and  for all admissible $k, l $.

\begin{Defn}\label{dots}For any  $d\in \mathbb{NT}_{m, s}$ such that $conn(\hat d)=\{(i_l, j_l)| 1\leq l\leq \frac{m+s}{2}\}$ and any  $i\in\{1, 2, \ldots, m, \bar 1, \bar 2,   \ldots,  \bar s\}$, define $b_{d, i}=j$ if there are $j$ ``$\bullet$" near the $i$th endpoint.\end{Defn}
Note that $b_{d, i}\in\mathbb Z$.  Thanks to Definition~\ref{D:N.O. dotted  OBC tangle diagram}, $b_{d, j_l}=0$ for all admissible $l$. Later on, we simply denote $X_i1_{\ob r}$ by $X_i$ etc if we know $r$ from the context.
Then any $d\in \mathbb{NT}_{m,s}$ is of form

\begin{equation} \label {form}
d=\prod_{j=1}^\infty {\Delta_j}^{i_j} \cdot \prod_{l=s}^1 {Y_l}^{b_{d, \bar l}} \cdot  \hat{d}\cdot
\prod_{j=m}^1 {X_j}^{b_{d, j}}  \end{equation}
where $Y_l$ is  $X_l\in\End_{\AB}(\ob s, \ob s)$ and $i_j$'s $\in \mathbb N$ such that only finite number of $i_j$'s are non-zero.

For any positive integer $b\geq 2$,
let $\mathfrak S_b$ be the symmetric group on $b$ letters $1, 2, \ldots, b$. Then $\mathfrak S_b$ is generated by basic transpositions $r_{i}=(i, i+1)$, $1\le i\le b-1$. Now,
$\mathfrak S_b$ acts on the  right of  $\mathbb N^b$ via place permutation. More explicitly,
$ \mathbf i\sigma =(i_{\sigma(1)}, i_{\sigma(2)}, \ldots, i_{\sigma(b)})$ for all $\sigma\in \mathfrak S_b$ and
  $\mathbf{i}=(i_1,i_2,\ldots, i_b)\in \mathbb{N}^b$.
 Define  $r_{k, l}=r_k r_{k+1}\cdots r_{l-1}$ if $k<l$ and $r_{k, k}=1$ and $r_{k, l}=r_{k-1} r_{k-2}\ldots r_{l}$ if $k>l$. If $r_{i_1}r_{i_2}\cdots r_{i_k}$ is a reduced expression of $w\in\mathfrak S_b$, define \begin{equation}\label{c5}T_w=T_{i_1}\cdots T_{i_k},\quad \text{ and }\quad   T^{inv}_w=T^{-1}_{i_1}\cdots T^{-1}_{i_k}\end{equation} in $\End_{\AB}(\ob b)$,  where $T_j$ is given in Lemma \ref{XX}. It is well-known that  both $T_w$ and $T^{inv}_w$ are independent of a reduced expression of $w$. We denote $T_{r_{k,l}}$(resp., $T_{r_{k,l}}^{inv}$ ) by $T_{k,l}$ (resp., $T_{k,l}^{inv}$).
 Let $\mathfrak{B}_b$ be the subgroup of $\mathfrak{S}_{2b}$ generated by $\{r_{2b-2i+2}r_{2b-2i+1}r_{2b-2i+3}r_{2b-2i+2}\mid 2<i<b\}$ and $\{r_{2b-1}\}$. Define \begin{equation}\label{c13}\mathcal {D}_{b,2b}=\left\{
r_{1,i_1}r_{2,j_1}\cdots r_{2b-1,i_b}r_{2b,j_b}\bigg|\begin{array}{lll}1\leq i_1<\ldots<i_r\leq 2b\\1\leq i_k<j_k\leq 2k;1\leq k\leq b\\\end{array}\right\}.
\end{equation} Thanks to \cite[Lemma~4.3]{RX}, $\mathcal{D}_{b,2b}$ is a complete set of right coset representatives for $\mathfrak{B}_{b}$ in $\mathfrak{S}_{2b}$ and $\sharp\mathcal{D}_{b,2b}=(2b-1)!!$.

\begin{Prop}\label{cspan3}    Suppose $d\in\bar{\mathbb{NT}}^1_{2s, 0}/\sim$  and   $conn(d)=\{(i_l, j_l)| 1\leq l\leq s\}$.
As morphisms in $\AB$,  $d=\lcap^{\otimes s} T^{inv}_w$,    where   $w= \vec{ \prod}_{t=1}^sr_{2t-1,i_t}r_{2t,k_t}\in \mathcal{D}_{s,2s}$  for some admissible $k_t$'s.
 \end{Prop}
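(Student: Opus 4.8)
The strategy is to reduce a normally ordered cup diagram with no dotted loops to the canonical form $\lcap^{\otimes s} T^{inv}_w$ by peeling off one cup at a time, working from the leftmost endpoint. Fix $d\in\bar{\mathbb{NT}}^1_{2s,0}/\sim$ with $\text{conn}(d)=\{(i_l,j_l)\mid 1\le l\le s\}$ where $i_l<j_l$ and $i_1<i_2<\cdots<i_s$, so necessarily $i_1=1$. Since $d\in\bar{\mathbb{NT}}^1_{2s,0}$ there are no decorated loops at all, and by the definition of $\bar{\mathbb{NT}}^\ell_{m,s}$ with $\ell=1$ there are no dots on any strand either (the dot count near each endpoint must lie in $\{\lfloor 0/2\rfloor\}=\{0\}$). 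Thus $d=\hat d$ is simply a reduced totally descending cup diagram, and the problem is purely about the Kauffmann category part: rewrite $\hat d$ in terms of the generators so that it reads as $s$ nested/adjacent cups composed with a word in the $T^{-1}_i$'s.

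\textbf{Key steps.} First I would identify, by the total-descending and reduced hypotheses together with \eqref{bbb}, that $\hat d$ is determined up to equality in $\AB$ by its connector, so it suffices to construct \emph{some} reduced totally descending representative of the required shape. Next, consider the cup $(1,j_1)$ attached to the leftmost endpoint: using the braid relations Theorem~\ref{present1}(3), together with \eqref{relation 2}--\eqref{relation 3} (which say a strand can be pushed past a cup at the cost of a single $T$ or $T^{-1}$) and Lemma~\ref{selfcrossing}, one can drag the endpoint $1$ rightward along the bottom row so that it becomes adjacent to its partner, producing a factor of the form $\lcap \otimes 1_{\ob{2s-2}}$ precomposed with $T^{inv}_{r_{1,i_1}r_{2,k_1}}$ for suitable $i_1$ and $k_1$; here $r_{1,i_1}$ records the horizontal position of the first endpoint of the cup and $r_{2,k_1}$ the position of the second. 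The braids used are always $T^{-1}$'s (not $T$'s) because the diagram is totally descending in the orientation where later strands pass \emph{under} earlier ones — this is exactly the content of the superscript ``$inv$''. After extracting this first cup, the remaining part of the diagram is a reduced totally descending cup diagram on $2s-2$ points, and I would induct on $s$, obtaining $w=\vec{\prod}_{t=1}^{s}r_{2t-1,i_t}r_{2t,k_t}$. That this $w$ lies in $\mathcal D_{s,2s}$ is immediate from the defining form \eqref{c13} once we check $1\le i_t<k_t\le 2t$ for each $t$, which follows from the normally-ordered convention $i_k<j_k$ and $i_k<i_l$ for $k<l$.

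\textbf{Main obstacle.} The delicate point is the bookkeeping in the inductive step: after removing the first cup, the indices of the remaining endpoints shift, and one must verify that the remaining diagram is \emph{still} reduced and totally descending in the correct orientation, so that the braid word accumulated is genuinely $T^{inv}$ rather than a mixed word, and that the positions $(i_t,k_t)$ that emerge satisfy the inequalities forcing $w\in\mathcal D_{s,2s}$. This requires a careful analysis of how dragging endpoint $1$ across the other cups affects their descending/reduced status — essentially one needs that pushing a vertical strand under a cup via \eqref{relation 2}--\eqref{relation 3} does not create a new crossing of two strands that already crossed, which uses that the original diagram was reduced together with the convexity-type ordering on the connector pairs. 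I expect this combinatorial verification — rather than any category-theoretic subtlety — to be the bulk of the work, and it will likely be organized as an induction on $s$ with the inductive hypothesis carefully stating that the ``inner'' diagram on $2s-2$ points again lies in $\bar{\mathbb{NT}}^1_{2s-2,0}$.
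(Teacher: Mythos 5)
Your overall strategy — untangle $d$ by braid multiplication and then invoke \eqref{bbb} to identify it with a canonical diagram — is the right family of ideas and matches the paper in spirit. However, the proposal as written has several gaps.

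First, the order of processing is reversed from what the target form of $w$ requires. You want to peel the cup $(1,j_1)$ first and induct on the remaining $2s-2$ points. If you do this, the $T^{inv}$ factor associated with cup $1$ lands at the \emph{bottom} of the diagram (furthest from $\lcap^{\otimes s}$), i.e., at the \emph{right end} of the word $w$. But the claimed form $w=\vec{\prod}_{t=1}^s r_{2t-1,i_t}r_{2t,k_t}$ places the cup-$1$ factor $r_{1,i_1}r_{2,k_1}$ at the \emph{left end}, adjacent to $\lcap^{\otimes s}$. The factors do not commute in general (they involve overlapping ranges of strand indices), so your construction produces a $T^{inv}$-word that is not in the required shape. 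The paper avoids this by processing cups in the opposite order: it defines $d_t=d_{t-1}T_{j^{t-1}_{s-t+1},2(s-t+1)}T_{i^{t-1}_{s-t+1},2(s-t)+1}$, dragging the cup with the largest remaining $i$-index to the rightmost free slot, so that after $s$ steps $d_s$ has connector $\{(2l-1,2l)\}$. Inverting the accumulated right-multiplication then naturally places the cup-$1$ factor first and the cup-$s$ factor last.

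Second, even setting the ordering aside, the bookkeeping for the $r_{k,l}$'s is off: since $i_1=1$ always, the factor $r_{1,i_1}=r_{1,1}$ is trivial and cannot ``record the horizontal position of the first endpoint'' as you claim; and ``dragging endpoint $1$ rightward'' to meet its partner does not produce a $\lcap\otimes 1_{\ob{2s-2}}$ factor (it would give a cup at positions $(j_1-1,j_1)$), so either the direction of the drag is wrong or the claimed factorization is.

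Third, the ``main obstacle'' you worry about — verifying that the inner diagram remains reduced and totally descending at every stage of an induction on $s$ — is precisely what the paper's approach sidesteps. The paper does not peel off cups and re-verify a membership hypothesis; instead it tracks only the connectors $\mathrm{conn}(d_t)$ of the intermediate composites $d_t=d\cdot(\text{product of }T\text{'s})$, observes that $\mathrm{conn}(d_s)$ is the trivial pairing, and applies \eqref{bbb} exactly once to conclude $d_s=\lcap^{\otimes s}$, then inverts. This eliminates the delicate Reidemeister-type combinatorics you anticipate, and is why your plan ends up being considerably harder than the paper's argument.
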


 \begin{proof} Suppose $d_0=d$. Define   $d_1=d_0T_{j_s,2s}T_{i_s,2s-1}$ and write  $conn(d_{1})=\{(i^{1}_l, j^{1}_l)| 1\leq l\leq s\}$. Obviously, $i_t^{1}=i_t$ if $t<s$ and $(i^1_s, j^1_s)=(2s-1,2s)$.
In general, write $conn(d_{t-1})=\{(i^{t-1}_l, j^{t-1}_l)| 1\leq l\leq s\}$ and
define $$d_{t}=d_{t-1}T_{j^{t-1}_{s-t+1},2(s-t+1)}T_{i^{t-1}_{s-t+1},2(s-t)+1},$$ $1\leq t\leq s$.  Then $conn(d_t)=\{(i_k, j^{t}_k),(2l+1, 2l+2)| 1\leq k\leq s-t, s-t\leq l<s\}$. In particular $i_t<j^{s-t}_t\leq 2t$ for any $1\leq t\leq s$.
 So, both $d_s$ and $\lcap^{\otimes s}$ are totally descending tangle diagrams with the same connector. By  \eqref{bbb},
 $d_s=\lcap^{\otimes s}$ as morphisms in $\FT$ and hence in $\B$. Using the functor in Lemma~\ref{ktau}(2), we see that $d_s=\lcap^{\otimes s}$ as morphisms in $\AB$.
 For example: $$
d_0=d=\begin{tikzpicture}[baseline = 3mm]\draw[-,thick,darkblue] (-.2,0) to[out=up,in=left] (.5,.6) to[out=right,in=up] (1.1,0);
         \draw[-,thick,darkblue] (.3,0) to[out=up,in=left] (.6,.4) to[out=right,in=up] (.9,0);
       \draw[-,line width=4pt,white] (0,0) to[out=up,in=left] (.3,.4) to[out=right,in=up] (.6,0);
       \draw[-,thick,darkblue]  (0,0) to[out=up,in=left] (.3,.4) to[out=right,in=up] (.6,0);\end{tikzpicture},\quad
       d_1=\begin{tikzpicture}[baseline = -5mm]
\draw[-,thick,darkblue] (-.56,0) to[out=up,in=left] (.84,.6) to[out=right,in=up] (2.24,0);
         \draw[-,thick,darkblue] (.56,0) to[out=up,in=left] (1.12,.4) to[out=right,in=up] (1.68,0);
       \draw[-,line width=4pt,white] (0,0) to[out=up,in=left] (.56,.4) to[out=right,in=up] (1.12,0);
       \draw[-,thick,darkblue]  (0,0) to[out=up,in=left] (.56,.4) to[out=right,in=up] (1.12,0);\draw[-,thick,darkblue] (2.24,-.5) to[out=90,in=-90] (1.68,0);
\draw[-,thick,darkblue] (1.68,-.5) to[out=90,in=-90] (2.24,0);
\draw[-,line width=4pt,white]  (1.68,-.5) to[out=90,in=-90] (2.24,0);
	\draw[-,thick,darkblue] (1.68,-.5) to[out=90,in=-90] (2.24,0);
\draw[-,thick,darkblue] (2.24-1.12,-.5) to[out=90,in=-90] (1.68-1.12,0);
\draw[-,thick,darkblue] (1.68-1.12,-.5) to[out=90,in=-90] (2.24-1.12,0);
\draw[-,line width=4pt,white]  (1.68-1.12,-.5) to[out=90,in=-90] (2.24-1.12,0);
	\draw[-,thick,darkblue] (1.68-1.12,-.5) to[out=90,in=-90] (2.24-1.12,0);
\draw[-,thick,darkblue] (2.24-0.56,-.5-.5) to[out=90,in=-90] (1.68-0.56,0-.5);
\draw[-,thick,darkblue] (1.68-0.56,-.5-.5) to[out=90,in=-90] (2.24-0.56,0-.5);
\draw[-,line width=4pt,white]  (1.68-0.56,-.5-.5) to[out=90,in=-90] (2.24-0.56,0-.5);
	\draw[-,thick,darkblue] (1.68-0.56,-.5-.5) to[out=90,in=-90] (2.24-0.56,0-.5);
    \draw[-,thick,darkblue] (-.56,-1) to[out=up,in=down] (-.56,-0);
     \draw[-,thick,darkblue] (-.56+.56,-1) to[out=up,in=down] (0,-0);
        \draw[-,thick,darkblue] (-.56+.56+0.56,-1) to[out=up,in=down] (0.56,-0.5);
         \draw[-,thick,darkblue] (2.24,-1) to[out=up,in=down] (2.24,-0.5);
    \end{tikzpicture},
     \quad
     d_2=
       \begin{tikzpicture}[baseline = -5mm]
\draw[-,thick,darkblue] (-.56,0) to[out=up,in=left] (.84,.6) to[out=right,in=up] (2.24,0);
         \draw[-,thick,darkblue] (.56,0) to[out=up,in=left] (1.12,.4) to[out=right,in=up] (1.68,0);
       \draw[-,line width=4pt,white] (0,0) to[out=up,in=left] (.56,.4) to[out=right,in=up] (1.12,0);
       \draw[-,thick,darkblue]  (0,0) to[out=up,in=left] (.56,.4) to[out=right,in=up] (1.12,0);\draw[-,thick,darkblue] (2.24,-.5) to[out=90,in=-90] (1.68,0);
\draw[-,thick,darkblue] (1.68,-.5) to[out=90,in=-90] (2.24,0);
\draw[-,line width=4pt,white]  (1.68,-.5) to[out=90,in=-90] (2.24,0);
	\draw[-,thick,darkblue] (1.68,-.5) to[out=90,in=-90] (2.24,0);
\draw[-,thick,darkblue] (2.24-1.12,-.5) to[out=90,in=-90] (1.68-1.12,0);
\draw[-,thick,darkblue] (1.68-1.12,-.5) to[out=90,in=-90] (2.24-1.12,0);
\draw[-,line width=4pt,white]  (1.68-1.12,-.5) to[out=90,in=-90] (2.24-1.12,0);
	\draw[-,thick,darkblue] (1.68-1.12,-.5) to[out=90,in=-90] (2.24-1.12,0);
\draw[-,thick,darkblue] (2.24-0.56,-.5-.5) to[out=90,in=-90] (1.68-0.56,0-.5);
\draw[-,thick,darkblue] (1.68-0.56,-.5-.5) to[out=90,in=-90] (2.24-0.56,0-.5);
\draw[-,line width=4pt,white]  (1.68-0.56,-.5-.5) to[out=90,in=-90] (2.24-0.56,0-.5);
	\draw[-,thick,darkblue] (1.68-0.56,-.5-.5) to[out=90,in=-90] (2.24-0.56,0-.5);
     \draw[-,thick,darkblue] (2.24-0.56-0.56,-.5-.5-.5) to[out=90,in=-90] (1.68-0.56-0.56,0-.5-.5);
\draw[-,thick,darkblue] (1.68-0.56-0.56,-.5-.5-.5) to[out=90,in=-90] (2.24-0.56-0.56,0-.5-.5);
\draw[-,line width=4pt,white]  (1.68-0.56-0.56,-.5-.5-.5) to[out=90,in=-90] (2.24-0.56-0.56,0-.5-.5);
	\draw[-,thick,darkblue] (1.68-0.56-0.56,-.5-.5-.5) to[out=90,in=-90] (2.24-0.56-0.56,0-.5-.5);

 \draw[-,thick,darkblue] (2.24-0.56-0.56-0.56,-.5-.5-.5-.5) to[out=90,in=-90] (1.68-0.56-0.56-0.56,0-.5-.5-.5);
\draw[-,thick,darkblue] (1.68-0.56-0.56-0.56,-.5-.5-.5-.5) to[out=90,in=-90] (2.24-0.56-0.56-0.56,0-.5-.5-.5);
\draw[-,line width=4pt,white]  (1.68-0.56-0.56-0.56,-.5-.5-.5-.5) to[out=90,in=-90] (2.24-0.56-0.56-0.56,0-.5-.5-.5);
	\draw[-,thick,darkblue] (1.68-0.56-0.56-0.56,-.5-.5-.5-.5) to[out=90,in=-90] (2.24-0.56-0.56-0.56,0-.5-.5-.5);
\draw[-,thick,darkblue] (-.56,-2) to[out=up,in=down] (-.56,-0);
\draw[-,thick,darkblue] (0,-2+0.5) to[out=up,in=down] (0,-0);
\draw[-,thick,darkblue] (.56,-2+0.5+0.5) to[out=up,in=down] (.56,-0.5);
\draw[-,thick,darkblue] (.56+.56,-2) to[out=up,in=down] (.56+0.56,-2+0.5);
\draw[-,thick,darkblue] (.56+.56+.56,-2) to[out=up,in=down] (.56+0.56+.56,-2+0.5+.5);
\draw[-,thick,darkblue] (.56+.56+.56+.56,-2) to[out=up,in=down] (.56+0.56+.56+.56,-2+0.5+1);
\end{tikzpicture}\overset{ \text{ \eqref{bbb}} }=\lcap^{\otimes 3}.$$Let $w=\vec{ \prod}_{t=1}^sr_{2t-1,i_t}r_{2t,j^{s-t}_t}$. Then  $w\in\mathcal{D}_{s,2s}$ and
 $\lcap^{\otimes s} T_w^{inv}=d_s\vec{ \prod}_{t=1}^s T^{inv}_{2t-1,i_t} T^{inv}_{2t,j^{s-t}_{t}}=d$ as morphisms in $\AB$.\end{proof}

Consider  $\U_v(\mathfrak h)$ as a $ \U_v(\mathfrak b)$-module such that $x_i^+$ acts as zero for all admissible $i$, and define
\begin{equation}{M^{gen}:=\U_v(\mathfrak g)\otimes_{\U_v(\mathfrak b)}\U_v(\mathfrak h).}\end{equation} Later on,  $M^{gen}$
 is called  the \emph{generic Verma module}. Then $M^{gen}$ is a  right $\U_v(\mathfrak h)$-module with basis $\{x^-_{\mathbf s}\otimes 1\mid \mathbf{s}\in \mathbb N^{\ell(w_0)}\}$, where $x^-_\mathbf{s}$ is defined in \eqref{xplus}.
Recall that $V$ is the natural $\U_v(\mathfrak g)$-module and $\dim V=\mathrm N $. Then
$M^{gen}\otimes V^{\otimes r}$ has  $\mathbb F$-basis $\{ x^-_{\mathbf s}\otimes k_\mu\otimes v_{\mathbf i}\mid {\mathbf s}\in \mathbb N^{\ell(w_0)},  \mathbf{i}\in \underline{\mathrm N }^{r}, \mu \in \mathcal{P}\}$,
where $v_{\mathbf{i}}=v_{i_1}\otimes \ldots \otimes v_{i_r}$.

From here to the end of this section,   we assume $\mfg=\mathfrak{so}_{2n}$. So, $\mathrm N =2n$,  and $v=q$.

\begin{Defn}For any  $r\in \mathbb N$ and $j\in\{0, 1\}$, let ${M}_r^j$  be the free  $\mathcal A_0$-module with basis  $\{z_v^j x^-_{\mathbf s}\otimes k_\mu\otimes v_{\mathbf i}\mid \mathbf s\in \mathbb N^{\ell(w_0)},  \mathbf{i}\in \underline{\mathrm N }^{r}, \mu \in \mathcal{P}\}$, where $\mathcal{A}_0$ is given in Definition~\ref{codeg}.
\end{Defn}

Obviously, $ {M}_1^1\subset {M}_1^0$ and $z_v {M}_1^0={M}_1^1$.

\begin{Lemma}\label{q0}Suppose
$H, K$ are sequences of positive roots and $j\in\{0, 1\}$. Then \begin{itemize}\item[(1)]$x^-_K\otimes x^-_H$ stabilizes ${M}_1^j$,
\item[(2)]$x^-_K\otimes x^+_H$ stabilizes ${M}_1^j$.\end{itemize}
\end{Lemma}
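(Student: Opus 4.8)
The plan is to verify directly that $x^-_K\otimes x^\pm_H$ sends each $\mathcal A_0$-basis vector $z_v^j\,x^-_{\mathbf s}\otimes k_\mu\otimes v_i$ of $M_1^j$ into $M_1^j$. Since such an operator acts factor by factor on $M^{gen}\otimes V$ and $z_v^j$ is a scalar, the parameter $j$ is irrelevant and the problem splits into two independent assertions: that $x^-_K\cdot(x^-_{\mathbf s}\otimes k_\mu)\in\bigoplus_{\mathbf r}\mathcal A_0\,(x^-_{\mathbf r}\otimes k_\mu)$ inside $M^{gen}$, and that $x^\pm_H v_i\in\bigoplus_{l}\mathcal A_0\,v_l$ inside $V$. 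Granting these, $(x^-_K\otimes x^\pm_H)(z_v^j x^-_{\mathbf s}\otimes k_\mu\otimes v_i)$ is an $\mathcal A_0$-linear combination of the vectors $z_v^j\,x^-_{\mathbf r}\otimes k_\mu\otimes v_l$, so it lies in $M_1^j$; this proves (1) and (2) simultaneously.

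For the first assertion, recall that the left $\U_v(\mathfrak g)$-action on $M^{gen}=\U_v(\mathfrak g)\otimes_{\U_v(\mathfrak b)}\U_v(\mathfrak h)$ is given by $u\cdot(x\otimes h)=ux\otimes h$, so $x^-_K\cdot(x^-_{\mathbf s}\otimes k_\mu)=(x^-_Kx^-_{\mathbf s})\otimes k_\mu$, where $x^-_Kx^-_{\mathbf s}$ equals the monomial $x^-_I$ associated to the sequence $I$ of positive roots obtained by concatenating $K$ with the ordered sequence underlying $\mathbf s$. Corollary~\ref{monoial} then expresses $x^-_I$ as an $\mathcal A_0$-linear combination of the PBW monomials $x^-_{\mathbf r}$, which is exactly what is needed (the $\U_v(\mathfrak h)$-factor $k_\mu$ is untouched). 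For the second assertion, I would invoke the explicit description of the action of root vectors on the natural module: by Lemmas~\ref{natr1}, \ref{natural0} and \ref{natural3}, for $\mfg=\mathfrak{so}_{2n}$ every root vector $x^\pm_\beta$ (the positive roots being the $\epsilon_i\pm\epsilon_j$, $i<j$) acts on $V$ through a matrix whose entries all lie in $\mathbb Z[q,q^{-1}]$, and $\mathbb Z[q,q^{-1}]\subseteq\mathcal A_0$ since $ev(q)=ev(q^{-1})=0$. Hence any product $x^\pm_H$ acts on $V$ by a matrix with entries in $\mathcal A_0$, giving $x^\pm_H v_i\in\bigoplus_l\mathcal A_0\,v_l$.

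I do not expect a genuine obstacle here: the statement is essentially a lattice-bookkeeping consequence of Corollary~\ref{monoial} together with the simply-laced form of the natural-module action recorded in Lemmas~\ref{natural0} and \ref{natural3}. The only point requiring a moment's care is to note that left multiplication by $x^-_K$ generally destroys the ordering of a PBW monomial, so that Corollary~\ref{monoial} (rather than a one-line commutation relation) is what is being used; everything else is immediate.
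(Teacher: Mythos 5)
Your proof is correct and uses the same two ingredients the paper does: Corollary~\ref{monoial} to rewrite $x^-_K x^-_{\mathbf s}$ as an $\mathcal A_0$-combination of PBW monomials in $M^{gen}$, and the explicit $\mathfrak{so}_{2n}$ matrix formulae of Lemmas~\ref{natural0},~\ref{natural3} (entries in $\mathbb Z[q,q^{-1}]\subset\mathcal A_0$) for the $V$-factor. The paper's proof is the same argument stated more tersely; your extra observation that $j$ is irrelevant because $z_v^j$ is a scalar and the operator acts factorwise is exactly the reason the paper can treat $j=0,1$ uniformly without comment.
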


\begin{proof}For any basis  element $v_l\in V$,
by Lemmas~\ref{natural0}, \ref{natural3}, $x^+_Hv_l=g_1v_{k_1}$ (resp., $x^-_Hv_l=g_2v_{k_2}$) for some $g_1, g_2\in \mathcal A_0\setminus \mathcal A_1$ and some $v_{k_1}, v_{k_2}\in V$ if   $x^+_Hv_l\neq 0$ (resp., $x^-_Hv_l\neq 0$).    Then (1)-(2) follow from Corollary~\ref{monoial}.
\end{proof}

\begin{Lemma}\label{theta} Suppose  $\mathbf s\in \mathbb N^{\ell(w_0)}$ and $\mu\in \mathcal P$. For any $i\in \underline {\mathrm N } $, and any  $\Psi\in \{\Theta, \bar\Theta\}$,
\begin{itemize}
\item [(1)]$\Psi((x^-_{\mathbf s} \otimes k_\mu) \otimes v_i)\equiv x^-_{\mathbf s} \otimes k_\mu  \otimes v_i \pmod {{M}_1^1}$, \item [(2)]   $ P\Psi P((x^-_{\mathbf s} \otimes k_\mu )\otimes v_i  )\in {M}_1^0$, \item [(3)] $\Psi ({M}_1^1)\subseteq {M}_1^1$ and $P\Psi P({M}_1^1)\subseteq {M}_1^1$.
\end{itemize}
\end{Lemma}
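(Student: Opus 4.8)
The plan is to read off the shape of $\Theta$ and $\bar\Theta$ from \eqref{theta123} and then bookkeep $z_v$-valuations, using that $z_v=z_q$ has $\mathrm{ev}$-value $1$ and that $M_1^1=z_vM_1^0$. Throughout, write $\Psi\in\{\Theta,\bar\Theta\}$ in the form
$\Psi=1\otimes1+\sum_{0\neq\beta\in\mathcal Q^+}\sum_{wt(J)=\beta}z_v^{\ell(J)}c_J\,(x_J^-\otimes x_J^+)$,
where $c_J\in\mathcal A_0$ (namely $c_J=g_J$, resp. $c_J=h_{J'}$) and $\ell(J)\geq1$. Since $V$ is finite dimensional, only finitely many $\beta$ act nontrivially on a fixed $(x_{\mathbf s}^-\otimes k_\mu)\otimes v_i$ (whether $\Psi$ acts directly or through $P(-)P$), so every sum occurring below is finite.

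For (1), the summand $1\otimes1$ contributes exactly $x_{\mathbf s}^-\otimes k_\mu\otimes v_i$. For each summand with $\beta\neq0$, the operator $x_J^-\otimes x_J^+$ maps $M_1^0$ into itself: its first tensor factor acts on $M^{gen}$ by left multiplication and $x_J^-x_{\mathbf s}^-$ is an $\mathcal A_0$-linear combination of the $x_{\mathbf r}^-$ by Corollary~\ref{monoial}, while its second factor acts on $V$ with coefficients in $\mathcal A_0$ by Lemmas~\ref{natural0} and~\ref{natural3}; this is precisely Lemma~\ref{q0}(2) with $j=0$. Hence $(x_J^-\otimes x_J^+)\big((x_{\mathbf s}^-\otimes k_\mu)\otimes v_i\big)\in M_1^0$, and multiplying by $z_v^{\ell(J)}c_J$ with $\ell(J)\geq1$, $c_J\in\mathcal A_0$, puts it into $z_vM_1^0=M_1^1$. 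Summing over the finitely many $\beta$ gives $\Psi\big((x_{\mathbf s}^-\otimes k_\mu)\otimes v_i\big)\equiv x_{\mathbf s}^-\otimes k_\mu\otimes v_i\pmod{M_1^1}$.

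Part (2) is the delicate one. As an operator on $M^{gen}\otimes V$ one has $P\Psi P=\sum_J z_v^{\ell(J)}c_J\,(x_J^+\otimes x_J^-)$, where now the $\U_v^+(\mfg)$-factor $x_J^+$ acts on $M^{gen}$ by left multiplication; equivalently, using the identities $(\tau\otimes\tau)(\Theta_\beta)=P(\Theta_\beta)$ and \eqref{reversed}, $P\Psi P=\sum_J z_v^{\ell(J)}c_J\,(\tau(x_J^-)\otimes\tau(x_J^+))$ with $\tau(x_J^-)\in\U_v^+(\mfg)$ acting on $M^{gen}$ and $\tau(x_J^+)\in\U_v^-(\mfg)$ acting on $V$, the latter with $\mathcal A_0$-coefficients by the formulae for $\tau(x_\beta^\pm)$ in Lemmas~\ref{natural0} and~\ref{natural3}. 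The point is that a positive-weight element of $\U_v^+(\mfg)$ need not preserve the standard lattice $M_0^0=\bigoplus_{\mathbf s\in\mathbb N^{\ell(w_0)},\,\mu\in\mathcal P}\mathcal A_0(x_{\mathbf s}^-\otimes k_\mu)$ of $M^{gen}$: commuting it through the $x^-$'s produces $z_v$-denominators. The key estimate I would establish is that left multiplication by a product of $\ell$ root vectors of $\U_v^+(\mfg)$ carries $M_0^0$ into $z_v^{-\ell}M_0^0$; granting this with $\ell=\ell(J)$, the prefactor $z_v^{\ell(J)}$ exactly cancels the loss, so $P\Psi P\big((x_{\mathbf s}^-\otimes k_\mu)\otimes v_i\big)\in M_1^0$ (which also explains why (2) only asserts membership in $M_1^0$, not the finer congruence of (1)). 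For $\Psi=\bar\Theta$ one can alternatively read this off the companion formulae for $\bar\Theta(v_{j'}\otimes m)$ in Proposition~\ref{matrix}, where the cancellation is already encoded in the powers of $z_q$ appearing there. I would prove the estimate by induction on $\ell$, reducing to a single root vector and then to the twisted-derivation (Levendorskii--Soibelman) commutation relation between a root vector of $\U_v^+(\mfg)$ and $\U_v^-(\mfg)$, which introduces at most one factor of $z_{v_\beta}^{-1}=z_q^{-1}$ (all $d_\beta=1$ since $\mfg=\mathfrak{so}_{2n}$). This denominator bound is where I expect the real work to lie.

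Part (3) is then formal. By (1), $\Psi$ fixes each basis vector of $M_1^0$ modulo $M_1^1\subseteq M_1^0$, hence $\Psi(M_1^0)\subseteq M_1^0$ by $\mathbb F$-linearity (so $\mathcal A_0$-linearity), and therefore $\Psi(M_1^1)=\Psi(z_vM_1^0)=z_v\Psi(M_1^0)\subseteq z_vM_1^0=M_1^1$. Likewise (2) gives $P\Psi P(M_1^0)\subseteq M_1^0$, whence $P\Psi P(M_1^1)\subseteq M_1^1$.
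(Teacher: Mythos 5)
Your part (1) and the formal deduction of (3) from (1)--(2) are essentially identical to the paper's argument. The problem is part (2), where you yourself flag the key step as unproved: you reduce the claim to the estimate that left multiplication by a product of $\ell$ root vectors of $\U_v^+(\mfg)$ carries the standard lattice $M_0^0$ of $M^{gen}$ into $z_v^{-\ell}M_0^0$, and then say this ``denominator bound is where I expect the real work to lie.'' That estimate \emph{is} the content of (2): since $\Psi$ is built from exponentials of $z_v x_\beta^-\otimes x_\beta^+$, the assertion ``$P\Psi P$ preserves $M_1^0$'' is more or less equivalent to ``$z_v^{\ell(J)}x_J^+$ preserves $M_0^0$,'' so asserting the latter without proof leaves the lemma unproved. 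Moreover, the one-step Levendorskii--Soibelman reduction you sketch is not obviously sound for non-simple $\beta$: a single $[x_\beta^+,x_\gamma^-]$ with $\beta\neq\gamma$ can produce a shorter positive root vector which must then be commuted further, so it is not clear from the relations alone that each $x_\beta^+$ contributes at most one $z_v^{-1}$. The appeal to Proposition~\ref{matrix} also does not resolve this: after conjugating by $P$ the operator acts on $v_i\otimes m$, so the companion formula there again has $\U_v^+(\mfg)$ (via $\tau(x_J^-)$) hitting the $M^{gen}$-slot, which is precisely the unresolved case.

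The paper avoids this denominator analysis entirely. It proves (2) by induction on the weight $\sum_i s_i\beta_i$. The base case $\mathbf s=0$ is trivial, since $x_\beta^+(1\otimes k_\mu)=0$ by the definition of $M^{gen}$. For the inductive step it peels off the highest-index root vector, $x_{\mathbf s}^-=x_{\beta_j}^-x_{\mathbf c}^-$, and then replaces $P\bar\Theta$ acting on $v_i\otimes x_{\beta_j}^-(x_{\mathbf c}^-\otimes k_\mu)$ with $P\bar\Theta\Delta(x_{\beta_j}^-)(v_i\otimes(x_{\mathbf c}^-\otimes k_\mu))$ plus a correction. The correction terms coming from $\Delta(x_{\beta_j}^-)$ are controlled by Proposition~\ref{roo}(1), whose coefficients $h_{K,H}$ lie in $\mathcal A_{\max\{\ell(K),\ell(H)\}}$ and where $K\neq\emptyset$ forces a strict drop in weight, so the correction is handled by the inductive hypothesis together with Lemma~\ref{q0}. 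The main term is transformed by the intertwining identity \eqref{qrm11}, $\bar\Theta\Delta(u)=\bar\Delta(u)\bar\Theta$ (and \eqref{qrm} for $\Theta$), into $P\bar\Delta(x_{\beta_j}^-)P$ applied to $P\bar\Theta P((x_{\mathbf c}^-\otimes k_\mu)\otimes v_i)$; the latter is in $M_1^0$ by induction, and $P\bar\Delta(x_{\beta_j}^-)P$ preserves $M_1^0$ by Proposition~\ref{roo}(2) and Lemma~\ref{q0}(1), because $\bar\Delta(x_{\beta_j}^-)$ is an explicit sum in which only \emph{negative} root vectors (and Cartan twists) act on either tensor slot. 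In short, the paper trades the hard ``denominators of $\U_v^+$ on the generic Verma lattice'' problem for the ``$z_v$-valuation of the coproduct of $x_\beta^-$'' problem, and the latter is Proposition~\ref{roo}, proved by explicit case analysis in Appendix B. If you want to salvage your route, you would need a genuine proof of your $z_v^{-\ell}$ bound, which is nontrivial and at least as much work as Proposition~\ref{roo}.
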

\begin{proof}Recall the terms  $z_v^{\ell(J)}g_Jx^-_J\otimes x^+_J$'s in \eqref{theta123}.
  If $\ell (J)=0$, then $z_v^{\ell(J)} g_Jx^-_J\otimes x^+_J=1\otimes 1$ and hence
 $z_v^{\ell(J)} g_Jx^-_J\otimes x^+_J (( x^-_{\mathbf s} \otimes k_\mu)  \otimes v_i)=x^-_{\mathbf s} \otimes k_\mu\otimes v_i$.
 If $\ell (J)>0$,
 by Lemma~\ref{q0}(2),
 $z_v^{\ell(J)} g_Jx^-_J\otimes x^+_J ( ( x^-_{\mathbf s} \otimes k_\mu)  \otimes v_i)\in {M}_1^1$. Thanks to \eqref{theta123}, $$\bar{\Theta} ((x^-_{\mathbf s} \otimes k_\mu) \otimes v_i)\equiv x^-_{\mathbf s} \otimes k_\mu \otimes v_i \pmod {{M}_1^1}.$$ The corresponding result for  $\Theta$ can be proved similarly. This proves (1).
Note that $x_\beta^+ (1\otimes k_\mu)=1\otimes x_\beta^+k_\mu=0$ for any $\beta\in \mathcal R^+$. By \eqref{theta123}, \begin{equation}\label{t3}{P\bar \Theta P}((1\otimes k_\mu)\otimes v_i)=1\otimes k_\mu\otimes v_i.\end{equation}
   This proves (2) when $\sum_i s_i\beta_i=0$ and $\Psi=\bar\Theta$.
In general,  let $j=max\{t\mid s_t\neq 0\}$.  Suppose $$y_\Psi=P\Psi P(  ( x^-_{\mathbf s} \otimes k_\mu)  \otimes v_i)$$ and $\mathbf c\in \mathbb{N}^{\ell(w_0)}$ such that $c_k=s_k$ for any $k\neq j$ and $c_j=s_j-1$. Then

 \begin{equation}\label{grs1}\begin{aligned} & y_{\bar \Theta}- P {\bar\Theta}\Delta(x_{\beta_j}^-)(v_i\otimes (x^-_{\mathbf c} \otimes k_\mu))\\=&-P{\bar\Theta}(\sum_{K,H}h_{K,H}x^-_Kv_i\otimes k_{-wt (K)}x^-_H(x^-_{\mathbf c}\otimes k_\mu)), \text{ by Proposition~\ref{roo}(1)}\\=&-P{\bar\Theta}P(\sum_{K,H}h_{K,H}k_{-wt (K)}x^-_H\otimes x^-_K((x^-_{\mathbf c}\otimes k_\mu)\otimes v_i ))\\ \in& M_1^0, ~\text{by Lemma~\ref{q0}(1) and induction assumption on  $\sum_{i} s_i \beta_i-wt(K)$},\end{aligned}\end{equation}  where  $K, H$ are sequences of positive roots such that  $wt (K)+wt (H)=\beta_{j}$,  $K\neq \emptyset$ and $h_{K,H}\in \mathcal A_0$. In particular, $h_{K, H}=1$ when $(K, H)=(\beta_j, \emptyset)$.
  For any  $\U_v(\mathfrak g)$-modules $M, N$ and any   $a, b\in \U_v(\mathfrak g) $,
 it is easy to see $P(a\otimes b) P=b\otimes a$ in $\End(M\otimes N)$. So, \begin{equation} \label{grs2} \begin{aligned}&P {\bar\Theta}\Delta(x_{\beta_j}^-)(v_i\otimes (x^-_{\mathbf c} \otimes k_\mu))=P\bar\Delta(x_{\beta_j}^-){\bar\Theta}(v_i\otimes (x^-_{\mathbf c} \otimes k_\mu)),~\text{by \eqref{qrm11}}\\=&
 P\bar\Delta(x_{\beta_j}^-)PP{\bar\Theta}P((x^-_{\mathbf c} \otimes k_\mu)\otimes v_i)\\ \in& P\bar\Delta(x_{\beta_j}^-)P (M_1^0), ~\text{by induction assumption on  $\sum_{i} s_i \beta_i-\beta_j$}\\ \in &
 M_1^0,~\text{by Proposition~\ref{roo}(2) and Lemma~\ref{q0}(1)}.\end{aligned}\end{equation}
  Combining \eqref{grs1}--\eqref{grs2} yields $y_{\bar\Theta}\in M_1^0$.
 The corresponding result for
 $y_\Theta$  can be checked   by arguments similar to those above. The only difference is that one has to replace \eqref{qrm11} by \eqref{qrm}. This proves (2).  Finally,  (3) follows from (1)-(2).
\end{proof}

To simplify the notation, we use $d$ to replace  $\Psi_{M^{gen}}(d)$. Let  $\hat{\epsilon}_i=wt(v_i)$, $1\le i\le \mathrm N $.  So, $\hat\epsilon_i= \epsilon_i$ if  $1\leq i\leq n$, and  $ \hat\epsilon_i=-\epsilon_{i'}$ if $n'\leq i\leq 1'$.

\begin{Lemma}\label{mu} For any  $1\le i\le \mathrm N $ and any $\mu\in \mathcal P$,     $$X^\pm ((1\otimes k_\mu)\otimes v_i)\equiv v^{\pm(2n-1)} (1\otimes k_{\mu\pm 2\hat\epsilon_i}\otimes v_i) \pmod  {{M}_1^1}.$$
Further, $X^\pm{M}_1^1\subseteq {M}_1^1$.
\end{Lemma}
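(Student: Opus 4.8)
The plan is to evaluate $X^{\pm1}$ on $(1\otimes k_\mu)\otimes v_i$ inside $M^{gen}\otimes V$ by unwinding the operator $\Psi(X^{\pm1})_{M^{gen}}$ from Proposition~\ref{abmw1}, tracking leading terms modulo $M_1^1$. By Proposition~\ref{abmw1}(2) together with \eqref{invr}, the element $X$ acts on $M^{gen}\otimes V$ as $\delta\,(P\pi^{-1}\bar\Theta)_{V\otimes M^{gen}}\circ(P\pi^{-1}\bar\Theta)_{M^{gen}\otimes V}$, and $X^{-1}$ as $\delta^{-1}(\Theta\pi P)_{V\otimes M^{gen}}\circ(\Theta\pi P)_{M^{gen}\otimes V}$, where the subscript records the domain. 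Since $\mfg=\mathfrak{so}_{2n}$ we have $\epsilon_\mfg=1$, so $\delta=q^{\mathrm N-1}=q^{2n-1}=v^{2n-1}$ by \eqref{lamb} and \eqref{vv}; this is the scalar $v^{\pm(2n-1)}$, and it is a unit of $\mathcal A_0$, hence acts invertibly on $M_1^1$.

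First I would run the chain for $X$. Applying $\bar\Theta$ to $(1\otimes k_\mu)\otimes v_i$ on $M^{gen}\otimes V$ gives $(1\otimes k_\mu)\otimes v_i$ plus an element of $M_1^1$, by Lemma~\ref{theta}(1) with $\mathbf s=0$. Applying $\pi^{-1}$: since $v_i$ has weight $\hat{\epsilon}_i$ in $V$, \eqref{pif} gives $\pi^{-1}((1\otimes k_\mu)\otimes v_i)=(1\otimes k_{\mu+\hat{\epsilon}_i})\otimes v_i$, and $\pi^{\pm1}$ maps $M_1^1$ to itself because moving $k_\eta$ past $x^-_{\mathbf s}$ costs only a power of $v$, a unit of $\mathcal A_0$. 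Applying $P$ produces, in $V\otimes M^{gen}$, the leading term $v_i\otimes(1\otimes k_{\mu+\hat{\epsilon}_i})$ with an error in $P(M_1^1)$. The key point is the second pass: here $\bar\Theta$ acts on $V\otimes M^{gen}$ with its $\U^+_v(\mfg)$-part on the $M^{gen}$-factor, and $1\otimes k_{\mu+\hat{\epsilon}_i}$ is a highest weight vector of the generic Verma module $M^{gen}$, so $\U^+_v(\mfg)_\beta$ annihilates it for every $\beta\neq0$ while $\bar\Theta_0=1\otimes1$; thus $\bar\Theta(v_i\otimes(1\otimes k_{\mu+\hat{\epsilon}_i}))=v_i\otimes(1\otimes k_{\mu+\hat{\epsilon}_i})$ exactly, while on the error term $\bar\Theta(P(M_1^1))\subseteq P(M_1^1)$ by Lemma~\ref{theta}(3). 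A further $\pi^{-1}$ shifts the leading term to $v_i\otimes(1\otimes k_{\mu+2\hat{\epsilon}_i})$ (still preserving $P(M_1^1)$), and the final $P$ returns $(1\otimes k_{\mu+2\hat{\epsilon}_i})\otimes v_i$ modulo $M_1^1$. Multiplying by $\delta=v^{2n-1}$ gives the stated congruence for $X$. The computation for $X^{-1}$ is entirely parallel with $(\Theta,\pi,P)$ replacing $(P,\pi^{-1},\bar\Theta)$: now $P$ is applied first, so no error term appears until the last application of $\Theta$ (which is the identity modulo $M_1^1$ by Lemma~\ref{theta}(1)), the middle $\Theta$ is again killed on the highest weight vector, and the two applications of $\pi$ shift $k_\mu$ down by $2\hat{\epsilon}_i$; multiplying by $\delta^{-1}$ yields the claim. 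Finally, $X^{\pm1}M_1^1\subseteq M_1^1$ follows by running the same two chains on an arbitrary basis vector $z_v x^-_{\mathbf s}\otimes k_\nu\otimes v_j$ of $M_1^1$, using the stronger inclusions $\Psi(M_1^1)\subseteq M_1^1$ and $P\Psi P(M_1^1)\subseteq M_1^1$ of Lemma~\ref{theta}(3) at both $\Psi$-steps, together with the $\mathcal A_0$-unit scaling of $\pi^{\pm1}$.

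The only genuinely delicate part is the bookkeeping of the $\pi^{\pm1}$-factors: one must extract the correct direction of the weight shift $k_\mu\mapsto k_{\mu\pm\hat{\epsilon}_i}$ from \eqref{pif} (which branch applies depends on which tensor slot carries the honest $V$-weight), and verify that $\pi^{\pm1}$ stabilises both $M_1^1$ and $P(M_1^1)$ up to units of $\mathcal A_0$. Everything else is supplied by Lemma~\ref{theta} and by the observation that $M^{gen}$ is a highest weight module, so that after the single swap $P$ the $\U^+_v(\mfg)$-part of $\Theta$ (resp.\ $\bar\Theta$) acts trivially on $1\otimes k_\nu$; this last observation is exactly the analogue of the device used in the proof of Lemma~\ref{polyofx}.
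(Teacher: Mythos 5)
Your proposal is correct and follows essentially the same route as the paper: you unwind $\Psi(X^{\pm1})_{M^{gen}}$ as $\delta^{\pm1}$ times a composite of $P$, $\pi^{\mp1}$, and $\bar\Theta$ (resp.\ $\Theta$), use Lemma~\ref{theta}(1) for the pass of $\bar\Theta$ (resp.\ $\Theta$) on $M^{gen}\otimes V$, observe that after the swap $P$ the $\U^+_v(\mathfrak g)$-part of the quasi-$R$-matrix kills the highest-weight vector $1\otimes k_\nu$ (which is exactly the content of \eqref{t3}, which the paper cites directly), track the weight shift through the two applications of $\pi^{\mp1}$ via \eqref{pif}, and deduce the stability $X^{\pm1}M_1^1\subseteq M_1^1$ from Lemma~\ref{theta}(3) plus the fact that $\pi^{\pm1}$ and $\delta^{\pm1}$ scale by $\mathcal A_0$-units. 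This matches the paper's argument line by line.
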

\begin{proof}Thanks to \eqref{pif}, $P \pi =\pi P$ and $\pi^{-1}(M_1^1)\subseteq M_1^1$.
Then
$$ \begin{aligned} X((1\otimes k_\mu)\otimes v_i) =& v^{2n-1}P\pi ^{-1}\bar{\Theta}P\pi ^{-1}\bar{\Theta}((1\otimes k_\mu)\otimes v_i),~\text{by Proposition~\ref{abmw1}}\\
 \equiv &   v^{2n-1} P\pi ^{-1}\bar \Theta P( (1\otimes k_{\hat\epsilon_i+\mu})\otimes v_i) \pmod {{M}_1^1}, \text{by Lemma \ref{theta} and \eqref{pif}}\\ \equiv &  v^{2n-1} \pi ^{-1}(1\otimes k_{\hat\epsilon_i+\mu})\otimes v_i) \pmod {{M}_1^1},~\text{by \eqref{t3}}\\
\equiv &  v^{2n-1}(1\otimes k_{2\hat\epsilon_i+\mu}\otimes v_i) \pmod {{M}_1^1},~\text{by \eqref{pif}}.\end{aligned}$$
  So,  $X{M}_1^1\subseteq {M}_1^1$.
 Finally, one can verify the result for $X^{-1}$  by arguments similar to those above. The only difference is that one has to use the result on $\bar \Theta$ to replace that for $\Theta$
in Lemma~\ref{theta}.
\end{proof}

From here to the end of this section, we assume   $n> r$.

\begin{Defn}\label{eta}For any  $d\in \mathbb{NT}_{2r, 0}$, define $\eta(d)\in \underline{\mathrm N }^{2r}$ such that   $\eta(d)_{i_l}=l$ and  $\eta(d)_{j_l}=l'$, $1\leq l\leq r$, where $conn(\hat d)=\{(i_1, j_1), \ldots, (i_r, j_r)\}$. 
\end{Defn}
 Since we are assuming that $n> r$, $\eta(d_1)=\eta(d_2)$ for any $d_1, d_2\in \mathbb{NT}_{2r, 0}$ if and only if  $conn(\hat d_1)=conn(\hat d_2)$.

\begin{Lemma}\label{actingd}Suppose $d, e\in \bar{\mathbb{NT}}_{2r, 0}$  such that  $conn(\hat d)=\{(i_1, j_1), \ldots, (i_r, j_r)\}$. For any
 $\mu\in \mathcal P$, there is a $c\in \mathbb Z$ such that
 $$d((1\otimes k_\mu) \otimes v_{\eta{(e)}})\equiv  \delta_{\eta(d),  {\eta({e})}} v^c (1\otimes k_\beta) \pmod{ {M}_0^1},$$
 where $\beta={\sum_{t=1}^r 2b_{d, i_t}\epsilon_t+\mu}$ and $b_{d, i_t}$'s are given in Definition~\ref{dots}.
 \end{Lemma}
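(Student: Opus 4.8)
The plan is to evaluate $\Psi_{M^{gen}}(d)$ on $(1\otimes k_\mu)\otimes v_{\eta(e)}$ directly from the normal form \eqref{form}. Since $d\in\bar{\mathbb{NT}}_{2r,0}$ carries no bubbles, has target $\ob 0$ (so the top dots $Y_l$ are absent), and satisfies $b_{d,j_l}=0$ by the normal-ordering conventions, \eqref{form} reads $d=\hat d\circ\prod_{l=1}^{r}X_{i_l}^{b_{d,i_l}}$, the factors $X_{i_l}$ commuting by Lemma~\ref{XX}, and $\hat d$ being a reduced totally descending $(2r,0)$-diagram, i.e.\ $r$ caps together with crossings, with connector $\mathrm{conn}(\hat d)$. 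By Proposition~\ref{abmw1}(2) the purely diagrammatic part acts only on the $V$-factors, $\Psi(\hat d)_{M^{gen}}=\mathrm{Id}_{M^{gen}}\otimes\Phi(\hat d)$ with $\Phi(\hat d)\colon V^{\otimes 2r}\to\mathbb F$ the evaluation of Proposition~\ref{bmw1}, while each dot $X_{i_l}$ genuinely mixes $M^{gen}$ with a $V$-factor. Throughout I work modulo $M_0^1=z_v M_0^0$ (recall $z_v=z_q$ for $\mfg=\mathfrak{so}_{2n}$), using that $R_{V,V}^{\pm1}$ and $\underline\alpha,\underline\beta$ have coefficients in $\mathbb C[q,q^{-1}]\subseteq\mathcal A_0$ (Lemmas~\ref{act123} and~\ref{AU}), hence preserve the $\mathcal A_0$-lattices $M_t^{j}$.

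I first compute the effect of the dots. Iterating Lemma~\ref{XX}, $X_{i_l}^{b_{d,i_l}}$ is a conjugate of $X_1^{b_{d,i_l}}$ by a product of crossings $T_j^{\pm1}$. Under $\Psi_{M^{gen}}$ each conjugating block acts, modulo $z_v$, by permuting the basis vectors $v_a$ (picking up a power of $q$), and only by genuine transpositions, because $n>r$ forces the $2r$ entries of $\eta(e)$ to be pairwise distinct, so the exceptional case $a=b$ in Lemma~\ref{act123} never arises. In between, one applies $X_1^{\pm1}$ on the $M^{gen}\otimes V_1$-slot; by Lemma~\ref{mu} (tensored with the remaining basis vectors, which keeps us inside $M_{2r}^{0}$) this fixes that $V$-factor and replaces $k_\nu$ by $k_{\nu\pm2\hat\epsilon_i}$ modulo $z_v$. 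Iterating over $l=1,\dots,r$ yields, for some $c'\in\mathbb Z$,
\[
\big({\textstyle\prod_{l=1}^{r}X_{i_l}^{b_{d,i_l}}}\big)\big((1\otimes k_\mu)\otimes v_{\eta(e)}\big)\equiv v^{c'}\,(1\otimes k_{\mu'})\otimes v_{\eta(e)}\pmod{z_v M_{2r}^{0}},
\]
where $\mu'=\mu+\sum_{l=1}^{r}2b_{d,i_l}\hat\epsilon_{\eta(e)_{i_l}}$.

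Applying $\hat d=\mathrm{Id}_{M^{gen}}\otimes\Phi(\hat d)$, and noting that $\Phi(\hat d)$ preserves $\mathcal A_0$-lattices (so carries $z_v M_{2r}^{0}$ into $M_0^1$), it remains to evaluate $\Phi(\hat d)(v_{\eta(e)})\in\mathbb F$ modulo $z_v\mathcal A_0$. Since $\hat d$ is reduced and totally descending, $\Phi(\hat d)$ is a composite of $R_{V,V}^{\pm1}$'s followed by $r$ caps $\underline\alpha$; modulo $z_v$ (where $R_{V,V}\equiv R_{V,V}^{-1}$ by Lemma~\ref{act123}) each $R_{V,V}^{\pm1}$ sends the distinct factors $v_a\otimes v_b$ to $v_b\otimes v_a$, times $q^{-1}$ when $a=b'$ and times $1$ otherwise, so $\Phi(\hat d)(v_{\eta(e)})\equiv q^{c''}\prod_{l=1}^{r}\underline\alpha\big(v_{\eta(e)_{i_l}}\otimes v_{\eta(e)_{j_l}}\big)$, the strands being re-labelled so that the $l$-th cap receives the factors originally at positions $i_l,j_l$. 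As $\varsigma_i=1$ for $\mfg=\mathfrak{so}_{2n}$ and $\underline\alpha(v_k\otimes v_l)=\delta_{k,l'}q^{-\varrho_k}$, this product is nonzero iff $\eta(e)_{i_l}=(\eta(e)_{j_l})'$ for every $l$; and because $n>r$, two entries of $\eta(e)$ sitting at positions $i_l,j_l$ are mutually dual only when $\{i_l,j_l\}$ is a connector pair of $\hat e$, so this holds iff $\mathrm{conn}(\hat d)=\mathrm{conn}(\hat e)$, i.e.\ iff $\eta(d)=\eta(e)$. When $\eta(d)\ne\eta(e)$ the whole expression lies in $M_0^1$, giving the $\delta_{\eta(d),\eta(e)}=0$ case; when $\eta(d)=\eta(e)$, collecting the powers of $q=v$ produces the scalar $v^{c}$ for a suitable $c\in\mathbb Z$, and then $\eta(e)_{i_l}=l\le r<n$, so $\hat\epsilon_{\eta(e)_{i_l}}=\epsilon_l$ and $\mu'=\mu+\sum_{l}2b_{d,i_l}\epsilon_l=\beta$ (no terms being missed since $b_{d,j_l}=0$), which is the asserted formula.

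The main obstacle is the bookkeeping in the second paragraph: $\Psi(X)$ is the non-local operator $\delta R_{N,V}^{-1}R_{V,N}^{-1}$ on $N=M^{gen}\otimes V^{\otimes j-1}$, and one must verify, uniformly over the $r$ dotted strands and compatibly with the lattices $M_t^{j}$, that after conjugating it onto strand~$1$ (Lemma~\ref{XX}) and reducing the $V$--$V$ crossings modulo $z_v$, its residual action on the $M^{gen}\otimes V_1$-slot is exactly the weight shift of Lemma~\ref{mu}. The Hexagon identities \eqref{Hexagon}--\eqref{bian} and Proposition~\ref{abmw1}(2) (which confines $\hat d$ to the $V$-factors) are precisely what make this reduction clean; everything else is a finite computation producing explicit powers of $q$.
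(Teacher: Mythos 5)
Your proof is correct and follows essentially the same strategy as the paper's: isolate the dots via \eqref{form} and Lemma~\ref{XX}, push them through using Lemma~\ref{mu} modulo the lattice $M^1_\bullet$, then evaluate the dot-free tangle $\hat d$ and observe that modulo $z_v$ the crossings act by permutations while the caps pair up the entries at the connector positions, giving the Kronecker delta $\delta_{\eta(d),\eta(e)}$. The only organizational difference is that the paper makes the reduction of $\hat d$ concrete by invoking Proposition~\ref{cspan3} ($\hat d = d_r T^{inv}_w$ with $d_r=\lcap^{\otimes r}$) and records the needed lattice properties and modular formulas as the displays \eqref{add1}--\eqref{add2} before chaining them in \eqref{sss123}; you instead factor $\Psi_{M^{gen}}(\hat d)=\mathrm{Id}_{M^{gen}}\otimes\Phi(\hat d)$ via Proposition~\ref{abmw1}(2) and reason directly that $\Phi(\hat d)$ modulo $z_v$ is a product of cap-evaluations over connector pairs. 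This is a clean repackaging, but the factorization of $\hat d$ into crossings followed by caps that you describe informally (``$\Phi(\hat d)$ is a composite of $R^{\pm1}_{V,V}$'s followed by $r$ caps'') is exactly the content of Proposition~\ref{cspan3}, which you should cite explicitly to close the argument; with that reference added, your bookkeeping concern in the final paragraph is handled by the same mechanism as the paper's Lemmas~\ref{theta}, \ref{mu} and \eqref{add1}--\eqref{add2}.
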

\begin{proof} By Proposition~\ref{cspan3}, $\hat d=d_rT_w^{inv}$ as morphisms, where $w=\vec{ \prod}_{t=1}^rr_{2t-1,i_t}r_{2t,c_t}\in \mathfrak{S}_{2r}$ for some $c_t$'s and $d_r=\lcap^{\otimes r}$. Then $\eta(\hat d)=\eta(d_r)w$. For any $\mathbf{k}\in \underline{\mathrm N }^{2r}$, define $\delta_\mathbf k=1$ if $k_{2l-1}=k_{2l}'$ for all $1\leq l\leq r$ and $\delta_\mathbf k=0$ otherwise. Since $A$ acts on $V^{\otimes 2}$ via $\underline\alpha$ (see  Lemma~\ref{AU}), we have
\begin{equation}\label{add1}d_r(M_{2r}^1)\subseteq M_0^1 \text { and } d_r((1\otimes k_\eta)\otimes v_\mathbf k)=v^b\delta_{\mathbf k}(1\otimes k_\eta), \text{for $ \forall \eta\in \mathcal P$, }
\end{equation}
where $b\in \mathbb Z$.
Note that $T_j^{-1}$ acts on $M^{gen}\otimes V^{\otimes 2r}$  via $Id_{M^{gen}}\otimes Id_{V}^{\otimes {j-1}}\otimes R_{V,V}\otimes Id_{V}^{\otimes {2r-j-1}}$. By   Lemma~\ref{act123}, $T_j^{-1}$  stabilizes $M_{2r}^1$ and \begin{equation}\label{add2}T_j^{-1}((1\otimes k_\eta)\otimes v_{\mathbf k})\equiv v^{(wt(v_{k_j})\mid wt(v_{k_{j+1}}))}(1\otimes k_\eta)\otimes v_{\mathbf k r_j}\pmod {M_{2r}^1}
\end{equation} for any $\eta\in \mathcal P$.

Suppose $\mathbf s=\eta(e)$ and $\beta_{\mathbf s}=\mu+2\sum_{j=1}^{2r}b_{d, j}\hat\epsilon_{s_j}$.
 Then there exist $c, a_1, a_2\in \mathbb Z$ and $x, x_1\in M_{2r}^1$ such that
 \begin{equation}\label{sss123} \begin{aligned}d((1\otimes k_\mu)\otimes v_{\mathbf{s}})&=d_rT^{inv}_wX_{2r}^{b_{d, 2r}}\circ\ldots\circ X_1^{b_{d, 1} } ((1\otimes k_\mu)\otimes v_{\mathbf{s}}),~\text{by \eqref{form},}\\&=v^{a_1}d_rT^{inv}_w((1\otimes k_{\beta_{\mathbf s}})\otimes v_{\mathbf{s}}+x), \text {  by Lemma~\ref{mu} and \eqref{add2} }\\&=v^{a_2}d_r((1\otimes k_{\beta_{\mathbf s}})\otimes v_{\mathbf{s}w^{-1}}+x_1), \text {  by \eqref{add2} }\\&\equiv v^{c}\delta_{\mathbf{s}w^{-1}}(1\otimes k_{\beta_{\mathbf s}}) \pmod{M_0^1}, \text{  by \eqref{add1}}.\end{aligned}\end{equation} Let $\mathbf c=\mathbf{s}w^{-1}
=(s_{w^{-1}(1)}, \ldots,  s_{w^{-1}(2r)})$. Then $\mathbf c$ is an arrangement of $1, 1',\ldots, r, r'$.
  Note that $\delta_{\mathbf c}=1$ if and only if $c_{2l-1}=c_{2l}'$  if and only if $s_{w^{-1}(2l-1)}=s_{w^{-1}(2l)}'$ for all $1\leq l\leq r$. So, $\delta_{\mathbf c}=1$ if and only if
    $\{\{w^{-1}(2l-1), w^{-1}(2l)\}\mid 1\le l\le r\}$ is the set of all caps in $e$.
On the other hand,  $\eta( d)=\eta(d_r)w$. So $\{\{w^{-1}(2l-1), w^{-1}(2l)\}\mid 1\le l\le r\}$ is the set of all caps in $d$.
Thus, $\delta_{\mathbf{c}}=1$ if and only if $conn(\hat e)=conn(\hat d)$, proving  $\delta_{\mathbf{c}}=\delta_{\eta(d),  {\eta(e)}}$.
 \end{proof}

\begin{Lemma}\label{mmm} For any positive integer  $i$ and any  $\mu\in \mathcal P$, we have
 \begin{itemize}\item[(1)] $\Delta_i {M}_0^1\subseteq {M}_0^1$,
 \item [(2)] $\Delta_i (1\otimes {k}_\mu)\equiv \sum_{j=1}^{2n} v^{2\varrho_{j'}} (1\otimes k_{2i\hat\epsilon_j+\mu} )\pmod{{M}_0^1}$.\end{itemize}
\end{Lemma}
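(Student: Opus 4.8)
The plan is to push the bubble $\Delta_i$ through the functor $\Psi_{M^{gen}}$ of Proposition~\ref{abmw1} and read off its action on the ``generic highest weight vectors'' $1\otimes k_\mu\in M^{gen}$, keeping only what survives modulo $M_0^1$. Graphically $\Delta_i$ is the loop carrying $i$ dots, so as a morphism in $\AB$ one has $\Delta_i=A\circ(X^i\otimes 1_{\ob 1})\circ U$ (with $X^{-|i|}$ in place of $X^i$ when $i<0$). Hence, by Proposition~\ref{abmw1} and \eqref{nnn}, the operator $\Delta_i$ on $M^{gen}$ equals $(Id\otimes\underline\alpha)\circ(X^i\otimes Id_V)\circ(Id\otimes\underline\beta)$, where $\underline\alpha,\underline\beta$ are the maps of Lemma~\ref{AU}, the operator $X=\Psi(X)_{M^{gen}}=\delta R_{M^{gen},V}^{-1}R_{V,M^{gen}}^{-1}$ acts on the first two tensor factors of $M^{gen}\otimes V\otimes V$, and $\underline\alpha$ on the last two. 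Since $\mathfrak g=\mathfrak{so}_{2n}$ we have $\varsigma\equiv1$, so $\underline\beta(1)=\sum_{k=1}^{2n}q^{\varrho_{k'}}v_k\otimes v_{k'}$ and $\underline\alpha(v_k\otimes v_l)=\delta_{k,l'}q^{-\varrho_k}$; thus everything reduces to understanding $X^i\big((1\otimes k_\mu)\otimes v_k\big)$ for each $k$ and then contracting the outer pair of $V$'s with $\underline\alpha$.

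For part (2), iterating Lemma~\ref{mu} gives $X^i\big((1\otimes k_\mu)\otimes v_k\big)\equiv v^{i(2n-1)}\big(1\otimes k_{\mu+2i\hat\epsilon_k}\otimes v_k\big)\pmod{M_1^1}$, and Lemma~\ref{mu} also records that $X^{\pm}$ stabilises $M_1^1$, so the $M_1^1$-part is genuinely a remainder throughout the $i$-fold composition. Writing $X^i\big((1\otimes k_\mu)\otimes v_k\big)=v^{i(2n-1)}\big(1\otimes k_{\mu+2i\hat\epsilon_k}\otimes v_k\big)+\xi_k$ with $\xi_k\in M_1^1$ and applying $Id\otimes\underline\alpha$: the leading term yields $\underline\alpha(v_k\otimes v_{k'})\,(1\otimes k_{\mu+2i\hat\epsilon_k})=q^{-\varrho_k}(1\otimes k_{2i\hat\epsilon_k+\mu})$, so after summing over $k$ and collecting the prefactors $q^{\varrho_{k'}}$, $q^{-\varrho_k}$ (and the twist scalar) and using the identity $\varrho_{k'}=-\varrho_k$, the coefficient of $1\otimes k_{2i\hat\epsilon_k+\mu}$ is $v^{2\varrho_{k'}}$, as claimed. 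For the remainder, $(Id\otimes\underline\alpha)(\xi_k\otimes v_{k'})\in M_0^1$, because $\underline\alpha$ merely contracts the last two $V$-factors against an element of $\mathcal A_0$ and hence carries $M_1^1\otimes V$ into $z_v M_0^0=M_0^1$. This gives part (2).

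Part (1) I would treat in parallel, by $\mathcal A_0$-lattice tracking. Each of the three maps $Id\otimes\underline\beta$, $X^i\otimes Id_V$, $Id\otimes\underline\alpha$ carries the evident $\mathcal A_0$-lattice $M_r^0$ of $M^{gen}\otimes V^{\otimes r}$ into the corresponding lattice of its target: for $\underline\alpha,\underline\beta$ because their structure constants $q^{\pm\varrho_j}$ are units in $\mathcal A_0$, and for $X=\delta R_{M^{gen},V}^{-1}R_{V,M^{gen}}^{-1}$ because $R_{M^{gen},V}^{-1}R_{V,M^{gen}}^{-1}$ is assembled from $\bar\Theta$, $\Theta$, $\pi^{\pm1}$, $P$ and $\delta^{\pm1}$, all of which stabilise $M_1^0$ by Lemma~\ref{theta} together with the fact that $\pi^{\pm1}$ acts by units of $\mathcal A_0$ on each weight space. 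Therefore $\Delta_i$ stabilises $M_0^0$, and since $\Delta_i$ is $\mathbb F$-linear, $\Delta_i M_0^1=\Delta_i(z_vM_0^0)=z_v\,\Delta_i M_0^0\subseteq z_v M_0^0=M_0^1$.

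The step I expect to be the main obstacle is the bookkeeping in the second paragraph: one must be certain that through the $i$-fold application of $X$ every error stays inside $M_1^1$, not merely inside the coarser lattice $M_1^0$, so that after the final contraction by $\underline\alpha$ it lands in $M_0^1$ and not just in $M_0^0$. This is exactly where the sharpened stability statement $X^{\pm}M_1^1\subseteq M_1^1$ of Lemma~\ref{mu} — resting on Lemma~\ref{theta}(3) and Lemma~\ref{q0} — is indispensable; with only the congruence of Lemma~\ref{mu} and the plain lattice preservation of the third paragraph one would control $\Delta_i$ merely modulo $M_0^0$, which is too weak for (2).
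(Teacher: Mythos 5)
Your proof is correct and follows essentially the same route as the paper's: both factor $\Delta_i$ through $\Psi_{M^{gen}}$ as $(Id\otimes\underline\alpha)\circ(X^i\otimes Id_V)\circ(Id\otimes\underline\beta)$, read off the leading coefficient $q^{\varrho_{j'}}\cdot q^{-\varrho_{j}}=v^{2\varrho_{j'}}$ from Lemma~\ref{AU}, and use the $M_1^1$-stability and congruence of Lemma~\ref{mu} for $X^{\pm}$ to control the error. The only divergence is in part (1), where the paper tracks the lattices $M_r^1$ directly through $U$, $X^i$, $A$, while you first establish stability of the coarser lattices $M_r^0$ and then pull out the factor $z_v$ by $\mathbb F$-linearity — a mild rearrangement resting on the same underlying stability facts from Lemmas~\ref{theta} and \ref{mu}.
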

\begin{proof} Thanks to   Lemma~\ref{AU},    $U({M}_0^1)\subseteq {M}_2^1$ and $A({M}_2^1)\subseteq {M}_0^1$.  By Lemma~\ref{mu}, we have  $\Delta_i {M}_0^1=A\circ (X^{i}\otimes Id_V)\circ U({M}_0^1)\subseteq {M}_0^1$  and
$$
\begin{aligned}
 \Delta_i (1\otimes {k}_\mu) &=(Id_{M^{gen}}\otimes \underline\alpha)\circ(X^{i}\otimes Id_V)((1\otimes {k}_\mu)\otimes \sum_{j=1}^{2n} v^{\varrho_{j'}}v_j\otimes v_{j'}) \\ &\equiv (Id_{M^{gen}}\otimes \underline\alpha)(\sum_{j=1}^{2n} v^{\varrho_{j'}}( (1\otimes k_{2i\hat\epsilon_j+\mu})\otimes v_j\otimes v_{j'})) \pmod { M_0^1}
\\&\equiv\sum_{j=1}^{2n} v^{2\varrho_{j'}} (1\otimes k_{2i\hat\epsilon_j+\mu}) \pmod{M_0^1}.
\end{aligned}
 $$
 \end{proof}

 The proof of Theorem~\ref{aff} depends on quantum group $\U_v(\mathfrak{so}_{2n})$ over $\mathbb{C}(v^{1/4})$.   Since we consider generic Verma module and natural module of  $\U_v(\mathfrak{so}_{2n})$, it is enough use $\mathbb{C}(v^{1/2})$ instead of  $\mathbb{C}(v^{1/4})$.

\begin{Theorem}\label{aff} Suppose $\mathbb K=\mathbb C(v^{1/2})$. If  $(\delta,  z)=(v^{2n-1}, z_v)$ and $\omega_0$ is uniquely determined by    \eqref{para1}, then   $\Hom_{\AB}(\ob 2r,\ob 0)$ has $\mathbb K$-basis given by $\mathbb{NT}_{2r, 0}/\sim$.
\end{Theorem}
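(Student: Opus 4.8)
The spanning half is already done: by Proposition~\ref{span1} the $\mathbb K$-module $\Hom_{\AB}(\ob{2r},\ob 0)$ is spanned by $\mathbb{NT}_{2r,0}/\!\sim$, so the plan is to prove that this set is $\mathbb K$-linearly independent. Since $\mfg=\mathfrak{so}_{2n}$ has $\epsilon_\mfg=1$ and $\mathrm N=2n$, the chosen parameters are exactly $\delta=\epsilon_\mfg q^{\mathrm N-\epsilon_\mfg}=v^{2n-1}$, $z=z_q=z_v$, and $\omega_0$ is then forced by \eqref{para1}; hence Propositions~\ref{bmw1}--\ref{abmw1} apply with this $\mfg$ and produce the functor $\Psi_{M^{gen}}$ sending $d\in\Hom_{\AB}(\ob{2r},\ob 0)$ to a $\U_v(\mfg)$-homomorphism $M^{gen}\otimes V^{\otimes 2r}\to M^{gen}$. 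I would prove independence by showing $\Psi_{M^{gen}}$ is injective on $\Hom_{\AB}(\ob{2r},\ob 0)$, testing images on the vectors $(1\otimes k_\mu)\otimes v_{\eta(e)}$ with $e\in\bar{\mathbb{NT}}_{2r,0}$ and $\mu\in\mathcal P$, and reading off coefficients in the group algebra $\mathbb K[\mathcal P]$ of the highest-weight line $1\otimes\U_v(\mathfrak h)\subset M^{gen}$, on which each $\Delta_j$ acts as multiplication by the Harish--Chandra image $\pi_0(z_j)$ of the central element $z_j$ of \eqref{cen}, whose explicit shape is furnished by \eqref{zkco}.

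Concretely: suppose $\sum_d a_d\,d=0$ is a finite relation with $d\in\mathbb{NT}_{2r,0}/\!\sim$; after multiplying by a suitable scalar we may assume all $a_d\in\mathcal A_0$ and, if the relation is nontrivial, that some $a_d\notin\mathcal A_1$. Write each $d$ in its normal form \eqref{form}, $d=\big(\prod_{j\ge1}\Delta_j^{\,i_j(d)}\big)\cdot\bar d$ with $\bar d\in\bar{\mathbb{NT}}_{2r,0}$, and apply $\Psi_{M^{gen}}$ of the relation to $(1\otimes k_\mu)\otimes v_{\eta(e_C)}$ for a fixed connector $C$ with representative $e_C$. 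By Lemma~\ref{actingd}, modulo $M_0^1$ the loop-free part contributes $\delta_{\eta(\hat d),\eta(e_C)}\,v^{c_d}(1\otimes k_{\beta_d})$ with $\beta_d=\mu+2\sum_t b_{d,i_t}\epsilon_t$; because $n>r$ we have $\eta(\hat d)=\eta(e_C)$ exactly when $conn(\hat d)=C$, so only those $d$ survive this test. Feeding the result through the loop factors (which preserve $M_0^0$ and $M_0^1$ by Lemma~\ref{mmm}(1)), isolating for each $\nu\in\mathcal P$ the coefficient of $1\otimes k_\nu$, and letting $\mu$ vary, the hypothesis reduces — after a $z_v$-adic leading-term analysis that lets one discard the error terms coming from the congruences in Lemmas~\ref{actingd}--\ref{mmm} — to the assertion that the relation
\[
\sum_{d:\,conn(\hat d)=C} a_d\,v^{c_d}\Big(\prod_{t}k_{\epsilon_t}^{\,2b_{d,i_t}}\Big)\prod_{j\ge1}\pi_0(z_j)^{\,i_j(d)}=0
\]
in $\mathbb K[\mathcal P]$ forces all the coefficients to vanish. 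Thus it remains to show that the elements $\big(\prod_t k_{\epsilon_t}^{2b_t}\big)\prod_j\pi_0(z_j)^{i_j}$, indexed by $(b_t)_t\in\mathbb Z^r$ and by sequences $(i_j)_{j\ge1}$ of non-negative integers with finite support, are $\mathbb K$-linearly independent in $\mathbb K[\mathcal P]$; combined with Proposition~\ref{cspan3} for the connector bookkeeping this kills every $a_d$ and proves the theorem via Proposition~\ref{span1}.

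This last independence statement is the heart of the argument and the step I expect to be the main obstacle, for a subtle reason: \emph{modulo $z_v$ it is false}. Lemma~\ref{mmm}(2) shows that at $v=1$ multiplication by $\pi_0(z_j)$ degenerates to multiplication by the power sum $\sum_{m=1}^n(k_{\epsilon_m}^{2j}+k_{\epsilon_m}^{-2j})$, and once $j>n$ these power sums satisfy the classical Newton relations, so the corresponding products $\prod\Delta_j^{i_j}$ become linearly dependent after reduction. The point to exploit is that those relations have coefficients in $\mathbb K[\mathcal P]$, not in $\mathbb K$, so they do not descend to $\AB$: expanding the right-hand sides of \eqref{zkco} into partial fractions, $\pi_0(z_l)$ is a $\mathbb K[\mathcal P]$-linear combination of the sequences $(\pm1)^l$ and $(\epsilon_\mfg q\,\sigma_\varrho(k_{wt(v_m)}^2)^{\pm1})^l$, and the ``transfer'' coefficients there are genuinely non-constant elements of $\mathbb K[\mathcal P]$. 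I would use this to introduce a monomial order on $\mathbb K[\mathcal P]$ in which $\prod_j\pi_0(z_j)^{i_j}$ has a leading monomial whose weight encodes $\sum_j j\,i_j$ and whose ($\mathbb K[\mathcal P]$-valued) leading coefficient encodes $\sum_j i_j$; peeling off the $\Delta_j$'s in decreasing order of $j$ then recovers each individual $i_j(d)$, after which the translations $\prod_t k_{\epsilon_t}^{2b_{d,i_t}}$ separate the dot data and the scalar factors $v^{c_d}$ cause no collision. This is precisely the place where the quantum deformation $v$ is indispensable, and where the bulk of the routine but lengthy computation lies; everything upstream (the connector peeling, the reduction to the torus line, the passage to $\mathbb K[\mathcal P]$) is formal once Lemmas~\ref{actingd}--\ref{mmm}, \ref{mu} and Proposition~\ref{cspan3} are in hand.
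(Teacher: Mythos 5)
Your overall strategy matches the paper's: use Proposition~\ref{span1} for spanning, evaluate the putative relation on $M^{gen}\otimes V^{\otimes 2r}$ at vectors of the form $m_0\otimes v_{\eta(e)}$, use Lemmas~\ref{actingd}--\ref{mmm} to reduce modulo $M_0^1$ to a congruence in the group algebra $1\otimes\mathbb K[\mathcal P]$, and finish with a leading-term argument. But the final reduction you propose is where a genuine gap opens up. You reduce the theorem to the assertion that the elements $\bigl(\prod_t k_{\epsilon_t}^{2b_t}\bigr)\prod_j\pi_0(z_j)^{i_j}$, indexed by $(b_t)\in\mathbb Z^r$ and finitely supported $(i_j)\in\mathbb N^{\mathbb N}$, are $\mathbb K$-linearly independent in $\mathbb K[\mathcal P]$. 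That statement is false outright, not merely modulo $z_v$: it is equivalent to the $\pi_0(z_j)$ being algebraically independent over $\mathbb K$, yet they all live in $\mathbb K[\mathcal P]$, whose fraction field has transcendence degree $n$ over $\mathbb K$, so any $n+1$ of them already satisfy a polynomial relation, and any such relation is a $\mathbb K$-linear relation among the monomials $\prod_j\pi_0(z_j)^{i_j}$. Your proposed rescue via a monomial order on $\mathbb K[\mathcal P]$ in which the leading monomial's weight records $\sum_j j\,i_j$ and the leading coefficient records $\sum_j i_j$ is not enough either: those two scalars do not determine the sequence $(i_j)$ (e.g. $i_1=i_3=1$ and $i_2=2$ have the same $\sum_j i_j$ and $\sum_j j\,i_j$), so the ``peeling'' step you describe is not forced by the leading data.

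The paper avoids ever asserting this false independence. It fixes $\mu=0$, uses Lemma~\ref{mmm}(2) directly (no detour through the partial-fraction form of the Harish--Chandra image in \eqref{zkco}), so that each $\Delta_i$ is replaced modulo $M_0^1$ by $\sum_{l=1}^{2n}v^{2\varrho_{l'}}k_{2i\hat\epsilon_l}$, obtaining the congruence \eqref{ddd6}. It then introduces the grading $\deg(k_{\pm\epsilon_i})=\pm1$ and passes to the top-degree part, which collapses the symmetric-in-$2n$-variables expression $\sum_{l=1}^{2n}k_{2i\hat\epsilon_l}$ to the genuine power sum $\sum_{l=1}^n k_{2i\epsilon_l}$ in \eqref{kk}; this degree reduction is a step your proposal skips. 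The remaining combinatorics is delegated to the leading-monomial analysis of \cite[Proposition~3.12]{RS3}. Thus the two arguments diverge precisely at the step you yourself flag as the main obstacle: the paper replaces the (false) global independence of the $\pi_0(z_j)$ by a congruence-plus-grading reduction and an external leading-monomial lemma, whereas your version leans on a claim that cannot be true and a monomial-order fix that does not separate the exponent sequences. If you want to complete your route, you must reformulate the final target as a congruence modulo $z_v$ on a restricted set of monomials, in the style of \eqref{kk}, rather than as absolute $\mathbb K$-linear independence in $\mathbb K[\mathcal P]$.
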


\begin{proof}
Recall that any $e\in \mathbb{NT}_{2r, 0}/\sim$ is of form in \eqref{form}. By  Proposition \ref{span1}, it is enough to prove $p_d=0$  for all $d$ if \begin{equation}\label{rrr123} \sum_{d\in B}p_{d}(\Delta_1,\Delta_2,\ldots)d=0,\end{equation}
where $B$ is a finite subset of $\bar{\mathbb{NT}}_{2r, 0}$ and $p_d\in\mathbb K[t_1,t_2,\ldots]$. If it were false, we assume all previous $p_d$'s are non-zero and choose a $j$ such that all previous $p_d\in \mathbb K[t_1,t_2,\ldots,t_j]$. In this case, write \begin{equation}\label{rrr1234}  p_{d}(\Delta_1,\Delta_2,\ldots)=\sum_{\mathbf s\in \mathbb N^{j}}f_{\mathbf s}^d(v^{1/2})\Delta^{\mathbf s},\end{equation}  where $f_{\mathbf s}^{d}(v^{1/2})\in\mathbb K $ and  $\Delta^{\mathbf s} =\Delta_1^{s_1}\Delta_2^{s_2}\cdots\Delta_j^{s_j}$. Without loss of any generality, we can assume that
$f_{\mathbf s}^{d}(v^{1/2})\in \mathcal A_0$ for all  previous pairs $(d, \mathbf s)$ and moreover, there is a pair $(d_1, \tilde{\mathbf s})$ such that
 $f_{\tilde{\mathbf s}}^{d_1}(v^{1/2})\in \mathcal A_0\setminus \mathcal A_1$. Suppose $conn(\hat d_1)=\{(i_l, j_l)| 1\leq l\leq r\}$ and  $B_1=\{d\in B\mid \eta(d)=\eta(d_1)\}$.
 Since we are assuming that $n> r$,  $\eta(d)=\eta(d_1)$ if and only if $conn(\hat d)=conn(\hat d_1)$. Recall   $b_{d, i}$'s  in Definition~\ref{dots}.
 Thanks to
Lemmas~\ref{actingd}--\ref{mmm}, there are some  $c(d)\in \mathbb Z$ depending on $d$ such that
$$\begin{aligned} & \sum_{d\in B}p_{d}  (\Delta_1,\Delta_2,\ldots)d ((1\otimes 1)\otimes v_{\eta(d_1)})\equiv \sum_{d\in B_1}p_{d}(\Delta_1,\Delta_2,\ldots)d ((1\otimes 1)\otimes v_{\eta(d_1)})\pmod { M_0^1}\\
  &\equiv\sum_{d\in B_1} v^{c(d)} p_{d}(\sum_{l=1}^{2n} v^{2\varrho_{l'}}(1\otimes k_{2\hat{\epsilon}_l}),\sum_{l=1}^{2n} v^{2\varrho_{l'}}(1\otimes k_{4\hat{\epsilon}_l}),\ldots) (1\otimes k_{\sum_{t=1}^r2b_{d, i_t}\epsilon_t})
 \pmod{ M_0^1}
.\end{aligned} $$  Thanks to \eqref{rrr123}-\eqref{rrr1234},
\begin{equation}\label{ddd6}\sum_{d\in B_1}v^{c(d)}\sum_{\mathbf s\in \mathbb N^{j}}f_{\mathbf s}^d(v^{1/2})
\prod_{i=1}^j ( \sum_{l=1}^{2n} 1\otimes k_{2i\hat{\epsilon}_l})^{s_i}
(1\otimes k_{\sum_{t=1}^r2b_{d, i_t}\epsilon_t})\equiv 0\pmod { M_0^1}. \end{equation}
Now, define $\text{deg}(k_{\pm \epsilon_i})=\pm 1$ for all admissible $i$.  Considering the terms in the LHS of \eqref{ddd6} with the highest degree  yields
\begin{equation}\label{kk}
\sum_{d\in B_1}\sum_{\mathbf s\in \mathbb N^{j}}v^{c(d)}f_{\mathbf s}^d(v^{1/2})
\prod_{i=1}^j ( \sum_{l=1}^{n}1\otimes k_{ 2i\epsilon_l})^{s_i}
(1\otimes k_{\sum_{t=1}^r2b_{d, i_t}\epsilon_t})\equiv 0\pmod { M_0^1}
\end{equation}
Note that \eqref{kk} is something like \cite[(3.27)]{RS3}. Using arguments on the leading monomials at the end of the proof
\underline{} of \cite[Proposition~3.12]{RS3}, we have   $v^{c(d)}f_{\mathbf s}^d(v^{1/2})\in \mathcal{A}_1$ for all pairs $(d, \mathbf s)$ such that $d\in B_1$. In particular, $v^{c(d_1)}f_{\tilde{\mathbf s}}^{d_1}(v^{1/2})\in \mathcal A_1$, a   contradiction since $f_{\tilde{\mathbf s}}^{d_1}(v^{1/2})\in \mathcal A_0\setminus \mathcal A_1$.
\end{proof}

\begin{proof}[\textbf{Proof of Theorem~\ref{affbasis}}]
If $m+s$ is odd, then $\Hom_{\AB}(\ob m, \ob s)=0$. So, we assume   $m+s=2r$ for some $r\in \mathbb{N}$.
 Thanks to Lemma~\ref{etannxs}(2), both  $\bar\eta_{\ob m}$ and $\bar\gamma_{\ob m}$ give bijections between  $\mathbb{NT}_{m,s}/\sim$ and $\mathbb{NT}_{0,2r}/\sim$ (as morphisms in $\AB$). Thanks to  Proposition \ref{span1}, it is enough to verify that $\mathbb {NT}_{2r, 0}/\sim$ is linear independent over $\mathbb K$.

We consider $\AB$ over the quotient ring $\mathbb{C}[\delta,\delta^{-1},z,\omega_0]/I$ and $I $ is the ideal generated by $ \delta-\delta^{-1}-z(\omega_0-1)$. Since $\delta-\delta^{-1}-z(\omega_0-1)$ is  irreducible, $\mathbb{C}[\delta,\delta^{-1},z,\omega_0]/I$ is a domain. Suppose $$\sum_{d\in B}f_dd=0,$$  where $B$ is a finite set of $\mathbb {NT}_{2r, 0}/\sim$ and $f_d\in \mathbb{C}[\delta, \delta^{-1},z, \omega_0]/I$.

We claim that $f_d=0$ for all $d\in B$. Since $\mathbb{C}[\delta,\delta^{-1},z,\omega_0]/I$  is a domain, $f_d=0$ if and only if $z^a\delta^bf_d=0$ for any $a\in\mathbb N,b\in\mathbb{Z}$. So,  we can assume each monomial of  $f_d$  has neither $\delta^{-1}$ nor $\omega_0$ as its factor  when we prove the claim.
Thanks to  Theorem \ref{aff}, we have $f_d=0$ for all  $\delta=v^{2n-1}, z=v-v^{-1}$ whenever $n>r$.
 Now the claim follows from the fundamental theorem of algebra.
  So, $\mathbb {NT}_{2r, 0}/\sim$ is linear independent over $\mathbb{C}[\delta,\delta^{-1},z,\omega_0]/I$ and hence over $\mathcal{Z}=\mathbb{Z}[\delta,\delta^{-1},z,\omega_0]/J$, where $J$  is the ideal generated by $ \delta-\delta^{-1}-z(\omega_0-1)$.

   Consider the affine  Kauffmann category  $\AB_\mathcal{Z}$ (resp., $\AB_\mathcal{\mathbb K}$)  over $\mathcal{Z}$ (resp., $\mathbb K$). By base change property, $\Hom_{\mathbb K\otimes_\mathcal{Z}\AB_{\mathcal{Z}}}(\ob 2r,\ob 0)$ has basis given by $\mathbb {NT}_{2r, 0}/\sim$. Then Theorem \ref{affbasis} follows from the fact that there is an obvious functor from $\AB_\mathcal{\mathbb K}$ to $\mathbb K\otimes_\mathcal{Z}\AB_{\mathcal{Z}}$, which sends the required basis element of $\Hom_{\AB_\mathcal{\mathbb K}}(\ob 2r, \ob 0)$ to the corresponding basis element in $\Hom_{\mathbb K\otimes_\mathcal{Z}\AB_{\mathcal{Z}}}(\ob 2r,\ob 0)$.
\end{proof}

\section{Affine Birman-Murakami-Wenzl algebras}

Thanks to  Theorem~\ref{affbasis},  $\AB$ can be considered as $\mathbb K[\Delta_1, \Delta_2, \ldots]$-linear category.
Suppose $\omega$ satisfies Definition~\ref{admi}(1).  Consider $\mathbb K$ as the right $\mathbb K[\Delta_1, \Delta_2, \ldots]$-module on which $\Delta_i$ acts on $\mathbb K$ via $\omega_i$ for any positive integer $i$. Define
 $$\AB( \mathbf \omega)=\mathbb K\otimes_{\mathbb K[\Delta_1, \Delta_2, \ldots]} \AB.$$
Let $\B'$ be the subcategory of $\AB(\mathbf \omega)$ generated by $\ob 1$ and four elementary morphisms   $A,U,T$ and $T^{-1}$.
Thanks to Theorem \ref{affbasis},
we have the following result, immediately. This shows that $\B$ is a subcategory of $\AB( \mathbf \omega)$.
\begin{Cor}\label{c4}  Suppose $m, s\in \mathbb N$. If  $\omega$ satisfies Definition~\ref{admi}(1), then
\begin{itemize}\item [(1)]
 $\Hom_{\AB( \mathbf \omega)}(\ob m, \ob s)$ has  $\mathbb K$-basis  given by $\bar {\mathbb {NT}}_{m, s}/\sim$,

\item [(2)]  $\B'\simeq \B$.
\end{itemize} \end{Cor}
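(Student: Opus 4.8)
The plan is to deduce both assertions from Theorem~\ref{affbasis} by an elementary base--change argument, using the factorisation \eqref{form} of a normally ordered diagram into a product of bubbles and a bubble--free remainder. First I would record that $\End_{\AB}(\ob 0)$ is a genuine polynomial ring: applying Theorem~\ref{affbasis} with $m=s=0$, a $\mathbb K$--basis of $\End_{\AB}(\ob 0)$ is $\mathbb{NT}_{0,0}/\!\sim$, and by the normal--ordering conditions (b),(c) every such diagram is a monomial $\prod_j\Delta_j^{i_j}$ in the crossing--free loops $\Delta_1,\Delta_2,\dots$ (with $\Delta_0=\omega_0 1_{\ob 0}$); no loop carries a $\circ$, so negative bubbles do not occur and $\End_{\AB}(\ob 0)=\mathbb K[\Delta_1,\Delta_2,\dots]$.

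For part~(1) I would then use that $\End_{\AB}(\ob 0)$ is commutative and acts centrally on $\Hom_{\AB}(\ob m,\ob s)$ (by \cite[Proposition~2.2.10]{EGNO}, together with $\Delta_i\circ\Delta_j=\Delta_i\otimes\Delta_j$), and that \eqref{form} exhibits each element of the $\mathbb K$--basis $\mathbb{NT}_{m,s}/\!\sim$ uniquely as $(\prod_j\Delta_j^{i_j})\otimes d_0$ with $d_0\in\bar{\mathbb{NT}}_{m,s}/\!\sim$ bubble--free and $(i_j)$ a finitely supported tuple in $\mathbb N$. This identifies $\mathbb{NT}_{m,s}/\!\sim$ with $\{\text{monomials in }\Delta_1,\Delta_2,\dots\}\times(\bar{\mathbb{NT}}_{m,s}/\!\sim)$ compatibly with the module structure, so $\Hom_{\AB}(\ob m,\ob s)$ is a free $\mathbb K[\Delta_1,\Delta_2,\dots]$--module with basis $\bar{\mathbb{NT}}_{m,s}/\!\sim$. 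Since by construction $\Hom_{\AB(\mathbf\omega)}(\ob m,\ob s)=\mathbb K\otimes_{\mathbb K[\Delta_1,\Delta_2,\dots]}\Hom_{\AB}(\ob m,\ob s)$, specialising $\Delta_i\mapsto\omega_i$ ($i\ge 1$) yields a free $\mathbb K$--module on the image of $\bar{\mathbb{NT}}_{m,s}/\!\sim$; as the elements of $\bar{\mathbb{NT}}_{m,s}$ are honest tangle diagrams and $d=d'$ in $\AB$ whenever $d\sim d'$, this image is $\bar{\mathbb{NT}}_{m,s}/\!\sim$ read inside $\AB(\mathbf\omega)$. Here Definition~\ref{admi}(1) together with Lemma~\ref{addmi} is what guarantees that a negative bubble $\Delta_{-j}$ occurring on a diagram still evaluates to $\omega_{-j}1_{\ob 0}$, so there is no inconsistency.

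For part~(2) I would compose the monoidal functor $\B\to\AB$ of Lemma~\ref{ktau}(2) with the canonical monoidal functor $\AB\to\AB(\mathbf\omega)$ (identity on objects, $f\mapsto 1\otimes f$) to obtain a monoidal functor $G\colon\B\to\AB(\mathbf\omega)$ sending the generators $A,U,T,T^{-1}$ of $\B$ to the generators of $\AB(\mathbf\omega)$ of the same name. Since $G$ is monoidal and $\B$ is generated by $\ob 1$ and these four morphisms, the image of $G$ is precisely the subcategory $\B'$, so $G$ corestricts to a full, essentially surjective functor $\B\to\B'$; it remains to prove $G$ is faithful, i.e.\ injective on each $\Hom_{\B}(\ob m,\ob s)$. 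By Theorem~\ref{bask} this space is free over $\mathbb K$ on $\{D_c\mid c\in conn(m,s)\}$, and each $D_c$, being a dot--free, loop--free reduced totally descending $(m,s)$--tangle diagram, lies in $\bar{\mathbb{NT}}_{m,s}$, with distinct connectors $c$ giving distinct classes modulo $\sim$ (as $\sim$ preserves $conn(\hat d)$). By part~(1) these classes are part of a $\mathbb K$--basis of $\Hom_{\AB(\mathbf\omega)}(\ob m,\ob s)$, hence $\mathbb K$--linearly independent; thus $G$ sends the basis $\{D_c\}$ to a linearly independent family and is injective. A full, faithful, essentially surjective monoidal functor being an equivalence, this gives $\B'\simeq\B$.

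The whole argument is essentially formal once Theorem~\ref{affbasis} is available, so I do not expect a serious obstacle; the one point deserving care is the first step, namely checking that the decomposition \eqref{form} is genuinely compatible with the $\End_{\AB}(\ob 0)$--module structure on $\Hom_{\AB}(\ob m,\ob s)$ and that $\End_{\AB}(\ob 0)$ is a polynomial (not a Laurent--type) ring, since it is this that makes base change along $\mathbb K[\Delta_1,\Delta_2,\dots]\to\mathbb K$ preserve freeness and the claimed basis $\bar{\mathbb{NT}}_{m,s}/\!\sim$.
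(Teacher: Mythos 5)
Your proposal is correct and takes essentially the same route as the paper: the paper declares the corollary to follow ``immediately'' from Theorem~\ref{affbasis}, and the intended argument is exactly the base-change you spell out, using the factorisation \eqref{form} to identify $\Hom_{\AB}(\ob m,\ob s)$ as a free $\mathbb K[\Delta_1,\Delta_2,\dots]$-module on $\bar{\mathbb{NT}}_{m,s}/\!\sim$ and then tensoring with $\mathbb K$ via $\Delta_i\mapsto\omega_i$. The one thing worth flagging is that the hypothesis from Definition~\ref{admi}(1) is not actually needed for the base-change to preserve the basis (the normally ordered diagrams carry only positive bubbles, so the specialisation $\Delta_j\mapsto\omega_j$, $j\ge1$, suffices on its own); its role, as you rightly note, is rather to guarantee consistency of the induced evaluation of the negative bubbles $\Delta_{-j}$ via Lemma~\ref{addmi}, which matters for the intended identification with the affine BMW algebra in the next theorem rather than for the linear algebra here.
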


\begin{Defn}\label{BMW}\cite{Good} The affine Birman-Murakami-Wenzl algebra $W_{r,\mathbb K}^{\text{aff}}$ is the $\mathbb K$-algebra generated by $x_1^{\pm 1}$, $g_i^{\pm1}$ and $e_i (1\leq i\leq r-1)$ subject to  the following  relations:
\begin{multicols}{2}\item[(1)] $g_ig_i^{-1}=g_i^{-1}g_i=1$, \item[(2)]  $x_1x_1^{-1}=x_1^{-1}x_1=1$,
\item[(3)]$e_i^2=\omega_0 e_i$,
\item[(4)]$g_ig_{i+1}g_i=g_{i+1}g_ig_{i+1}$,
\item[(5)]$g_i-g_i^{-1}=z(1-e_i)$,
\item[(6)]$x_1g_1x_1g_1=g_1x_1g_1x_1$,
    \item[(7)]$y_iz_j=z_jy_i$, if $\vert i-j\vert \geq 2$,
    \item[(8)]$x_1y_j=y_jx_1$ if $j\geq 2$,
    \item[(9)]$e_ie_{i\pm1}e_i=e_i$,
    \item[(10)]$g_ig_{i\pm1}e_i=e_{i\pm1}e_i$ and $e_ig_{i\pm1}g_i=e_ie_{i\pm1}$,
    \item[(11)]$e_1x_1^se_1=\omega_se_1$, for any $s\ge 1$,
        \item[(12)]$g_ie_i=e_ig_i=\delta^{-1} e_i$,
            \item[(13)]$e_1x_1g_1x_1= \delta e_1=x_1g_1x_1e_1$,
\end{multicols}
\noindent where $y_i, z_i\in \{e_i, g_i\}$, $1\le i\le r-1$.
\end{Defn}
Note that the current  $z$ in Definition~\ref{BMW}  is $-z$ in \cite{Good}.

\begin{Theorem}\label{c3} As  $\mathbb K$-algebras, $\End_{\AB(\omega)}(\ob r)\cong {W}_{r,\mathbb K}^{\text{aff}}$.
\end{Theorem}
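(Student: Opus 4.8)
The plan is to exhibit mutually inverse $\mathbb K$-algebra homomorphisms between $\End_{\AB(\omega)}(\ob r)$ and $W_{r,\mathbb K}^{\text{aff}}$. First I would define a functor-level map: the generators $e_i, g_i^{\pm1}, x_1^{\pm1}$ of $W_{r,\mathbb K}^{\text{aff}}$ should be sent to the morphisms $1_{i-1}\otimes U\circ A\otimes 1_{r-i-1}$, $1_{i-1}\otimes T^{\pm1}\otimes 1_{r-i-1}$, and $X^{\pm1}\otimes 1_{r-1}$ in $\End_{\AB(\omega)}(\ob r)$ respectively (using the notation $X_i1_{\ob r}$, $U_i$, etc.\ from Lemma~\ref{XX}). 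To see this is a well-defined algebra homomorphism I must check that each of the thirteen relations in Definition~\ref{BMW} holds among these images. Most of these translate directly: relation (1) is $X\circ X^{-1}=1$ together with Definition~\ref{AK defn}(1) and the interchange law; (2)--(4) are Theorem~\ref{present1}(2),(6),(3); (5) is Theorem~\ref{present1}(4); (6) is Definition~\ref{AK defn}(2) plus Lemma~\ref{XX}; (7)--(8) follow from the interchange law; (9)--(10) are the standard Kauffmann/BMW tangle identities coming from Theorem~\ref{present1}(7)--(9); (12) is Theorem~\ref{present1}(5); and (13) is Theorem~\ref{present1}(1) together with Definition~\ref{AK defn}(2),(3). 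The one relation that genuinely uses the quotient $\AB\to\AB(\omega)$ is (11): $e_1\circ X_1^s\circ e_1=\omega_s e_1$. This is exactly the statement that $(U\circ A\otimes 1_{r-1})$ precomposed with $X^s\otimes 1_{r-1}$ and postcomposed with $U\circ A\otimes 1_{r-1}$ collapses, via Theorem~\ref{present1}(7), to $\Delta_s\otimes (U\circ A)\otimes 1_{r-2}$ (for $r\ge 2$; for $r=1$ directly to $\Delta_s$), and $\Delta_s$ acts as $\omega_s1_{\ob 0}$ in $\AB(\omega)$ by the definition of the specialization. I would record this as a short lemma or simply cite it inline.

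Next I would construct the inverse map, or more efficiently argue surjectivity and then compare ranks. Surjectivity: every morphism in $\End_{\AB(\omega)}(\ob r)$ is, by Corollary~\ref{c4}(1), a $\mathbb K$-linear combination of normally ordered dotted tangle diagrams in $\bar{\mathbb {NT}}_{r,r}/\sim$; each such diagram is built by composition and tensor product from $U,A,T,T^{-1},X^{\pm1}$ and $1_{\ob 1}$, hence lies in the image of the algebra map above (caps, cups, crossings, dots placed in the $i$th strand position are precisely the images of the generators, and $1_{\ob 1}^{\otimes r}$ is the unit). So the homomorphism $W_{r,\mathbb K}^{\text{aff}}\to\End_{\AB(\omega)}(\ob r)$ is surjective. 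To conclude it is an isomorphism I would invoke Goodman--Hauschild's basis theorem for $W_{r,\mathbb K}^{\text{aff}}$ from \cite{Good}: the affine BMW algebra is free over $\mathbb K$ with a known monomial basis indexed by pairs of $r$-tangle diagrams decorated by monomials in the $x_i$ (equivalently by $\bar{\mathbb {NT}}_{r,r}/\sim$ under the dictionary above). Comparing this with the basis $\bar{\mathbb {NT}}_{r,r}/\sim$ of $\End_{\AB(\omega)}(\ob r)$ from Corollary~\ref{c4}(1) shows both sides are free $\mathbb K$-modules of the same rank, and a surjection between free modules of the same finite-or-countable rank that matches basis sets is an isomorphism. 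Alternatively, and perhaps cleaner for the write-up, I would directly define the reverse map on basis diagrams of $\End_{\AB(\omega)}(\ob r)$ by reading off a word in the $W_{r,\mathbb K}^{\text{aff}}$-generators (using that $W_{r,\mathbb K}^{\text{aff}}$ has such a diagram calculus in \cite{Good}) and check the two composites are identities on bases.

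The main obstacle I anticipate is the bookkeeping in matching the two diagram bases precisely: one must be careful that the normal-ordering conventions of Definition~\ref{D:N.O. dotted OBC tangle diagram} correspond, strand for strand and dot for dot, to Goodman--Hauschild's normal form for affine BMW diagrams, including the placement of dots (our $\bullet$ versus their $x_i$-powers), the handling of the relation $\bullet\circ = 1$ versus their $x_i x_i^{-1}=1$, and the loop/bubble conventions (our $\Delta_i$ specialized to $\omega_i$ versus their $\omega_i$). The $\mathbf u$-admissibility/Definition~\ref{admi}(1) hypothesis is what guarantees the loop values are consistent on both sides, so I would make sure to note that $\AB(\omega)$ is only defined when $\omega$ satisfies Definition~\ref{admi}(1), matching the standing hypothesis under which $W_{r,\mathbb K}^{\text{aff}}$'s basis theorem holds. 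Once the dictionary is pinned down, the verification of the thirteen relations is routine (each is a short diagram manipulation using Theorem~\ref{present1} and Definition~\ref{AK defn}), and the isomorphism follows from surjectivity plus the rank count. I would present the proof as: (i) define the homomorphism on generators; (ii) verify relations, singling out (11); (iii) observe surjectivity from Corollary~\ref{c4}(1); (iv) conclude via the rank comparison with \cite{Good}.
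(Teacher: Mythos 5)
Your proposal is correct and follows essentially the same route as the paper: define $\gamma$ on generators by $e_i\mapsto U_i\circ A_i$, $g_i^{\pm1}\mapsto T_i^{\pm1}$, $x_1^{\pm1}\mapsto X_1^{\pm1}$; verify the thirteen relations (with (11) being the only one using the specialization $\Delta_s\mapsto\omega_s 1_{\ob 0}$); observe surjectivity from Corollary~\ref{c4}(1); and conclude by matching Goodman--Hauschild's monomial basis of $W_{r,\mathbb K}^{\text{aff}}$ bijectively with $\bar{\mathbb{NT}}_{r,r}/\sim$ via the normal form \eqref{form}. The only caution worth flagging is that, since both sides are free of countably infinite rank, a bare ``surjection plus equal rank'' argument is insufficient; you must (as you note in the alternative) actually track that $\gamma$ sends the Goodman--Hauschild basis bijectively onto $\bar{\mathbb{NT}}_{r,r}/\sim$, which is precisely what the paper does using \eqref{form} and \cite[Proposition 6.12(1)]{Good}, together with the intermediate step that $\gamma$ restricts to an isomorphism $W_{r,\mathbb K}\to\End_{\B'}(\ob r)$ via \cite[Theorem 5.41]{Good} and Corollary~\ref{c4}(2).
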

\begin{proof}The required  algebra homomorphism $\gamma: {W}_{r,\mathbb K}^{\text{aff}}\rightarrow \End_{\AB(\omega)}(\ob r)$ satisfies $\gamma(e_i)=E_i$, $\gamma(x_1^{\pm1})=X_1^{\pm1}$ and $\gamma(g_i^{\pm1})=T_i^{\pm1}$ for all admissible $i$, where
\begin{equation}\label{t6}E_i=U_i\circ A_i.\end{equation}
 In order to verify that $\gamma$ is an algebra homomorphism, we need to check that the images of $x_1^\pm, g_i^\pm$ and $e_i$ satisfy Definition~\ref{BMW}(1)-(12). Later on, we say Definition~\ref{BMW}(1) holds if  the images of $x_1^\pm, g_i^\pm$ and $e_i$ satisfy Definition~\ref{BMW}(1).

  By (RII)-(RIII),\eqref{relation 4},  (L) and (S), we see that  Definition~\ref{BMW}(1)--(5) hold.
Thanks to   the definition of $\AB(\omega)$,   Definition~\ref{BMW}(11) holds. For $1\leq i\leq r$, let $x_i=g_{i-1}...g_1x_1g_1...g_{i-1}$. Then $\gamma(x_i^{\pm1})=X_i^{\pm1}$. Now, Definition~\ref{BMW}(6) follows from Lemma~\ref{XX}. Definition~\ref{BMW}(7)-(8) follows from the interchange law.
Note that both sides of each equation in Definition~\ref{BMW}(9)-(10) are totally descending tangle diagrams with the same connector. Thanks to  \eqref{bbb} and the monoidal functor in  Lemma~\ref{ktau}(2), Definition~\ref{BMW}(9)-(10) hold.
 Definition~\ref{BMW}
 (12) follows from Lemma~\ref{selfcrossing}(1)-(2). Thanks to Lemma~\ref{selfcrossing}(1)(3), we have $$E_1X_1T_1X_1=\delta E_1T_1X_1T_1X_1=\delta E_1X_2X_1=\delta E_1X_1^{-1}X_1=\delta E_1.$$ One can verify $X_1T_1X_1E_1=\delta E_1$ similarly. So,  Definition~\ref{BMW}(13) holds. This verifies that $\gamma$ is an algebra homomorphism.

  Let ${W}_{r,\mathbb K}$ be the subalgebra of ${W}_{r,\mathbb K}^{\text{aff}}$ generated by $g_i^{\pm1}$ and $e_i ,1\leq i\leq r-1$. Thanks to \cite[Theorem 5.41]{Good} and Corollary~\ref{c4}(2), $\gamma\mid_{{W}_{r,\mathbb K}}: {W}_{r,\mathbb K}\rightarrow \End_{\B'}(\ob r)$ is a $\mathbb K$-algebra isomorphism. Let $\gamma_0=\gamma\mid_{{W}_{r,\mathbb K}}$.  Thanks to \cite[Propotion 6.12(1)]{Good}, ${W}_{r,\mathbb K}^{\text{aff}}$ have  basis $\{x^{\mathbf t}\gamma_0^{-1}(d)x^{\mathbf s}\}$ where $d\in\bar{\mathbb {NT}}^1_{r,r}/\sim$, $\mathbf t, \mathbf s\in \mathbb Z^r$, and  $x^{\mathbf t}=x_1^{t_1}\cdots x_{r}^{t_r}$ and $x^{\mathbf s}=x_1^{s_1}\cdots x_{r}^{s_r}$ such that $\gamma(x^{\mathbf t}\gamma_0^{-1}(d)x^{\mathbf s})\in\bar{\mathbb {NT}}_{r,r}/\sim$. By Corollary~\ref{c4}(1), $\bar{\mathbb {NT}}_{r,r}/\sim$
   is a basis of $\End_{\AB(\omega)}(\ob r)$.
   Via \eqref{form}, one can check that $\gamma$ gives a bijective map between $\{x^{\mathbf t}\gamma_0^{-1}(d)x^{\mathbf s}\}$ and $\bar{\mathbb {NT}}_{r,r}/\sim$.  So, $\gamma$ has to be a $\mathbb K$-linear isomorphism, and hence an algebra  isomorphism.
\end{proof}

\section{A basis theorem  of cyclotomic Kauffmann category}

The aim of this section is to prove Theorem~\ref{cycb} under the Assumption~\ref{asump}. We always assume that $\omega$ is admissible.  So, both $\AB(\omega)$ and  $\CB^f$ are available.
Thanks to Definition~\ref{scbmw}, there is a functor from $\AB(\omega)$ to $\CB^f$. It   results in    an epimorphism
$$\xi:\End_{\AB(\omega)}(\ob r)\twoheadrightarrow \End_{\CB^f}(\ob r).$$
Let $\tilde\gamma=\xi\circ\gamma$, where
$\gamma$ is the $\mathbb K$-algebra isomorphism in Theorem~\ref{c3}.
Then   $$\tilde\gamma:  {W}_{r,\mathbb K}^{\text{aff}}\twoheadrightarrow
\End_{\CB^f}(\ob r)$$ is an epimorphism such that
 $\tilde\gamma(f(x_1))=f(X)\otimes 1_{\ob r-\ob 1}=0$. So, $\tilde\gamma$ factors through the cyclotomic Birman-Wenzl-Murakami algebra  $W_{a, r}=W_{r,\mathbb K}^{\text{aff}}/J$, where $J$ is the two sided ideal of ${W}_{r,\mathbb K}^{\text{aff}}$ generated by $f(x_1)=\prod_{i=1}^a (x_1-u_i)$. The  induced epimorphism is denoted by
 $\bar \gamma: W_{a, r}\twoheadrightarrow \End_{\CB^f}(\ob r)$.
 The current   $W_{a, r}$ is the same as  $\mathcal{B}_{r}^{a}(q,\delta^{-1},\omega_i,-b_i)$ in \cite[Definition 1.1]{Yu}, where $b_i$ is the coefficient of $x_1^{i}$ in $f(x_1)$.

 For all $p\in \mathbb Z$ and all  admissible positive  integers $i, j, l$ such that $i\le j\le l$,  Yu~\cite{Yu} defined $$\alpha_{i,j,l}^{p}=x_i^p g_{i, j}e_{j, l+1}\in W_{a, r}$$
 where $g_{i, j}=g_ig_{i+1}\cdots g_{j-1}$ and $e_{j, l+1}=e_{j} e_{j+1}\cdots e_{l}$.
 Note that $g_{i, j}=1$ if $i=j$.  For any $r\in \mathbb N\setminus 0$, let
 \begin{equation}\label{brr} B_{2r,2r}=\mathbb K\text{-span} \left\{\vec{\prod}_{l=r}^1 \alpha_{i_l,j_l,{2l-1}}^{s_l}\mid i_1<i_2<...<i_r,-\lfloor \frac{a-1}{2}\rfloor\le  s_l \le \lfloor \frac{a}{2}\rfloor\right\}.\end{equation}
Then $B_{2r,2r}\subseteq W_{a, 2r}$. Recall
$E_i$ in \eqref{t6}.
 Mimicking Yu's construction, we define $$\gamma_{i,j,\ell}^p=X_i^{p}T_{i,j}^{inv}E_{j, \ell}U_{\ell}\in \Hom_{\CB^f}(\ob {\ell-1}, \ob {\ell+1})$$ for any $p\in \mathbb{Z}$ and   all   positive integers  $i, j, \ell$ such that $i\leq j\leq \ell$, where $T^{inv}_{i, j}=T^{-1}_i T^{-1}_{i+1}\cdots T^{-1}_{j-1}$ and $E_{j, \ell}=E_{j} E_{j+1}\cdots E_{\ell-1}$.
 For any positive integer $r$, let
 \begin{equation} \label{c9} B_{0,2r}=\mathbb K\text{-span} \left\{\vec{\prod}_{l=r}^1 \gamma_{i_l ,j_l, 2l-1}^{s_l}\mid  i_1<i_2<...<i_r,  -\lfloor \frac{a}{2}\rfloor\le  s_l \le \lfloor \frac{a-1}{2}\rfloor \right\}.\end{equation} Then
 $B_{0,2r}\in \Hom_{\CB^f}(\ob 0, \ob 2r)$.
Recall the  contravariant functor $\sigma$   in Lemma~\ref{ktau}(1). Since
$\sigma$ stabilizes the right tensor ideal of $\AB$ generated by $f(\begin{tikzpicture}[baseline=-.5mm]
 \draw[-,thick,darkblue] (0,-.3) to (0,.3);
      \node at (0,0) {$\color{darkblue}\scriptstyle\bullet$};
 \end{tikzpicture})$ together with  $\Delta_k-\omega_k$ for all  $k\in \mathbb Z$, it  induces a contravariant functor $\bar\sigma:\CB^f \rightarrow\CB^f
$.
\begin{Lemma}\label{add} For any positive integer $r$, $\Hom_{\CB^f}(\ob {2r}, \ob 0)=\bar\sigma(B_{0, 2r})$.
\end{Lemma}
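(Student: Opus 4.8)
The statement to prove is that $\Hom_{\CB^f}(\ob{2r},\ob 0)$ equals $\bar\sigma(B_{0,2r})$, where $B_{0,2r}$ is the spanning set in \eqref{c9}. The plan is to run the argument entirely parallel to the one already used for $B_{0,2r}$ itself (which, implicitly, is an earlier step establishing that $\Hom_{\CB^f}(\ob 0,\ob{2r})$ is spanned by $B_{0,2r}$), and then to transport that statement across the contravariant functor $\bar\sigma$ of Lemma~\ref{ktau}(1). First I would recall that $\bar\sigma$ is an (anti-)equivalence of $\CB^f$ with itself, so in particular it induces a $\mathbb K$-linear isomorphism $\Hom_{\CB^f}(\ob 0,\ob{2r})\xrightarrow{\ \sim\ }\Hom_{\CB^f}(\ob{2r},\ob 0)$. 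Applying $\bar\sigma$ to the identity $\Hom_{\CB^f}(\ob 0,\ob{2r})=\mathbb K\text{-span}(B_{0,2r})$ then gives $\Hom_{\CB^f}(\ob{2r},\ob 0)=\mathbb K\text{-span}(\bar\sigma(B_{0,2r}))$, which is exactly the claim once one unwinds that $\bar\sigma(B_{0,2r})$ as written in the Lemma denotes the image set.

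The substantive content, therefore, is to verify that $\Hom_{\CB^f}(\ob 0,\ob{2r})$ is spanned by $B_{0,2r}$ in the first place, or — if that has already been established earlier in the manuscript as a step toward Theorem~\ref{cycb} — simply to cite it. Assuming it must be (re)done here, the key steps are: (a) by Lemma~\ref{etannxs}(3) (the $\bar\eta_{\ob m}$/$\bar\gamma_{\ob m}$ bijections, here degenerate since the source is $\ob 0$), reduce to understanding $\bar{\mathbb{NT}}^a_{0,2r}/\!\sim$; (b) show every element of $\bar{\mathbb{NT}}^a_{0,2r}/\!\sim$ can be rewritten as one of the monomials $\vec{\prod}_{l=r}^1\gamma_{i_l,j_l,2l-1}^{s_l}$, using Proposition~\ref{cspan3} to present the underlying undotted tangle $\hat d=\lcap^{\otimes r}T^{inv}_w$ with $w\in\mathcal D_{r,2r}$, then pushing the dots on each cup to the rightmost boundary (Definition~\ref{D:N.O. dotted OBC tangle diagram}(f), \eqref{relation 6}, Lemma~\ref{selfcrossing}(3)) and using the $\mathbf u$-admissibility / the relation $f(X)\otimes 1=0$ in $\CB^f$ together with Lemma~\ref{addmi} to force each dot-count $s_l$ into the window $-\lfloor a/2\rfloor\le s_l\le\lfloor (a-1)/2\rfloor$; and (c) observe that the combinatorial data $(i_1<\dots<i_r;\,s_1,\dots,s_r)$ indexing \eqref{c9} is in bijection with $\bar{\mathbb{NT}}^a_{0,2r}/\!\sim$, matching $\mathcal D_{r,2r}$ against connectors exactly as in Proposition~\ref{cspan3}, so that the spanning set $B_{0,2r}$ surjects onto a spanning set of $\Hom_{\CB^f}(\ob 0,\ob{2r})$. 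Each of these is a direct transcription of the affine-case arguments of Section~4 (Proposition~\ref{span1}, Lemma~\ref{sim}) into the cyclotomic quotient, where additionally the polynomial relation $f(X)=0$ and the scalar specialization $\Delta_i=\omega_i$ are available to cut the rank down.

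The main obstacle I anticipate is bookkeeping rather than conceptual: making sure that after pushing dots through crossings (via Lemma~\ref{spa}) and along cups/caps, and then reducing the dot exponents modulo the degree-$a$ polynomial $f$, one lands precisely in the normalized form $\vec{\prod}_{l=r}^1\gamma_{i_l,j_l,2l-1}^{s_l}$ with the indices $i_l$ strictly increasing and the $s_l$ in the prescribed range — in particular that the reductions do not re-introduce crossings or loops (they do not, by the same argument as in the proof of Lemma~\ref{sim}, since $\Delta_j-\omega_j$ and $f(X)$ act "locally"), and that the choice of $j_l$ is pinned down by the convention on reduced totally descending diagrams via \eqref{bbb}. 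A secondary point to be careful about is the exact form of the dot window: for the top-row (cup) endpoints one uses $-\lfloor a/2\rfloor\le s_l\le\lfloor(a-1)/2\rfloor$ as in \eqref{c9}, which is the $\bar\sigma$-mirror of the bottom-row window $-\lfloor(a-1)/2\rfloor\le s_l\le\lfloor a/2\rfloor$ appearing in \eqref{brr}; I would state explicitly that $\bar\sigma$ swaps $A\leftrightarrow U$ and fixes $X,X^{-1}$, hence sends $\gamma_{i,j,\ell}^p$-type morphisms to $\alpha_{i,j,\ell}^p$-type ones (up to the $\bullet\leftrightarrow\circ$ reindexing of Lemma~\ref{selfcrossing}(3)), which is what makes the two windows correspond. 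Once the spanning of $\Hom_{\CB^f}(\ob 0,\ob{2r})$ by $B_{0,2r}$ is in hand, applying $\bar\sigma$ closes the proof in one line.
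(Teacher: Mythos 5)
Your proposal is a genuinely different route from the paper's. The paper does not attempt to prove directly that $\Hom_{\CB^f}(\ob 0,\ob{2r})$ is spanned by $B_{0,2r}$; instead it argues module-theoretically. It observes that $d_r=\lcap^{\otimes r}=\bar\sigma\bigl(\vec{\prod}_{k=r}^1\gamma^{0}_{2k-1,2k-1,2k-1}\bigr)\in\bar\sigma(B_{0,2r})$, that every $d\in\bar{\mathbb{NT}}_{2r,0}/\!\sim$ decomposes as $d=d_r\circ g$ with $g\in\End_{\CB^f}(\ob{2r})$ via \eqref{form} and Proposition~\ref{cspan3}, and — the crux — that $\End_{\CB^f}(\ob{2r})\,B_{0,2r}\subseteq B_{0,2r}$ by mimicking Yu's proof that $B_{2r,2r}$ is a left $W_{a,2r}$-module. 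Applying $\bar\sigma$ (contravariant, involutive, fixing $\End(\ob{2r})$) converts the left-module statement into $\bar\sigma(B_{0,2r})\,\End_{\CB^f}(\ob{2r})\subseteq\bar\sigma(B_{0,2r})$, and the spanning then follows from Proposition~\ref{span1}. You propose to bypass the module-structure citation with a direct dot-pushing/normalization argument, which in effect re-proves the combinatorial spanning statement of Proposition~\ref{cycba} from scratch. That is viable in principle but substantially longer, and it is the bookkeeping you rightly flag as the obstacle that Yu's result is being invoked to avoid.

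Two local misstatements to fix before this would hold up: first, there is no "earlier step" in the paper establishing $\Hom_{\CB^f}(\ob 0,\ob{2r})=\mathbb K\text{-span}(B_{0,2r})$ — that statement is equivalent under $\bar\sigma$ to the Lemma itself and would have to be proved, not cited. Second, Lemma~\ref{etannxs} has the standing hypothesis $m\neq 0$, so part (3) cannot be invoked to "reduce to understanding $\bar{\mathbb{NT}}^a_{0,2r}/\!\sim$" when one side is $\ob 0$; you would instead need to work directly from Proposition~\ref{span1} (giving spanning by $\mathbb{NT}_{0,2r}/\!\sim$ in the quotient) and then argue that the relations $\Delta_i=\omega_i$ and $f(X)=0$ reduce to $\bar{\mathbb{NT}}^a_{0,2r}/\!\sim$. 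That reduction is exactly where the work lies, and it is precisely the content the paper packages into the appeal to Yu.
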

\begin{proof} Obviously,   $\bar\sigma(B_{0, 2r})\subseteq \Hom_{\CB^f}(\ob {2r}, \ob 0)$. Suppose  $d\in\bar{\mathbb {NT}}_{2r, 0}/\sim$.
Thanks to  \eqref{form} and Proposition~\ref{cspan3},  $d\in  d_r\End_{\CB^f}(\ob {2r})$, where $$d_r=\lcap^{\otimes r}=\bar\sigma(\vec{\prod}_{k=r}^1 \gamma_{2k-1 ,2k-1, 2k-1}^{0})\in\bar\sigma(B_{0, 2r}).$$ So, $d\in \bar\sigma(B_{0, 2r})  \End_{\CB^f}(\ob {2r})$.

 In~\cite[Lemmas~2.6,2.7]{Yu}, Yu  proved that $B_{2r,2r}$ is a left $W_{a, 2r}$-module, where $B_{2r,2r}$ is given in \eqref{brr}.
Mimicking  arguments there, one can verify   $\End_{\mathcal C}(\ob {2r})B_{0, 2r}\subseteq B_{0, 2r}$
without any difficulty.
Note that $\bar \sigma^2= Id$ and $\bar\sigma(\End_{\mathcal C}(\ob {2r}))=\End_{\mathcal C}(\ob {2r})$. So,   $\bar\sigma(B_{0, 2r})  \End_{\mathcal C}(\ob {2r})\subseteq \bar\sigma(B_{0, 2r}) $, forcing $d\in \bar\sigma(B_{0, 2r}) $. By  Proposition~\ref{span1}, we have  $\bar\sigma(B_{0, 2r})\supseteq \Hom_{\CB^f}(\ob {2r}, \ob 0)$.
 \end{proof}

 \begin{Prop}\label{cycba}As $\mathbb K$-module,   $\Hom_{\CB^f}(\ob {2r}, \ob 0)$ is spanned by $\bar{\mathbb {NT}}_{2r, 0}^a/\sim$.
\end{Prop}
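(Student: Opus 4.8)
The plan is to deduce this directly from the spanning result Proposition~\ref{span1}, the normal form \eqref{form}, and the two extra families of relations available in $\CB^f$: the loop evaluations $\Delta_i=\omega_i1_{\ob 0}$ and the cyclotomic relation $f(X)=0$ (one could equally well start from Lemma~\ref{add}, but the two entry points lead to the same computation). First I would isolate the local fact that $f(X_i1_{\ob{2r}})=0$ in $\End_{\CB^f}(\ob{2r})$ for \emph{every} $i$, not just $i=1$: by definition $f(X)\otimes 1_{\ob{2r-1}}\in I$, and writing $1_{\ob{i-1}}\otimes f(X)\otimes 1_{\ob{2r-i}}$ as a composite of $f(X)\otimes 1_{\ob{2r-1}}$ with the invertible braidings built from the $T_j$'s shows it too lies in $I$; equivalently, $A\circ(f(X)\otimes 1_{\ob 1})=0$ in $\CB^f$ follows at once from \eqref{idea}. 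Since $f(t)=\sum_{j=0}^{a}b_jt^j$ with $b_a=1$ and $b_0=\prod_i(-u_i)$ a unit, this relation lets us rewrite $X_i^{k}$, for any $k\in\mathbb Z$, as a $\mathbb K$-linear combination of the $X_i^{k'}$ with $k'$ ranging over any fixed window of $a$ consecutive integers, in particular over $\{-\lfloor a/2\rfloor,\ldots,\lfloor(a-1)/2\rfloor\}$ (negative powers being handled by invertibility of $b_0$, and the fact that all windows of length $a$ yield the same span being itself a consequence of $f(X)=0$).

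Next I would run the reduction on a spanning set. By Proposition~\ref{span1} applied to the quotient $\CB^f$, $\Hom_{\CB^f}(\ob{2r},\ob 0)$ is spanned by the images of $\mathbb{NT}_{2r,0}/\sim$. For $d\in\mathbb{NT}_{2r,0}$ the normal form \eqref{form} reads $d=\big(\prod_j\Delta_j^{i_j}\big)\circ\hat d\circ\prod_l X_{i_l}^{b_{d,i_l}}$, where $\{(i_l,j_l)\mid 1\le l\le r\}=conn(\hat d)$, the product over $l$ runs over the $r$ caps of $\hat d$, and by Definition~\ref{D:N.O. dotted  OBC tangle diagram}(e)--(g) all dots lie near the left endpoints $i_l$. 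In $\CB^f$ we replace each $\Delta_j$ by $\omega_j1_{\ob 0}$, so $d$ equals a scalar times the loopless diagram $\hat d\circ\prod_l X_{i_l}^{b_{d,i_l}}\in\bar{\mathbb{NT}}_{2r,0}$; hence $\Hom_{\CB^f}(\ob{2r},\ob 0)$ is already spanned by $\bar{\mathbb{NT}}_{2r,0}/\sim$. Since the $X_{i_l}$ act on distinct strands they pairwise commute (Lemma~\ref{XX}), so I can apply the reduction of the first paragraph to each factor $X_{i_l}^{b_{d,i_l}}$ in turn. Each step replaces the diagram by a $\mathbb K$-linear combination of diagrams with the \emph{same} underlying tangle $\hat d$ and the dot count near $i_l$ moved into the window $\{-\lfloor a/2\rfloor,\ldots,\lfloor(a-1)/2\rfloor\}$; no closed loop is created and the connector of $\hat d$ is untouched, so every resulting diagram is still normally ordered and now lies in $\bar{\mathbb{NT}}^a_{2r,0}$. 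Passing to $\sim$-classes, this proves $\Hom_{\CB^f}(\ob{2r},\ob 0)$ is spanned by $\bar{\mathbb{NT}}^a_{2r,0}/\sim$.

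The step needing the most care is the propagation of the cyclotomic relation to all strands, i.e.\ that $f(X_i)=0$ for $i>1$ inside $\CB^f$: this uses the precise notion of right tensor ideal in \eqref{idea} together with invertibility of the braiding morphisms, and it is also where the hypothesis that $f$ has unit constant term enters, in order to bring \emph{negative} dot-powers into range. Everything else — that $\Delta_i=\omega_i$ removes all loops from a normally ordered diagram, that the $X_{i_l}$ commute, and that lowering a dot-power preserves all of Definition~\ref{D:N.O. dotted  OBC tangle diagram}(a)--(g) — is routine given the results established above.
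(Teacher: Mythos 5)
Your reduction to $\bar{\mathbb{NT}}_{2r,0}/\sim$ by evaluating the loops $\Delta_j\mapsto\omega_j$ is fine, but the first paragraph contains a genuine gap that the rest of the argument leans on: the claim that $f(X_i)=0$ in $\End_{\CB^f}(\ob{2r})$ for \emph{every} $i$. The justification offered --- ``writing $1_{\ob{i-1}}\otimes f(X)\otimes 1_{\ob{2r-i}}$ as a composite of $f(X)\otimes 1_{\ob{2r-1}}$ with the invertible braidings'' --- does not go through. The relevant relation (Definition~\ref{AK defn}(2), Lemma~\ref{XX}) is $X_i=T_{i-1}\cdots T_1 X_1 T_1\cdots T_{i-1}$, a \emph{sandwich} by the same word in the $T_j$'s on both sides rather than a conjugation by an invertible morphism and its inverse. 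Because $T_j^2\neq 1_{\ob 2}$ (indeed $T_j-T_j^{-1}=z(1-E_j)$), one has $X_i^k\neq T_{i-1}\cdots T_1 X_1^k T_1\cdots T_{i-1}$ already for $k=2$, so $f(X_i)$ is \emph{not} a composite of $f(X_1)$ with morphisms, and \eqref{idea} gives you no way to put it in the right tensor ideal $I$. This is the familiar phenomenon from Ariki--Koike and cyclotomic BMW algebras: the cyclotomic relation $f(x_1)=0$ does not imply $f(x_i)=0$ for $i>1$; the later Jucys--Murphy-type elements generically satisfy a polynomial of much higher degree. Consequently your key reduction --- replacing $X_{i_l}^{b}$ for $i_l>1$ by a $\mathbb K$-combination of $X_{i_l}^{s}$ with $s$ in the window, keeping the underlying tangle $\hat d$ unchanged --- has no basis.

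There is a second, more structural issue hidden here. Even the weaker claim you'd really want --- that for a cap with left endpoint $i_l>1$ the local relation $A\circ(f(X)\otimes 1_{\ob 1})=0$ applies --- requires sliding an element of $I$ across a tensor factor \emph{on the left}, which the right tensor ideal axioms \eqref{idea} do not provide. The paper sidesteps both problems by working algebraically: Lemma~\ref{add} identifies $\Hom_{\CB^f}(\ob{2r},\ob 0)$ with $\bar\sigma(B_{0,2r})$, and the crucial input is Yu's result (\cite[Lemmas~2.6, 2.7]{Yu}) that the analogous $\mathbb K$-span $B_{2r,2r}$ is a \emph{left} $W_{a,2r}$-module. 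That module-stability is exactly where the window reduction happens, and it relies on the interaction of $f(x_1)=0$ with the idempotent-like elements $e_i$ and the $\omega$-relations $e_1 x_1^s e_1=\omega_s e_1$ --- not on $f$ annihilating each strand separately. Proposition~\ref{cspan3} then converts elements of $B_{0,2r}$ into honest normally ordered diagrams with dot-counts in the required window. So the correct route is Lemma~\ref{add} plus Yu's module structure, not propagation of $f(X)=0$ to all strands.
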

\begin{proof}  Suppose  $\vec{\prod}_{k=1}^r  {\gamma}_{i_k,j_k,{2k-1}}^{s_k }
 \in B_{0, 2r}$ (see \eqref{c9}). We claim that there is a $d \in \bar{\mathbb {NT}}_{2r, 0}^a$ such that $d=\vec{\prod}_{k=1}^r  \bar\sigma({\gamma}_{i_k,j_k,{2k-1}}^{s_k })$ as morphisms in $\CB^f$. If so, by~Lemma~\ref{add},
 we immediately have the result.

In order to simplify notation,  let $E_{i, i}=1$ and $E_{i,j}= E_{i-1}E_{i-2}\cdots E_{j}$ if $i>j$. Thanks to \eqref{bbb} and the functor in  Lemma~\ref{ktau}(2), $A_k=A_kE_{k-1}T_{k}T_{k-1}$  for all  $k>1$ in $\AB$ and hence in $\CB^f$. So, $A_kT_{k-1, k+1}^{inv} =A_kE_{k-1}$.
 Using it together with braid relations  yields $$ A_lE_{l, j}T^{inv}_{j, i}
=A_lT^{inv}_{l,i}T^{inv}_{l+1,j+1},  \text{    for all possible  $i\leq j\leq l$.}$$
Obviously, $i_1=j_1=1$.  So,
  $$\begin{aligned}& \vec{\prod}_{\ell=1}^r   \bar\sigma({\gamma}_{i_\ell,j_\ell,{2\ell-1}}^{s_\ell })
=\vec{\prod}_{\ell=1}^r  A_{2\ell-1}E_{2\ell-1, j_\ell} T^{inv}_{j_\ell, i_\ell} X_{i_\ell}^{s_\ell}
 = \vec{\prod}_{\ell=1}^r  A_{2\ell-1}T^{inv}_{2\ell-1,i_\ell} T^{inv}_{2\ell,j_\ell+1}\prod_{k=1}^r X_{i_k}^{s_k}\\
&=A_1A_3\cdots A_{2r-1}\vec{\prod}_{\ell=1}^r  T^{inv}_{2\ell-1,i_\ell}T^{inv}_{2\ell,j_\ell+1}\prod_{k=1}^rX_{i_k}^{s_k}
=  \lcap^{\otimes r}   \vec{\prod}_{\ell=1}^r  T^{inv}_{2\ell-1,i_\ell}T^{inv}_{2\ell,j_\ell+1}
\prod_{k=1}^r X_{i_k}^{s_k}.\end{aligned}$$
Define
$B=\{ \lcap^{\otimes r} T^{inv}_w\mid w\in\mathcal{D}_{r,2r}\}$, where $\mathcal{D}_{r,2r}$ is given in \eqref{c13}. By Proposition~\ref{cspan3}, as sets of morphisms in $\AB$,  $B\supseteq \bar{\mathbb {NT}}^1_{2r, 0}/\sim$.
Thanks to Theorem~\ref{affbasis},  the number of morphisms in    $\bar{\mathbb {NT}}^1_{2r, 0}/\sim$   is  $\sharp \mathcal{D}_{r,2r}=(2r-1)!!$. Since $\sharp B\le  \sharp \mathcal{D}_{r,2r}$,  $B=  \bar{\mathbb {NT}}^1_{2r, 0}/\sim $ in $\AB$. So, Proposition~\ref{cspan3} gives an explicit bijection between $ \bar{\mathbb {NT}}^1_{2r, 0}/\sim $ and $B$.
Moreover, there is a $d_1\in \bar {\mathbb {NT}}^1_{2r, 0}$ such that $d_1=\lcap^\otimes r\vec{\prod}_{\ell=1}^rT^{inv}_{2l-1,i_{l}}T^{inv}_{2l,j_{l}+1}$ and  any $i_\ell$ is a left endpoint of a cap in $d_1$. Then the claim follows if we assume
 $d=d_1\circ \prod_{k=1}^r X_{i_k}^{s_k}$.
\end{proof}

\begin{Assumption}\label{assum1}
Until  the end of Proposition ~\ref{keytheorem},  we assume $2r\le \min\{q_1, q_2, \ldots, q_k\}$. We also keep the setting for parabolic quantum groups in section~3. Fix $I\in\{I_1, I_2\}$ in \eqref{defofpiee} such that $\mathfrak g\neq \mathfrak {so}_{2n+1}$ when $I=I_1$. Moreover, $a$ is always  $\text{deg} f_I(t)$ and $k=\lfloor (a-1)/2\rfloor +1$, where $f_{I}(t)$ is given in Definition~\ref{fi}.
\end{Assumption}

 Suppose $d\in \bar{\mathbb{NT}}_{2r,0}^a/\sim$ such that $conn(\hat d)=\{(i_l, j_l)\mid 1\le l\le r\}$.
   For any $c\in \mathbb Z$ and $0\le c\le  a-1$, define
  $$
   a_{i_l,c}=\begin{cases} p_c+l, &\text{ if $0\leq c\leq k-1$,}\\
    p_{2k-c}-l+1, & \text{ if $I=I_1$ and $k\leq c\leq a-1$,}\\
 p_{2k-c-1}-l+1,  &\text{ if $I=I_2$ and $k\leq c\leq a-1$,}\\
 \end{cases}$$
 where $p_0=0$ and $ p_j=\sum_{i=1}^j q_i$.
Since we are assuming $q_j\ge 2r$ for all admissible $j$, we have the following Lemma, immediately.
\begin{Lemma}\label{add9} Suppose  $b, c\in \{0,1, \ldots. a-1\}$ and $1\leq l\leq r$. We have \begin{itemize}
\item [(1)]$p_{c}<{a_{i_l,c}}\leq p_{c}+r\leq p_{c+1}-r$ if $0\leq c\leq k-1$,
\item [(2)]$p_{2k-c-1}+r\leq p_{2k-c}-r<{a_{i_l,c}}\leq p_{2k-c}$ if $I=I_1$ and $k\leq c\leq a-1$,
\item [(3)]$p_{2k-c-2}+r\leq p_{2k-c-1}-r<{a_{i_l,c}}\leq p_{2k-c-1}$ if $I=I_2$ and $k\leq c\leq a-1$,
\item [(4)]$  a_{i_j,b}=a_{i_l, c}  \text{ if and only if } (j,b)=(l,c)$,
 \item [(5)]$a_{i_j,b}=l  \text{ if and only if } (j,b)=(l,0)$.
 \end{itemize}
 \end{Lemma}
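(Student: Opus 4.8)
The plan is to read Lemma~\ref{add9} as a purely combinatorial statement about the integers $a_{i_l,c}$ and to deduce all five parts from the single standing hypothesis $q_j\ge 2r$ together with the trivial facts $0=p_0<p_1<p_2<\cdots$ and $p_{c+1}-p_c=q_{c+1}$. The three regimes in the definition of $a_{i_l,c}$ — the branch $0\le c\le k-1$, and the two branches $k\le c\le a-1$ according to $I=I_1$ or $I=I_2$ — can be treated uniformly, since in every case $a_{i_l,c}$ is obtained from one of the partial sums $p_*$ by a shift whose size lies in $\{1,\ldots,r\}$.

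First I would dispose of parts (1)--(3) by direct substitution. For (1), $a_{i_l,c}=p_c+l$ with $1\le l\le r$ gives $p_c<a_{i_l,c}\le p_c+r$, and $p_{c+1}-r=p_c+q_{c+1}-r\ge p_c+r$ by $q_{c+1}\ge 2r$. For (2) we use that $I=I_1$ forces $a$ even, $k=a/2$, and $2k-c$ runs through $\{1,\ldots,k\}$ as $c$ runs through $\{k,\ldots,a-1\}$; then $a_{i_l,c}=p_{2k-c}-l+1$ with $1\le l\le r$ yields $p_{2k-c}-r<p_{2k-c}-r+1\le a_{i_l,c}\le p_{2k-c}$, and $p_{2k-c}-r=p_{2k-c-1}+q_{2k-c}-r\ge p_{2k-c-1}+r$. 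Part (3) is the same computation with $p_{2k-c-1}$ replacing $p_{2k-c}$, using that $I=I_2$ forces $a$ odd, $k=(a+1)/2$, and $2k-c-1$ runs through $\{1,\ldots,k-1\}$.

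Next I would package (1)--(3) into a disjointness statement and read off (4) and (5). By (1)--(3), for each admissible $c$ the set $\{a_{i_l,c}\mid 1\le l\le r\}$ lies in an interval $J_c$ which is either $(p_c,p_c+r]$ (when $0\le c\le k-1$) or $(p_m-r,p_m]$ for the appropriate index $m\ge1$ (when $k\le c\le a-1$). A short check, again using only $q_j\ge 2r$ so that $p_m+r\le p_{m+1}-r$ for all $m$, shows that distinct $J_c$ are pairwise disjoint: two first-type intervals sit in disjoint slabs $(p_c,p_{c+1})$; two second-type intervals sit in disjoint slabs $(p_{m-1},p_m)$; and a first-type $(p_c,p_c+r]$ and a second-type $(p_m-r,p_m]$ are separated because each is squeezed between consecutive partial sums at distance at least $r$ from each endpoint. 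Since $l\mapsto a_{i_l,c}$ is strictly increasing for $c\le k-1$ and strictly decreasing for $c\ge k$, equality $a_{i_j,b}=a_{i_l,c}$ forces $b=c$ (same interval) and then $j=l$ (strict monotonicity), which is (4). For (5), $a_{i_l,0}=p_0+l=l\in\{1,\ldots,r\}$, whereas every $J_c$ with $c\ge1$ lies strictly to the right of $r$ (its left endpoint is $\ge p_1-r=q_1-r\ge r$ and it is open there, or its left endpoint is $p_c\ge p_1\ge 2r$); hence $a_{i_j,b}=l$ with $1\le l\le r$ forces $b=0$, and then $j=l$.

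I do not anticipate any real difficulty: the entire lemma is bookkeeping, and the only quantitative input is $q_j\ge 2r$. The one place that needs care is keeping the shifted index and its range straight across the $I_1$ and $I_2$ cases, so that the blocks $(p_m-r,p_m]$ exhaust exactly $m\in\{1,\ldots,k\}$ (resp.\ $m\in\{1,\ldots,k-1\}$) and interleave cleanly with the first-type blocks; this is precisely what the normalization $k=\lfloor(a-1)/2\rfloor+1$ in Assumption~\ref{assum1} is arranged to guarantee.
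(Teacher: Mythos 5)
Your proof is correct and complete. The paper itself offers no proof at all — it says only that "since we are assuming $q_j\ge 2r$ for all admissible $j$, we have the following Lemma, immediately," so your write-up supplies exactly the bookkeeping that the authors left implicit: (1)--(3) by substituting the definition of $a_{i_l,c}$ and using $q_j\ge 2r$ to compare consecutive partial sums, then (4)--(5) by packaging the ranges of $a_{i_l,c}$ into pairwise disjoint length-$r$ intervals $J_c$ squeezed between consecutive $p_m$'s, together with the strict monotonicity of $l\mapsto a_{i_l,c}$ on each interval and the observation that only $J_0=(0,r]$ meets $\{1,\ldots,r\}$. The one ancillary fact you invoke — that $I=I_1$ forces $a=2k$ and $I=I_2$ forces $a=2k-1$ — is correct and does follow from Definition~2.9 together with the restriction in Assumption~6.5 that $\mfg\neq\mathfrak{so}_{2n+1}$ when $I=I_1$, which you should perhaps cite explicitly rather than attribute to the normalization of $k$ alone, since $k=\lfloor(a-1)/2\rfloor+1$ holds for both parities of $a$.
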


\begin{Lemma}
 \label{aaa} Suppose $1\leq l\leq r$. Then  $\beta_{l,c}\in\mathcal{R}^+\setminus\mathcal{R}_{I}^+$,  and $\beta_{l,c}<\beta_{l,c+1}$   if $ 1\leq c< k$  and   $\beta_{l,c+1}<\beta_{l,c}$ if $ k\leq c< a-1$, where
  \begin{equation}\label{rootnew}
\beta_{l,c}=\begin{cases}
\epsilon_{a_{i_l,c-1}}-\epsilon_{a_{i_l,c}}, & \text{if $1\leq c\leq k-1$,}\\
\epsilon_{a_{i_l,k}}+\epsilon_{a_{i_l,k-1}},& \text{if $c=k$ ,}\\
\epsilon_{a_{i_l,c}}-\epsilon_{a_{i_l,c-1}},& \text{if $k<c\leq a-1$.}\\
\end{cases}
\end{equation}
\end{Lemma}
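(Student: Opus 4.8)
\textbf{Proof plan for Lemma~\ref{aaa}.}
The plan is to reduce everything to the explicit description of the convex order in \eqref{ord} together with the size estimates collected in Lemma~\ref{add9}. First I would treat the claim that $\beta_{l,c}\in\mathcal{R}^+\setminus\mathcal{R}_I^+$. By Lemma~\ref{add9}(1)--(3) the indices $a_{i_l,c-1}$ and $a_{i_l,c}$ lie in different blocks of the parabolic (the index set of $\mathcal{R}_I$ is $\mathbb Z I$, and $I$ omits exactly the simple roots $\alpha_{p_1},\ldots,\alpha_{p_k}$), so each $\beta_{l,c}$ is a positive root which does \emph{not} lie in $\mathbb Z I$; hence $\beta_{l,c}\in\mathcal R^+\setminus\mathcal R_I^+$. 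Concretely, for $1\le c\le k-1$ one has $a_{i_l,c-1}\le p_{c-1}+r<p_c<a_{i_l,c}$, so the root $\epsilon_{a_{i_l,c-1}}-\epsilon_{a_{i_l,c}}$ straddles the wall $\alpha_{p_c}$; the cases $c=k$ and $k<c\le a-1$ are handled the same way using parts (2) and (3) of Lemma~\ref{add9}, remembering that $k=\lfloor(a-1)/2\rfloor+1$ so $2k-c$ (resp.\ $2k-c-1$) runs through the lower block indices $1,\ldots,k-1$ as $c$ runs from $k$ to $a-1$.

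Next I would establish the two chains of inequalities $\beta_{l,c}<\beta_{l,c+1}$ for $1\le c<k$ and $\beta_{l,c+1}<\beta_{l,c}$ for $k\le c<a-1$ by simply matching $\beta_{l,c}$ against the prescribed convex order \eqref{ord}. For $1\le c\le k-2$ both $\beta_{l,c}=\epsilon_{a_{i_l,c-1}}-\epsilon_{a_{i_l,c}}$ and $\beta_{l,c+1}=\epsilon_{a_{i_l,c}}-\epsilon_{a_{i_l,c+1}}$ are of the form $\epsilon_i-\epsilon_j$ with $i<j$; since $a_{i_l,c-1}<a_{i_l,c}$, \eqref{ord} (the clause $\epsilon_i-\epsilon_j<\epsilon_{i+1}-\epsilon_{i+2}$, used transitively, i.e.\ $\epsilon_i-\epsilon_j<\epsilon_{i'}-\epsilon_{j'}$ whenever $i<i'$) gives $\beta_{l,c}<\beta_{l,c+1}$. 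For $c=k-1$ we compare $\epsilon_{a_{i_l,k-2}}-\epsilon_{a_{i_l,k-1}}$ with $\epsilon_{a_{i_l,k}}+\epsilon_{a_{i_l,k-1}}$; by \eqref{ord} a root $\epsilon_i-\epsilon_j$ precedes any $\epsilon_{i'}+\epsilon_{j'}$ once $i\le i'$ (here $a_{i_l,k-2}<a_{i_l,k}$ by Lemma~\ref{add9}(1),(2),(3)), so again $\beta_{l,k-1}<\beta_{l,k}$. On the descending side, for $c=k$ I compare $\beta_{l,k+1}=\epsilon_{a_{i_l,k+1}}-\epsilon_{a_{i_l,k}}$ with $\beta_{l,k}=\epsilon_{a_{i_l,k}}+\epsilon_{a_{i_l,k-1}}$; using $a_{i_l,k+1}<a_{i_l,k}<a_{i_l,k-1}$ (from Lemma~\ref{add9}) and the clause of \eqref{ord} placing $\epsilon_i-\epsilon_j$ (with $i<j$) before $\epsilon_{i'}+\epsilon_{j'}$ when $i\le i'$ — here $i=a_{i_l,k+1}<a_{i_l,k}=i'$ — we get $\beta_{l,k+1}<\beta_{l,k}$. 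Finally for $k<c\le a-2$ both $\beta_{l,c}=\epsilon_{a_{i_l,c}}-\epsilon_{a_{i_l,c-1}}$ and $\beta_{l,c+1}=\epsilon_{a_{i_l,c+1}}-\epsilon_{a_{i_l,c}}$ have the form $\epsilon_i-\epsilon_j$ with $i<j$; since $a_{i_l,c+1}<a_{i_l,c}$ (indices in the lower block decrease as $c$ increases), \eqref{ord} yields $\beta_{l,c+1}<\beta_{l,c}$, completing the descending chain.

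The only genuinely delicate point is bookkeeping: one must keep straight that $a_{i_l,c}$ is \emph{increasing} in $c$ on the range $0\le c\le k-1$ but, on the tail range $k\le c\le a-1$, the relevant block index $2k-c$ (for $I=I_1$) or $2k-c-1$ (for $I=I_2$) is \emph{decreasing} in $c$, so the roots switch from $\epsilon_i-\epsilon_j$ with a growing first index to $\epsilon_i-\epsilon_j$ with a shrinking first index, with the single ``turnaround'' root $\epsilon_{a_{i_l,k}}+\epsilon_{a_{i_l,k-1}}$ of type $\epsilon_i+\epsilon_j$ in between; this is exactly why the inequality reverses at $c=k$. The case split on $\mfg$ never really intervenes here because, thanks to Assumption~\ref{assum1}, $I=I_1$ is excluded for $\mfg=\mathfrak{so}_{2n+1}$ and all the indices involved are strictly less than $n$ (they live in the first $k$ blocks, each of size $\ge 2r$), so none of the roots $\beta_{l,c}$ is short and the three variants of \eqref{ord} agree on the relevant comparisons. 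I expect no real obstacle beyond organizing this index arithmetic carefully; I would present it as a short case-by-case verification referencing \eqref{ord} and Lemma~\ref{add9}.
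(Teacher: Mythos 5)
Your proposal is correct and follows the paper's own approach exactly: the paper's proof is the single sentence ``The result follows from Lemma~\ref{add9} and \eqref{ord},'' and you have simply unpacked that citation. Two small bookkeeping slips, neither fatal: (i) for $c=k$ with $I=I_1$, both endpoints $a_{i_l,k-1}$ and $a_{i_l,k}$ lie in the same block (block $k$), so the membership of $\beta_{l,k}$ in $\mathcal R^+\setminus\mathcal R_{I_1}^+$ is not because the root straddles a wall $\alpha_{p_m}$ with $m<k$ but because $\epsilon_i+\epsilon_j$ necessarily involves $\alpha_n=\alpha_{p_k}\notin I_1$; and (ii) the ``first index'' of $\beta_{l,k}=\epsilon_{a_{i_l,k}}+\epsilon_{a_{i_l,k-1}}$ in the sense of \eqref{ord} is $\min(a_{i_l,k},a_{i_l,k-1})$, which is $a_{i_l,k-1}$ for $I=I_1$ and $a_{i_l,k}$ for $I=I_2$, so the comparison at $c=k-1$ and $c=k$ should refer to this minimum rather than to $a_{i_l,k}$ unconditionally (the resulting inequalities still hold, since $a_{i_l,k-2}$ and $a_{i_l,k+1}$ are smaller than both candidates by Lemma~\ref{add9} and $q_j\ge 2r$).
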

\begin{proof} The result follows from  Lemma~\ref{add9} and \eqref{ord}. \end{proof}

\begin{example}\label{c14}Suppose $(k, q_1, q_2)=(2, 4, 9)$ in \eqref{defofpiee}. Let ($r, d)=(2, \gamma_{\ob 2})$, where $\gamma_{\ob 2}$ is given in \eqref{varep}. Then $conn(d)=\{(1,4), (2,3)\}$.   If $(I, a)=(I_1, 4)$, then \begin{itemize}\item $a_{1,0}=1$, $a_{1,1}=5$, $a_{1,2}=13$, $a_{1,3}=4$,
  \item $a_{2,0}=2$, $a_{2,1}=6$, $a_{2,2}=12$, $a_{2,3}=3$,\item $\beta_{1,1}=\epsilon_1-\epsilon_5$, $\beta_{1,2}=\epsilon_5+\epsilon_{13}$, ${\beta_{1,3}}=\epsilon_4-\epsilon_{13}$, and $\beta_{1,1}<\beta_{1,3}<\beta_{1,2}$, where $<$ is the convex order in \eqref{ord}.
  \item $\beta_{2,1}=\epsilon_2-\epsilon_6$, ${\beta_{2,2}}=\epsilon_6+\epsilon_{12}$, $\beta_{2,3}=\epsilon_3-\epsilon_{12}$,  and $\beta_{2,1}<\beta_{2,3}<\beta_{2,2}$.
  \end{itemize}
 \end{example}

\begin{Defn}\label{cocy}
   Suppose $d\in \bar{\mathbb{NT}}_{2r,0}^a/\sim$ and $conn(\hat d)=\{(i_l, j_l)\mid 1\le l\le r\}$. For $1\leq l\leq r$ and  $1\leq c\leq a-1$, define
 $x^{\pm}(d)_{i_l,c}= \vec{\prod}_{j=1}^c x^\pm_{\beta_{l,j}}$ and   $x^{\pm}(d)_{i_l,0}=1$, where $\beta_{l, j}$ is given in \eqref{rootnew}.
\end{Defn}

Given two sequences of positive roots $K,  H$, we write $x_K^-\sim x_H^-$ and $x_K^+\sim x_H^+$
if $K$ can be obtained from $H$ by place permutation. Recall $V$ is the natural $\U_v(\mathfrak g)$-module with basis $\{v_1,v_2,...,v_\mathrm N \}$ and $j'=\mathrm N +1-j$ for all $1\leq j\leq \mathrm N $. For any positive root $\beta$, we have  already  described  the action of $x_\beta^+$ on $V$ in Lemmas~\ref{natural0}-\ref{natural3}. We will freely use those results  in the proof of Lemma~\ref{add7}.
\begin{Lemma}\label{add7}Keep the notations in Definition \ref{cocy}. Suppose $x_H^+\sim x^{+}(d)_{i_l,c}$. Assume  $h=a_{i_l,c}$ if $0\leq c\leq k-1$ and $h=a_{i_l,c}'$ otherwise. Then
  $x^{+}(d)_{i_l,c}v_h=yv_l$ for some $y\in \mathcal{A}_0\setminus\mathcal{A}_1$, and $x_H^+v_h\neq 0$ if and only if $x_H^+= x^{+}(d)_{i_l,c}$.
\end{Lemma}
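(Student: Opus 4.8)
The plan is to compute the action of $x^{+}(d)_{i_l,c}$ on the basis vector $v_h$ directly, peeling off the root elements one at a time from the right, using the explicit matrix formulas in Lemmas~\ref{natural0}--\ref{natural3}. First I would record, from Lemma~\ref{aaa} and \eqref{rootnew}, the precise list of roots occurring in $x^{+}(d)_{i_l,c}$: they are $\epsilon_{a_{i_l,0}}-\epsilon_{a_{i_l,1}}, \epsilon_{a_{i_l,1}}-\epsilon_{a_{i_l,2}}, \ldots, \epsilon_{a_{i_l,k-2}}-\epsilon_{a_{i_l,k-1}}$ (from indices $1$ through $k-1$), then $\epsilon_{a_{i_l,k}}+\epsilon_{a_{i_l,k-1}}$ (index $k$), then $\epsilon_{a_{i_l,k+1}}-\epsilon_{a_{i_l,k}},\ldots,\epsilon_{a_{i_l,c}}-\epsilon_{a_{i_l,c-1}}$ (indices $k+1$ through $c$), together with the crucial separation property from Lemma~\ref{add9}: all the $a_{i_l,j}$ are distinct, and the ``ascending'' ones ($j\le k-1$) lie in disjoint blocks from the ``descending'' ones. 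Then, starting from $v_h$ where $h=a_{i_l,c}$ (case $c\le k-1$) or $h=a_{i_l,c}'$ (case $c\ge k$), I would apply the product right-to-left. By Lemmas~\ref{natural0}--\ref{natural3} each factor $x^{+}_{\epsilon_p-\epsilon_q}$ (resp.\ $x^{+}_{\epsilon_p+\epsilon_q}$) sends $v_q\mapsto$ (scalar)$\,v_p$ and $v_{p'}\mapsto$ (scalar)$\,v_{q'}$ and kills all other basis vectors, with the scalar a unit in $\mathcal A_0$, i.e.\ in $\mathcal A_0\setminus\mathcal A_1$; the telescoping structure of the index sequence then forces $v_h$ to travel through $v_{a_{i_l,c-1}}, v_{a_{i_l,c-2}},\ldots$ and ultimately to $v_{a_{i_l,0}}=v_l$ by Lemma~\ref{add9}(5), picking up a product of units. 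This proves $x^{+}(d)_{i_l,c}v_h=yv_l$ with $y\in\mathcal A_0\setminus\mathcal A_1$.

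For the ``if and only if'' statement I would argue as follows. One direction is trivial: if $x_H^+=x^{+}(d)_{i_l,c}$ (meaning the \emph{ordered} product agrees, not merely up to rearrangement) then $x_H^+v_h\neq 0$ by the computation just performed. For the converse, suppose $x_H^+\sim x^{+}(d)_{i_l,c}$ but the ordered product $x_H^+$ differs from $x^{+}(d)_{i_l,c}$; I want to show $x_H^+v_h=0$. The key observation is that, because all the indices $a_{i_l,j}$ are pairwise distinct and sit in disjoint ``blocks'' (Lemma~\ref{add9}(1)--(3)), each basis vector $v_p$ appearing at any intermediate stage is annihilated by every factor of $x_H^+$ except possibly one specific factor --- namely the unique root $\beta_{l,j}$ whose ``lower'' index matches $p$ (for the $\epsilon-\epsilon$ type) or whose appropriate index matches. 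Hence applying the factors of $x_H^+$ in \emph{any} order either reproduces exactly the telescoping chain (which only happens when the order is forced to be the descending-then-single-then-ascending pattern defining $x^{+}(d)_{i_l,c}$) or else at some stage a factor hits a vector it annihilates, giving $0$. I would make this precise by induction on $c$: strip off the rightmost factor of $x_H^+$; it acts nontrivially on $v_h$ only if it is $\beta_{l,c}$ (one checks the other roots in the list kill $v_h$, using the block-disjointness and the explicit formulas), and then one is reduced to $x_{H'}^+$ acting on a scalar multiple of $v_{a_{i_l,c-1}}$, where $x_{H'}^+\sim x^{+}(d)_{i_l,c-1}$, so the induction hypothesis applies.

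The main obstacle I anticipate is the bookkeeping in the converse direction: one must verify carefully that, among the roots $\beta_{l,1},\ldots,\beta_{l,c}$, the only one acting nontrivially on a given intermediate vector $v_p$ is the ``expected'' one, and this requires splitting into the sub-cases according to whether $p$ lies in an ascending block, a descending block, or is one of the ``turning point'' indices $a_{i_l,k-1},a_{i_l,k}$ where the $+$-type root $\epsilon_{a_{i_l,k}}+\epsilon_{a_{i_l,k-1}}$ enters. Here the distinction between $\mathfrak{sp}_{2n}$, $\mathfrak{so}_{2n}$ and $\mathfrak{so}_{2n+1}$ matters only through the precise scalars (and through whether $v_{n+1}$ can occur, which is excluded by $h=a_{i_l,c}$ or $a_{i_l,c}'$ together with $q_j\ge 2r$), so the combinatorial skeleton is uniform. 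Since all the scalars produced are visibly units times elements of $\mathcal A_0\setminus\mathcal A_1$ --- the formulas in Lemmas~\ref{natural0}--\ref{natural3} never introduce a factor of $z_q$ or $z_{q^{1/2}}$ for these $\pm$-type roots when $\mathfrak g=\mathfrak{so}_{2n}$ --- the claim $y\in\mathcal A_0\setminus\mathcal A_1$ is immediate once the chain is traced. I expect the whole argument to be a careful but routine induction, which is why I would present only the reduction step and the base case $c=0$ (where $x^{+}(d)_{i_l,0}=1$ and $h=a_{i_l,0}=l$, so the statement is vacuous) in detail and leave the remaining verifications to the reader.
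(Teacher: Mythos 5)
Your proposal is correct and follows essentially the same route as the paper's proof: use Lemma~\ref{add9} and \eqref{rootnew} to observe that among $\beta_{l,1},\dots,\beta_{l,c}$ only $\beta_{l,c}$ acts nontrivially on $v_h$ (via the explicit formulas in Lemmas~\ref{natural0}--\ref{natural3}), note that $x^+_{\beta_{l,c}}v_h$ is a unit of $\mathcal A_0$ times the ``next'' basis vector, and induct on $c$ down to $a_{i_l,0}=l$. The paper's argument is more compressed, but your telescoping/block-disjointness discussion is just an elaboration of the same two-step core (annihilation by all but the rightmost expected factor, plus induction).
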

\begin{proof}If $c=0$ there is nothing to prove. Via Lemma~\ref{add9} and \eqref{rootnew}, $x^{+}_{\beta_{l,j}}v_h\neq 0$ if and only if $j=c$.  Further, $$x^{+}_{\beta_{l,c}}v_h=\begin{cases}y_1v_{a_{i_l,c-1}}, & \text{if $1\leq c\leq k-1$,}\\
y_2v_{a_{i_l,k-1}},& \text{if $c=k$ ,}\\
y_3v_{a_{i_l,c-1}'},& \text{if $k<c\leq a-1$.}\\
\end{cases}$$
for some  $y_1, y_2, y_3\in \mathcal{A}_0\setminus\mathcal{A}_1$.
Note that  $a_{i_l,0}=l$.  The result follows from  induction on $c$.
\end{proof}

 Recall $\mathcal S_{I,l}$ in \eqref{sir} for any $l\in \mathbb N$ and   $\mathcal R^+=\{\beta_j|1\leq j\leq \ell(w_0)\}$ such that $\beta_i<\beta_j$ in the sense of \eqref{ord} if $i<j$. To simplify the notation, we write  $${\textsc{x}}^-_{\mathbf i}=
 \vec{\prod}_{j= \ell(w_{0, I}^{-1} w_0)}^1 (x^-_{\beta_{t_j}})^{i_j}$$  for any  $\mathbf i\in \mathbb N^{ \ell(w_{0, I}^{-1} w_0)}$, then $\mathcal S_{I,l}=\left\{{\textsc{x}}^-_{\mathbf i} m_I\otimes v_{\mathbf j}\mid  \mathbf i\in \mathbb N^\ell(w_{0, I}^{-1} w_0),  \mathbf j\in \underline {\mathrm N} ^l\right\}$.
 We say ${\textsc{x}}_{\mathbf i}^-$ is of degree $|\mathbf i|=\sum_{j=1}^{ \ell(w_{0, I}^{-1} w_0)} i_j$. In this case,
  we also say    ${\textsc{x}}^-_{\mathbf i}  m_{I}\otimes v_{\mathbf j}$ is of degree $|\mathbf i|$.

\begin{Defn}\label{mij} For any $l, j\in \mathbb N$, let $M^j_{I, l}$ be the free  $\mathcal A_0$-module with basis given by  $\{z_v^{\text{deg} (y)+j} y\mid y\in \mathcal{S}_{I,l}\}$, where $\mathcal{A}_0$ is given in Definition~\ref{codeg}.
\end{Defn}
 Obviously, $ M^1_{I, l}\subset M^0_{I, l}$ and $z_v M^0_{I, l}=M^1_{I, l}$.

 \begin{Lemma}\label{cmonoial} Suppose  $\mathbf  i\in \mathbb N^{\ell(w_{0, I}^{-1} w_0)}$.

  \begin{itemize}\item[(1)]If $\beta\in \mathcal R^+_{I}$, then $z_v^{|\mathbf i| +1}x_\beta^- {\textsc{x}}^-_{\mathbf i}  m_{I}\equiv 0   \pmod {M^1_{I, 0}}$.
  \item[(2)] If $\beta\in \mathcal{R}^+\setminus\mathcal{R}_{I}^{+}$ (see \eqref{pararoot}),  then $z_v^{|\mathbf i|+1} x_\beta^-  {\textsc{x}}^-_{\mathbf i} m_{I}\equiv z_v^{|\mathbf i|+1}v^c {\textsc{x}}^-_{\mathbf s }m_{I} \pmod  { M^1_{I, 0}} $ for some $c\in\mathbb{Z}$ and $\mathbf s\in \mathbb N^{\ell(w_{0, I}^{-1} w_0)}$  such that  ${\textsc{x}}^-_{\mathbf s}\sim x_\beta^- {\textsc{x}}^-_{\mathbf i}$.
  \end{itemize}
  \end{Lemma}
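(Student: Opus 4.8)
\textbf{Proof proposal for Lemma~\ref{cmonoial}.}
The plan is to reduce both statements to the commuting relations already established in Lemma~\ref{commut} and to the explicit degree tracking packaged in Proposition~\ref{commut1}. First I would recall from Lemma~\ref{commut} that for $1\le i<j\le \ell(w_0)$ one has
\[
x_{\beta_i}^-x_{\beta_j}^- = v^{(\beta_i\mid\beta_j)}x_{\beta_j}^-x_{\beta_i}^- + \sum_{\mathbf r} c_{\mathbf r}^- x^-_{\mathbf r},
\]
where $c_{\mathbf r}^-\in\mathcal A_{|\mathbf r|-1}$ by Proposition~\ref{commut1}, the sum is over $\mathbf r$ with $\sum_l r_l\beta_l=\beta_i+\beta_j$, and $r_l=0$ unless $i<l<j$. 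The key bookkeeping observation is that each monomial $x^-_{\mathbf r}$ occurring in a correction term has strictly smaller total degree $|\mathbf r|$ than the product it replaces (it has degree $|\mathbf r|\le |\mathbf i|+1-1 = |\mathbf i|$ when we started from a product of degree $|\mathbf i|+1$), and its coefficient lies in $\mathcal A_{|\mathbf r|-1}$, hence the product (coefficient)$\times z_v^{|\mathbf i|+1}x^-_{\mathbf r}$ carries a factor $z_v^{|\mathbf i|+1}\cdot z_v^{\ge|\mathbf r|-1} = z_v^{\ge|\mathbf r|+ (|\mathbf i|-|\mathbf r|)+1 }$, i.e. at least $z_v^{\text{deg}(x^-_{\mathbf r})+1}$, so it lands in $M^1_{I,0}$ by Definition~\ref{mij}. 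This is the mechanism by which all ``straightening error terms'' become negligible mod $M^1_{I,0}$.

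For part (1), suppose $\beta\in\mathcal R^+_I$, say $\beta=\beta_i$ with $\beta_i\in\mathcal R^+_I$. I would straighten $x_{\beta_i}^-\,{\textsc x}^-_{\mathbf i}m_I$ into the PBW form $\sum_{\mathbf s} d_{\mathbf s}\,x^-_{\mathbf s}m_I$ using the relations above; by the degree/coefficient analysis just described, all terms arising from nontrivial corrections contribute to $M^1_{I,0}$ after multiplying by $z_v^{|\mathbf i|+1}$. The surviving ``leading'' term is $x^-_{\mathbf s}m_I$ where $\mathbf s$ records the positions of the $\beta_{t_j}$'s in ${\textsc x}^-_{\mathbf i}$ together with one extra slot at the position of $\beta_i$. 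Since $\beta_i\in\mathcal R^+_I$ and $m_I$ is the highest weight vector of $L_I(\lambda_{I,\mathbf c})$ with $\dim L_I(\lambda_{I,\mathbf c})=1$, the relevant factor $x^-_{\beta_i}$ acting on $m_I$ — once pushed all the way to the right past the $x^-_{\beta_{t_j}}$'s, which only produces further lower-degree corrections of the same negligible type — kills it (this is exactly the argument used in the proof of Lemma~\ref{tsmodule1}: $x^-_{\beta_1}m_I=0$ when $\beta_1\in\mathcal R^+_I$, and the general case follows by the same commutation/induction). Hence $z_v^{|\mathbf i|+1}x_\beta^-{\textsc x}^-_{\mathbf i}m_I\equiv 0\pmod{M^1_{I,0}}$. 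Care is needed to handle the induction on $|\mathbf i|$ cleanly: one should induct on $|\mathbf i|$ and, within a fixed degree, on the maximal index $j$ with $i_j\ne 0$, so that every correction term either has strictly smaller degree (handled by the induction on degree and the $z_v$-factor) or is dealt with directly.

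For part (2), suppose $\beta\in\mathcal R^+\setminus\mathcal R^+_I$. Again straighten $x_\beta^-\,{\textsc x}^-_{\mathbf i}m_I$ into PBW form. All correction terms, being of strictly lower degree with coefficients in the appropriate $\mathcal A_{\ge\bullet}$, contribute to $M^1_{I,0}$ after the $z_v^{|\mathbf i|+1}$ factor, by the same computation as above; moreover any monomial that would involve a factor $x^-_{\beta}$ with $\beta\in\mathcal R^+_I$ hitting $m_I$ vanishes as in part (1). What remains is a single monomial $v^c x^-_{\mathbf s}m_I$ with ${\textsc x}^-_{\mathbf s}\sim x_\beta^-{\textsc x}^-_{\mathbf i}$ (the scalar $v^c$ being the accumulated product of the $v^{(\beta_i\mid\beta_j)}$ commutation factors), which is precisely the asserted shape. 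I expect the main obstacle to be the careful verification that the $z_v$-powers genuinely outpace the drop in degree for \emph{iterated} straightenings — a single application of Lemma~\ref{commut} is transparent, but repeatedly re-straightening the correction terms could in principle consume the $z_v$ budget; the resolution is that each correction strictly decreases degree, so after finitely many steps one reaches PBW form and the total $z_v$-exponent of every non-leading monomial $x^-_{\mathbf r}m_I$ is at least $\text{deg}(x^-_{\mathbf r})+1$, which is exactly the defining condition for membership in $M^1_{I,0}$ from Definition~\ref{mij}. The bookkeeping is routine once organized by the double induction, so I would present it schematically rather than expanding every commutator.
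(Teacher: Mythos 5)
Your high-level strategy — straighten $x_\beta^-{\textsc x}^-_{\mathbf i}$ into PBW form using Lemma~\ref{commut} and track $z_v$-valuations via Proposition~\ref{commut1} — is the same as the paper's, and the paper's proof indeed hinges on the identity you are aiming for, namely its equation~\eqref{tilde}: $z_v^{|\mathbf i|+1}(x_H^--v^bx_{\mathbf c}^-)=\sum_{\mathbf r}a_{\mathbf r}z_v^{|\mathbf r|+1}x_{\mathbf r}^-$ with $a_{\mathbf r}\in\mathcal A_0$. However, there are two genuine gaps in your write-up.

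First, your central ``bookkeeping observation'' is false. You assert that each correction monomial $x^-_{\mathbf r}$ produced by Lemma~\ref{commut} has \emph{strictly smaller} total degree, $|\mathbf r|\le|\mathbf i|$, than the product it replaces. The paper's Appendix gives explicit counterexamples: e.g.\ in Lemma~A.12 (commute9) the bracket $[x_{2\epsilon_i}^-,x_{2\epsilon_j}^-]_v$ of two root vectors produces monomials of length $3$ and $4$, and similar length-increasing corrections occur in Lemmas~A.8--A.11. What saves the argument is not a degree drop but that Proposition~\ref{commut1} ties the $(q^{1/2}-1)$-adic valuation of the coefficient to the \emph{length} $|\mathbf r|$ of the correction: a single commutation contributes a coefficient in $\mathcal A_{|\mathbf r|-1}$, and when you iterate to straighten a length-$(|\mathbf i|+1)$ product into PBW form, the cumulative coefficient of a correction monomial of length $|\mathbf r|$ lies in $\mathcal A_{|\mathbf r|-|\mathbf i|}$ (there is always at least one genuine commutation step). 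This is precisely strong enough that $z_v^{|\mathbf i|+1}\cdot\mathcal A_{|\mathbf r|-|\mathbf i|}\subset\mathcal A_{|\mathbf r|+1}=z_v^{|\mathbf r|+1}\mathcal A_0$, which is what membership in $M^1_{I,0}$ requires. Your valuation arithmetic using $\mathcal A_{|\mathbf r|-1}$ for the full iterated straightening would actually be a \emph{stronger} claim than what Proposition~\ref{commut1} gives, and it is not obviously true when $|\mathbf i|>1$; the accounting needs to be done via the number of commutation steps along the straightening chain, not by blanket citation.

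Second, for part~(1) your argument is circular where it needs to be substantive. You say that pushing $x^-_{\beta}$ (with $\beta\in\mathcal R_I^+$) to the right ``only produces further lower-degree corrections of the same negligible type'' and that any factor $x^-_{\gamma}$ with $\gamma\in\mathcal R^+_I$ hitting $m_I$ vanishes ``as in part~(1).'' But the correction monomials $x_{\mathbf s}^-$ produced by \eqref{tilde} can themselves contain factors $x^-_{\beta_{j_t}}$ with $\beta_{j_t}\in\mathcal R^+_I$ and $\beta_{j_t}<\beta$; these do \emph{not} trivially annihilate $m_I$ (the factor is not adjacent to $m_I$), and the content of part~(1) is precisely that their contribution nonetheless lands in $M^1_{I,0}$. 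The paper handles this with a secondary induction on the index $l$ of $\beta_{j_l}\in\mathcal R^+_I$: one peels off the smallest $\mathcal R^+_I$-factor $x^-_{\beta_{j_t}}$ that survives in $x_{\mathbf s}^-$, re-applies \eqref{tilde} to the tail product $x^-_{\beta_{j_t}}\vec\prod(x^-_{\beta_i})^{s_i}m_I$ using the induction hypothesis at the strictly smaller index $t<l$, and iterates on the remaining $\mathcal R^+_I$-multiplicity. Without this second induction, the proof of (1) is incomplete; you cannot simply invoke ``$x^-_\gamma m_I=0$ for $\gamma\in\mathcal R^+_I$'' because the dangerous terms have other root vectors between $x^-_\gamma$ and $m_I$.
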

\begin{proof}  Obviously, (1)-(2) hold if ${\textsc{x}}^-_{\mathbf i}=1$. We assume ${\textsc{x}}^-_{\mathbf i}\neq 1$ and hence $|\mathbf i|\ge 1$.
Suppose $x_{\mathbf c}^-\sim x_{\beta}^- {\textsc{x}}^-_{\mathbf i}$ and  $\mathcal{R}_{I}^+=\{\beta_{j_l}\mid 1\le l\le \ell(w_{0, I})\}$ such that ${j_a}<{j_b}$ for all admissible $a<b$.
Thanks to Lemma~\ref{commut} and Proposition~\ref{commut1}, for any  $x_H^-$ such that $x_H^-\sim x_{\beta}^- {\textsc{x}}^-_{\mathbf i}$, we have \begin{equation}\label{tilde}z_v^{|\mathbf i|+1}(x_H^--v^b x_{\mathbf c}^-)=\sum_{{\mathbf r\in \mathbb{N}^{\ell(w_0)}}}a_{\mathbf r}z_v^{\vert \mathbf  r\vert+1}x_{\mathbf r}^- \end{equation} for some $b\in\mathbb{Z}$ and some $a_{\mathbf r}\in \mathcal A_0$.
 First of all, we assume $x_\beta^- {\textsc{x}}^-_{\mathbf i}=x_{\mathbf c}^-$.
 By Lemma~\ref{commut},
$ a_{\mathbf r}=0$    if $r_t\neq 0$ for some $t$ such that $\beta_t\geq\beta$.

 If $\beta\not\in \mathcal{R}_{I}^+$, then $x_{\mathbf c}^-= {\textsc{x}}^-_{\tilde{\mathbf c}}$, where
 $\tilde{\mathbf c}\in \mathbb N^{\ell(w_{0, I}^{-1} w_0)}$ such that $\tilde c_j=c_{t_j}$ for all admissible $j$. In this case,  (2) automatically  holds. Otherwise, $\beta\in \mathcal{R}_{I}^+$ and $\beta=\beta_{j_l}$ for some $j_l>t_{\ell(w_{0, I}^{-1}w_0)}$.  Suppose $x_H^-={\textsc{x}}^-_{\mathbf i}x_{\beta_{j_l}}^-$ in
   \eqref{tilde}. Note that    ${\textsc{x}}^-_{\mathbf i}x^-_{\beta_{j_l}}m_{I}=0$.
 We claim the RHS of \eqref{tilde} acts on $m_{I}$ is in $M_{I, 0}^1$. If so, we have (1), immediately.

   We prove our claim  by induction on $l$. If $l=1$ and $\beta=\beta_{j_1}$,
any  term in the RHS of \eqref{tilde} acts on $m_{I}$ is in  $M_{I, 0}^1$, and
  the claim follows. Suppose  $l>1$. Any monomial $x_{\mathbf s}^-$ in  RHS of \eqref{tilde} satisfies  $s_j=0$ unless  $\beta_j<\beta_{j_l}$. If $\sum_{l=1}^{\ell(w_{0, I})}s_{j_l}=0$, then $z_v^{\vert \mathbf s \vert+1} x^-_{\mathbf s}m_{I}\in M_{I, 0}^1$, and there is nothing to  prove.
 Otherwise, let $t$ be the minimal number such that $s_{j_t}\neq 0$, then $j_t<j_l$ (i.e. $t<l$).  By  induction assumption on $t$ and Lemma \ref{commut},
 $$z_v^{\sum_{\ell=1}^{j_t-1}s_\ell+1}
 x^-_{\beta_{j_t}}\vec{\prod}_{i=j_t-1}^{1}(x^-_{\beta_i})^{s_i}m_{I}
=\left(\sum_{\mathbf r\in\mathbb{N}^{\ell(w_{0, I}^{-1} w_0)}} b_{\mathbf r}z_v^{\vert \mathbf r\vert +1}\textsc{x}_{\mathbf r}\right)m_{I}$$ where $b_{\mathbf r}\in \mathcal{A}_0$ and  $b_{\mathbf r}= 0$ unless $r_l=0$ for all $1\leq l\leq \ell(w_{0, I}^{-1} w_0)$ such that $t_l>{j_t}$.
So,  $$z_v^{| \mathbf s|}x^-_{\mathbf s}m_{I}=z_v^{\sum_{\ell={j_t}}^{\ell(w_0)}s_\ell-1}\vec{\prod}_{i={\ell(w_0)}}^{j_t+1}x^-_{\beta_i}
 (x^-_{\beta_{j_t}})^{s_{j_t}-1}\left(\sum_{\mathbf r\in\mathbb{N}^{\ell(w_{0, I}^{-1} w_0)}} b_{\mathbf r}z_v^{\vert \mathbf r\vert +1}\textsc{x}_{\mathbf r}\right)m_{I}.$$
By induction assumption on
 $\sum_{l=1}^{\ell(w_{0, I})}s_{j_l}-1$, we have $z_v^{| \mathbf s|+1}x^-_{\mathbf s}m_{I}\in M_{I, 0}^1$,
  proving the claim.

  We have proved (1)-(2) when $x_\beta^- {\textsc{x}}^-_{\mathbf i}=x_{\mathbf c}^-$. In general, we assume $x_H^-=x_\beta^-{\textsc{x}}^-_{\mathbf i}$. Using \eqref{tilde}, we see that the general case follows from  those  when  $x_\beta^-{\textsc{x}}^-_{\mathbf i}=x_{\mathbf c}^-$.
 \end{proof}

\begin{Cor}\label{to}Suppose
$H, K$ are sequences of positive roots and $j\in\{0,1\}$. Then \begin{itemize}\item[(1)]$z_v^{\ell (K)}x^-_K\otimes x^-_H$ stabilizes $M^j_{I, 1}$,
\item[(2)]$z_v^{\ell (K)}x^-_K\otimes x^+_H$ stabilizes $M^j_{I, 1}$.
\end{itemize}\end{Cor}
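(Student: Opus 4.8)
The statement to prove is Corollary~\ref{to}, which asserts that $z_v^{\ell(K)}x_K^-\otimes x_H^-$ and $z_v^{\ell(K)}x_K^-\otimes x_H^+$ stabilize the $\mathcal A_0$-module $M^j_{I,1}$ for $j\in\{0,1\}$. The approach is to reduce this to the action on the two tensor factors separately, exactly as the analogous Lemma~\ref{q0} does in the $\mfg=\mathfrak{so}_{2n}$ generic-Verma setting. A general basis element of $M^j_{I,1}$ has the form $z_v^{\text{deg}(y)+j}\,\textsc{x}^-_{\mathbf i}m_I\otimes v_l$ for some $\mathbf i\in\mathbb N^{\ell(w_{0,I}^{-1}w_0)}$ and some $l\in\underline{\mathrm N}$. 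Applying $z_v^{\ell(K)}x_K^-\otimes x_H^-$ (resp.\ $z_v^{\ell(K)}x_K^-\otimes x_H^+$) to it gives $z_v^{\text{deg}(y)+j+\ell(K)}\,x_K^-\textsc{x}^-_{\mathbf i}m_I\otimes x_H^- v_l$ (resp.\ with $x_H^+$). So the two things I must control are: the action of $x_H^{\pm}$ on a single basis vector $v_l$ of $V$, and the action of $z_v^{\ell(K)}x_K^-$ on $\textsc{x}^-_{\mathbf i}m_I$ with the appropriate power of $z_v$ bookkeeping.

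First I would handle the $V$-factor. By Lemmas~\ref{natural0}--\ref{natural3}, each root element $x_\beta^{\pm}$ for $\beta\in\mathcal R^+$ acts on any basis vector $v_l$ either as zero or as $g\,v_{l'}$ for a single basis vector $v_{l'}$ and a scalar $g\in\mathcal A_0$ (indeed $g\in\mathcal A_0\setminus\mathcal A_1$, but membership in $\mathcal A_0$ is all that is needed here). Composing, for a sequence $H$ of positive roots, either $x_H^{\pm}v_l=0$ or $x_H^{\pm}v_l=g\,v_{l''}$ with $g\in\mathcal A_0$ and a single basis vector $v_{l''}$. This is precisely the argument already used in the proof of Lemma~\ref{q0}, and I would just cite Lemmas~\ref{natural0}--\ref{natural3} and Corollary~\ref{monoial} as there.

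Next I would handle the $M^{\mathfrak p_I}(\lambda_{I,\mathbf c})$-factor. Write $x_K^-$ as a sequence of root elements $x_{\gamma_1}^-\cdots x_{\gamma_{\ell(K)}}^-$ with each $\gamma_t\in\mathcal R^+$. I want to show that $z_v^{\ell(K)+\vert\mathbf i\vert+j}\,x_K^-\textsc{x}^-_{\mathbf i}m_I$ lies in $M^j_{I,0}$ — that is, it is an $\mathcal A_0$-combination of elements $z_v^{\vert\mathbf s\vert+j}\textsc{x}^-_{\mathbf s}m_I$. This is an induction on $\ell(K)$: the single-root step is exactly Lemma~\ref{cmonoial}. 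Indeed, Lemma~\ref{cmonoial}(1) disposes of the case $\gamma_t\in\mathcal R^+_I$ (the result is then in $M^1_{I,0}\subseteq M^0_{I,0}$, which is absorbed since an extra $z_v$ only strengthens membership), and Lemma~\ref{cmonoial}(2) handles $\gamma_t\in\mathcal R^+\setminus\mathcal R^+_I$, converting $z_v^{\vert\mathbf i\vert+1}x_\gamma^-\textsc{x}^-_{\mathbf i}m_I$ into $z_v^{\vert\mathbf i\vert+1}v^c\textsc{x}^-_{\mathbf s}m_I$ modulo $M^1_{I,0}$, with $\vert\mathbf s\vert=\vert\mathbf i\vert+1$ matching the new degree. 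Peeling off the roots of $K$ one at a time, and using that $z_vM^0_{I,0}=M^1_{I,0}$ so that the modular error terms are harmless, gives the claim for the full $x_K^-$. Tensoring the two factor computations and recombining the powers of $z_v$ (the $\text{deg}(y)=\vert\mathbf i\vert$ convention for $y=\textsc{x}^-_{\mathbf i}m_I\otimes v_l$ makes all the exponents line up, and the new degree after acting is $\vert\mathbf s\vert$) then yields that the result lies in $M^j_{I,1}$. The statement for $j=0$ and $j=1$ is uniform because the difference is just one extra factor of $z_v$ throughout.

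\textbf{Main obstacle.} The conceptual content is entirely in Lemmas~\ref{cmonoial} and \ref{natural0}--\ref{natural3}, which are assumed here, so no genuinely hard step remains. The only thing requiring care is the power-of-$z_v$ bookkeeping: one must keep track of how $\ell(K)$ copies of $z_v$ get redistributed as the degree of the $\textsc{x}^-$-monomial climbs from $\vert\mathbf i\vert$ to $\vert\mathbf s\vert=\vert\mathbf i\vert+\ell(K)$ during the reduction, and confirm that Lemma~\ref{cmonoial}(2) is applied with exactly the exponent it is stated for at each peeling step. Once that is set up, the induction closes immediately and both parts (1) and (2) follow from the single generic computation, differing only in whether the $V$-factor uses $x_H^-$ or $x_H^+$, which Lemmas~\ref{natural0}--\ref{natural3} treat on equal footing.
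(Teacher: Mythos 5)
Your proof is correct and follows essentially the same route as the paper: you separate the action across the two tensor factors, invoke Lemmas~\ref{natural0}--\ref{natural3} to see that $x_H^{\pm}$ sends any basis vector of $V$ to $0$ or to an $\mathcal A_0$-multiple of a single basis vector, and then reduce the $M^{\mathfrak p_I}(\lambda_{I,\mathbf c})$-factor to Lemma~\ref{cmonoial}. Your write-up just makes explicit the iteration over the roots of $K$ and the accompanying $z_v$-bookkeeping, which the paper leaves implicit when it states that ``(1)-(2) follow from Lemma~\ref{cmonoial}.''
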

\begin{proof} For any basis  element $v_l\in V$,
by Lemmas~\ref{natural0}--\ref{natural3}, $x^+_Hv_l=g_1v_{k_1}$ (resp., $x^-_Hv_l=g_2v_{k_2}$) for some $g_1, g_2\in \mathcal A_0\setminus \mathcal A_1$ and some $v_{k_1}, v_{k_2}\in V$ if   $x^+_Hv_l\neq 0$ (resp., $x^-_Hv_l\neq 0$).    Then the (1)-(2) follow from Lemma~\ref{cmonoial}.
\end{proof}

\begin{Lemma}\label{c1}Suppose $\Phi\in\{\Theta, \bar\Theta\}$. For any  $\mathbf s\in \mathbb N^{\ell(w_{0, I}^{-1} w_0) }$ and any basis element  $v_l$ of $V$, we have
\begin{itemize}\item[(1)] $z_v^{|\mathbf s|}P\Phi P(\textsc{x}^-_{\mathbf s} m_{I}\otimes v_l)\equiv z_v^{|\mathbf s|} \textsc{x}^-_{\mathbf s} m_{I}\otimes v_l  \pmod{M^1_{I, 1}}$,
      \item[(2)]  $P\Phi P(M^j_{I, 1})\subseteq M^j_{I, 1}, j=0, 1$.
\end{itemize} \end{Lemma}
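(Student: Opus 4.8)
\textbf{Proof plan for Lemma~\ref{c1}.}
The statement concerns the action of $\Phi\in\{\Theta,\bar\Theta\}$, sandwiched by the tensor flip $P$, on the module $M^{\mathfrak p_I}(\lambda_{I,\mathbf c})\otimes V$, expressed in the basis coming from $\mathcal S_{I,1}$. The plan is to mimic the argument of Lemma~\ref{theta}, which handled the $M^{gen}$ (generic Verma) case for $\mathfrak{so}_{2n}$: there the key inputs were the expansion \eqref{theta123} of $\Phi$ into terms $z_v^{\ell(J)}g_J x_J^-\otimes x_J^+$, the cancellation $x_\beta^+(1\otimes k_\mu)=0$, and the stabilization lemma (Lemma~\ref{q0}). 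Here the analogous toolkit is: \eqref{theta123} again, the fact that $x_\beta^+ m_I=0$ for all $\beta\in\mathcal R^+$ (since $L_I(\lambda_{I,\mathbf c})$ is a $1$-dimensional highest weight module killed by $\U^+_v(\mfg)_\beta$, $\beta\neq 0$), and the stabilization Corollary~\ref{to}. So the proof is a transcription with $M^j_{I,1}$, $M^j_{I,0}$, $\textsc{x}^-_{\mathbf s}m_I$ playing the roles of $M_1^j$, $M_0^j$, $x^-_{\mathbf s}\otimes k_\mu$.

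First I would prove (1). Write $\Phi=\sum_{\beta\in\mathcal Q^+}\Phi_\beta$ with $\Phi_\beta=\sum_{wt(J)=\beta}z_v^{\ell(J)}g_J x_J^-\otimes x_J^+$ (for $\Phi=\bar\Theta$; the $\Theta$ case is identical after replacing $g_J$ by $h_{J'}$ and weakly increasing by weakly decreasing sequences). Apply $P\Phi P$ to $\textsc{x}^-_{\mathbf s}m_I\otimes v_l$: after conjugating by $P$ the factor $x_J^+$ acts on the $M$-slot and $x_J^-$ on the $V$-slot, so a typical summand is $z_v^{\ell(J)}g_J\,(x_J^- v_l)\otimes (x_J^+\textsc{x}^-_{\mathbf s}m_I)$. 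When $\ell(J)=0$ this is exactly $\textsc{x}^-_{\mathbf s}m_I\otimes v_l$ (after restoring $P$). When $\ell(J)\ge1$, I would argue as follows: by Lemmas~\ref{natural0}--\ref{natural3}, $x_J^- v_l$ is either $0$ or $g_2 v_{k_2}$ with $g_2\in\mathcal A_0\setminus\mathcal A_1$; and $x_J^+\textsc{x}^-_{\mathbf s}m_I$, by Corollary~\ref{monoial} together with the commuting relations (Lemma~\ref{commut}) pushing $x_J^+$ past the $x^-$'s and then killing $m_I$, lies (after multiplying by $z_v^{|\mathbf s|}$) in the span of $z_v^{\text{deg}(y)+1}y$, $y\in\mathcal S_{I,0}$; combining these two facts with $\ell(J)\ge1$ contributing one extra power of $z_v$ places the whole summand in $M^1_{I,1}$. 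Summing over $\beta$ gives (1). Here the precise bookkeeping of the degree — ensuring that the power of $z_v$ tracking $|\mathbf s|$ plus $\text{deg}$ of the resulting monomial plus $\ell(J)$ lands us in $M^1_{I,1}$ rather than merely $M^0_{I,1}$ — is the one place requiring care, and it is exactly what Corollary~\ref{to}(2) (with $j=1$) is designed to supply; I would invoke it rather than redo the estimate by hand.

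Then (2) follows from (1) by the usual argument: a general element of $M^j_{I,1}$ is an $\mathcal A_0$-combination of $z_v^{\text{deg}(y)+j}y$ with $y=\textsc{x}^-_{\mathbf s}m_I\otimes v_l\in\mathcal S_{I,1}$; for $j=0$, part (1) says $P\Phi P$ sends each such generator to itself modulo $M^1_{I,1}\subseteq M^0_{I,1}$, hence $P\Phi P(M^0_{I,1})\subseteq M^0_{I,1}$; for $j=1$, write $z_v^{\text{deg}(y)+1}y=z_v\cdot z_v^{\text{deg}(y)}y$, apply the $j=0$ case to see $z_v^{\text{deg}(y)}y$ maps into $M^0_{I,1}$, then multiply by $z_v$ and use $z_v M^0_{I,1}=M^1_{I,1}$. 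The main obstacle, such as it is, is not conceptual but notational: keeping the three indices ($|\mathbf s|$ from the incoming monomial, the degree of the outgoing monomial produced after commuting $x_J^+$ to the right, and $\ell(J)$) aligned so the $z_v$-powers genuinely cancel and land in the claimed submodule. Since Corollary~\ref{to} and Corollary~\ref{monoial} are precisely the lemmas that encapsulate this, I expect the write-up to be short and to consist almost entirely of "by Corollary~\ref{to}" and "by \eqref{theta123}" citations, exactly parallel to the proof of Lemma~\ref{theta}.
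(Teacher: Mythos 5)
Your proposal misidentifies which part of Lemma~\ref{theta} is the true analogue of Lemma~\ref{c1}(1), and this leads to a genuine gap. Lemma~\ref{theta}(1) concerns the \emph{unconjugated} operator $\Phi$, and is indeed proved directly by combining \eqref{theta123} with the stabilization Lemma~\ref{q0}. But Lemma~\ref{c1}(1) is about $P\Phi P$, whose analogue is Lemma~\ref{theta}(2) — and the paper proves \emph{that} by induction on the weight, not by a direct expansion.

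The reason a direct expansion fails: after conjugating $\Phi=\sum_J z_v^{\ell(J)}g_J\,x_J^-\otimes x_J^+$ by $P$, the operator acting on $M^{\mathfrak p_I}(\lambda_{I,\mathbf c})\otimes V$ has the form $\sum_J z_v^{\ell(J)}g_J\,x_J^+\otimes x_J^-$, with the \emph{raising} operators $x_J^+$ hitting the module slot. To push $x_J^+$ past $\textsc{x}^-_{\mathbf s}$ and kill $m_I$, you need control of the mixed commutators $[x_\alpha^+,x_\beta^-]$, which produce Cartan factors and have no simple $z_v$-degree estimate. None of the lemmas you invoke provides this: Lemma~\ref{commut} and Corollary~\ref{monoial} only reorder products of $x^-$'s (or products of $x^+$'s) among themselves, and Corollary~\ref{to}(2) stabilizes $M^j_{I,1}$ under $z_v^{\ell(K)}x_K^-\otimes x_H^+$ — i.e.\ it requires the \emph{lowering} operator on the module slot, which is the form of $\Phi$, not of $P\Phi P$. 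So the step "by Corollary~\ref{monoial} together with the commuting relations pushing $x_J^+$ past the $x^-$'s" simply has no supporting lemma, and "Corollary~\ref{to}(2) ... is designed to supply" the degree bookkeeping for $P\Phi P$ is not true.

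The paper's proof avoids naked $[x^+,x^-]$ commutations entirely: it inducts on $\sum_j s_j\beta_{t_j}$, peeling off the largest root vector $x^-_{\beta_{t_\ell}}$ from $\textsc{x}^-_{\mathbf s}$. For $\Theta$ one writes $P\Theta P(x^-_{\beta_{t_\ell}}\textsc{x}^-_{\mathbf c}m_I\otimes v_l)$, uses the quasi-R-matrix identity $\Theta\bar\Delta(u)=\Delta(u)\Theta$ from \eqref{qrm} (respectively \eqref{qrm11} for $\bar\Theta$) to move $x^-_{\beta_{t_\ell}}$ across $\Theta$, then expands $\Delta(x^-_{\beta_{t_\ell}})$ via Proposition~\ref{roo}; the leading term gives $x^-_{\beta_{t_\ell}}\otimes 1$ applied to something controlled by the inductive hypothesis, and the cross terms $h_{K,H}$ (with $K,H$ nonempty and $h_{K,H}\in\mathcal A_{\max\{\ell(K),\ell(H)\}}$) are absorbed by Corollary~\ref{to}(1), which legitimately has $x_K^-$ on the module slot. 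Your part (2), deducing $P\Phi P(M^j_{I,1})\subseteq M^j_{I,1}$ from (1), is fine as stated; the gap is entirely in (1).
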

\begin{proof}
We prove (1) by induction on $\sum_{j=1}^{\ell(w_{0, I}^{-1} w_0)} s_j\beta_{t_j}$. Since $x^+_\beta m_{I}=0$ for any positive root $\beta$, by \eqref{theta123},  $$P\Phi P(m_{I}\otimes v_l)=m_{I}\otimes v_l.$$
  This proves (1)  when $\sum_{j=1}^{\ell(w_{0, I}^{-1} w_0)}s_j\beta_{t_j}=0$. Otherwise, we pick   $\ell$, the maximal number such that $s_{\ell}\neq 0$.  Let $y_\Phi=z_v^{|\mathbf s|}P\Phi P(   \textsc{x}^-_{\mathbf s}    m_{I}\otimes v_l)$ and $\mathbf c\in \mathbb{N}^{\ell(w_{0, I}^{-1} w_0)}$ such that $c_k=s_k$ for any $k\neq \ell$ and $c_\ell=s_\ell-1$.
Then,
 $$\begin{aligned} & y_\Theta- z_v^{|\mathbf s|} P\Theta(\bar\Delta(x_{\beta_{t_\ell}}^-)(v_l\otimes \textsc{x}^-_{\mathbf c}m_{I}))\\ = & -z_v^{|\mathbf s|} P\Theta(\sum_{K,H}g_{K,H}z_v^{\ell(H)}x_{K}^-v_l\otimes k_{wt(K)}x_{H}^-\textsc{x}^-_{\mathbf c}m_{I}),~\text{by Proposition~\ref{roo}(2)}
   \\=&-z_vP\Theta P(\sum_{K,H}g_{K,H}z_v^{\ell(H)+|\mathbf c|}k_{wt(K)}x_{H}^-\otimes x_{K}^-)(\textsc{x}^-_{\mathbf c}m_{I}\otimes v_l)
   \\ \in& M_{I, 1}^1,\text{by Corollary~\ref{to}(1) and induction assumption on $\sum_{j=1}^{   \ell(w_{0, I}^{-1} w_0)    }s_j\beta_{t_j}-wt (K)$},
 \end{aligned}$$
where   $K, H$ are sequences of positive roots such that  $wt (K)+wt (H)=\beta_{t_\ell}$,  $K\neq \emptyset$  and $g_{K,H}\in \mathcal A_0$. In particular, $g_{K,H}=1$ when $(K, H)=(\beta_{t_\ell},  \emptyset)$.
 We  have
  $$\begin{aligned} &z_v^{|\mathbf s|} P\Theta(\bar\Delta(x_{\beta_{t_\ell}}^-)(v_l\otimes \textsc{x}^-_{\mathbf c}m_{I}))= z_v^{|\mathbf s|}P\Delta(x_{\beta_{t_\ell}}^-)\Theta(v_l\otimes \textsc{x}^-_{\mathbf c}m_{I}),~\text{by \eqref{qrm}} \\= &z_v P\Delta(x_{\beta_{t_\ell}}^-)PP\Theta P(z_v^{|\mathbf c|} \textsc{x}^-_{\mathbf c}m_{I}\otimes v_l)
\\  = & z_vP\Delta(x_{\beta_{t_\ell}}^-)P(z_v^{|\mathbf c|}\textsc{x}^-_{\mathbf c}m_{I}\otimes v_l+x ), ~\text{by induction assumption on   $\sum_{j=1}^{ \ell(w_{0, I}^{-1} w_0) }c_j\beta_{t_j}$,}\end{aligned}$$
where $x\in {M}_{I, 1}^1$. By Proposition~\ref{roo}(1), as operators in $\End(M^{\mathfrak p_{I}}(\lambda_{I, \mathbf c})\otimes V)$,  we have\begin{equation} \label{rst} z_vP \Delta(x_{\beta_{t_\ell}}^-)P-z_v x_{\beta_{t_\ell}}^{-}\otimes 1 =z_vk_{-\beta_{t_\ell}}\otimes x_{\beta_{t_\ell}}^{-}+ \sum_{K_1,H_1}z_v^{\ell(H_1)+1} h_{K_1,H_1}k_{-wt (K_1)}x_{H_1}^{-}\otimes x_{K_1}^{-}, \end{equation}
 where $K_1, H_1$ are non-empty sequences of positive roots
such that  $wt(K_1)+wt(H_1)=\beta_{t_\ell}$ and $h_{K_1,H_1}\in \mathcal A_0$.
Thanks to  Corollary~\ref{to}(1), the RHS of \eqref{rst} sends
 ${M}_{I, 1}^0$ to ${M}_{I, 1}^1$ and  $z_v x_{\beta_{t_\ell}}^{-}\otimes 1$ fixes   ${M}_{I, 1}^1$.
 Therefore, $$y_\Theta\equiv (z_v  x_{\beta_{t_\ell}}^{-}\otimes 1)(z_v^{|\mathbf c|} \textsc{x}^-_{\mathbf c}m_{I}\otimes v_l)\pmod {M^1_{I, 1}}\equiv z_v^{|\mathbf s|} \textsc{x}^-_{\mathbf s}   m_{I}\otimes v_l\pmod {M^1_{I, 1}}.$$
This proves (1) for $\Theta$. Using \eqref{qrm11}   instead of \eqref{qrm}, one can check the result on $y_{\bar\Theta}$ similarly. Finally, the last assertion follows from previous result on $y_{\Phi}$.  \end{proof}

\begin{Lemma}\label{cactingd}Keep the setting above.  \begin{itemize}\item [(a)]
$z_v^{-(|\mathbf s| +c)}c_{(\mathbf s, h), (\mathbf r, l)}(X^t)\in \mathcal A_1$ if  one of conditions holds: (1) $0\le -t<c<k$,  (2) $k\leq c \leq a-1$, and $-k<t\leq c-k$,
\item [(b)] $z_v^{-(|\mathbf s| +c)}c_{(\mathbf s, h), (\mathbf r, l)}(X^t)\in \mathcal A_0\setminus \mathcal A_1$ if and only if one of conditions holds:  (1) $ h=a_{i_l,c}$ provided that $t=-c$ and $0< c<k$,
(2) $h=a_{i_l,c}'$ provided that either $t=c-k+1$ and  $k\leq c\leq a-1$ or $a$ is even and
    $(t, c)=(-k, a-1)$.
    \end{itemize}
\end{Lemma}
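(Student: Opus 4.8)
The scalars $c_{(\mathbf s,h),(\mathbf r,l)}(X^t)$ are, by definition, the entries of the operator $\Psi_{M^{\mathfrak p_{I}}(\lambda_{I,\mathbf c})}(X^t)$ relative to the PBW-type basis $\mathcal S_{I,1}$ of $M^{\mathfrak p_{I}}(\lambda_{I,\mathbf c})\otimes V$ from Lemma~\ref{tsmodule1}, so the claim is a statement about the $z_v$-adic valuations (in the sense of Definition~\ref{codeg}) of these entries. The plan is to induct on $|t|$, reducing everything to a single application of $X^{\pm1}$. Writing $M=M^{\mathfrak p_{I}}(\lambda_{I,\mathbf c})$, Proposition~\ref{abmw1} expresses $\Psi(X^{\pm1})_{M}$ as $\delta^{\pm1}$ times a product of braidings $R_{V,M}^{\pm1}$ and $R_{M,V}^{\pm1}$; by \eqref{rinv} and \eqref{invr} these unwind into the quasi-$R$-matrices $\overline{\Theta}$ and $\Theta$ composed with the weight operators $\pi^{\pm1}$ and tensor flips, and the one-step action of $\overline{\Theta}$ on $m\otimes v_j$ and on $v_j\otimes m$ is written out in Proposition~\ref{matrix}, with the action on $V$ of the root vectors occurring there recorded in Lemmas~\ref{natural0}--\ref{natural3}.

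The core of the argument is to track leading terms modulo the $\mathcal A_0$-lattice $M^1_{I,1}$, which is exactly what Lemma~\ref{c1}, Corollary~\ref{to} and Lemma~\ref{cmonoial} are built for: a factor $P\overline{\Theta}P$ fixes a basis vector up to a term of $M^1_{I,1}$, while each nontrivial summand $\overline{\Theta}_{\beta}$ contributes one factor $z_v$ (coming from the $z_q$ in Proposition~\ref{matrix}) and possibly creates one new negative root vector. Iterating $X^{-1}$ on ${\textsc{x}}^-_{\mathbf r}m_{I}\otimes v_{l}$, Lemma~\ref{aaa} shows that the root vectors created in succession are precisely $x^-_{\beta_{l,1}},x^-_{\beta_{l,2}},\dots$ in the convex order, the order reversing past $c=k$ because $\beta_{l,k}$ is of $\epsilon+\epsilon$ type; after $c$ steps the only term surviving modulo $M^1_{I,1}$ is a scalar multiple of $x^-(d)_{i_l,c}\,{\textsc{x}}^-_{\mathbf r}m_{I}\otimes v_{a_{i_l,c}}$ (with $v_{a_{i_l,c}}$ replaced by $v_{a_{i_l,c}'}$ once $c\ge k$), the scalar being a product of the unit prefactors of Lemmas~\ref{natural0}--\ref{natural3} times a power of $z_v$. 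That this scalar genuinely lies in $\mathcal A_0\setminus\mathcal A_1$ is the content of Lemma~\ref{add7}; matching the power of $z_v$ it carries against the value of $|\mathbf s|$ forced by ${\textsc{x}}^-_{\mathbf s}\sim x^-(d)_{i_l,c}{\textsc{x}}^-_{\mathbf r}$ produces the normalization $z_v^{-(|\mathbf s|+c)}$ of the statement. Any other contribution carries at least one extra $z_v$ and so normalizes into $\mathcal A_1$; together with Lemma~\ref{add9}(4)--(5), which controls when $h=a_{i_l,c}$ or $h=a_{i_l,c}'$ is forced, this already yields part~(a) and the ``only if'' direction of part~(b).

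For the ``if'' direction of part~(b) it remains to check that the two families of pairs in (b)(1)--(2) are exactly those for which the main term above survives with a unit coefficient. The delicate points are the boundary $c=k$, where the $\epsilon-\epsilon$ roots are replaced by $\epsilon+\epsilon$ roots in Lemma~\ref{aaa}, and the case $(t,c)=(-k,a-1)$ present when $a$ is even, where one extra application of $X^{-1}$ is needed to reach the top product $x^-(d)_{i_l,a-1}$; the source of this asymmetry is the even-$a$ form of $f_{I}$ in Definition~\ref{fi}, reflecting $\alpha\in\{-q,q^{-1}\}$ in Assumption~\ref{asump}. To keep the valuations under control in this case I would instead use the explicit parabolic Verma flags of Lemmas~\ref{polyofx}--\ref{polyofxodd} together with the scalars $X^{-1}|_{N_j/N_{j-1}}=\epsilon_{\mfg}q^{-b_j}\,Id_{N_j/N_{j-1}}$ computed there, which make the $z_v$-valuation of the relevant transition coefficient directly visible. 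This case check, over the ranges of $t$ and $c$ fixed in Assumption~\ref{assum1}, is the step I expect to be the main obstacle; everything else is the mechanical $z_v$-bookkeeping of the previous paragraph.
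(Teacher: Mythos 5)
Your plan diverges from the paper's argument in a way that introduces real gaps rather than just a different route.

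The central difficulty is that working ``modulo $M^1_{I,1}$'' is too coarse to see the quantity $c_{(\mathbf s,h),(\mathbf r,l)}(X^t)$ at all. Lemma~\ref{c1} and the reduction it gives (recorded as \eqref{expan1}) only say that $X^{\pm b}$ agrees with $\delta^{\pm b}(\pi^{-2}\bar\Theta)^{\pm b}$ modulo $M^1_{I,1}$; they do \emph{not} say that after $c$ applications of $X^{-1}$ the surviving term modulo $M^1_{I,1}$ is a multiple of $x^-(d)_{i_l,c}\,\textsc{x}^-_{\mathbf r}m_I\otimes v_{a_{i_l,c}}$. By Lemma~\ref{c1}(1) the leading term modulo $M^1_{I,1}$ remains the original vector; the target term carries $z_v^{c}$ and sits in $M^c_{I,1}$, far below the resolution of the filtration you work with. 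To compute a coefficient at that depth by iterating one step at a time, you would have to keep track of all intermediate lattices $M^j_{I,1}$ for $0\le j\le c$, which is precisely the bookkeeping the paper sidesteps by not iterating. Instead, the paper expands $\bar\Theta^j$ and $\Theta^j$ in one stroke (\eqref{expan}), uses the descent/ascent statistics $\psi(H)$ and $\phi(H)$ on a sequence $H$ of positive roots — the coefficient of $z_v^{\ell(H)}x_H^-\otimes x_H^+$ is a unit in $\bar\Theta^j$ (resp.\ $\Theta^j$) iff $\psi(H)=j-1$ (resp.\ $\phi(H)=j-1$) — and then computes $\psi(J_0)$, $\phi(J_0)$ explicitly for the distinguished sequence $J_0$ with $x^\pm_{J_0}=x^\pm(d)_{i_l,c}$ (\eqref{psi1}). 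Everything in parts (a) and (b) then falls out of the single comparison of $t$ against these two integers. Your picture of ``one new root vector per step'' is also not accurate as a formal reduction: even a single application of $\bar\Theta$ produces monomials $x^-_I\otimes x^+_I$ of arbitrary length (see e.g.\ \eqref{pro1}), and it is precisely the $\psi$/$\phi$ count, not the number of applications, that governs which of these carry unit coefficients.

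The fallback you suggest for the boundary $(t,c)=(-k,a-1)$ does not fix this. Lemmas~\ref{polyofx}--\ref{polyofxodd} give the scalar $X^{-1}|_{N_j/N_{j-1}}=\epsilon_\mfg q^{-b_j}$ on each Verma subquotient, which is diagonal information; the coefficient in Lemma~\ref{cactingd} is a genuinely off-diagonal PBW transition coefficient between vectors $\textsc{x}^-_{\mathbf s}m_I\otimes v_h$ and $\textsc{x}^-_{\mathbf r}m_I\otimes v_l$ that can live in different subquotients, and the flag scalars give no control over it. In the paper, the even-$a$ exceptional case $(t,c)=(-k,a-1)$ is handled uniformly by the same $\phi(J_0)$ computation (since $\phi(J_0)=k-1=-t-1$), with no need for a separate device.
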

\begin{proof} First, we discuss the actions of $\Theta^t$ and $\bar \Theta^t$ on  $M^{\mathfrak p_{I} }(\lambda_{I, \mathbf c})\otimes V$.
By Lemmas~\ref{natural0}-\ref{natural3}, $(x_\beta^{\pm})^3=0$  in $\End(V)$ for any $\beta\in\mathcal{R}^+$.  Thanks to \eqref{theta123}, for any positive $j\ge 1$, we have  \begin{equation}\label{expan}\bar\Theta^j=\sum_H g_{H,j} z_v^{\ell(H)} x_H^-\otimes x_H^+,\quad \Theta^j=\sum_{H'} h_{H',j} z_v^{\ell(H')} x_{H'}^-\otimes x_{H'}^+,\end{equation} where  $H$(resp., $H'$) ranges over all sequences of positive roots such that $\ell(H)<\infty$ (resp., $\ell(H')<\infty$ ) and $g_{H,j} , h_{H',j} \in \mathcal A_0$.
 In order to divide such $H, H'$ into different classes, we define $\psi(H)=\#\{s\mid \beta_{\ell_s}>\beta_{\ell_{s+1}}, 1\leq s\leq b-1\}$ and  $\phi(H)=\#\{s\mid \beta_{\ell_s}<\beta_{\ell_{s+1}}, 1\leq s\leq b-1\}$,  where $H=(\beta_{\ell_1},...,\beta_{\ell_b})$. Thanks to \eqref{theta123},
 $\psi(H)\leq j-1$ and $\phi(H')\leq j-1$ if $H$ and $H'$ are those in \eqref{expan} and $g_{H,j} \in \mathcal A_0\setminus \mathcal A_1$ (resp., $h_{H',j}   \in \mathcal A_0\setminus \mathcal A_1$) if $\psi(H)= j-1$ (resp., $\phi(H')= j-1$).

Thanks to Corollary~\ref{to}(2) and  \eqref{pif},
  \begin{equation}\label{fff}  \Phi (M^j_{I, 1})\subseteq M^j_{I, 1}, \text{ if $j\in \{0, 1\}$  and $~\Phi\in\{\Theta, \bar \Theta, \pi , \pi ^{-1} \}$}.\end{equation} It is easy to see $P\pi =\pi P$. Now, we use
  Lemma~\ref{c1} to obtain:   \begin{equation}\label{expan1}
( P\pi ^{-1}\bar\Theta P\pi ^{-1}\bar\Theta)^{\pm b}(z_v^{\vert \mathbf s\vert} \textsc{x}^-_{\mathbf s} m_{I}\otimes v_h)\equiv z_v^{\vert \mathbf s\vert}(\pi ^{-2}\bar\Theta)^{\pm b}(\textsc{x}^-_{\mathbf s}m_{I}\otimes v_h) \pmod{ M^1_{I, 1}}\end{equation} for $b\geq 0$ where $\textsc{x}^-_{\mathbf s}$ and  $v_h$ are given before this lemma. If $c_{(\mathbf s, h), (\mathbf r, l)}(X^t)\neq 0$, then $wt(\textsc{x}^-_{\mathbf s}m_{I}\otimes v_h)=wt(\textsc{x}^-_{\mathbf r} m_{I}\otimes v_l)$, and hence
\begin{equation}\label{wt1}wt(v_{h})=\begin{cases}
      \epsilon_{a_{{i_l},c}}, & \text{if $0\leq c\leq k-1$,}\\ -\epsilon_{a_{{i_l},c}}, & \text{if $ k\leq c\leq a-1$}.\end{cases}\end{equation}
   In other words,
      \begin{equation} \label {wt1h} {h}=\begin{cases}
      a_{{i_l},c}, & \text{if $0\leq c\leq k-1$,}\\ a_{{i_l},c}', & \text{if $ k\leq c\leq a-1$.}\\ \end{cases}
\end{equation}
      In the following, we always  assume that \eqref{wt1} holds. Otherwise, $c_{(\mathbf s, h), (\mathbf r, l)}(X^t)= 0$, and there is nothing to prove.

For any sequence $J$ of  positive
roots with $\ell(J)<\infty$,   let $c_{(\mathbf s, h), (\mathbf r, l)}(J)$ be the coefficient of $\textsc{x}^-_{\mathbf r} m_{I}\otimes v_l$ in the expression of $z_v^{|\mathbf  s |+\ell(J)}x^-_J\otimes x^+_J(\textsc{x}^-_{\mathbf s}m_{I}\otimes v_h)$. Let  $J_0$ be the  sequence of positive roots such that $x^{\pm}_{J_0}=x^\pm(d)_{i_l,c}$.  Then $\ell(J_0)=c$.
         Thanks to Lemma~\ref{cmonoial}, \begin{equation}\label{coeff} z_v^{-(c+|\mathbf s|)}c_{(\mathbf s, h), (\mathbf r, l)}(J)\in \mathcal A_1, \text{ if $x^-_J\nsim x^{-}_{J_0}$.}\end{equation}
By Lemma~\ref{add7}, we have
\begin{equation}\label{coeff1}x_{J_1}^+v_{h}=0, \text{  for any $J_1$ such that  $x_{J_1}^+\sim x^+_{J_0}$ and $J_1\neq J_0$,}  \end{equation}  and $x_{J_0}^+v_{h}= f v_l$  for some $f\in\mathcal A_0\setminus \mathcal A_1$.
 By Lemma~\ref{cmonoial}, $$z_v^{(c+|\mathbf s|)}x_{J_0}^-\textsc{x}^-_{\mathbf s}m_{I}\equiv z_v^{(c+|\mathbf s|)}v^p \textsc{x}^-_{\mathbf r} m_{I} \pmod{M^1_{I, 0}}$$ for some $p\in\mathbb Z$.  Therefore,
\begin{equation}\label{coeff2} z_v^{-(c+|\mathbf s|)}c_{(\mathbf s, h), (\mathbf r, l)}(J_0)\in \mathcal A_0\setminus \mathcal A_1.\end{equation}
For any $t\in\{\lfloor \frac{a-1}{2}\rfloor, \lfloor\frac{a-1}{2}\rfloor-1,\ldots,-\lfloor \frac{a}{2}\rfloor\}$, let $c_{(\mathbf s, h), (\mathbf r, l)}(\bar\Theta^{t})$ be the coefficient of $\textsc{x}^-_{\mathbf r} m_{I}\otimes v_l$ in the expression of  $z_v^{\vert \mathbf s\vert}\bar\Theta^{t}( \textsc{x}^-_{\mathbf s } m_{I}\otimes v_h)$.
Thanks to Lemma~\ref{aaa}, \begin{equation}\label{psi1}\psi(J_0)=\begin{cases}0, &\text{if $1\leq c\leq k$}\\  c-k, &\text {if $k<c\leq a-1$}\end{cases} \text{ and }\ \   \phi(J_0)=\begin{cases}c-1, &\text{if $1\leq c\leq k$}\\ k-1, &\text{if $k<c\leq a-1$}\end{cases}.\end{equation}
So,
\begin{itemize}\item $\psi(J_0)\geq t$ if $k\leq c\leq a-1$, $0\leq t\leq c-k$,
\item $\phi(J_0)\geq-t$ if $0<c<k$, $-c<t\leq 0$ or $k\leq c\leq a-1$, $-c<t\leq0$,
\item $\psi(J_0)= t-1$ if $k\leq c\leq a-1$, $t=c-k+1$,
\item $\phi(J_0)=-t-1$ if $0<c<k$, $t=-c$ or $2|a$ and $(c, t)=(a-1, -k)$.
\end{itemize}Suppose $t$ is given in  (a). If  $t\leq0$, then  $\phi(J_0)\geq-t$. Thanks to \eqref{expan}, $x_{J_0}^-\otimes x_{J_0}^+$ can not appear in the expression of $\Theta^{-t}$. By  \eqref{coeff}-\eqref{coeff1},
 $z_v^{-(|\mathbf  s| +c)}  c_{(\mathbf s, h), (\mathbf r, l)}(\Theta^{-t})\in  \mathcal A_1$. If $t>0$. Then  $\psi(J_0)\geq t$. Thanks to \eqref{expan}, $x_{J_0}^-\otimes x_{J_0}^+$ can not appear in the expression of $\bar\Theta^{t}$. By  \eqref{coeff}-\eqref{coeff1},
 $z_v^{-(|\mathbf  s| +c)}  c_{(\mathbf s, h), (\mathbf r, l)}(\bar\Theta^{t})\in  \mathcal A_1$. In any  case, \begin{equation} \label{ggg123} z_v^{-(|\mathbf  s| +c)}  c_{(\mathbf s, h), (\mathbf r, l)}(\bar\Theta^{t})\in  \mathcal A_1.\end{equation}

 If $t$ is given in (b)(1) or $2|a$ and $(c, t)=(a-1, -k)$ in (b)(2), then  $t<0$ and  $\phi(J_0)=-t-1$.  Thanks to \eqref{expan}, $x_{J_0}^-\otimes x_{J_0}^+$ do appear in $\Theta^{-t}$ with coefficient $z_v^ch_{J_{0},t}  $, where $h_{J_{0},t}  \in \mathcal A_0\setminus \mathcal A_1$. By  \eqref{coeff}-\eqref{coeff2},
 $z_v^{-(|\mathbf  s| +c)}  c_{(\mathbf s, h), (\mathbf r, l)}(\Theta^{-t})\in  \mathcal A_0\setminus \mathcal A_1$. If $t$ is given in (b)(2) and $t>0$, then $\psi(J_0)=t-1$.  Thanks to \eqref{expan}, $x_{J_0}^-\otimes x_{J_0}^+$ appears in $\bar\Theta^t$ with coefficient $z_v^cg_{J_{0},t}  $, where $g_{J_{0},t} \in \mathcal A_0\setminus \mathcal A_1$. By  \eqref{coeff}-\eqref{coeff2},
 $z_v^{-(|\mathbf  s| +c)}  c_{(\mathbf s, h), (\mathbf r, l)}(\bar\Theta^{t})\in  \mathcal A_0\setminus \mathcal A_1$. In any  case, \begin{equation} \label{ggg1234} z_v^{-(|\mathbf  s| +c)}  c_{(\mathbf s, h), (\mathbf r, l)}(\bar\Theta^{t})\in  \mathcal A_0\setminus \mathcal A_1.\end{equation}

 We have  $\Psi_{M^{\mathfrak p_{I}}(\lambda_{I, \mathbf c})}(X^t)=( \delta P\pi ^{-1}\bar\Theta P\pi ^{-1}\bar\Theta)^t$, where $\delta$ is given in \eqref{lamb}.
  So both (a) and (b) follow from  \eqref{expan1} and  \eqref{pif}, immediately.
\end{proof}

 Assume  $\mathbb N_a=\{0, 1, 2,...,a-1\}$.  Let  $\phi:   \{\lfloor \frac{a-1}{2}\rfloor, \lfloor\frac{a-1}{2}\rfloor-1,\ldots,-\lfloor \frac{a}{2}\rfloor\}\rightarrow \mathbb N_a$ be the map such that
 \begin{equation}\label {biidjijw}\phi(j)=\begin{cases} -j, & \text{if $-k<j\leq 0$,}\\  k+j-1, & \text{if $0<j\leq k-1$,}\\
  \end{cases}
  \end{equation}
and $\phi(-k)=a-1$, which is available only if $a=2k$.

\begin{Cor}\label{cactingd2} Suppose   $Y\in\{X, \bar\Theta\}$ and $c\in \mathbb N_a$.
\begin{itemize}\item [(1)]  If $\phi(t)<c$, then  $z_v^{-(|\mathbf  s| +c)}  c_{(\mathbf s, h), (\mathbf r, l)}(Y^{t})\in \mathcal A_1$.
\item [(2)] Suppose  $\phi(t)=c$. Then $z_v^{-(|\mathbf  s| +c)}  c_{(\mathbf s, h), (\mathbf r, l)}(Y^{t})\in  \mathcal A_0\setminus \mathcal A_1$ if and only if $h$ is given in \eqref{wt1h}.
\end{itemize}
\end{Cor}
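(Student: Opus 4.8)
The statement is a reformulation of Lemma~\ref{cactingd}, together with the intermediate estimates for $\bar\Theta^{t}$ obtained in its proof, in which the exponent $t$ is reindexed by $c=\phi(t)\in\mathbb N_a$. The plan is first to record that $\phi$ is a bijection from $\{\lfloor \frac{a-1}{2}\rfloor,\ldots,-\lfloor \frac{a}{2}\rfloor\}$ onto $\mathbb N_a$: for $-k<t\le 0$ we have $\phi(t)=-t\in\{0,\ldots,k-1\}$; for $0<t\le k-1$ we have $\phi(t)=k+t-1\in\{k,\ldots,2k-2\}$; and when $a=2k$ the value $t=-k$ gives $\phi(t)=a-1$. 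Unwinding the inequalities then shows that $\phi(t)<c$ is exactly the union of the conditions in Lemma~\ref{cactingd}(a): if $-k<t\le 0$ it reads $0\le -t<c$, which is condition (a)(1) when $c<k$ and (using $-k<t\le 0\le c-k$) condition (a)(2) when $c\ge k$; if $0<t\le k-1$ it forces $c>k$ and reads $t\le c-k$, which is condition (a)(2). Similarly, $\phi(t)=c$ together with the weight constraint \eqref{wt1}--\eqref{wt1h} is exactly the list in Lemma~\ref{cactingd}(b): for $1\le c\le k-1$ it gives $t=-c$ and condition (b)(1); for $k\le c\le a-1$ it gives $t=c-k+1\le k-1$ (first alternative of (b)(2)) or, when $c=a-1$ and $a$ is even, $t=-k$ (second alternative of (b)(2)).

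Granting this, for $Y=X$ parts (1) and (2) are respectively Lemma~\ref{cactingd}(a) and Lemma~\ref{cactingd}(b), except for the single value $c=0$, which is not met by Lemma~\ref{cactingd}. There the only $t$ with $\phi(t)=0$ is $t=0$, the associated root sequence $J_0$ has length $\ell(J_0)=c=0$ and is empty, so $X^{0}=1\otimes 1$ identifies the relevant pair as $(\mathbf r,l)=(\mathbf s,h)$ with $z_v^{-|\mathbf s|}c_{(\mathbf s,h),(\mathbf s,h)}(X^{0})=1\in\mathcal A_0\setminus\mathcal A_1$, which matches \eqref{wt1h} since $a_{i_l,0}=l$; and part (1) is vacuous for $c=0$. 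For $Y=\bar\Theta$, I would observe that the proof of Lemma~\ref{cactingd} already proves the required estimates for $\bar\Theta^{t}$ along the way, namely \eqref{ggg123} for the $\mathcal A_1$ statement and \eqref{ggg1234} for the $\mathcal A_0\setminus\mathcal A_1$ statement (with $\bar\Theta^{t}:=\Theta^{-t}$ for $t<0$, using $\Theta\bar\Theta=1\otimes 1$). Alternatively one may transfer between $X^{t}$ and $\bar\Theta^{t}$ directly: by Proposition~\ref{abmw1}, $\Psi_{M^{\mathfrak p_{I}}(\lambda_{I,\mathbf c})}(X^{t})=(\delta\,P\pi^{-1}\bar\Theta P\pi^{-1}\bar\Theta)^{t}$ with $\delta=\epsilon_{\mfg}q^{\mathrm N-\epsilon_{\mfg}}$ a unit in $\mathcal A_0$, and \eqref{expan1} gives $X^{t}\equiv(\pi^{-2}\bar\Theta)^{t}\pmod{M^{1}_{I,1}}$ up to this unit; as $\pi$ only rescales weight spaces by powers of $v$, which are units in $\mathcal A_0$, the coefficients $c_{(\mathbf s,h),(\mathbf r,l)}(X^{t})$ and $c_{(\mathbf s,h),(\mathbf r,l)}(\bar\Theta^{t})$ have the same $\mathcal A_0$-valuation, so the case $Y=\bar\Theta$ reduces to $Y=X$.

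The only real work will be the bookkeeping: checking that the three clauses defining $\phi$, together with the parity-dependent value $\phi(-k)=a-1$, reproduce exactly the case divisions of Lemma~\ref{cactingd} with neither overlap nor omission. In particular one must keep careful track of the threshold $c=k$ and of whether $a$ is even or odd, which controls whether $-k$ lies in the domain of $\phi$ and whether $c$ can equal $a-1$ for some $t$ with $\phi(t)=c$. No representation-theoretic input is needed beyond Lemma~\ref{cactingd} and the facts recalled above.
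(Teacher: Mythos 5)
Your argument matches the paper's own proof: identify the condition $\phi(t)<c$ (resp. $\phi(t)=c$) with the case divisions in Lemma~\ref{cactingd}(a) (resp. (b)), cite that lemma for $Y=X$, and cite the intermediate estimates \eqref{ggg123} and \eqref{ggg1234} for $Y=\bar\Theta$. Two small remarks. First, your observation that $c=0$ (equivalently $t=0$) falls outside the literal hypotheses $0<c<k$ and $k\le c\le a-1$ of Lemma~\ref{cactingd}(b) is a real gap in the paper's terse proof, and your direct check that $z_v^{-|\mathbf s|}c_{(\mathbf s,h),(\mathbf s,h)}(Y^0)=1$ with $h=a_{i_l,0}=l$ cleanly closes it. Second, your alternative route for $Y=\bar\Theta$ -- deducing it from the $Y=X$ case via $\Psi(X^t)=(\delta P\pi^{-1}\bar\Theta P\pi^{-1}\bar\Theta)^t$ and \eqref{expan1} -- is slightly circular as phrased, because in the paper's proof of Lemma~\ref{cactingd} it is precisely the $\bar\Theta$-estimates \eqref{ggg123}--\eqref{ggg1234} that are established first and then transferred to $X^t$ via \eqref{expan1}; also $(\pi^{-2}\bar\Theta)^t$ is an alternating product, not $\pi^{-2t}\bar\Theta^t$, so the claim that the two coefficients differ only by a power of $v$ would need the weight bookkeeping that the paper carries out anyway. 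Your primary route (cite \eqref{ggg123}--\eqref{ggg1234} directly) is the right one and is what the paper does.
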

\begin{proof} Note that   conditions in (1) (resp., (2)) is equivalent to the  conditions in
 Lemma~\ref{cactingd}(a) (resp., (b)). So, the current results follow from Lemma~\ref{cactingd} when $Y=X$.
 If $Y=\bar\Theta$, the  results follows from \eqref{ggg123} and \eqref{ggg1234}.
\end{proof}

 For any  $1\le i\le 2r-1$, define $$ \tilde{R}_i=Id_{M^{\mathfrak p_{I}}(\lambda_{I, \mathbf c})}\otimes Id_{V}^{\otimes {i-1}}\otimes \tilde{R}\otimes Id_{V}^{\otimes {2r-i-1}},$$  where $\tilde{{R}}\in\End(V^{\otimes 2})$ such that $\tilde{{R}}(v_j\otimes v_l)=v^{(wt(v_j)\mid wt(v_l))}v_j\otimes v_l$ for  any $v_j, v_l\in  V$.
Define \begin{equation}\label{t5}\tilde{X}_j^{\pm 1}=\tilde{R}_{j-1}\tilde {X}_{j-1}^{\pm 1}\tilde{R}_{j-1},~ 2\leq j\leq 2r,\end{equation} where
$\tilde{X}_1^{\pm 1}=(\delta \pi ^{-2}\bar\Theta)^{\pm 1}\otimes Id_{V}^{\otimes {2r-1}}$, and
$\delta$ is given  in \eqref{lamb}.
For any $\psi, \psi_1\in \End(M^0_{I,2r})$, we write $$\psi\approx\psi_1 \text{ if $\psi(y)\equiv\psi_1(y)\pmod{M^1_{I,2r}}$ for any $y\in M^0_{I,2r}$.}$$  As mentioned before, we use $d$ to denote $\Psi_{M^{\mathfrak p_{I}}(\lambda_{I, \mathbf c})}(d)$ for any admissible $d$. The following results follow immediately from
 Lemma~\ref{act123} and \eqref{expan1}.

 \begin{Lemma}\label{kkkey}
 As morphisms in $\End(M^0_{I,2r})$, we have
 \begin{itemize}\item[(1)] $T_j^{\pm1}\approx \tilde{R}_j$ for all $1\le j\le 2r-1$,  \item [(2)] $X_1^{\pm1}\approx \tilde X_1^\pm $. \end{itemize} \end{Lemma}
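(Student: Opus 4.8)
The plan is to reduce both claims, modulo $z_v$, to facts already established above. First I would note that $M^1_{I,2r}=z_vM^0_{I,2r}$, so that for any operator preserving $M^0_{I,2r}$ the relation $\approx$ is simply congruence of matrix entries modulo $z_v$ in the $\mathcal A_0$-basis $\{z_v^{\text{deg}(y)}\,y\mid y\in\mathcal S_{I,2r}\}$ of $M^0_{I,2r}$. All operators occurring below preserve $M^0_{I,2r}$ (for part (1) this is clear from Lemma~\ref{act123}, which shows $R_{V,V}^{-1}$ preserves the $\mathcal A_0$-span of the $v_k$'s; for part (2) it follows from \eqref{fff} together with \eqref{expan1}), so this reduction is legitimate. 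I would also handle the two signs simultaneously: in part (1), $R_{V,V}^{-1}\equiv R_{V,V}\pmod{z_v}$ by the last assertion of Lemma~\ref{act123}, hence $T_j\approx T_j^{-1}$; in part (2), $X_1^{-1}$ acts as the operator-theoretic inverse of the action of $X_1$, and \eqref{expan1} is available for both exponents $\pm b$.

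For part (1): by Proposition~\ref{abmw1} together with \eqref{nnn}, the morphism $T_j^{\pm1}$ acts on $M^{\mathfrak p_{I}}(\lambda_{I, \mathbf c})\otimes V^{\otimes 2r}$ as $Id_{M^{\mathfrak p_{I}}(\lambda_{I, \mathbf c})}\otimes Id_V^{\otimes j-1}\otimes R_{V,V}^{\mp1}\otimes Id_V^{\otimes 2r-j-1}$, and $\tilde R_j$ has precisely this shape with $\tilde R$ in place of $R_{V,V}^{\mp1}$. So it suffices to check $R_{V,V}^{-1}\equiv\tilde R\pmod{z_v}$ in $\End(V^{\otimes 2})$. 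This is a term-by-term inspection of the formula for $R_{V,V}^{-1}(v_k\otimes v_l)$ in Lemma~\ref{act123}(1)--(2): every summand carrying a factor $z_q=z_v$ is discarded, and the surviving summand coincides with $\tilde R(v_k\otimes v_l)$, once one records that $(wt(v_k)\mid wt(v_l))$ is $1$, $-1$ or $0$ according as $k=l$, $k=l'$ or $k\notin\{l,l'\}$ (and similarly for the pair $(n+1,n+1)$ when $\mathfrak g=\mathfrak{so}_{2n+1}$). Tensoring on the untouched copies of $V$ then gives $T_j^{\pm1}\approx\tilde R_j$ on all of $M^0_{I,2r}$.

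For part (2): again by Proposition~\ref{abmw1} and \eqref{nnn}, the morphism $X_1^{\pm1}$ acts on $M^{\mathfrak p_{I}}(\lambda_{I, \mathbf c})\otimes V^{\otimes 2r}$ as $(\delta\,P\pi^{-1}\bar\Theta P\pi^{-1}\bar\Theta)^{\pm1}\otimes Id_V^{\otimes 2r-1}$, with $\delta$ as in \eqref{lamb}. A basis vector of $M^0_{I,2r}$ has the form $z_v^{|\mathbf s|}{\textsc{x}}^-_{\mathbf s}m_I\otimes v_{j_1}\otimes\cdots\otimes v_{j_{2r}}$, on which $X_1^{\pm1}$ affects only the factor ${\textsc{x}}^-_{\mathbf s}m_I\otimes v_{j_1}$; there, \eqref{expan1} with $b=1$ gives
\[
(P\pi^{-1}\bar\Theta P\pi^{-1}\bar\Theta)^{\pm1}\bigl(z_v^{|\mathbf s|}{\textsc{x}}^-_{\mathbf s}m_I\otimes v_{j_1}\bigr)\equiv z_v^{|\mathbf s|}(\pi^{-2}\bar\Theta)^{\pm1}\bigl({\textsc{x}}^-_{\mathbf s}m_I\otimes v_{j_1}\bigr)\pmod{M^1_{I,1}}.
\]
Re-attaching the untouched factors $v_{j_2}\otimes\cdots\otimes v_{j_{2r}}$ (which carry $M^1_{I,1}$ into $M^1_{I,2r}$) then yields $X_1^{\pm1}\approx(\delta\pi^{-2}\bar\Theta)^{\pm1}\otimes Id_V^{\otimes 2r-1}=\tilde X_1^{\pm1}$.

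I do not expect any genuine obstacle: both parts are purely a matter of discarding $z_v$-multiples, part (1) from the explicit $R$-matrix in Lemma~\ref{act123} and part (2) from \eqref{expan1}. The only two points that require a moment of care are verifying that the $z_v$-reduction of $R_{V,V}^{-1}$ is exactly $\tilde R$, and the routine bookkeeping that an identity proved on $M^{\mathfrak p_{I}}(\lambda_{I, \mathbf c})\otimes V$ (resp. $\otimes V^{\otimes 2}$) propagates to $M^0_{I,2r}$, since $M^0_{I,2r}$ is, slot by slot, the $\mathcal A_0$-base change of $M^0_{I,1}$ with $V^{\otimes 2r-1}$ appended.
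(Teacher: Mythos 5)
Your overall approach coincides with the paper's: the statement is meant to be read off from Lemma~\ref{act123} (for part~(1)) and from \eqref{expan1} (for part~(2)), and your part~(2) argument is a correct and complete expansion of that. The reduction of the equivalence $\approx$ to congruence of matrix entries modulo $z_v$ in the $\mathcal A_0$-basis of $M^0_{I,2r}$ is also exactly the right bookkeeping.

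However, there is a real problem in your part~(1), precisely at the step you describe as a ``term-by-term inspection.'' You assert that, after discarding the $z_q$-summands in Lemma~\ref{act123}, ``the surviving summand coincides with $\tilde R(v_k\otimes v_l)$.'' It does not, with $\tilde R$ as the paper defines it. Reading off Lemma~\ref{act123}(1)--(2), the surviving term in every case is a scalar times $v_l\otimes v_k$, i.e.
$$R_{V,V}^{\pm 1}(v_k\otimes v_l)\equiv v^{(wt(v_k)\mid wt(v_l))}\,v_l\otimes v_k\pmod{z_v},$$
a \emph{weighted flip}, whereas the paper defines $\tilde R(v_j\otimes v_l)=v^{(wt(v_j)\mid wt(v_l))}\,v_j\otimes v_l$, a \emph{diagonal} operator. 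For $k\notin\{l,l'\}$ these already disagree: $v_l\otimes v_k\neq v_k\otimes v_l$. So either $T_j^{\pm1}\not\approx\tilde R_j$ with the stated $\tilde R$, or (far more likely) the formula for $\tilde R$ has a typo and the flip is intended. The downstream use in Lemma~\ref{cactingd1} --- where $\tilde X_j=\tilde R_{j-1}\tilde X_{j-1}\tilde R_{j-1}$ is claimed to reproduce $\phi_{0,j}(\bar\Theta)$ up to a power of $v$ --- only works if $\tilde R$ permutes the two $V$-factors, so the flip is indeed what is needed. Your proposal should have surfaced this discrepancy rather than declaring the two operators equal; as written, the ``inspection'' step is the one place that would not have gone through on paper.

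Everything else in the proposal (the reduction of $\approx$ to $\bmod\ z_v$, the use of the last assertion of Lemma~\ref{act123} to get $T_j\approx T_j^{-1}$, the invocation of \eqref{expan1} for $X_1^{\pm1}$, and the propagation from one slot to $2r$ slots) is fine.
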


\begin{Lemma} \label{approx} If $d\in \{T_1, \ldots, T_{2r-1}\}\cup\{X_1, \ldots, X_{2r}\}$, and
 $j\in \{0, 1\}$, then  $d^{\pm 1}{M}^j_{I,2r}\subseteq {M}^j_{I,2r}$.
\end{Lemma}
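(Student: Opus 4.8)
\textbf{Proof proposal for Lemma~\ref{approx}.}
The plan is to reduce the statement to facts already established about how the elementary endomorphisms act on the filtration $M^1_{I,2r}\subseteq M^0_{I,2r}$. First I would treat the crossings $T_j^{\pm 1}$, $1\le j\le 2r-1$. These act through $R_{V,V}^{\pm 1}$ in the $j,j+1$ tensor slots, and by Lemma~\ref{act123} together with the definition of $M^j_{I,2r}$ (Definition~\ref{mij}, the evident $2r$-strand analogue of Definition~\ref{codeg}) we already know that $R_{V,V}^{\pm 1}$ preserves the span of $\{z_v^{\ell}\,\textsc{x}^-_{\mathbf s}m_I\otimes v_{\mathbf i}\}$ with the appropriate power of $z_v$, because the off-diagonal entries of $R_{V,V}^{\pm 1}$ carry the extra factor $z_q=z_v$. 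Hence $T_j^{\pm 1}M^j_{I,2r}\subseteq M^j_{I,2r}$; this is essentially Lemma~\ref{kkkey}(1) upgraded from the $\approx$-statement to an honest stabilization, and only requires noting that the correction terms in $R_{V,V}^{\pm1}-\tilde R$ land in $z_vM^0=M^1$.

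Next I would handle $X_1^{\pm 1}$. By Proposition~\ref{abmw1}, $X_1^{\pm 1}$ acts on $M^{\mathfrak p_I}(\lambda_{I,\mathbf c})\otimes V^{\otimes 2r}$ as $(\delta^{\pm 1}(P\pi^{-1}\bar\Theta P\pi^{-1}\bar\Theta)^{\pm 1})$ in the first two slots (module and first $V$). Now Lemma~\ref{c1}(2) says $P\bar\Theta P$ (and $P\Theta P$) stabilizes $M^j_{I,1}$, and by \eqref{fff} the maps $\pi^{\pm 1}$ do too; applying these slot-wise together with Corollary~\ref{to} to pass from the one-strand module $M^j_{I,1}$ to the $2r$-strand module $M^j_{I,2r}$ gives $X_1^{\pm 1}M^j_{I,2r}\subseteq M^j_{I,2r}$. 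Concretely, one expands $X_1^{\pm1}$ using \eqref{expan}, observes every term of positive length on the left tensor factor carries a compensating power of $z_v$, and invokes Lemma~\ref{cmonoial} to straighten $x^-_J\textsc{x}^-_{\mathbf s}m_I$ back into the PBW basis without losing the $z_v$-power.

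Finally I would push the result from $X_1$ to $X_2,\dots,X_{2r}$ by induction using the identity $X_i 1_{\ob{2r}}=T_{i-1}\,X_{i-1}1_{\ob{2r}}\,T_{i-1}$ from Lemma~\ref{XX} (equivalently \eqref{relation 5}): if $X_{i-1}^{\pm 1}$ and all $T_j^{\pm1}$ stabilize $M^j_{I,2r}$, then so does $X_i^{\pm 1}=T_{i-1}^{\mp 1}X_{i-1}^{\pm1}T_{i-1}^{\mp1}$ (note $T^{-1}$ and $T$ both stabilize by the first paragraph, so the sign does not matter). The main obstacle, and the only genuinely non-formal point, is the bookkeeping of $z_v$-powers when a $\bar\Theta$-tail is straightened through $\textsc{x}^-_{\mathbf s}m_I$: one must be sure that the length $\ell(J)$ of the tail exactly matches the drop in PBW-degree produced by the commutation relations, so that the product of $z_v$-exponents is $\ge \deg(y)+j$ rather than something smaller. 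This is precisely what Lemma~\ref{cmonoial} and Corollary~\ref{to} were set up to guarantee, so in practice the proof is a short assembly of those lemmas together with Lemma~\ref{XX}; I would write it as two displayed inclusions (one for $T_j$, one for $X_1$) followed by the inductive step, with no long computation.
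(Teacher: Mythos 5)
Your proposal is correct and mirrors the paper's argument: handle $T_\ell^{\pm1}$ via Lemma~\ref{act123}, handle $X_1^{\pm1}$ by composing the stabilization statements for $P\bar\Theta P$ (Lemma~\ref{c1}), $\bar\Theta$, and $\pi^{\pm1}$ (from \eqref{fff}) in the factorization of $\Psi(X^{\pm1})$, and propagate to $X_j$ via $X_j=T_{j-1}X_{j-1}T_{j-1}$. Only a cosmetic slip: you wrote $X_i^{\pm1}=T_{i-1}^{\mp1}X_{i-1}^{\pm1}T_{i-1}^{\mp1}$ where the signs on the conjugating $T$'s should be $\pm1$, but as you yourself observe, both $T$ and $T^{-1}$ already stabilize $M^j_{I,2r}$, so the argument is unaffected.
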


\begin{proof} Since  $T_\ell^{\pm1}$ acts on $M_{I, 2r}$ via $ Id_{M^{\mathfrak p_{I}}(\lambda_{I, \mathbf c})}\otimes Id_{V}^{\otimes {\ell-1}}\otimes \text{{R}}_{V,V}^{\pm1}\otimes Id_{V}^{\otimes {2r-\ell-1}}$,
 by   Lemma~\ref{act123}, we have the result when $d\in \{T_1, \ldots, T_{2r-1}\}$. Let $\Phi\in \{\Theta, \bar\Theta\}$.
  Thanks to Lemma~\ref{c1}, ${M}^j_{I,2r}$ is fixed by  $P\Phi P$.
 By \eqref{fff}, ${M}^j_{I,2r}$ is fixed by  $\Theta, \bar\Theta$ and $\pi ^{\pm1}$.
  Note that $\Psi_{M^{\mathfrak p_{I}}(\lambda_{I, \mathbf c})}(X^{\pm1})=(\delta P\pi ^{-1}\bar\Theta P\pi ^{-1}\bar\Theta)^{\pm1}$. So we have the result when $d=X_1$. The general case follows from the equation $X_j=T_{j-1} X_{j-1}T_{j-1}$.
  \end{proof}

  \begin{Defn}\label{add8}
   Suppose $d\in \bar{\mathbb{NT}}_{2r,0}^a/\sim$ and $conn(\hat d)=\{(i_l, j_l)| 1\leq i\leq r\}$.  For any $\mathbf{e}\in \mathbb N_a^r$, define
\begin{itemize}\item[(1)]  $\tilde{x}^-(d)_{\mathbf {e}}=\textsc{x}_{\mathbf s}^-$ such that $\textsc{x}_{\mathbf s}^-\sim \vec{\Pi}_{l=1}^{r}x^-(d)_{i_l,e_l}$, where $x^-(d)_{i_l,e_l}$ is given in Definition~\ref{cocy},
     \item[(2)]$ {v}(d)_{\mathbf e}=v_{b_1}\otimes\ldots\otimes v_{b_{2r}}\in V^{\otimes {2r}}$ such that  $b_{j_l}=l'$ and $b_{i_l}=a_{{i_l},e_{l}}$ (resp.,  $a_{{i_l},e_{l}}'$)
      if $0\leq e_l\leq k-1$ (resp., $ k\leq e_l\leq a-1$).\end{itemize}
\end{Defn}
\begin{example}\label{c15}Keep the notations in Example~\ref{c14}. We have  \begin{itemize}\item
${\beta_{1,1}}<{\beta_{2,1}}<{\beta_{2,3}}<{\beta_{1,3}}<{\beta_{1,2}}<{\beta_{2,2}}$
  \item ${v}(d)_{\mathbf e}=v_1\otimes v_2\otimes v_{2'}\otimes v_{1'}$ and $\tilde{x}^-(d)_{\mathbf {e}}=1$ if  $\mathbf {e}=(0,0)$,
  \item  $ {v}(d)_{\mathbf e}=v_{4'}\otimes v_{3'}\otimes v_{2'}\otimes v_{1'}$ and  $\tilde{x}^-(d)_{\mathbf {e}}=x^-_{\beta_{2,2}}x^-_{\beta_{1,2}}x^-_{\beta_{1,3}}x^-_{\beta_{2,3}}x^-_{\beta_{2,1}}x^-_{\beta_{1,1}}$, if $\mathbf e=(3,3)$.
  \end{itemize}
 \end{example}

 Suppose $d,d'\in \bar{\mathbb{NT}}_{2r,0}^a/\sim$ and $\mathbf e\in \mathbb N_a^r$. Let  $c_{d, d'}$  be the coefficient of $\tilde{x}^-(d')_{\mathbf{e}}m_{I}$ in $d (m_{I}\otimes {v}(d')_{\mathbf{e}})$. It is reasonable since
$d(m_{I}\otimes {v}(d')_{\mathbf{e}})$ can be expressed as  a linear combination of  elements in $\mathcal S_{I, 0}$ and $\tilde{x}^-(d')_{\mathbf{e}}m_{I}\in \mathcal S_{I, 0}$
 (see  Lemma \ref{tsmodule1}).
 Thanks to  \eqref{form},    $d=\hat{d}\prod_{l=1}^{2r} \circ X_{l}^{b_{d, l}}$. Suppose   $conn (\hat d)=\{(i_l, j_l)\mid 1\leq l\leq r\}$. Then, $b_{d, j_l}=0$, $1\le l\le r$.  Define
 $\mathbf{e}(d)= (\phi(b_{d, i_1}), \ldots, \phi(b_{d, i_r}))$, where $\phi$ is given in \eqref{biidjijw}.
 For any $\mathbf{t}, \mathbf{s}\in \mathbb{N}_a^r$, we say $\mathbf{t}\succcurlyeq \mathbf{s}$ if $t_l\geq s_l$, $1\leq l\leq r$.

\begin{Lemma}\label{cactingd1} Suppose $\mathbf e\in \mathbb N_a^r$ and  $d,d'\in \bar{\mathbb{NT}}_{2r,0}^a/\sim$ such that $d=\hat{d}\prod_{l=1}^{2r}  X_{l}^{b_{d, l}}$. We have
\begin{itemize}\item[(1)]$z_v^{-\vert \mathbf {e}\vert}c_{d,d'}\in \mathcal A_1$ if either $\mathbf e(d)\not\succcurlyeq \mathbf e$ or  $\mathbf e(d)=\mathbf e$ and  $\eta(d)\neq\eta(d')$, where $|\mathbf e|=\sum_{i=1}^r e_i$ and $\eta(d)$ is given in Definition \ref{eta}.
 \item[(2)]$z_v^{-\vert \mathbf {e}\vert}c_{d,d'}\in \mathcal A_{0}\setminus\mathcal A_1$ if  $\mathbf e(d)=\mathbf e$ and  $\eta(d)=\eta(d')$.
\end{itemize}
\end{Lemma}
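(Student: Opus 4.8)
The plan is to compute the coefficient $c_{d,d'}$ by tracking how $d$ acts on $m_I\otimes v(d')_{\mathbf e}$ modulo $M^1_{I,2r}$, reducing everything to the one-tensor-factor estimates already established. First I would use the factorization $d=\hat d\circ\prod_{l=1}^{2r}X_l^{b_{d,l}}$ from \eqref{form} together with Proposition~\ref{cspan3}, which writes $\hat d=d_r T^{inv}_w$ with $d_r=\lcap^{\otimes r}$ and $w\in\mathcal D_{r,2r}$; this reduces the action of $d$ to a composite of the operators $T_j^{\pm1}$, $X_j^{\pm1}$ and the cap operators $A_j$ acting on $M^{\mathfrak p_I}(\lambda_{I,\mathbf c})\otimes V^{\otimes 2r}$. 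By Lemma~\ref{kkkey}, $T_j^{\pm1}\approx\tilde R_j$ and $X_1^{\pm1}\approx\tilde X_1^{\pm1}$ modulo $M^1_{I,2r}$, and by Lemma~\ref{approx} all these operators preserve both $M^0_{I,2r}$ and $M^1_{I,2r}$, so the computation of $c_{d,d'}$ modulo $z_v\,(\text{i.e. }\mathcal A_1\text{-coefficients})$ can be carried out entirely with the simple diagonal-up-to-lower-order operators $\tilde R_j$ and $\tilde X_j$.

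Next I would trace the vector $m_I\otimes v(d')_{\mathbf e}$ through this composite. The dotted strands $X_{i_l}^{b_{d,i_l}}$ act near each endpoint $i_l$; here the key input is Corollary~\ref{cactingd2} (built from Lemma~\ref{cactingd}), which says that $z_v^{-(|\mathbf s|+c)}c_{(\mathbf s,h),(\mathbf r,l)}(X^t)\in\mathcal A_1$ whenever $\phi(t)<c$, and lies in $\mathcal A_0\setminus\mathcal A_1$ exactly when $\phi(t)=c$ and the weight constraint \eqref{wt1h} holds. Applying this at each endpoint $i_l$ with $c=e_l$ and $t=b_{d,i_l}$, the dotted part contributes an $\mathcal A_1$-coefficient unless $\phi(b_{d,i_l})\ge e_l$ for all $l$, i.e. unless $\mathbf e(d)\succcurlyeq\mathbf e$; this gives the first alternative in part~(1). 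When $\mathbf e(d)=\mathbf e$, the dotted strands produce, modulo $M^1$, exactly the vectors $x^-(d)_{i_l,e_l}$ applied to the factor at endpoint $i_l$ of $v(d')_{\mathbf e}$ (this is precisely what Lemma~\ref{add7} and Corollary~\ref{cactingd2}(2) arrange, via the weights $a_{i_l,e_l}$ in Definition~\ref{add8}), and then the braidings $\tilde R_j$ permute tensor factors with invertible scalar coefficients while $d_r$ contracts each cap via $\underline\alpha$. The cap contraction is nonzero modulo $M^1$ precisely when the surviving $V$-labels are paired as $(j,j')$, which as in the proof of Lemma~\ref{actingd} forces $conn(\hat d)=conn(\hat d')$, i.e. $\eta(d)=\eta(d')$; otherwise the coefficient is in $\mathcal A_1$. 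This yields the second alternative of part~(1). When both $\mathbf e(d)=\mathbf e$ and $\eta(d)=\eta(d')$, the braidings and cap contractions contribute only invertible scalars in $\mathcal A_0\setminus\mathcal A_1$, and the dotted contributions are in $\mathcal A_0\setminus\mathcal A_1$ by Corollary~\ref{cactingd2}(2); since $\mathcal A_0\setminus\mathcal A_1$ is multiplicatively closed (as $\mathcal A_0$ is a ring and $\mathcal A_1$ a proper ideal) and $\mathbb F$ is a field, the product $z_v^{-|\mathbf e|}c_{d,d'}$ lies in $\mathcal A_0\setminus\mathcal A_1$, proving part~(2).

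The main obstacle I anticipate is bookkeeping the interaction between the braidings $T^{inv}_w$ and the dotted operators $X_{i_l}^{b_{d,i_l}}$: the braid group element $w$ reorders the $2r$ tensor slots, so the dot attached to the physical strand labeled $i_l$ must be carried to the correct slot before applying Corollary~\ref{cactingd2}, and one must check that this reordering introduces only $\mathcal A_0\setminus\mathcal A_1$ scalars and never lowers the relevant $\phi$-index. This is handled by the relation $X_j=T_{j-1}X_{j-1}T_{j-1}$ from Lemma~\ref{XX} together with Lemma~\ref{act123}, but verifying it cleanly for an arbitrary reduced word of $w\in\mathcal D_{r,2r}$ requires care; I would organize it by induction on $r$, peeling off the innermost cap as in the proof of Proposition~\ref{cspan3} so that at each stage only one dotted strand and one cap are in play, which localizes the estimate to the two-tensor-factor situation already treated. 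A secondary subtlety is that the labels $a_{i_l,e_l}$ defined in the paragraph before Lemma~\ref{add9} were chosen precisely so that, for a fixed connector, the pairs $(i_l,e_l)$ determine $a_{i_l,e_l}$ injectively (Lemma~\ref{add9}(4)), and the hypothesis $2r\le\min q_j$ (Assumption~\ref{assum1}) guarantees all these labels stay within a single block; I must invoke this at the point where I identify which basis vector $\tilde x^-(d')_{\mathbf e}m_I$ appears, to rule out accidental coincidences between contributions of distinct $(l,e_l)$.
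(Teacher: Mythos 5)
Your proposal follows the paper's own strategy: the same factorization $d=\hat d\prod_l X_l^{b_{d,l}}$ with $\hat d=d_r T^{inv}_w$ via Proposition~\ref{cspan3}, the same reductions modulo $M^1_{I,2r}$ via Lemmas~\ref{kkkey} and~\ref{approx}, the same appeal to Corollary~\ref{cactingd2} for the dotted endpoints, and the same cap-contraction comparison with $\eta(d')$ lifted from the proof of Lemma~\ref{actingd}. The one place where you diverge is in how the interaction between the dots $X_{i_l}^{b_{d,i_l}}$ and the braidings $T^{inv}_w$ is tamed. You propose an induction on $r$, peeling off one cap at a time. The paper instead exploits the key identity recorded in \eqref{key123}: modulo $M^1$ the product $\vec\prod_l\tilde X_l^{b_{d,l}}$ applied to $m_I\otimes v(d')_{\mathbf e}$ equals $v^{t}\vec\prod_l\phi_{0,l}(\bar\Theta)^{b_{d,l}}$ applied to the same vector, because each $\tilde X_j=\tilde R_{j-1}\tilde X_{j-1}\tilde R_{j-1}$ and the $\tilde R_j$ are diagonal in the $V$-basis, so conjugating by them only contributes a power of $v$. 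This converts the whole dotted part into a product of $\bar\Theta$'s acting directly between the zeroth tensor slot and each slot $l$, completely decoupling it from the braid $T^{inv}_w$; the $\hat d$ part is then analyzed separately using \eqref{sss123} from the proof of Lemma~\ref{actingd}. Your inductive bookkeeping could likely be made to work, but you would have to verify at each stage that the intermediate $R_{V,V}^{\pm1}$-moves do not lower the $\phi$-index and contribute only unit scalars modulo $M^1$ (the $z_q$-cross-terms they produce must be swept into $M^1$ using $\hat d M^1\subseteq M^1$), whereas the $\phi_{0,l}(\bar\Theta)$-reformulation handles this once and for all and avoids an induction. Your argument that the final sum of a single $\mathcal A_0\setminus\mathcal A_1$ term with several $\mathcal A_1$ terms lands in $\mathcal A_0\setminus\mathcal A_1$ is correct and is exactly how the paper concludes.
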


\begin{proof}  For all $0\le b<c\le 2r$,  let $\phi_{b,c}:\U_v(\mfg)^{\otimes2}\to \U_v(\mfg)^{\otimes(2r+1)}$ be the linear map  such that $$\phi_{b,c}(x\otimes y)=1^{\otimes b}\otimes \overset{b+1\text{th}} {x}\otimes 1^{\otimes (c-b-1)}\otimes\overset{c+1\text{th}} y\otimes 1^{\otimes (2r-c)}, $$ for all $ x\otimes y\in \U(\mfg)^{\otimes2}$.
Given an $m_{I}\otimes {v}(d')_{\mathbf{e}}\in \mathcal S_{I, 2r}$, where ${v}(d')_{\mathbf{e}}$ is defined in Definition \ref{add8}, we claim
  $$ (d-v^s \hat d \vec{\prod}_{l=1}^{2r}  \phi_{0, l}(\bar\Theta)^{b_{d, l}})(m_{I}\otimes {v}(d')_{\mathbf{e}})\in M_{I, 0}^1$$
for some  $s\in \mathbb Z$
 depending on $m_{I}\otimes {v}(d')_{\mathbf{e}}$.

In fact,  by Lemmas~\ref{kkkey}-\ref{approx}, we have  $\prod_{l=1}^{2r} X_{l}^{b_{d, l}}\approx \vec{\prod}_{l=1}^{2r} \tilde X_{l}^{b_{d, l}}$. Thanks to \eqref{pif} and \eqref{t5}
   \begin{equation}\label{key123} \vec{\prod}_{l=1}^{2r} \tilde X_{l}^{b_{d, j}}(m_{I}\otimes {v}(d')_{\mathbf{e}})=v^{t}\vec{\prod}_{l=1}^{2r} \phi_{0, l}(\bar\Theta)^{b_{d, l}} (m_{I}\otimes {v}(d')_{\mathbf{e}})\end{equation}   for some $t\in \mathbb Z$
 depending on $m_{I}\otimes {v}(d')_{\mathbf{e}}$.   By Proposition~\ref{cspan3},  $\hat d=d_rT^{inv}_{w}$ for some $w\in \mathcal D_{r, 2r}$,  where $d_r=\lcap^{\otimes r}\in \bar{\mathbb{NT}}_{2r, 0}$.
 By Lemmas~\ref{AU},~\ref{approx},  $d_r{M}^j_{I,2r}\subseteq{M}^j_{I,0}$ and   $\hat d {M}^j_{I,2r}\subseteq{M}^j_{I,0}$ if $j\in\{0, 1\}$. Combining these results yields our claim.

We write the RHS of \eqref{key123} as a linear combination of $\mathcal{S}_{I, 2r}$ in Lemma \ref{tsmodule1}.
For any $v_{\mathbf{i}}=v_{i_1}\otimes v_{i_2}\otimes...\otimes v_{i_{2r}}\in V^{\otimes 2r}$, let $a_{d, d', \mathbf{i}}$ be the coefficient of $\tilde{x}^-(d')_{\mathbf{e}}m_{I}\otimes v_{\mathbf{i}}$ in this expression.
Thanks to Corollary~\ref{cactingd2}, $z_v^{-\vert\mathbf{e}\vert}a_{d, d', \mathbf{i}}\in {\mathcal A}_1$ if $\mathbf e(d)\not\succcurlyeq \mathbf e$.  Since $\hat d
{M}^1_{I,2r}\subseteq{M}^1_{I,0}$, we immediately have   $z_v^{-\vert \mathbf {e}\vert}c_{d,d'}\in \mathcal A_1$ in this case. This is the first assertion in (1).

Now, we assume $\mathbf e= \mathbf e(d)$. By  Corollary~\ref{cactingd2} again,
 $z_v^{-\vert\mathbf{e}\vert}a_{d, d', \mathbf{i}}\in {\mathcal A}_0\setminus{\mathcal A}_1$    if and  only if $\mathbf i=\eta(d') $,
  where $\eta(d')$ is given in Definition~\ref{eta}.

Let $b_{d,d',\mathbf i}$ be the coefficient of $\tilde{x}^-(d')_{\mathbf{e}}m_{I}$ in $ \hat d(z_v^{\vert\mathbf{e}\vert}\tilde{x}^-(d')_{\mathbf{e}}m_{I}\otimes v_{\mathbf{i}})$. We have  $z_v^{-\vert\mathbf{e}\vert}b_{d, d', \mathbf{i}}\in {\mathcal A}_0$ since
   $z_v^{\vert\mathbf{e}\vert}\tilde{x}^-(d')_{\mathbf{e}}m_{I}\otimes v_{\mathbf{i}}\in M^0_{I,2r}$ and $\hat d {M}^0_{I,2r}\subseteq{M}^0_{I,0}$.

Thanks to \eqref{sss123},   we have  $z_v^{-\vert\mathbf{e}\vert}b_{d, d', \mathbf{i}}\in {\mathcal A}_1$   if $\mathbf i\neq \eta(d)$ and
$z_v^{-\vert\mathbf{e}\vert}b_{d, d', \mathbf{i}}\in {\mathcal A}_0\setminus{\mathcal A}_1 $ if
 $\mathbf i= \eta(d)$.
 So, $z_v^{-\vert \mathbf {e}\vert}c_{d,d'}\in {\mathcal A}_1$  if $\eta(d)\neq\eta(d')$ and $z_v^{-\vert \mathbf {e}\vert}c_{d,d'}\in {\mathcal A}_0\setminus{\mathcal A}_1$  if $\eta(d)=\eta(d')$. This proves (2) and the second assertion in (1).
\end{proof}

By \eqref{vv},  $v=q$ if $\mathfrak g\in \{\mathfrak{sp}_{2n}, \mathfrak{so}_{2n}\}$ and $v=q^{1/2}$ if $\mathfrak g=\mathfrak {so}_{2n+1}$. In Proposition~\ref{keytheorem},  we assume (1) $\mathbb K=\mathbb C(q^{1/2})$, (2) $z=z_q$, (3) $\delta$ is given in \eqref{lamb}, (4) $\omega_0$ is determined by  \eqref{para1}, (5) $\omega$ is $\mathbf u$-admissible and (6) $f_I(t)$ is given in Definition~\ref{fi} where $I\in \{I_1, I_2\}$.

\begin{Prop}\label{keytheorem}
  $\Hom_{\CB^{f_I}}(\ob {2r},\ob {0})$ has  $\mathbb K$-basis given by $\bar{\mathbb {NT}}_{2r, 0}^a/\sim$.
\end{Prop}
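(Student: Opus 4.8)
The plan is to combine the spanning result already in hand with a linear independence argument carried out inside the concrete module $M^{\mathfrak p_I}(\lambda_{I,\mathbf c})\otimes V^{\otimes 2r}$, exactly as in the affine case (Theorem~\ref{aff}) but now keeping track of the $\mathbf u$-admissible specialization. First I would invoke Proposition~\ref{cycba}: $\Hom_{\CB^{f_I}}(\ob{2r},\ob 0)$ is spanned over $\mathbb K$ by $\bar{\mathbb{NT}}^a_{2r,0}/\!\sim$, whose cardinality is $a^r(2r-1)!!$ by the counting in Theorem~\ref{cycb}(2) (or directly from \eqref{c9} and the fact that $\sharp\mathcal D_{r,2r}=(2r-1)!!$). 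So it remains only to show these morphisms are $\mathbb K$-linearly independent, and for that it suffices, by the usual base change argument at the end of the proof of Theorem~\ref{affbasis}, to prove independence after the specialization $(\delta,z)=(\epsilon_\mfg q^{\mathrm N-\epsilon_\mfg}, z_q)$ over $\mathbb K=\mathbb C(q^{1/2})$, using Theorem~\ref{th} which says $\Psi_{M^{\mathfrak p_I}(\lambda_{I,\mathbf c})}$ factors through $\CB^{f_I}$.

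Next I would set up the pairing. Suppose $\sum_{d}\mu_d\,d=0$ in $\Hom_{\CB^{f_I}}(\ob{2r},\ob 0)$ with $\mu_d\in\mathbb K$, the sum over $d\in\bar{\mathbb{NT}}^a_{2r,0}/\!\sim$, and assume not all $\mu_d$ vanish. After clearing denominators and dividing by a power of $z_v$ I may assume all $\mu_d\in\mathcal A_0$ and at least one $\mu_{d_0}\in\mathcal A_0\setminus\mathcal A_1$. Choose $\mathbf e=\mathbf e(d_0)$ minimal among the tuples $\{\mathbf e(d):\mu_d\notin\mathcal A_1\}$ with respect to $\succcurlyeq$; this is possible because $\succcurlyeq$ restricted to $\mathbb N_a^r$ admits minimal elements. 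Now apply the relation to the vector $m_I\otimes v(d_0)_{\mathbf e}\in M^{\mathfrak p_I}(\lambda_{I,\mathbf c})\otimes V^{\otimes 2r}$ and extract the coefficient of $\tilde x^-(d_0)_{\mathbf e}m_I$ in the expansion with respect to the basis $\mathcal S_{I,0}$ of Lemma~\ref{tsmodule1}. By Lemma~\ref{cactingd1}, $z_v^{-|\mathbf e|}c_{d,d_0}\in\mathcal A_1$ whenever $\mathbf e(d)\not\succcurlyeq\mathbf e$, and also whenever $\mathbf e(d)=\mathbf e$ but $\eta(d)\ne\eta(d_0)$; while $z_v^{-|\mathbf e|}c_{d,d_0}\in\mathcal A_0\setminus\mathcal A_1$ when $\mathbf e(d)=\mathbf e$ and $\eta(d)=\eta(d_0)$. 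The only terms that could survive modulo $z_v^{|\mathbf e|+1}\mathcal A_0$ are those with $\mathbf e(d)=\mathbf e$; among those, the ones with $\mu_d\notin\mathcal A_1$ all have $\eta(d)=\eta(d_0)$ by the choice of $\mathbf e$ (any $d$ with $\mu_d\notin\mathcal A_1$ has $\mathbf e(d)\succcurlyeq\mathbf e$ by minimality, and if $\mathbf e(d)=\mathbf e$ we must still control $\eta$), and since $d\in\bar{\mathbb{NT}}^a_{2r,0}/\!\sim$ is determined by $(\eta(d),\mathbf e(d))$ (the connector and the dot data), there is exactly one such $d$, namely $d_0$.

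Putting this together, the coefficient equation reduces modulo $z_v^{|\mathbf e|+1}\mathcal A_0$ to $\mu_{d_0}\,c_{d_0,d_0}\equiv 0$, forcing $\mu_{d_0}\,z_v^{|\mathbf e|}\cdot(z_v^{-|\mathbf e|}c_{d_0,d_0})\in z_v^{|\mathbf e|+1}\mathcal A_0$; since $z_v^{-|\mathbf e|}c_{d_0,d_0}\in\mathcal A_0\setminus\mathcal A_1$ is a unit times $z_v^{0}$ in the appropriate sense and $\mathcal A_0$ is a valuation ring, this gives $\mu_{d_0}\in\mathcal A_1$, contradicting the choice of $d_0$. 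Hence all $\mu_d=0$, so $\bar{\mathbb{NT}}^a_{2r,0}/\!\sim$ is $\mathbb K$-linearly independent; combined with Proposition~\ref{cycba} it is a basis, as claimed. I expect the main obstacle to be the bookkeeping in the reduction step — precisely verifying that $d\in\bar{\mathbb{NT}}^a_{2r,0}/\!\sim$ is uniquely recovered from the pair $(\eta(d),\mathbf e(d))$ under Assumption~\ref{assum1} (this uses $q_j\ge 2r$ and Lemma~\ref{add9}(4)-(5)), and tracking the powers of $z_v$ so that the valuations in Lemma~\ref{cactingd1} line up exactly; the $q\mapsto$ generic specialization to remove the hypothesis $2r\le\min q_j$ and to pass from $\mathbb C(q^{1/2})$ to a general integral domain $\mathbb K$ is then routine, following verbatim the base-change paragraph of the proof of Theorem~\ref{affbasis}.
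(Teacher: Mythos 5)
Your strategy is basically the paper's, but the choice of extremality is reversed, and this creates a genuine gap. You pick $\mathbf e=\mathbf e(d_0)$ \emph{minimal} with respect to $\succcurlyeq$ and then assert "any $d$ with $\mu_d\notin\mathcal A_1$ has $\mathbf e(d)\succcurlyeq\mathbf e$ by minimality," but $\succcurlyeq$ is only a partial order on $\mathbb N_a^r$: minimality of $\mathbf e(d_0)$ rules out elements strictly below it, not elements incomparable to it, and a fortiori does not force all remaining $\mathbf e(d)$ to be comparable. More seriously, even for those $d$ that do satisfy $\mathbf e(d)\succcurlyeq\mathbf e$ strictly, Lemma~\ref{cactingd1} is silent: it yields $z_v^{-|\mathbf e|}c_{d,d_0}\in\mathcal A_1$ only when $\mathbf e(d)\not\succcurlyeq\mathbf e$, or when $\mathbf e(d)=\mathbf e$ with $\eta(d)\ne\eta(d_0)$; it says nothing about $\mathbf e(d)\succcurlyeq\mathbf e$, $\mathbf e(d)\ne\mathbf e$. (This is inherited from Corollary~\ref{cactingd2}, which controls the coefficient only when $\phi(t)<c$ or $\phi(t)=c$, never $\phi(t)>c$.) So your claim "the only terms that could survive modulo $z_v^{|\mathbf e|+1}\mathcal A_0$ are those with $\mathbf e(d)=\mathbf e$" is not justified, and the contradiction you extract does not follow.

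The paper avoids this by taking $B_2=\{d\in B_1:|\mathbf e(d)|\text{ \emph{maximal}}\}$ and any $d_1\in B_2$. Maximality of the \emph{norm} $|\mathbf e(d_1)|$ does the job that minimality cannot: for $d\in B_1$ with $d\ne d_1$, either $|\mathbf e(d)|<|\mathbf e(d_1)|$ (so $\mathbf e(d)\not\succcurlyeq\mathbf e(d_1)$) or $|\mathbf e(d)|=|\mathbf e(d_1)|$ but $\mathbf e(d)\ne\mathbf e(d_1)$ (again $\not\succcurlyeq$) or $\mathbf e(d)=\mathbf e(d_1)$ with $\eta(d)\ne\eta(d_1)$; the problematic case $\mathbf e(d)\succcurlyeq\mathbf e(d_1)$ with strict inequality is impossible because it would require $|\mathbf e(d)|>|\mathbf e(d_1)|$. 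All the other ingredients in your write-up (the spanning via Proposition~\ref{cycba}, the pairing against $m_I\otimes v(d_0)_{\mathbf e}$, extracting the coefficient of $\tilde x^-(d_0)_{\mathbf e}m_I$ using the basis $\mathcal S_{I,0}$, the uniqueness of $d$ given $(\eta(d),\mathbf e(d))$, the $\mathcal A_0$/$\mathcal A_1$-valuation bookkeeping) are sound and match the paper; only the choice of minimal versus maximal needs to be flipped. Also note the paper works over $\mathbb K=\mathbb C(q^{1/2})$ already in Proposition~\ref{keytheorem} itself; the base-change to a general $\mathbb K$ is deferred to Theorem~\ref{calz} and the proof of Theorem~\ref{cycb}, not folded into this proposition, so the final specialization paragraph you sketch is unnecessary here.
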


\begin{proof}By  Proposition \ref{cycba}, it is enough to prove $f_d(q^{1/2})=0$  for all $d$ if $$\sum_{d\in B}f_d(q^{1/2})d=0,$$
where $B$ is a finite subset of $\bar{\mathbb {NT}}_{2r, 0}^a/\sim$ and $f_d(q^{1/2})\in\mathbb K$.

If it were false, we assume all previous $f_d(q^{1/2})$'s are non-zero. Without loss of any generality, we can assume that
$f_d(q^{1/2})\in \mathcal A_0$ for all  previous $d$ and moreover, there is at least one $d'$ such that
 $f_{d'}(q^{1/2})\in \mathcal A_0\setminus\mathcal A_1$. Let $B_1=\{d\in B\mid f_{d}(q^{1/2}) \not\in \mathcal A_1\}$ and $B_2=\{d\in B_1\mid \vert{\mathbf{e}}(d)\vert \text{ is  maximal }\}$, where ${\mathbf{e}}(d)$ is given before Lemma~\ref{cactingd1}. Obviously $B_2\neq \varnothing$.

Keep the notation in Definition \ref{add8}.
      Thanks to Lemma~\ref{cactingd1},
  the coefficient  of $\tilde{x}^-(d_1)_{\mathbf e(d_1)}m_{I}$ in $\sum_{d\in B}f_d(q^{1/2})d(m_{I}\otimes v_{\mathbf e(d_1)})$ is
  $ f_{d_1}(q^{1/2})z_v^{\vert{\mathbf{e}}(d_1)\vert}(g(q^{1/2})+h(q^{1/2}))$ for any $d_1\in B_2$, where $g(q^{1/2})\in \mathcal A_{0}\setminus\mathcal A_{1}$ and $h(q^{1/2})\in \mathcal A_1$. Then $f_{d_1}(q^{1/2})=0\in \mathcal A_1$, a contradiction since   $d_1\in B_1$. \end{proof}

In Theorem~\ref{calz}, we keep the following assumptions:
 \begin{itemize} \item $\mathcal Z=\mathbb Z[\hat{q}^{\pm 1}, (\hat{q}-\hat{q}^{-1})^{-1}, \hat u_1^{\pm1}, \ldots, \hat u_a^{\pm1}]$, where $\hat q$ and $\hat q-\hat q^{-1}$, $\hat u_1, \ldots, \hat u_a$ are indeterminates.
 \item  $\hat f(t)=\prod_{j=1}^a(t-\hat u_j)\in \mathcal Z[t]$,
  \item $z=z_{\hat {q}}$, $\delta=\alpha \prod_{i=1}^a \hat u_i$ where $\alpha\in \{1, -1\}$ if $a$ is odd and $\alpha\in \{-\hat{q}, \hat{q}^{-1}\}$ if $a$ is even,
       \item $\delta, \omega_0, z$ satisfy \eqref{para1},
 \item $\{\tilde{\omega}_j\mid j\in \mathbb Z \}$ is $\hat{\mathbf u}$-admissible in the sense of Definition~\ref{uad} and $\tilde{\omega}_0=\omega_0$.\end{itemize}

\begin{Theorem}\label{calz}
The $\mathcal Z$-module  $\Hom_{\CB^{\hat f}}(\ob {2r},\ob {0})$ has basis given by  $\bar{\mathbb {NT}}_{2r, 0}^a/\sim$.
\end{Theorem}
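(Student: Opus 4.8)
The plan is to deduce Theorem~\ref{calz} from Proposition~\ref{keytheorem} by a standard generic-specialization argument, exactly parallel to the way Theorem~\ref{affbasis} was obtained from Theorem~\ref{aff}. First I would observe that, since $\{\tilde\omega_j\}$ is $\hat{\mathbf u}$-admissible over $\mathcal Z$, Corollary~2.29 of \cite{RX} (quoted in the text) guarantees it is admissible, so $\CB^{\hat f}$ over $\mathcal Z$ is well-defined; moreover Assumption~\ref{asump} holds over $\mathcal Z$ by the hypotheses listed before the statement. By Proposition~\ref{cycba} (or rather the obvious base-changed version of it, which only uses the defining relations of $\CB^{\hat f}$ and hence works over any base), $\Hom_{\CB^{\hat f}}(\ob{2r},\ob 0)$ is spanned over $\mathcal Z$ by $\bar{\mathbb {NT}}_{2r,0}^a/\!\sim$; note $a$ here is $\deg\hat f$, and by Lemma~\ref{etannxs}(3) the case of general $m,s$ with $m+s=2r$ reduces to the case $(\ob{2r},\ob 0)$. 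So the whole content of the theorem is $\mathcal Z$-linear independence of $\bar{\mathbb {NT}}_{2r,0}^a/\!\sim$.

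For the independence, suppose $\sum_{d\in B} f_d\, d = 0$ with $B\subseteq \bar{\mathbb {NT}}_{2r,0}^a/\!\sim$ finite and $f_d\in\mathcal Z$. I would argue that each $f_d=0$. Since $\mathcal Z$ is an integral domain, one may clear denominators and assume each $f_d$ is a Laurent polynomial in $\hat q,\hat u_1,\dots,\hat u_a$ over $\mathbb Z$; then $f_d = 0$ iff $f_d$ vanishes under every ring homomorphism $\mathcal Z\to\mathbb C(q^{1/2})$ sending $\hat q\mapsto q$ (or $q^{1/2}\mapsto$ an appropriate square root), $\hat u_i\mapsto$ suitable units, provided these specializations are Zariski-dense. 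The key point is that the specializations coming from Lemma~\ref{ghom123} — namely those arising from parabolic quantum groups $\U_v(\mathfrak g)$ with $\mathfrak g\in\{\mathfrak{so}_{2n},\mathfrak{so}_{2n+1},\mathfrak{sp}_{2n}\}$ and a parabolic $I\in\{I_1,I_2\}$ with $f_I(t) = \hat f(t)$ after specialization — realize, as the integers $n, q_1,\dots,q_k$ and $\mathbf c$ vary, a Zariski-dense set of values of $(\hat q,\hat u_1,\dots,\hat u_a)$ compatible with Assumption~\ref{asump} (one needs $2r\le\min q_i$, $n>r$, which only excludes finitely many constraints and does not affect density; the relation $\delta=\alpha\prod \hat u_i$ and $\omega_0=1+z^{-1}(\delta-\delta^{-1})$ cut out exactly the subvariety on which $\mathcal Z$ already lives). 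For each such specialization, Proposition~\ref{keytheorem} says $\bar{\mathbb {NT}}_{2r,0}^a/\!\sim$ is a $\mathbb C(q^{1/2})$-basis of $\Hom_{\CB^{f_I}}(\ob{2r},\ob 0)$, so the specialized $f_d$ all vanish. Hence every $f_d$ vanishes on a Zariski-dense subset of the relevant variety, so $f_d=0$ in $\mathcal Z$.

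To make this precise I would, as in the proof of Theorem~\ref{affbasis}, introduce the cyclotomic Kauffmann category $\CB^{\hat f}_{\mathcal Z}$ over $\mathcal Z$ and note that for any specialization $\mathcal Z\to\mathbb K'$ with $\mathbf u$-admissible image there is a functor $\CB^{\hat f}_{\mathcal Z}\to\CB^{f'}_{\mathbb K'}$ carrying the spanning set $\bar{\mathbb {NT}}_{2r,0}^a/\!\sim$ to the corresponding spanning set; this is where Lemma~\ref{ghom123} is used, to see that each geometric point produced by the parabolic construction does carry an admissible $\omega$ and the correct polynomial $f_I$. Finally, having shown $\bar{\mathbb {NT}}_{2r,0}^a/\!\sim$ is $\mathcal Z$-linearly independent and spanning, it is a $\mathcal Z$-basis; transporting back through Lemma~\ref{etannxs}(3) gives the statement for all $(m,s)$, and the rank count $a^{\frac{m+s}{2}}(m+s-1)!!$ of Theorem~\ref{cycb} follows by counting $\bar{\mathbb {NT}}_{m,s}^a/\!\sim$ via Proposition~\ref{cspan3} and \eqref{c13}.

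The main obstacle I anticipate is the density/compatibility bookkeeping: one must check that the parameters $(\hat q,\hat u_1,\dots,\hat u_a)$ attained by the families in Lemma~\ref{ghom123} — where $u_j=\epsilon_{\mathfrak g}q^{b_j}$ with $b_j$ the explicit integers in \eqref{polf123}--\eqref{polf12}, together with possible extra entries $u_{k+1}, u_{k+2}\in\{1,q^{\pm1}\}$ — actually sweep out a Zariski-dense subset of the subvariety of $(\mathbb{G}_m)^{a+1}$ cut out by $\delta=\alpha\prod\hat u_i$ and the $\omega_0$-relation, subject only to the open conditions $n>r$, $2r\le\min q_i$. Since the $b_j$ depend linearly and surjectively (up to the parity constraints absorbed into $\alpha$ and the fixed entries) on the free parameters $c_1,\dots,c_k$, this should reduce to the observation that a linear family with enough free integer parameters is Zariski-dense in the algebraic torus it lands in; I would carry out this verification case by case in $\mathfrak g$ and in $I\in\{I_1,I_2\}$, handling the even/odd-$a$ distinction exactly as in Lemma~\ref{ghom123}. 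Everything else is a routine repackaging of the argument already given for $\Hom_{\AB}$.
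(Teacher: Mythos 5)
Your proposal is correct and follows essentially the same route as the paper: one obtains spanning from Proposition~\ref{cycba}, then proves $\mathcal Z$-linear independence by pushing a hypothetical relation through the parabolic specializations coming from Lemma~\ref{ghom123} and invoking Proposition~\ref{keytheorem} at each specialized point, with the Zariski-density of the attained parameter values as the crux. The density verification you defer is exactly what the paper supplies: it defines $\phi_a(q_1,\dots,q_k,c_1,\dots,c_{a-k})=\mathbf v_a$, computes $\det J_{\phi_{2k}}=(-1)^k 4^k$ and $\det J_{\phi_{2k-1}}=(-1)^{k-1}2^{2k-1}$ to show $\phi_a$ is dominant (hence $\phi_a(O_a)$ is dense in $\mathbb C^a$), and then specializes $q$ at a non-root-of-unity $q^*$ to transfer density of exponent vectors to density of $\{\epsilon_\mathfrak g (q^*)^{\mathbf v_a}\}$ in $(\mathbb C^*)^a$, forcing every coefficient $g_d$ to vanish.
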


\begin{proof}
We assume $\mfg=\mathfrak{so}_{2n}$ (resp.,  $\mathfrak{sp}_{2n}$)  if $\alpha \in \{\hat q^{-1}, 1\}$ (resp.,  $\alpha \in \{-\hat q, -1\}$), where $n=\sum_{j=1}^k q_j$ such that $2r\le \min\{q_1, q_2, \ldots, q_k\}$ and $k=\lfloor\frac{a-1}{2}\rfloor+1$. We also assume $I=I_1$ if $a=2k$ and $I=I_2$ if $a=2k-1$, where $I_1, I_2$ is given in \eqref{defofpiee}. This enables us to use freely  previous results in section~3 and those in this section.
In any case, $v=q$, where $v$ is the defining parameter for $\U_v(\mathfrak g)$ over $\mathbb F$.
For any $s\geq 1$ and any sequence $\mathbf i=(i_1, i_2, \ldots, i_s)\in \mathbb Z^s$, let  $$q^{\mathbf i}=( q^{i_1},  q^{i_2},\ldots,  q^{i_s}).
$$
For any $a\in \{2k, 2k-1\}$, we specialize  $(\hat{q}, \hat u_1, \ldots, \hat u_a)$  at $(q, \epsilon_\mfg q^{\mathbf b_a})$ where $\mathbf b_{2k}=(b_1,\ldots, b_{2k})$ and $\mathbf b_{2k-1}$ is obtained from $\mathbf b_{2k}$ by removing $b_{k+1}$.
In any case,  $b_j$'s are given in \eqref{polf12}  and $\hat f(t)$ is specialized to $f_{I}(t)$ in Definition~\ref{fi}, respectively.
 Since $\{\tilde{\omega}_j\mid j\in \mathbb Z\}$ is $\hat {\mathbf u}$-admissible, such $\tilde{\omega}_j$'s are uniquely determined by $\hat u_j$'s. So,  $\tilde{\omega}$ is specialized to $\omega$ in Lemma~\ref{ghom123} for $\mfg\in\{\mathfrak{sp}_{2n}, \mathfrak{so}_{2n}\}$ with respect to $\alpha$.

 Since  $\mathbb{C}(q^{1/2})$ is a $\mathcal{Z}$-module on which  $\hat{q}, \hat u_1, \ldots, \hat u_a$ act via $q$ and $\epsilon_\mfg q^{\mathbf{b}_a}$  in an obvious way,
  there is a $\mathcal{Z}$-linear monoidal functor $\mathcal{F}: \AB_{\mathcal{Z}}\rightarrow \AB_{\mathbb{C}(q^{1/2})}$  sending  generators to generators with the same names. Let $J^I$ be the right tensor ideal of $\AB_{\mathbb{C}(q^{1/2})}$ generated by $f_{I}(X)$, $\Delta_j-\omega_j1_{\ob 0}$, $j\in \mathbb{Z}$. Since
  $\mathcal{F}(\Delta_j-\tilde\omega_j1_{\ob 0})=\Delta_j-\omega_j1_{\ob 0}$ for all  $j\in \mathbb{Z}$, and
  $\mathcal{F}(\hat f(X))=f_{I}(X)$,
   $\mathcal{F}$ induces a $\mathcal{Z}$-linear functor $\tilde{\mathcal{F}}: \CB_{\mathcal{Z}}^{\hat f}\rightarrow \CB_{\mathbb{C}(q^{1/2})}^{f_{I}}$, where $\CB_{\mathbb{C}(q^{1/2})}^{f_{I}}=\AB_{\mathbb{C}(q^{1/2})}/J^I$.

Now, we prove  $\sum_{d\in \bar{\mathbb {NT}}_{2r, 0}^a/\sim}  g_{d}d=0$ only if $g_{d}=0$ for all $d$, where $g_{d}\in\mathcal{Z}$.
In fact, we have  $\sum_{d\in \bar{\mathbb {NT}}_{2r, 0}^a/\sim}   g_{d}(q, \epsilon_\mfg q^{\mathbf{b}_a}) d=0$ where  $g_{d}(q, \epsilon_\mfg q^{\mathbf{b}_a})£º=\tilde{\mathcal{F}}( g_{d})$.
By Proposition~\ref{keytheorem}, $g_d(q, \epsilon_\mfg q^{\mathbf{b}_a})=0$ for all $d\in \bar{\mathbb {NT}}_{2r, 0}^a/\sim$.
  Thanks to \eqref{polf12},
  \begin{equation} \label{uij3} b_j=\begin{cases} 2(c_j+\sum_{j\leq l\leq k}q_l)-\epsilon_\mfg, &\text{if $1\le j\le k$}\\
   2(-c_{2k-j+1}-\sum_{k+2\leq l\leq j}q_{2k-l+2})+\epsilon_\mfg ,  & \text{if $k+1\leq j\leq 2k$.}\\ \end{cases}
   \end{equation}
 So,  we can view $ b_j$'s as polynomials of $q_j$'s and  $c_l$'s, where $1\leq j\leq k$ and  $ 1\leq l\leq k$ (resp., $1\leq l\leq k-1 $) if $a=2k$ (resp., $a=2k-1$). Let
 \begin{equation} \label{ua} \mathbf v_a=\begin{cases} (b_1,\ldots,b_k,b_{2k}, \ldots, b_{a-k+1}), & \text{if $a=2k$,}\\
   (b_1,\ldots,b_k,b_{2k}, \ldots, b_{a-k+3}), & \text{if $a=2k-1$.}\\
   \end{cases}
 \end{equation}
 Let $\phi_a: \mathbb C^a\rightarrow \mathbb C^a$ be the morphism such that
 $\phi_a(q_1, \ldots,q_k, c_1,\ldots, c_{a-k})= \mathbf v_a$. Thanks to \eqref{uij3}, the  Jacobi matrix  $J_{\phi_a}$ of $\phi_a$ satisfies
$$J_{\phi_{2k}}=\begin{pmatrix}
    H_{k} & 2E \\
    -H_k+2 E & -2 E \\
  \end{pmatrix}
,$$
where $E$ is the  $k\times k$  identity matrix  and $H_k$ is the upper-triangular matrix such that the $(l,j)$th entry is $2$ for all admissible  $j\ge l$.
The  Jacobi matrix $J_{\phi_{2k-1}}$ is obtained from $J_{\phi_{2k}}$ by deleting its  $2k$th  row and $2k$th column.
It is routine to check that   $$\text{det}J_{\phi_{2k}}=(-1)^{k}4^{k} \text{ and $\text{det}J_{\phi_{2k-1}}=(-1)^{k-1}2^{2k-1}$.}$$ So,  $\phi_a$ is always  dominant.
Define
   $$O_a=\{(q_1, \ldots,q_k, c_1,\ldots, c_{a-k})\mid c_t, q_j\in \mathbb Z \text{ and }q_j\geq 2r, 1\leq j\leq k, 1\leq t\leq a-k\}. $$
 Then  $O_a$   is Zariski dense in $\mathbb C^a$.
 Since $\phi_a$ is  dominant, $\phi_a(O_a)$ is Zariski dense in $\mathbb C^a$.
  For any $a\in \{2k, 2k-1\}$ define $\phi(O_a)_q=\{\epsilon_\mfg q^{\mathbf v_a}  \mid \mathbf v_a\in \phi_a(O_a)\}$.
 Suppose $0\neq q^*\in \mathbb C$ such that  $q^*$ is not  a  root of $1$.
Specializing   $q$ at $q^*$  yields  a bijection between $\phi_a(O_a)_{q*}$ and  $\phi_a(O_a)$. This shows that  $\phi_a(O_a)_{q*}$ is Zariski dense in $\mathbb C^a$.  This observation together with $g_d(q, \epsilon_\mfg q^{\mathbf{b}_a})=0$ yield $g_d=0$. So,    $\bar{\mathbb {NT}}_{2r, 0}^a/\sim$ is $\mathcal  Z$-linear independent.  Now,
the result follows immediately from  Proposition~\ref{cycba}.
\end{proof}

\begin{proof}[\textbf{Proof of Theorem~\ref{cycb}}]If $m+s$ is odd , then $\Hom_{\CB^f}(\ob m, \ob s)=0$. So, we assume   $m+s=2r$ for some   $r\in\mathbb N$.

$``{\Longleftarrow}"$: By Theorem~\ref{calz}, $\Hom_{\CB^{\hat f}}(\ob 2r, \ob 0)$ has $\mathcal Z$-basis given by   $\bar{\mathbb {NT}}_{2r, 0}$. By  arguments on base change (see, e.g in \cite{RS3}), we see  that $\bar{\mathbb {NT}}_{2r, 0}^a/\sim$ is linear independent over $\mathbb K$.
Thanks to Proposition~~\ref{cycba},  $\Hom_{\CB^{\hat f}}(\ob 2r, \ob 0)$
has $\mathbb K$-basis given by  $\bar{\mathbb {NT}}_{2r, 0}^a/\sim$. In general, the result follows from
 Lemma~\ref{etannxs}(3).

$``\Longrightarrow":$ At the beginning of this section,  we have explained that there is an algebra epimorphism  $ \bar \gamma: W_{a, r}\rightarrow \End_{\CB^f}(\ob r)$, where $W_{a, r}$ is the  cyclotomic Birman-Wenzl-Murakami algebra.
Goodman~\cite{G09} has proved that $W_{a, r}$ is always free over $\mathbb K$  with rank $c^r ((2r-1)!!-r!)+a^rr!$, where
$c$ is the minimal positive integer such that $ e_1f(x_1)=0$ and $c=\text{deg} f(x_1)\le a$. Since  $ \bar \gamma$ is an epimorphism and  $\text{rank}( \End_{\CB^f}(\ob r))=a^r (2r-1)!!$, we have $c=a$. This shows that  $e_1, e_1x_1, \ldots, e_1x_1^{a-1}$ is $\mathbb K$-linear independent. Otherwise, we can find
a $f(x)$ such that $c=\text{deg} f(x_1)< a$, and $\text{rank}(W_{a, r})=c^r ((2r-1)!!-r!)+a^rr!<a^r (2r-1)!!$,  a contradiction.  So, $W_{a, r}$ is admissible in the sense of  \cite[Corollary~4.5]{WYu}.  Goodman~\cite{G09} proved  that $W_{a, r}$ is admissible if and only if $\omega$ is $\mathbf u$-admissible in our sense.
  This completes the proof.
  \end{proof}

The following result follows immediately from arguments above.
\begin{Cor} Keep the Assumption~\ref{asump}. Suppose $\omega$ is $\mathbf u$-admissible in the sense of Definition~\ref{uad}.
 Then    $ W_{a, r}
\cong \End_{\CB^f}(\ob r)$ as $\mathbb K$-algebras.
\end{Cor}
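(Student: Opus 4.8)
The plan is to show that the surjective $\mathbb K$-algebra homomorphism
$\bar\gamma\colon W_{a,r}\twoheadrightarrow \End_{\CB^f}(\ob r)$ constructed at the beginning of Section~6 is in fact bijective, and to deduce this purely from rank considerations once $\omega$ is $\mathbf u$-admissible. First I would record the two relevant freeness statements. On the target side, Theorem~\ref{cycb}(2) applied with $m=s=r$ gives (since $\omega$ is $\mathbf u$-admissible) that $\End_{\CB^f}(\ob r)=\Hom_{\CB^f}(\ob r,\ob r)$ is free over $\mathbb K$ with basis $\bar{\mathbb {NT}}_{r,r}^a/\sim$, hence of rank $a^{r}(2r-1)!!$. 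On the source side, by Goodman's theorem the cyclotomic Birman--Murakami--Wenzl algebra $W_{a,r}$ is free over $\mathbb K$ of rank $c^{r}\big((2r-1)!!-r!\big)+a^{r}r!$, where $c\le a$ is the least positive integer with $e_1f(x_1)=0$ and equals $\deg f(x_1)$ after cancelling common factors; these are exactly the facts already invoked in the proof of Theorem~\ref{cycb}.

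Next I would pin down $c=a$. Because $\bar\gamma$ is surjective, $\operatorname{rank}_{\mathbb K}W_{a,r}\ge \operatorname{rank}_{\mathbb K}\End_{\CB^f}(\ob r)=a^{r}(2r-1)!!$; on the other hand $c^{r}\big((2r-1)!!-r!\big)+a^{r}r!\le a^{r}(2r-1)!!$ with equality if and only if $c=a$. Hence $c=a$ and $\operatorname{rank}_{\mathbb K}W_{a,r}=a^{r}(2r-1)!!$. This is the same bookkeeping already carried out inside the proof of Theorem~\ref{cycb}, so I would simply quote it. Thus $\bar\gamma$ is a surjection of free $\mathbb K$-modules of the same finite rank $n:=a^{r}(2r-1)!!$.

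Finally I would conclude that such a surjection is automatically injective. Since $\End_{\CB^f}(\ob r)$ is free, hence projective, $\bar\gamma$ admits a $\mathbb K$-linear section $s$, so $W_{a,r}=s\big(\End_{\CB^f}(\ob r)\big)\oplus\ker\bar\gamma$ with $s\big(\End_{\CB^f}(\ob r)\big)$ free of rank $n$; as $W_{a,r}$ is torsion-free over the integral domain $\mathbb K$ and has rank $n$ over the fraction field of $\mathbb K$, the summand $\ker\bar\gamma$ is torsion-free of generic rank $0$, whence $\ker\bar\gamma=0$. Therefore $\bar\gamma$ is an isomorphism of $\mathbb K$-algebras. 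I expect no genuine obstacle here: all the substantive input is already established in Sections~5--6, and the remaining work is only the elementary linear-algebra fact about surjections between equal-rank free modules over a domain. The one point requiring care is to keep Assumption~\ref{asump} in force (so that $\CB^f$ and $W_{a,r}$ are the intended objects) and to note that the hypothesis of $\mathbf u$-admissibility is genuinely used, since without it Theorem~\ref{cycb}(2) fails and the target need not be free of the expected rank.
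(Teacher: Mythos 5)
Your proposal is correct and amounts to precisely the reading of ``follows immediately from arguments above'' that the paper intends: combine the surjection $\bar\gamma\colon W_{a,r}\twoheadrightarrow\End_{\CB^f}(\ob r)$ with the two rank computations — Theorem~\ref{cycb}(2) (using $\mathbf u$-admissibility) for the target, and Goodman's formula plus his admissibility criterion for the source — and conclude via the standard fact that a surjection between free $\mathbb K$-modules of equal finite rank over an integral domain is an isomorphism. One small polish: your rank inequality forces $c=a$ only for $r\ge 2$ (when $r=1$ the term $(2r-1)!!-r!$ vanishes, so the inequality is trivially an equality for every $c$); the cleaner and uniform way to get $c=a$ is to invoke directly, as the paper does, Goodman's equivalence between $\mathbf u$-admissibility of $\omega$ and admissibility of $W_{a,r}$ (that is, linear independence of $e_1,e_1x_1,\ldots,e_1x_1^{a-1}$), which gives $c=a$ without appealing to $\bar\gamma$ — though for $r=1$ the rank identity $\operatorname{rank}W_{a,1}=a$ holds regardless, so your conclusion is unaffected.
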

\begin{appendix}
\section{Proof of Proposition~\ref{commut1}}

The aim of this section is to prove Proposition~\ref{commut1}.
  More explicitly, we need to prove  $c_{\mathbf r}^\pm \in \mathcal A_{|\mathbf r|-1}$, where $c_{\mathbf r}^{\pm }$ are  given in Lemma~ \ref{commut}.
  Via   the $\mathbb C$-linear anti-automorphism $\bar{\tau}$ of $\U_v(\mathfrak g)$ and \eqref{switch}, it is enough for us to deal with  $c_{\mathbf r}^{-}$.  So, we
  compute $[x_\nu^-, x_\alpha^-]_v=x_\nu^-x_{\alpha}^--v^{(\nu\mid \alpha)} x_{\alpha}^-x_{\nu}^-$ for any positive roots $\alpha, \nu\in \mathcal R^+$ such that $\nu<\alpha$. Our arguments depend on minimal pairs in Corollary~\ref{equa} with respect to
 the convex order in \eqref{ord}.
 For such a pair $\alpha$ and $\nu$, there  are four cases we have to consider:
\begin{itemize}\item[(1)] $\alpha+\nu\in \mathcal R^+$ and $(\alpha,\nu)$ is a minimal pair.
\item[(2)] $\alpha+\nu\in \mathcal R^+$ and $\{\alpha, \nu\}$ is not a minimal pair.
\item [(3)]  $\alpha+\nu\notin \mathcal R^+$ and there are $\beta, \gamma\in \mathcal R^+$ such that $\alpha>\beta\geq \gamma>\nu$ and $\alpha+\nu=\beta+\gamma$.
\item [(4)]  $\alpha+\nu\notin \mathcal R^+$ and there are no  $\beta, \gamma\in \mathcal R^+$ such that $\alpha>\beta\ge \gamma>\nu$ and $\alpha+\nu=\beta+\gamma$.
    \end{itemize}

In case (1),  $(\alpha,\nu)$ is one of minimal pairs  in Corollary~\ref{equa}.
In Lemma~A.1-Lemma~A.3, We give  detailed information on  $\{\alpha, \nu\}$ which appears in (2)-(4).

\begin{Lemma}\label{equal}
Suppose $\alpha, \beta, \gamma, \nu\in \mathcal R^+$  such that $\alpha>\beta>\gamma>\nu$ and
 $\alpha+\nu=\beta+\gamma\in\mathcal{R}^+$. Then $\alpha+\nu\in\{\epsilon_i+\epsilon_j, 2\epsilon_k\mid i<j, k< n-1\}$. Further,
\begin{itemize}\item[(1)]If $\alpha+\nu=\epsilon_i+\epsilon_j$, then $\alpha\in\{\epsilon_j-\epsilon_k, \epsilon_j+\epsilon_l, \epsilon_j, 2\epsilon_j\mid k>j+1, l> j\}$. In this case,
\begin{enumerate}\item $\beta= \epsilon_j-\epsilon_t$, $j<t<k$, if $\alpha=\epsilon_j-\epsilon_k$,\item
  $\beta=\epsilon_j-\epsilon_f$, $j< f\leq n$,  if $\alpha=\epsilon_j, 2\epsilon_j$,
  \item   $\beta\in\{\epsilon_j+\epsilon_s, \epsilon_j-\epsilon_t, \epsilon_j\mid s>l, t>j\}$, if $\alpha=\epsilon_j+\epsilon_l$.\end{enumerate}
\item[(2)]If $\alpha+\nu=2\epsilon_i$,  then $\alpha\in\{\epsilon_i+\epsilon_j \mid i<j<n\}$. In this case,  $\beta=\epsilon_i+\epsilon_k$, where $j<k\leq n$.
\end{itemize}
\end{Lemma}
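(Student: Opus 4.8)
The plan is to prove this by a direct finite case analysis driven by the explicit list of positive roots \eqref{typeb} and the explicit convex order \eqref{ord}. Write $\mu=\alpha+\nu=\beta+\gamma$. The first step is a reduction: by the defining property of a convex order \cite{Pa}, if $\alpha_1<\alpha_2$ are positive roots with $\alpha_1+\alpha_2\in\mathcal R^+$ then $\alpha_1<\alpha_1+\alpha_2<\alpha_2$; applying this to both decompositions of $\mu$ and combining with the hypothesis $\alpha>\beta>\gamma>\nu$ forces the total ordering $\nu<\gamma<\mu<\beta<\alpha$. In particular $\alpha$ is the largest and $\nu$ the smallest of the four roots, and $\mu$ lies strictly between $\gamma$ and $\beta$; this is what I will use to line up the summands of each decomposition with $\alpha,\beta,\gamma,\nu$.

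Next I would pin down $\mu$. Since $\mu$ is a sum of two positive roots, inspection of \eqref{typeb} shows $\mu$ is of the form $\epsilon_i-\epsilon_j$ with $j>i+1$, or $\epsilon_i+\epsilon_j$ with $i<j$, or (in type $B$) $\epsilon_i$ with $i<n$, or (in type $C$) $2\epsilon_i$ with $i<n$ — the extreme roots $\epsilon_i-\epsilon_{i+1}$, $\epsilon_n$, $2\epsilon_n$ being simple. For $\mu=\epsilon_i-\epsilon_j$ every decomposition is $(\epsilon_i-\epsilon_m)+(\epsilon_m-\epsilon_j)$ with $i<m<j$, and for $\mu=\epsilon_i$ (type $B$) it is $(\epsilon_i-\epsilon_m)+\epsilon_m$ with $i<m\le n$; in either case, by \eqref{ord} the first summand sits in the $i$th block of roots and the second in a strictly later block, so the first is $\nu$ and the second is $\alpha$. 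Writing the two decompositions of $\mu$ with indices $m_1$ (for $\alpha,\nu$) and $m_2$ (for $\beta,\gamma$) and reading off \eqref{ord}, the relation $\alpha>\beta$ gives $m_1>m_2$ while $\gamma>\nu$ gives $m_2>m_1$, a contradiction. Hence $\mu\in\{\epsilon_i+\epsilon_j\}\cup\{2\epsilon_k\}$, and for $\mu=2\epsilon_k$ a length-four chain needs two distinct decompositions $(\epsilon_k+\epsilon_m)+(\epsilon_k-\epsilon_m)$ with $k<m\le n$, hence two admissible values of $m$, which forces $k\le n-2$, i.e. $k<n-1$.

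Finally, for $\mu=\epsilon_i+\epsilon_j$ (and, as a warm-up, the $2\epsilon_k$ case, which reproduces part (2)) I would enumerate all decompositions $\mu=\alpha+\nu$, sort them by \eqref{ord}, and extract those admitting a further root strictly between $\alpha$ and $\nu$. The decompositions are $(\epsilon_i-\epsilon_m)+(\epsilon_j+\epsilon_m)$ and $(\epsilon_j-\epsilon_m)+(\epsilon_i+\epsilon_m)$ with $m>j$, $(\epsilon_i-\epsilon_m)+(\epsilon_m+\epsilon_j)$ with $i<m<j$, the pair $(\epsilon_j-\epsilon_{j+1})+(\epsilon_i+\epsilon_{j+1})$, the type-$B$ decomposition $\epsilon_i+\epsilon_j$ into two short roots, and the type-$C$ ones involving $2\epsilon_j$ or $2\epsilon_i$. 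Using \eqref{ord} one sees the largest summand $\alpha$ is necessarily one of $\epsilon_j-\epsilon_k$ ($k>j+1$), $\epsilon_j+\epsilon_l$ ($l>j$), $\epsilon_j$ (type $B$), or $2\epsilon_j$ (type $C$), and then for each fixed $\alpha$ the intermediate root $\beta$ — which must be the larger summand in some other decomposition of $\mu$ lying below $\alpha$ — runs through exactly the values listed in (1)(a)--(c) and (2). Each comparison here is decided directly by \eqref{ord}, and the endpoints can be cross-checked against the lists of minimal pairs in Corollary~\ref{equa}.

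The main obstacle is the bookkeeping in this last step: there are several shapes of $\alpha$, each with several candidate $\beta$'s, and the comparisons most prone to error are those where \eqref{ord} reverses the naive order on the second index (since $\epsilon_i+\epsilon_k<\epsilon_i+\epsilon_{k-1}$) and those where $j$, $k$, or $l$ equals the boundary value $n-1$ or $n$ — which is precisely where the three types $B$, $C$, $D$ diverge. I would therefore carry the three types along in parallel, isolating the finitely many boundary comparisons and checking them individually.
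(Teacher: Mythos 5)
Your proposal is correct and follows essentially the same route as the paper: enumerate the decompositions of $\mu$ into two positive roots, use the convexity property to force the total order $\nu<\gamma<\mu<\beta<\alpha$, rule out $\mu\in\{\epsilon_i-\epsilon_j,\epsilon_k\}$ by the index-reversal contradiction, and read off the allowed $(\alpha,\beta)$ pairs from \eqref{ord} block by block. One small slip: you list a type-$C$ decomposition of $\epsilon_i+\epsilon_j$ "involving $2\epsilon_i$", but $\epsilon_i+\epsilon_j-2\epsilon_i=\epsilon_j-\epsilon_i$ is a negative root when $i<j$, so no such decomposition exists; this does not affect the overall argument since your plan would discard it anyway.
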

\begin{proof} We decompose a positive root $\mu$  into a summation of two positive roots as follows. If  $\mu\in \{\epsilon_i-\epsilon_j,  \epsilon_i+\epsilon_j\}$ and  $i<j$, we have
\begin{itemize}\item$\epsilon_i-\epsilon_j=(\epsilon_m-
\epsilon_j)+(\epsilon_i-\epsilon_m), i<m<j$,
\item$\epsilon_i+\epsilon_j=(\epsilon_j-\epsilon_k)+(\epsilon_i+\epsilon_k)=(\epsilon_j+\epsilon_l)+(\epsilon_i-\epsilon_l)=(\epsilon_i)+(\epsilon_j)=(2\epsilon_j)+(\epsilon_i-\epsilon_j), j<k$, $i<l\neq j$.
\end{itemize}
Suppose $\mu\in \{\epsilon_i, 2\epsilon_i\}$. Then $\epsilon_i=(\epsilon_k)+
(\epsilon_i-\epsilon_k)$ and
$2\epsilon_i=(\epsilon_i+\epsilon_k)+(\epsilon_i-\epsilon_k)$. Thanks to \eqref{ord}, $\alpha+\nu\notin\{ \epsilon_i-\epsilon_j, \epsilon_k\}$ for all admissible $i, j, k$.
Now,  (1)-(2) follow from \eqref{ord}.  \end{proof}

\begin{Lemma}\label{equal2}Suppose  $\alpha, \beta, \gamma, \nu\in \mathcal R^+$  such that $\alpha>\beta\geq \gamma>\nu$
and $\alpha+\nu=\beta+\gamma\not\in\mathcal{R}^+$. Then one of (a)-(j) holds:
\begin{itemize}\item[(a)]$(\alpha, \nu, \beta, \gamma)=(\epsilon_k-\epsilon_l, \epsilon_i-\epsilon_j, \epsilon_k-\epsilon_j, \epsilon_i-\epsilon_l)$, $i<k<j<l$,
\item[(b)]$(\alpha, \nu, \beta, \gamma)=(\epsilon_k+\epsilon_j, \epsilon_i+\epsilon_l, \epsilon_k+\epsilon_l, \epsilon_i+\epsilon_j)$, $i<k<j<l$,
\item[(c)]$(\alpha, \nu, \beta, \gamma)=(\epsilon_k+\epsilon_j, \epsilon_i-\epsilon_l, \epsilon_k-\epsilon_l, \epsilon_i+\epsilon_j)$, $i<k<j$, $i<k<l$  and $j\neq l$,
\item[(d)]$(\alpha, \nu, \beta, \gamma)=(\epsilon_k, \epsilon_i-\epsilon_j, \epsilon_k-\epsilon_j, \epsilon_i)$, $i<k<j$,
\item[(e)]$(\alpha, \nu, \beta, \gamma)=(\epsilon_k+\epsilon_j, \epsilon_i, \epsilon_k, \epsilon_i+\epsilon_j)$, $i<k<j$,
\item[(f)]$(\alpha, \nu, \beta, \gamma)=(2\epsilon_k, \epsilon_i-\epsilon_j, \epsilon_k-\epsilon_j, \epsilon_i+\epsilon_k)$, $i<k<j$,
\item[(g)]$(\alpha, \nu, \beta, \gamma)=(\epsilon_k+\epsilon_j, 2\epsilon_i, \epsilon_i+\epsilon_k, \epsilon_i+\epsilon_j)$, $i<k<j$,
\item[(h)]$(\alpha, \nu, \beta, \gamma)=(2\epsilon_k, 2\epsilon_i, \epsilon_i+\epsilon_k, \epsilon_i+\epsilon_k)$, $i<k$,
\item[(i)] $(\alpha, \nu, \beta, \gamma)=(\epsilon_i+\epsilon_j, \epsilon_i-\epsilon_j, \epsilon_i+\epsilon_k, \epsilon_i-\epsilon_k)$, $i<j<k$. In this case, $\mfg\in \{\mathfrak{so}_{2n}, \mathfrak{so}_{2n+1}\}$.
\item[(j)]$(\alpha, \nu, \beta, \gamma)=(\epsilon_i+\epsilon_j, \epsilon_i-\epsilon_j, \epsilon_i, \epsilon_i)$. In this case, $\mfg=\mathfrak{so}_{2n+1}$.
\end{itemize}
\end{Lemma}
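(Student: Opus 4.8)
The plan is to establish Lemma~\ref{equal2} by a systematic case analysis, entirely parallel to the proof of Lemma~\ref{equal}, but now tracking the arithmetic constraint $\alpha+\nu=\beta+\gamma\notin\mathcal R^+$ rather than $\in\mathcal R^+$. First I would record that since $\alpha,\nu\in\mathcal R^+$ and $\alpha+\nu\notin\mathcal R^+$, the sum $\alpha+\nu$ must be one of the finitely many weights of the form $\epsilon_i+\epsilon_j$ (for $i=j$ allowed when $\mfg=\mathfrak{sp}_{2n}$, i.e.\ $2\epsilon_k$), $\epsilon_i-\epsilon_j$, $\epsilon_i$, or $0$, that are \emph{not} roots; equivalently $\alpha+\nu$ lies in $\{\epsilon_i+\epsilon_j : i\le j\}\setminus\mathcal R^+$ together with $\{2\epsilon_i : i\le n-1\}$ when $\mfg\ne\mathfrak{sp}_{2n}$ etc. The key reduction is that $\alpha$ and $\nu$ are positive roots with $\nu<\alpha$ in the convex order \eqref{ord}, so $\alpha\succ\nu$ in every occurring decomposition; combined with $\wt$-additivity this pins down the first coordinate patterns.

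The main body of the argument is: enumerate all ways of writing a fixed non-root weight $\mu=\alpha+\nu$ as a sum of two positive roots $\alpha>\nu$, and in each such case enumerate the intermediate decompositions $\mu=\beta+\gamma$ with $\alpha>\beta\ge\gamma>\nu$. I would organise this by the ``type'' of $\mu$: (i) $\mu=\epsilon_k+\epsilon_j$ with $k<j$ but $\epsilon_k+\epsilon_j$ not a root — this cannot happen for $k<j$, so actually the relevant non-root sums are those where the two $\epsilon$-indices coincide or the decomposition forces a clash; more carefully, the non-root sums arising as $\alpha+\nu$ are $\epsilon_k-\epsilon_j$ with the ``wrong'' crossing pattern, $2\epsilon_k$ when $2\epsilon_k\notin\mathcal R^+$ (i.e.\ $\mfg\ne\mathfrak{sp}_{2n}$), $\epsilon_k$ when $\epsilon_k\notin\mathcal R^+$ (i.e.\ $\mfg\ne\mathfrak{so}_{2n+1}$ is false — it is a root there — so this is vacuous), and $0$ or $2\epsilon_i$-type clashes. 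I would then go through the list $\epsilon_i\pm\epsilon_j$, $2\epsilon_i$ for the value of $\mu$, using the explicit decomposition formulas from the proof of Lemma~\ref{equal} ($\epsilon_i-\epsilon_j=(\epsilon_m-\epsilon_j)+(\epsilon_i-\epsilon_m)$, $\epsilon_i+\epsilon_j=(\epsilon_j-\epsilon_k)+(\epsilon_i+\epsilon_k)=\cdots$, and $2\epsilon_i=(\epsilon_i+\epsilon_k)+(\epsilon_i-\epsilon_k)$), and impose both the ordering $\alpha>\beta\ge\gamma>\nu$ from \eqref{ord} and the non-root condition. Cases (a)--(c) come from $\mu=\epsilon_k+\epsilon_j$ expressed via two different ``$\pm$'' splits that fail to be roots; (d)--(e) from $\mu$ involving a short root $\epsilon_k$ (so $\mfg\in\{\mathfrak{so}_{2n+1}\}$ is excluded in some sub-cases, or forced in others); (f)--(h) from $\mu=2\epsilon_k$ (so $\mfg\ne\mathfrak{sp}_{2n}$, or the $2\epsilon_i$ on the $\nu$ side forces $\mfg=\mathfrak{sp}_{2n}$); (i)--(j) from $\mu=2\epsilon_i$ written as $(\epsilon_i+\epsilon_j)+(\epsilon_i-\epsilon_j)$ — here $2\epsilon_i$ being a root would contradict $\alpha+\nu\notin\mathcal R^+$ unless $\mfg\ne\mathfrak{sp}_{2n}$, giving the type restrictions stated; case (j) specifically needs a decomposition into $\epsilon_i+\epsilon_i$ which only makes sense as $(\epsilon_i)+(\epsilon_i)$, requiring $\epsilon_i\in\mathcal R^+$, i.e.\ $\mfg=\mathfrak{so}_{2n+1}$.

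For each candidate quadruple $(\alpha,\nu,\beta,\gamma)$ I would verify the strict/weak inequalities against \eqref{ord} and discard those violating $\alpha>\beta\ge\gamma>\nu$, and I would check that no decomposition has been missed by a counting argument: the number of ordered pairs $(\beta,\gamma)$ with $\beta+\gamma=\mu$, $\beta\ge\gamma$, is finite and each is either a root-pair (already handled in Lemma~\ref{equal}) or appears in the list (a)--(j). The expected main obstacle is purely bookkeeping: the convex order \eqref{ord} is different for $\mathfrak{so}_{2n}$, $\mathfrak{so}_{2n+1}$, $\mathfrak{sp}_{2n}$, and the short-root/long-root distinction interacts with which sums $2\epsilon_i$, $\epsilon_i$ are or are not roots, so ensuring \emph{completeness} of the case list (no missing quadruple) across all three Lie types — and correctly attaching the type restrictions in (i) and (j) — is the delicate part. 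I would cross-check the final list against the subsequent use of this lemma in Appendix~A (the computation of $c_{\mathbf r}^-$) to confirm every needed quadruple is present.
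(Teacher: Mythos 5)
There is a genuine gap, and it is fatal to the whole case analysis. Your opening reduction asserts that since $\alpha,\nu\in\mathcal R^+$ and $\alpha+\nu\notin\mathcal R^+$, the sum $\alpha+\nu$ must be a weight of the form $\epsilon_i\pm\epsilon_j$, $\epsilon_i$, $2\epsilon_i$, or $0$, i.e.\ a weight of ``root shape'' (support at most two $\epsilon$'s). That is false. The sum of two positive roots $\alpha=\pm\epsilon_a\pm\epsilon_b$ (or $\epsilon_a,2\epsilon_a$) and $\nu=\pm\epsilon_c\pm\epsilon_d$ is an arbitrary weight $\sum_l b_l\epsilon_l$ with $\sum_l|b_l|\le 4$, and it can perfectly well have support $3$ or $4$: e.g.\ $\alpha=\epsilon_k-\epsilon_l$, $\nu=\epsilon_i-\epsilon_j$ with $\{i,j,k,l\}$ distinct gives $\alpha+\nu=\epsilon_i+\epsilon_k-\epsilon_j-\epsilon_l$. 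Consequently the families (a), (b), (c), (f), (g) of the lemma — and also (d), (e), whose sums have support $3$ — never arise in your enumeration of $\mu=\alpha+\nu$. You later claim that ``(a)--(c) come from $\mu=\epsilon_k+\epsilon_j$ expressed via two different $\pm$ splits,'' but in (a) one has $\alpha+\nu=\epsilon_i+\epsilon_k-\epsilon_j-\epsilon_l$, not a root. Only (h), (i), (j) have support $\le 2$, so your restriction drops seven of the ten cases.

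The paper's proof (which is brief) handles this correctly: write $\alpha+\nu=\sum_l b_l\epsilon_l$ and observe that $\sum_l|b_l|\le 4$, then list all possible shapes. These split into a set $I_1$ (weights such as $3\epsilon_i\pm\epsilon_k$, $2\epsilon_i-\epsilon_k-\epsilon_j$, etc.) that admit a \emph{unique} unordered decomposition into two positive roots — so they cannot arise here, since $(\alpha,\nu)\neq(\beta,\gamma)$ forces two distinct decompositions — and a set $I_2$ whose elements are then treated case by case against the convex order \eqref{ord} to produce exactly (a)--(j). Your proposal lacks both the correct a priori range for $\alpha+\nu$ and this uniqueness-of-decomposition argument for discarding the $I_1$ shapes, so as written it cannot establish the completeness of the list.
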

\begin{proof}
 We  write $\alpha+\nu=\sum_{l=1}^nb_l\epsilon_l$ since we are assuming   $\alpha, \nu\in \mathcal R^+$.   Note that $\alpha+\nu\not\in\mathcal R^+$. By \eqref{typeb}
 $\alpha+\nu\in I_1\cup I_2$, where $$I_1=\{3\epsilon_i\pm\epsilon_k, 2\epsilon_i-\epsilon_k-\epsilon_j, \epsilon_i+\epsilon_k-2\epsilon_j,  2\epsilon_i\pm\epsilon_l\}$$ and $$\begin{aligned} I_2= \{& \epsilon_k+\epsilon_i-\epsilon_l-\epsilon_j, \epsilon_k+\epsilon_i+\epsilon_l-\epsilon_j,\epsilon_k+\epsilon_i+\epsilon_l+\epsilon_j,\\ &  2\epsilon_i+\epsilon_k+\epsilon_j, 2\epsilon_i+\epsilon_k-\epsilon_j,   2\epsilon_k+2\epsilon_i,  \epsilon_k+\epsilon_i+\epsilon_j, \epsilon_k+\epsilon_i-\epsilon_l, 2\epsilon_i\}.\end{aligned}$$
 Since any element in $I_1$ can be uniquely decomposed into the summation of two positive roots, $\alpha+\nu\notin I_1$. When $\alpha+\nu\in I_2$, one can check (a)-(j) via \eqref{ord} directly. For example, if $\alpha+\nu=\epsilon_k+\epsilon_i-\epsilon_l-\epsilon_j$, we can assume $\alpha=\epsilon_k-\epsilon_l$ and $\nu=\epsilon_i-\epsilon_j$  without loss of any generality.
 In this case, if $\alpha+\nu=\beta+\gamma$ satisfying  $\alpha>\beta\geq \gamma>\nu$, then $\beta=\epsilon_k-\epsilon_j$, $\gamma=\epsilon_i-\epsilon_j$. and $i<k<j<l$. This verifies (a). Since  (b)-(j) can be checked in a similar way, we omit details.
\end{proof}

\begin{Lemma}\label{equal1}Suppose  $\alpha, \nu\in \mathcal R^+$ and $\alpha>\nu$. If $\alpha+\nu=\sum_{i=1}^j\gamma_i$, for some $j\ge 3$ and  $  \gamma_i\in \mathcal R^+$,   such that   $\alpha>\gamma_i>\nu$,     then $\{\alpha,\nu\}$ is one of  pairs in Lemma~\ref{equal2}(f)-(h). In this case, $\mathfrak g=\mathfrak{sp}_{2n}$.
\end{Lemma}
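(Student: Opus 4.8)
The plan is to prove this by a finite case analysis driven by the explicit list of positive roots \eqref{typeb} and the convex order \eqref{ord}, after first cutting down the possibilities for $\alpha+\nu$. Write $\alpha+\nu=\sum_{l=1}^{j}\gamma_l$ as in the statement, and let $c(\mu)$ denote the sum of the $\epsilon$-coordinates of a weight $\mu$. Every positive root has $c(\mu)\in\{0,1,2\}$, with $c(\mu)=0$ exactly for the difference-type roots $\epsilon_a-\epsilon_b$, so $c(\alpha)+c(\nu)=\sum_l c(\gamma_l)\le 4$; in particular at most four of the $\gamma_l$ have nonzero coordinate sum. Moreover $\alpha+\nu$ has support in at most four coordinates with total absolute coefficient at most $4$ (again from \eqref{typeb}), which forces the difference-type summands among the $\gamma_l$ to telescope: organizing them into maximal chains $\epsilon_{a_0}-\epsilon_{a_1},\ \epsilon_{a_1}-\epsilon_{a_2},\dots$ and collapsing each chain to $\epsilon_{a_0}-\epsilon_{a_m}$, one rewrites $\alpha+\nu$ as a sum of at most three positive roots, each of which I will argue still lies in the open interval $(\nu,\alpha)$.

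Next I would invoke Lemma~\ref{equal} and Lemma~\ref{equal2}. If the collapsed decomposition uses exactly two positive roots strictly between $\nu$ and $\alpha$ whose sum is $\alpha+\nu$, then either $\alpha+\nu\in\mathcal R^+$, putting us in the situation of Lemma~\ref{equal}, or $\alpha+\nu\notin\mathcal R^+$ and $(\alpha,\nu,\beta,\gamma)$ is one of the ten families (a)--(j) of Lemma~\ref{equal2}. For the case $\alpha+\nu\in\mathcal R^+$ and for families (a)--(e), (i), (j), one checks, root by root using \eqref{ord}, that no third root of $(\nu,\alpha)$ can be split off without leaving the interval: concretely, enumerate the roots $\mu$ with $\nu<\mu<\alpha$ and verify that none of the needed further decompositions $\beta=\mu'+\mu''$ or $\gamma=\mu'+\mu''$ (nor a decomposition directly of $\alpha+\nu$ into three in-range roots) keeps both pieces inside $(\nu,\alpha)$; this excludes $j\ge 3$ for all these cases. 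The surviving possibilities are precisely families (f)--(h) of Lemma~\ref{equal2}, in each of which $\alpha$ or $\nu$ equals a long root $2\epsilon_\bullet$ and hence $\mathfrak g=\mathfrak{sp}_{2n}$. For these I would record the explicit three-term decompositions, e.g. $2\epsilon_k+2\epsilon_i=(\epsilon_i+\epsilon_k)+(\epsilon_k-\epsilon_l)+(\epsilon_i+\epsilon_l)$ with $k<l\le n$ for (h), and the analogous ones for (f) and (g), and note that a fourth in-range summand is impossible by the bound $c(\alpha)+c(\nu)\le 4$.

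The main obstacle will be the bookkeeping in the middle step: there are many near-miss configurations where $\alpha+\nu$ does split into three positive roots but one or two of them coincide with $\alpha$ or $\nu$ (violating strictness), or where only a two-term decomposition actually survives, and each of the families (a)--(e), (i), (j) must be cleared individually against the ordering in \eqref{ord}. A secondary point requiring care is the telescoping reduction: one must check that collapsing a chain of difference-type summands does not push the collapsed root outside $(\nu,\alpha)$, which uses that $\nu$ and $\alpha$ occupy extreme positions in the chosen convex order relative to that chain. Once the verification is organized by the value of $c(\alpha)+c(\nu)\in\{2,3,4\}$ and by the type ($\mathfrak{so}$ versus $\mathfrak{sp}$) of the ambient Lie algebra, the argument is routine, and the restriction $\mathfrak g=\mathfrak{sp}_{2n}$ in the conclusion falls out automatically from the shape of families (f)--(h).
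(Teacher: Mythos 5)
Your key reduction step is not correct as stated. You claim that after telescoping maximal chains of difference-type summands $\epsilon_{a_0}-\epsilon_{a_1},\ \epsilon_{a_1}-\epsilon_{a_2},\dots$ into $\epsilon_{a_0}-\epsilon_{a_m}$, the expression $\alpha+\nu$ becomes a sum of at most three in-range positive roots. But the difference-type summands need not chain \emph{among themselves}: a $-\epsilon_b$ contribution from $\epsilon_a-\epsilon_b$ can equally well be cancelled by a sum-type root $\epsilon_c+\epsilon_b$. For a concrete failure take $\mathfrak g=\mathfrak{sp}_{2n}$ with $n\ge 4$, $(\alpha,\nu)=(2\epsilon_2,2\epsilon_1)$, and
$$2\epsilon_1+2\epsilon_2=(\epsilon_1+\epsilon_3)+(\epsilon_2-\epsilon_3)+(\epsilon_1+\epsilon_4)+(\epsilon_2-\epsilon_4).$$
All four summands lie strictly between $2\epsilon_1$ and $2\epsilon_2$ in the order \eqref{ord}, yet the two difference-type roots $\epsilon_2-\epsilon_3$ and $\epsilon_2-\epsilon_4$ share the head index $2$ and form no chain, so your telescoping does nothing and the decomposition stays at $j=4$. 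Here the pair happens to be family (h), so the conclusion of the lemma survives, but the failure is structural: without a reduction that genuinely decreases $j$, you have no bound on the number of summands and the proposed "finite case analysis" is not finite; in particular you cannot rule out $j\geq 3$ for families (a)--(e), (i), (j) just by checking whether their two- and three-term splittings extend. The paper's reduction is different and does the job: it locates a pair $\{\epsilon_k-\epsilon_t,\ x+\epsilon_t\}\subset\{\gamma_1,\dots,\gamma_j\}$ with $x\in\{\pm\epsilon_s\}$ (or $x=0$ when $\mathfrak g=\mathfrak{so}_{2n+1}$) that cancels at coordinate $t$, and replaces the pair by the single positive root $\epsilon_k+x$, which lies strictly between them by convexity of the chosen order. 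This handles \emph{any} cancellation pattern, not just chains of difference-type roots, and iterating it reduces to $j=3$, after which Lemma~\ref{min} and Corollary~\ref{equa} pin down the possibilities.

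The second gap is the $\mathfrak{so}_{2n+1}$ exclusion. You say the constraint $\mathfrak g=\mathfrak{sp}_{2n}$ "falls out automatically from the shape of families (f)--(h)." That is only the forward implication (pairs of type (f)--(h) force $\mathfrak{sp}_{2n}$); the lemma also requires that \emph{no} pair in $\mathfrak{so}_{2n+1}$ admits a $j\ge 3$ decomposition, including pairs involving the short roots $\epsilon_s$ (families (d), (e), (j)) and pairs coming from Lemma~\ref{equal}. In the paper this is a separate and rather intricate argument — bounding each coordinate $b_l$ of $\alpha+\nu$ by $2$, splitting by whether some $b_s=2$, and cascading through subcases using the convex order — and your proposal contains no analogue of it.
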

\begin{proof}

At moment, let $\mathcal{R}^{+}_{\mfg}$ be the set of  positive roots with respect to   $\mfg$. Thanks to \eqref{typeb} and  \eqref{ord},  $\mathcal{R}^{+}_{\mathfrak{so}_{2n}}\subset \mathcal{R}^{+}_{\mathfrak{sp}_{2n}}$ and  $\alpha>\nu$ in $\mathcal{R}^{+}_{\mathfrak{sp}_{2n}}$ if $\alpha>\nu$ in $\mathcal{R}^{+}_{\mathfrak{so}_{2n}}$. So, it is enough to consider  $\mfg\in\{\mathfrak{so}_{2n+1}, \mathfrak{sp}_{2n}\}$.
Suppose  \begin{equation} \label{kkk123} \alpha+\nu=\sum_{i=1}^j\gamma_i=\sum_{l=1}^n b_l\epsilon_l\end{equation}  where $\gamma_i$'s are required positive roots. By \eqref{typeb}, $\sum_{l=1}^n\vert b_l\vert\leq 4$. Let $A=\{\gamma_1, \gamma_2, \ldots, \gamma_j\}$.

 Suppose $\mfg=\mathfrak{sp}_{2n}$.
 Since $j\geq 3$ and  $\sum_{i=1}^j\gamma_i=\sum_{l=1}^n b_l\epsilon_l$, there is a $t$ such that  $\{\epsilon_k-\epsilon_t, x+\epsilon_t\}\subset A$ for some $k<t$ and some
 $x\in \{\pm\epsilon_s\mid 1\leq s\leq n, x+\epsilon_t\in \mathcal{R}_{\mathfrak{sp}_{2n}}^{+}\}$. Otherwise, $\sum_{l=1}^n\vert b_l\vert\geq 6$, a contradiction.

 Without loss of any generality, we  assume $\{\gamma_1, \gamma_2\}=\{\epsilon_k-\epsilon_t, x+\epsilon_t\}\subset \mathcal{R}^{+}_{\mathfrak{sp}_{2n}}$ as sets  and $\gamma_1<\gamma_2$. So, $x\neq -\epsilon_k$ and hence
 $\gamma'_1=\gamma_1+\gamma_2=\epsilon_k+x\in \mathcal{R}_{\mathfrak{sp}_{2n}}$.   However,  $x+\epsilon_k$ is a summation of two positive roots, it can not be  a negative root. So, $\gamma'_1\in \mathcal{R}^+_{\mathfrak{sp}_{2n}}$   and
 $\gamma_1<\gamma'_1<\gamma_2$. So, $\alpha+\nu$ is a summation  of $j-1$'s positive roots and all of them
 are between $\alpha $ and $\nu$.
 Using the above arguments repeatedly, we have
 \begin{equation}\label{sum} \lambda_1+\lambda_2+\lambda_3=\alpha+\nu\end{equation} where $\alpha>\lambda_i>\nu$ and $\lambda_1+\lambda_2=\beta\in\mathcal{R}_{\mathfrak{sp}_{2n}}^+$. Without loss of any generality, we can assume  $\lambda_1<\beta<\lambda_2$ and
$(\lambda_2, \lambda_1)$ is a minimal pair of $\beta$.
 Since $\beta+\lambda_3=\alpha+\nu$ and $\alpha>\lambda_3>\nu$ and $\alpha>\beta>\nu$, $\{\alpha,\nu\}$ has to be one of  pairs in Lemmas~\ref{equal}-\ref{equal2}.
 Since
     $$\begin{aligned} (2\epsilon_k)+(\epsilon_i-\epsilon_j)&=(\epsilon_k-\epsilon_n)+(\epsilon_k-\epsilon_j)+(\epsilon_i+\epsilon_n),\\
(\epsilon_k+\epsilon_j)+2\epsilon_i&=(\epsilon_i-\epsilon_j)+(\epsilon_k+\epsilon_j)+(\epsilon_i+\epsilon_j),\\
(2\epsilon_k)+(2\epsilon_i)&=(\epsilon_k-\epsilon_j)+(\epsilon_i+\epsilon_j)+(\epsilon_i+\epsilon_k),\\
\end{aligned}$$ $\{\alpha, \nu\}$ can be one of pairs   in Lemmas~\ref{equal2}(f)-(h). Otherwise, we know the exact information on $\beta$. We use Corollary~\ref{equa} to conclude that there is no required minimal pair $(\lambda_2, \lambda_1)$ of $\beta$ such that $\nu<\lambda_i<\alpha$. This  contradicts to  \eqref{sum}. We give an example as follows. One can   check other cases in a similar way.

If $\{\alpha,\nu\}$ is the pair in Lemma~\ref{equal2}(a), then $\{\beta, \lambda_3\}=\{\epsilon_k-\epsilon_j, \epsilon_i-\epsilon_l\}$, $i<k<j<l$. In this case, $\epsilon_i-\epsilon_j<\epsilon_k-\epsilon_m<\epsilon_k-\epsilon_l<\epsilon_m-\epsilon_j$ for each minimal pair $(\epsilon_m-\epsilon_j,\epsilon_k-\epsilon_m)$ of $\epsilon_k-\epsilon_j$ in Corollary~\ref{equa} and either $\epsilon_i-\epsilon_m\leq\epsilon_i-\epsilon_j$ or $\epsilon_m-\epsilon_l>\epsilon_k-\epsilon_l$ for each minimal pair $(\epsilon_m-\epsilon_l,\epsilon_i-\epsilon_m)$ of $\epsilon_i-\epsilon_l$ in Corollary~\ref{equa}. We can not find the required minimal pair $(\lambda_2, \lambda_1)$ of $\beta$. This is a contradiction.

\medskip

Suppose $\mfg=\mathfrak{so}_{2n+1}$. Thanks to \eqref{typeb} and \eqref{kkk123},  $b_l\leq 2$ for all $1\leq l\leq n$.
We will get a contradiction in any case. First, we assume
$b_s=2$ for some $1\leq s\leq n$. By \eqref{typeb}, $\alpha, \nu\in\{\epsilon_s, \epsilon_s\pm\epsilon_j, \epsilon_l+\epsilon_s\mid l<s<j\}$.

 Suppose  $\nu\in\{\epsilon_s, \epsilon_s\pm\epsilon_j\mid s< j\}$.  Since
 $\alpha>\nu$, we have $\alpha\neq \epsilon_l+\epsilon_s$ if $l<s$. Thanks to \eqref{ord}, the coefficient of $\epsilon_s$ in the expression of $\gamma_i$ is $1$ for $\alpha>\gamma_i>\nu$. Since $j\geq 3$ and  $\sum_{i=1}^j\gamma_i=\sum_{l=1}^n b_l\epsilon_l$, $b_s\geq 3$, a contradiction.

  Suppose $\nu=\epsilon_l+\epsilon_s$ for some $l<s$.  Then $\alpha\in \{ \epsilon_k+\epsilon_s, \epsilon_s, \epsilon_s-\epsilon_j \}$ and  either  $k> s$  or $k< s$. Firstly, if $\alpha=\epsilon_s$, then $\alpha+\nu=\sum_{i=1}^j\gamma_i=\epsilon_l+2\epsilon_s$ and either $\epsilon_s+\epsilon_t\in A$ or $\epsilon_s-\epsilon_f\in A$ for some $t$ and $f$. Otherwise,  $b_s\neq 2$.
\begin{itemize}\item[(a)]
Suppose   $\epsilon_s+\epsilon_t\in A$. Since $\nu<\epsilon_s+\epsilon_t<\alpha$, by \eqref{ord},
 $l<t<s$.   Further, $\epsilon_{k_{0}}-\epsilon_t\in A$ for some ${k_{0}}<t$. Otherwise,  $b_t\neq 0$. If ${k_{0}}\leq l$,  by \eqref{ord} $\epsilon_{k_{0}}-\epsilon_t<\epsilon_l+\epsilon_s$, a contradiction. Suppose  $t>{k_{0}}>l$. Since $b_{k_{0}}=0$, there is a $k_1$ such that $\epsilon_{k_1}-\epsilon_{k_0}\in A$. If $k_1<l$, we get a contradiction via \eqref{ord}. Otherwise, we repeat above arguments so that we can assume $k_1<l$. This leads to a contradiction, too.

   \item [(b)]
   Suppose $\epsilon_s-\epsilon_{f}\in A $ for some $f>s$. Since $b_{f}=0$ and $\alpha>\gamma_i>\nu$,   $\epsilon_{j}+\epsilon_{f}\in A $ for some $l<j<s$.  Further,
    $b_j=0$ and
  $\epsilon_{j_0}-\epsilon_{j}\in A$ for some $j_0<j$.
  If $j_0\le l$, by \eqref{ord}, $\epsilon_{j_0}-\epsilon_j<\nu$, a contradiction.
If $j_0>l$, this  also leads to  a contradiction by using  arguments in (a).\end{itemize}

Secondly,  if    $\alpha=\epsilon_s-\epsilon_j$, then $\alpha+\nu=\epsilon_l+2\epsilon_s-\epsilon_j$,  $l<s<j$. So, $b_l=1$.   Note that  $\gamma_i>\nu=\epsilon_l+\epsilon_s$. We have
$\epsilon_l+\epsilon_t\in A$ for some $l<t<s$. Consequently, $b_t=0$ and  $\epsilon_f-\epsilon_t\in A $ for some $f<t$.  Now, comparing $f$ and $l$ and  using  arguments in (a) repeatedly leads to a contradiction.

Finally, if  $\alpha=\epsilon_k+\epsilon_s$, then  $\alpha+\nu=\epsilon_k+2\epsilon_s+\epsilon_l$ with $l<s$.
  Suppose $ k<s$. Since $\alpha>\nu$, we have   $l<k<s$ . Note that  $b_s=2$ and $\nu<\gamma_i<\alpha$. By \eqref{ord},   $  \epsilon_t+\epsilon_s\in A$ for some $l<t<k$.
 So, $b_t=0$ and $ \epsilon_j-\epsilon_t\in A$ for some $j<t$.
  Now, comparing $f$ and $l$ and  using  arguments in (a) repeatedly leads to a contradiction.
 When $k>s$, $b_k=1$. One can get a contradiction by arguments on the case $k<s$.

 We have proved that  $b_s\neq 2$ for any admissible  $s$. It remains to deal with the case $b_s<2$
  for all $1\leq s\leq n$.
We claim  that one of the following cases has to  happen:
 \begin{itemize} \item [(1)] $\epsilon_f, \epsilon_g\in A$ for some $f\neq g$,
  \item [(2)] $\epsilon_k-\epsilon_t, x+\epsilon_t\in A$ for some $x\in\{\pm\epsilon_s, 0\mid 1\leq s\leq n, x+\epsilon_t\in \mathcal{R}_{\mathfrak{so}_{2n+1}}^{+}\}$ and $1\leq k<t\leq n$. \end{itemize}
      It is enough to prove that (1) happens if we assume (2) does not happen. In fact, since $j\geq 3$,  there is a pair $\{\gamma_i,\gamma_t\}$ such that $\gamma_i=\epsilon_f$ and $\gamma_t=\epsilon_g$, for some $f$ and $g$. Otherwise,
 $\sum_{i=1}^n|b_i|\geq 5$, a contradiction. Suppose   $f= g$. Since we are assuming
    $b_f<2$,  $ \epsilon_k-\epsilon_f\in A$ and $\{\epsilon_k-\epsilon_f, \epsilon_f\}\subset  A $.
    This shows that (2) happens, a contradiction.  So, $f\neq g$ and (1) happens.

   In the first case,  we have $\epsilon_f+\epsilon_g\in  \mathcal{R}_{\mathfrak{so}_{2n+1}}^{+}$ since $f\neq g$. In the second case, if $x\neq \epsilon_k$, we have $\epsilon_k-\epsilon_t+x+\epsilon_t=x+\epsilon_k\in \mathcal{R}_{\mathfrak{so}_{2n+1}}^{+}$. If  $x=\epsilon_k$. Since $b_k<2$, $\epsilon_l-\epsilon_k\in A$ for some $l$ and $\epsilon_l-\epsilon_k+\epsilon_k-\epsilon_t=\epsilon_l-\epsilon_t\in \mathcal{R}_{\mathfrak{so}_{2n+1}}^{+}$. In any of these cases, we can assume that $$\gamma_1'=\gamma_1+\gamma_2\in \mathcal{R}_{\mathfrak{so}_{2n+1}}^{+}, \text{ and  $\gamma_1'+\sum_{i=3}^{j}\gamma_i=\alpha+\nu$,}$$ $\nu<\gamma_1<\gamma_1'<\gamma_2<\alpha$. By induction on $j$, we need to deal with the case  $\lambda_1+\lambda_2+\lambda_3=\alpha+\nu$ and $\lambda_1+\lambda_2\in \mathcal{R}_{\mathfrak{so}_{2n+1}}^{+}$.
 Via Corollary~\ref{equa}, one can check that this will never happen by arguments similar to those for the case $\mfg=\mathfrak{sp}_{2n}$.
\end{proof}

\begin{Cor}\label{monoial1}Suppose $\alpha, \nu\in \mathcal{R}^{+}$  such that
 $\alpha>\nu$ and    $\alpha+\nu\notin \mathcal{R}^{+}$. If  there are no $\beta, \gamma\in \mathcal R^+$ such that $\alpha>\beta\geq \gamma>\nu$ and $\alpha+\nu=\beta+\gamma$,   then  $x_{\nu}^{-}x_{\alpha}^{-}=v^{(\alpha\mid\nu)}x_{\alpha}^{-}x_{\nu}^{-}$.
\end{Cor}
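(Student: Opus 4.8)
The plan is to reduce the statement to the structural part of Lemma~\ref{commut}. Since $\alpha,\nu\in\mathcal R^{+}$ with $\nu<\alpha$ in the fixed convex order \eqref{ord}, first I would write $\nu=\beta_i$ and $\alpha=\beta_j$ with $i<j$, and apply Lemma~\ref{commut}(1) to get
\[
x_\nu^-x_\alpha^--v^{(\alpha\mid\nu)}x_\alpha^-x_\nu^-=\sum_{\mathbf r\in\mathbb N^{\ell(w_0)}}c_{\mathbf r}^-x_{\mathbf r}^-,
\]
where the sum runs over $\mathbf r$ with $\sum_{l}r_l\beta_l=\alpha+\nu$ and $c_{\mathbf r}^-=0$ unless $r_l=0$ for every $l$ with $l\le i$ or $l\ge j$; equivalently, a term survives only if $\mathbf r$ is supported on indices $l$ with $\nu=\beta_i<\beta_l<\beta_j=\alpha$ in the convex order. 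It therefore suffices to show that no such $\mathbf r$ can occur, which forces the right-hand side to vanish and gives the claimed relation (using symmetry of the form, $(\alpha\mid\nu)=(\nu\mid\alpha)$). In the degenerate case $j=i+1$ the set of admissible indices is empty, so the empty sum cannot equal $\alpha+\nu$ and the relation holds at once.

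In general the argument would split according to $|\mathbf r|=\sum_l r_l\ge 1$. If $|\mathbf r|=1$ then $\alpha+\nu=\beta_l\in\mathcal R^{+}$, contradicting the hypothesis $\alpha+\nu\notin\mathcal R^{+}$. If $|\mathbf r|=2$ then $\alpha+\nu=\beta_l+\beta_m$ for (not necessarily distinct) roots with $\nu<\beta_l,\beta_m<\alpha$; letting $\beta$ and $\gamma$ be the larger and the smaller of $\beta_l,\beta_m$, one gets $\alpha>\beta\ge\gamma>\nu$ with $\alpha+\nu=\beta+\gamma$, again contradicting the hypothesis. If $|\mathbf r|\ge 3$ then $\alpha+\nu$ is a sum of at least three positive roots each strictly between $\nu$ and $\alpha$, so by Lemma~\ref{equal1} the pair $\{\alpha,\nu\}$ must be one of those appearing in Lemma~\ref{equal2}(f)--(h); but each of those pairs, by the very statement of Lemma~\ref{equal2}, comes equipped with $\beta,\gamma\in\mathcal R^{+}$ satisfying $\alpha>\beta\ge\gamma>\nu$ and $\alpha+\nu=\beta+\gamma$, once more contradicting the hypothesis. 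Hence every $c_{\mathbf r}^-=0$, and $x_\nu^-x_\alpha^-=v^{(\alpha\mid\nu)}x_\alpha^-x_\nu^-$.

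I do not expect a real obstacle here: the corollary is essentially a bookkeeping consequence of Lemma~\ref{commut} together with the combinatorial classifications of Lemmas~\ref{equal}--\ref{equal1}, which carry all the substantive work. The only point requiring some care is matching the support condition "$r_l=0$ for $l\le i$ or $l\ge j$" from Lemma~\ref{commut} with the "$\alpha>\beta\ge\gamma>\nu$" (resp. "$\alpha>\gamma_i>\nu$") conditions in Lemmas~\ref{equal1}--\ref{equal2}: one must confirm that these are literally the same strict inequalities in the convex order \eqref{ord}, after which the three-case dichotomy above closes the proof immediately.
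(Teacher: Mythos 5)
Your proposal is correct and follows essentially the same route as the paper's own proof: both reduce to the structural statement of Lemma~\ref{commut} and then rule out the possibility that $\alpha+\nu$ is a sum of $j\ge 1$ positive roots strictly between $\nu$ and $\alpha$ in the convex order, handling $j=1$ via $\alpha+\nu\notin\mathcal R^+$, $j=2$ via the stated hypothesis, and $j\ge 3$ via Lemma~\ref{equal1} (whose output pairs (f)--(h) of Lemma~\ref{equal2} already carry a pair $\beta\ge\gamma$ of the forbidden kind). The paper compresses this into two sentences; your version makes the three-case split explicit, but the content is identical.
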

\begin{proof} By assumption  and  Lemma~\ref{equal1}, $\alpha+\nu\neq \sum_{i=1}^j \gamma_i$, where $\alpha>\gamma_i>\nu$, $\gamma_i\in \mathcal R^+$ and $j\geq 2$.  Since  $\alpha+\nu\notin \mathcal{R}^{+}$, by Lemma~\ref{commut}, we have $x_{\nu}^{-}x_{\alpha}^{-}=v^{(\alpha\mid\nu)}x_{\alpha}^{-}x_{\nu}^{-}$.
\end{proof}

 Thanks to \eqref{vv},  $q=v^2$ if $\mfg=\mathfrak{so}_{2n+1}$ and $q=v$ if $\mfg\in\{\mathfrak{so}_{2n}, \mathfrak{sp}_{2n}\}$. Recall that  $[k]=\frac{v^k-v^{-k}}{v-v^{-1}}$ and  $z_q=q-q^{-1}$.  We are going to deal with pairs $\{\alpha, \nu\}$ in (2)-(3). First, we list some equalities which follow from Lemma~\ref{min}.
Recall that $ \Upsilon_\beta$ is the set of minimal pairs of $\beta\in \mathcal R^+$.

\begin{Cor}\label{25a}
Suppose $\mfg\in\{\mathfrak{so}_{2n+1}, \mathfrak{so}_{2n}, \mathfrak{sp}_{2n}\}$. \begin{itemize}
\item[(1)] $[x_{\epsilon_i-\epsilon_{k}}^-, x_{\epsilon_{k}-\epsilon_{j}}^-]_v=-q^{-1}x_{\epsilon_i-\epsilon_j}^-$, if $1\leq i<k<j\leq n$,
\item[(2)] $[x_{\epsilon_i+\epsilon_{j+1}}^-, x_{\epsilon_{j}-\epsilon_{j+1}}^-]_v=-q^{-1}x_{\epsilon_i+\epsilon_{j}}^-$, if $1\leq i<j< n$,
\item[(3)] $[x_{\epsilon_i}^-, x_{\epsilon_{n}}^-]_v=-[2]x_{\epsilon_i+\epsilon_{n}}^-$, if $1\leq i< n$ and $\mfg=\mathfrak{so}_{2n+1}$,
\item[(4)] $[x_{\epsilon_i-\epsilon_{k}}^-, x_{\epsilon_{k}}^-]_v=-q^{-1}x_{\epsilon_i}^-$, if $1\leq i<k\leq n$ and $\mfg=\mathfrak{so}_{2n+1}$,
\item[(5)] $[x_{\epsilon_i-\epsilon_{k}}^-, x_{\epsilon_{k}+\epsilon_{j}}^-]_v=-q^{-1}x_{\epsilon_i+\epsilon_j}^-$, if $1\leq i<k<j\leq n$,
\item[(6)] $[x_{\epsilon_i-\epsilon_{n}}^-, x_{2\epsilon_{n}}^-]_v=-q^{-2}x_{\epsilon_i+\epsilon_n}^-$, if $1\leq i< n$ and $\mfg=\mathfrak{sp}_{2n}$,
\item[(7)] $[x_{\epsilon_i-\epsilon_{n}}^-, x_{\epsilon_i+\epsilon_{n}}^-]_v=-[2]x_{2\epsilon_i}^-$, if $1\leq i< n$ and $\mfg=\mathfrak{sp}_{2n}$.
\end{itemize}
\end{Cor}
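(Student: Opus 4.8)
The plan is to derive all seven identities as instances of a single rearrangement of Lemma~\ref{min}(2). Suppose $(\alpha,\nu)\in\Upsilon_\beta$ with $\beta=\alpha+\nu$ and with $\nu<\alpha$ in the convex order \eqref{ord}. Then Lemma~\ref{min}(2) (applied with $(\nu,\gamma)=(\alpha,\nu)$) reads $x_\alpha^-x_\nu^- - v^{-(\alpha\mid\nu)}x_\nu^-x_\alpha^- = [p_{\alpha,\nu}+1]\,x_\beta^-$; multiplying through by $v^{(\alpha\mid\nu)}$ and rearranging gives
$$[x_\nu^-,x_\alpha^-]_v = x_\nu^-x_\alpha^- - v^{(\alpha\mid\nu)}x_\alpha^-x_\nu^- = -\,v^{(\alpha\mid\nu)}[p_{\alpha,\nu}+1]\,x_\beta^-.$$
So for each of (1)--(7) I would first locate the relevant minimal pair inside the list of Corollary~\ref{equa} — e.g. $(\epsilon_k-\epsilon_j,\epsilon_i-\epsilon_k)\in\Upsilon_{\epsilon_i-\epsilon_j}$ for (1) by Corollary~\ref{equa}(3), $(\epsilon_j-\epsilon_{j+1},\epsilon_i+\epsilon_{j+1})$ and $(\epsilon_l+\epsilon_j,\epsilon_i-\epsilon_l)$ in $\Upsilon_{\epsilon_i+\epsilon_j}$ for (2),(5), $(\epsilon_n,\epsilon_i)\in\Upsilon_{\epsilon_i+\epsilon_n}$ for (3), $(\epsilon_k,\epsilon_i-\epsilon_k)\in\Upsilon_{\epsilon_i}$ for (4), $(2\epsilon_n,\epsilon_i-\epsilon_n)\in\Upsilon_{\epsilon_i+\epsilon_n}$ for (6), and $(\epsilon_i+\epsilon_n,\epsilon_i-\epsilon_n)\in\Upsilon_{2\epsilon_i}$ for (7) — and then evaluate the two scalars $v^{(\alpha\mid\nu)}$ and $[p_{\alpha,\nu}+1]$ explicitly.

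For the power $v^{(\alpha\mid\nu)}$ I would use the inner product on the root space from Section~2, namely $(\epsilon_i\mid\epsilon_j)=2\delta_{ij}$ if $\mfg=\mathfrak{so}_{2n+1}$ and $(\epsilon_i\mid\epsilon_j)=\delta_{ij}$ otherwise, together with $q=v^2$ in type $B$ and $q=v$ in types $C,D$ (see \eqref{vv}). Thus in (1),(2),(5) one finds $(\alpha\mid\nu)=-2$ in type $B$ and $-1$ in types $C,D$, hence $v^{(\alpha\mid\nu)}=q^{-1}$ uniformly; in (4) likewise $v^{(\alpha\mid\nu)}=q^{-1}$ (type $B$ only); in (6), with $\mfg=\mathfrak{sp}_{2n}$, $(\alpha\mid\nu)=-2$ so $v^{(\alpha\mid\nu)}=q^{-2}$; and in (3),(7) the two roots are orthogonal, so $v^{(\alpha\mid\nu)}=1$. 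For $[p_{\alpha,\nu}+1]$ I would use \eqref{p1}: $p_{\alpha,\nu}=1$ exactly when $(d_\alpha,d_\beta,d_\nu)=(1,2,1)$. In (1),(2),(5) all three roots have a common length (in $C,D$ all roots are equal-length; in $B$ they are all long), and in (4),(6) the triple is $(1,1,2)$ resp. $(2,1,1)$ — never $(1,2,1)$ — so $[p_{\alpha,\nu}+1]=[1]=1$ there; in (3) and (7) the sum $\beta$ is a long root while $\alpha,\nu$ are short, so $p_{\alpha,\nu}=1$ and $[p_{\alpha,\nu}+1]=[2]$. Substituting these values into the displayed formula yields precisely the stated right-hand sides.

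The one point requiring genuine care — and the closest thing to an obstacle — is bookkeeping with the convex order: Lemma~\ref{min} names a minimal pair $(\nu,\gamma)$ by the convention $\nu>\gamma$, whereas $[x_\nu^-,x_\alpha^-]_v$ in the statement is written with $\nu<\alpha$, so for each item I must check against \eqref{ord} which of the two roots appearing is the larger one (so as to match the roles of $\alpha$ and $\nu$ above and to read off $(\alpha\mid\nu)$ with the right sign). In every one of the seven cases this is immediate, since the two roots involved have distinct first indices, except in (7) where the two roots are $\epsilon_i-\epsilon_n<\epsilon_i+\epsilon_n$ with the same first index — which \eqref{ord} orders by the "$-$ before $+$" rule. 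Once the orientation is fixed the remaining computations are routine; in particular there is no need to invoke Lemma~\ref{commut} or Proposition~\ref{commut1}, since every sum here lands in $\mathcal R^+$ and Lemma~\ref{min} applies verbatim.
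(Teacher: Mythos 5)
Your proof is correct and follows exactly the paper's argument: identify each left-hand side as a minimal pair from Corollary~\ref{equa}, rearrange Lemma~\ref{min}(2) to get $[x_\nu^-,x_\alpha^-]_v=-v^{(\alpha\mid\nu)}[p_{\alpha,\nu}+1]x_\beta^-$, and evaluate the two scalars in each case (the paper merely asserts this evaluation, and your bookkeeping reproduces it faithfully, including correctly flagging that (7) requires the ``$-$ before $+$'' convention of \eqref{ord} when the first indices coincide). One small inaccuracy in a parenthetical: type $C$ is not simply laced (the $2\epsilon_i$ are long), so the reason $p_{\alpha,\nu}=0$ in (1),(2),(5) for $\mfg=\mathfrak{sp}_{2n}$ is that the three roots there are all of the form $\epsilon_a\pm\epsilon_b$ and hence all short, not that all roots of $C$ have equal length; this does not affect the conclusion.
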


\begin{proof} In (1)-(7), we compute $[x_\nu^-, x_\alpha^-]_\nu$ for some admissible $\alpha, \nu\in \mathcal R^+$. Thanks to
Corollary ~\ref{equa}, such $(\alpha, \nu)$'s  are minimal pairs. So, (1)-(7) follow from  Lemma~\ref{min}, immediately.
\end{proof}

\begin{Lemma}\label{commute1}For all admissible $i<j<k$, let  $a_k=[x_{\epsilon_i+\epsilon_k}^-, x_{\epsilon_j-\epsilon_k}^-]_v$. Then
$$a_k
=(-q)^{j-k}x_{\epsilon_i+\epsilon_j}^--z_q\sum_{t=j+1}^{k-1}(-q)^{t-k}x_{\epsilon_j-\epsilon_t}^-x_{\epsilon_i+\epsilon_t}^-.$$
\end{Lemma}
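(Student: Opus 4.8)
The plan is to prove the identity
$$a_k=[x_{\epsilon_i+\epsilon_k}^-, x_{\epsilon_j-\epsilon_k}^-]_v=(-q)^{j-k}x_{\epsilon_i+\epsilon_j}^--z_q\sum_{t=j+1}^{k-1}(-q)^{t-k}x_{\epsilon_j-\epsilon_t}^-x_{\epsilon_i+\epsilon_t}^-$$
by downward induction on $k$, starting from $k=j+1$ (where the sum is empty) and decreasing $k$ toward $j+1$; more precisely, for fixed $i<j$ I would induct on $k$ increasing, so the base case $k=j+1$ reads $a_{j+1}=(-q)^{-1}x_{\epsilon_i+\epsilon_j}^-$, which is exactly Corollary~A.10(2) (with the roles of the indices matched up: $[x_{\epsilon_i+\epsilon_{j+1}}^-,x_{\epsilon_j-\epsilon_{j+1}}^-]_v=-q^{-1}x_{\epsilon_i+\epsilon_j}^-$). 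For the inductive step, I would write $x_{\epsilon_j-\epsilon_k}^-$ in terms of the minimal pair supplied by Corollary~A.5(3): since $(\epsilon_{k-1}-\epsilon_k,\ \epsilon_j-\epsilon_{k-1})$ is a minimal pair of $\epsilon_j-\epsilon_k$ with $p_{\nu,\gamma}=0$, Lemma~A.8(2) gives $x_{\epsilon_j-\epsilon_k}^-$ as a $v$-commutator $[x_{\epsilon_j-\epsilon_{k-1}}^-, x_{\epsilon_{k-1}-\epsilon_k}^-]_v$ up to a scalar $[1]=1$ — i.e. $x_{\epsilon_{k-1}-\epsilon_k}^-x_{\epsilon_j-\epsilon_{k-1}}^--v^{-(\epsilon_{k-1}-\epsilon_k\mid\epsilon_j-\epsilon_{k-1})}x_{\epsilon_j-\epsilon_{k-1}}^-x_{\epsilon_{k-1}-\epsilon_k}^-=x_{\epsilon_j-\epsilon_k}^-$.

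The core of the argument is then a Jacobi-identity / bracket manipulation. I would substitute this expression for $x_{\epsilon_j-\epsilon_k}^-$ into $a_k=[x_{\epsilon_i+\epsilon_k}^-,x_{\epsilon_j-\epsilon_k}^-]_v$ and expand, using the quantum Jacobi-type rule $[x,[y,z]_{v^a}]_{v^{b+c}}=[[x,y]_{v^b},z]_{v^c}+v^b[y,[x,z]_{v^c}]_{v^{a-b}}$ (the standard $v$-bracket Leibniz rule, valid because the relevant inner products are additive over the weight lattice). This rewrites $a_k$ in terms of $[x_{\epsilon_i+\epsilon_k}^-,x_{\epsilon_j-\epsilon_{k-1}}^-]_v=a_{k-1}$ and $[x_{\epsilon_i+\epsilon_k}^-,x_{\epsilon_{k-1}-\epsilon_k}^-]_v$. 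The latter bracket is handled by Corollary~A.10(5): $(\epsilon_{k-1}-\epsilon_k,\epsilon_i+\epsilon_k)$ — wait, one must be careful — the relevant statement is $[x_{\epsilon_i-\epsilon_k}^-,x_{\epsilon_k+\epsilon_j}^-]_v=-q^{-1}x_{\epsilon_i+\epsilon_j}^-$; here I need $[x_{\epsilon_i+\epsilon_k}^-,x_{\epsilon_{k-1}-\epsilon_k}^-]_v$, which by reindexing Corollary~A.10(5) (applied with the pair $(\epsilon_{k-1}-\epsilon_k,\epsilon_i+\epsilon_k)$, a minimal pair of $\epsilon_i+\epsilon_{k-1}$ per Corollary~A.5(4) with $l=k-1$) equals $-q^{-1}x_{\epsilon_i+\epsilon_{k-1}}^-$ up to the bracket orientation and a power of $q$ that I will track carefully. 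Plugging the inductive formula for $a_{k-1}$ and collecting terms should telescope: the leading term $(-q)^{j-(k-1)}x_{\epsilon_i+\epsilon_j}^-$ gets multiplied by a factor $(-q)^{-1}$ to become $(-q)^{j-k}x_{\epsilon_i+\epsilon_j}^-$, and a new summand for $t=k-1$ of the form $-z_q(-q)^{(k-1)-k}x_{\epsilon_j-\epsilon_{k-1}}^-x_{\epsilon_i+\epsilon_{k-1}}^-$ appears, while each old summand $-z_q(-q)^{t-(k-1)}x_{\epsilon_j-\epsilon_t}^-x_{\epsilon_i+\epsilon_t}^-$ picks up the factor $(-q)^{-1}$ to match $-z_q(-q)^{t-k}x_{\epsilon_j-\epsilon_t}^-x_{\epsilon_i+\epsilon_t}^-$.

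The main obstacle I anticipate is bookkeeping: getting every power of $q$ (and every sign, i.e.\ power of $-1$) exactly right. This requires computing several inner products $(\epsilon_a\pm\epsilon_b\mid\epsilon_c\pm\epsilon_d)$ — recall the normalization $(\epsilon_i\mid\epsilon_j)=\delta_{ij}$ for $\mathfrak{so}_{2n+1}$ and hence $q=v^2$ there, versus $(\epsilon_i\mid\epsilon_j)=\delta_{ij}$ and $q=v$ for the other types (from \eqref{vv}) — and one must check that the formula is uniform across $\mathfrak{sp}_{2n},\mathfrak{so}_{2n},\mathfrak{so}_{2n+1}$, which it should be since only roots of the form $\epsilon_a\pm\epsilon_b$ with $a\neq b$ are involved and these have the same inner-product pattern in all three cases. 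A secondary subtlety is making sure the inner bracket $[x_{\epsilon_j-\epsilon_{k-1}}^-,x_{\epsilon_{k-1}-\epsilon_k}^-]_v$ is in the correct order and $v$-weight so that Lemma~A.8(2) applies verbatim; if the orientation is reversed one rewrites it using the easy identity $[x,y]_v=-v^{(\cdot\mid\cdot)}[y,x]_{v^{-1}}\cdots$ appropriately. Once the power-of-$q$ ledger is balanced, the telescoping is automatic and the induction closes; I do not expect any conceptual difficulty beyond this computation.
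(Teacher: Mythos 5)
Your overall plan --- induct on $k$ from the base case $k=j+1$ (Corollary~\ref{25a}(2)), decompose $x_{\epsilon_j-\epsilon_k}^-$ via the minimal pair $(\epsilon_{k-1}-\epsilon_k,\,\epsilon_j-\epsilon_{k-1})$ (Corollary~\ref{25a}(1), equivalently Lemma~\ref{min}(2)), reduce to $a_{k-1}$, and telescope --- is exactly the paper's, and the power-of-$q$ bookkeeping does work out uniformly across the three types. One small slip in the normalization: for $\mathfrak{so}_{2n+1}$ the paper sets $(\epsilon_i\mid\epsilon_j)=2\delta_{ij}$, not $\delta_{ij}$; your conclusion $v^2=q$ there is still correct.

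There is, however, one concrete misidentification in the inductive step. You write that the Jacobi expansion rewrites $a_k$ in terms of ``$[x_{\epsilon_i+\epsilon_k}^-,x_{\epsilon_j-\epsilon_{k-1}}^-]_v=a_{k-1}$'', but that equality is false already on weight grounds: $a_{k-1}=[x_{\epsilon_i+\epsilon_{k-1}}^-,x_{\epsilon_j-\epsilon_{k-1}}^-]_v$ has weight $-(\epsilon_i+\epsilon_j)$, whereas $[x_{\epsilon_i+\epsilon_k}^-,x_{\epsilon_j-\epsilon_{k-1}}^-]_v$ has weight $-(\epsilon_i+\epsilon_j+\epsilon_k-\epsilon_{k-1})$. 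What the paper actually uses (via Corollary~\ref{monoial1}, since $\epsilon_i+\epsilon_j+\epsilon_k-\epsilon_{k-1}$ is not a root and admits no decomposition $\beta+\gamma$ with $\alpha>\beta\ge\gamma>\nu$) is that this bracket \emph{vanishes}, i.e.\ $x_{\epsilon_i+\epsilon_k}^-$ and $x_{\epsilon_j-\epsilon_{k-1}}^-$ commute; that vanishing is what collapses the Jacobi expansion. The term $a_{k-1}$ comes from the \emph{other} summand: by Corollary~\ref{25a}(2), $[x_{\epsilon_i+\epsilon_k}^-,x_{\epsilon_{k-1}-\epsilon_k}^-]_v=-q^{-1}x_{\epsilon_i+\epsilon_{k-1}}^-$, and after one more $v$-reordering (which also spits out the correction term) one lands on the recursion
$a_k=-q^{-1}a_{k-1}+q^{-1}z_q\,x_{\epsilon_j-\epsilon_{k-1}}^-x_{\epsilon_i+\epsilon_{k-1}}^-$,
which telescopes to the claimed formula exactly as you describe. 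So once you swap which bracket vanishes and which one reproduces $a_{k-1}$, your argument is the paper's.
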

\begin{proof}  The required formula for $a_{j+1}$ follows from Corollary~\ref{25a}(2). Suppose $k>j+1$. Then
$$\begin{aligned}
&a_j=-qx_{\epsilon_i+\epsilon_k}^-
[x_{\epsilon_j-\epsilon_{k-1}}^-, x_{\epsilon_{k-1}-\epsilon_{k}}^-]_v-q^{-1}x_{\epsilon_{j}-\epsilon_{k}}^-x_{\epsilon_i+\epsilon_k}^-, \text{ by Corollary~\ref{25a}(1)}
\\&=\frac{1}{q}(z_qx_{\epsilon_j-\epsilon_{k-1}}^-x_{\epsilon_i+\epsilon_{k-1}}^--[x_{\epsilon_i+\epsilon_{k-1}}^-
, x_{\epsilon_j-\epsilon_{k-1}}^-]_v-x_{\epsilon_{j}-\epsilon_{k}}^-x_{\epsilon_i+\epsilon_k}^-)
-[x_{\epsilon_j-\epsilon_{k-1}}^-, x_{\epsilon_{k-1}-\epsilon_{k}}^-]_vx_{\epsilon_i+\epsilon_{k}}^-\\&=-q^{-1}a_{k-1}+q^{-1}z_qx_{\epsilon_j-\epsilon_{k-1}}^-x_{\epsilon_i+\epsilon_{k-1}}^-, \text{ by Corollary~\ref{25a}(1)}
\\& =(-q)^{j+1-k}a_{j+1}-
\sum_{t=j+1}^{k-1}(-q)^{t-k}z_qx_{\epsilon_j-\epsilon_t}^-x_{\epsilon_i+\epsilon_t}^-,~\text{by induction assumption on $k$},
\\ & = (-q)^{j-k}x_{\epsilon_i+\epsilon_j}^--z_q\sum_{t=j+1}^{k-1}(-q)^{t-k}
x_{\epsilon_j-\epsilon_t}^-x_{\epsilon_i+\epsilon_t}^-, ~\text{by  Corollary~\ref{25a}(2), }
\end{aligned}$$
where the second and third equalities follow from $[x_{\epsilon_i+\epsilon_k}^-, x_{\epsilon_j-\epsilon_{k-1}}^-]_v=0$ (see  Corollary~\ref{monoial1}) and Corollary~\ref{25a}(2).
  So, we have the required formula on $a_k$ in general.
  \end{proof}

\begin{Lemma}\label{commute2}For any $j$, $i<j\leq n$, let $a_j=[x_{\epsilon_i-\epsilon_j}^-, x_{\epsilon_i+\epsilon_j}^-]_v$. Then
$$
a_j -\sum_{t=j+1}^{n}(-q)^{1+j-t}z_qx_{\epsilon_i+\epsilon_t}^-x_{\epsilon_i-\epsilon_t}^-
=\begin{cases}0, & \text{if $\mfg=\mathfrak{so}_{2n} $,} \\ -(-q)^{j-n} [2] x_{2\epsilon_i}^-, & \text{if $\mfg=\mathfrak{sp}_{2n} $,}\\ (-q)^{j-n} {[2]}^{-1} (1-q^{-1})(x_{\epsilon_i}^-)^2, & \text{if $\mfg=\mathfrak{so}_{2n+1} $.}\\ \end{cases}$$
\end{Lemma}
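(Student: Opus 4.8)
The plan is to establish the formula for $a_j=[x_{\epsilon_i-\epsilon_j}^-,x_{\epsilon_i+\epsilon_j}^-]_v$ by downward induction on $j$, running from the base case $j=n$ down to $j=i+1$. The whole computation is parallel to that of Lemma~\ref{commute1}: we rewrite $x_{\epsilon_i+\epsilon_j}^-$ (for $j<n$) as a $v$-commutator of two roots lying strictly between $\epsilon_i-\epsilon_j$ and itself in the convex order, commute $x_{\epsilon_i-\epsilon_j}^-$ past the pieces using the already-established relations, and collect terms.

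First I would treat the base case $j=n$ separately according to $\mfg$. For $\mfg=\mathfrak{so}_{2n}$, the root $\epsilon_i+\epsilon_n$ is the largest of the $\epsilon_i+\epsilon_\ast$ family, and $2\epsilon_i\notin\mathcal R^+$; one checks via Corollary~\ref{monoial1} (there is no $\beta\geq\gamma$ with $\alpha>\beta\geq\gamma>\nu$ and $\beta+\gamma=\epsilon_i-\epsilon_n+\epsilon_i+\epsilon_n=2\epsilon_i$ that stays inside $\mathcal R^+$, since $\mathfrak{so}_{2n}$ has no roots $2\epsilon_i$) that $a_n=0$, matching the claim with an empty sum. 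For $\mfg=\mathfrak{sp}_{2n}$, we have $a_n=[x_{\epsilon_i-\epsilon_n}^-,x_{\epsilon_i+\epsilon_n}^-]_v=-[2]x_{2\epsilon_i}^-$ directly from Corollary~\ref{25a}(7), again matching. For $\mfg=\mathfrak{so}_{2n+1}$, here $2\epsilon_i\notin\mathcal R^+$ but $\epsilon_i\in\mathcal R^+$, and the relevant decomposition is $\epsilon_i-\epsilon_n+\epsilon_i+\epsilon_n = \epsilon_i+\epsilon_i$; one expects $a_n$ to be a scalar multiple of $(x_{\epsilon_i}^-)^2$, and the scalar $(-q)^{0}[2]^{-1}(1-q^{-1})$ should come out of applying Corollary~\ref{25a}(3)--(4) together with the quantum Serre / PBW-straightening identities relating $x_{\epsilon_i}^-x_{\epsilon_i}^-$ to $x_{\epsilon_i+\epsilon_n}^-$ and $x_{\epsilon_i-\epsilon_n}^-$; this base case will need the most care since it involves the non-simply-laced node $\alpha_n$ and the factor $[2]$ from Lemma~\ref{natr1}(3).

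For the inductive step ($j<n$), I would use the identity $x_{\epsilon_i+\epsilon_j}^- = -q\,[x_{\epsilon_{j}-\epsilon_{j+1}}^-,x_{\epsilon_i+\epsilon_{j+1}}^-]_v$ coming from Corollary~\ref{25a}(2) (rearranged), substitute into $a_j$, and expand. Commuting $x_{\epsilon_i-\epsilon_j}^-$ past $x_{\epsilon_{j}-\epsilon_{j+1}}^-$ uses Corollary~\ref{25a}(1) (with a sign/power of $-q$), commuting it past $x_{\epsilon_i+\epsilon_{j+1}}^-$ produces $a_{j+1}$ plus an error term $x_{\epsilon_i-\epsilon_{j}}^-x_{\epsilon_i+\epsilon_{j+1}}^-$-type contributions which are handled by Lemma~\ref{commute1} (the $a_k$ there, specialized to the relevant indices) and by Corollary~\ref{monoial1} for the vanishing commutators. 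After collecting, the $a_{j+1}$ term gets multiplied by $-q^{-1}$ and the recursion $a_j = -q^{-1}a_{j+1} + (\text{new term at index }j) + \ldots$ telescopes exactly as in Lemma~\ref{commute1}, yielding the stated geometric-type sum $\sum_{t=j+1}^n(-q)^{1+j-t}z_q x_{\epsilon_i+\epsilon_t}^- x_{\epsilon_i-\epsilon_t}^-$ plus $(-q)^{j-n}$ times the base-case contribution. The main obstacle I anticipate is bookkeeping the powers of $-q$ and $z_q$ correctly through the double layer of recursion (the $a_j$ recursion feeding on the Lemma~\ref{commute1} formula for the cross terms), together with getting the $\mathfrak{so}_{2n+1}$ base-case scalar $[2]^{-1}(1-q^{-1})$ exactly right — everything else is a routine but lengthy substitution.
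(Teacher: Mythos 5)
Your proposal matches the paper's proof almost exactly: downward induction from $j=n$, with the base cases handled per type (zero for $\mathfrak{so}_{2n}$ from Corollary~\ref{monoial1}, Corollary~\ref{25a}(7) for $\mathfrak{sp}_{2n}$, and Corollary~\ref{25a}(3)--(4) plus $[x_{\epsilon_i-\epsilon_n}^-,x_{\epsilon_i}^-]_v=0$ for $\mathfrak{so}_{2n+1}$), and the inductive step using Corollary~\ref{25a}(2) to rewrite $x_{\epsilon_i+\epsilon_j}^-$ and telescope. One small correction: the inductive step is lighter than you anticipate — the cross terms are killed entirely by $[x_{\epsilon_i-\epsilon_j}^-, x_{\epsilon_i+\epsilon_{j+1}}^-]_v=0$ from Corollary~\ref{monoial1} together with Corollary~\ref{25a}(1), so Lemma~\ref{commute1} is never invoked (it only enters in the later lemmas of the appendix such as Lemma~\ref{commute5}).
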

\begin{proof}
Suppose $j<n$.
$$\begin{aligned}
&a_j=-qx_{\epsilon_i-\epsilon_j}^-
[x_{\epsilon_i+\epsilon_{j+1}}^-, x_{\epsilon_{j}-\epsilon_{j+1}}^-]_v-x_{\epsilon_{i}+\epsilon_{j}}^-x_{\epsilon_i-\epsilon_j}^-, \text{ by Corollary~\ref{25a}(2)}\\& =\frac{1}{q}(x_{\epsilon_{j}-\epsilon_{j+1}}^-x_{\epsilon_i-\epsilon_j}^--x_{\epsilon_i-\epsilon_{j+1}}^-)
x_{\epsilon_i+\epsilon_{j+1}}^--qx_{\epsilon_i+\epsilon_{j+1}}^-(x_{\epsilon_{j}-\epsilon_{j+1}}^-x_{\epsilon_i-\epsilon_j}^--x_{\epsilon_i-\epsilon_{j+1}}^-)
-x_{\epsilon_{i}+\epsilon_{j}}^-x_{\epsilon_i-\epsilon_j}^-\\&=-q[x_{\epsilon_i+\epsilon_{j+1}}^-, x_{\epsilon_{j}-\epsilon_{j+1}}^-]_vx_{\epsilon_i-\epsilon_j}^--q^{-1}[x_{\epsilon_i-\epsilon_{j+1}}^-
, x_{\epsilon_i+\epsilon_{j+1}}^-]_v+z_qx_{\epsilon_i+\epsilon_{j+1}}^-x_{\epsilon_i-\epsilon_{j+1}}^-
-x_{\epsilon_{i}+\epsilon_{j}}^-x_{\epsilon_i-\epsilon_j}^-\\&=-q^{-1}a_{j+1}+z_qx_{\epsilon_i+\epsilon_{j+1}}^-x_{\epsilon_i-\epsilon_{j+1}}^-
,\text{ by Corollary~\ref{25a}(2)}\\&=(-q)^{j-n}a_{n}+\sum_{t
=j+1}^{n}(-q)^{1+j-t}z_qx_{\epsilon_i+\epsilon_t}^-x_{\epsilon_i-\epsilon_t}^-, \text { by induction on $n-j$},\end{aligned}$$
where the second and third equalities follow from  $[x_{\epsilon_i-\epsilon_j}^-, x_{\epsilon_i+\epsilon_{j+1}}^-]_v=0$ (see  Corollary~\ref{monoial1}) and Corollary~\ref{25a}(1).
Therefore,  the result follows from the corresponding result on   $a_n$.

 When $\mfg\in \{ \mathfrak{so}_{2n} , \mathfrak{sp}_{2n}\}$,  the required formulae on $a_n$  follow from Corollary~\ref{monoial1} and  Corollary~\ref{25a} (7), respectively.
If $\mfg=\mathfrak{so}_{2n+1}$, then
$$\begin{aligned}
a_n&=-[2]^{-1}x_{\epsilon_i-\epsilon_n}^-
[x_{\epsilon_i}^-, x_{\epsilon_n}^-]_v-x_{\epsilon_i+\epsilon_n}^-x_{\epsilon_{i}-\epsilon_{n}}^-,~\text{by Corollary~\ref{25a}(3)}\\&=[2]^{-1}(q^{-1}(x_{\epsilon_n}^-x_{\epsilon_i-\epsilon_n}^--
x_{\epsilon_i}^-)x_{\epsilon_i}^--x_{\epsilon_i}^-(x_{\epsilon_n}^-x_{\epsilon_i-\epsilon_n}^--
x_{\epsilon_i}^-))-x_{\epsilon_i+\epsilon_n}^-x_{\epsilon_{i}-\epsilon_{n}}^-\\&= [2]^{-1}(-[x_{\epsilon_i}^-, x_{\epsilon_n}^-]_v
x_{\epsilon_{i}-\epsilon_{n}}^--q^{-1}(x_{\epsilon_{i}}^-)^2
+(x_{\epsilon_{i}}^-)^2)-x_{\epsilon_i+\epsilon_n}^-x_{\epsilon_{i}-\epsilon_{n}}^-\\&=[2]^{-1}(1-q^{-1})(x_{\epsilon_{i}}^-)^2, ~\text{by Corollary~\ref{25a}(3)},
\end{aligned}$$
 where the second and third equalities follow from $[x_{\epsilon_i-\epsilon_n}^-, x_{\epsilon_i}^-]_v=0$ (see  Corollary~\ref{monoial1}) and Corollary~\ref{25a}(4).
\end{proof}

\begin{Lemma}\label{commute3}Suppose $\mathfrak g=\mathfrak{so}_{2n+1}$. For all admissible $i<j<n$, we have  $$[x_{\epsilon_i}^-, x_{\epsilon_j}^-]_v=-[2](-q)^{j-n}x_{\epsilon_i+\epsilon_j}^-+[2]\sum_{t=j+1}^{n}(-q)^{t-n}z_qx_{\epsilon_j-
\epsilon_t}^-x_{\epsilon_i+\epsilon_t}^-.$$
\end{Lemma}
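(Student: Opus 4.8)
The plan is to compute $[x_{\epsilon_i}^-, x_{\epsilon_j}^-]_v$ directly by the same ``telescoping'' technique used to prove Lemmas~\ref{commute1} and \ref{commute2}, reducing from the pair $(\epsilon_j,\epsilon_i)$ toward a minimal pair for which Lemma~\ref{min} applies. Concretely, since $i<j<n$ and $\mfg=\mathfrak{so}_{2n+1}$, Corollary~\ref{equa}(2) tells us that $\Upsilon_{\epsilon_j}=\{(\epsilon_k,\epsilon_j-\epsilon_k)\mid j<k\le n\}$; the natural choice is to single out the minimal pair $(\epsilon_n,\epsilon_j-\epsilon_n)$ and write $x_{\epsilon_j}^- = $ (up to scalar) $[x_{\epsilon_{j}-\epsilon_n}^-,x_{\epsilon_n}^-]$-type expression via Corollary~\ref{25a}(4), or alternatively to peel off one index at a time. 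I would set $a_j = [x_{\epsilon_i}^-,x_{\epsilon_j}^-]_v$ and expand $x_{\epsilon_j}^-$ using Corollary~\ref{25a}(4): $x_{\epsilon_j}^- = -q\,[x_{\epsilon_j-\epsilon_{j+1}}^-, x_{\epsilon_{j+1}}^-]_v$ (rearranging the stated identity), substitute, and push $x_{\epsilon_i}^-$ through the commutator using the Jacobi-type manipulation. The key inputs will be: $[x_{\epsilon_i}^-, x_{\epsilon_j-\epsilon_{j+1}}^-]_v = 0$, which follows from Corollary~\ref{monoial1} once one checks $\epsilon_i + (\epsilon_j-\epsilon_{j+1})\notin\mathcal R^+$ and that it admits no decomposition into roots strictly between $\epsilon_j-\epsilon_{j+1}$ and $\epsilon_i$ (this is exactly the situation catalogued in Lemmas~\ref{equal}--\ref{equal1}); and Corollary~\ref{25a}(2), which evaluates $[x_{\epsilon_i+\epsilon_{j+1}}^-, x_{\epsilon_j-\epsilon_{j+1}}^-]_v = -q^{-1}x_{\epsilon_i+\epsilon_j}^-$.

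The mechanics will mirror the proof of Lemma~\ref{commute2} almost line for line. After substituting $x_{\epsilon_j}^- \sim [x_{\epsilon_j-\epsilon_{j+1}}^-,x_{\epsilon_{j+1}}^-]_v$ into $a_j$ and using bilinearity of the $v$-commutator together with the vanishing commutator above, I expect to arrive at a recursion of the shape
\[
a_j = -q^{-1} a_{j+1} + [2]\, z_q\, x_{\epsilon_j-\epsilon_{j+1}}^- x_{\epsilon_i+\epsilon_{j+1}}^-,
\]
valid for $j < n-1$, where the coefficient $[2]$ enters because $x_{\epsilon_j}^-$ and $x_{\epsilon_i}^-$ in $\mathfrak{so}_{2n+1}$ carry $[2]$-factors (compare Lemma~\ref{natural1} and Corollary~\ref{25a}(3)--(4)). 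Iterating this recursion down to the base case $j = n-1$ gives
\[
a_j = (-q)^{j-(n-1)} a_{n-1} + [2] \sum_{t=j+1}^{n-1} (-q)^{t-(n-1)} z_q\, x_{\epsilon_j-\epsilon_t}^- x_{\epsilon_i+\epsilon_t}^-,
\]
and then $a_{n-1}$ must be computed separately: here $\epsilon_{n-1}$ has $\Upsilon_{\epsilon_{n-1}} = \{(\epsilon_n, \epsilon_{n-1}-\epsilon_n)\}$, so one more application of Corollary~\ref{25a}(3)--(4) plus $[x_{\epsilon_i}^-, x_{\epsilon_{n-1}-\epsilon_n}^-]_v = 0$ and Corollary~\ref{25a}(3) should produce $a_{n-1} = -[2](-q)^{-1} x_{\epsilon_i+\epsilon_{n-1}}^- + [2] z_q\, x_{\epsilon_{n-1}-\epsilon_n}^- x_{\epsilon_i+\epsilon_n}^-$, i.e. the $t = n$ term and the leading term of the claimed formula. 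Combining, the whole sum reassembles into $-[2](-q)^{j-n} x_{\epsilon_i+\epsilon_j}^- + [2]\sum_{t=j+1}^n (-q)^{t-n} z_q\, x_{\epsilon_j-\epsilon_t}^- x_{\epsilon_i+\epsilon_t}^-$.

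The main obstacle I anticipate is purely bookkeeping rather than conceptual: getting the powers of $-q$ and the $[2]$-factors exactly right. There are three delicate points. First, one must be careful about which order the root vectors appear in each product term, since the convex order \eqref{ord} dictates which monomials are ``PBW-ordered,'' and Corollary~\ref{25a}(2) outputs $x_{\epsilon_i+\epsilon_j}^-$ in a fixed position — the telescoping must respect this. Second, the vanishing commutators $[x_{\epsilon_i}^-, x_{\epsilon_j-\epsilon_t}^-]_v = 0$ for $j < t$ need to be justified uniformly; this is where Corollary~\ref{monoial1} together with the classification in Lemma~\ref{equal2} is invoked, and I would state this once as a sub-claim rather than repeat it. Third, the precise value of $a_{n-1}$ requires threading through the $\mathfrak{so}_{2n+1}$-specific identities carefully, since $x_{\epsilon_n}^-$ carries a $[2]$ while $x_{\epsilon_{n-1}-\epsilon_n}^-$ does not. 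Once the recursion and base case are pinned down the algebra is routine, and I would present it compactly in the style of the proof of Lemma~\ref{commute2} without belaboring each intermediate rearrangement.
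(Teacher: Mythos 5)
Your plan deviates from the paper's approach in a way that introduces a real error. The paper expands $x_{\epsilon_j}^-$ via the minimal pair $(\epsilon_n, \epsilon_j-\epsilon_n)$ — that is, Corollary~\ref{25a}(4) with $k=n$ directly — pushes $x_{\epsilon_i}^-$ through using $[x_{\epsilon_i}^-,x_{\epsilon_j-\epsilon_n}^-]_v=0$ (Corollary~\ref{monoial1}) and $[x_{\epsilon_i}^-,x_{\epsilon_n}^-]_v=-[2]x_{\epsilon_i+\epsilon_n}^-$ (Corollary~\ref{25a}(3)), and reduces in one step to $-[2][x_{\epsilon_i+\epsilon_n}^-,x_{\epsilon_j-\epsilon_n}^-]_v+[2]z_qx_{\epsilon_j-\epsilon_n}^-x_{\epsilon_i+\epsilon_n}^-$, at which point the already-proved Lemma~\ref{commute1} finishes the expansion. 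You instead take $k=j+1$ and hope for a recursion $a_j=-q^{-1}a_{j+1}+[2]z_qx_{\epsilon_j-\epsilon_{j+1}}^-x_{\epsilon_i+\epsilon_{j+1}}^-$.

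That recursion is false. Because $x_{\epsilon_i}^-$ and $x_{\epsilon_j-\epsilon_{j+1}}^-$ commute \emph{exactly} (Corollary~\ref{monoial1}: $\epsilon_i+\epsilon_j-\epsilon_{j+1}\notin\mathcal R^+$ and admits no intermediate decomposition, and $(\epsilon_i\mid\epsilon_j-\epsilon_{j+1})=0$), pushing $x_{\epsilon_i}^-$ through the expansion of $x_{\epsilon_j}^-=-q[x_{\epsilon_j-\epsilon_{j+1}}^-,x_{\epsilon_{j+1}}^-]_v$ produces no inhomogeneous correction term at all; the calculation lands on the sandwich identity
$a_j=a_{j+1}x_{\epsilon_j-\epsilon_{j+1}}^--q\,x_{\epsilon_j-\epsilon_{j+1}}^-a_{j+1}$,
not a linear recursion in $a_{j+1}$. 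This is structurally different from the proof of Lemma~\ref{commute2}, which you claim to mimic line for line: there the pivot element $x_{\epsilon_i-\epsilon_j}^-$ fails to commute with $x_{\epsilon_j-\epsilon_{j+1}}^-$ and generates the correction term via Corollary~\ref{25a}(1). No analogous cross-term appears in your setting, so the ``$+\,[2]z_q\ldots$'' piece does not materialize where you put it.

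A second, independent problem: your ``iterated'' formula $a_j=(-q)^{j-(n-1)}a_{n-1}+[2]\sum_{t=j+1}^{n-1}(-q)^{t-(n-1)}z_qx_{\epsilon_j-\epsilon_t}^-x_{\epsilon_i+\epsilon_t}^-$ does not follow from your stated recursion even if that recursion were correct. Iterating $a_s=-q^{-1}a_{s+1}+[2]z_qx_{\epsilon_s-\epsilon_{s+1}}^-x_{\epsilon_i+\epsilon_{s+1}}^-$ for $s=j,j+1,\ldots$ would yield monomials $x_{\epsilon_s-\epsilon_{s+1}}^-x_{\epsilon_i+\epsilon_{s+1}}^-$, with the first index varying, not the fixed-$j$ monomials $x_{\epsilon_j-\epsilon_t}^-x_{\epsilon_i+\epsilon_t}^-$ that the target formula requires. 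Converting one into the other would itself require Corollary~\ref{25a}(1) applied repeatedly — i.e., would amount to re-deriving Lemma~\ref{commute1}. You can repair the argument by doing strong induction through the sandwich identity, substituting the known form of $a_{j+1}$ and commuting every PBW monomial past $x_{\epsilon_j-\epsilon_{j+1}}^-$ using Corollaries~\ref{25a}(1),(2) and \ref{monoial1}; this works but is strictly more labor than the paper's route, which confines the $n\to j$ telescoping entirely to Lemma~\ref{commute1} and handles Lemma~\ref{commute3} in a single pass.
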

\begin{proof}We have
 $$\begin{aligned}
{[}x_{\epsilon_i}^-, x_{\epsilon_{j}}^-{]}_v&=-qx_{\epsilon_i}^-
[x_{\epsilon_j-\epsilon_{n}}^-, x_{\epsilon_{n}}^-]_v- x_{\epsilon_{j}}^-x_{\epsilon_i}^-,~\text{by Corollary~\ref{25a}(4)}\\&=(x_{\epsilon_{n}}^-x_{\epsilon_i}^--[2]x_{\epsilon_i+\epsilon_{n}}^-)
x_{\epsilon_j-\epsilon_{n}}^--qx_{\epsilon_j-\epsilon_{n}}^-(x_{\epsilon_{n}}^-x_{\epsilon_i}^--[2]x_{\epsilon_i+\epsilon_{n}}^-)
- x_{\epsilon_{j}}^-x_{\epsilon_i}^-\\&=-q[x_{\epsilon_j-\epsilon_{n}}^-, x_{\epsilon_{n}}^-]_vx_{\epsilon_i}^--[2][x_{\epsilon_i+\epsilon_n}^-, x_{\epsilon_j-\epsilon_n}^-]_v+[2]z_qx_{\epsilon_j-\epsilon_n}^-x_{\epsilon_i+\epsilon_n}^-
- x_{\epsilon_{j}}^-x_{\epsilon_i}^-\\&=-[2][x_{\epsilon_i+\epsilon_n}^-, x_{\epsilon_j-\epsilon_n}^-]_v+[2]z_qx_{\epsilon_j-\epsilon_n}^-x_{\epsilon_i+\epsilon_n}^-,~\text{by Corollary~\ref{25a}(4)},\\
&=-[2](-q)^{j-n}x_{\epsilon_i+\epsilon_j}^-+[2]\sum_{t=j+1}^{n}(-q)^{t-n}
z_qx_{\epsilon_j-\epsilon_t}^-x_{\epsilon_i+\epsilon_t}^-, \text{by  Lemma~\ref{commute1},}\\
\end{aligned}$$
 where the second and third equalities follow from  $[x_{\epsilon_i}^-, x_{\epsilon_j-\epsilon_{n}}^-]_v=0$ (see  Corollary~\ref{monoial1}) and Corollary~\ref{25a}(3).  \end{proof}

\begin{Lemma}\label{commute4}If $(\alpha, \nu, \beta, \gamma)$ is one of pairs in Lemma~\ref{equal2}(a)-(e). Then $[x_{\nu}^-, x_{\alpha}^-]_v=z_qx_{\beta}^-x_{\gamma}^-$.
\end{Lemma}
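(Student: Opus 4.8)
The plan is to verify the five identities (a)--(e) one at a time, each by a short direct manipulation of $v$-commutators of the same flavour as the proofs of Corollary~\ref{25a} and Lemmas~\ref{commute1}--\ref{commute3}. A uniform first observation simplifies matters: in every one of the configurations (a)--(e) the indices occurring in $\alpha$ and $\nu$ are pairwise distinct (this is forced by the inequalities on $i,k,j,l$ listed in Lemma~\ref{equal2}), so $(\alpha\mid\nu)=0$ and hence $[x_\nu^-,x_\alpha^-]_v$ is the ordinary commutator $x_\nu^-x_\alpha^--x_\alpha^-x_\nu^-$. The engine of each case is then: rewrite whichever of $x_\alpha^-$, $x_\nu^-$ is the ``larger'' root as a known scalar multiple of a length-two $v$-commutator $c\,[x_\xi^-,x_\rho^-]_v$, where $\rho\in\{\beta,\gamma\}$ and $\{\xi,\rho\}$ is (up to relabelling indices) one of the minimal pairs covered by Corollary~\ref{25a}; such a $\xi$ exists in each case because $\gamma-\nu=\beta-\alpha$ is itself a root or $0$, which one reads off from the explicit quadruples in Lemma~\ref{equal2}(a)--(e).

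After the substitution one expands the resulting nested commutator into a sum of three monomials. Exactly one of them is killed immediately: the pair $\{\beta,\nu\}$ (resp.\ $\{\beta,\alpha\}$, whichever arises) has $\beta+\nu\notin\mathcal R^+$ and admits no intermediate decomposition $\alpha'>\beta'\ge\gamma'>\nu'$ in the configurations at hand --- this is checked against Corollary~\ref{equa} --- so Corollary~\ref{monoial1} gives the $v$-commutativity relation that removes that term. The two surviving monomials recombine, on applying the relevant parts of Corollary~\ref{25a} (parts (1),(2),(5) for the $\epsilon_i\pm\epsilon_j$-type cases (a),(b),(c), and parts (3),(4) together with Lemma~\ref{commute3} for the cases (d),(e), which involve the short root $\epsilon_k$ and therefore pertain only to $\mathfrak g=\mathfrak{so}_{2n+1}$), into precisely $z_qx_\beta^-x_\gamma^-$; the remaining work is to check that all the powers of $q$ (and, in (d),(e), the stray factor $[2]$ produced at the intermediate stage) cancel. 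Throughout one must keep in mind $q=v^2$ if $\mfg=\mathfrak{so}_{2n+1}$ and $q=v$ otherwise, together with the inner-product normalisation $(\epsilon_i\mid\epsilon_i)=2$ (resp.\ $1$) in the two cases, since these are what make the $q$-exponents in Corollary~\ref{25a} uniform.

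The main obstacle is therefore not conceptual but organisational: five cases, each needing the right choice of $\xi$ and the right orientation of each elementary $v$-commutator so that the convex order of \eqref{ord} is respected and Corollaries~\ref{25a} and~\ref{monoial1} apply on the nose, followed by careful tracking of scalars to land on exactly $z_q$ with no residual power of $q$. Once the analogous statements for the pairs in Lemma~\ref{equal2}(f)--(j) are also in hand, this lemma, in combination with Corollary~\ref{monoial1}, is what is needed to bound $ev(c^-_{\mathbf r})$ by $|\mathbf r|-1$ and so complete the proof of Proposition~\ref{commut1}.
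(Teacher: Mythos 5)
Your approach is essentially the paper's: each of cases (a)--(e) is handled by replacing one of $x_\alpha^-$, $x_\nu^-$ with a length-two $v$-commutator from Corollary~\ref{25a}, expanding the nested bracket, discarding the one monomial that Corollary~\ref{monoial1} shows $v$-commutes, and reassembling the survivors via Corollary~\ref{25a}. A few bookkeeping corrections to your sketch: the paper does not always decompose the larger root (in (d) and (e) it is $x_\nu^-$ that gets rewritten), cases (d)--(e) go through using only Corollary~\ref{25a}(1),(4),(5) and Corollary~\ref{monoial1} with no appeal to Lemma~\ref{commute3}, and no intermediate $[2]$ factor arises in those cases.
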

\begin{proof}Suppose  $(\alpha, \nu, \beta, \gamma)$ is given in Lemma~\ref{equal2}(a). Then
$$\begin{aligned}{[}x_{\epsilon_i-\epsilon_{j}}^-&,  x_{\epsilon_{k}-\epsilon_{l}}^-]_v
 =-qx_{\epsilon_i-\epsilon_j}^-
[x_{\epsilon_k-\epsilon_{j}}^-, x_{\epsilon_{j}-\epsilon_{l}}^-]_v-x_{\epsilon_k-\epsilon_{l}}^-x_{\epsilon_i-\epsilon_{j}}^-,~\text{by Corollary~\ref{25a}(1)}\\= & q^{-1}(x_{\epsilon_{j}-\epsilon_{l}}^-x_{\epsilon_i-\epsilon_j}^--x_{\epsilon_i-\epsilon_{l}}^-)
x_{\epsilon_k-\epsilon_{j}}^--qx_{\epsilon_k-\epsilon_{j}}^-(x_{\epsilon_{j}-\epsilon_{l}}^-x_{\epsilon_i-\epsilon_j}^--x_{\epsilon_i-\epsilon_{l}}^-)
-x_{\epsilon_k-\epsilon_{l}}^-x_{\epsilon_i-\epsilon_{j}}^-\\ =&-q[x_{\epsilon_k-\epsilon_{j}}^-, x_{\epsilon_{j}-\epsilon_{l}}^-]_vx_{\epsilon_i-\epsilon_{j}}^-
+qx_{\epsilon_k-\epsilon_{j}}^-x_{\epsilon_{i}-\epsilon_{l}}^--q^{-1}x_{\epsilon_{i}-\epsilon_{l}}^-x_{\epsilon_k-\epsilon_{j}}^--x_{\epsilon_k-\epsilon_{l}}^-x_{\epsilon_i-\epsilon_{j}}^-
\\= &
z_qx_{\epsilon_k-\epsilon_{j}}^-x_{\epsilon_{i}-\epsilon_{l}}^-,~\text{by Corollary~\ref{25a}(1)},\end{aligned}$$  where the second and third equalities follow from $[x_{\epsilon_i-\epsilon_j}^-, x_{\epsilon_k-\epsilon_{j}}^-]_v=0$, $[x_{\epsilon_i-\epsilon_l}^-, x_{\epsilon_k-\epsilon_{j}}^-]_v=0$ (see Corollary~\ref{monoial1}) and Corollary~\ref{25a}(1).

Suppose  $(\alpha, \nu, \beta, \gamma)$ is given in Lemma~\ref{equal2}(b). Then
\begin{equation}\label{ccc3}\begin{aligned}&{[}x_{\epsilon_i+\epsilon_l}^-, x_{\epsilon_k+\epsilon_j}^-{]_v}
=-q
[x_{\epsilon_i-\epsilon_{k}}^-, x_{\epsilon_{k}+\epsilon_{l}}^-]_vx_{\epsilon_{k}+\epsilon_{j}}^--x_{\epsilon_k+\epsilon_{j}}^-x_{\epsilon_i+\epsilon_{l}}^-,~\text{by Corollary~\ref{25a}(5)}\\=& -q(x_{\epsilon_{k}+\epsilon_{j}}^-x_{\epsilon_i-\epsilon_k}^--x_{\epsilon_i+\epsilon_{j}}^-)
x_{\epsilon_k+\epsilon_{l}}^-+q^{-1}x_{\epsilon_k+\epsilon_{l}}^-(x_{\epsilon_{k}+\epsilon_{j}}^-x_{\epsilon_i-\epsilon_k}^--x_{\epsilon_i+\epsilon_{j}}^-)-x_{\epsilon_k+\epsilon_{j}}^-x_{\epsilon_i+\epsilon_{l}}^-
\\=&-qx_{\epsilon_k+\epsilon_{j}}^-[x_{\epsilon_i-\epsilon_{k}}^-, x_{\epsilon_{k}+\epsilon_{l}}^-]_v
+qx_{\epsilon_i+\epsilon_{j}}^-x_{\epsilon_k+\epsilon_{l}}^--q^{-1}x_{\epsilon_k+\epsilon_{l}}^-x_{\epsilon_i+\epsilon_{j}}^--x_{\epsilon_k+\epsilon_{j}}^-x_{\epsilon_i+\epsilon_{l}}^-
\\=&
z_qx_{\epsilon_k+\epsilon_l}^-x_{\epsilon_i+\epsilon_j}^-,~\text{by Corollary~\ref{25a}(5)},\end{aligned}\end{equation}
where the second and third equalities follow from $[x_{\epsilon_k+\epsilon_l}^-, x_{\epsilon_h+\epsilon_{j}}^-]_v=0$, if $h\in\{i, k\}$ (see  Corollary~\ref{monoial1}) and Corollary~\ref{25a}(5).

 Suppose $(\alpha, \nu, \beta, \gamma)$  is given in Lemma~\ref{equal2}(c).
 So $(\alpha, \nu, \beta, \gamma)=(\epsilon_k+\epsilon_j, \epsilon_i-\epsilon_l, \epsilon_k-\epsilon_l, \epsilon_i+\epsilon_j)$, $i<k<j$, $i<k<l$  and $j\neq l$.
 If  $j>l$, then \begin{equation} \label{ccc2} \begin{aligned}{[}&x_{\epsilon_i-\epsilon_l}^-}  , {x_{\epsilon_k+\epsilon_j}^-]_v
=-qx_{\epsilon_i-\epsilon_l}^-
[x_{\epsilon_k-\epsilon_{l}}^-, x_{\epsilon_{l}+\epsilon_{j}}^-]_v-x_{\epsilon_{k}+\epsilon_{j}}^-x_{\epsilon_i-\epsilon_l}^-,~\text{by Corollary~\ref{25a}(5)}\\
&=q^{-1}(x_{\epsilon_{l}+\epsilon_{j}}^-x_{\epsilon_i-\epsilon_l}^--x_{\epsilon_i+\epsilon_{j}}^-)
x_{\epsilon_k-\epsilon_{l}}^--qx_{\epsilon_k-\epsilon_{l}}^-(x_{\epsilon_{l}+\epsilon_{j}}^-x_{\epsilon_i-\epsilon_l}^--x_{\epsilon_i+\epsilon_{j}}^-)
-x_{\epsilon_{k}+\epsilon_{j}}^-x_{\epsilon_i-\epsilon_l}^-\\&=-q
[x_{\epsilon_k-\epsilon_{l}}^-, x_{\epsilon_{l}+\epsilon_{j}}^-]_vx_{\epsilon_i-\epsilon_l}^-
+qx_{\epsilon_k-\epsilon_l}^-x_{\epsilon_i+\epsilon_j}^--q^{-1}x_{\epsilon_i+\epsilon_j}^-x_{\epsilon_k-\epsilon_l}^--x_{\epsilon_{k}+\epsilon_{j}}^-x_{\epsilon_i-\epsilon_l}^-
\\&=
z_qx_{\epsilon_k-\epsilon_l}^-x_{\epsilon_i+\epsilon_j}^-,~\text{by Corollary~\ref{25a}(5)}, \end{aligned}\end{equation}where the second and third equalities follow from $[x_{\epsilon_i-\epsilon_l}^-, x_{\epsilon_k-\epsilon_{l}}^-]_v=0$, $ [x_{\epsilon_i+\epsilon_j}^-, x_{\epsilon_k-\epsilon_{l}}^-]_v=0$ (see  Corollary~\ref{monoial1}) and Corollary~\ref{25a}(5).
If $j<l$, then
   \begin{equation}\label{ccc4}\begin{aligned}{[}&x_{\epsilon_i-\epsilon_l}^-, x_{\epsilon_{k}+\epsilon_{j}}^-{]_v}
=-q
[x_{\epsilon_i-\epsilon_k}^-, x_{\epsilon_k-\epsilon_{l}}^-]_vx_{\epsilon_k+\epsilon_j}^--x_{\epsilon_{k}+\epsilon_{j}}^-x_{\epsilon_i-\epsilon_l}^-
,~\text{by Corollary~\ref{25a}(1)}\\&=q^{-1}x_{\epsilon_k-\epsilon_{l}}^-(x_{\epsilon_{k}+\epsilon_{j}}^-x_{\epsilon_i-\epsilon_k}^--x_{\epsilon_i+\epsilon_{j}}^-)
-q(x_{\epsilon_{k}+\epsilon_{j}}^-x_{\epsilon_i-\epsilon_k}^--x_{\epsilon_i+\epsilon_{j}}^-)x_{\epsilon_k-\epsilon_{l}}^--x_{\epsilon_{k}+\epsilon_{j}}^-x_{\epsilon_i-\epsilon_l}^-
\\&=-qx_{\epsilon_k+\epsilon_j}^-[x_{\epsilon_i-\epsilon_k}^-, x_{\epsilon_k-\epsilon_{l}}^-]_v
-q^{-1}x_{\epsilon_k-\epsilon_l}^-x_{\epsilon_{i}+\epsilon_{j}}^-+qx_{\epsilon_{i}+\epsilon_{j}}^-x_{\epsilon_k-\epsilon_l}^--x_{\epsilon_{k}+\epsilon_{j}}^-x_{\epsilon_i-\epsilon_l}^-
\\&=
z_qx_{\epsilon_k-\epsilon_l}^-x_{\epsilon_{i}+\epsilon_{j}}^-,~\text{by Corollary~\ref{25a}(1)},\end{aligned}\end{equation}
where the second and third equalities follow from $[x_{\epsilon_k-\epsilon_l}^-, x_{\epsilon_k+\epsilon_{j}}^-]_v=0$, $[x_{\epsilon_i+\epsilon_j}^-, x_{\epsilon_k-\epsilon_{l}}^-]_v=0$ (see  Corollary~\ref{monoial1}) and Corollary~\ref{25a}(5).

 Suppose $(\alpha, \nu, \beta, \gamma)$  is given in Lemma~\ref{equal2}(d). Then
 \begin{equation}\label{ccc1}  \begin{aligned}
{ [x_{\epsilon_i-\epsilon_j}^-, x_{\epsilon_k}^-]_v}
&=-q[x_{\epsilon_{i}-\epsilon_{k}}^-, x_{\epsilon_k-\epsilon_{j}}^-]_vx_{\epsilon_k}^--x_{\epsilon_k}^-x_{\epsilon_i-\epsilon_j}^-,~\text{by Corollary~\ref{25a}(1)}\\&=-q(x_{\epsilon_k}^-x_{\epsilon_i-\epsilon_k}^--x_{\epsilon_i}^-)
x_{\epsilon_k-\epsilon_{j}}^-+q^{-1}x_{\epsilon_k-\epsilon_{j}}^-(x_{\epsilon_k}^-x_{\epsilon_i-\epsilon_k}^--x_{\epsilon_i}^-)-x_{\epsilon_k}^-x_{\epsilon_i-\epsilon_j}^-
\\&=-qx_{\epsilon_k}^-[x_{\epsilon_{i}-\epsilon_{k}}^-, x_{\epsilon_k-\epsilon_{j}}^-]_v
+qx_{\epsilon_i}^-x_{\epsilon_k-\epsilon_j}^--q^{-1}x_{\epsilon_k-\epsilon_j}^-x_{\epsilon_i}^--x_{\epsilon_k}^-x_{\epsilon_i-\epsilon_j}^-
\\&=
z_qx_{\epsilon_k-\epsilon_j}^-x_{\epsilon_i}^-,~\text{by Corollary~\ref{25a}(1)},\end{aligned}\end{equation}
where the second and third equalities follow from $[x_{\epsilon_k-\epsilon_j}^-, x_{\epsilon_k}^-]_v=0$, $ [x_{\epsilon_i}^-, x_{\epsilon_k-\epsilon_{j}}^-]_v=0$ (see  Corollary~\ref{monoial1}) and Corollary~\ref{25a}(4).

Finally, suppose $(\alpha, \nu, \beta, \gamma)$  is given in Lemma~\ref{equal2}(e). We have
$$\begin{aligned}{[x_{\epsilon_i}^-, x_{\epsilon_k+\epsilon_j}^-]_v}&=-q[x_{\epsilon_{i}-\epsilon_{k}}^-, x_{\epsilon_k}^-]_vx_{\epsilon_k+\epsilon_j}^--x_{\epsilon_k+\epsilon_j}^-x_{\epsilon_i}^-,~\text{by Corollary~\ref{25a}(4)}\\&
=-q(x_{\epsilon_k+\epsilon_j}^-x_{\epsilon_i-\epsilon_k}^--x_{\epsilon_i+\epsilon_j}^-)
x_{\epsilon_k}^-+q^{-1}x_{\epsilon_k}^-(x_{\epsilon_k+\epsilon_j}^-x_{\epsilon_i-\epsilon_k}^--x_{\epsilon_i+\epsilon_j}^-)-x_{\epsilon_k+\epsilon_j}^-x_{\epsilon_i}^-
\\&=-qx_{\epsilon_k+\epsilon_j}^-[x_{\epsilon_{i}-\epsilon_{k}}^-, x_{\epsilon_k}^-]_v
+qx_{\epsilon_i+\epsilon_j}^-x_{\epsilon_k}^--q^{-1}x_{\epsilon_k}^-x_{\epsilon_i+\epsilon_j}^--x_{\epsilon_k+\epsilon_j}^-x_{\epsilon_i}^-
\\&=
z_qx_{\epsilon_k}^-x_{\epsilon_i+\epsilon_j}^-,~\text{by Corollary~\ref{25a}(4)},\end{aligned}$$
where the second and third equalities follow from $[x_{\epsilon_k}^-, x_{\epsilon_k+\epsilon_j}^-]_v=0$, $[x_{\epsilon_i+\epsilon_{j}}^-, x_{\epsilon_k}^-]_v=0$ (see  Corollary~\ref{monoial1}) and Corollary~\ref{25a}(5).
\end{proof}

\begin{Lemma}\label{commute5}Suppose $1\leq i<j<n$. For any $k,\  j<k\leq n$,  let $a_k=[x_{\epsilon_i-\epsilon_k}^-, x_{\epsilon_j+\epsilon_k}^-]_v$. Then $$ \begin{aligned} & a_k+\sum_{t=k+1}^n(-q)^{k-t}z_qx_{\epsilon_j+\epsilon_t}^-
x_{\epsilon_i-\epsilon_t}^--z_q
x_{\epsilon_j-\epsilon_{k}}^-x_{\epsilon_i+\epsilon_{k}}^- +z_q
(-q)^{k-n}x_{\epsilon_j-\epsilon_{n}}^-x_{\epsilon_i+\epsilon_{n}}^-\\ & =\begin{cases} (-q)^{k-n} \{ z_qx_{\epsilon_j-\epsilon_n}^-x_{\epsilon_i+\epsilon_n}^-+[2]^{-1}q^{-1}(z_qx_{\epsilon_j}^-x_{\epsilon_i}^--[x_{\epsilon_i}^-, x_{\epsilon_j}^-]_v)\}, &\text{if $\mathfrak g=\mathfrak{so}_{2n+1}$,}\\ (-q)^{k-n} [x_{\epsilon_i+\epsilon_n}^-, x_{\epsilon_j-\epsilon_n}^-]_v ,  & \text{if $\mathfrak g=\mathfrak{so}_{2n}$,}\\(-q)^{k-n}\{(1+q^{-2})z_qx_{\epsilon_j-\epsilon_{n}}^-x_{\epsilon_i+\epsilon_{n}}^--q^{-2}[x_{\epsilon_i+\epsilon_n}^-, x_{\epsilon_j-\epsilon_n}^-]_v \},  & \text{if $\mathfrak g=\mathfrak{sp}_{2n}$.}\\
 \end{cases}\end{aligned} $$
  Moreover,  $[x_{\epsilon_i+\epsilon_n}^-, x_{\epsilon_j-\epsilon_n}^-]_v$ and  $[x_{\epsilon_i}^-, x_{\epsilon_j}^-]_v$ have been computed in  Lemma~\ref{commute1} and  Lemma~\ref{commute3}, respectively.
\end{Lemma}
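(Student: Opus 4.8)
The plan is to prove the identity by downward induction on $n-k$, following verbatim the scheme already used for Lemmas~\ref{commute1}, \ref{commute2} and \ref{commute3}: first establish a recursion that reduces $a_k$ to $a_{k+1}$ for $j<k<n$, and then close the induction with a direct, type-by-type computation of the base case $a_n=[x_{\epsilon_i-\epsilon_n}^-,x_{\epsilon_j+\epsilon_n}^-]_v$. Once the recursion is iterated from $k$ up to $n$, substituting the value of $a_n$ reorganizes the whole expression into the stated left-hand side, exactly as in the proof of Lemma~\ref{commute2}.

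For the inductive step, the key move is to rewrite the factor $x_{\epsilon_j+\epsilon_k}^-$ through the minimal pair $(\epsilon_k-\epsilon_{k+1},\,\epsilon_j+\epsilon_{k+1})$ of $\epsilon_j+\epsilon_k$, which is available since $k<n$: Corollary~\ref{25a}(2) gives $x_{\epsilon_j+\epsilon_k}^-=-q\,[x_{\epsilon_j+\epsilon_{k+1}}^-,x_{\epsilon_k-\epsilon_{k+1}}^-]_v$. Substituting this into $a_k=x_{\epsilon_i-\epsilon_k}^-x_{\epsilon_j+\epsilon_k}^--q^{-1}x_{\epsilon_j+\epsilon_k}^-x_{\epsilon_i-\epsilon_k}^-$, expanding the two nested $v$-commutators, and then moving the ``$v$-commuting'' factors past one another — pairs of root vectors whose weights sum to a non-root admitting no intermediate decomposition $v$-commute by Corollary~\ref{monoial1}, while the remaining rearrangements are the elementary identities of Corollary~\ref{25a}(1),(2) together with the commutators for the pairs of type Lemma~\ref{equal2}(b),(c) worked out in Lemma~\ref{commute4} — collapses everything to a recursion expressing $a_k$ in terms of $a_{k+1}$ up to explicit terms quadratic in the $x_\beta^-$'s. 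Iterating this recursion yields $a_k=(-q)^{k-n}a_n+\sum_{t=k+1}^n(\cdots)$.

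The base case $k=n$ is where the three Lie types diverge, since $\Upsilon_{\epsilon_j+\epsilon_n}$ in Corollary~\ref{equa}(5) carries the extra minimal pair $(\epsilon_n,\epsilon_j)$ when $\mathfrak g=\mathfrak{so}_{2n+1}$, the pair $(2\epsilon_n,\epsilon_j-\epsilon_n)$ when $\mathfrak g=\mathfrak{sp}_{2n}$, and nothing extra when $\mathfrak g=\mathfrak{so}_{2n}$. Accordingly I would compute $a_n$ by decomposing $x_{\epsilon_j+\epsilon_n}^-$ (resp.\ $x_{\epsilon_i-\epsilon_n}^-$) through the appropriate minimal pair, using Corollary~\ref{25a}(3),(6),(7) for the short/long root factors, using Corollary~\ref{monoial1} to discard the brackets $[x_{\epsilon_i-\epsilon_n}^-,x_{\epsilon_j}^-]_v$, $[x_{\epsilon_i+\epsilon_n}^-,\cdot]_v$ and the like that vanish, and finally expressing the leftover terms via the already-known brackets $[x_{\epsilon_i+\epsilon_n}^-,x_{\epsilon_j-\epsilon_n}^-]_v$ (Lemma~\ref{commute1}) and $[x_{\epsilon_i}^-,x_{\epsilon_j}^-]_v$ (Lemma~\ref{commute3}). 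The genuine obstacle here is not conceptual but bookkeeping: one must track the precise powers of $-q$, the $[2]^{\pm1}$ factors produced by the short root $\epsilon_n$ in type $B$, and the $q^{-2}$ and $z_{q^2}$ factors produced by the long root $2\epsilon_n$ in type $C$, and then verify that the correction sums telescope to exactly the displayed expression. The $\mathfrak{sp}_{2n}$ computation is the most delicate of the three, and reconciling it with the $(1+q^{-2})$ coefficient in the statement is where I expect the bulk of the effort to lie; no new idea beyond those of Lemmas~\ref{commute1}--\ref{commute3} should be needed.
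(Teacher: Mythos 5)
Your overall architecture matches the paper exactly: downward induction on $n-k$ via the minimal pair $(\epsilon_k-\epsilon_{k+1},\epsilon_j+\epsilon_{k+1})$ and Corollary~\ref{25a}(2) to establish the recursion $a_k=-q^{-1}a_{k+1}+z_q(\cdots)$, iterate to land on $a_n$, then split the base case by type using the extra minimal pairs from Corollary~\ref{equa}(5) and appeal to Lemmas~\ref{commute1} and~\ref{commute3} at the end. The paper does the same.

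One concrete mistake in the sketch: you propose to use Corollary~\ref{monoial1} to discard $[x_{\epsilon_i-\epsilon_n}^-,x_{\epsilon_j}^-]_v$. That bracket does \emph{not} vanish. Since $\epsilon_i-\epsilon_n<\epsilon_i<\epsilon_j-\epsilon_n<\epsilon_j$ for $i<j<n$ in the convex order \eqref{ord}, the pair $(\epsilon_j,\,\epsilon_i-\epsilon_n)$ falls under Lemma~\ref{equal2}(d), and by Lemma~\ref{commute4} (equation \eqref{ccc1} in the paper's proof) $[x_{\epsilon_i-\epsilon_n}^-,x_{\epsilon_j}^-]_v=z_qx_{\epsilon_j-\epsilon_n}^-x_{\epsilon_i}^-$. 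In the paper's type-$B$ base case this non-vanishing term is exactly what survives and, after reassembly, produces the $[2]^{-1}q^{-1}z_qx_{\epsilon_j}^-x_{\epsilon_i}^-$ piece of the stated answer; if you discard it you get the wrong formula for $\mathfrak{so}_{2n+1}$. Relatedly, you place the bulk of the bookkeeping difficulty in $\mathfrak{sp}_{2n}$, but in the paper's proof it is $\mathfrak{so}_{2n+1}$ that requires the most care, because of the interacting $[2]^{\pm1}$ factors coming from the short root $\epsilon_n$ and the non-trivial mixed bracket just noted. The brackets that do vanish by Corollary~\ref{monoial1} in the type-$B$ base case are things like $[x_{\epsilon_i-\epsilon_n}^-,x_{\epsilon_i}^-]_v$ and, in type $C$, $[x_{\epsilon_i-\epsilon_n}^-,x_{\epsilon_j-\epsilon_n}^-]_v$. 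With this correction your plan reproduces the paper's argument.
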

\begin{proof} Suppose $k<n$. Then
$$\begin{aligned}&a_k=-qx_{\epsilon_i-\epsilon_k}^-[x_{\epsilon_j+\epsilon_{k+1}}^-, x_{\epsilon_{k}-\epsilon_{k+1}}^-]_v,~\text{by Corollary~\ref{25a}(2)}
\\&=q^{-1}(x_{\epsilon_k-\epsilon_{k+1}}^-x_{\epsilon_i-\epsilon_{k}}^-
-x_{\epsilon_i-\epsilon_{k+1}}^-)x_{\epsilon_j+\epsilon_{k+1}}^--q^{-1}x_{\epsilon_{j}+\epsilon_{k}}^-x_{\epsilon_i-\epsilon_k}^-\\&\qquad-q(x_{\epsilon_j+\epsilon_{k+1}}^-x_{\epsilon_i-\epsilon_k}^-+z_qx_{\epsilon_j-\epsilon_k}^-
x_{\epsilon_i+\epsilon_{k+1}}^-)x_{\epsilon_k-\epsilon_{k+1}}^-\\&=q^{-1}x_{\epsilon_k-\epsilon_{k+1}}^-(x_{\epsilon_j+\epsilon_{k+1}}^-x_{\epsilon_i-\epsilon_k}^-+z_qx_{\epsilon_j-\epsilon_k}^-
x_{\epsilon_i+\epsilon_{k+1}}^-)-q^{-1}x_{\epsilon_i-\epsilon_{k+1}}^-x_{\epsilon_j+\epsilon_{k+1}}^-\\ &\qquad-x_{\epsilon_j+\epsilon_{k+1}}^-(x_{\epsilon_k-\epsilon_{k+1}}^-x_{\epsilon_i-\epsilon_k}^-
-x_{\epsilon_i-\epsilon_{k+1}}^-)-qz_qx_{\epsilon_j-\epsilon_k}^-x_{\epsilon_i+\epsilon_{k+1}}^-x_{\epsilon_k-\epsilon_{k+1}}^-
-q^{-1}x_{\epsilon_{j}+\epsilon_{k}}^-x_{\epsilon_i-\epsilon_k}^-\\&=-[x_{\epsilon_j+\epsilon_{k+1}}^-, x_{\epsilon_{k}-\epsilon_{k+1}}^-]_vx_{\epsilon_i-\epsilon_{k}}^-+q^{-1}z_qx_{\epsilon_k-\epsilon_{k+1}}^-x_{\epsilon_j-\epsilon_{k}}^-x_{\epsilon_i+\epsilon_{k+1}}^-
-q^{-1}[x_{\epsilon_i-\epsilon_{k+1}}^-, x_{\epsilon_j+\epsilon_{k+1}}^-]_v
\\ &\qquad +(1-q^{-2})x_{\epsilon_j+\epsilon_{k+1}}^-x_{\epsilon_i-\epsilon_{k+1}}^--qz_qx_{\epsilon_j-\epsilon_{k}}^-x_{\epsilon_i+\epsilon_{k+1}}^-x_{\epsilon_k-\epsilon_{k+1}}^-
-q^{-1}x_{\epsilon_{j}+\epsilon_{k}}^-x_{\epsilon_i-\epsilon_k}^-
\\&=\frac{z_q}{q}(x_{\epsilon_j+\epsilon_{k+1}}^-x_{\epsilon_i-\epsilon_{k+1}}^-
+x_{\epsilon_j-\epsilon_{k+1}}^-x_{\epsilon_i+\epsilon_{k+1}}^-+qx_{\epsilon_j-\epsilon_{k}}^-x_{\epsilon_i+\epsilon_{k}}^-)-\frac{1}{q}a_{k+1},\text{by Corollary~\ref{25a}(1)(2)}
\\&=(-q)^{k-n}a_n-z_q\sum_{t=k+1}^n(-q)^{k-t}x_{\epsilon_j+\epsilon_t}^-
x_{\epsilon_i-\epsilon_t}^-+z_q
x_{\epsilon_j-\epsilon_{k}}^-x_{\epsilon_i+\epsilon_{k}}^--z_q
(-q)^{k-n}x_{\epsilon_j-\epsilon_{n}}^-x_{\epsilon_i+\epsilon_{n}}^-,\end{aligned}$$ where the second and third equalities follow from \eqref{ccc2} and Corollary~\ref{25a}(1) and, the last equality follows from induction assumption on $n-k$.
In order to complete the proof, it remain to show that $a_n$ has the required formula.
If  $\mathfrak g=\mathfrak{so}_{2n}$, then   $$\begin{aligned}
a_n&=-q[x_{\epsilon_i-\epsilon_j}^-, x_{\epsilon_j-\epsilon_{n}}^-]_vx_{\epsilon_{j}+\epsilon_{n}}^--q^{-1}x_{\epsilon_{j}+\epsilon_{n}}^-x_{\epsilon_i-\epsilon_n}^-,~\text{by Corollary~\ref{25a}(1)}\\&=q^{-1}x_{\epsilon_j-\epsilon_{n}}^-(x_{\epsilon_{j}+\epsilon_{n}}^-x_{\epsilon_i-\epsilon_j}^--x_{\epsilon_i+\epsilon_{n}}^-)
-(x_{\epsilon_{j}+\epsilon_{n}}^-x_{\epsilon_i-\epsilon_j}^--x_{\epsilon_i+\epsilon_{n}}^-)x_{\epsilon_j-\epsilon_{n}}^--q^{-1}x_{\epsilon_{j}+\epsilon_{n}}^-x_{\epsilon_i-\epsilon_n}^-
\\&=-x_{\epsilon_j+\epsilon_n}^-[x_{\epsilon_i-\epsilon_j}^-, x_{\epsilon_j-\epsilon_{n}}^-]_v
+x_{\epsilon_i+\epsilon_n}^-x_{\epsilon_j-\epsilon_n}^--q^{-1}x_{\epsilon_j-\epsilon_n}^-x_{\epsilon_i+\epsilon_n}^-
-q^{-1}x_{\epsilon_{j}+\epsilon_{n}}^-x_{\epsilon_i-\epsilon_n}^-\\&=[x_{\epsilon_i+\epsilon_n}^-, x_{\epsilon_j-\epsilon_n}^-]_v,~\text{by Corollary~\ref{25a}(1)},\end{aligned}$$ where the second and third equalities follow from  $[x_{\epsilon_j-\epsilon_n}^-, x_{\epsilon_j+\epsilon_{n}}^-]_v=0$ (see  Corollary~\ref{monoial1}) and Corollary~\ref{25a}(5). If  $\mathfrak g=\mathfrak{so}_{2n+1}$, then
$$\begin{aligned}&a_n
=\frac{1}{[2]}(\frac{1}{q}(x_{\epsilon_n}^-x_{\epsilon_i-\epsilon_n}^-
-x_{\epsilon_i}^-)x_{\epsilon_j}^--(x_{\epsilon_j}^-x_{\epsilon_i-\epsilon_n}^-+z_qx_{\epsilon_j-\epsilon_n}^-
x_{\epsilon_i}^-)x_{\epsilon_n}^-)-\frac{1}{q}x_{\epsilon_{j}+\epsilon_{n}}^-x_{\epsilon_i-\epsilon_n}^-, \text{ by \eqref{ccc1}}\\&=[2]^{-1}(q^{-1}x_{\epsilon_n}^-(x_{\epsilon_j}^-x_{\epsilon_i-\epsilon_n}^-+z_qx_{\epsilon_j-\epsilon_n}^-
x_{\epsilon_i}^-)-q^{-1}x_{\epsilon_i}^-x_{\epsilon_j}^--q^{-1}x_{\epsilon_j}^-(x_{\epsilon_n}^-x_{\epsilon_i-\epsilon_n}^-
-x_{\epsilon_i}^-))\\&\qquad-z_q[2]^{-1}x_{\epsilon_j-\epsilon_n}^-x_{\epsilon_i}^-x_{\epsilon_n}^--q^{-1}x_{\epsilon_{j}+\epsilon_{n}}^-x_{\epsilon_i-\epsilon_n}^-, \text{ by Corollary~\ref{25a}(4),}
\\&=\frac{1}{q[2]}(z_qx_{\epsilon_{n}}^-x_{\epsilon_j-\epsilon_n}^-
x_{\epsilon_i}^--[x_{\epsilon_{j}}^-, x_{\epsilon_{n}}^-]_vx_{\epsilon_i-\epsilon_n}^--[x_{\epsilon_{i}}^-, x_{\epsilon_{j}}^-]_v-z_qqx_{\epsilon_j-\epsilon_n}^-x_{\epsilon_i}^-x_{\epsilon_n}^-)
-\frac{1}{q}x_{\epsilon_{j}+\epsilon_{n}}^-x_{\epsilon_i-\epsilon_n}^-
\\&=-\frac{1}{q[2]}([x_{\epsilon_i}^-, x_{\epsilon_j}^-]_v-z_qx_{\epsilon_{n}}^-x_{\epsilon_j-\epsilon_n}^-
x_{\epsilon_i}^-)-z_q\frac{1}{q[2]}(x_{\epsilon_n}^-x_{\epsilon_j-\epsilon_n}^--x_{\epsilon_j}^-)x_{\epsilon_i}^-+z_qx_{\epsilon_j-\epsilon_n}^-
x_{\epsilon_i+\epsilon_n}^-\\&=
-q^{-1}[2]^{-1}[x_{\epsilon_i}^-, x_{\epsilon_j}^-]_v+z_qx_{\epsilon_j-\epsilon_n}^-
x_{\epsilon_i+\epsilon_n}^-+q^{-1}[2]^{-1}z_qx_{\epsilon_j}^-x_{\epsilon_i}^-,\end{aligned}$$
where  the fourth equality follows from  Corollary~\ref{25a}(3)-(4). If $\mathfrak g=\mathfrak{sp}_{2n}$, then
$$\begin{aligned}a_n&=-q^{2}x_{\epsilon_i-\epsilon_n}^-[x_{\epsilon_j-\epsilon_{n}}^-, x_{2\epsilon_{n}}^-]_v-q^{-1}x_{\epsilon_{j}+\epsilon_{n}}^-x_{\epsilon_i-\epsilon_n}^-,~\text{by Corollary~\ref{25a}(6)}\\&=
q^{-2}(x_{2\epsilon_{n}}^-x_{\epsilon_i-\epsilon_n}^--x_{\epsilon_i+\epsilon_n}^-)x_{\epsilon_j-\epsilon_n}^-
-qx_{\epsilon_j-\epsilon_n}^-(x_{2\epsilon_{n}}^-x_{\epsilon_i-\epsilon_n}^--x_{\epsilon_i+\epsilon_n}^-)-q^{-1}x_{\epsilon_{j}+\epsilon_{n}}^-x_{\epsilon_i-\epsilon_n}^-
\\&=-q[x_{\epsilon_j-\epsilon_{n}}^-, x_{2\epsilon_{n}}^-]_vx_{\epsilon_i-\epsilon_{n}}^-
-q^{-2}[x_{\epsilon_i+\epsilon_{n}}^-, x_{\epsilon_j-\epsilon_{n}}^-]_v+q^{-1}([2]z_qx_{\epsilon_j-\epsilon_{n}}^-x_{\epsilon_i+\epsilon_{n}}^-
-x_{\epsilon_{j}+\epsilon_{n}}^-x_{\epsilon_i-\epsilon_n}^-)\\&=
-q^{-2}[x_{\epsilon_i+\epsilon_{n}}^-, x_{\epsilon_j-\epsilon_{n}}^-]_v+q^{-1}[2]z_qx_{\epsilon_j-\epsilon_{n}}^-x_{\epsilon_i+\epsilon_{n}}^-,~\text{by Corollary~\ref{25a}(6)},\end{aligned}$$
where the second and third equalities follow from $[x_{\epsilon_i-\epsilon_n}^-, x_{\epsilon_j-\epsilon_{n}}^-]_v=0$ (see  Corollary~\ref{monoial1}) and Corollary~\ref{25a}(6).
\end{proof}
\begin{Lemma}\label{commute6}Suppose $\mathfrak g=\mathfrak{sp}_{2n}$ and $i<j<n$. We have
$$[x_{\epsilon_i-\epsilon_j}^-, x_{2\epsilon_j}^-]_v=-(-q)^{j-n-2}x_{\epsilon_i+\epsilon_j}^-+z_q\sum_{t=j+1}^n(-q)^{t-n-2}
x_{\epsilon_j-\epsilon_t}^-x_{\epsilon_i+\epsilon_t}^-.$$
\end{Lemma}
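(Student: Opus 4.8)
This lemma computes the $v$-commutator $[x^-_{\epsilon_i-\epsilon_j},x^-_{2\epsilon_j}]_v$ for the pair $(\nu,\alpha)=(\epsilon_i-\epsilon_j,2\epsilon_j)$ with $\nu<\alpha$; here $\alpha+\nu=\epsilon_i+\epsilon_j\in\mathcal R^+$, but $(\alpha,\nu)$ is \emph{not} a minimal pair of $\epsilon_i+\epsilon_j$, so it falls under case~(2) and must be handled by reducing to a minimal pair of $2\epsilon_j$. By Corollary~\ref{equa}(1) the unique minimal pair of $2\epsilon_j$ (for $\mathfrak g=\mathfrak{sp}_{2n}$) is $(\epsilon_j+\epsilon_n,\epsilon_j-\epsilon_n)$, and $(\epsilon_j-\epsilon_n\mid\epsilon_j+\epsilon_n)=0$, so Corollary~\ref{25a}(7) applied with $i\mapsto j$ gives $x^-_{2\epsilon_j}=-[2]^{-1}\bigl(x^-_{\epsilon_j-\epsilon_n}x^-_{\epsilon_j+\epsilon_n}-x^-_{\epsilon_j+\epsilon_n}x^-_{\epsilon_j-\epsilon_n}\bigr)$. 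The plan is to substitute this into $[x^-_{\epsilon_i-\epsilon_j},x^-_{2\epsilon_j}]_v$ and expand.

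\textbf{Key steps.} First I would distribute the outer $v$-commutator over each of the two products using the Leibniz-type rule $[a,bc]_s=[a,b]_{s_1}c+v^{(\alpha\mid\beta)}b[a,c]_{s_2}$; since every relevant pairing of positive roots evaluates to $0$ or $-1$, the only two-term brackets that arise are $[x^-_{\epsilon_i-\epsilon_j},x^-_{\epsilon_j-\epsilon_n}]_v=-q^{-1}x^-_{\epsilon_i-\epsilon_n}$ (Corollary~\ref{25a}(1) with the third index equal to $n$) and $[x^-_{\epsilon_i-\epsilon_j},x^-_{\epsilon_j+\epsilon_n}]_v=-q^{-1}x^-_{\epsilon_i+\epsilon_n}$ (Corollary~\ref{25a}(5) with the third index equal to $n$). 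Collecting the four resulting monomials and re-pairing them, they assemble into $-[2]^{-1}q^{-1}\bigl(-[x^-_{\epsilon_i-\epsilon_n},x^-_{\epsilon_j+\epsilon_n}]_v+[x^-_{\epsilon_i+\epsilon_n},x^-_{\epsilon_j-\epsilon_n}]_v\bigr)$. Next I would feed in the $\mathfrak{sp}_{2n}$ evaluation of $[x^-_{\epsilon_i-\epsilon_n},x^-_{\epsilon_j+\epsilon_n}]_v$ already obtained as the base case $a_n$ inside the proof of Lemma~\ref{commute5}, namely $-q^{-2}[x^-_{\epsilon_i+\epsilon_n},x^-_{\epsilon_j-\epsilon_n}]_v+q^{-1}[2]z_qx^-_{\epsilon_j-\epsilon_n}x^-_{\epsilon_i+\epsilon_n}$; using $1+q^{-2}=q^{-1}[2]$ everything simplifies to $[x^-_{\epsilon_i-\epsilon_j},x^-_{2\epsilon_j}]_v=-q^{-2}[x^-_{\epsilon_i+\epsilon_n},x^-_{\epsilon_j-\epsilon_n}]_v+q^{-2}z_qx^-_{\epsilon_j-\epsilon_n}x^-_{\epsilon_i+\epsilon_n}$. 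Finally I would substitute the $k=n$ instance of Lemma~\ref{commute1}, $[x^-_{\epsilon_i+\epsilon_n},x^-_{\epsilon_j-\epsilon_n}]_v=(-q)^{j-n}x^-_{\epsilon_i+\epsilon_j}-z_q\sum_{t=j+1}^{n-1}(-q)^{t-n}x^-_{\epsilon_j-\epsilon_t}x^-_{\epsilon_i+\epsilon_t}$, and absorb the leftover monomial $x^-_{\epsilon_j-\epsilon_n}x^-_{\epsilon_i+\epsilon_n}$ as the $t=n$ term of the sum; using $q^{-2}=(-q)^{-2}$ to rewrite the exponents $(-q)^{j-n-2}$ and $(-q)^{t-n-2}$ yields the claimed formula. (A completely parallel alternative, matching the format of Lemmas~\ref{commute2} and~\ref{commute5}, is a downward induction on $n-j$: the base case $j=n$ is Corollary~\ref{25a}(6), and the step comes from rewriting $x^-_{2\epsilon_j}$ via Lemma~\ref{commute2} under the substitution $(i,j)\mapsto(j,j+1)$, together with Lemma~\ref{commute5} for $k=j+1$ and Corollary~\ref{25a}(1),(5).)

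\textbf{Main obstacle.} Nothing here is conceptual: the difficulty is entirely in the $v$-power bookkeeping --- keeping straight which power of $v$ decorates each partial $v$-commutator as the outer bracket is distributed over products --- and in verifying that the correction terms telescope cleanly into the single sum $z_q\sum_{t=j+1}^n(-q)^{t-n-2}x^-_{\epsilon_j-\epsilon_t}x^-_{\epsilon_i+\epsilon_t}$ with exactly the stated exponents. As in the neighbouring lemmas, every rearrangement that treats a product of two root vectors as commuting up to a scalar must be justified via Corollary~\ref{monoial1}, which is checked by inspecting the convex order in~\eqref{ord}; since the relevant weight sums (such as $\epsilon_i+\epsilon_j-2\epsilon_n$) are not roots and admit no admissible decomposition, no genuinely new computation beyond the ones in Lemmas~\ref{commute1}--\ref{commute5} is required.
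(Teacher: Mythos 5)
Your proposal is correct and follows essentially the same route as the paper's proof: expand via Corollary~\ref{25a}(7), push $x^-_{\epsilon_i-\epsilon_j}$ past the two factors using Corollary~\ref{25a}(1) and (5), reassemble into $[2]^{-1}q^{-1}\bigl([x^-_{\epsilon_i-\epsilon_n},x^-_{\epsilon_j+\epsilon_n}]_v-[x^-_{\epsilon_i+\epsilon_n},x^-_{\epsilon_j-\epsilon_n}]_v\bigr)$, substitute the $\mathfrak{sp}_{2n}$ value of $a_n$ from Lemma~\ref{commute5} to land on $-q^{-2}[x^-_{\epsilon_i+\epsilon_n},x^-_{\epsilon_j-\epsilon_n}]_v+q^{-2}z_qx^-_{\epsilon_j-\epsilon_n}x^-_{\epsilon_i+\epsilon_n}$, and finally unfold Lemma~\ref{commute1} at $k=n$. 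The only cosmetic difference is that you phrase the $x^-_{\epsilon_i-\epsilon_j}$-sliding as a Leibniz rule for $v$-commutators while the paper unfolds and reassembles the definitions by hand; the intermediate expressions and the cited lemmas are identical.
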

\begin{proof}We have
$$\begin{aligned}&[x_{\epsilon_i-\epsilon_{j}}^-, x_{2\epsilon_{j}}^-]_v
 =[2]^{-1}(-x_{\epsilon_i-\epsilon_{j}}^-[x_{\epsilon_j-\epsilon_{n}}^-, x_{\epsilon_j+\epsilon_{n}}^-]_v)-q^{-2}x_{2\epsilon_{j}}^-x_{\epsilon_i-\epsilon_{j}}^-,~\text{by Corollary~\ref{25a}(7)}\\= &
[2]^{-1}q^{-1}((x_{\epsilon_j+\epsilon_{n}}^-x_{\epsilon_i-\epsilon_{j}}^--x_{\epsilon_i+\epsilon_{n}}^-)
x_{\epsilon_j-\epsilon_{n}}^--(x_{\epsilon_j-\epsilon_{n}}^-x_{\epsilon_i-\epsilon_{j}}^--x_{\epsilon_i-\epsilon_{n}}^-)
x_{\epsilon_j+\epsilon_{n}}^-)-q^{-2}x_{2\epsilon_{j}}^-x_{\epsilon_i-\epsilon_{j}}^-\\= &
[2]^{-1}(q^{-2}x_{\epsilon_j+\epsilon_{n}}^-(x_{\epsilon_j-\epsilon_{n}}^-x_{\epsilon_i-\epsilon_{j}}^--x_{\epsilon_i-\epsilon_{n}}^-)
-q^{-1}x_{\epsilon_i+\epsilon_{n}}^-x_{\epsilon_j-\epsilon_{n}}^- )  - q^{-2}[2]^{-1}x_{\epsilon_j-\epsilon_{n}}^- \times \\ & (x_{\epsilon_j+\epsilon_{n}}^-x_{\epsilon_i-\epsilon_{j}}^--x_{\epsilon_i+\epsilon_{n}}^-)
+[2]^{-1}q^{-1}x_{\epsilon_i-\epsilon_{n}}^-x_{\epsilon_j+\epsilon_{n}}^--q^{-2}x_{2\epsilon_{j}}^-x_{\epsilon_i-\epsilon_{j}}^-,~\text{by Corollary~\ref{25a}(1),(5)}\\
= & [2]^{-1}q^{-1}(
[x_{\epsilon_i-\epsilon_{n}}^-, x_{\epsilon_j+\epsilon_{n}}^-]_v-q^{-1}[x_{\epsilon_j-\epsilon_{n}}^-, x_{\epsilon_j+\epsilon_{n}}^-]_vx_{\epsilon_i-\epsilon_{j}}^--[x_{\epsilon_i+\epsilon_{n}}^-, x_{\epsilon_j-\epsilon_{n}}^-]_v)-q^{-2}x_{2\epsilon_{j}}^-x_{\epsilon_i-\epsilon_{j}}^-\\= &
[2]^{-1}(q^{-1}[x_{\epsilon_i-\epsilon_{n}}^-, x_{\epsilon_j+\epsilon_{n}}^-]_v-q^{-1}[x_{\epsilon_i+\epsilon_{n}}^-, x_{\epsilon_j-\epsilon_{n}}^-]_v),~\text{by Corollary~\ref{25a}(7)}\\
= & [2]^{-1}(q^{-1}(-q^{-2}[x_{\epsilon_i+\epsilon_n}^-, x_{\epsilon_j-\epsilon_n}^-]_v+(1+q^{-2})z_q
x_{\epsilon_j-\epsilon_{n}}^-x_{\epsilon_i+\epsilon_{n}}^-))
 -  [2]^{-1}q^{-1}[x_{\epsilon_i+\epsilon_{n}}^-, x_{\epsilon_j-\epsilon_{n}}^-]_v,\\
=& -q^{-2}[x_{\epsilon_i+\epsilon_{n}}^-, x_{\epsilon_j-\epsilon_{n}}^-]_v
+q^{-2}z_qx_{\epsilon_j-\epsilon_{n}}^-x_{\epsilon_i+\epsilon_{n}}^-,\\
\end{aligned}$$
where the second equality follows from Corollary~\ref{25a}(1),(5) and the sixth equality follows from the  formula on $[x_{\epsilon_i-\epsilon_{n}}^-, x_{\epsilon_j+\epsilon_{n}}^-]_v$ in Lemma~\ref{commute5}.
\end{proof}

\begin{Lemma}\label{commute7}For all admissible $i<l<j$, we have $$\begin{aligned}{[x_{\epsilon_i-\epsilon_j}^-, x_{2\epsilon_l}^-]_v}=-&(-q)^{l-n-1}z_qx_{\epsilon_l-\epsilon_j}^-x_{\epsilon_i+\epsilon_l}^--z_q^2\sum_{t=j+1}^{n}(-q)^{t-n}
x_{\epsilon_l-\epsilon_{t}}^-x_{\epsilon_l-\epsilon_{j}}^-x_{\epsilon_i+\epsilon_{t}}^-\\&
+z_q^2(-q)^{j-n-1}(x_{\epsilon_l-\epsilon_{j}}^-)^2x_{\epsilon_i+\epsilon_{j}}^-+z_q^2\sum_{t=l+1}^{j-1}(-q)^{t-n-1}x_{\epsilon_l-\epsilon_{j}}^-
x_{\epsilon_l-\epsilon_{t}}^-x_{\epsilon_i+\epsilon_{t}}^-.\end{aligned}$$
\end{Lemma}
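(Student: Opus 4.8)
The plan is to rewrite $x_{\epsilon_i-\epsilon_j}^-$ in terms of root vectors attached to $\epsilon_i-\epsilon_l$ and $\epsilon_l-\epsilon_j$ and then reduce to the already computed bracket $[x_{\epsilon_i-\epsilon_l}^-,x_{2\epsilon_l}^-]_v$ of Lemma~\ref{commute6}. Concretely, since $i<l<j$, Corollary~\ref{25a}(1) gives $x_{\epsilon_i-\epsilon_j}^-=-q[x_{\epsilon_i-\epsilon_l}^-,x_{\epsilon_l-\epsilon_j}^-]_v=-q\,x_{\epsilon_i-\epsilon_l}^-x_{\epsilon_l-\epsilon_j}^-+x_{\epsilon_l-\epsilon_j}^-x_{\epsilon_i-\epsilon_l}^-$ (using $(\epsilon_i-\epsilon_l\mid\epsilon_l-\epsilon_j)=-1$ and $q=v$ in type $\mathfrak{sp}_{2n}$). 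Because $(\epsilon_i-\epsilon_j\mid2\epsilon_l)=0$, the left side of the claimed identity is the ordinary commutator $[x_{\epsilon_i-\epsilon_j}^-,x_{2\epsilon_l}^-]$, so applying the $v$-Leibniz rule $[AB,C]_v=A[B,C]_v+[A,C]_vB$ to each of $x_{\epsilon_i-\epsilon_l}^-x_{\epsilon_l-\epsilon_j}^-$ and $x_{\epsilon_l-\epsilon_j}^-x_{\epsilon_i-\epsilon_l}^-$, together with the vanishing $[x_{\epsilon_l-\epsilon_j}^-,x_{2\epsilon_l}^-]_v=0$, yields
$$[x_{\epsilon_i-\epsilon_j}^-,x_{2\epsilon_l}^-]_v=-q\,[x_{\epsilon_i-\epsilon_l}^-,x_{2\epsilon_l}^-]_v\,x_{\epsilon_l-\epsilon_j}^-+x_{\epsilon_l-\epsilon_j}^-\,[x_{\epsilon_i-\epsilon_l}^-,x_{2\epsilon_l}^-]_v.$$
Substituting the formula from Lemma~\ref{commute6} (with $j$ there replaced by $l$), namely $[x_{\epsilon_i-\epsilon_l}^-,x_{2\epsilon_l}^-]_v=-(-q)^{l-n-2}x_{\epsilon_i+\epsilon_l}^-+z_q\sum_{t=l+1}^n(-q)^{t-n-2}x_{\epsilon_l-\epsilon_t}^-x_{\epsilon_i+\epsilon_t}^-$, reduces everything to reordering a handful of explicit triple products.

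The two facts that make this clean are: first, $[x_{\epsilon_l-\epsilon_j}^-,x_{2\epsilon_l}^-]_v=0$, equivalently $x_{\epsilon_l-\epsilon_j}^-x_{2\epsilon_l}^-=v^2x_{2\epsilon_l}^-x_{\epsilon_l-\epsilon_j}^-$; this follows from Corollary~\ref{monoial1} once one checks that $3\epsilon_l-\epsilon_j\notin\mathcal R^+$ and, as in Lemma~\ref{equal1}, that $3\epsilon_l-\epsilon_j$ is not a sum of two or more positive roots lying strictly between $\epsilon_l-\epsilon_j$ and $2\epsilon_l$ in the order \eqref{ord} (every such root is of the form $\epsilon_l-\epsilon_p$ with $p>j$, and no such sum produces coefficient $3$ on $\epsilon_l$). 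Second, the bracket $[x_{\epsilon_i-\epsilon_l}^-,x_{2\epsilon_l}^-]_v$ is exactly an instance of Lemma~\ref{commute6}, so no new computation of that type is needed. After substitution, the term $-(-q)^{l-n-2}x_{\epsilon_i+\epsilon_l}^-$, upon commuting $x_{\epsilon_l-\epsilon_j}^-$ across it, produces the leading summand $-(-q)^{l-n-1}z_qx_{\epsilon_l-\epsilon_j}^-x_{\epsilon_i+\epsilon_l}^-$ of the claimed formula, while the sum $\sum_t x_{\epsilon_l-\epsilon_t}^-x_{\epsilon_i+\epsilon_t}^-$, multiplied on the left by $x_{\epsilon_l-\epsilon_j}^-$ and from both sides of the displayed commutator, generates the three $z_q^2$-terms.

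The genuinely delicate step, and the one I expect to be the main obstacle, is the normal reordering of each product $x_{\epsilon_l-\epsilon_j}^-x_{\epsilon_l-\epsilon_t}^-x_{\epsilon_i+\epsilon_t}^-$ (and of $x_{\epsilon_l-\epsilon_t}^-x_{\epsilon_i+\epsilon_t}^-x_{\epsilon_l-\epsilon_j}^-$) into the shape displayed in the statement; this behaviour genuinely splits on the position of $t$ relative to $j$. For $t>j$ one moves $x_{\epsilon_l-\epsilon_j}^-$ past $x_{\epsilon_l-\epsilon_t}^-$ and $x_{\epsilon_i+\epsilon_t}^-$ using Corollary~\ref{25a}(1),(5) and records the correction terms; for $t=j$ one obtains the square $(x_{\epsilon_l-\epsilon_j}^-)^2$ contribution; and for $l<t<j$ one uses that $x_{\epsilon_l-\epsilon_j}^-$ and $x_{\epsilon_l-\epsilon_t}^-$ commute up to a power of $v$ while $x_{\epsilon_l-\epsilon_j}^-$ and $x_{\epsilon_i+\epsilon_t}^-$ recombine via Corollary~\ref{25a}(5) and Lemma~\ref{commute1}. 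Matching these three cases against the sums $\sum_{t=j+1}^n$ and $\sum_{t=l+1}^{j-1}$ and tracking the powers of $-q$ and the factors $z_q,z_q^2$ precisely then gives the identity. Conceptually nothing beyond the cited relations is required; the only real risk is an arithmetic slip in a $(-q)$-exponent or a sign, so I would organize the computation so that each reordering invokes exactly one of the cited commutation relations.
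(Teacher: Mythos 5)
Your overall plan is exactly the paper's: decompose $x_{\epsilon_i-\epsilon_j}^-$ as $-q[x_{\epsilon_i-\epsilon_l}^-,x_{\epsilon_l-\epsilon_j}^-]_v$ via Corollary~\ref{25a}(1), kill the $[x_{\epsilon_l-\epsilon_j}^-,x_{2\epsilon_l}^-]_v$ contributions by Corollary~\ref{monoial1}, reduce to Lemma~\ref{commute6}, and then reorder. But there is a genuine error in the key intermediate identity, and you signalled exactly where the risk lies (``the only real risk is an arithmetic slip in a $(-q)$-exponent'') — and then made such a slip.

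The rule you write, $[AB,C]_v=A[B,C]_v+[A,C]_vB$, is not the $v$-Leibniz rule: the correct identity for $[X,Y]_v=XY-v^{(\mathrm{wt}X\mid\mathrm{wt}Y)}YX$ carries a twist,
\[
[AB,C]_v \;=\; A[B,C]_v \;+\; v^{(\mathrm{wt}B\mid\mathrm{wt}C)}\,[A,C]_v\,B .
\]
Here $A=x_{\epsilon_i-\epsilon_l}^-$, $B=x_{\epsilon_l-\epsilon_j}^-$, $C=x_{2\epsilon_l}^-$, and $(\mathrm{wt}B\mid\mathrm{wt}C)=(\epsilon_l-\epsilon_j\mid 2\epsilon_l)=2$ with $v=q$ in type $\mathfrak{sp}_{2n}$, so the twist contributes $q^2$. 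Applying this correctly to $x_{\epsilon_i-\epsilon_j}^- = -qAB+BA$ (and using $[B,C]_v=0$) gives
\[
[x_{\epsilon_i-\epsilon_j}^-,x_{2\epsilon_l}^-]_v
= x_{\epsilon_l-\epsilon_j}^-\,[x_{\epsilon_i-\epsilon_l}^-,x_{2\epsilon_l}^-]_v
\;-\; q^3\,[x_{\epsilon_i-\epsilon_l}^-,x_{2\epsilon_l}^-]_v\,x_{\epsilon_l-\epsilon_j}^-,
\]
with coefficient $-q^3$, not $-q$. This is precisely the paper's fourth displayed equality, and the $q^2$ discrepancy is not a harmless normalization: if you carry your version through Lemma~\ref{commute6} and the reorderings (using $x_{\epsilon_i+\epsilon_l}^-x_{\epsilon_l-\epsilon_j}^-=q\,x_{\epsilon_l-\epsilon_j}^-x_{\epsilon_i+\epsilon_l}^-$ and Lemma~\ref{commute1} to handle the $t=j$ term), the leading coefficient of $x_{\epsilon_l-\epsilon_j}^-x_{\epsilon_i+\epsilon_l}^-$ cancels to zero rather than producing $-(-q)^{l-n-1}z_q$. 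With the corrected factor $-q^3$, the direct contribution $(-q)^{l-n-2}(q^4-1)$ and the $t=j$ correction $-q^3z_q(-q)^{l-n-2}$ combine to $(-q)^{l-n-2}q\,z_q=-(-q)^{l-n-1}z_q$ as required. So the gap is concrete: restore the twist $v^{(\mathrm{wt}B\mid\mathrm{wt}C)}=q^2$, and the rest of your outline (which matches the paper's case split on $t<j$, $t=j$, $t>j$) goes through.
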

\begin{proof}We have
$$\begin{aligned}&{[x_{\epsilon_i-\epsilon_j}^-, x_{2\epsilon_l}^-]_v}=-q[x_{\epsilon_{i}-\epsilon_{l}}^-, x_{\epsilon_l-\epsilon_{j}}^-]_vx_{2\epsilon_l}^--x_{2\epsilon_l}^-x_{\epsilon_i-\epsilon_j}^-,
~\text{by Corollary~\ref{25a}(1)}\\=&x_{\epsilon_l-\epsilon_{j}}^-(q^{-2}x_{2\epsilon_l}^-x_{\epsilon_i-\epsilon_l}^-+[x_{\epsilon_i-\epsilon_l}^-,
 x_{2\epsilon_l}^-]_v)-(qx_{2\epsilon_l}^-x_{\epsilon_i-\epsilon_l}^-+q^3[x_{\epsilon_i-\epsilon_l}^-,
 x_{2\epsilon_l}^-]_v)x_{\epsilon_l-\epsilon_{j}}^--x_{2\epsilon_l}^-x_{\epsilon_i-\epsilon_j}^-
 \\ =&-qx_{2\epsilon_l}^-[x_{\epsilon_{i}-\epsilon_{l}}^-, x_{\epsilon_l-\epsilon_{j}}^-]_v+x_{\epsilon_l-\epsilon_{j}}^-[x_{\epsilon_i-\epsilon_l}^-,
 x_{2\epsilon_l}^-]_v-q^3[x_{\epsilon_i-\epsilon_l}^-,
 x_{2\epsilon_l}^-]_vx_{\epsilon_l-\epsilon_{j}}^--x_{2\epsilon_l}^-x_{\epsilon_i-\epsilon_j}^-\\
 = & x_{\epsilon_l-\epsilon_{j}}^-[x_{\epsilon_i-\epsilon_l}^-,
 x_{2\epsilon_l}^-]_v-q^3[x_{\epsilon_i-\epsilon_l}^-,
 x_{2\epsilon_l}^-]_vx_{\epsilon_l-\epsilon_{j}}^-,
~\text{by Corollary~\ref{25a}(1)}\\ =&(-q)^{l-n-1}(x_{\epsilon_l-\epsilon_{j}}^-x_{\epsilon_i+\epsilon_l}^--q^{3}x_{\epsilon_i+\epsilon_l}^-x_{\epsilon_l-\epsilon_{j}}^-)
+
 z_q\sum_{l+1\leq t\leq n, t\neq j}(-q)^{t-n+1}x_{\epsilon_l-\epsilon_t}^-x_{\epsilon_i+\epsilon_t}^-x_{\epsilon_l-\epsilon_{j}}^-     \\
& +
 z_q\sum_{t=l+1}^n(-q)^{t-n-2}x_{\epsilon_l-\epsilon_{j}}^-x_{\epsilon_l-\epsilon_t}^-x_{\epsilon_i+\epsilon_t}^-
 +
 z_q(-q)^{j-n+1}x_{\epsilon_l-\epsilon_j}^-([x_{\epsilon_i+\epsilon_j}^-, x_{\epsilon_l-\epsilon_{j}}^-]_v \\ &  +q^{-1}x_{\epsilon_l-\epsilon_{j}}^-x_{\epsilon_i+\epsilon_j}^-), \text{by Lemma~\ref{commute6}}
 \\= & (-q)^{l-n-2}(q^4-1)x_{\epsilon_l-\epsilon_{j}}^-x_{\epsilon_i+\epsilon_{l}}^--z_q^2\sum_{t=j+1}^n(-q)^{t-n}x_{\epsilon_l-\epsilon_{t}}^-x_{\epsilon_l-\epsilon_{j}}^-
x_{\epsilon_i+\epsilon_{t}}^-
\\ &
+z_q\sum_{t=l+1}^{j-1}(-q)^{t-n-2}(1-q^4)x_{\epsilon_l-\epsilon_{j}}^-x_{\epsilon_l-\epsilon_{t}}^-
x_{\epsilon_i+\epsilon_{t}}^-+z_q(-q)^{j-n-2}(x_{\epsilon_l-\epsilon_{j}}^-)^2x_{\epsilon_i+\epsilon_{j}}^-
\\ &
-z_q(-q)^{j-n}x_{\epsilon_l-\epsilon_{j}}^-(x_{\epsilon_l-\epsilon_{j}}^-x_{\epsilon_i+\epsilon_{j}}^--(-q)^{l+1-j}x_{\epsilon_i+\epsilon_l}^-
-qz_q\sum_{t=l+1}^{j-1}(-q)^{t-j}x_{\epsilon_l-\epsilon_t}^-x_{\epsilon_i+\epsilon_t}^-)
\\=&-(-q)^{l-n-1}z_qx_{\epsilon_l-\epsilon_j}^-x_{\epsilon_i+\epsilon_l}^--z_q^2\sum_{t=j+1}^{n}(-q)^{t-n}
x_{\epsilon_l-\epsilon_{t}}^-x_{\epsilon_l-\epsilon_{j}}^-x_{\epsilon_i+\epsilon_{t}}^-\\ &
+z_q^2(-q)^{j-n-1}(x_{\epsilon_l-\epsilon_{j}}^-)^2x_{\epsilon_i+\epsilon_{j}}^-+z_q^2\sum_{t=l+1}^{j-1}(-q)^{t-n-1}x_{\epsilon_l-\epsilon_{j}}^-
x_{\epsilon_l-\epsilon_{t}}^-x_{\epsilon_i+\epsilon_{t}}^-.\end{aligned}$$
The second and the third equalities follow from $[x_{\epsilon_l-\epsilon_j}^-, x_{2\epsilon_l}^-]_v=0$ (see  Corollary~\ref{monoial1}).
  The sixth equality follows from Lemma \ref{commute1} and $[x_{\epsilon_i+\epsilon_{t_1}}^-, x_{\epsilon_l-\epsilon_{j}}^-]_v=0$, $[x_{\epsilon_l-\epsilon_{t_2}}^-, x_{\epsilon_l-\epsilon_{t_3}}^-]_v=0$ where $l\leq t_1\leq n$, $t_1\neq j$ and $l< t_2<t_3\leq n$.
Such formulae follows from  Corollary~\ref{monoial1}.
\end{proof}
\begin{Lemma}\label{commute8}Suppose $\mathfrak g=\mathfrak{sp}_{2n}$. For all admissible $i<k<j$,  let $a_j=[x_{2\epsilon_i}^-, x_{\epsilon_k+\epsilon_{j}}^-]_v$. Then
 $$\begin{aligned}a_j =-&(-q)^{k-n-1}z_qx_{\epsilon_i+\epsilon_{k}}^-x_{\epsilon_i+\epsilon_{j}}^-
+z_q^2\sum_{t=k+1}^j(-q)^{t-n-1}x_{\epsilon_k-\epsilon_{t}}^-x_{\epsilon_i+\epsilon_{t}}^-x_{\epsilon_i+\epsilon_{j}}^-
\\&-z_q^2\sum_{t=j+1}^n(-q)^{t-n}x_{\epsilon_k-\epsilon_{t}}^-x_{\epsilon_i+\epsilon_{j}}^-x_{\epsilon_i+\epsilon_{t}}^-.\end{aligned}$$
\end{Lemma}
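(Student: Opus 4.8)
The plan is to prove Lemma~\ref{commute8} by the same method used for Lemmas~\ref{commute5}--\ref{commute7}: set up a downward induction that lowers the index $j$, reduce to a single base case at $j=n$, and evaluate that base case by expressing $x_{2\epsilon_i}^-$ through its minimal pair. Throughout, $[x_{2\epsilon_i}^-,x_{\epsilon_k+\epsilon_j}^-]_v$ is an ordinary commutator, since $(2\epsilon_i\mid\epsilon_k+\epsilon_j)=0$ when $i<k<j$.

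For the inductive step, suppose $i<k<j<n$. By Corollary~\ref{25a}(2), applied with the first index $k$ in place of $i$, we have $x_{\epsilon_k+\epsilon_j}^-=-q\,x_{\epsilon_k+\epsilon_{j+1}}^-x_{\epsilon_j-\epsilon_{j+1}}^-+x_{\epsilon_j-\epsilon_{j+1}}^-x_{\epsilon_k+\epsilon_{j+1}}^-$. Substituting this into $a_j=x_{2\epsilon_i}^-x_{\epsilon_k+\epsilon_j}^--x_{\epsilon_k+\epsilon_j}^-x_{2\epsilon_i}^-$ and moving $x_{2\epsilon_i}^-$ to the right, using $x_{2\epsilon_i}^-x_{\epsilon_j-\epsilon_{j+1}}^-=x_{\epsilon_j-\epsilon_{j+1}}^-x_{2\epsilon_i}^-$ (which holds by Corollary~\ref{monoial1}: indeed $2\epsilon_i<\epsilon_j-\epsilon_{j+1}$ in the order \eqref{ord}, the sum $2\epsilon_i+\epsilon_j-\epsilon_{j+1}$ is not a root, and by the classification of Lemmas~\ref{equal}--\ref{equal1} it admits no admissible decomposition) together with $x_{2\epsilon_i}^-x_{\epsilon_k+\epsilon_{j+1}}^-=a_{j+1}+x_{\epsilon_k+\epsilon_{j+1}}^-x_{2\epsilon_i}^-$, the pure product terms cancel and one is left with the recursion $a_j=-q\,[a_{j+1},x_{\epsilon_j-\epsilon_{j+1}}^-]_v$. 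Feeding in the closed form for $a_{j+1}$ given by the induction hypothesis, expanding $[a_{j+1},x_{\epsilon_j-\epsilon_{j+1}}^-]_v$ monomial by monomial via the Leibniz rule for $v$-commutators, and reducing the elementary commutators $[x_{\epsilon_i+\epsilon_t}^-,x_{\epsilon_j-\epsilon_{j+1}}^-]_v$ and $[x_{\epsilon_k-\epsilon_t}^-,x_{\epsilon_j-\epsilon_{j+1}}^-]_v$ by Corollary~\ref{25a}(1),(2) (when their root sums are roots) or by Corollary~\ref{monoial1} (when they are not), one recovers the claimed formula for $a_j$ once the powers of $-q$ and $z_q$ are collected across the three families of monomials.

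For the base case $a_n=[x_{2\epsilon_i}^-,x_{\epsilon_k+\epsilon_n}^-]_v$ with $i<k<n$, I would use the unique minimal pair $(\epsilon_i-\epsilon_n,\epsilon_i+\epsilon_n)$ of $2\epsilon_i$ (Corollary~\ref{equa}(1) together with Lemma~\ref{min}, equivalently Corollary~\ref{25a}(7)) to write $x_{2\epsilon_i}^-=-[2]^{-1}[x_{\epsilon_i-\epsilon_n}^-,x_{\epsilon_i+\epsilon_n}^-]_v$. Substituting and expanding the resulting nested $v$-commutator, the inner term $[x_{\epsilon_i+\epsilon_n}^-,x_{\epsilon_k+\epsilon_n}^-]_v$ vanishes by Corollary~\ref{monoial1}, while $[x_{\epsilon_i-\epsilon_n}^-,x_{\epsilon_k+\epsilon_n}^-]_v$ is precisely an instance of Lemma~\ref{commute5}; assembling these gives $a_n$ in the required closed form (the $\sum_{t=n+1}^n$ term being empty), which seeds the induction.

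The hard part is purely the computational bookkeeping: there are three families of monomials on the right-hand side — $x_{\epsilon_i+\epsilon_k}^-x_{\epsilon_i+\epsilon_j}^-$, the family indexed by $k<t\le j$, and the family indexed by $j<t\le n$ — and one must carry the exponents of $-q$ and $z_q$ attached to each family correctly through the telescoping recursion, while repeatedly invoking Corollary~\ref{monoial1}, every such invocation requiring a short verification via the explicit convex order \eqref{ord} and Lemmas~\ref{equal}--\ref{equal1} that the relevant root sum has no admissible decomposition. The single most delicate computation is the base case $a_n$.
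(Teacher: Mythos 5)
Your proposal is correct but takes a genuinely different route from the paper. The paper treats all $j<n$ by a single direct computation: it expands $x_{2\epsilon_i}^-$ via its minimal pair (Corollary~\ref{25a}(7)), uses the prefabricated identities \eqref{ccc3}--\eqref{ccc4} from the proof of Lemma~\ref{commute4} to simplify $[x_{\epsilon_i\pm\epsilon_n}^-,x_{\epsilon_k+\epsilon_j}^-]_v$, and then invokes Lemmas~\ref{commute1} and~\ref{commute5} to unfold the resulting commutators. By contrast, you set up a downward induction $a_j = -q\,a_{j+1}x_{\epsilon_j-\epsilon_{j+1}}^- + x_{\epsilon_j-\epsilon_{j+1}}^-a_{j+1}$, peeling the simple root $\alpha_j=\epsilon_j-\epsilon_{j+1}$ off the second argument via Corollary~\ref{25a}(2), with only $a_n$ requiring the minimal-pair expansion of $x_{2\epsilon_i}^-$ (and there Lemma~\ref{commute5} and Lemma~\ref{commute1} play the same role as in the paper). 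Your recursion itself is valid: $(2\epsilon_i\mid\epsilon_j-\epsilon_{j+1})=0$ and the pair $\{2\epsilon_i,\epsilon_j-\epsilon_{j+1}\}$ falls under Corollary~\ref{monoial1} (it is not in Lemma~\ref{equal2}(f)--(h)), so the pure product terms cancel exactly as you claim. The inductive step is messier than it may appear because several of the needed commutations are $q$-commutations rather than plain commutations (for instance $x_{\epsilon_k-\epsilon_{j+1}}^-x_{\epsilon_j-\epsilon_{j+1}}^- = q\,x_{\epsilon_j-\epsilon_{j+1}}^-x_{\epsilon_k-\epsilon_{j+1}}^-$ and $x_{\epsilon_i+\epsilon_{j+1}}^-x_{\epsilon_i+\epsilon_j}^- = q\,x_{\epsilon_i+\epsilon_j}^-x_{\epsilon_i+\epsilon_{j+1}}^-$), so some of the cross terms produced by the $t=j$ and $t=j+1$ entries of the two sums cancel against each other or must be $q$-reordered before they line up with the target formula — I checked this and it does work out, but it is genuinely the hard part. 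Your induction has the virtue of organizing Lemma~\ref{commute8} in the same style as Lemmas~\ref{commute1} and~\ref{commute5}, at the cost of re-deriving what \eqref{ccc3}--\eqref{ccc4} package up once and for all; the paper's choice buys brevity by reusing Lemma~\ref{commute4}.
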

\begin{proof} We have
$$\begin{aligned}a_n=&-[2]^{-1}[x_{\epsilon_i-\epsilon_{n}}^-, x_{\epsilon_i+\epsilon_{n}}^-]_vx_{\epsilon_k+\epsilon_{n}}^--x_{\epsilon_k+\epsilon_{n}}^-x_{2\epsilon_i}^-,
~\text{by Corollary~\ref{25a}(7)}
\\=&[2]^{-1}x_{\epsilon_i+\epsilon_{n}}^-([x_{\epsilon_i-\epsilon_{n}}^-, x_{\epsilon_k+\epsilon_{n}}^-]_v+q^{-1}
x_{\epsilon_k+\epsilon_{n}}^-x_{\epsilon_i-\epsilon_{n}}^-)-x_{\epsilon_k+\epsilon_{n}}^-x_{2\epsilon_i}^-\\& -[2]^{-1}q([x_{\epsilon_i-\epsilon_{n}}^-, x_{\epsilon_k+\epsilon_{n}}^-]_v+q^{-1}
x_{\epsilon_k+\epsilon_{n}}^-x_{\epsilon_i-\epsilon_{n}}^-)x_{\epsilon_i+\epsilon_{n}}^-
\\=&[2]^{-1}(x_{\epsilon_i+\epsilon_{n}}^-[x_{\epsilon_i-\epsilon_{n}}^-, x_{\epsilon_k+\epsilon_{n}}^-]_v-q[x_{\epsilon_i-\epsilon_{n}}^-, x_{\epsilon_k+\epsilon_{n}}^-]_vx_{\epsilon_i+\epsilon_{n}}^-)
\\ =&
q^{-1}z_qx_{\epsilon_i+\epsilon_{n}}^-x_{\epsilon_k-\epsilon_{n}}^-x_{\epsilon_i+\epsilon_{n}}^--
z_qx_{\epsilon_k-\epsilon_{n}}^-(x_{\epsilon_i+\epsilon_{n}}^-)^2
\\ =& q^{-1}z_q[x_{\epsilon_i+\epsilon_{n}}^-, x_{\epsilon_k-\epsilon_{n}}^-]_vx_{\epsilon_i+\epsilon_{n}}^--q^{-1}z_q^2x_{\epsilon_k-\epsilon_{n}}^-(x_{\epsilon_i+\epsilon_{n}}^-)^2
.\end{aligned}$$
The third equality follows from  $[x_{\epsilon_i+\epsilon_n}^-, x_{\epsilon_k+\epsilon_{n}}^-]_v=0$ (see  Corollary~\ref{monoial1}) and Corollary~\ref{25a}(7). The  fourth equality follow from Lemma~\ref{commute5} and  $[x_{\epsilon_i+\epsilon_n}^-, x_{\epsilon_i+\epsilon_{h}}^-]_v=0$, $[x_{\epsilon_i+\epsilon_n}^-, x_{\epsilon_k-\epsilon_{t}}^-]_v=0$, $k<t<n$, $k\leq h<n$. Such formulae follow from  Corollary~\ref{monoial1}. Thanks to the formula of $[x_{\epsilon_i+\epsilon_n}^-, x_{\epsilon_k-\epsilon_n}^-]_v$ in  Lemma~\ref{commute1}, $a_n$ has the required formula. Suppose $j<n$. Then
 $$\begin{aligned}a_j=&-[2]^{-1}[x_{\epsilon_i-\epsilon_{n}}^-, x_{\epsilon_i+\epsilon_{n}}^-]_vx_{\epsilon_k+\epsilon_{j}}^--x_{\epsilon_k+\epsilon_{j}}^-x_{2\epsilon_i}^-,
~\text{by Corollary~\ref{25a}(7)}
\\ =& [2]^{-1}x_{\epsilon_i+\epsilon_{n}}^-(x_{\epsilon_{k}+\epsilon_{j}}^-x_{\epsilon_{i}-\epsilon_{n}}^-
+z_qx_{\epsilon_{k}-\epsilon_{n}}^-x_{\epsilon_{i}+\epsilon_{j}}^-)-x_{\epsilon_k+\epsilon_{j}}^-x_{2\epsilon_i}^-\\-&[2]^{-1}
x_{\epsilon_i-\epsilon_{n}}^-(x_{\epsilon_{k}+\epsilon_{j}}^-x_{\epsilon_{i}+\epsilon_{n}}^-
+z_qx_{\epsilon_{k}+\epsilon_{n}}^-x_{\epsilon_{i}+\epsilon_{j}}^-),~\text{by \eqref{ccc3}, \eqref{ccc4}}
\\= & [2]^{-1}((x_{\epsilon_{k}+\epsilon_{j}}^-x_{\epsilon_{i}+\epsilon_{n}}^-
+z_qx_{\epsilon_{k}+\epsilon_{n}}^-x_{\epsilon_{i}+\epsilon_{j}}^-)x_{\epsilon_i-\epsilon_{n}}^-+
z_qx_{\epsilon_i+\epsilon_{n}}^-x_{\epsilon_{k}-\epsilon_{n}}^-x_{\epsilon_{i}+\epsilon_{j}}^-)-x_{\epsilon_k+\epsilon_{j}}^-x_{2\epsilon_i}^-\\
-&[2]^{-1}((x_{\epsilon_{k}+\epsilon_{j}}^-x_{\epsilon_{i}-\epsilon_{n}}^-
+z_qx_{\epsilon_{k}-\epsilon_{n}}^-x_{\epsilon_{i}+\epsilon_{j}}^-)x_{\epsilon_i+\epsilon_{n}}^-+
z_qx_{\epsilon_i-\epsilon_{n}}^-x_{\epsilon_{k}+\epsilon_{n}}^-x_{\epsilon_{i}+\epsilon_{j}}^-),~\text{by \eqref{ccc3}, \eqref{ccc4}}\\ = & \frac{1}{[2]}(z_q([x_{\epsilon_i+\epsilon_{n}}^-, x_{\epsilon_k-\epsilon_{n}}^-]_v-[x_{\epsilon_i-\epsilon_{n}}^-, x_{\epsilon_k+\epsilon_{n}}^-]_v)
x_{\epsilon_i+\epsilon_{j}}^--x_{\epsilon_k+\epsilon_{j}}^-[x_{\epsilon_i-\epsilon_{n}}^-, x_{\epsilon_i+\epsilon_{n}}^-]_v)-x_{\epsilon_k+\epsilon_{j}}^-x_{2\epsilon_i}^-
\\ =& [2]^{-1}z_q([x_{\epsilon_i+\epsilon_{n}}^-, x_{\epsilon_k-\epsilon_{n}}^-]_v-[x_{\epsilon_i-\epsilon_{n}}^-, x_{\epsilon_k+\epsilon_{n}}^-]_v)
x_{\epsilon_i+\epsilon_{j}}^-,
~\text{by Corollary~\ref{25a}(7)}
\\ = & \sum_{t=k+1}^{n-1}(-q)^{t-n-1}z_q^2x_{\epsilon_k-\epsilon_t}^-x_{\epsilon_i+\epsilon_t}^-
x_{\epsilon_i+\epsilon_{j}}^--z_q^2x_{\epsilon_k-\epsilon_{n}}^-
x_{\epsilon_i+\epsilon_{j}}^-x_{\epsilon_i+\epsilon_{n}}^--z_q(-q)^{k-n-1}x_{\epsilon_i+\epsilon_k}^-x_{\epsilon_i+\epsilon_j}^-\\
= &
z_q^2\sum_{t=k+1}^j(-q)^{t-n-1}x_{\epsilon_k-\epsilon_{t}}^-x_{\epsilon_i+\epsilon_{t}}^-x_{\epsilon_i+\epsilon_{j}}^-
-(-q)^{k-n-1}z_qx_{\epsilon_i+\epsilon_{k}}^-x_{\epsilon_i+\epsilon_{j}}^-\\&-z_q^2\sum_{t=j+1}^n(-q)^{t-n}x_{\epsilon_k-\epsilon_{t}}^-x_{\epsilon_i+\epsilon_{j}}^-x_{\epsilon_i+\epsilon_{t}}^-
.\end{aligned}$$ The fourth equality follows from  $[x_{\epsilon_i\pm\epsilon_n}^-, x_{\epsilon_i+\epsilon_{j}}^-]_v=0$ (see  Corollary~\ref{monoial1}). The sixth equality follows from Lemmas \ref{commute1}, \ref{commute5} and the last equality follows from  $[x_{\epsilon_i\pm\epsilon_t}^-, x_{\epsilon_i+\epsilon_{j}}^-]_v=0$ if $j<t<n$ (see  Corollary~\ref{monoial1}).
\end{proof}
\begin{Lemma}\label{commute9}Suppose $\mathfrak g=\mathfrak{sp}_{2n}$. For all admissible $i<j$, let $a_j=[x_{2\epsilon_i}^-, x_{2\epsilon_j}^-]_v$.
 Then $$\begin{aligned}& a_j=-[2]^{-1}(-q)^{2j-2n-1}z_q(x_{\epsilon_i+\epsilon_{j}}^-)^2
-z_q^3\sum_{t=j+1}^{n-1}(-q)^{t-n-1}x_{\epsilon_j-\epsilon_{n}}^-
x_{\epsilon_j-\epsilon_{t}}^-x_{\epsilon_i+\epsilon_{t}}^-x_{\epsilon_i+\epsilon_{n}}^-\\&\qquad
+z_q^2\sum_{t=j+1}^{n}(-q)^{j+t-2n-1}x_{\epsilon_j-\epsilon_{t}}^-
x_{\epsilon_i+\epsilon_{j}}^-x_{\epsilon_i+\epsilon_{t}}^-
+[2]^{-1}z_q^3\sum_{t=j+1}^{n}(-q)^{2t-2n-2}(x_{\epsilon_j-\epsilon_{t}}^-)^2
(x_{\epsilon_i+\epsilon_{t}}^-)^2\\&\qquad-z_q^3\sum_{t=j+1}^{n-1}\sum_{s=t+1}^{n-1}(-q)^{s+t-2n-1}x_{\epsilon_j-\epsilon_{s}}^-
x_{\epsilon_j-\epsilon_{t}}^-x_{\epsilon_i+\epsilon_{t}}^-x_{\epsilon_i+\epsilon_{s}}^-.\end{aligned}$$
\end{Lemma}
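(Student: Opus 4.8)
\textbf{Proof proposal for Lemma~\ref{commute9}.}
The plan is to reduce $a_j=[x_{2\epsilon_i}^-,x_{2\epsilon_j}^-]_v$ to commutators that have already been computed in this appendix, by replacing the quartic root vector $x_{2\epsilon_j}^-$ by a $v$-commutator of its (unique) minimal pair and then straightening. Since $i<j\leq n$, Corollary~\ref{25a}(7) (applied with index $j$) lets me write $x_{2\epsilon_j}^-$ as a scalar multiple of $[x_{\epsilon_j-\epsilon_n}^-,x_{\epsilon_j+\epsilon_n}^-]_v$; substituting this and using the standard $q$-bracket manipulations (the same Leibniz/Jacobi-type identities used throughout Lemmas~\ref{commute1}--\ref{commute8}, with the appropriate powers of $v$ dictated by the inner products $(\epsilon_a\mid\epsilon_b)=\delta_{a,b}$ of type $C$) expresses $a_j$ in terms of $[x_{2\epsilon_i}^-,x_{\epsilon_j+\epsilon_n}^-]_v$, $[x_{2\epsilon_i}^-,x_{\epsilon_j-\epsilon_n}^-]_v$, and the cost of recommuting $x_{\epsilon_j-\epsilon_n}^-$ past whatever these produce.

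First I would dispose of $[x_{2\epsilon_i}^-,x_{\epsilon_j-\epsilon_n}^-]_v$. The weight $2\epsilon_i+\epsilon_j-\epsilon_n$ is not a root, and a direct inspection of the convex order in \eqref{ord} shows it has no decomposition $\beta+\gamma$ with $\epsilon_j-\epsilon_n>\beta\geq\gamma>2\epsilon_i$, nor a longer decomposition of the type in Lemma~\ref{equal1}; hence Corollary~\ref{monoial1} gives $[x_{2\epsilon_i}^-,x_{\epsilon_j-\epsilon_n}^-]_v=0$, which kills one of the two branches. The surviving branch contains $[x_{2\epsilon_i}^-,x_{\epsilon_j+\epsilon_n}^-]_v$, which is precisely Lemma~\ref{commute8} with $(i,j,n)$ in place of $(i,k,j)$ there (legitimate since $i<j<n$); plugging in that formula produces a linear combination of the monomials $x_{\epsilon_i+\epsilon_j}^-x_{\epsilon_i+\epsilon_n}^-$ and $x_{\epsilon_j-\epsilon_t}^-x_{\epsilon_i+\epsilon_t}^-x_{\epsilon_i+\epsilon_n}^-$ for $j<t\leq n$.

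It then remains to move $x_{\epsilon_j-\epsilon_n}^-$ leftwards through each of these monomials into PBW order. The point is that almost every required commutator vanishes by Corollary~\ref{monoial1}: $[x_{\epsilon_i+\epsilon_j}^-,x_{\epsilon_j-\epsilon_n}^-]_v=0$ and $[x_{\epsilon_i+\epsilon_t}^-,x_{\epsilon_j-\epsilon_n}^-]_v=0$ for $j<t<n$, since in each case the relevant weight sum is not a root and admits no admissible decomposition, while the one nonzero straightening relation is $[x_{\epsilon_i+\epsilon_n}^-,x_{\epsilon_j-\epsilon_n}^-]_v$, which is Lemma~\ref{commute1} with $k=n$. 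Assembling these, tracking the powers of $-q$ and $z_q$ and the $v$-twists coming from each commutation, yields the five families of terms in the statement; the case $j=n$ (if counted as admissible) is handled separately as a base case via Corollary~\ref{25a}(7) and Corollary~\ref{monoial1}. I expect the only real difficulty to be bookkeeping: keeping the nested $z_q$- and $(-q)$-weighted double sums in the correct order and checking that no further terms survive — no structural input beyond Corollaries~\ref{25a} and \ref{monoial1} and Lemmas~\ref{commut}, \ref{commute1}, \ref{commute8} is needed.
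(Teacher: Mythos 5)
Your proposal follows essentially the same route as the paper's proof: Corollary~\ref{25a}(7) at index $j$ to unfold $x_{2\epsilon_j}^-$ as $[2]^{-1}[x_{\epsilon_j-\epsilon_n}^-,x_{\epsilon_j+\epsilon_n}^-]_v$, the vanishing $[x_{2\epsilon_i}^-,x_{\epsilon_j-\epsilon_n}^-]_v=0$ from Corollary~\ref{monoial1} to drop one branch, Lemma~\ref{commute8} (with indices shifted as you indicate) for the surviving $[x_{2\epsilon_i}^-,x_{\epsilon_j+\epsilon_n}^-]_v$, and then straightening via Lemma~\ref{commute1} together with the same list of vanishing commutators $[x_{\epsilon_i+\epsilon_j}^-,x_{\epsilon_j-\epsilon_n}^-]_v=[x_{\epsilon_i+\epsilon_t}^-,x_{\epsilon_j-\epsilon_n}^-]_v=[x_{\epsilon_j-\epsilon_t}^-,x_{\epsilon_j-\epsilon_n}^-]_v=0$ that the paper also invokes, with the $j=n$ base case treated separately by Corollary~\ref{25a}(7) and Corollary~\ref{monoial1}. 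The argument is correct and matches the paper's; the bookkeeping you flag as the only real difficulty is indeed where the paper spends most of its effort.
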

\begin{proof} Suppose $j=n$. Then
$$\begin{aligned}a_n &=-[2]^{-1}[x_{\epsilon_i-\epsilon_{n}}^-, x_{\epsilon_i+\epsilon_{n}}^-]_vx_{2\epsilon_n}^- -x_{2\epsilon_n}^-x_{2\epsilon_i}^-,
~\text{by Corollary~\ref{25a}(7)}
\\&=[2]^{-1}(q^{-2}x_{\epsilon_i+\epsilon_{n}}^-(x_{2\epsilon_n}^-x_{\epsilon_i-\epsilon_{n}}^--x_{\epsilon_i+\epsilon_{n}}^-)
-(x_{2\epsilon_n}^-x_{\epsilon_i-\epsilon_{n}}^--x_{\epsilon_i+\epsilon_{n}}^-)x_{\epsilon_i+\epsilon_{n}}^-)-x_{2\epsilon_n}^-x_{2\epsilon_i}^-
 \\&=[2]^{-1}(-x_{2\epsilon_n}^-[x_{\epsilon_i-\epsilon_{n}}^-, x_{\epsilon_i+\epsilon_{n}}^-]_v+(1-q^{-2})(x_{\epsilon_i+\epsilon_{n}}^-)^2)-x_{2\epsilon_n}^-x_{2\epsilon_i}^-,  \text{ by Corollary~\ref{25a}(6)}
\\&=q^{-1}[2]^{-1}z_q(x_{\epsilon_i+\epsilon_{n}}^-)^2,
~\text{by Corollary~\ref{25a}(7)},  \end{aligned}$$ where the second equality follows from  $[x_{\epsilon_i+\epsilon_n}^-, x_{2\epsilon_n}^-]_v=0$ (see  Corollary~\ref{monoial1}).
So, we have the required formula for $j=n$ follows.

 If  $j<n$, then        \begin{equation}\begin{aligned}\label{046}a_j  &=-[2]^{-1}x_{2\epsilon_i}^-[x_{\epsilon_j-\epsilon_{n}}^-, x_{\epsilon_j+\epsilon_{n}}^-]_v-x_{2\epsilon_j}^-x_{2\epsilon_i}^-,
~\text{by Corollary~\ref{25a}(7)}\\&=[2]^{-1}([x_{2\epsilon_i}^-, x_{\epsilon_j+\epsilon_{n}}^-]_vx_{\epsilon_j-\epsilon_{n}}^-
-x_{\epsilon_j-\epsilon_{n}}^-[x_{2\epsilon_i}^-, x_{\epsilon_j+\epsilon_{n}}^-]_v),
\end{aligned}\end{equation} where the last equation follows from $[x_{2\epsilon_i}^-, x_{\epsilon_j-\epsilon_{n}}^-]_v=0$ (see Corollary~\ref{monoial1})
and Corollary~\ref{25a}(7).
We use  Lemma~\ref{commute8} to  rewrite ${[}x_{2\epsilon_i}^-, x_{\epsilon_j+\epsilon_{n}}^-{]}_vx_{\epsilon_j-\epsilon_{n}}^-$ as follows: $$\begin{aligned}
& {[}x_{2\epsilon_i}^-, x_{\epsilon_j+\epsilon_{n}}^-{]}_vx_{\epsilon_j-\epsilon_{n}}^-=-(-q)^{j-n-1}z_qx_{\epsilon_i+\epsilon_{j}}^-(q^{-1}x_{\epsilon_j-\epsilon_{n}}^-x_{\epsilon_i+\epsilon_{n}}^-+[x_{\epsilon_i+\epsilon_{n}}^-, x_{\epsilon_j-\epsilon_{n}}^-]_v)\\&\quad+z_q^2\sum_{t=j+1}^n(-q)^{t-n-1}
x_{\epsilon_j-\epsilon_{t}}^-x_{\epsilon_i+\epsilon_{t}}^-(q^{-1}x_{\epsilon_j-\epsilon_{n}}^-x_{\epsilon_i+\epsilon_{n}}^-+[x_{\epsilon_i+\epsilon_{n}}^-, x_{\epsilon_j-\epsilon_{n}}^-]_v)\\&=
-(-q)^{j-n-1}z_q
(x_{\epsilon_j-\epsilon_{n}}^-x_{\epsilon_i+\epsilon_{j}}^-x_{\epsilon_i+\epsilon_{n}}^-+x_{\epsilon_i+\epsilon_{j}}^-
[x_{\epsilon_i+\epsilon_{n}}^-, x_{\epsilon_j-\epsilon_{n}}^-]_v)\\&\quad+z_q^2\sum_{t=j+1}^{n-1}(-q)^{t-n-1}(x_{\epsilon_j-\epsilon_{n}}^-x_{\epsilon_j-\epsilon_{t}}^-
x_{\epsilon_i+\epsilon_{t}}^-x_{\epsilon_i+\epsilon_{n}}^-+x_{\epsilon_j-\epsilon_{t}}^-x_{\epsilon_i+\epsilon_{t}}^-
[x_{\epsilon_i+\epsilon_{n}}^-, x_{\epsilon_j-\epsilon_{n}}^-]_v)\\&
\quad-q^{-1}z_q^2x_{\epsilon_j-\epsilon_{n}}^-\left(x_{\epsilon_i+\epsilon_{n}}^-[x_{\epsilon_i+\epsilon_{n}}^-, x_{\epsilon_j-\epsilon_{n}}^-]_v +(q^{-2}x_{\epsilon_j-\epsilon_{n}}^-x_{\epsilon_i+\epsilon_{n}}^-+q^{-1}[x_{\epsilon_i+\epsilon_{n}}^-. x_{\epsilon_j-\epsilon_{n}}^-]_v)x_{\epsilon_i+\epsilon_{n}}^-\right).
\end{aligned}$$
The last equality follows from  $[x_{\epsilon_i+\epsilon_{j}}^-, x_{\epsilon_j-\epsilon_{n}}^-]_v=0$,  $[x_{\epsilon_j-\epsilon_{k}}^-, x_{\epsilon_j-\epsilon_{n}}^-]_v=0$, and $[x_{\epsilon_i+\epsilon_{k}}^-, x_{\epsilon_j-\epsilon_{n}}^-]=0$ where $j< k<n$.
Such equalities follows from Corollary~\ref{monoial1}. We also use  Lemma~\ref{commute8} to rewrite $x_{\epsilon_j-\epsilon_{n}}^-[x_{2\epsilon_i}^-, x_{\epsilon_j+\epsilon_{n}}^-]_v$ in \eqref{046}.  So,
$$\begin{aligned}{[x_{2\epsilon_i}^-, x_{2\epsilon_j}^-]_v}&=[2]^{-1}(-(-q)^{j-n-1}z_qx_{\epsilon_i+\epsilon_{j}}^-+z_q^2\sum_{t=j+1}^{n}(-q)^{t-n-1}
x_{\epsilon_j-\epsilon_{t}}^-x_{\epsilon_i+\epsilon_{t}}^-)[x_{\epsilon_i+\epsilon_{n}}^-, x_{\epsilon_j-\epsilon_{n}}^-]_v\\&\qquad+[2]^{-1}
q^{-2}z_q^3(x_{\epsilon_j-\epsilon_{n}}^-)^2(x_{\epsilon_i+\epsilon_{n}}^-)^2-[2]^{-1}q^{-2}z_q^2x_{\epsilon_j-\epsilon_{n}}^-[x_{\epsilon_i+\epsilon_{n}}^-, x_{\epsilon_j-\epsilon_{n}}^-]_vx_{\epsilon_i+\epsilon_{n}}^-.
\end{aligned}$$
In the following, we compute  $x_{\epsilon_i+\epsilon_{j}}^-[x_{\epsilon_i+\epsilon_{n}}^-, x_{\epsilon_j-\epsilon_{n}}^-]_v$, and $x_{\epsilon_j-\epsilon_{t}}^-x_{\epsilon_i+\epsilon_{t}}^-[x_{\epsilon_i+\epsilon_{n}}^-, x_{\epsilon_j-\epsilon_{n}}^-]_v$, $j<t\leq n$, and  $x_{\epsilon_j-\epsilon_{n}}^-[x_{\epsilon_i+\epsilon_{n}}^-, x_{\epsilon_j-\epsilon_{n}}^-]_vx_{\epsilon_i+\epsilon_{n}}^-$.
  In any case, we rewrite   $[x_{\epsilon_i+\epsilon_{n}}^-, x_{\epsilon_j-\epsilon_{n}}^-]$ via  Lemma~\ref{commute1}.
 So, we have the formula on  $ x_{\epsilon_j-\epsilon_{n}}^-[x_{\epsilon_i+\epsilon_{n}}^-, x_{\epsilon_j-\epsilon_{n}}^-]_vx_{\epsilon_i+\epsilon_{n}}^-$
 directly. In other cases,  we
need  extra commutative relations as follows: \begin{itemize}\item[(a)]$[x_{\epsilon_i+\epsilon_{j}}^-, x_{\epsilon_j-\epsilon_{h}}^-]_v=0$, if $j< h\leq n$,
\item[(b)]$[x_{\epsilon_i+\epsilon_{n}}^-, x_{\epsilon_i+\epsilon_{s}}^-]_v=0$, $ [x_{\epsilon_i+\epsilon_{n}}^-, x_{\epsilon_j-\epsilon_{h}}^-]_v=0$, if $i< s<n$, $j<h<n$,
\item[(c)]$[x_{\epsilon_i+\epsilon_{h_3}}^-, x_{\epsilon_j-\epsilon_{h_4}}^-]_v=0$, $ [x_{\epsilon_i+\epsilon_{h_2}}^-, x_{\epsilon_i+\epsilon_{h_1}}^-]_v=0$, $[x_{\epsilon_j-\epsilon_{h_1}}^-, x_{\epsilon_j-\epsilon_{h_2}}^-]_v=0$, if $h_1<h_2$ and $h_i>j$ for all $1\leq i\leq4$.
\end{itemize} All  equalities  in (a)-(c)  follow from Corollary~\ref{monoial1}. So,
$$\begin{aligned}&x_{\epsilon_i+\epsilon_{j}}^-[x_{\epsilon_i+\epsilon_{n}}^-, x_{\epsilon_j-\epsilon_{n}}^-]_v
 \overset{(a)}=(-q)^{j-n}(x_{\epsilon_i+\epsilon_j}^-)^2+z_q\sum_{t=j+1}^{n-1}(-q)^{t-n+1}x_{\epsilon_j-\epsilon_t}^-x_{\epsilon_i+\epsilon_j}^-x_{\epsilon_i+\epsilon_t}^-,
 \\&x_{\epsilon_j-\epsilon_{n}}^- x_{\epsilon_i+\epsilon_{n}}^-[x_{\epsilon_i+\epsilon_{n}}^-, x_{\epsilon_j-\epsilon_{n}}^-]_v\overset{(b)}=(-q)^{j-n}x_{\epsilon_j-\epsilon_{n}}^-(x_{\epsilon_i+\epsilon_j}^-+z_q\sum_{t=j+1}^{n-1}(-q)^{t-j+1}
x_{\epsilon_j-\epsilon_t}^-x_{\epsilon_i+\epsilon_t}^-)x_{\epsilon_i+\epsilon_{n}}^-.\end{aligned}$$
Finally, we simplify $x_{\epsilon_j-\epsilon_{t}}^-x_{\epsilon_i+\epsilon_{t}}^-[x_{\epsilon_i+\epsilon_{n}}^-, x_{\epsilon_j-\epsilon_{n}}^-]_v$ for any $j<t<n$.
$$\begin{aligned}&x_{\epsilon_j-\epsilon_{t}}^-x_{\epsilon_i+\epsilon_{t}}^-[x_{\epsilon_i+\epsilon_{n}}^-, x_{\epsilon_j-\epsilon_{n}}^-]_v
\\= & -(-q)^{j-n+1}x_{\epsilon_j-\epsilon_{t}}^-x_{\epsilon_i+\epsilon_j}^-x_{\epsilon_i+\epsilon_{t}}^-+z_q\sum_{s=j+1}^{t-1}(-q)^{s-n+1}x_{\epsilon_j-\epsilon_{t}}^-
x_{\epsilon_j-\epsilon_s}^-x_{\epsilon_i+\epsilon_s}^-x_{\epsilon_i+\epsilon_{t}}^-\\&
+z_q\sum_{s=t+1}^{n-1}(-q)^{s-n+1}x_{\epsilon_j-\epsilon_{s}}^-
x_{\epsilon_j-\epsilon_t}^-x_{\epsilon_i+\epsilon_t}^-x_{\epsilon_i+\epsilon_{s}}^-
-(-q)^{t-n}z_qx_{\epsilon_j-\epsilon_{t}}^-
x_{\epsilon_i+\epsilon_t}^-x_{\epsilon_j-\epsilon_{t}}^-
x_{\epsilon_i+\epsilon_t}^-, \text{ by (c)}\\
=& -(-q)^{j-n+1}x_{\epsilon_j-\epsilon_{t}}^-x_{\epsilon_i+\epsilon_j}^-x_{\epsilon_i+\epsilon_{t}}^-+z_q\sum_{s=j+1}^{t-1}(-q)^{s-n+1}x_{\epsilon_j-\epsilon_{t}}^-
x_{\epsilon_j-\epsilon_s}^-x_{\epsilon_i+\epsilon_s}^-x_{\epsilon_i+\epsilon_{t}}^-\\&
+z_q\sum_{s=t+1}^{n-1}(-q)^{s-n+1}x_{\epsilon_j-\epsilon_{s}}^-
x_{\epsilon_j-\epsilon_t}^-x_{\epsilon_i+\epsilon_t}^-x_{\epsilon_i+\epsilon_{s}}^-
+(-q)^{t-n-1}z_q
(x_{\epsilon_j-\epsilon_{t}}^-)^2(x_{\epsilon_i+\epsilon_{t}}^-)^2\\&\qquad
-z_q(-q)^{j-n}x_{\epsilon_j-\epsilon_{t}}^-x_{\epsilon_i+\epsilon_{j}}^-x_{\epsilon_i+\epsilon_{t}}^-+z_q^2\sum_{s=j+1}^{t-1}(-q)^{s-n}x_{\epsilon_j-\epsilon_{t}}^-
x_{\epsilon_j-\epsilon_{s}}^-x_{\epsilon_i+\epsilon_{s}}^-x_{\epsilon_i+\epsilon_{t}}^-.
\end{aligned}$$ We remark that the last  equality  can be checked directly by rewriting $x_{\epsilon_i+\epsilon_t}^-x_{\epsilon_j-\epsilon_{t}}^-$ via Lemma \ref{commute1} and (c).
Now, we
 rewrite $[x_{2\epsilon_i}^-, x_{2\epsilon_j}^-]_v$ via \eqref{046}. We have
 $$\begin{aligned}a_j&=-(-q)^{2j-2n-1}[2]^{-1}z_q(x_{\epsilon_i+\epsilon_{j}}^-)^2
-z_q^3\sum_{t=j+1}^{n-1}(-q)^{t-n-1}x_{\epsilon_j-\epsilon_{n}}^-
x_{\epsilon_j-\epsilon_{t}}^-x_{\epsilon_i+\epsilon_{t}}^-x_{\epsilon_i+\epsilon_{n}}^-\\&
+z_q^2\sum_{t=j+1}^{n}(-q)^{j+t-2n-1}x_{\epsilon_j-\epsilon_{t}}^-
x_{\epsilon_i+\epsilon_{j}}^-x_{\epsilon_i+\epsilon_{t}}^-
+[2]^{-1}z_q^3\sum_{t=j+1}^{n}(-q)^{2t-2n-2}(x_{\epsilon_j-\epsilon_{t}}^-)^2
(x_{\epsilon_i+\epsilon_{t}}^-)^2\\&+[2]^{-1}z_q^3\sum_{t=j+1}^{n-1}\sum_{s=t+1}^{n-1}(-q)^{s+t-2n}x_{\epsilon_j-\epsilon_{s}}^-
x_{\epsilon_j-\epsilon_{t}}^-x_{\epsilon_i+\epsilon_{t}}^-x_{\epsilon_i+\epsilon_{s}}^-
\\&+[2]^{-1}z_q^3\sum_{t=j+1}^{n-1}\sum_{s=j+1}^{t-1}(-q)^{s+t-2n-2}x_{\epsilon_j-\epsilon_{t}}^-
x_{\epsilon_j-\epsilon_{s}}^-x_{\epsilon_i+\epsilon_{s}}^-x_{\epsilon_i+\epsilon_{t}}^-
\\&=-[2]^{-1}(-q)^{2j-2n-1}z_q(x_{\epsilon_i+\epsilon_{j}}^-)^2
-z_q^3\sum_{t=j+1}^{n-1}(-q)^{t-n-1}x_{\epsilon_j-\epsilon_{n}}^-
x_{\epsilon_j-\epsilon_{t}}^-x_{\epsilon_i+\epsilon_{t}}^-x_{\epsilon_i+\epsilon_{n}}^-\\&
+z_q^2\sum_{t=j+1}^{n}(-q)^{j+t-2n-1}x_{\epsilon_j-\epsilon_{t}}^-
x_{\epsilon_i+\epsilon_{j}}^-x_{\epsilon_i+\epsilon_{t}}^-
+[2]^{-1}z_q^3\sum_{t=j+1}^{n}(-q)^{2t-2n-2}(x_{\epsilon_j-\epsilon_{t}}^-)^2
(x_{\epsilon_i+\epsilon_{t}}^-)^2\\&-z_q^3\sum_{t=j+1}^{n-1}\sum_{s=t+1}^{n-1}(-q)^{s+t-2n-1}x_{\epsilon_j-\epsilon_{s}}^-
x_{\epsilon_j-\epsilon_{t}}^-x_{\epsilon_i+\epsilon_{t}}^-x_{\epsilon_i+\epsilon_{s}}^-,\end{aligned}$$ proving   the required formula about $a_j$.\end{proof}

``\textbf{Proof of Proposition~\ref{commut1}}" We have described explicitly possible pairs $\{\alpha, \nu\}$ in (1)-(4).
Thanks to  Lemma~\ref{min}, $c_{\mathbf r}^-\in \mathcal A_{|\mathbf r|-1}$ if it appears in
Lemma~\ref{commut}(1) for the expression of  $x_\nu^-x_{\alpha}^--v^{(\nu\mid \alpha)} x_{\alpha}^-x_{\nu}^-$ such that $\{\alpha, \nu\}$ appears in (1).
If $\{\alpha, \nu\}$ appears in (4), the corresponding result follows from   Corollary~\ref{monoial1}.
If  $\{\alpha, \nu\}$ appears in (2),  the corresponding result follows from Lemmas~\ref{commute1}, \ref{commute3}, \ref{commute5}--\ref{commute6} and
those for  $\mfg=\mathfrak{sp}_{2n}$ in Lemma~\ref{commute2}. Finally, if  $\{\alpha, \nu\}$ appears in (3),  the corresponding result follows from Lemmas~\ref{commute4}, \ref{commute7}--\ref{commute9} and those for  $\mfg\in\{\mathfrak{so}_{2n},\mathfrak{so}_{2n+1}\}$ in Lemma~\ref{commute2}. This completes the proof of Proposition~\ref{commut1} for  $c_{\mathbf r}^-$. Applying $\bar\tau$ yields the results on $c_{\mathbf r}^+$.

\section{Proof of Proposition~\ref{roo}}
 Later on,  we will freely use $k_\lambda x_{\nu}^-=v^{-(\lambda\mid \nu)}x_{\nu}^-k_{\lambda}, \forall (\nu, \lambda)\in\mathcal{R}^+\times  \mathcal{P}$.

\begin{Lemma}\label{copro1}Suppose that $\mfg \in\{ \mathfrak{so}_{2n}, \mathfrak{sp}_{2n}, \mathfrak{so}_{2n+1}\}$ and $1\leq i<j\leq n $. Then
\begin{itemize}\item[(1)]$\Delta(x_{\epsilon_i-\epsilon_j}^{-})=x_{\epsilon_i-\epsilon_j}^{-}\otimes k_{-(\epsilon_i-\epsilon_j)}+1\otimes x_{\epsilon_i-\epsilon_j}^{-}-qz_q\sum_{i<t<j}x_{\epsilon_t-\epsilon_j}^{-}\otimes k_{-(\epsilon_t-\epsilon_j)}x_{\epsilon_i-\epsilon_t}^{-}$,  \item[(2)]$\overline{\Delta}(x_{\epsilon_i-\epsilon_j}^{-})=x_{\epsilon_i-\epsilon_j}^{-}\otimes k_{\epsilon_i-\epsilon_j}+1\otimes x_{\epsilon_i-\epsilon_j}^{-}-z_q\sum_{i<t<j}x_{\epsilon_i-\epsilon_t}^{-}\otimes k_{\epsilon_i-\epsilon_t}x_{\epsilon_t-\epsilon_j}^{-}$.
\end{itemize}
\end{Lemma}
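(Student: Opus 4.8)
The plan is to prove both formulas simultaneously by induction on $j-i\ge 1$, reducing each non-simple root vector $x_{\epsilon_i-\epsilon_j}^-$ to a $v$-commutator of a Chevalley generator with a shorter one and then using that $\Delta$ and $\overline\Delta$ are algebra homomorphisms.

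\textbf{Base case.} When $j=i+1$ we have $x_{\epsilon_i-\epsilon_{i+1}}^-=x_i^-$, a Chevalley generator, and both asserted identities reduce (the sums being empty) to $\Delta(x_i^-)=x_i^-\otimes k_{-\alpha_i}+1\otimes x_i^-$ and $\overline\Delta(x_i^-)=x_i^-\otimes k_{\alpha_i}+1\otimes x_i^-$, which are exactly \eqref{rell} and \eqref{rell1}.

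\textbf{Inductive step.} Fix $i<j$ with $j-i\ge 2$. By Corollary~\ref{25a}(1) applied with $k=j-1$,
\[ x_{\epsilon_i-\epsilon_j}^-=-q\,[x_{\epsilon_i-\epsilon_{j-1}}^-,\,x_{j-1}^-]_v, \]
where $[a,b]_v=ab-v^{(\cdot\mid\cdot)}ba$ and the relevant factor $v^{(\epsilon_i-\epsilon_{j-1}\mid\epsilon_{j-1}-\epsilon_j)}$ equals $q^{-1}$ in all three cases (one checks this directly from the conventions for $v$ and $(\,\mid\,)$ recorded before \eqref{vv}). Since $\Delta$ is an algebra homomorphism, I would substitute the inductive formula for $\Delta(x_{\epsilon_i-\epsilon_{j-1}}^-)$ (legitimate since $(j-1)-i<j-i$) together with $\Delta(x_{j-1}^-)=x_{j-1}^-\otimes k_{-\alpha_{j-1}}+1\otimes x_{j-1}^-$, expand the product $\Delta(x_{\epsilon_i-\epsilon_{j-1}}^-)\Delta(x_{j-1}^-)-q^{-1}\Delta(x_{j-1}^-)\Delta(x_{\epsilon_i-\epsilon_{j-1}}^-)$ in $\U_v(\mathfrak g)^{\otimes 2}$, and then collect terms according to the weight of the first tensor factor. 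The relation $k_\lambda x_\nu^-=v^{-(\lambda\mid\nu)}x_\nu^- k_\lambda$ moves all Cartan elements to the right, the $v$-commutators $[x_{\epsilon_t-\epsilon_j}^-,x_{j-1}^-]_v$ that appear are evaluated by Corollary~\ref{25a}(1), and the commutations $x_{\epsilon_i-\epsilon_t}^- x_{j-1}^-=x_{j-1}^- x_{\epsilon_i-\epsilon_t}^-$ (for $i<t<j-1$) follow from Corollary~\ref{monoial1}. Collecting: the weight $-(\epsilon_i-\epsilon_j)$ component reproduces $x_{\epsilon_i-\epsilon_j}^-\otimes k_{-(\epsilon_i-\epsilon_j)}$ (again by Corollary~\ref{25a}(1)), the weight $0$ component gives $1\otimes x_{\epsilon_i-\epsilon_j}^-$, for $t=j-1$ a genuinely new summand $-qz_q\,x_{\epsilon_{j-1}-\epsilon_j}^-\otimes k_{-(\epsilon_{j-1}-\epsilon_j)}x_{\epsilon_i-\epsilon_{j-1}}^-$ is produced from the $(1\otimes x_{j-1}^-)$/$(x_{\epsilon_i-\epsilon_{j-1}}^-\otimes k_{-(\epsilon_i-\epsilon_{j-1})})$ cross terms, and for each $i<t<j-1$ the weight $-(\epsilon_t-\epsilon_j)$ component assembles to $-qz_q\,x_{\epsilon_t-\epsilon_j}^-\otimes k_{-(\epsilon_t-\epsilon_j)}x_{\epsilon_i-\epsilon_t}^-$ while the weight $-(\epsilon_t-\epsilon_{j-1})$ and $-\alpha_{j-1}$ components cancel. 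This yields (1). Part (2) follows by running the identical computation with $\overline\Delta$ in place of $\Delta$ and \eqref{rell1} in place of \eqref{rell}; alternatively one may transport (1) through \eqref{bar123}.

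\textbf{Main obstacle.} There is no conceptual difficulty beyond Appendix A; the work is entirely bookkeeping. The delicate point is tracking the powers of $q$ produced by the Cartan commutations and verifying that, for a fixed intermediate weight $-(\epsilon_t-\epsilon_j)$, the four contributions coming from the terms of $\Delta(x_{\epsilon_i-\epsilon_{j-1}}^-)\Delta(x_{j-1}^-)$ and its opposite recombine — via Corollaries~\ref{25a} and \ref{monoial1} — into exactly the single normalized summand $-qz_q\,x_{\epsilon_t-\epsilon_j}^-\otimes k_{-(\epsilon_t-\epsilon_j)}x_{\epsilon_i-\epsilon_t}^-$, with all spurious weight components cancelling. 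This is the tedious computation the paper warns about.
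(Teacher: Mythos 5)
Your proposal is correct and follows essentially the same route as the paper: induction on $j-i$, with base case $x_{\epsilon_i-\epsilon_{i+1}}^-=x_i^-$ handled by \eqref{rell}--\eqref{rell1}, and the inductive step expressing $x_{\epsilon_i-\epsilon_j}^-$ via Corollary~\ref{25a}(1) as a $v$-commutator of $x_{\epsilon_i-\epsilon_{j-1}}^-$ with $x_{j-1}^-$, then expanding, moving Cartan elements via $k_\lambda x_\nu^- = v^{-(\lambda\mid\nu)}x_\nu^- k_\lambda$, commuting $x_{\epsilon_i-\epsilon_t}^-$ past $x_{j-1}^-$ by Corollary~\ref{monoial1}, and closing the $v$-commutators that arise with Corollary~\ref{25a}(1). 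One small slip: the $v$-commutators appearing after expansion are $[x_{\epsilon_t-\epsilon_{j-1}}^-,x_{j-1}^-]_v$ (not $[x_{\epsilon_t-\epsilon_j}^-,x_{j-1}^-]_v$), which Corollary~\ref{25a}(1) evaluates to $-q^{-1}x_{\epsilon_t-\epsilon_j}^-$; with that correction the argument matches the paper's proof step for step, including treating $\Delta$ and $\overline\Delta$ in parallel.
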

\begin{proof}
If $j-i=1$, then $x_{\epsilon_i-\epsilon_j}^{-}=x_i^-$. By \eqref{rell}-\eqref{rell1}, we immediately have  (1) and (2). In general,
 by Corollary~\ref{25a}(1),
$$\Psi(x_{\epsilon_i-\epsilon_j}^{-})
=\Psi(x_{\epsilon_{j-1}-\epsilon_j}^-)\Psi(x_{\epsilon_i-\epsilon_{j-1}}^-)-
q\Psi(x_{\epsilon_i-\epsilon_{j-1}}^-)\Psi(x_{\epsilon_{j-1}-\epsilon_j}^-),$$ $\Psi\in \{\Delta, \bar\Delta\}.$
Rewriting $\Psi(x_{\epsilon_i-\epsilon_{j-1}}^-)$ by induction assumption on  $j-1-i$ and using     $x_{\epsilon_i-\epsilon_t}^-x_{\epsilon_{j-1}-\epsilon_j}^-=x_{\epsilon_{j-1}-\epsilon_j}^-x_{\epsilon_i-\epsilon_t}^-$ if $i<t<j-1$ (see Corollary~\ref{monoial1}) yield the following two equations:
 $$\begin{aligned}
\Delta(x_{\epsilon_i-\epsilon_j}^{-})=&x_{\epsilon_i-\epsilon_j}^-\otimes k_{-(\epsilon_i-\epsilon_j)}+1\otimes x_{\epsilon_i-\epsilon_j}^-
-qz_qx_{\epsilon_{j-1}-\epsilon_j}^-\otimes k_{-(\epsilon_{j-1}-\epsilon_j)}x_{\epsilon_i-\epsilon_{j-1}}^-
\\& +q^2z_q\sum_{i<t<j-1}[x_{\epsilon_t-\epsilon_{j-1}}^{-}, x_{\epsilon_{j-1}-\epsilon_j}^-]_v\otimes k_{-(\epsilon_t-\epsilon_j)}x_{\epsilon_i-\epsilon_t}^{-},
\end{aligned}$$
$$\begin{aligned}\bar\Delta(x_{\epsilon_i-\epsilon_j}^{-})=& x_{\epsilon_i-\epsilon_j}^-\otimes k_{\epsilon_i-\epsilon_j}+1\otimes x_{\epsilon_i-\epsilon_j}^-
-z_q\sum_{i<t<j-1}x_{\epsilon_i-\epsilon_t}^{-}\otimes k_{\epsilon_i-\epsilon_t}x_{\epsilon_{j-1}-\epsilon_j}^-x_{\epsilon_t-\epsilon_{j-1}}^{-}
\\ &
+qz_q\sum_{i<t<j-1}x_{\epsilon_i-\epsilon_t}^{-}\otimes k_{\epsilon_i-\epsilon_t}[x_{\epsilon_t-\epsilon_{j-1}}^{-}, x_{\epsilon_{j-1}-\epsilon_j}^-]_v.
\end{aligned}$$
 Using  Corollary~\ref{25a}(1) to simplify $[x_{\epsilon_t-\epsilon_{j-1}}^{-}, x_{\epsilon_{j-1}-\epsilon_j}^-]_v$ above, we immediately have  (1)-(2).
\end{proof}
\begin{Lemma}\label{copro2}Suppose that $\mfg = \mathfrak{so}_{2n+1}$ and $1\leq j\leq n$. Then
\begin{itemize}\item[(1)]$\Delta(x_{\epsilon_j}^{-})=x_{\epsilon_j}^{-}\otimes k_{-\epsilon_j}+1\otimes x_{\epsilon_j}^{-}-qz_q\sum_{j<t\leq n}x_{\epsilon_t}^{-}\otimes k_{-\epsilon_t}x_{\epsilon_j-\epsilon_t}^{-}$,  \item[(2)]$\overline{\Delta}(x_{\epsilon_j}^{-})=x_{\epsilon_j}^{-}\otimes k_{\epsilon_j}+1\otimes x_{\epsilon_j}^{-}-z_q\sum_{j<t\leq n}x_{\epsilon_j-\epsilon_t}^{-}\otimes k_{\epsilon_j-\epsilon_t}x_{\epsilon_t}^{-}$.
\end{itemize}
\end{Lemma}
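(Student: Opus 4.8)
The plan is to argue by induction on $n-j$, following the template of the proof of Lemma~\ref{copro1}. The base case $j=n$ is immediate: there $x_{\epsilon_n}^-=x_n^-$ is a simple root vector, so both identities follow at once from \eqref{rell} and \eqref{rell1}, the sums on the right-hand sides being empty.

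For the inductive step, suppose $j<n$ and that the formulas are known for $\epsilon_{j+1}$. I would start from Corollary~\ref{25a}(4), taken with $i=j$, $k=j+1$, which gives $x_{\epsilon_j}^-=-q\,[x_{\epsilon_j-\epsilon_{j+1}}^-,x_{\epsilon_{j+1}}^-]_v$ with $x_{\epsilon_j-\epsilon_{j+1}}^-=x_j^-$. Since $\Psi\in\{\Delta,\overline\Delta\}$ is an algebra homomorphism,
$$\Psi(x_{\epsilon_j}^-)=-q\bigl(\Psi(x_j^-)\,\Psi(x_{\epsilon_{j+1}}^-)-q^{-1}\,\Psi(x_{\epsilon_{j+1}}^-)\,\Psi(x_j^-)\bigr).$$
Next I would substitute $\Psi(x_j^-)$ from \eqref{rell} (resp.\ \eqref{rell1}) and $\Psi(x_{\epsilon_{j+1}}^-)$ from the inductive hypothesis, expand the product, and push every $k_\lambda$-factor past the root vectors using $k_\lambda x_\nu^-=v^{-(\lambda\mid\nu)}x_\nu^-k_\lambda$. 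The "leading" products should recombine, again via Corollary~\ref{25a}(4), into $x_{\epsilon_j}^-\otimes k_{-\epsilon_j}+1\otimes x_{\epsilon_j}^-$ (resp.\ $x_{\epsilon_j}^-\otimes k_{\epsilon_j}+1\otimes x_{\epsilon_j}^-$); the mixed "leading" terms should produce the $t=j+1$ summand of the claimed formula; and for each $t$ with $j+1<t\le n$ the remaining cross terms should assemble into a single summand, after rewriting $[x_{\epsilon_j-\epsilon_{j+1}}^-,x_{\epsilon_{j+1}-\epsilon_t}^-]_v$ via Corollary~\ref{25a}(1) (which produces the root vector $x_{\epsilon_j-\epsilon_t}^-$) and discarding brackets of the shape $[x_{\epsilon_j-\epsilon_{j+1}}^-,x_{\epsilon_t}^-]_v$, which vanish by Corollary~\ref{monoial1} since $\epsilon_j+\epsilon_t-\epsilon_{j+1}\notin\mathcal R^+$ and admits no decomposition into intermediate positive roots when $t>j+1$.

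The main obstacle will be the coefficient bookkeeping in this last step: carefully tracking the powers of $q$ generated by the $v$-commutators and by commuting the $k_\lambda$'s past the $x_\nu^-$'s, and verifying that every cross term recombines into exactly the displayed sum with coefficient $-qz_q$ (resp.\ $-z_q$) and the stated $k$-weights $k_{-\epsilon_t}$, with no leftover contributions. This is of the same nature as the computation already carried out in Lemma~\ref{copro1}, so I would present it compactly, citing Corollary~\ref{25a} and Corollary~\ref{monoial1} for the relevant commutations and leaving the arithmetic to the reader.
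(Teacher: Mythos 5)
Your argument is correct, but it follows a genuinely different route from the paper's. The paper treats Lemma~\ref{copro2} \emph{non-inductively}: it writes $x_{\epsilon_j}^-=x_{\epsilon_n}^-x_{\epsilon_j-\epsilon_n}^--qx_{\epsilon_j-\epsilon_n}^-x_{\epsilon_n}^-$ using the minimal pair $(\epsilon_n,\epsilon_j-\epsilon_n)\in\Upsilon_{\epsilon_j}$, then feeds in the already-known coproduct of $x_{\epsilon_j-\epsilon_n}^-$ from Lemma~\ref{copro1} together with the simple-root formula for $x_{\epsilon_n}^-=x_n^-$; the residual brackets $[x_{\epsilon_t-\epsilon_n}^-,x_{\epsilon_n}^-]_v$ are then collapsed via Corollary~\ref{25a}(4). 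You instead pick the other minimal pair $(\epsilon_{j+1},\epsilon_j-\epsilon_{j+1})\in\Upsilon_{\epsilon_j}$, which makes the simple factor $x_j^-$ and leaves the ``long'' factor $x_{\epsilon_{j+1}}^-$ to be handled by induction on $n-j$ — i.e., you transplant the inductive template of Lemma~\ref{copro1} directly into Lemma~\ref{copro2}, whereas the paper uses Lemma~\ref{copro1} as a black box and skips the induction. Both routes are of comparable length; yours needs Corollary~\ref{25a}(1) plus the vanishing $[x_{\epsilon_j-\epsilon_{j+1}}^-,x_{\epsilon_t}^-]_v=0$ from Corollary~\ref{monoial1} to assemble the $j+1<t\le n$ terms, while the paper needs Corollary~\ref{monoial1} for the commutations $x_{\epsilon_j-\epsilon_t}^-x_{\epsilon_n}^-=x_{\epsilon_n}^-x_{\epsilon_j-\epsilon_t}^-$ and a single application of Corollary~\ref{25a}(4) at the end. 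The paper's decomposition has the mild advantage of never re-deriving what Lemma~\ref{copro1} already established; yours has the advantage of being closed-form within Lemma~\ref{copro2} and structurally parallel to the proof of Lemma~\ref{copro1}. I checked the coefficient bookkeeping for your decomposition in both $\Delta$ and $\overline\Delta$ (including that $k_1b-qbk_1$ kills one mixed term, $ak_2-qk_2a=-z_qk_2a$ produces the $t=j+1$ summand, $d_tb-qbd_t=x_{\epsilon_j-\epsilon_t}^-$ produces the $j+1<t$ summands, and $[x_j^-,x_{\epsilon_t}^-]_v=0$ kills the remaining cross terms) and it does close up exactly as you predict.
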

\begin{proof}
If $n-j=0$, $x_{\epsilon_j}^{-}=x_{n}^{-}$. By \eqref{rell} and \eqref{rell1}, we have  (1)-(2).   Suppose $n-j>0$. By Corollary~\ref{25a}(4),
 $$\Psi(x_{\epsilon_j}^{-})
=\Psi(x_{\epsilon_n}^-)\Psi(x_{\epsilon_j-\epsilon_n}^-)-
q\Psi(x_{\epsilon_j-\epsilon_n}^-)\Psi(x_{\epsilon_n}^-)
,$$  $\Psi\in \{\Delta, \bar\Delta\}$. Rewriting $\Psi(x_{\epsilon_j-\epsilon_{n}}^-)$ via Lemma~\ref{copro1} and using     $x_{\epsilon_j-\epsilon_t}^-x_{\epsilon_n}^-=x_{\epsilon_n}^-x_{\epsilon_j-\epsilon_t}^-$ if $j<t<n$ (see Corollary~\ref{monoial1}) yield the following two equations:
  $$\Delta(x_{\epsilon_j}^{-})
=x_{\epsilon_j}^-\otimes k_{-{\epsilon_j}}+1\otimes x_{\epsilon_j}^--qz_qx_{\epsilon_n}^-\otimes k_{-\epsilon_n}x_{\epsilon_j-\epsilon_{n}}^-+q^2z_q\sum_{j<t<n}[x_{\epsilon_t-\epsilon_{n}}^{-}, x_{\epsilon_n}^-]_v\otimes k_{-\epsilon_t}x_{\epsilon_j-\epsilon_t}^{-},
$$
$$\bar\Delta(x_{\epsilon_j}^{-})=x_{\epsilon_j}^-\otimes k_{{\epsilon_j}}+1\otimes x_{\epsilon_j}^--z_qx_{\epsilon_j-\epsilon_{n}}^-\otimes k_{\epsilon_j-\epsilon_{n}}x_{\epsilon_{n}}^-+qz_q\sum_{j<t<n}x_{\epsilon_j-\epsilon_{t}}^{-}\otimes k_{\epsilon_j-\epsilon_t}[x_{\epsilon_t-\epsilon_n}^{-}, x_{\epsilon_n}^-]_v.
$$
Finally, we have   (1) and (2)  after we use  Corollary~\ref{25a}(4) to simplify  $[x_{\epsilon_t-\epsilon_n}^{-}, x_{\epsilon_n}^-]_v$ above.
\end{proof}

Suppose $1\leq h<l\leq n $. Define
 $$D_{\epsilon_h+\epsilon_l}=\begin{cases}\{\epsilon_h-\epsilon_j, \epsilon_h, \epsilon_h+\epsilon_t \mid h<j\leq n, l< t\leq n\}, &\text{if $\mfg=\mathfrak{so}_{2n+1}$,}\\ \{\epsilon_h-\epsilon_j, \epsilon_h+\epsilon_t \mid h<j\leq n, l<t\leq n\}, &\text{if $\mfg\in\{\mathfrak{so}_{2n}, \mathfrak{sp}_{2n}\}$.}\\
 \end{cases} $$

\begin{Lemma}\label{copro3}Suppose that $\mfg \in\{ \mathfrak{so}_{2n}, \mathfrak{sp}_{2n}, \mathfrak{so}_{2n+1}\}$ and $1\leq h<l\leq n $.
 Then
\begin{itemize}\item[(1)]$\Delta(x_{\epsilon_h+\epsilon_l}^-)=x_{\epsilon_h+\epsilon_l}^{-}\otimes k_{-(\epsilon_h+\epsilon_l)}+1\otimes x_{\epsilon_h+\epsilon_l}^{-} + \sum_{I,\gamma}h_{I,\gamma}x_I^{-}\otimes k_{-wt(I)}x_\gamma^{-}$,  \item[(2)]$\overline{\Delta}(x_{\epsilon_h+\epsilon_l}^{-})=x_{\epsilon_h+\epsilon_l}^{-}\otimes k_{\epsilon_h+\epsilon_l}+1\otimes x_{\epsilon_h+\epsilon_l}^{-} + \sum_{I,\gamma}g_{I,\gamma}x_{\gamma}^{-}\otimes k_{\gamma}x_{I}^{-}$,
\end{itemize}
where  $I$ ranges over  non-empty sequence of positive roots and $\gamma\in D_{\epsilon_h+\epsilon_l}$ such that $wt(I)+\gamma=\epsilon_h+\epsilon_l$
and, both $h_{I,\gamma}$ and $g_{I,\gamma}$ are in $\mathcal A_{\ell(I)}$.\end{Lemma}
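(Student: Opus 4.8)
The plan is to derive the coproduct formula for $x_{\epsilon_h+\epsilon_l}^-$ by the same bootstrapping used in Lemmas~\ref{copro1} and~\ref{copro2}: realize $x_{\epsilon_h+\epsilon_l}^-$ as a $v$-commutator of two root vectors whose coproducts are already in hand, apply $\Delta$ (resp.\ $\overline\Delta$), and then normal-order the resulting product of coproducts using the commutator identities of Corollary~\ref{25a} together with the vanishing relations of Corollary~\ref{monoial1} and the rule $k_\lambda x_\nu^-=v^{-(\lambda\mid\nu)}x_\nu^-k_\lambda$. The whole argument is an induction on $n-l$, with base case $l=n$ and inductive step passing from $l+1$ to $l$.

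For the inductive step (so $l<n$), the pair $(\epsilon_l-\epsilon_{l+1},\epsilon_h+\epsilon_{l+1})$ lies in $\Upsilon_{\epsilon_h+\epsilon_l}$ by Corollary~\ref{equa}(4), and Corollary~\ref{25a}(2) gives $x_{\epsilon_h+\epsilon_l}^-=-q\,[x_{\epsilon_h+\epsilon_{l+1}}^-,x_{\epsilon_l-\epsilon_{l+1}}^-]_v$. Here $x_{\epsilon_l-\epsilon_{l+1}}^-=x_l^-$ is a simple root vector, so by \eqref{rell} its coproduct has only the two leading terms; I would substitute this and the inductive formula for $\Delta(x_{\epsilon_h+\epsilon_{l+1}}^-)$ into $\Delta(x_{\epsilon_h+\epsilon_l}^-)=-q\bigl(\Delta(x_{\epsilon_h+\epsilon_{l+1}}^-)\Delta(x_{\epsilon_l-\epsilon_{l+1}}^-)-v^{(\epsilon_h+\epsilon_{l+1}\mid\epsilon_l-\epsilon_{l+1})}\Delta(x_{\epsilon_l-\epsilon_{l+1}}^-)\Delta(x_{\epsilon_h+\epsilon_{l+1}}^-)\bigr)$, and likewise for $\overline\Delta$. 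The two ``leading $\times$ leading'' contributions recombine, using Corollary~\ref{25a}(2) once more, into the main terms $x_{\epsilon_h+\epsilon_l}^-\otimes k_{-(\epsilon_h+\epsilon_l)}+1\otimes x_{\epsilon_h+\epsilon_l}^-$ plus exactly one correction term supported on $I=(\epsilon_l-\epsilon_{l+1})$, $\gamma=\epsilon_h+\epsilon_{l+1}\in D_{\epsilon_h+\epsilon_l}$, whose coefficient carries precisely one factor $1-v^{2(\epsilon_h+\epsilon_{l+1}\mid\epsilon_l-\epsilon_{l+1})}$, a unit multiple of $z_q$. The remaining contributions come from the length-$\geq 1$ tail of $\Delta(x_{\epsilon_h+\epsilon_{l+1}}^-)$ (coefficients already in $\mathcal A_{\ell(I)}$ by induction) times the two terms of $\Delta(x_{\epsilon_l-\epsilon_{l+1}}^-)$; one reorders $x_I^-x_{\epsilon_l-\epsilon_{l+1}}^-$ and $x_\gamma^-x_{\epsilon_l-\epsilon_{l+1}}^-$ via Corollary~\ref{monoial1} and Corollary~\ref{25a}, checking each time that the new second-factor root stays in $D_{\epsilon_h+\epsilon_l}$, which is forced by the shapes of the minimal pairs in Corollary~\ref{equa} and the convex order \eqref{ord}.

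For the base case $l=n$ I would split into the three Lie types, always reducing to a $v$-commutator of an $\epsilon$-difference (or $\epsilon$-single) root vector with the simple vector $x_n^-$. For $\mfg=\mathfrak{so}_{2n}$: $x_{\epsilon_{n-1}+\epsilon_n}^-=x_n^-$ is \eqref{rell1} directly, and for $h<n-1$ one has $x_{\epsilon_h+\epsilon_n}^-=-q\,[x_{\epsilon_h-\epsilon_{n-1}}^-,x_n^-]_v$ by Corollary~\ref{25a}(5), feeding in Lemma~\ref{copro1}. For $\mfg=\mathfrak{sp}_{2n}$: $x_{2\epsilon_n}^-=x_n^-$ and $x_{\epsilon_h+\epsilon_n}^-=-q^2\,[x_{\epsilon_h-\epsilon_n}^-,x_n^-]_v$ by Corollary~\ref{25a}(6), again via Lemma~\ref{copro1}. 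For $\mfg=\mathfrak{so}_{2n+1}$: $x_{\epsilon_n}^-=x_n^-$ and $x_{\epsilon_h+\epsilon_n}^-=-[2]^{-1}[x_{\epsilon_h}^-,x_n^-]_v$ by Corollary~\ref{25a}(3), via Lemma~\ref{copro2}. Each of these unwinds by the same normal-ordering computation as the inductive step, so no separate induction on $h$ is needed.

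The main obstacle will be the degree bookkeeping in these normal-ordering steps, namely verifying uniformly that a term whose first tensor factor is a product $x_I^-$ of $\ell(I)$ root vectors comes with a coefficient in $\mathcal A_{\ell(I)}$. The mechanism is that each $v$-commutator consumed either kills a term (Corollary~\ref{monoial1}) or produces a scalar, or $z_q$-, multiple of a single root vector (Corollary~\ref{25a}), so that each time the length of the first-factor monomial grows by one a compensating factor of $z_q$ appears — exactly as in the estimate $c_{\mathbf r}^\pm\in\mathcal A_{|\mathbf r|-1}$ of Proposition~\ref{commut1} and in \eqref{theta123}. Making this precise requires a patient case split in the $l=n$ base case and, in the inductive step, a separate treatment according to whether the summation indices arising from Lemmas~\ref{copro1}, \ref{copro2} and the inductive formula exceed or fall short of $l+1$; this is the tedious computation flagged in the text, but it is routine once the recursion and the $\mathcal A_i$-valuations are organized as above.
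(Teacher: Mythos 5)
Your proposal mirrors the paper's proof: the same induction on $n-l$, the same base-case decompositions of $x_{\epsilon_h+\epsilon_n}^-$ via Corollary~\ref{25a}(3),(5),(6) feeding in Lemmas~\ref{copro1}--\ref{copro2}, the same inductive-step commutator $x_{\epsilon_h+\epsilon_l}^-=-q[x_{\epsilon_h+\epsilon_{l+1}}^-,x_{\epsilon_l-\epsilon_{l+1}}^-]_v$ from Corollary~\ref{25a}(2), and the same bookkeeping via Corollary~\ref{monoial1} and Proposition~\ref{commut1} to certify the $\mathcal A_{\ell(I)}$-valuations. The only thing you leave implicit is carrying out the normal-ordering computations explicitly, but that is exactly what the paper does, so this is essentially the same proof.
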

\begin{proof} Let  $l=n$. First, we assume  $\mfg=\mathfrak{sp}_{2n}$.   By Corollary~\ref{25a}(6),
$$\Psi(x_{\epsilon_h+\epsilon_n}^{-})
=\Psi(x_{2\epsilon_{n}}^-)\Psi(x_{\epsilon_h-\epsilon_{n}}^-)-
q^2\Psi(x_{\epsilon_h-\epsilon_{n}}^-)\Psi(x_{2\epsilon_{n}}^-),  \Psi\in \{\Delta, \bar\Delta\}.$$
By Corollary~\ref{monoial1}, we have
\begin{equation}\label{aaaa1} x_{\epsilon_h-\epsilon_t}^-x_{2\epsilon_n}^-=x_{2\epsilon_n}^-x_{\epsilon_h-\epsilon_t}^-,  \text{ if $h<t<n$.}
\end{equation}
Rewriting  $\Psi(x_{\epsilon_h-\epsilon_{n}}^-)$ and $\Psi(x_{2\epsilon_{n}}^-)$  via Lemma~\ref{copro1}, \eqref{rell}-\eqref{rell1} and using \eqref{aaaa1}   yield the following two equations:
$$ \begin{aligned}\Delta(x_{\epsilon_h+\epsilon_n}^{-})
&=x_{\epsilon_h+\epsilon_n}^-\otimes k_{-({\epsilon_h+\epsilon_n})}+1\otimes x_{\epsilon_h+\epsilon_n}^-+(1-q^4)x_{2\epsilon_n}^-\otimes k_{-2\epsilon_n}x_{\epsilon_h-\epsilon_n}^-\\&+q^3z_q\sum_{h<t<n}
[x_{\epsilon_t-\epsilon_{n}}^{-}, x_{2\epsilon_n}^-]_v\otimes k_{-(\epsilon_t+\epsilon_n)}x_{\epsilon_h-\epsilon_t}^{-},\end{aligned}$$
$$\begin{aligned}\bar\Delta(x_{\epsilon_h+\epsilon_n}^{-})
&=x_{\epsilon_h+\epsilon_n}^-\otimes k_{{\epsilon_h+\epsilon_n}}+1\otimes x_{\epsilon_h+\epsilon_n}^-+(q^{-2}-q^2)x_{\epsilon_h-\epsilon_n}^-\otimes k_{\epsilon_h-\epsilon_n}x_{2\epsilon_n}^-\\&+q^2z_q\sum_{h<t<n}x_{\epsilon_h-\epsilon_{t}}^{-}\otimes k_{\epsilon_h-\epsilon_t}[x_{\epsilon_t-\epsilon_n}^{-}, x_{2\epsilon_n}^-]_v\end{aligned}$$
Using Corollary~\ref{25a}(6) to simplify $[x_{\epsilon_t-\epsilon_n}^{-}, x_{2\epsilon_n}^-]_v$ above, we have
\begin{equation}\label{ddd4}\begin{aligned} \Delta(x_{\epsilon_h+\epsilon_n}^{-})=&x_{\epsilon_h+\epsilon_n}^-\otimes k_{-({\epsilon_h+\epsilon_n})}+1\otimes x_{\epsilon_h+\epsilon_n}^- -z_qq^2[2]x_{2\epsilon_n}^-\otimes k_{-2\epsilon_n}x_{\epsilon_h-\epsilon_n}^-\\&\quad-qz_q\sum_{h<t<n}
x_{\epsilon_t+\epsilon_{n}}^{-}\otimes k_{-(\epsilon_t+\epsilon_n)}x_{\epsilon_h-\epsilon_t}^{-},
\end{aligned}\end{equation}
\begin{equation}\label{ddd5}\begin{aligned}\bar\Delta(x_{\epsilon_h+\epsilon_n}^{-})
&
=x_{\epsilon_h+\epsilon_n}^-\otimes k_{{\epsilon_h+\epsilon_n}}+1\otimes x_{\epsilon_h+\epsilon_n}^--z_q[2]x_{\epsilon_h-\epsilon_n}^-\otimes k_{\epsilon_h-\epsilon_n}x_{2\epsilon_n}^-\\&\quad-z_q\sum_{h<t< n}x_{\epsilon_h-\epsilon_{t}}^{-}\otimes k_{\epsilon_h-\epsilon_t}x_{\epsilon_t+\epsilon_n}^{-}.
\end{aligned}\end{equation}

Secondly, we assume  $\mfg=\mathfrak{so
}_{2n}$.
 If $h=n-1$, then  $x_{\epsilon_h+\epsilon_l}^-=x_n^-$. In this case,   (1) and (2) follow from \eqref{rell}--\eqref{rell1}. In general, by Corollary~\ref{25a}(5),  $$\Psi(x_{\epsilon_h+\epsilon_n}^{-})
=\Psi(x_{\epsilon_{n-1}+\epsilon_{n}}^-)\Psi(x_{\epsilon_h-\epsilon_{n-1}}^-)-
q\Psi(x_{\epsilon_h-\epsilon_{n-1}}^-)\Psi(x_{\epsilon_{n-1}+\epsilon_{n}}^-), \text{ $\Psi\in \{\Delta, \bar\Delta\}$.}$$
By Corollary~\ref{monoial1}), we have
\begin{equation}\label{aaaa2}  x_{\epsilon_h-\epsilon_t}^-x_{\epsilon_{n-1}+\epsilon_n}^-=x_{\epsilon_{n-1}+\epsilon_n}^-x_{\epsilon_h-\epsilon_t}^-, \text{ if $h<t<n-1$.}\end{equation}
Rewriting $\Psi(x_{\epsilon_h-\epsilon_{n}}^-)$ and $\Psi(x_{\epsilon_{n-1}+\epsilon_{n}}^-)$  via Lemma~\ref{copro1} and \eqref{rell}-\eqref{rell1} and using \eqref{aaaa2}, we have   the following two equations:
 $$ \begin{aligned}\Delta (x_{\epsilon_h+\epsilon_n}^{-})
&=x_{\epsilon_h+\epsilon_n}^-\otimes k_{-{\epsilon_h-\epsilon_n}}+1\otimes x_{\epsilon_h+\epsilon_n}^--qz_qx_{\epsilon_{n-1}+\epsilon_n}^-\otimes k_{-\epsilon_{n-1}-\epsilon_n}x_{\epsilon_h-\epsilon_{n-1}}^-\\ &\quad+q^2z_q
\sum_{h<t<n-1}[x_{\epsilon_t-\epsilon_{n-1}}^{-}, x_{\epsilon_{n-1}+\epsilon_n}^-]_v\otimes k_{-\epsilon_t-\epsilon_{n}}x_{\epsilon_h-\epsilon_t}^{-},\end{aligned}$$
$$\begin{aligned}\bar\Delta(x_{\epsilon_h+\epsilon_n}^{-})
&=x_{\epsilon_h+\epsilon_n}^-\otimes k_{{\epsilon_h+\epsilon_n}}+1\otimes x_{\epsilon_h+\epsilon_n}^--z_qx_{\epsilon_h-\epsilon_{n-1}}^-\otimes k_{\epsilon_h-\epsilon_{n-1}}x_{\epsilon_{n-1}+\epsilon_n}^- \\&\quad+qz_q\sum_{h<t<n-1}x_{\epsilon_h-\epsilon_{t}}^{-}\otimes k_{\epsilon_h-\epsilon_t}[x_{\epsilon_t-\epsilon_{n-1}}^{-}, x_{\epsilon_{n-1}+\epsilon_n}^-]_v.
\end{aligned}$$
Finally, we have $$\begin{aligned}\Delta(x_{\epsilon_h+\epsilon_n}^{-})
=& x_{\epsilon_h+\epsilon_n}^-\otimes k_{-{\epsilon_h-\epsilon_n}}+1\otimes x_{\epsilon_h+\epsilon_n}^--qz_q\sum_{h<t\leq n-1}x_{\epsilon_t+\epsilon_{n}}^{-}\otimes k_{-\epsilon_t-\epsilon_n}x_{\epsilon_h-\epsilon_t}^{-},
\end{aligned}$$ $$\begin{aligned}\bar\Delta(x_{\epsilon_h+\epsilon_n}^{-})
=x_{\epsilon_h+\epsilon_n}^-\otimes k_{{\epsilon_h+\epsilon_n}}+1\otimes x_{\epsilon_h+\epsilon_n}^--z_q\sum_{h<t\leq n-1}x_{\epsilon_h-\epsilon_{t}}^{-}\otimes k_{\epsilon_h-\epsilon_t}x_{\epsilon_{t}+\epsilon_n}^-.
\end{aligned}$$
after we use Corollary~\ref{25a}(5) to simplify $[x_{\epsilon_t-\epsilon_{n-1}}^{-}, x_{\epsilon_{n-1}+\epsilon_n}^-]_v$ above.

 Suppose $\mfg=\mathfrak{so}_{2n+1}$. Thanks to Corollary~\ref{25a}(3), $$\Psi(x_{\epsilon_h+\epsilon_n}^{-})={[}2{]}^{-1}\Psi(x_{\epsilon_{n}}^-)\Psi(x_{\epsilon_h}^-)-{[}2{]}^{-1}
\Psi(x_{\epsilon_h}^-)\Psi(x_{\epsilon_{n}}^-),  ~\Psi\in\{\Delta, \bar \Delta\}.$$
By Corollary~\ref{monoial1}, we have
\begin{equation}\label{aaaa3} x_{\epsilon_h-\epsilon_t}^-x_{\epsilon_n}^-=x_{\epsilon_n}^-x_{\epsilon_h-\epsilon_t}^-, \text{ if $h<t<n$.}\end{equation}
Rewriting $\Psi(x_{\epsilon_h}^-)$ and $\Psi(x_{\epsilon_n}^-)$ via Lemma~\ref{copro2} and \eqref{rell}-\eqref{rell1} and using   \eqref{aaaa3} yield the following two equations:
  $$\begin{aligned}&\Delta(x_{\epsilon_h+\epsilon_n}^{-})
=x_{\epsilon_h+\epsilon_n}^-\otimes k_{-({\epsilon_h+\epsilon_n})}+1\otimes x_{\epsilon_h+\epsilon_n}^-+[2]^{-1}qz_qx_{\epsilon_n}^{-}\otimes k_{-\epsilon_n}[x_{\epsilon_h-\epsilon_n}^{-}, x_{\epsilon_n}^{-}]_v\\&+[2]^{-1}qz_q\sum_{h<t< n}[x_{\epsilon_t}^{-}, x_{\epsilon_n}^{-}]_v\otimes k_{-\epsilon_t}x_{\epsilon_h-\epsilon_t}^{-}k_{-\epsilon_n}-[2]^{-1}qz_q(1-q)(x_{\epsilon_n}^{-})^2\otimes k_{-2\epsilon_n}x_{\epsilon_h-\epsilon_n}^-,
\end{aligned}$$$$\begin{aligned}&\bar\Delta(x_{\epsilon_h+\epsilon_n}^{-})
=x_{\epsilon_h+\epsilon_n}^-\otimes k_{{\epsilon_h+\epsilon_n}}+1\otimes x_{\epsilon_h+\epsilon_n}^-+[2]^{-1}qz_q[x_{\epsilon_h-\epsilon_n}^{-}, x_{\epsilon_n}^{-}]_v\otimes k_{\epsilon_h}x_{\epsilon_n}^{-}\\& +[2]^{-1}z_q(1-q^{-1})x_{\epsilon_h-\epsilon_n}^-\otimes k_{\epsilon_h-\epsilon_n}(x_{\epsilon_n}^{-})^2+[2]^{-1}z_q\sum_{h<t< n}x_{\epsilon_h-\epsilon_t}^{-}\otimes k_{\epsilon_h-\epsilon_t}[x_{\epsilon_t}^{-}, x_{\epsilon_n}^{-}]_v.
\end{aligned}$$
After we use Corollary~\ref{25a}(3)-(4) to simplify  both  $[x_{\epsilon_t}^{-}, x_{\epsilon_n}^{-}]_v$ and $[x_{\epsilon_h-\epsilon_n}^{-}, x_{\epsilon_n}^{-}]_v$  above, we have
$$\begin{aligned}\Delta(x_{\epsilon_h+\epsilon_n}^{-})
=& x_{\epsilon_h+\epsilon_n}^-\otimes k_{-({\epsilon_h+\epsilon_n})}+
1\otimes x_{\epsilon_h+\epsilon_n}^--[2]^{-1}z_qx_{\epsilon_n}^{-}\otimes k_{-\epsilon_n} x_{\epsilon_h}^{-}\\ & -[2]^{-1}qz_q(1-q)(x_{\epsilon_n}^{-})^2\otimes k_{-2\epsilon_n}x_{\epsilon_h-\epsilon_n}^- -qz_q\sum_{h<t<n}x_{\epsilon_t+\epsilon_{n}}^{-}\otimes k_{-(\epsilon_t+\epsilon_n)}x_{\epsilon_h-\epsilon_t}^{-},
\end{aligned}$$
 $$\begin{aligned}\bar\Delta(x_{\epsilon_h+\epsilon_n}^{-})
=& x_{\epsilon_h+\epsilon_n}^-\otimes k_{{\epsilon_h+\epsilon_n}}+1\otimes x_{\epsilon_h+\epsilon_n}^-+[2]^{-1}z_q(1-q^{-1})x_{\epsilon_h-\epsilon_n}^-\otimes k_{\epsilon_h-\epsilon_n}(x_{\epsilon_n}^{-})^2\\& -[2]^{-1}z_qx_{\epsilon_h}^{-}\otimes k_{\epsilon_h} x_{\epsilon_n}^{-}-z_q\sum_{h<t<n}x_{\epsilon_h-\epsilon_t}^{-}\otimes k_{\epsilon_h-\epsilon_t}x_{\epsilon_t+\epsilon_n}^{-}.
\end{aligned}$$

So far, we have verified   (1)--(2)  when $l=n$.  Suppose $n-l>0$.
{By Corollary~\ref{25a}(2)}, for $\Psi\in\{\Delta, \bar\Delta\}$, $$\Psi(x_{\epsilon_h+\epsilon_l}^{-})
=\Psi(x_{\epsilon_l-\epsilon_{l+1}}^-)\Psi(x_{\epsilon_h+\epsilon_{l+1}}^-)-
q\Psi(x_{\epsilon_h+\epsilon_{l+1}}^-)\Psi(x_{\epsilon_l-\epsilon_{l+1}}^-)
.$$  Note that  $\Psi(x_{\epsilon_l-\epsilon_{l+1}}^-)$ can be computed by  via \eqref{rell}-\eqref{rell1}, and   $\Psi(x_{\epsilon_h+\epsilon_{l+1}}^-)$ can be computed  by induction assumption on $n-(l+1)$.  We have
  $$\begin{aligned}&\Delta(x_{\epsilon_h+\epsilon_{l+1}}^-)=x_{\epsilon_h+\epsilon_{l+1}}^{-}\otimes k_{-(\epsilon_h+\epsilon_{l+1})}+1\otimes x_{\epsilon_h+\epsilon_{l+1}}^{-} + \sum_{I_1,\gamma_1}h_{I_1,\gamma_1}x_{I_1}^{-}\otimes k_{-wt (I_1)}x_{\gamma_1}^{-},\\&\bar\Delta(x_{\epsilon_h+\epsilon_{l+1}}^-)=x_{\epsilon_h+\epsilon_{l+1}}^{-}\otimes k_{\epsilon_h+\epsilon_{l+1}}+1\otimes x_{\epsilon_h+\epsilon_{l+1}}^{-}+ \sum_{I_1,\gamma_1}g_{I_1,\gamma_1}x_{\gamma_1}^{-}\otimes k_{\gamma_1}x_{I_1}^{-},\end{aligned}$$ where $I_1$ ranges over non-empty sequence of positive roots and $\gamma_1\in D_{\epsilon_h+\epsilon_{l+1}}$  such that $wt(I_1)+\gamma_1=\epsilon_h+\epsilon_{l+1}$ and $ h_{I_1,\gamma_1}, g_{I_1,\gamma_1}\in\mathcal A_{\ell(I_1)}$.
 Suppose  $l+1<t\leq n$, $h<j<l$ or $l+1<j\leq n$.  By Corollary~\ref{monoial1}, we have \begin{equation} \label{aaaa4} \begin{aligned}  & x_{\epsilon_h+\epsilon_t}^-x_{\epsilon_l-\epsilon_{l+1}}^-=x_{\epsilon_l-\epsilon_{l+1}}^-x_{\epsilon_h+\epsilon_t}^-,  \ \ x_{\epsilon_h-\epsilon_{j}}^-x_{\epsilon_l-\epsilon_{l+1}}^-=x_{\epsilon_l-\epsilon_{l+1}}^-x_{\epsilon_h-\epsilon_{j}}^-,\\ & x_{\epsilon_h}^-x_{\epsilon_l-\epsilon_{l+1}}^-=x_{\epsilon_l-\epsilon_{l+1}}^-x_{\epsilon_h}^-, \ \  \quad\quad\ \
   x_{\epsilon_h-\epsilon_{l+1}}^-x_{\epsilon_l-\epsilon_{l+1}}^-=qx_{\epsilon_l-\epsilon_{l+1}}^-x_{\epsilon_h-\epsilon_{l+1}}^-. \end{aligned}\end{equation}
 Using \eqref{aaaa4} to simplify two equations above, we have
 \begin{equation}\begin{aligned}\label{eee1}\Delta&(x_{\epsilon_h+\epsilon_l}^{-})
-x_{\epsilon_h+\epsilon_l}^-\otimes k_{-({\epsilon_h+\epsilon_l})}-1\otimes x_{\epsilon_h+\epsilon_l}^-+qz_q
x_{\epsilon_l-\epsilon_{l+1}}^-\otimes k_{-({\epsilon_l-\epsilon_{l+1}})}x_{\epsilon_h+\epsilon_{l+1}}^-
\\& +q\sum_{I_1} h_{I_1,\epsilon_h-\epsilon_l}x_{I_1}^{-}
\otimes k_{-wt(I_1)}[x_{\epsilon_h-\epsilon_l}^-, x_{\epsilon_l-\epsilon_{l+1}}^-]_v
 \\ & =\sum_{I_1,\gamma_1} h_{I_1,\gamma_1}(x_{\epsilon_l-\epsilon_{l+1}}^-x_{I_1}^{-}- q^{1-(\gamma_1, \epsilon_l-\epsilon_{l+1})}x_{I_1}^{-}x_{\epsilon_l-\epsilon_{l+1}}^- )\otimes k_{-wt(I_1)-({\epsilon_l-\epsilon_{l+1}})}x_{\gamma_1}^{-}
\end{aligned}\end{equation} and \begin{equation}\begin{aligned}\label{eee2}\bar\Delta&(x_{\epsilon_h+\epsilon_l}^{-})
- x_{\epsilon_h+\epsilon_l}^-\otimes k_{{\epsilon_h+\epsilon_l}}-1\otimes x_{\epsilon_h+\epsilon_l}^-+z_q
x_{\epsilon_h+\epsilon_{l+1}}^-\otimes k_{\epsilon_h+\epsilon_{l+1}}x_{\epsilon_l-\epsilon_{l+1}}^-
\\&+q\sum_{I_1}g_{I_1,\epsilon_h-\epsilon_l}[x_{\epsilon_h-\epsilon_l}^{-},
x_{\epsilon_l-\epsilon_{l+1}}^{-}]_v\otimes k_{\epsilon_h-\epsilon_{l+1}}x_{I_1}^{-}
\\&=q^{(\gamma_1, \epsilon_l-\epsilon_{l+1})}\sum_{I_1,\gamma_1}g_{I_1,\gamma_1}x_{\gamma_1}^{-}\otimes k_{\gamma_1}(x_{\epsilon_l-\epsilon_{l+1}}^-x_{I_1}^{-}-q^{1-(\gamma_1, \epsilon_l-\epsilon_{l+1})}x_{I_1}^{-}x_{\epsilon_l-\epsilon_{l+1}}^-).
\end{aligned}\end{equation}
Thanks to Corollary~\ref{25a}(1), we  replace $[x_{\epsilon_h-\epsilon_l}^{-},
x_{\epsilon_l-\epsilon_{l+1}}^{-}]_v$ by $-q^{-1}x_{\epsilon_h-\epsilon_{l+1}}^{-}$ in those two equations.
So, we only need to deal with the (RHS) of \eqref{eee1}--\eqref{eee2}. Since $ h_{I_1,\gamma_1}, g_{I_1,\gamma_1}\in A_{\ell(I_1)}$,
by Proposition~\ref{commut1}(1), for each pair $\{I_1, \gamma_1\}$ there exists $c\in\mathbb{Z}$ such that: $$D(x_{\epsilon_l-\epsilon_{l+1}}^-x_{I_1}^{-}-q^{1-(\gamma_1, \epsilon_l-\epsilon_{l+1})}x_{I_1}^{-}x_{\epsilon_l-\epsilon_{l+1}}^-)= D(1-q^c)x_{\epsilon_l-\epsilon_{l+1}}^-x_{I_1}^{-}+\sum_{J}c_{J}x_{J}^{-}, D\in\{ h_{I_1,\gamma_1}, g_{I_1,\gamma_1}\},$$where $J$ is non-empty sequence of positive
roots such that $wt(J)=wt(I_1)$ and $c_{J}\in\mathcal A_{\ell(J)}$. We rewrite  the (RHS) of \eqref{eee1}--\eqref{eee2} via the above equation.
Now, (1)--(2) follow,
since $(1-q^c)D\in\mathcal A_{\ell(I_1)+1}$, no mater whether $c=0$ or not and, $x_{I}^-=x_{\epsilon_l-\epsilon_{l+1}}^-x_{I_1}^{-}$  where $I=(I_1, \epsilon_l-\epsilon_{l+1})$, $\ell(I)=\ell(I_1)+1$.\end{proof}

\begin{Lemma}\label{copro4}Suppose that $\mfg=\mathfrak{sp}_{2n}$ and $1\leq h\leq n$. Then \begin{itemize}\item[(1)]$\Delta(x_{2\epsilon_h}^{-})=x_{2\epsilon_h}^{-}\otimes k_{-2\epsilon_h}+1\otimes x_{2\epsilon_h}^{-} + \sum_{I,J} h_{I,J}x_I^{-}\otimes k_{-wt(I)}x_J^{-}$,  \item[(2)]$\overline{\Delta}(x_{2\epsilon_h}^{-})=x_{2\epsilon_h}^{-}\otimes k_{2\epsilon_h}+1\otimes x_{2\epsilon_h}^{-} + \sum_{I,J}g_{I,J}x_I^{-}\otimes k_{wt(I)}x_J^{-}$,
\end{itemize}
where $I$, $J$ are non-empty sequences of positive roots such that $wt(I)+wt(J)=2\epsilon_h$ and, both $ h_{I,J}$ and $g_{I,J}$ are in $\mathcal A_{max\{\ell(I), \ell(J)\}}$.
\end{Lemma}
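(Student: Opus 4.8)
The plan is to reduce the computation of $\Delta(x_{2\epsilon_h}^-)$ and $\bar\Delta(x_{2\epsilon_h}^-)$ to the coproduct formulae already established in Lemmas~\ref{copro1} and \ref{copro3}, via the unique minimal pair of $2\epsilon_h$. When $h=n$ one has $2\epsilon_n=\alpha_n$, so $x_{2\epsilon_n}^-=x_n^-$ and the claimed identities are exactly \eqref{rell} and \eqref{rell1} with the remainder sum over $(I,J)$ empty. For $h<n$, Corollary~\ref{equa}(1) gives $\Upsilon_{2\epsilon_h}=\{(\epsilon_h+\epsilon_n,\epsilon_h-\epsilon_n)\}$; since $(\epsilon_h-\epsilon_n\mid\epsilon_h+\epsilon_n)=0$ for $\mathfrak{sp}_{2n}$, Corollary~\ref{25a}(7) yields $x_{2\epsilon_h}^-=-[2]^{-1}(x_{\epsilon_h-\epsilon_n}^-x_{\epsilon_h+\epsilon_n}^--x_{\epsilon_h+\epsilon_n}^-x_{\epsilon_h-\epsilon_n}^-)$, an \emph{ordinary} commutator. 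Applying the algebra homomorphism $\Delta$ (resp.\ $\bar\Delta$) and substituting the expansions of $\Delta(x_{\epsilon_h-\epsilon_n}^-)$ from Lemma~\ref{copro1} and of $\Delta(x_{\epsilon_h+\epsilon_n}^-)$ from Lemma~\ref{copro3}, I would write each coproduct as its two ``primary'' terms plus a remainder sum, the latter carrying at least one factor of $z_q$ in every summand.

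Expanding $[\Delta(x_{\epsilon_h-\epsilon_n}^-),\Delta(x_{\epsilon_h+\epsilon_n}^-)]$ into four pieces according to this splitting, the first step is to check that the ``primary $\times$ primary'' piece produces, after moving all Cartan parts to the right via $k_\lambda x_\nu^-=v^{-(\lambda\mid\nu)}x_\nu^-k_\lambda$, exactly $-[2]\big(x_{2\epsilon_h}^-\otimes k_{-2\epsilon_h}+1\otimes x_{2\epsilon_h}^-\big)$: the four off-diagonal $x^-\otimes k\cdot x^-$ cross terms cancel pairwise because the relevant $v$-pairings vanish, and the two surviving commutators are recognised as $-[2]x_{2\epsilon_h}^-$ by Corollary~\ref{25a}(7) again. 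Multiplying by $-[2]^{-1}$ recovers the two leading terms of the asserted formulae. The remaining three pieces each inherit a $z_q$ factor; their contributions are brought into the PBW form $x_I^-\otimes k_{\pm wt(I)}x_J^-$ by straightening products of negative root vectors, using the vanishing commutators of Corollary~\ref{monoial1} to move ``far apart'' generators past one another and the explicit $v$-commutator formulae of Appendix~A --- notably Lemmas~\ref{commute1}, \ref{commute5}, \ref{commute6}, \ref{commute7} and \ref{commute9} --- to rewrite each surviving sub-product either as a $v$-commutator $[x_\mu^-,x_\nu^-]_v$ with $\mu+\nu\notin\mathcal R^+$ or as a short PBW sum.

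The main obstacle will be the valuation bookkeeping needed to conclude $h_{I,J},g_{I,J}\in\mathcal A_{\max\{\ell(I),\ell(J)\}}$. As in the proof of Lemma~\ref{copro3}, the mechanism is a gain of one level at each reordering: commuting a single root vector past a block $x_B^-$ of length $\ell(B)$, Lemma~\ref{commut} and Proposition~\ref{commut1} write the difference as $(1-q^c)\cdot(\text{original product})+\sum_{\mathbf r}c_{\mathbf r}^-x_{\mathbf r}^-$ with $c_{\mathbf r}^-\in\mathcal A_{|\mathbf r|-1}$ and $1-q^c\in\mathcal A_1$ whether or not $c=0$; similarly a $v$-commutator $[x_\mu^-,x_\nu^-]_v$ with $\mu+\nu\notin\mathcal R^+$ expands as $\sum_{|\mathbf r|\ge 2}c_{\mathbf r}^-x_{\mathbf r}^-$ with $c_{\mathbf r}^-\in\mathcal A_{|\mathbf r|-1}$. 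Combined with the $z_q$-factors inherited from the remainder sums (each in $\mathcal A_1$) and the inclusion $\mathcal A_i\mathcal A_j\subseteq\mathcal A_{i+j}$ for the $(q-q^{-1})$-adic valuation, this keeps the coefficient of every surviving term $x_I^-\otimes k_{\pm wt(I)}x_J^-$ at level at least $\max\{\ell(I),\ell(J)\}$. I expect no genuinely new phenomenon beyond Lemmas~\ref{copro1}--\ref{copro3}; the only real cost is a longer case analysis, because for $\mathfrak{sp}_{2n}$ the weight $2\epsilon_h$ splits as $(\epsilon_h-\epsilon_j)+(\epsilon_h+\epsilon_j)$ with \emph{both} summands further decomposable, so both tensor factors of the output can carry sequences of roots --- which is exactly why the asserted bound is the symmetric $\mathcal A_{\max\{\ell(I),\ell(J)\}}$ rather than the one-sided $\mathcal A_{\ell(I)}$ of Lemma~\ref{copro3}.
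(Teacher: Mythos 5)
Your proposal is correct and follows the same route as the paper: both reduce via Corollary~\ref{equa}(1) and Corollary~\ref{25a}(7) to the ordinary commutator $x_{2\epsilon_h}^-=-[2]^{-1}[x_{\epsilon_h-\epsilon_n}^-,x_{\epsilon_h+\epsilon_n}^-]$, expand each factor's coproduct via Lemma~\ref{copro1} and the $\mathfrak{sp}_{2n}$ case of Lemma~\ref{copro3} (the paper uses the explicit forms \eqref{ddd4}--\eqref{ddd5} derived in that proof), straighten with the $q$-commutation relations of Corollary~\ref{monoial1} and the $v$-commutators of Lemmas~\ref{commute1}, \ref{commute5}, and track valuations through Proposition~\ref{commut1}. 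Your ``primary$\times$primary plus $z_q$-tagged remainder'' bookkeeping is a cleaner packaging of the same expansion, and your observation that the cross terms vanish because $(\epsilon_h-\epsilon_n\mid\epsilon_h+\epsilon_n)=0$ matches the paper's implicit use of that fact.
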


\begin{proof}
If $h=n$, then $x_{2\epsilon_h}^{-}=x_{2n}^-$. In this case,     (1)-(2) follow from \eqref{rell}--\eqref{rell1}. Suppose $h<n$. By Corollary~\ref{25a}(7),
$$\Psi(x_{2\epsilon_h}^{-})
=[2]^{-1}\Psi(x_{\epsilon_h+\epsilon_{n}}^-)\Psi(x_{\epsilon_h-\epsilon_{n}}^-)-
[2]^{-1}\Psi(x_{\epsilon_h-\epsilon_{n}}^-)\Psi(x_{\epsilon_h+\epsilon_{n}}^-),$$
$\Psi\in\{\Delta, \bar\Delta\}$. By Corollary~\ref{monoial1},
\begin{equation}\label{aaaa5} x_{\epsilon_h-\epsilon_t}^-x_{\epsilon_{h}\pm\epsilon_n}^-=qx_{\epsilon_{h}\pm\epsilon_n}^-x_{\epsilon_h-\epsilon_t}^-, \text{ if $h<t<n$.}\end{equation}
 Rewriting $\Psi(x_{\epsilon_h-\epsilon_{n}}^-)$ and $\Psi(x_{\epsilon_h+\epsilon_{n}}^-)$  via Lemma~\ref{copro1} and \eqref{ddd4}-\eqref{ddd5} and using \eqref{aaaa5}, we have
  $$\begin{aligned}&\Delta(x_{2\epsilon_h}^{-})
=x_{2\epsilon_h}^-\otimes k_{2\epsilon_h}^{-1}+1\otimes x_{2\epsilon_h}^-+q^3z_q^2x_{2\epsilon_n}^-\otimes k_{-2\epsilon_n}(x_{\epsilon_h-\epsilon_{n}}^-)^2-z_qx_{\epsilon_{h}+\epsilon_n}^-\otimes k_{-(\epsilon_{h}+\epsilon_n)}x_{\epsilon_h-\epsilon_{n}}^-\\&~+q^2z_q^2\sum_{h<t<n}(x_{2\epsilon_n}^-x_{\epsilon_{t}-\epsilon_n}^-\otimes k_{-(\epsilon_{t}+\epsilon_n)}x_{\epsilon_h-\epsilon_{n}}^-x_{\epsilon_h-\epsilon_{t}}^--qx_{\epsilon_{t}-\epsilon_n}^-x_{2\epsilon_{n}}^-\otimes k_{-(\epsilon_{t}+\epsilon_n)}x_{\epsilon_h-\epsilon_{t}}^-x_{\epsilon_h-\epsilon_{n}}^-)\\&~
+[2]^{-1}q^2z_q^2\sum_{h<s< n, h<t<n}q^{(\epsilon_s, \epsilon_t)}(x_{\epsilon_{s}+\epsilon_n}^-x_{\epsilon_{t}-\epsilon_n}^--x_{\epsilon_{s}-\epsilon_n}^-x_{\epsilon_{t}+\epsilon_n}^-)\otimes k_{-(\epsilon_{s}+\epsilon_t)}x_{\epsilon_h-\epsilon_{s}}^-x_{\epsilon_h-\epsilon_{t}}^-
\\&~-q[2]^{-1}z_q\sum_{h<s< n}([x_{\epsilon_{h}-\epsilon_n}^-, x_{\epsilon_{s}+\epsilon_n}^-]_v-[x_{\epsilon_{h}+\epsilon_n}^-, x_{\epsilon_{s}-\epsilon_n}^-]_v)\otimes k_{-(\epsilon_{h}+\epsilon_s)}x_{\epsilon_h-\epsilon_{s}}^-
,
\end{aligned}$$
and
 $$\begin{aligned}\bar\Delta&(x_{2\epsilon_h}^{-})
=x_{2\epsilon_h}^-\otimes k_{2\epsilon_h}+1\otimes x_{2\epsilon_h}^-+\frac{1}{q}z_q^2  (x_{\epsilon_h-\epsilon_{n}}^-)^2\otimes k_{2\epsilon_h-2\epsilon_n}x_{2\epsilon_n}^--z_qx_{\epsilon_h-\epsilon_{n}}^-\otimes k_{\epsilon_h-\epsilon_{n}}x_{\epsilon_{h}+\epsilon_n}^-\\&+z_q^2\sum_{h<t<n}  (x_{\epsilon_h-\epsilon_{n}}^-x_{\epsilon_h-\epsilon_{t}}^- \otimes k_{2\epsilon_h-\epsilon_{n}-\epsilon_t}x_{2\epsilon_n}^-x_{\epsilon_{t}-\epsilon_n}^--q  x_{\epsilon_h-\epsilon_{t}}^-x_{\epsilon_h-\epsilon_{n}}^- \otimes k_{2\epsilon_h-\epsilon_{n}-\epsilon_t}x_{\epsilon_{t}-\epsilon_n}^-x_{2\epsilon_{n}}^-)
\\&
+[2]^{-1}z_q^2\sum_{h<s< n, h<t<n}q^{-(\epsilon_s, \epsilon_t)}  x_{\epsilon_h-\epsilon_{s}}^-x_{\epsilon_h-\epsilon_{t}}^- \otimes k_{2\epsilon_h-\epsilon_{s}-\epsilon_t}(x_{\epsilon_{s}+\epsilon_n}^-x_{\epsilon_{t}-\epsilon_n}^--x_{\epsilon_{s}-\epsilon_n}^-x_{\epsilon_{t}+\epsilon_n}^-)
\\&-[2]^{-1}qz_q\sum_{h<t<n}x_{\epsilon_h-\epsilon_{t}}^-\otimes k_{\epsilon_h-\epsilon_{t}}([x_{\epsilon_{h}-\epsilon_n}^-, x_{\epsilon_{s}+\epsilon_n}^-]_v-[x_{\epsilon_{h}+\epsilon_n}^-, x_{\epsilon_{t}-\epsilon_n}^-]_v).
\end{aligned}$$ Then (1)-(2) can be verified by rewriting $[x_{\epsilon_{h}-\epsilon_n}^-, x_{\epsilon_{s}+\epsilon_n}^-]_v$ and $[x_{\epsilon_{h}+\epsilon_n}^-, x_{\epsilon_{t}-\epsilon_n}^-]_v$ via Lemmas \ref{commute1}, \ref{commute5}, respectively.
 \end{proof}
``\textbf{Proof of Proposition~\ref{roo}}"  Thanks to  \eqref{typeb}, any positive root $\beta$ is one of those in Lemmas~\ref{copro1}--\ref{copro4}.  So, Proposition~\ref{roo} follows from Lemmas~\ref{copro1}--\ref{copro4}. \qed
\end{appendix}

\small


\begin{thebibliography}{DWH99}


     \bibitem{Ari} {\scshape S.~ Ariki}, {\og On the decomposition numbers of the Hecke algebra of
$G(m,1,n) $\fg}, \emph{J. Math. Kyoto Univ.},
\textbf{36}, Number 4, (1996), 789--808. MR 1443748

\bibitem{AMR} {\scshape S. Ariki, A. Mathas and  H. Rui}, {\og Cyclotomic Nazarov-Wenzl algebras\fg}, \emph{Nagoya Math. J.}, \text{182}, (2006), 47--134. MR 2235339


\bibitem{Bao}{\scshape H.Bao},
{\og Kazhdan-Lustig theory of the super type D and quantum symmetric pairs\fg}, \emph{Represent. Theory}, \textbf{21}(2017), 247--276. MR 3696376

\bibitem {BW} {\scshape J.S. Birman and  H. Wenzl}, {\og Braids, link polynomials and a new algebra\fg}, \emph{Trans. Amer. Math. Soc.} \textbf{313}, (1989), 249--273. MR 0992598

\bibitem{Bou} {\scshape N.Bourbaki}, {\og Groupes et alg$\grave{\text{e}}$bres de Lie£¬ Chapitre VII-Chpitre VIII\fg}, Hermann, Paris, 1975. MR 0453824
\bibitem{B}{\scshape R.~Brauer}, {\og On algebras which are connected with the semisimple
  continuous groups\fg}, \emph{Ann. of Math.}, \textbf{38}, (1937), 857--872. MR 1503378
\bibitem{Br}{\scshape J.Brundan}, {\og Representations of the oriented skein category \fg},  arXiv:1712.08953 [math.RT].
\bibitem{BCNR}{\scshape J.Brundan, J. Comes, D. Nash and A. Reynolds}, {\og A basis theorem for the affine oriented Baruer category and its  cyclotomic quotients  \fg},  \emph{Quantum Topol.},  \textbf{8}, (2017), 75--112. MR 3630282

    \bibitem{BKM} {\scshape J. Brundan, A. Kleshchev and P. McNamara}, {\og  Homological properties of finite type Khovanov-Lauda-Rouquier algebras\fg}, \emph{ Duke Math. J.}, \textbf{163}(2014), 1353--1404. MR 3205728
\bibitem{BSW}{\scshape
J. Brundan, A. Savage and  B. Webster}, {\og On the definition of quantum Heisenberg category\fg},
\emph{Algebra and Number Theory}, \textbf{14}, (2020), no.2, 275--321.
MR 4104410

\bibitem{CP} {\scshape C. De Concini  and  C. Procesi}, {\og Quantum groups, D-modules, representation theory, and quantum groups \fg}, \emph{Lecture Notes in Math.}, \textbf{1565}, 31--140,  Springer, Berlin, 1993. MR 1288995

\bibitem{DRV}{\scshape Z.Daugherty,  A.Ram and R.Virk}, {\og  Affine and degenerate affine BMW algebras: Actions on tensor space\fg}, \emph{Selecta Mathematica}, \textbf {19}, (2013), no. 2, 611--653.
MR 3090238
\bibitem{De1} {\scshape P.Deligne},
{\og Hodge cycles on abelian varieties. (Notes by J.S.Milne). Hodge cycles, motives, and Shimura varieties\fg}, \emph{Lecture Notes in Math.},
\textbf{900},  (1982),  9--100. MR 0654325
\bibitem{De2} {\scshape P.Deligne},
{\og La s$\acute{\rm e}$rie exceptionnelle de groupes de Lie\fg}, \emph{C.R. Acad. Sci. Paris S$\acute{ e}$r. I Math.},
\textbf{ 322}(4),  (1996),  321--326. MR 1378507
\bibitem {Drin} {\scshape V. G. Drinfeld}, {\og Almost cocommutative Hopf algebras\fg}, \emph{Algebra
i Analiz}, \textbf{Vol. 1}, Issue 2, (1989) 30--46. MR 1025154

\bibitem{EGNO} {\scshape P. Etingof, S.Gelaki, D.Nikshych and V. Ostrik}, {\og  Tensor Categories \fg}, Mathematical Surveys and Monographs \textbf{205},  American Mathematical Society, Providence, RI 2015. MR 3242743




\bibitem{G09}{\scshape F.~M. Goodman},
{\og  Remarks on cyclotomic and degenerate cyclotomic BMW algebras\fg},
 \emph{J. Algebra}, \textbf{364}, (2012),  13--37. MR 2927045




\bibitem{Good}{\scshape F.Goodman and H.  Hauschild}, {\og Affine Birman-Wenzl-Murakami algebras and tangles in the solid torus\fg}, \emph{Fund. Math.}, \textbf{190}, (2006), 77--137. MR 2232856

\bibitem{GU}{\scshape University Of Georgia VIGRE Algebra Group}, {\og An analog of Kostant¡¯s theorem for the cohomology of quantum groups\fg}, \emph{Proc. Amer. Math. Soc.}, \textbf{138}, (2010), 85--99. MR 2550172


     \bibitem{J}{\scshape J. Jantzen},{\og Lectures on quantum groups\fg},  \emph{Graduate Studies om Mathematics}, Vol. 6, American Mathematical Society, Providence, RI 1996. MR 1359532

         \bibitem{Kok} {\scshape K.Koike} {\og  On the decomposition of tensor products of the representations of classical groups: by means of universal characters\fg},  \emph {Adv. in Math.}, \textbf{74} (1989),  57--86. MR 0991410

     \bibitem{Led} {\scshape R. Leduc}, {\og A two-parameter version of the centralizer algebra of the mixed tensor representations of the general linear group and quantum general linear group \fg} Ph. D thesis, University of Wisconsin-Madison, 1994.
\bibitem{LZ} {\scshape G. Lehrer and R.B. Zhang}, {\og  The Brauer Category and Invariant Theory\fg},  \emph{J.  Eur. Math. Soc.}, \textbf{17} (9), (2015), 2311--2351. MR 3420509
\bibitem{LS} {\scshape S.Z. Levendorskii and Ya.S. Soibelman}, {\og Algebras of functions on compact quantum groups,
Schubert cells and quantum tori\fg}, \emph{Comm. Math. Phys.}, \textbf{139}, (1991), 141--170. MR 1116413
\bibitem{Lu} {\scshape G. Lusztig}, {\og  Introduction to Quantum Groups \fg}, Reprint of the 1994 edition.  Modern Birkh$\ddot{a}$user Classics,  Birkh$\ddot{a}$user/Springer, New York, 2010. MR 2759715

\bibitem{Lu1}{\scshape G. Lusztig}, {\og Quantum groups at roots of $1$\fg},
\emph{Geom. Dedicata}, \textbf{35}, (1990), no. 1--3, 89--113. MR 1066560


\bibitem{Na} {\scshape M. Nazarov}, {\og  Young's orthogonal form for Brauer's centralizer algebra\fg},  \emph {J. Algebra}, \textbf{182}, (1996), no. 3, 664--693. MR 1398116



\bibitem{Olden} {\scshape R. Haering-Oldenburg}, {\og Cyclotomic Birman-Murakami-Wenzl algebras \fg}, \emph{J. Pure Appl. Algebra}, \textbf{161}, (2001), 113--144. MR 1834081


\bibitem{Pa} {\scshape P. Papi}, {\og  A characterization of a special ordering in a root system \fg}, \emph{Proc.  Amer.  Math. Soc.}, \textbf{120}, (1994), no.3, 661--665. MR 1169886

 \bibitem{Re} {\scshape A. Reynolds}, {\og Representation of Oriented Brauer categories\fg}, Ph. D Thesis, University of Oregon, 2015.

 \bibitem{RS2}{\scshape H. Rui {\normalfont \smfandname} L. Song}, {\og  Decomposition matrices of Birman-Murakami-Wenzl algebras\fg}, \emph{J.Algebra}, \textbf{444} (2015), 246--271. MR 3406176

 \bibitem{RS3}{\scshape H. Rui {\normalfont \smfandname} L. Song}, {\og  Affine Brauer category and parabolic category $\mathcal O$ in types $B,C,D$\fg}, \emph{Math. Zeit.}, \text{293}, (2019), 503--550. MR 4002288

 \bibitem{RX}{\scshape H. Rui {\normalfont \smfandname} J. Xu}, {\og The representations of cyclotomic BMW algebras\fg}, \emph{ J. Pure Appl. Algebra}, \textbf{213},  (2009), no. 12, 2262--2288. MR 2553602

\bibitem{RS4}{\scshape H. Rui {\normalfont \smfandname} L. Song}, {\og  Representations of Brauer category and categorifications\fg}, \emph{J. Alegbra}, \textbf{557},  (2020), 1--36. MR 4090834



\bibitem{VT} {\scshape V.Turaev}, {\og Operator invariants of tangles, and $R$ -matrices.\fg}, \emph{Mathematics of the USSR-Izvestiya}, \textbf{35}, (1990), no.2, 411--444. MR 1024455

  

\bibitem{WYu} {\scshape S. Wilcox  and S.  Yu}, {\og  The cyclotomic BMW algebra associated with the two string type B braid group\fg},
\emph{Comm. Algebra} \textbf{39}, (2011),  4428--4446. MR 2855137


\bibitem{xyz} {\scshape Z. Xiao, T. Yang and Y. Zhang}, {\og The diagram category of framerd tanfles and invariants of quantized symplectic group\fg}, \emph{Sci. in China}, \textbf{63} (2020), 689-700. MR 4079891

  \bibitem{Yu} {\scshape S. Yu}, {\og The Cyclotomic Birman-Murakami-Wenzl Algebras\fg}, Ph. D Thesis, University of Sydney,  (2008).
\end{thebibliography}
\end{document}